\definecolor{shadecolor}{gray}{0.75}
\newlength{\offsetpage}
\newenvironment{widepage}{\begin{adjustwidth}{-1.2cm}{-\offsetpage/2}
		\addtolength{\textwidth}{\offsetpage}}%
	{\end{adjustwidth}}
\newenvironment{widepage1}{\begin{adjustwidth}{-1.0cm}{-\offsetpage/2}
		\addtolength{\textwidth}{\offsetpage}}%
	{\end{adjustwidth}}
\newcommand*\circled[1]{\tikz[baseline=(char.base)]{
		\node[shape=circle,draw,inner sep=2pt] (char) {#1};}}
\tikzstyle{condition}=[rectangle, draw=black, rounded corners, fill=colorqr, drop shadow,
\tikzstyle{abstract}=[rectangle, draw=black, rounded corners, fill=blue!30, drop shadow,
\tikzstyle{comment}=[rectangle, draw=black, rounded corners, fill=color1, drop shadow,
\tikzstyle{myarrow}=[->, >=open triangle 90, thick]
\tikzstyle{line}=[-, thick]
\newcommand\myhrulefill[1]{\leavevmode\leaders\hrule height#1\hfill\kern0pt}
\renewcommand{\l@section}{\@dottedtocline{1}{1.5em}{2.6em}}
\renewcommand{\l@subsection}{\@dottedtocline{2}{4.0em}{3.6em}}
\renewcommand{\l@subsubsection}{\@dottedtocline{3}{7.4em}{4.5em}}
\numberwithin{equation}{chapter}
\let\cleardoublepage\clearpage
\newcommand{\changefonts}{%
	\fontsize{9}{11}\selectfont
}
\def\algoalign#1{\parbox[t]{\dimexpr\linewidth-\algorithmicindent}{#1}}
\definecolor{titlepagecolor}{cmyk}{75,68,67,90}
\definecolor{titlepagecolor2}{rgb}{1.0, 0.08, 0.58}
\definecolor{emerald}{rgb}{0.31, 0.78, 0.47}
\definecolor{deeppink}{HTML}{D14064}
\definecolor{lowpink}{HTML}{ffe6ec}
\newcommand{\partcolor}{gray!65} 
\definecolor{lowblue}{HTML}{E1EBFE}
\renewcommand*\cleardoublepage{
	\clearpage
	\if@twoside   
	\ifodd\c@page 
	\hbox{}\newpage
	\if@twocolumn\hbox{}   
	\newpage
	\fi
	\fi
	\fi
} \makeatother
\let\originalpart=\part
\def\part#1{\cleardoublepage\clearpage \pagecolor{\partcolor} \originalpart{#1}\nopagecolor }
\definecolor{caligraphcolor}{HTML}{B3C2F0}
\newenvironment{sbmatrix}[1]{\def\mysubscript{#1}\mathop\bgroup\begin{bmatrix}}{\end{bmatrix}\egroup_{\textstyle\mathstrut\mysubscript}}
\newenvironment{bmatrixfoot}
{\footnotesize\begin{bmatrix}}
	{\end{bmatrix}\normalsize}
\newenvironment{bmatrixscript}
{\scriptsize\begin{bmatrix}}
	{\end{bmatrix}\normalsize}
\newcommand\comple[1]{#1^C}
\newcommand{\cond}{\text{cond} }
\let\oldforall\forall
\renewcommand{\forall}{\oldforall\, }
\newcommand{\mdframecolor}{gray!10}
\newcommand{\mdframehideline}{true}
\definecolor{mylightbluetitle}{RGB}{60,113,183}
\definecolor{mylightbluetext}{rgb}{0,0.08,0.45}
\definecolor{structurecolorblue}{RGB}{60,113,183}
\definecolor{structurecolorgreen}{RGB}{63,145,182}
\colorlet{structurecolor}{structurecolorblue}
\definecolor{structurecolorelegant}{RGB}{60,113,183}
\definecolor{structurecolorlt}{RGB}{31,119,185}
\definecolor{colorBlue1}  {RGB}{220,227,248}
\newcommand{\mdframecolorTheorem}{gray!35}  
\definecolor{winestain}{rgb}{0.5,0,0}
\definecolor{mydarkblue}{rgb}{0,0.08,0.45}
\definecolor{mydarkred}{rgb}{0.70,0.00,0.00}
\definecolor{mydarkyellow}{RGB}{197,151,13}
\definecolor{mydarkgreen}{rgb}{0.00,0.30,0.00}
\definecolor{mydarkpurple}{RGB}{90,35,140}
\definecolor{mydarkgray}{RGB}{64,64,64}
\definecolor{color0}  {RGB}{174,225,254} 
\definecolor{color1}  {RGB}{220,227,248} 
\definecolor{color2}  {RGB}{28,130,185} 
\definecolor{color3}  {RGB}{255,253,250} 
\definecolor{colormiddleright}  {RGB}{245,253,250} 
\definecolor{colorbottomleft}  {RGB}{255,243,250} 
\definecolor{coloruppermiddle}  {RGB}{255,253,230} 
\definecolor{colormiddleleft}  {RGB}{255,244,237}
\definecolor{colorcr}  {RGB}{249,253,232} 
\definecolor{colorreduction}  {RGB}{255,235,254} 
\definecolor{colorqr}  {RGB}{254,221,199} 
\definecolor{colorbiconjugate}  {RGB}{251,149,161} 
\definecolor{colorsvd}  {RGB}{215,247,235} 
\definecolor{colorupperright}  {RGB}{239,246,251} 
\definecolor{colorspectral}  {RGB}{206,226,243} 
\definecolor{colorbottomright}  {RGB}{220,224,236} 
\definecolor{coloreigenvalue}  {RGB}{197,203,224} 
\definecolor{colorcp} {RGB}{217, 234, 186} 
\definecolor{colorcpborder} {RGB}{233, 243, 216} 
\definecolor{colorupperleft}  {RGB}{235,243,240} 
\definecolor{colorsemidefinite}  {RGB}{217,232,226} 
\definecolor{colormiddle} {RGB}{235, 240,255}
\definecolor{colorlu}  {RGB}{220,227,255} 
\definecolor{colorals}  {RGB}{240,230,255} 
\definecolor{coloralsbkg}  {RGB}{248,243,255} 
\definecolor{canaryyellow}{rgb}{1.0, 0.75, 0.0}
\definecolor{bluepigment}{rgb}{0.0, 0.0, 1.0}
\definecolor{canarypurple}{RGB}{208, 13, 241}
\definecolor{colorGreenOcre}{RGB}{51,102,0} 
\definecolor{colorBlue2}{RGB}{200,207,248}
\definecolor{shadecolor}{gray}{0.75}
\definecolor{color0}  {RGB}{174,225,254} 
\definecolor{color1}  {RGB}{220,227,248} 
\definecolor{color2}  {RGB}{28,130,185} 
\definecolor{color3}  {RGB}{255,253,250} 
\definecolor{colormiddleright}  {RGB}{245,253,250} 
\definecolor{colorbottomleft}  {RGB}{255,243,250} 
\definecolor{coloruppermiddle}  {RGB}{255,253,230} 
\definecolor{colormiddleleft}  {RGB}{255,253,250}
\definecolor{colorcr}  {RGB}{249,253,232} 
\definecolor{colorreduction}  {RGB}{255,235,254} 
\definecolor{colorqr}  {RGB}{254,221,199} 
\definecolor{colorbiconjugate}  {RGB}{251,149,161} 
\definecolor{colorsvd}  {RGB}{215,247,235} 
\definecolor{colorupperright}  {RGB}{239,246,251} 
\definecolor{colorspectral}  {RGB}{206,226,243} 
\definecolor{colorbottomright}  {RGB}{220,224,236} 
\definecolor{coloreigenvalue}  {RGB}{197,203,224} 
\definecolor{colorupperleft}  {RGB}{235,243,240} 
\definecolor{colorsemidefinite}  {RGB}{217,232,226} 
\definecolor{colormiddle} {RGB}{235, 240,255}
\definecolor{colorlu}  {RGB}{220,227,255} 
\definecolor{colorals}  {RGB}{240,230,255} 
\definecolor{coloralsbkg}  {RGB}{248,243,255} 
\definecolor{canaryyellow}{rgb}{1.0, 0.75, 0.0}
\definecolor{bluepigment}{rgb}{0.0, 0.0, 1.0}
\definecolor{canarypurple}{RGB}{208, 13, 241}
\definecolor{mydarkblue}{rgb}{0,0.08,0.45}
\definecolor{winestain}{rgb}{0.5,0,0}
\newcommand{\hadaprod}{\circledast}
\newcommand{\kronecker}{\otimes}
\newcommand{\leadto}{\qquad\underrightarrow{ \text{leads to} }\qquad}
\newcommand{\gapthree}{\,\,\,}
\newcommand{\gap}{\,\,\,\,\,\,\,\,}  
\mathchardef\mhyphen="2D
\newcommand{\real}{\mathbb{R}}
\newcommand{\complex}{\mathbb{C}}
\newcommand{\integer}{\mathbb{Z}}
\newcommand{\exampbar}{\hfill $\square$\par}
\newcommand{\argmax}{\text{arg max}}
\newcommand{\argmin}{\text{arg min}} 
\newcommand\mathopmax[1]{\mathop{\max}_{#1}}
\newcommand\mathopmin[1]{\mathop{\min}_{#1}}
\newcommand\mathoplim[1]{\mathop{\lim}_{#1}}
\newcommand\abs[1]{\left\lvert#1\right\rvert}
\newcommand\norm[1]{\left\lVert#1\right\rVert}
\newcommand\normtwo[1]{\left\lVert#1\right\rVert_2}
\newcommand\normtwobig[1]{\big\lVert#1\big\rVert_2}
\newcommand\normf[1]{\left\lVert#1\right\rVert_F}
\newcommand{\convd}{\breve d}
\newcommand{\concd}{\widehat d}
\newcommand{\cnstd}{\bar d}
\newcommand{\concG}{\widehat G}
\newcommand{\mathcalC}{\mathcal{C}}
\newcommand{\mathcalO}{\mathcal{O}}
\newcommand{\mathcalP}{\mathcal{P}}
\newcommand{\mathcalV}{\mathcal{V}}
\newcommand{\Cov}{\mathbb{C}\mathrm{ov}}
\newcommand{\Exp}{\mathbb{E}}
\newcommand{\normal}{\mathcal{N}}
\newcommand{\tr}{\mathrm{tr}}
\newcommand{\cspace}{\mathcal{C}}
\newcommand{\nspace}{\mathcal{N}}
\newcommand{\bzero}{\mathbf{0}}
\newcommand{\bone}{\mathbf{1}}
\newcommand{\diag}{\mathrm{diag}}
\newcommand{\rank}{\mathrm{rank}}
\newcommand{\adjugate}{\mathrm{adj}}
\newcommand{\trace}{\mathrm{tr}}
\newcommand{\spn}{\mathrm{span}}
\newcommand{\bOmega}{\boldsymbol\Omega}
\newcommand{\balpha}{\boldsymbol\alpha}
\newcommand{\bdelta}{\boldsymbol\delta}
\newcommand{\bPhi}{\boldsymbol\Phi}
\newcommand{\bPsi}{\boldsymbol\Psi}
\newcommand{\bpsi}{\boldsymbol\psi}
\newcommand{\bgamma}{\boldsymbol\gamma}
\newcommand{\bLambda}{\boldsymbol\Lambda}
\newcommand{\blambda}{\boldsymbol\lambda}
\newcommand{\bphi}{\boldsymbol\phi}
\newcommand{\bmu}{\boldsymbol\mu}
\newcommand{\bsigma}{\boldsymbol\sigma}
\newcommand{\bSigma}{\boldsymbol\Sigma}
\def\sF{{\mathbb{F}}}
\def\sI{{\mathbb{I}}}
\def\sJ{{\mathbb{J}}}
\def\sK{{\mathbb{K}}}
\def\sL{{\mathbb{L}}}
\def\sS{{\mathbb{S}}}
\newcommand{\widebarbW}{\overline{\bm{W}}}
\newcommand{\widebarbZ}{\overline{\bm{Z}}}
\newcommand{\widebarba}{\overline{\bm{a}}}
\newcommand{\widebarbb}{\overline{\bm{b}}}
\newcommand{\widebarbw}{\overline{\bm{w}}}
\newcommand{\widebarbz}{\overline{\bm{z}}}
\newcommand{\widetildebA}{\widetilde{\bm{A}}}
\newcommand{\widetildebC}{\widetilde{\bm{C}}}
\newcommand{\widetildebH}{\widetilde{\bm{H}}}
\newcommand{\widetildebN}{\widetilde{\bm{N}}}
\newcommand{\widetildebP}{\widetilde{\bm{P}}}
\newcommand{\widetildebS}{\widetilde{\bm{S}}}
\newcommand{\widetildebW}{\widetilde{\bm{W}}}
\newcommand{\widetildebX}{\widetilde{\bm{X}}}
\newcommand{\widetildebZ}{\widetilde{\bm{Z}}}
\newcommand{\widetildeba}{\widetilde{\bm{a}}}
\newcommand{\widetildebb}{\widetilde{\bm{b}}}
\newcommand{\widetildebf}{\widetilde{\bm{f}}}
\newcommand{\widetildebj}{\widetilde{\bm{j}}}
\newcommand{\widetildebl}{\widetilde{\bm{l}}}
\newcommand{\widetildebw}{\widetilde{\bm{w}}}
\newcommand{\widetildebx}{\widetilde{\bm{x}}}
\newcommand{\widetildeby}{\widetilde{\bm{y}}}
\newcommand{\widetildebz}{\widetilde{\bm{z}}}
\newcommand{\widetildea}{\widetilde{a}}
\newcommand{\widetildey}{\widetilde{y}}
\newcommand{\widetildez}{\widetilde{z}}
\newcommand{\ba}{\bm{a}}
\newcommand{\bA}{\bm{A}}
\newcommand{\bb}{\bm{b}}
\newcommand{\bB}{\bm{B}}
\newcommand{\bc}{\bm{c}}
\newcommand{\bC}{\bm{C}}
\newcommand{\bd}{\bm{d}}
\newcommand{\bD}{\bm{D}}
\newcommand{\be}{\bm{e}}
\newcommand{\bE}{\bm{E}}
\newcommand{\bff}{\bm{f}}
\newcommand{\bF}{\bm{F}}
\newcommand{\bg}{\bm{g}}
\newcommand{\bG}{\bm{G}}
\newcommand{\bh}{\bm{h}}
\newcommand{\bH}{\bm{H}}
\newcommand{\bI}{\bm{I}}
\newcommand{\bj}{\bm{j}}
\newcommand{\bJ}{\bm{J}}
\newcommand{\bK}{\bm{K}}
\newcommand{\bl}{\bm{l}}
\newcommand{\bL}{\bm{L}}
\newcommand{\bmm}{\bm{m}}
\newcommand{\bM}{\bm{M}}
\newcommand{\bn}{\bm{n}}
\newcommand{\bN}{\bm{N}}
\newcommand{\bo}{\bm{o}}
\newcommand{\bp}{\bm{p}}
\newcommand{\bP}{\bm{P}}
\newcommand{\bq}{\bm{q}}
\newcommand{\bQ}{\bm{Q}}
\newcommand{\br}{\bm{r}}
\newcommand{\bR}{\bm{R}}
\newcommand{\bs}{\bm{s}}
\newcommand{\bS}{\bm{S}}
\newcommand{\bT}{\bm{T}}
\newcommand{\bu}{\bm{u}}
\newcommand{\bU}{\bm{U}}
\newcommand{\bv}{\bm{v}}
\newcommand{\bV}{\bm{V}}
\newcommand{\bw}{\bm{w}}
\newcommand{\bW}{\bm{W}}
\newcommand{\bx}{\bm{x}}
\newcommand{\bX}{\bm{X}}
\newcommand{\by}{\bm{y}}
\newcommand{\bY}{\bm{Y}}
\newcommand{\bz}{\bm{z}}
\newcommand{\bZ}{\bm{Z}}
\newcommand{\brx}{\bm{R}_\gamma^{(x)}}
\newcommand{\bry}{\bm{R}_\gamma^{(y)}} 
\newcommand{\bryt}{\bm{R}_\gamma^{(y)\top}} 
\newcommand{\bxgamma}{\bX_\gamma}
\newcommand{\bygamma}{\bY_\gamma}
\newcommand{\bugamma}{\bU_\gamma}
\newcommand{\bvgamma}{\bV_\gamma}
\newcommand{\bomegagamma}{\bOmega_\gamma}
\newcommand{\svd}{\mathrm{svd}}
\DeclareMathAlphabet{\mathsfit}{\encodingdefault}{\sfdefault}{m}{sl}
\SetMathAlphabet{\mathsfit}{bold}{\encodingdefault}{\sfdefault}{bx}{n}
\def\sF{{\mathbb{F}}}
\def\sI{{\mathbb{I}}}
\def\sJ{{\mathbb{J}}}
\def\sK{{\mathbb{K}}}
\def\sL{{\mathbb{L}}}
\def\sS{{\mathbb{S}}}
\def\rz{{\textnormal{z}}}
\def\rvx{{\mathbf{x}}}
\def\rvz{{\mathbf{z}}}
\newcommand{\roundcornertheorem}{0pt}
\newcommand{\linewidththeorem}{0.1pt}
\newcommand{\frametitlerulewidththeorem}{0.1pt}
\newcommand{\innerbottommargintheorem}{2pt}
\newcommand{\innerleftmargintheorem}{2pt}
\newcommand{\innerrightmargintheorem}{2pt}
\newcommand{\innertopmargintheorem}{2pt}
\newcommand{\outerlinewidththeorem}{1pt}
\newenvironment{theoremHigh}[1][]{%
\refstepcounter{theo}%
\ifstrempty{#1}%
{\newcommand{\theoName}{}}
{\newcommand{\theoName}{:~(#1)}}
\mdfsetup{
	backgroundcolor=colorBlue1,
	linecolor=colorBlue1,
	frametitlerulewidth=\frametitlerulewidththeorem,
	roundcorner=\roundcornertheorem,
	linewidth=\linewidththeorem,
	innerbottommargin=\innerbottommargintheorem,
	innerleftmargin=\innerleftmargintheorem,
	innerrightmargin=\innerrightmargintheorem,
	innertopmargin=\innertopmargintheorem,
	outerlinewidth=\outerlinewidththeorem,
	topline=false,
	innertopmargin=-5pt,
	innerbottommargin=1pt,
	linewidth=0,
	startinnercode=\paragraph{{\strut Theorem~\thetheo\theoName}}
}
\begin{mdframed}[]\relax%
}{\end{mdframed}}
\newenvironment{corollaryHigh}[1][]{%
\refstepcounter{theo}%
\ifstrempty{#1}%
{\newcommand{\theoName}{}}
{\newcommand{\theoName}{:~(#1)}}
\mdfsetup{
	backgroundcolor=colorBlue1,
	linecolor=colorBlue1,
	frametitlerulewidth=\frametitlerulewidththeorem,
	roundcorner=\roundcornertheorem,
	linewidth=\linewidththeorem,
	innerbottommargin=\innerbottommargintheorem,
	innerleftmargin=\innerleftmargintheorem,
	innerrightmargin=\innerrightmargintheorem,
	innertopmargin=\innertopmargintheorem,
	outerlinewidth=\outerlinewidththeorem,
	topline=false,
	innertopmargin=-5pt,
	innerbottommargin=1pt,
	linewidth=0,
	startinnercode=\paragraph{{\strut Corollary~\thetheo\theoName}}
}
\begin{mdframed}[]\relax%
}{\end{mdframed}}
\newenvironment{theorem}[1][]{%
	\refstepcounter{theo}%
	\ifstrempty{#1}%
	{\newcommand{\theoName}{}}
	{\newcommand{\theoName}{:~(#1)}}
	\mdfsetup{
		backgroundcolor=\mdframecolorTheorem,
		linecolor=\mdframecolorTheorem,
		frametitlerulewidth=\frametitlerulewidththeorem,
		roundcorner=\roundcornertheorem,
		linewidth=\linewidththeorem,
		innerbottommargin=\innerbottommargintheorem,
		innerleftmargin=\innerleftmargintheorem,
		innerrightmargin=\innerrightmargintheorem,
		innertopmargin=\innertopmargintheorem,
		outerlinewidth=\outerlinewidththeorem,
		topline=false,
		innertopmargin=-5pt,
		innerbottommargin=1pt,
		linewidth=0,
		startinnercode=\paragraph{{\strut Theorem~\thetheo\theoName}}
	}
	\begin{mdframed}[]\relax%
	}{\end{mdframed}}
\newenvironment{corollary}[1][]{%
	\refstepcounter{theo}%
\ifstrempty{#1}%
{\newcommand{\theoName}{}}
{\newcommand{\theoName}{:~(#1)}}
\mdfsetup{
	backgroundcolor=\mdframecolorTheorem,
	linecolor=\mdframecolorTheorem,
	frametitlerulewidth=\frametitlerulewidththeorem,
	roundcorner=\roundcornertheorem,
	linewidth=\linewidththeorem,
	innerbottommargin=\innerbottommargintheorem,
	innerleftmargin=\innerleftmargintheorem,
	innerrightmargin=\innerrightmargintheorem,
	innertopmargin=\innertopmargintheorem,
	outerlinewidth=\outerlinewidththeorem,
	topline=false,
	innertopmargin=-5pt,
	innerbottommargin=1pt,
	linewidth=0,
	startinnercode=\paragraph{{\strut Corollary~\thetheo\theoName}}
}
	\begin{mdframed}[]\relax%
	}{\end{mdframed}}
\newenvironment{lemma}[1][]{%
	\refstepcounter{theo}%
\ifstrempty{#1}%
{\newcommand{\theoName}{}}
{\newcommand{\theoName}{:~(#1)}}
\mdfsetup{
	backgroundcolor=\mdframecolorTheorem,
	linecolor=\mdframecolorTheorem,
	frametitlerulewidth=\frametitlerulewidththeorem,
	roundcorner=\roundcornertheorem,
	linewidth=\linewidththeorem,
	innerbottommargin=\innerbottommargintheorem,
	innerleftmargin=\innerleftmargintheorem,
	innerrightmargin=\innerrightmargintheorem,
	innertopmargin=\innertopmargintheorem,
	outerlinewidth=\outerlinewidththeorem,
	topline=false,
	innertopmargin=-5pt,
	innerbottommargin=1pt,
	linewidth=0,
	startinnercode=\paragraph{{\strut Lemma~\thetheo\theoName}}
}
\begin{mdframed}[]\relax%
}{\end{mdframed}}
\newenvironment{proposition}[1][]{%
\refstepcounter{theo}%
\ifstrempty{#1}%
{\newcommand{\theoName}{}}
{\newcommand{\theoName}{:~(#1)}}
\mdfsetup{
	backgroundcolor=\mdframecolorTheorem,
	linecolor=\mdframecolorTheorem,
	frametitlerulewidth=\frametitlerulewidththeorem,
	roundcorner=\roundcornertheorem,
	linewidth=\linewidththeorem,
	innerbottommargin=\innerbottommargintheorem,
	innerleftmargin=\innerleftmargintheorem,
	innerrightmargin=\innerrightmargintheorem,
	innertopmargin=\innertopmargintheorem,
	outerlinewidth=\outerlinewidththeorem,
	topline=false,
	innertopmargin=-5pt,
	innerbottommargin=1pt,
	linewidth=0,
	startinnercode=\paragraph{{\strut Proposition~\thetheo\theoName}}
}
\begin{mdframed}[]\relax%
}{\end{mdframed}}
\newtheoremstyle{normalfontstyle} 
{3pt}                           
{3pt}                           
{\normalfont}                   
{}                              
{\bfseries}                     
{}                             
{ }                             
{}                              
\declaretheoremstyle[
spaceabove=3pt,
spacebelow=3pt,
headfont=\bfseries,
notefont=\bfseries, 
notebraces={(}{)}, 
bodyfont=\normalfont,
postheadspace=1em, 
]{normalfontboldhead}
\theoremstyle{normalfontstyle}
\newcommand{\BlackBox}{\rule{1.5ex}{1.5ex}}  
\renewenvironment{proof}{\par\noindent{\bf Proof\ }}{\hfill\BlackBox\\[2mm]}
\declaretheorem[style=normalfontboldhead, name=Definition, numberlike=theo]{definitionT}
\newmdenv[skipabove=7pt,
skipbelow=7pt,
rightline=false,
leftline=true,
topline=false,
bottomline=false,
linecolor=mydarkblue,
innerleftmargin=5pt,
innerrightmargin=5pt,
innertopmargin=0pt,
leftmargin=2cm,
rightmargin=0cm,
linewidth=4pt,
innerbottommargin=0pt]{dBox}
\newenvironment{definition}{\begin{dBox}\begin{definitionT}}{\end{definitionT}\end{dBox}}
\declaretheorem[style=normalfontboldhead, name=Exercise, numberlike=theo]{exerciseC}
\newmdenv[skipabove=7pt,
skipbelow=7pt,
rightline=false,
leftline=true,
topline=false,
bottomline=false,
linecolor=mydarkgreen,
innerleftmargin=5pt,
innerrightmargin=5pt,
innertopmargin=0pt,
leftmargin=2cm,
rightmargin=0cm,
linewidth=4pt,
innerbottommargin=0pt]{eBox}
\newenvironment{exercise}{\begin{eBox}\begin{exerciseC}}{\end{exerciseC}\end{eBox}}
\declaretheorem[style=normalfontboldhead, name=Remark, numberlike=theo]{remarekC}
\newmdenv[skipabove=7pt,
skipbelow=7pt,
rightline=false,
leftline=true,
topline=false,
bottomline=false,
linecolor=mydarkgreen,
innerleftmargin=5pt,
innerrightmargin=5pt,
innertopmargin=0pt,
leftmargin=2cm,
rightmargin=0cm,
linewidth=4pt,
innerbottommargin=0pt]{rBox}
\newenvironment{remark}{\begin{rBox}\begin{remarekC}}{\end{remarekC}\end{rBox}}
\declaretheorem[style=normalfontboldhead, name=Example, numberlike=theo]{exampleC}
\newmdenv[skipabove=7pt,
skipbelow=7pt,
rightline=false,
leftline=false,
topline=false,
bottomline=false,
linecolor=mydarkgreen,
innerleftmargin=1pt,
innerrightmargin=5pt,
innertopmargin=0pt,
leftmargin=2cm,
rightmargin=0cm,
linewidth=4pt,
innerbottommargin=0pt]{xBox}
\newenvironment{example}{\begin{xBox}\begin{exampleC}}{\exampbar\end{exampleC}\end{xBox}}
\newcommand{\xchaptertitle}{Chapter~\thechapter~}
\newcommand{\problemname}{Problems}
\newenvironment{problemset}[1][\xchaptertitle~\problemname]{
	\vspace*{10pt}
	\begin{center}
		\phantomsection\addcontentsline{toc}{section}{\texorpdfstring{\xchaptertitle~\problemname}{\problemname}}
		\markright{#1}
		\textcolor{structurecolor}{\Large\bfseries\adftripleflourishleft~#1~\adftripleflourishright}
	\end{center}
	\begin{enumerate}[ref=\thechapter.\theenumi]}{
\end{enumerate}}
\newcommand{\mytitle}{Matrix Decomposition and Applications}
\begin{document}
\newpage

\thispagestyle{empty}  
\title{\mytitle}

\author{
\begin{center}
\name Jun Lu \\ 
\email jun.lu.locky@gmail.com \\
\end{center}
}

\frontmatter

\newpage
\maketitle

\chapter*{\centering \begin{normalsize}Preface\end{normalsize}}
The realm of matrices is as vast as it is indispensable, with applications spanning from the minutiae of quantum systems to the expansive challenges of large-scale data analytics. At the heart of matrix analysis lies the transformative process of matrix decomposition---a method of reducing a complex matrix into simpler, constituent parts that illuminate its structure and utility. Far from being merely an abstract mathematical concept, matrix decomposition has become a cornerstone in fields as diverse as computer science, engineering, physics, and economics.

At its essence, matrix decomposition simplifies the representation and manipulation of matrices by breaking them down into manageable components. This process enables efficient solutions to linear systems, reduces computational complexity, and provides insights into data's inherent structure. Its applications are far-reaching, influencing everything from machine learning and optimization to image processing and recommender systems.

The historical roots of matrix decomposition trace back to the foundational work of Alston S. Householder in the mid-20th century, which set the stage for modern numerical analysis. Over the decades, the field has seen tremendous advancements, including innovations like backpropagation for neural networks, dimensionality reduction techniques in machine learning, and the utilization of low-rank matrices in natural language processing and large language models.

Today, matrix decomposition underpins technologies in statistics, optimization, and artificial intelligence. It is fundamental to the functioning of algorithms in deep neural networks, recommendation systems, and high-dimensional data analysis, among others. These applications not only underscore its practical significance but also highlight the evolving complexity of its theoretical underpinnings.

This book seeks to serve as a comprehensive and accessible introduction to matrix decomposition, offering readers a bridge between theoretical concepts and practical applications. It is designed for readers with a foundational understanding of linear algebra and aims to achieve the following objectives:
\begin{itemize}
\item \textit{Explore core principles.} Present the mathematical foundations of matrix decomposition, ranging from basic methods such as LU, Cholesky, and QR decomposition to advanced techniques like SVD, eigenvalue decomposition, and their modern extensions.
\item \textit{Highlight practical applications.} Demonstrate the relevance of decomposition methods in diverse fields, including optimization, machine learning, neural network compression, and data interpretation.
\item \textit{Facilitate problem-solving.} Equip readers with the tools to understand and solve problems involving matrices, emphasizing how decomposition can simplify complex tasks and provide deeper insights.
\end{itemize}

\vspace{5em}
\noindent
\textit{Keywords: }
Existence and computing of matrix decompositions, Low-rank approximation, Pivot, LU decomposition for nonzero leading principal minors, Data distillation, CR decomposition, CUR/Skeleton decomposition, Interpolative decomposition, Biconjugate decomposition, Coordinate transformation, Hessenberg decomposition, ULV decomposition, URV decomposition, Rank decomposition, Gram--Schmidt process, Householder reflector, Givens rotation, Rank-revealing decomposition, Cholesky decomposition and update/downdate, Eigenvalue problems, Alternating least squares.

\vspace{5em}
\noindent
\textit{Acknowledgement: }
We extend our deepest gratitude to Gilbert Strang for posing the problem articulated in Corollary~\ref{corollary:invertible-intersection}, reviewing the manuscript, and providing invaluable insights and references on the three factorizations derived from elimination steps. We are especially thankful for his generosity in sharing the manuscript of \citet{strang2021three}, which greatly enriched our understanding of the subject.
We also extend our heartfelt appreciation to the anonymous professors who offered their consultation, feedback, and expressed interest in adopting this book as course material for college-level instruction.
The author also acknowledges the collaborative contributions of Joerg Osterrieder, Christine P. Chai, and Xuanyu Ye in developing the Bayesian approach for nonnegative matrix factorization and (intervened) interpolative decomposition. Their work has significantly illuminated the structure and content of several sections in this book, providing critical perspectives and innovative methodologies.

\input{chapter-worldmap.tex}

\newpage
\begingroup
\hypersetup{
linkcolor=mydarkblue,
linktoc=page,  
}
\dominitoc
\pdfbookmark{\contentsname}{toc} 

\setcounter{tocdepth}{1}  
\tableofcontents 
\endgroup

\mainmatter

\newpage
\section*{Introduction and Background}
\addcontentsline{toc}{section}{Introduction and Background}

Matrix decomposition is a cornerstone of modern numerical linear algebra, with applications in diverse fields such as statistics  \citep{banerjee2014linear, gentle1998numerical}, optimization \citep{gill2021numerical}, and  machine learning \citep{goodfellow2016deep, bishop2006pattern}, particularly in deep learning.
As an essential computational framework, it simplifies complex matrix operations by breaking a matrix into more manageable components. This approach is critical not only for theoretical insights but also for practical implementations, enabling efficient algorithms and enhancing interpretability.

The prominence of matrix decomposition techniques is largely due to advances like the backpropagation algorithm for neural network training and the use of low-rank neural networks in efficient deep learning architectures \citep{lu2025large}.
The primary goal of this book is to provide a self-contained introduction to the concepts and mathematical tools of  linear algebra and matrix analysis, laying a solid foundation for understanding matrix decomposition techniques and their applications in subsequent sections.
This book explores the fundamental techniques and applications of matrix decomposition. It begins with foundational methods such as LU and Cholesky decomposition, which are integral to solving linear systems and understanding positive definiteness. It then delves into more advanced topics, including QR decomposition, spectral decomposition, and singular value decomposition (SVD), which have broad applications ranging from eigenvalue problems to low-rank approximations and data compression.
This introduction is designed for readers with a foundational knowledge of linear algebra and aims to bridge the gap between theory and application, equipping them with the necessary tools to navigate this critical area of numerical mathematics.

However, we clearly realize our inability to cover all the useful and interesting results concerning matrix decomposition. Given the scope limitations, topics such as the analysis of Euclidean space, Hermitian space, and Hilbert space are not addressed in detail here. For a more comprehensive introduction to these areas, readers are encouraged to consult the literature on linear algebra, including works such as \citet{trefethen1997numerical, strang1993introduction, stewart2000decompositional, gentle2007matrix, higham2002accuracy, quarteroni2010numerical, golub2013matrix, beck2017first, gallier2017fundamentals, boyd2018introduction, strang2019linear, van2020advanced, strang2021every}. 
It is important to note that this book specifically focuses on providing compact proofs for the existence of various matrix decomposition methods. For a more in-depth exploration of topics such as reducing computational complexity, detailed discussions of applications, and insights into tensor decomposition, readers are encouraged to refer to \citet{lu2021numerical}.

A matrix decomposition involves breaking down a complex matrix into its constituent parts, simplifying its representation.
The underlying principle of this approach is that, rather than solving specific problems directly, matrix algorithms focus on simplifying more complex matrix operations. These operations can be performed on the decomposed components, rather than the original matrix itself. At a general level, a matrix decomposition task for a matrix $\bA$ can be formulated as follows:
\begin{itemize}
\item $\bA=\bQ\bU$: Here, $\bQ$ is an orthogonal matrix that contains the same column space as $\bA$, while $\bU$ is a relatively simple and sparse matrix used to reconstruct $\bA$.
\item $\bA=\bQ\bT\bQ^\top$: In this case, $\bQ$ is orthogonal such that $\bA$ and $\bT$ are \textit{similar matrices} \footnote{See Definition~\ref{definition:similar-matrices}  for a rigorous definition.} that share essential properties such as  eigenvalues and sparsity. Additionally, working with $\bT$ is computationally simpler than working with $\bA$.
\item $\bA=\bU\bT\bV$: In this formulation, $\bU$ and $\bV$ are orthogonal matrices such that the columns of $\bU$ and the rows of $\bV$ form  orthonormal bases for the column space and row space of $\bA$, respectively.
\item $\underset{m\times n}{\bA}=\underset{m\times r}{\bB} \gapthree \underset{r\times n}{\bC}$: Here, $\bB$ and $\bC$ are full-rank matrices capable of reducing the memory storage  requirements for  $\bA$. 
In practical applications, 
a low-rank approximation, $\underset{m\times n}{\bA}\approx \underset{m\times k}{\bD}\gapthree \underset{k\times n}{\bF}$, where $k<r$ is the \textit{numerical rank} of the matrix, proves beneficial. This approximation allows for more efficient storage of the matrix $\bA$, requiring only $k(m+n)$ floats instead of $mn$ numbers. Additionally, it facilitates the efficient computation of matrix-vector products, $\bb = \bA\bx$, through intermediate steps involving $\bc = \bF\bx$ and $\bb = \bD\bc$. This approximation method is also valuable for data interpretation and other computational tasks.

\item Although typically computationally demanding, a matrix decomposition can be leveraged to solve  new problems related to the original matrix in various contexts. For instance, once the factorization of $\bA$ is obtained, it can be reused to solve a set of linear systems: $\bb_1=\bA\bx_1, \bb_2=\bA\bx_2, \ldots, \bb_k=\bA\bx_k$.

\item More generally,  matrix decomposition aids in understanding the internal structure and logic of operations involving matrix multiplication. Each component of the decomposition contributes to a geometrical transformation, as discussed in Section~\ref{section:coordinate-transformation}.
\end{itemize}

Matrix decomposition algorithms can be classified into several categories. Below are six fundamental types:
\begin{enumerate}
\item Factorizations based on Gaussian elimination, such as LU decomposition and its positive definite counterpart, Cholesky decomposition.
\item Factorizations achieved by orthogonalizing either the columns or rows of a matrix, enabling effective data representation in an orthonormal basis.
\item Factorizations involving skeleton matrices, where a subset of columns or rows can sufficiently represent the entire dataset with minimal reconstruction error, while preserving sparsity and nonnegativity.
\item Reduction to Hessenberg, tridiagonal, or bidiagonal forms, allowing the properties of the matrix (such as rank and eigenvalues) to be explored within these reduced forms.
\item Factorizations derived from the computation of matrix eigenvalues.
\item Other specialized methods, which involve optimization techniques and high-level concepts. These may not fit neatly into the categories above but still represent important classes of decompositions.
\end{enumerate}

The visual representations of matrix decomposition in Figures~\ref{fig:matrix-decom-world-picture} and \ref{fig:matrix-decom-world-picture2} illustrate the connections between various decomposition methods based on their underlying relationships. These figures also distinguish the methods according to specific criteria or prerequisites. Further details about these visualizations are provided in the accompanying text.

\subsection*{Objectives of This Work}
This book aims to provide a comprehensive yet accessible introduction to the principles, methods, and applications of matrix decomposition. Designed for readers with a foundational understanding of linear algebra, it bridges the gap between theoretical rigor and practical applications. The goals include:

\begin{enumerate}
\item Presenting core concepts: Introducing the mathematical foundations of matrix decomposition, including LU, Cholesky, QR, and SVD, along with more advanced methods like eigenvalue and Jordan decompositions.
\item Highlighting applications: Demonstrating the relevance of these techniques in various domains, such as optimization, machine learning, and signal processing.
\item Providing rigorous proofs: Ensuring that the presented methods are mathematically rigorous, with proofs and derivations to deepen understanding.

\end{enumerate}

\paragraph{Notation and preliminaries.}
In the remainder of this section, we  introduce and review fundamental concepts from linear algebra. We will also introduce additional important notions as necessary to ensure clarity.  
Throughout the text, our focus will be on real matrices. Unless otherwise specified, the eigenvalues of the matrices under discussion are assumed to be real as well.

Scalars are represented in non-bold font, potentially with subscripts (e.g., $a$, $\alpha$, $\alpha_i$).
Vectors are denoted using \textbf{boldface} lowercase letters, possibly with subscripts (e.g., $\bmu$, $\bx$, $\bx_n$, $\bz$), while matrices are represented by \textbf{boldface} uppercase letters, possibly with subscripts (e.g., $\bA$, $\bL_j$). 
The $i$-th element of a vector $\bz$  is written as $z_i$ in non-bold font. 
For a matrix $\bA$, the value in the $i$-th row and $j$-th column  is represented as  $a_{ij}$. 
Additionally, we also adopt \textbf{Matlab-style notation}; the submatrix of  $\bA$ from the $i$-th  to $j$-th rows and $k$-th  to $m$-th columns is denoted by $\bA_{i:j,k:m}=\bA[i:j,k:m]$. 
When the indices are not continuous, with ordered subindex sets $\sI$ and $\sJ$, $\bA[\sI, \sJ]$ indicates the submatrix of $\bA$ obtained by extracting the rows and columns indexed by $\sI$ and $\sJ$, respectively. Similarly, $\bA[:, \sJ]$ denotes the submatrix of $\bA$ obtained by extracting the columns of $\bA$ indexed by $\sJ$.

\index{Matlab-style notation}

All vectors are represented in column format rather than row format. A row vector is indicated by the transpose of a column vector, e.g., denoted by $\ba^\top$.
A specific column vector with values is delineated by the semicolon  symbol $``;"$, for example, $\bx=[1;2;3]$ is a column vector in $\real^3$. 
Similarly, a row vector with specific values is separated by commas, e.g., $\by=[1,2,3]$ is a row vector with three values. 
Furthermore, a column vector can be expressed as the transpose of a row vector, for instance, $\by=[1,2,3]^\top$ is a column vector.

The transpose of a matrix $\bA$ is denoted by $\bA^\top$, and its inverse is denoted by $\bA^{-1}$. 
The $p \times p$ identity matrix is denoted by $\bI_p$. 
A vector or matrix consisting entirely of zeros is denoted by the \textbf{boldface} zero, $\bzero$, with its size inferred from context. 
Specifically, $\bzero_p$ signifies a vector of all zeros with $p$ entries, and $\bzero_{p\times q}$ represents a matrix of all zeros with dimensions $p\times q$.



\index{Eigenvalue}
\index{Eigenvector}
\begin{definition}[Eigenvalue and eigenvector]
Given any vector space $\sF$ and any linear map $\bA: \sF \rightarrow \sF$ (or simply a real matrix $\bA\in\real^{n\times n}$), a scalar $\lambda \in \sK$ is called a \textit{(right) eigenvalue, or proper value, or characteristic value} of $\bA$, if there exists a nonzero vector $\bu \in \sF$ such that
\begin{equation*}
\bA \bu = \lambda \bu.
\end{equation*}
And $\bu$ is called a \textit{(right) eigenvector} of $\bA$ associated with $\lambda$.

On the other hand, $\kappa$ is referred to as a \textit{left eigenvalue} if there exists a nonzero vector $\bv\in \sF$ such that 
$$
\bv^\top\bA = \kappa \bv^\top.
$$
And $\bv$ is called a \textit{(left) eigenvector} of $\bA$ associated with $\kappa$.

When it is clear from the context, we will simply use the term ``eigenvalue/eigenvector" instead of ``right eigenvalue/eigenvector."

\end{definition}
In simple terms, an eigenvector $\bu$ of a matrix $\bA$ represents a direction that remains unchanged when transformed into the coordinate system defined by the columns of $\bA$ (see Section~\ref{section:coordinate-transformation} for more details on coordinate transformations).
In fact, real-valued matrices can have complex eigenvalues. However, all the eigenvalues of symmetric matrices are real (see Theorem~\ref{theorem:spectral_theorem}).

\index{Spectrum}
\index{Spectral radius}
\begin{definition}[Spectrum and spectral radius]\label{definition:spectrum}
The set of all eigenvalues of $\bA$ is called the \textit{spectrum} of $\bA$ and is denoted by $\Lambda(\bA)$. The largest magnitude of the eigenvalues is known as the \textit{spectral radius} $\rho(\bA)$:
$$
\rho(\bA) = \mathop{\max}_{\lambda\in \Lambda(\bA)}  \abs{\lambda}.
$$
\end{definition}

Moreover, the pair $(\lambda, \bu)$ mentioned above is commonly referred to as an \textit{eigenpair}. Intuitively, the  above definitions  indicate that multiplying the matrix $\bA$ by the vector $\bu$ yields a new vector that lies in the same direction as $\bu$, but scaled by a factor $\lambda$. 
For any eigenvector $\bu$, it can be scaled by a scalar $s$ such that $s\bu$ remains an eigenvector of $\bA$.  
This is why we refer to $\bu$ as an eigenvector of $\bA$ associated with the eigenvalue $\lambda$. 
To avoid any ambiguity, we usually assume that the eigenvector is normalized to have unit length, and its first entry is positive, since both $\bu$ and $-\bu$ are valid eigenvectors.

In linear algebra, it is a fundamental property that every vector space has a basis. Any vector in the space can be expressed as a linear combination of the basis vectors. Using this concept, we define the \textit{span} and \textit{dimension} of a subspace in terms of its basis.
\index{Subspace}
\begin{definition}[Subspace]
A nonempty subset $\mathcal{V}$ of $\real^n$ is called a \textit{subspace} if for all $\ba,\bb\in \mathcal{V}$ and all $x,y\in \real$, the linear combination  $x\ba+y\ba$ also belongs to $\mathcalV$.
\end{definition}

\index{Span}
\begin{definition}[Span]
If every vector $\bv$ in a subspace $\mathcal{V}$ can be expressed as a linear combination of the vectors $\{\ba_1, \ba_2, \ldots,$ $\ba_m\}$, then the set $\{\ba_1, \ba_2, \ldots, \ba_m\}$ is said to span $\mathcal{V}$.
\end{definition}

\index{Linearly independent}
In linear algebra, the concept of linear independence is fundamental when studying sets of vectors. Two equivalent definitions are provided below.
\begin{definition}[Linearly independent]
A set of vectors $\{\ba_1, \ba_2, \ldots, \ba_m\}$ is called \textit{linearly independent} if the equation $x_1\ba_1+x_2\ba_2+\ldots+x_m\ba_m=\bzero$ has only the trivial solution where all scalars $x_i=0$. 
An equivalent definition is that $\ba_1\neq \bzero$, and for every $k>1$, the vector $\ba_k$ does not belong to the span of the preceding vectors $\{\ba_1, \ba_2, \ldots, \ba_{k-1}\}$.
\end{definition}

\begin{exercise}
Show that the columns of the $m \times n$ matrix $\bA$ are linearly independent if and only if $f(\bx) = \bA\bx$ is a one-to-one function.
\end{exercise}

\index{Basis}
\index{Dimension}
\begin{definition}[Basis and dimension]
A set of vectors $\{\ba_1, \ba_2, \ldots, \ba_m\}$ is called a \textit{basis} of a subspace $\mathcal{V}$ if they are linearly independent and span $\mathcal{V}$. All bases of a given subspace contain the same number of vectors, and this common number of vectors in any basis is called the \textit{dimension} of the subspace $\mathcal{V}$. 

By convention, the subspace containing only the zero vector, $\{\bzero\}$, has dimension zero. Furthermore, every nonzero subspace has a basis consisting of mutually \textit{orthogonal} vectors (i.e., the vectors in the basis are mutually perpendicular).
\end{definition}

\index{Column space}
\index{Range}
\begin{definition}[Column space (range)]
For  an $m \times n$ real matrix $\bA$, the \textit{column space (or range)} of $\bA$ is defined as the set of all linear combinations of its columns:
\begin{equation*}
\cspace (\bA) = \{ \by\in \real^m: \exists\, \bx \in \real^n, \, \by = \bA \bx \}.
\end{equation*}
Similarly, the \textit{row space} of $\bA$ is the set of all linear combinations of its rows, which is equal to the column space of the transpose $\bA^\top$:
\begin{equation*}
\cspace (\bA^\top) = \{ \bx\in \real^n: \exists\, \by \in \real^m, \, \bx = \bA^\top \by \}.
\end{equation*}
\end{definition}

\index{Null space (nullspace, kernel)}
\begin{definition}[Null space (nullspace, kernel)]
For an $m \times n$ real matrix $\bA$, the \textit{null space} (also called the \textit{kernel or nullspace}) of  $\bA$ is the set of all vectors in $\real^n$ that satisfy:
\begin{equation*}
\nspace (\bA) = \{\by \in \real^n:  \, \bA \by = \bzero \}.
\end{equation*}
Similarly, the null space of $\bA^\top$ (i.e., the \textit{left null space} of $\bA$) is defined as 	
\begin{equation*}
\nspace (\bA^\top) = \{\bx \in \real^m:  \, \bA^\top \bx = \bzero \}.
\end{equation*}
\end{definition}

Both the column space of $\bA$ and the null space of $\bA^\top$ are subspaces of $\real^n$. Moreover, every vector in $\nspace(\bA^\top)$ is orthogonal to $\cspace(\bA)$, and vice versa; similarly, every vector in $\nspace(\bA)$ is also orthogonal to $\cspace(\bA^\top)$, and vice versa.

\index{Rank}
\begin{definition}[Rank]
The $rank$ of a matrix $\bA\in \real^{m\times n}$ is the dimension of the column space of $\bA$. That is, the rank of $\bA$ is equal to the maximum number of linearly independent columns of $\bA$, and is also the maximum number of linearly independent rows of $\bA$. The rank of  $\bA$  is equal to the rank of its transpose, $\bA^\top$. Additionally, $\bA$ is said to have full rank if its rank equals $\min\{m,n\}$. 
Specifically, given a vector $\bu \in \real^m$ and a vector $\bv \in \real^n$, then the $m\times n$ matrix $\bu\bv^\top$ is of rank 1. In short, the rank of a matrix is equal to:
\begin{itemize}
\item the number of linearly independent columns;
\item the number of linearly independent rows;
\item and remarkably, these two quantities are always equal (see Theorem~\ref{theorem:equal-dimension-rank}).
\end{itemize}
\end{definition}

\begin{exercise}[Rank of matrix addition]\label{exercise:rk_ad}
Let $\bA$ and $\bB$ be two matrices with ranks $a$ and $b$, respectively. Show that the rank of $\bA + \bB$ is at most $a + b$ and at least $\abs{a-b}$.
\end{exercise}

\begin{exercise}[Rank of matrix multiplication, a.k.a., Sylvester's inequality]\label{exercise:rk_prod}
Let $\bA\in\real^{m\times n}$ and $\bB\in\real^{n\times p}$ be two matrices with ranks $a$ and $b$, respectively. Show that the rank of $\bA \bB$ is at most $\min\{a,b\}$ and at least $a+b-n$.
\end{exercise}

\index{Orthogonal complement}
\begin{definition}[Orthogonal complement in general]
The orthogonal complement $\mathcalV^\perp$ of a subspace $\mathcalV$ consists of all vectors that are perpendicular to $\mathcalV$. Formally,
$$
\mathcalV^\perp = \{\bv : \bv^\top\bu=0, \ \forall\, \bu\in \mathcalV  \}.
$$
The two subspaces are \textit{disjoint} (i.e., their intersection is $\{\bzero\}$) and together span the entire space. The dimensions of $\mathcalV$ and $\mathcalV^\perp$ add up to the dimension of the full space. 
Furthermore, taking the orthogonal complement twice returns the original subspace:  $(\mathcalV^\perp)^\perp=\mathcalV$.
\end{definition}

For example, we can explicitly define the orthogonal complement of the column space as follows:
\begin{definition}[Orthogonal complement of column space]
For an $m \times n$ real matrix $\bA$, the orthogonal complement of its column space $\mathcalC(\bA)$, denoted by $\cspace^{\bot}(\bA)$, is the subspace:
\begin{equation*}
\begin{aligned}
\cspace^{\bot}(\bA) &= \{\by\in \real^m: \, \by^\top \bA \bx=\bzero, \ \forall\, \bx \in \real^n \} \\
&=\{\by\in \real^m: \, \by^\top \bv = \bzero, \ \forall\, \bv \in \cspace(\bA) \}.
\end{aligned}
\end{equation*}
\end{definition}

We now introduce the \textit{four fundamental subspaces} associated with any matrix $\bA\in \real^{m\times n}$ of rank $r$, as outlined in Theorem~\ref{theorem:fundamental-linear-algebra}. 
To establish this fundamental theorem of linear algebra, we first need to verify a key result: the equality of the row rank and column rank of a matrix. 

\index{Rank}
\begin{theorem}[Row rank equals column rank]\label{theorem:equal-dimension-rank}
The dimension of the column space of a matrix $\bA\in \real^{m\times n}$ is equal to the dimension of its
row space. In other words, the row rank and the column rank of a matrix $\bA$ are equal.
\end{theorem}

\begin{proof}[{of Theorem~\ref{theorem:equal-dimension-rank}}]
We begin by observing that the null space of $\bA$ is orthogonal  to the row space of $\bA$: $\nspace(\bA) \bot \cspace(\bA^\top)$ (where the row space of $\bA$ corresponds to the column space of $\bA^\top$). That is, vectors in the null space of $\bA$ are orthogonal to vectors in the row space of $\bA$. To see this, suppose $\bA$ has rows $\{\ba_1^\top, \ba_2^\top, \ldots, \ba_m^\top\}$ and $\bA=[\ba_1^\top; \ba_2^\top; \ldots; \ba_m^\top]$ is the row partition. For any vector $\bx\in \nspace(\bA)$, we have $\bA\bx = \bzero$, or equivalently, $[\ba_1^\top\bx; \ba_2^\top\bx; \ldots; \ba_m^\top\bx]=\bzero$. 
Since the row space of $\bA$ is spanned by $\{\ba_1^\top, \ba_2^\top, \ldots, \ba_m^\top\}$, it follows that $\bx$ is perpendicular to all vectors in $\cspace(\bA^\top)$, which means $\nspace(\bA) \bot \cspace(\bA^\top)$.

Next, suppose  the dimension of the row space of $\bA$ is $r$. \textcolor{mylightbluetext}{Let $\{\br_1, \br_2, \ldots, \br_r\}$ be a set of vectors in $\real^n$ and form a basis for the row space}. Then the $r$ vectors $\{\bA\br_1, \bA\br_2, \ldots, \bA\br_r\}$ lie in the column space of $\bA$. We claim that these vectors are linearly independent.
To verify this, suppose there exists a linear combination of the $r$ vectors: $x_1\bA\br_1 + x_2\bA\br_2+ \ldots+ x_r\bA\br_r=\bzero$, that is, $\bA(x_1\br_1 + x_2\br_2+ \ldots+ x_r\br_r)=\bzero$, and the vector $\bv=x_1\br_1 + x_2\br_2+ \ldots+ x_r\br_r$ belongs to the null space of $\bA$. But since $\{\br_1, \br_2, \ldots, \br_r\}$ is a basis for the row space of $\bA$, $\bv$ must also lie  in the row space of $\bA$. 
We have shown that vectors from the null space of $\bA$ is perpendicular to vectors from the row space of $\bA$; thus, it holds that $\bv^\top\bv=0$, which implies that  $x_1=x_2=\ldots=x_r=0$. Hence, \textcolor{mylightbluetext}{$\bA\br_1, \bA\br_2, \ldots, \bA\br_r$ lie in the column space of $\bA$, and they are linearly independent}.
Since these $r$ linearly independent vectors are in the column space of $\bA$, the column space must have dimension at least $r$.  This proves that \textbf{row rank of $\bA\leq $ column rank of $\bA$}. 

Applying the same reasoning to $\bA^\top$, we conclude that \textbf{column rank of $\bA\leq $ row rank of $\bA$}. 
Combining these results, we obtain the equality of the row rank and column rank of $\bA$. This completes the proof.
\end{proof}

Additional insights from this proof reveal that if $\{\br_1, \br_2, \ldots, \br_r\}$ forms a basis for the row space of $\bA\in\real^{m\times n}$, then \textcolor{black}{$\{\bA\br_1, \bA\br_2, \ldots, \bA\br_r\}$} constitutes a basis for the column space of $\bA$.
This result is formalized in the following lemma:

\begin{lemma}[Column basis from row basis]\label{lemma:column-basis-from-row-basis}
For any matrix $\bA\in \real^{m\times n}$, if $\{\br_1, \br_2, \ldots, \br_r\}$ is a set of vectors in $\real^n$ that forms a basis for the row space of $\bA$, then $\{\bA\br_1, \bA\br_2, \ldots, \bA\br_r\}$ forms a basis for the column space of $\bA$.
\end{lemma}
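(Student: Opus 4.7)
The plan is to lift this lemma directly out of the machinery already developed in the proof of Theorem~\ref{lemma:equal-dimension-rank}, since the proof of that theorem essentially constructed the desired basis along the way. Concretely, I would observe that three things need to be established: (i) each $\bA\br_i$ lies in $\cspace(\bA)$; (ii) the vectors $\bA\br_1, \ldots, \bA\br_r$ are linearly independent in $\real^m$; and (iii) they span $\cspace(\bA)$.

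First, claim (i) is immediate because $\bA\br_i$ is by definition a linear combination of the columns of $\bA$ with coefficients given by the entries of $\br_i$, so $\bA\br_i \in \cspace(\bA)$ for every $i = 1, 2, \ldots, r$. Next, for (ii), I would reproduce the short independence argument from the proof of Theorem~\ref{lemma:equal-dimension-rank}: suppose $x_1\bA\br_1 + \cdots + x_r\bA\br_r = \bzero$, so that $\bv := x_1\br_1 + \cdots + x_r \br_r$ satisfies $\bA\bv = \bzero$, placing $\bv \in \nspace(\bA)$. Since $\br_1, \ldots, \br_r$ is a basis for the row space, $\bv \in \cspace(\bA^\top)$ as well. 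Using the orthogonality $\nspace(\bA) \perp \cspace(\bA^\top)$ already noted in that proof, $\bv^\top \bv = 0$, hence $\bv = \bzero$; independence of the $\br_i$ then forces $x_1 = \cdots = x_r = 0$.

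For (iii), the cleanest route is not to exhibit the spanning explicitly, but to invoke a dimension count: Theorem~\ref{lemma:equal-dimension-rank} gives $\dim \cspace(\bA) = \dim \cspace(\bA^\top) = r$, and $r$ linearly independent vectors in an $r$-dimensional subspace necessarily form a basis. Combining (i)--(iii) yields the claim.

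There is really no main obstacle here, as the entire argument is packaged inside the preceding theorem's proof; the only expository care needed is to (a) keep the linear-independence argument self-contained by citing the orthogonality $\nspace(\bA) \perp \cspace(\bA^\top)$ explicitly, and (b) make clear that the spanning assertion is obtained for free from Theorem~\ref{lemma:equal-dimension-rank} rather than requiring a separate construction. This keeps the lemma a tight corollary rather than a repeat of the earlier derivation.
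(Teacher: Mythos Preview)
Your proposal is correct and matches the paper's approach exactly: the paper states the lemma as a direct byproduct of the proof of Theorem~\ref{lemma:equal-dimension-rank}, with the linear independence of $\bA\br_1,\ldots,\bA\br_r$ established there via the orthogonality $\nspace(\bA)\perp\cspace(\bA^\top)$, and the spanning obtained from the equality of row and column rank. There is nothing to add or change.
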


For any matrix $\bA\in \real^{m\times n}$, it can be easily verified that any vector in the row space of $\bA$ is orthogonal to any vector in the null space of $\bA$. Specifically, if $\bx_n \in \nspace(\bA)$, then $\bA\bx_n = \bzero$, which implies that $\bx_n$ is perpendicular to every row of $\bA$, thus supporting this assertion.

Similarly, any vector in the column space of $\bA$ is orthogonal to any vector in the null space of $\bA^\top$. Moreover, the column space of $\bA$ together with the null space of $\bA^\top$ span the entire space $\real^m$. This observation is a key part of the  fundamental theorem of linear algebra.

The fundamental theorem consists of two essential components: the dimensions of the subspaces and the orthogonality relationships between pairs of subspaces. 
The orthogonality relationships have already  been demonstrated above.
Additionally, when the row space has dimension $r$, the null space has dimension $n-r$. These relationships are rigorously established in the following theorem.

\index{Fundamental spaces}
\index{Fundamental theorem of linear algebra}
\begin{figure}[h!]
	\centering
	\includegraphics[width=0.9\textwidth]{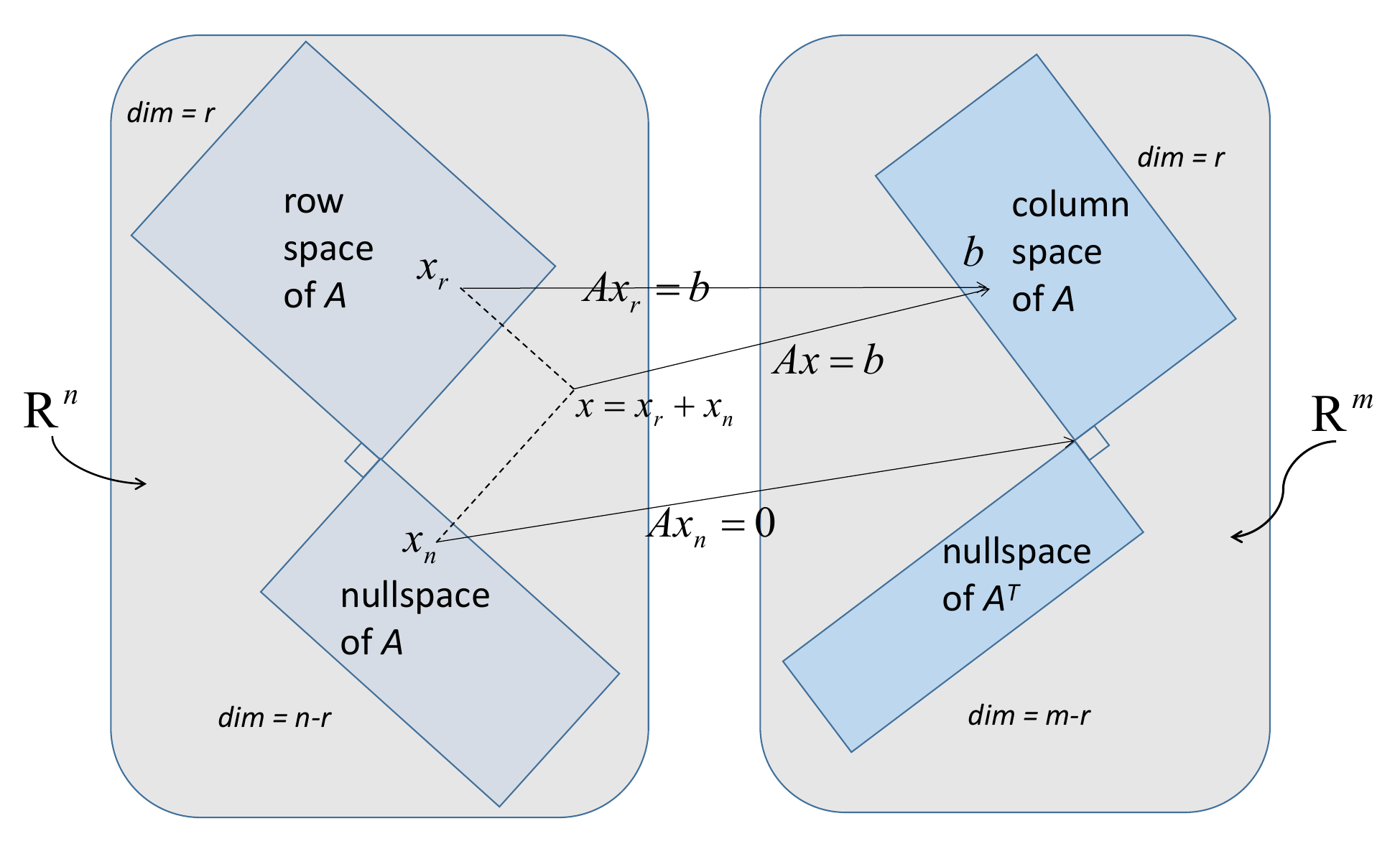}
	\caption{Two pairs of orthogonal subspaces in $\real^n$ and $\real^m$. $\dim(\cspace(\bA^\top)) + \dim(\nspace(\bA))=n$ and $\dim(\nspace(\bA^\top)) + \dim(\cspace(\bA))=m$. The null space component maps to zero as $\bA\bx_n = \bzero \in \real^m$. The row space component maps to the column space as $\bA\bx_r = \bA(\bx_r+\bx_n)=\bb \in \cspace(\bA)$.}
	\label{fig:lafundamental}
\end{figure}
\begin{theorem}[The fundamental theorem of linear algebra]\label{theorem:fundamental-linear-algebra}

\textit{Orthogonal Complement} and \textit{Rank-Nullity Theorem}: for any matrix $\bA\in \real^{m\times n}$, we have 
\begin{itemize}
\item The null space $\nspace(\bA)$ is orthogonal complement to the row space $\cspace(\bA^\top)$ in $\real^n$: $\dim(\nspace(\bA))+\dim(\cspace(\bA^\top))=n$;
	
\item The left null space $\nspace(\bA^\top)$ is orthogonal complement to the column space $\cspace(\bA)$ in $\real^m$: $\dim(\nspace(\bA^\top))+\dim(\cspace(\bA))=m$;
	
\item For a rank-$r$ matrix $\bA$, $\dim(\cspace(\bA^\top)) = \dim(\cspace(\bA)) = r$, that is, $\dim(\nspace(\bA)) = n-r$ and $\dim(\nspace(\bA^\top))=m-r$.
\end{itemize}
\end{theorem}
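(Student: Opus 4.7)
The plan is to leverage what has already been established in the proof of Theorem~\ref{lemma:equal-dimension-rank} together with Lemma~\ref{lemma:column-basis-from-row-basis}. The orthogonality assertions $\nspace(\bA)\perp\cspace(\bA^\top)$ and $\nspace(\bA^\top)\perp\cspace(\bA)$ are essentially free: the first was derived inside the proof of Theorem~\ref{lemma:equal-dimension-rank} by writing $\bA$ as a stack of rows $\ba_i^\top$ and observing that $\bA\bx=\bzero$ means $\bx$ is orthogonal to every $\ba_i$, and the second is obtained by applying the very same argument to $\bA^\top$. So I would open the proof by recording these two orthogonality facts, and then focus the remainder of the argument on the dimension identities.

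For the first dimension identity $\dim(\cspace(\bA^\top))+\dim(\nspace(\bA))=n$, I would exhibit an orthogonal direct sum decomposition $\real^n = \cspace(\bA^\top) \oplus \nspace(\bA)$. Pick a basis $\{\br_1,\ldots,\br_r\}$ of the row space. Given any $\bx\in\real^n$, the vector $\bA\bx$ lies in $\cspace(\bA)$, and by Lemma~\ref{lemma:column-basis-from-row-basis} the set $\{\bA\br_1,\ldots,\bA\br_r\}$ is a basis of that column space, so $\bA\bx=\sum_{i=1}^r c_i\,\bA\br_i=\bA\bx_r$ for some scalars $c_i$ and the vector $\bx_r:=\sum_i c_i\br_i\in\cspace(\bA^\top)$. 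Setting $\bx_n:=\bx-\bx_r$ gives $\bA\bx_n=\bzero$, so $\bx_n\in\nspace(\bA)$, and hence $\bx=\bx_r+\bx_n$. Orthogonality forces $\cspace(\bA^\top)\cap\nspace(\bA)=\{\bzero\}$ (a vector in both is orthogonal to itself), so the sum is direct and the dimensions add to $n$.

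For the second identity $\dim(\cspace(\bA))+\dim(\nspace(\bA^\top))=m$, I would simply reapply the previous argument verbatim with $\bA^\top\in\real^{n\times m}$ in place of $\bA$. The third bullet is then an immediate corollary: Theorem~\ref{lemma:equal-dimension-rank} gives $\dim(\cspace(\bA))=\dim(\cspace(\bA^\top))=r$, and substituting into the two identities yields $\dim(\nspace(\bA))=n-r$ and $\dim(\nspace(\bA^\top))=m-r$.

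The only step that requires genuine thought is showing surjectivity of the decomposition $\bx=\bx_r+\bx_n$; a naive attempt would extend a basis of the row space by arbitrary complementary vectors and then try to prove after the fact that those complements can be chosen in the null space, which is clumsier. The clean trick is to exploit Lemma~\ref{lemma:column-basis-from-row-basis} so that $\bx_r$ is constructed \emph{explicitly} from the image $\bA\bx$, which immediately forces the residue $\bx-\bx_r$ into the null space. Everything else is routine bookkeeping on orthogonal direct sums.
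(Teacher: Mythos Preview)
Your proposal is correct and follows essentially the same approach as the paper: both arguments use Lemma~\ref{lemma:column-basis-from-row-basis} to write $\bA\bx=\sum_i c_i\,\bA\br_i$ for a row-space basis $\{\br_i\}$, deduce that $\bx-\sum_i c_i\br_i$ lies in $\nspace(\bA)$, and then invoke the orthogonality $\nspace(\bA)\perp\cspace(\bA^\top)$ to conclude the sum is direct (the paper phrases this as building a basis $\{\br_1,\ldots,\br_r,\bn_1,\ldots,\bn_k\}$ of $\real^n$, you phrase it as $\real^n=\cspace(\bA^\top)\oplus\nspace(\bA)$, but the content is identical).
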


\begin{proof}[of Theorem~\ref{theorem:fundamental-linear-algebra}]
From the proof of Theorem~\ref{theorem:equal-dimension-rank}, let $\{\br_1, \br_2, \ldots, \br_r\}$ be a set of vectors in $\real^n$ that forms a basis for the row space; then \textcolor{black}{$\{\bA\br_1, \bA\br_2, \ldots, \bA\br_r\}$ is a basis for the column space of $\bA$}. Let $\bn_1, \bn_2, \ldots, \bn_k \in \real^n$ form a basis for the null space of $\bA$. Following again from the proof of Theorem~\ref{theorem:equal-dimension-rank}, $\nspace(\bA) \bot \cspace(\bA^\top)$, thus, $\br_1, \br_2, \ldots, \br_r$ are perpendicular to $\bn_1, \bn_2, \ldots, \bn_k$. Then, $\{\br_1, \br_2, \ldots, \br_r, \bn_1, \bn_2, \ldots, \bn_k\}$ is linearly independent in $\real^n$.

For any vector $\bx\in \real^n $, $\bA\bx$ lies in the column space of $\bA$, so it can be written as a linear combination of $\bA\br_1, \bA\br_2, \ldots, \bA\br_r$: $\bA\bx = \sum_{i=1}^{r}a_i\bA\br_i$. This implies that $\bA(\bx-\sum_{i=1}^{r}a_i\br_i) = \bzero$, and $\bx-\sum_{i=1}^{r}a_i\br_i$ is thus in $\nspace(\bA)$. Since $\{\bn_1, \bn_2, \ldots, \bn_k\}$ is a basis for the null space of $\bA$, $\bx-\sum_{i=1}^{r}a_i\br_i$ can be represented as a linear combination of $\bn_1, \bn_2, \ldots, \bn_k$: $\bx-\sum_{i=1}^{r}a_i\br_i = \sum_{j=1}^{k}b_j \bn_j$, i.e., $\bx=\sum_{i=1}^{r}a_i\br_i + \sum_{j=1}^{k}b_j \bn_j$. That is, any vector $\bx\in \real^n$ can be represented by $\{\br_1, \br_2, \ldots, \br_r, \bn_1, \bn_2, \ldots, \bn_k\}$, and the set forms a basis for $\real^n$. Thus, the dimensions satisfy: $r+k=n$, i.e., $\dim(\nspace(\bA))+\dim(\cspace(\bA^\top))=n$. Similarly, we can prove that $\dim(\nspace(\bA^\top))+\dim(\cspace(\bA))=m$.
\end{proof}

Figure~\ref{fig:lafundamental} illustrates  two pairs of such orthogonal subspaces and demonstrates  how $\bA$ maps $\bx$ into the column space. The dimensions of the row space of $\bA$ and the null space of $\bA$ sum to $n$. And the dimensions of the column space of $\bA$ and the null space of $\bA^\top$ add up to $m$. The null space component is mapped to zero, as $\bA\bx_{\bn} = \bzero \in \real^m$, which is the intersection of the column space of $\bA$ and the null space of $\bA^\top$. 
Conversely, the row space component is mapped  to the column space, as $\bA\bx_{\br} = \bA(\bx_{\br} + \bx_{\bn})=\bb\in \real^m$.

\index{Orthogonal matrix}
\begin{definition}[Orthogonal matrix]
A real square matrix $\bQ$ is called an \textit{orthogonal matrix} if its inverse equals its transpose, that is, $\bQ^{-1}=\bQ^\top$ and $\bQ\bQ^\top = \bQ^\top\bQ = \bI$. Equivalently, suppose $\bQ=[\bq_1, \bq_2, \ldots, \bq_n]$, where $\bq_i \in \real^n$ for all $i \in \{1, 2, \ldots, n\}$. Then, $\bq_i^\top \bq_j = \delta(i,j)$, where $\delta(i,j)$ is the Kronecker delta function. If $\bQ$ contains only $\gamma$ of these columns with $\gamma<n$, the relation $\bQ^\top\bQ = \bI_\gamma$ stills holds, where $\bI_\gamma$ is the $\gamma\times \gamma$ identity matrix.
However, in this case,  the equation $\bQ\bQ^\top=\bI$ no longer holds; and $\bQ$ is known as a \textit{semi-orthogonal matrix}.
An orthogonal matrix also preserves the length of any vector $\bx$, i.e., $\norm{\bQ\bx} = \norm{\bx}$ (see Definition~\ref{definition:vec_l2_norm}).
\end{definition}

\index{Permutation matrix}
\begin{definition}[Permutation matrix]\label{definition:permutation-matrix}
A permutation matrix $\bP$ is a square binary matrix with exactly one entry of 1 in each row and each column; and all other entries are 0.
\paragraph{Row perspective.} A permutation matrix $\bP$ can be viewed as having the rows of the identity matrix $\bI$ arranged in a specific order. 
This order determines the sequence of row permutations. To permute the rows of a matrix $\bA$, multiply $\bA$ on the left by  $\bP$, yielding  $\bP\bA$.

\paragraph{Column perspective.} 
Alternatively, $\bP$ can be viewed as having the columns of the identity matrix $\bI$ rearranged. This order determines the sequence of column permutations. To permute the columns of $\bA$, multiply $\bA$ on the right by $\bP$, yielding $\bA\bP$.
\end{definition}

The permutation matrix $\bP$ can also be efficiently represented using a vector  $\sJ \in \integer_{++}^n$ of indices, such that $\bP = \bI[:, \sJ]$, where $\bI$ is the $n\times n$ identity matrix. Notably, the sum of the elements in $\sJ$ equals $1+2+\ldots+n= \frac{n^2+n}{2}$.

\begin{example}[Permutation]
Let
$\bA=\begin{bmatrixscript}
	1 & 2&3\\
	4&5&6\\
	7&8&9
\end{bmatrixscript}
$
and 
$\bP=\begin{bmatrixscript}
0 &1& 0 \\
0 &0 &1\\
1 &0 &0 
\end{bmatrixscript}.
$
The row permutation and the column permutation are given by 
$
\bP\bA = \begin{bmatrixscript}
	4&5&6\\
	7&8&9\\
	1 & 2&3\\
\end{bmatrixscript}
$
and
$
\bA\bP = \begin{bmatrixscript}
	3 & 1 & 2 \\
	6 & 4 & 5\\
	9 & 7 & 8
\end{bmatrixscript},
$
where the order of the rows of $\bA$ appearing in $\bP\bA$ matches the order of the rows of $\bI$ in $\bP$, 
and the order of the columns of $\bA$ appearing in $\bA\bP$ matches the order of the columns of $\bI$ in $\bP$.
\end{example}

\index{Determinant}
Geometrically, the determinant of an $n \times n$ matrix  $\bA$, denoted by $\det(\bA)$, is the (signed) volume of the $n$-dimensional parallelepiped defined by its row (or column) vectors.
For example, an orthogonal matrix always forms a unit hypercube, and so the absolute value of its determinant is always 1.
The above definition is self-consistent because 
the volume defined by the row vectors and the volume defined by the column vectors of a square matrix can be mathematically shown to be the same.
We can recursively define the determinant of a matrix as follows:
\begin{definition}[Determinant: Laplace expansion by minors]\label{definition:determinant}
Let $\bA\in\real^{n\times n}$ be any square matrix, and let $\bA_{ij}\in\real^{(n-1)\times (n-1)}$ denote the submatrix of $\bA$ obtained by deleting the $i$-th row and $j$-th column. 
The \textit{determinant} of $\bA$ can be computed recursively using the following equations:
\begin{equation}\label{equation:def_det}
	\det(\bA)=\sum_{k=1}^{n} (-1)^{i+k} a_{ik}\det(\bA_{ik})=\sum_{k=1}^{n}(-1)^{k+j} a_{kj}\det(\bA_{kj}),
\end{equation}
where the first equation is the \textit{Laplace expansion by minors along row $i$}, and the second equation is the \textit{Laplace expansion by minors along column $j$}.
Equivalently, given a cardinality $r$, and consider an index set $\sJ\subseteq\{1,2,\ldots,n\}$ with cardinality $r$ ($\abs{\sJ}=r$) and its complementary set $\comple{\sJ}=\{1,2,\ldots,n\}\backslash \sJ$. Then we have:
$$
\begin{aligned}
	\det(\bA) 
	=\sum_{\sI} (-1)^{\gamma} \det(\bA[\sI,\sJ])\det(\bA[\comple{\sI}, \comple{\sJ}])
	=\sum_{\sI} (-1)^{\gamma} \det(\bA[\sJ,\sI])\det(\bA[\comple{\sJ}, \comple{\sI}]),
\end{aligned}
$$
where $\gamma=\sum_{i\in \sI} i +\sum_{j\in \sJ}j$, and the sum is taken over all the index sets $\sI\subseteq\{1,2,\ldots,n\}$ with cardinality $r$.
When $r=1$, this reduces to \eqref{equation:def_det}.
\end{definition}

\begin{remark}[Determinant: alternating sums and permutations]\label{remark:det_altsumper}
Let the function $p:\{1,2,\ldots,n\}\rightarrow \{1,2,\ldots,n\}$ be a one-to-one function  of permutations, i.e., $p(i)=i$ in the identity case.
Then, there are $n!$ distinct permutations of the set $\{1,2,\ldots,n\}$. For a given permutation, let $\text{sgn}(p)=1$ if the minimum number of transpositions to achieve this permutation is even; and $\text{sgn}(p)=-1$ otherwise.
Then, the determinant can be  equivalently defined as
$
	\det(\bA) = \sum_{p} \left( \text{sgn}(p) \prod_{i=1}^{n} a_{i p(i)} \right).
$
\end{remark}

A quantity closely related to the determinant is the adjugate of a matrix, which we now define.
\index{Adjugate}
\begin{definition}[Adjugate]\label{definition:adjugate}
Let $\bA\in\real^{n\times n}$ be any square matrix. Then, the \textit{adjugate} of $\bA$, denoted $\adjugate(\bA)$, is an $n\times n$ matrix whose $(i,j)$-th element is given by 
\begin{equation}\label{equation:adjug1}
\adjugate(\bA)_{ij} = (-1)^{i+j} \det\left(\bA\big[\comple{\{j\}}, \comple{\{i\}}\big]\right),
\end{equation}
where $\comple{\{i\}}$ is the complementary set of $\{1,2,\ldots,n\}$: $\comple{\{i\}}=\{1,2,\ldots,n\}\backslash i$.
Comparing this with the definition of determinants, we have
\begin{equation}\label{equation:adjug2}
\adjugate(\bA)\bA = \bA \adjugate(\bA) = \det(\bA)\bI.
\end{equation}
This shows that  $\adjugate(\bA)$ is nonsingular (resp., upper triangular, diagonal) if $\bA$ is nonsingular (resp., upper triangular, diagonal): 
\begin{equation}\label{equation:adjug3}
\adjugate(\bA)=\det(\bA)\bA^{-1}.
\end{equation}
If $\bA$ is singular, then every column of $\bA$ lies in the null space of $\adjugate(\bA)$: the dimension of the null space  of $\adjugate(\bA)$ is at least the rank of $\bA$.
\end{definition}
For example, $\adjugate\big(\scriptsize\begin{bmatrix}
a& b\\
c& d
\end{bmatrix}\normalsize\big)
=
\scriptsize
\begin{bmatrix}
d & -b \\
-c & a
\end{bmatrix}
$.
Additional properties of the adjugate of a matrix are discussed in Problem~\ref{prob:pro_adjug}, where we introduce the \textit{interpolative decomposition} of a matrix.

\index{Nonsingular matrix}
From an introductory linear algebra course, we observe the following equivalences regarding nonsingular matrices.
\begin{remark}[List of equivalence of nonsingularity for a matrix]
For a square matrix $\bA\in \real^{n\times n}$, the following claims are equivalent:
\begin{itemize}
\item $\bA$ is nonsingular;
\item $\bA$ is invertible, i.e., $\bA^{-1}$ exists;
\item $\bA\bx=\bb$ has a unique solution $\bx = \bA^{-1}\bb$;
\item $\bA\bx = \bzero$ has a unique, trivial solution: $\bx=\bzero$;
\item Columns of $\bA$ are linearly independent;
\item Rows of $\bA$ are linearly independent;
\item $\det(\bA) \neq 0$; 
\item $\dim(\nspace(\bA))=0$;
\item $\nspace(\bA) = \{\bzero\}$, i.e., the null space is trivial;
\item $\cspace(\bA)=\cspace(\bA^\top) = \real^n$, i.e., the column space or row space span the whole $\real^n$;
\item $\bA$ has full rank $r=n$;
\item The reduced row echelon form is $\bR=\bI$;
\item $\bA^\top\bA$ is symmetric positive definite;
\item $\bA$ has $n$ nonzero (positive) singular values;
\item All eigenvalues of $\bA$ are nonzero.
\end{itemize}
\end{remark}

\index{Singular matrix}
It is important to keep these equivalences in mind, as misunderstanding them can easily lead to confusion. On the other hand, the following remark outlines the corresponding set of equivalent conditions for singular matrices.
\begin{remark}[List of equivalence of singularity for a matrix]\label{remark:list_sing_equiv}
For a square matrix $\bA\in \real^{n\times n}$ with an eigenpair $(\lambda, \bu)$, the following claims are equivalent:
\begin{itemize}
\item $(\bA-\lambda\bI)$ is singular;
\item $(\bA-\lambda\bI)$ is not invertible;
\item $(\bA-\lambda\bI)\bx = \bzero$ has nonzero $\bx\neq \bzero$ solutions, and $\bx=\bu$ is one of such solutions;
\item $(\bA-\lambda\bI)$ has linearly dependent columns;
\item $\det(\bA-\lambda\bI) = 0$; 
\item $\dim(\nspace(\bA-\lambda\bI))>0$;
\item Null space of $(\bA-\lambda\bI)$ is nontrivial;
\item Columns of $(\bA-\lambda\bI)$ are linearly dependent;
\item Rows of $(\bA-\lambda\bI)$ are linearly dependent;
\item $(\bA-\lambda\bI)$ has rank $r<n$;
\item Dimension of column space = dimension of row space = $r<n$;
\item $(\bA-\lambda\bI)^\top(\bA-\lambda\bI)$ is symmetric semidefinite;
\item $(\bA-\lambda\bI)$ has $r<n$ nonzero (positive) singular values;
\item Zero is an eigenvalue of $(\bA-\lambda\bI)$.
\end{itemize}
\end{remark}

Norms provide a measure of the magnitude of a vector or matrix, which is useful in many applications, such as determining the length of a vector in Euclidean space or the size of a matrix in a multidimensional setting. 
Additionally, norms enable us to define distances between vectors or matrices. The distance between two vectors $\bu$ and $\bv$ can be computed using the norm of their difference $\norm{\bu-\bv}$. 
This is critical for tasks involving proximity measures, such as clustering algorithms in machine learning.

For a vector $\bx\in\real^n$, we define the vector $\ell_2$ norm as follows.
\index{Vector norm}
\index{Matrix norm}
\begin{definition}[Vector $\ell_2$ norm]\label{definition:vec_l2_norm}
	For a vector $\bx\in\real^n$, the \textit{$\ell_2$ vector norm} is defined as $\normtwo{\bx} = \sqrt{x_1^2+x_2^2+\ldots+x_n^2}$.
\end{definition}

For a matrix $\bA\in\real^{m\times n}$, we  define the (matrix) Frobenius norm as follows.
\begin{definition}[Matrix Frobenius norm\index{Frobenius norm}]\label{definition:frobernius-in-svd}
	The \textit{Frobenius norm} of a matrix $\bA\in \real^{m\times n}$ is defined as 
	$$
	\normf{\bA} = \sqrt{\sum_{i=1,j=1}^{m,n} (a_{ij})^2}=\sqrt{\trace(\bA\bA^\top)}=\sqrt{\trace(\bA^\top\bA)} = \sqrt{\sigma_1^2+\sigma_2^2+\ldots+\sigma_r^2}, 
	$$
where $\sigma_1, \sigma_2, \ldots, \sigma_r$ are the nonzero singular values of $\bA$  (see Section~\ref{section:SVD}).
The squared Frobenius norm of a matrix is often referred to as the \textit{energy} of the matrix in the machine learning community.
\end{definition}
The Frobenius norm can be interpreted as the $\ell_2$ norm applied to the vectorized form of the  matrix.
Additionally, the spectral norm of a matrix is defined as follows.
\begin{definition}[Matrix spectral norm]\label{definition:spectral_norm}
The \textit{spectral norm} of a matrix $\bA\in \real^{m\times n}$ is defined as 
$$
\normtwo{\bA} = \mathop{\max}_{\bx\neq\bzero} \frac{\normtwo{\bA\bx}}{\normtwo{\bx}}  =\mathop{\max}_{\bx\in \real^n: \normtwo{\bx}=1}  \normtwo{\bA\bx} ,
$$
which corresponds to the largest singular value of $\bA$, i.e., $\normtwo{\bA} = \sigma_{\max}(\bA)$.
The definition also implies  the inequality: $\normtwo{\bA\bx}\leq \sigma_{\max}(\bA)\normtwo{\bx}$ for any $\bx\in\real^n$.
\end{definition}

For simplicity, we will not always explicitly indicate the full subscript for both the vector $\ell_2$ norm and the matrix Frobenius norm when it is clear from the context which one we are referring to; that is, we may write $\norm{\bA}=\normf{\bA}$ and $\norm{\bx}=\normtwo{\bx}$.

\newpage
\part{Gaussian Elimination}

\chapter{LU Decomposition}

\section{LU Decomposition}
One of the most well-known and foundational matrix decompositions is the LU decomposition. The details are outlined in the following theorem, and the proof of its existence will be discussed in subsequent sections.~\footnote{Note that, in the subsequent text, decomposition-related results will be presented in blue boxes, while other claims and theorems will be in gray boxes. This convention will be consistently applied throughout the remainder of the book without further notification.}

\index{Decomposition: LU}
\begin{theoremHigh}[LU decomposition with permutation]\label{theorem:lu-factorization-with-permutation}
Let $\bA$ be a nonsingular $n\times n$ square matrix. Then, it can be decomposed as 
\begin{equation}
	\bA = \bP\bL\bU, \nonumber
\end{equation}
where $\bP$ is a permutation matrix, $\bL$ is a unit lower triangular matrix (i.e., a lower triangular matrix with all 1's on the diagonal), and $\bU$ is a \textit{nonsingular} upper triangular matrix. 
\end{theoremHigh}


In certain cases, the use of the permutation matrix is unnecessary. This decomposition depends on the (leading) principal minors. We provide a precise definition, which is crucial for the subsequent illustration.

\index{Principal minor}
\begin{definition}[Principal minors]\label{definition:principle-minors}
Let $\bA$ be an $n\times n$ square matrix. A $k \times k$ submatrix of $\bA$ obtained by deleting any $n-k$ columns and the same $n-k$ rows from $\bA$ is called a $k$-th order \textit{principal submatrix} of $\bA$. The determinant of a $k \times k$ principal submatrix is called a $k$-th order \textit{principal minor} of $\bA$.
\end{definition}

\begin{definition}[Leading principal minors\index{Leading principal minor}]\label{definition:leading-principle-minors}
Let $\bA$ be an $n\times n$ square matrix. A $k \times k$ submatrix of $\bA$ obtained by deleting the \textbf{last} $n-k$ columns and the \textbf{last} $n-k$ rows from $\bA$ is called the $k$-th order \textit{leading principal submatrix} of $\bA$; that is, the $k\times k$ submatrix taken from the top-left corner of $\bA$. The determinant of the $k \times k$ leading principal submatrix is called the $k$-th order \textit{leading principal minor} of $\bA$.
\end{definition}

If the leading principal minors of matrix $\bA$ satisfy mild conditions, the LU decomposition does not require a permutation matrix, which we now recall in the following theorem:
\begin{theoremHigh}[LU decomposition without permutation]\label{theorem:lu-factorization-without-permutation}
Let $\bA$ be an  $n\times n$ square matrix with nonzero leading principal minors, i.e., $\det(\bA_{1:k,1:k})\neq 0$, for all $k\in \{1,2,\ldots, n\}$. Then, $\bA$ can be decomposed as 
\begin{equation}
	\bA = \bL\bU, \nonumber
\end{equation}
where $\bL$ is a unit lower triangular matrix (i.e., a lower triangular matrix with all 1's on the diagonal), and $\bU$ is a \textit{nonsingular} upper triangular matrix. 
Specifically, this decomposition is \textbf{unique}; see Corollary~\ref{corollary:unique-lu-without-permutation}.
\end{theoremHigh}

In Theorem~\ref{theorem:lu-factorization-without-permutation}, we assume that the leading principal minors are nonzero, implying that the leading principal submatrices  and the matrix $\bA$ are nonsingular. 
In the previous theorem, we also assumed that  $\bA$ is nonsingular. However, it is important to note that an LU decomposition can still exist even when   $\bA$ is singular. 
As will be explained in the next section, if $\bA$ is singular, some of the pivots during Gaussian elimination will be zero, resulting in corresponding zero diagonal entries in the matrix $\bU$.

Even when  $\bA$ is nonsingular, its leading principal submatrices may still be singular.
Furthermore, if certain leading principal minors are zero, an LU decomposition may still exist, but it is no longer guaranteed to be unique under these conditions.

Additionally,  LU decomposition can be generalized to handle non-square or singular matrices---examples include the \textit{rank-revealing LU decomposition}. Interested readers are encouraged to consult \citet{pan2000existence, miranian2003strong, dopico2006multiple} for further details, or refer to Section~\ref{section:rank-reveal-lu-short} for a brief overview.

\index{Backward substitution}
\index{Gaussian elimination}
\section{Relation to Gaussian Elimination}\label{section:gaussian-elimination}
Solving the linear system equation $\bA\bx=\bb$ is a fundamental problem in linear algebra.
One widely used method for solving such systems is \textit{Gaussian elimination}, which simplifies a linear system by transforming it into an upper triangular form through a sequence of \textit{elementary row operations} (or \textit{elementary row transformations}).
This process unfolds over $n-1$ stages for a square matrix $\bA\in \real^{n\times n}$.  
As a result, the system becomes much easier to solve using \textit{backward substitution}.
The elementary operations involved are formally defined as follows.

\begin{definition}[Elementary transformations\index{Elementary transformation}]\label{definition:elemen_trans}
Given a square matrix $\bA$, the following three transformations are referred to as \textit{elementary row (resp., column) transformations}:
\begin{enumerate}
\item  Interchanging two rows (resp., columns) of $\bA$.
\item  Multiplying all elements of a single row (resp.,  column) of $\bA$ by a nonzero value.
\item  Adding a multiple of one row (resp., column) to another row (resp., column).
\end{enumerate}
\end{definition}
Specifically,  elementary row transformations of $\bA$ are represented by unit \textit{lower} triangular matrices that act on the left of $\bA$  (e.g., $\bE\bA$), while elementary column transformations are represented by unit \textit{upper} triangular matrices that act on the right of  $\bA$ (e.g., $\bA\bE$).

Gaussian elimination is based on the third type of elementary row transformation listed above. Suppose the upper triangular matrix obtained through Gaussian elimination is given by $\bU = \bE_{n-1}\bE_{n-2}\ldots\bE_1\bA$ (which corresponds to $n-1$ steps). 
And at the $k$-th stage ($k\leq n-1$), consider the $k$-th column of $\bE_{k-1}\bE_{k-2}\ldots\bE_1\bA$, denoted by $\bx\in \real^n$. 
Gaussian elimination aims to introduce zeros below the diagonal of $\bx$ using a transformation of the form
\begin{equation}\label{equation:elimination_mat}
\bE_k = \bI - \bz_k \be_k^\top,
\end{equation}
where $\be_k \in \real^n$ is the $k$-th standard basis vector, and $\bz_k\in \real^n$ is defined as 
$$
\bz_k = [0, \ldots, 0, z_{k+1}, \ldots, z_n]^\top, \qquad z_i= \frac{x_{i}}{x_{k}}, \quad \forall\, i \in \{k+1,\ldots, n\}.
$$
We observe that $\bE_k$ is a unit lower triangular matrix (with $1$'s on its diagonal), where only the entries below the diagonal in the $k$-th column are nonzero:
$$
\bE_k=
\begin{bmatrixfoot}
1 & \ldots & 0&  0 & \ldots & 0\\
\vdots & \ddots & \vdots & \vdots & \ddots & \vdots \\
0 & \ldots & 1  & 0 & \ldots & 0 \\
0 & \ldots & -z_{k+1}  & 1 & \ldots & 0\\
\vdots & \ddots & \vdots & \vdots & \ddots & \vdots \\
0 & \ldots & -z_n & 0 & \ldots & 1
\end{bmatrixfoot}.
$$
Multiplying on the left by $\bE_k$ will introduce zeros below the diagonal:
$$
\bE_k \bx =
\begin{bmatrixfoot}
1 & \ldots & 0&  0 & \ldots & 0\\
\vdots & \ddots & \vdots & \vdots & \ddots & \vdots \\
0 & \ldots & 1  & 0 & \ldots & 0 \\
0 & \ldots & -z_{k+1}  & 1 & \ldots & 0\\
\vdots & \ddots & \vdots & \vdots & \ddots & \vdots \\
0 & \ldots & -z_n & 0 & \ldots & 1
\end{bmatrixfoot}
\begin{bmatrixfoot}
x_1 \\
\vdots \\
x_k\\
x_{k+1} \\
\vdots \\
x_n
\end{bmatrixfoot}=
\begin{bmatrixfoot}
x_1 \\
\vdots \\
x_k\\
0 \\
\vdots \\
0
\end{bmatrixfoot}.
$$

As an example, we outline the Gaussian elimination steps for a $4\times 4$ matrix. For simplicity, we assume no row permutations. 
In the following matrix, $\boxtimes$ represents a value that may not be zero, and \textbf{boldface} indicates the value has just been changed. 
\paragraph{A Trivial Gaussian Elimination For a $4\times 4$ Matrix:}
\begin{equation}\label{equation:elmination-steps}
\footnotesize
\begin{sbmatrix}{\bA}
	\boxtimes & \boxtimes & \boxtimes & \boxtimes \\
	\boxtimes & \boxtimes & \boxtimes & \boxtimes \\
	\boxtimes & \boxtimes & \boxtimes & \boxtimes \\
	\boxtimes & \boxtimes & \boxtimes & \boxtimes
\end{sbmatrix}
\stackrel{\bE_1}{\longrightarrow}
\begin{sbmatrix}{\bE_1\bA}
	\boxtimes & \boxtimes & \boxtimes & \boxtimes \\
	\bm{0} & \textcolor{mylightbluetext}{\bm{\boxtimes}} & \bm{\boxtimes} & \bm{\boxtimes} \\
	\bm{0} & \bm{\boxtimes} & \bm{\boxtimes} & \bm{\boxtimes} \\
	\bm{0} & \bm{\boxtimes} & \bm{\boxtimes} & \bm{\boxtimes}
\end{sbmatrix}
\stackrel{\bE_2}{\longrightarrow}
\begin{sbmatrix}{\bE_2\bE_1\bA}
	\boxtimes & \boxtimes & \boxtimes & \boxtimes \\
	 0 & \textcolor{mylightbluetext}{\boxtimes} & \boxtimes & \boxtimes \\
	 0  & \bm{0} & \textcolor{mylightbluetext}{\bm{\boxtimes}} & \bm{\boxtimes} \\
      0 & \bm{0} & \bm{\boxtimes} & \bm{\boxtimes}
\end{sbmatrix}
\stackrel{\bE_3}{\longrightarrow}
\begin{sbmatrix}{\bE_3\bE_2\bE_1\bA}
	\boxtimes & \boxtimes & \boxtimes & \boxtimes \\
	0 &  \textcolor{mylightbluetext}{\boxtimes} & \boxtimes & \boxtimes \\
    0 & 0  & \textcolor{mylightbluetext}{\boxtimes} & \boxtimes \\
	0 & 0  & \bm{0} & \textcolor{mylightbluetext}{\bm{\boxtimes}}
\end{sbmatrix},
\end{equation}
where $\bE_1, \bE_2$, and $\bE_3$ are lower triangular matrices. Specifically, as discussed earlier, Gaussian transformation matrices $\bE_i$'s are unit lower triangular matrices with $1$'s on the diagonal. This can be explained that for the $k$-th transformation $\bE_k$, working on the matrix $\bE_{k-1}\ldots\bE_1\bA$, the transformation subtracts multiples of the $k$-th row from rows $\{k+1, k+2, \ldots, n\}$ in order to create zeros below the diagonal in the $k$-th column of the matrix, without using rows $\{1, 2, \ldots, k-1\}$.   

To make this more concrete, consider stage 1 of the example above. 
We multiply on the left by $\bE_1$, which subtracts suitable multiples of the first row from rows $2, 3$, and $4$, resulting in zeros in the first entry of each of these rows. Similar operations occur at steps 2 and 3.
By defining $\bL= \bE_1^{-1}\bE_2^{-1}\bE_3^{-1}$ and letting $\bU$ denote the matrix obtained after elimination,~\footnote{Unit lower triangular matrices have two important properties: their inverses are also unit lower triangular, and the product of such matrices results in another unit lower triangular matrix.} we obtain the decomposition $\bA= \bL\bU$. 
Thus, we have constructed an LU decomposition for the  $4\times 4$ matrix $\bA$. 

In the process of Gaussian elimination, we systematically eliminate entries below the diagonal to transform a matrix into an upper triangular form. A key element in guiding this elimination procedure is the first nonzero entry encountered in each row at every step. This special entry not only determines the feasibility of the elimination but also plays a crucial role in numerical stability. We now formally define this important concept.
\begin{definition}[Pivot\index{Pivot}]\label{definition:pivot}
The first nonzero entry in the row after each elimination step is referred to as a \textit{pivot}. For example, the \textcolor{mylightbluetext}{blue} crosses in Equation~\eqref{equation:elmination-steps} indicate the positions of the pivots.
\end{definition}

However,  the entry $a_{11}$ (the (1,1) element of the matrix $\bA$) may occasionally be zero.
In such cases, no such elimination matrix $\bE_1$ can successfully carry out the next elimination step. 
Therefore, we must swap the first and second rows using a permutation matrix $\bP_1$. This is known as  \textit{pivoting}, or simply \textit{permutation}.
\paragraph{Gaussian Elimination With a Permutation in the Beginning:}
$$
\footnotesize
\begin{aligned}
\begin{sbmatrix}{\bA}
	0 & \boxtimes & \boxtimes & \boxtimes \\
	\boxtimes & \boxtimes & \boxtimes & \boxtimes \\
	\boxtimes & \boxtimes & \boxtimes & \boxtimes \\
	\boxtimes & \boxtimes & \boxtimes & \boxtimes
\end{sbmatrix}
&\stackrel{\bP_1}{\longrightarrow}
\begin{sbmatrix}{\bP_1\bA}
	\bm{\boxtimes} & \bm{\boxtimes} & \bm{\boxtimes} & \bm{\boxtimes} \\
	\bm{0} & \bm{\boxtimes} & \bm{\boxtimes} & \bm{\boxtimes} \\
	\boxtimes & \boxtimes & \boxtimes & \boxtimes \\
	\boxtimes & \boxtimes & \boxtimes & \boxtimes
\end{sbmatrix}
\stackrel{\bE_1}{\longrightarrow}
\begin{sbmatrix}{\bE_1\bP_1\bA}
	\boxtimes & \boxtimes & \boxtimes & \boxtimes \\
	\bm{0} & \textcolor{mylightbluetext}{\bm{\boxtimes}} & \bm{\boxtimes} & \bm{\boxtimes} \\
	\bm{0} & \bm{\boxtimes} & \bm{\boxtimes} & \bm{\boxtimes} \\
	\bm{0} & \bm{\boxtimes} & \bm{\boxtimes} & \bm{\boxtimes}
\end{sbmatrix}
\stackrel{\bE_2}{\longrightarrow}
\begin{sbmatrix}{\bE_2\bE_1\bP_1\bA}
	\boxtimes & \boxtimes & \boxtimes & \boxtimes \\
	0 & \textcolor{mylightbluetext}{\boxtimes} & \boxtimes & \boxtimes \\
	0  & \bm{0} & \textcolor{mylightbluetext}{\bm{\boxtimes}} & \bm{\boxtimes} \\
	0 & \bm{0} & \bm{\boxtimes} & \bm{\boxtimes}
\end{sbmatrix}
\stackrel{\bE_3}{\longrightarrow}
\begin{sbmatrix}{\bE_3\bE_2\bE_1\bP_1\bA}
	\boxtimes & \boxtimes & \boxtimes & \boxtimes \\
	0 &  \textcolor{mylightbluetext}{\boxtimes} & \boxtimes & \boxtimes \\
	0 & 0  & \textcolor{mylightbluetext}{\boxtimes} & \boxtimes \\
	0 & 0  & \bm{0} & \textcolor{mylightbluetext}{\bm{\boxtimes}}
\end{sbmatrix}.
\end{aligned}
$$
By defining $\bL=\bE_1^{-1}\bE_2^{-1}\bE_3^{-1}$ and $\bP=\bP_1^{-1}$, the expression $\bA=\bP\bL\bU$ represents a complete  LU decomposition with permutation for the $4\times 4$ matrix $\bA$.

In certain cases, additional permutation matrices such as $\bP_2, \bP_3, \ldots$ may be required between the lower triangular transformations $\bE_i$'s. 
An example is provided below.
\paragraph{Gaussian Elimination With a Permutation in Between:}
$$
\footnotesize
\begin{sbmatrix}{\bA}
	\boxtimes & \boxtimes & \boxtimes & \boxtimes \\
	\boxtimes & \boxtimes & \boxtimes & \boxtimes \\
	\boxtimes & \boxtimes & \boxtimes & \boxtimes \\
	\boxtimes & \boxtimes & \boxtimes & \boxtimes \\
\end{sbmatrix}
\stackrel{\bE_1}{\longrightarrow}
\begin{sbmatrix}{\bE_1\bA}
	\boxtimes & \boxtimes & \boxtimes & \boxtimes \\
	\bm{0} & \bm{0} & \bm{\boxtimes} & \bm{\boxtimes} \\
	\bm{0} & \bm{\boxtimes} & \bm{\boxtimes} & \bm{\boxtimes} \\
	\bm{0} & \bm{\boxtimes} & \bm{\boxtimes} & \bm{\boxtimes}
\end{sbmatrix}
\stackrel{\bP_1}{\longrightarrow}
\begin{sbmatrix}{\bP_1\bE_1\bA}
	\boxtimes & \boxtimes & \boxtimes & \boxtimes \\
	\bm{0} & \textcolor{mylightbluetext}{\bm{\boxtimes}} & \bm{\boxtimes} & \bm{\boxtimes} \\
	\bm{0}  & \bm{0} & \textcolor{mylightbluetext}{\bm{\boxtimes}} & \bm{\boxtimes} \\
	0 & \boxtimes & \boxtimes & \boxtimes
\end{sbmatrix}
\stackrel{\bE_2}{\longrightarrow}
\begin{sbmatrix}{\bE_2\bP_1\bE_1\bA}
	\boxtimes & \boxtimes & \boxtimes & \boxtimes \\
	0 &  \textcolor{mylightbluetext}{\boxtimes} & \boxtimes & \boxtimes \\
	0 & 0  & \textcolor{mylightbluetext}{\boxtimes} & \boxtimes \\
	0 & \bm{0}  & \bm{0} & \textcolor{mylightbluetext}{\bm{\boxtimes}}
\end{sbmatrix}.
$$
In this scenario, we find that $\bU=\bE_2\bP_1\bE_1\bA$. In Section~\ref{section:lu-perm} or Section~\ref{section:partial-pivot-lu}, we will demonstrate that incorporating interleaved permutations still leads to the form $\bA=\bP\bL\bU$, where $\bP$ accounts for all permutations performed.

The provided examples can be easily extended to any $n\times n$ matrix, assuming there are no row permutations involved. 
For such matrices, we apply $n-1$ such lower triangular transformations.
The $k$-th transformation, $\bE_k$, introduces zeros below the diagonal in the $k$-th column of $\bA$ by subtracting multiples of the $k$-th row from rows $\{k+1, k+2, \ldots, n\}$.
Finally, by defining $\bL=\bE_1^{-1}\bE_2^{-1}\ldots \bE_{n-1}^{-1}$, we obtain the LU decomposition $\bA=\bL\bU$ without the need for permutations.

From the examples above involving elementary row operations in the Gaussian elimination process, we can draw the following conclusion about the row spaces after performing (elementary) row transformations.
\begin{proposition}[Row space after row operations]\label{proposition:rowspa_rowele}
Let  $\bA\in\real^{m\times n}$ be a matrix that undergoes a sequence of elementary row operations represented by  $\bE_1, \bE_2, \ldots, \bE_k$, and define $\bE = \bE_k\bE_{k-1}\ldots \bE_1$. 
Then, the row space of $\bB=\bE\bA$ is identical to the row space of $\bA$. 
\end{proposition}
\begin{proof}[of Proposition~\ref{proposition:rowspa_rowele}]
Since the rows of $\bB$ are linear combinations of the rows of $\bA$,  it follows that $\cspace(\bB^\top)\subseteq \cspace(\bA^\top)$. 
Moreover, since  the row transformations are invertible, $\bE= \bE_k\bE_{k-1}\ldots \bE_1$ is also invertible. Therefore, we can write: $\bA = {\bE}^{-1}\bB$. This  implies that the rows of $\bA$ are also  linear combinations of the rows of $\bB$: $\cspace(\bA^\top)\subseteq \cspace(\bB^\top)$. 
Combining the two results, we conclude that $\cspace(\bA^\top)= \cspace(\bB^\top)$.
\end{proof}

Note, however, that the column spaces of $\bA$ and $\bB$ may differ. Nonetheless, since the dimension of the row space equals the dimension of the column space (i.e., the rank of the matrix), the dimensions of the column spaces of $\bA$ and $\bB$ are the same.

\section{Existence of  LU Decomposition without Permutation}\label{section:exist-lu-without-perm}
Gaussian elimination, or Gaussian transformation, provides insight into the foundation of LU decomposition.
We now rigorously prove Theorem~\ref{theorem:lu-factorization-without-permutation}, which establishes the existence of LU decomposition without permutation, using mathematical induction.
\begin{proof}[{of Theorem~\ref{theorem:lu-factorization-without-permutation}: LU decomposition without permutation}]
We will prove by induction that every $n\times n$ square matrix $\bA$ with nonzero leading principal minors admits the LU decomposition of the form $\bA=\bL\bU$.
For the base case ($n=1$), the result is trivial: set $L=1$ and $U=A$ so that $A=LU$. 

Now assume that any $k\times k$ matrix $\bA_k$ with all  leading principal minors being nonzero has an LU decomposition without permutation. Our goal is to establish that any $(k+1)\times(k+1)$ matrix $\bA_{k+1}$ can also be expressed in this LU decomposition form without permutation.

For any $(k+1)\times(k+1)$ matrix $\bA_{k+1}$, suppose the $k$-th order leading principal submatrix of $\bA_{k+1}$ is $\bA_k$ with  size  $k\times k$. Then $\bA_k$ can be factored as $\bA_k =  \bL_k\bU_k$, where  $\bL_k$ is a unit lower triangular matrix and $\bU_k$ is a nonsingular upper triangular matrix, as per the assumption. Express $\bA_{k+1}$ as
$
\bA_{k+1} = \scriptsize\begin{bmatrix}
	\bA_k & \bb \\
	\bc^\top & d
\end{bmatrix}.
$
Then it admits the following factorization:
$$
\bA_{k+1} = \begin{bmatrix}
	\bA_k & \bb \\
	\bc^\top & d
\end{bmatrix}
=
\begin{bmatrix}
	\bL_k &\bzero \\
	\bx^\top  & 1 
\end{bmatrix}
\begin{bmatrix}
	\bU_k & \by\\
	\bzero & z
\end{bmatrix} = \bL_{k+1}\bU_{k+1},
$$
where $\bb = \bL_k\by$, $\bc^\top = \bx^\top\bU_k$, $d = \bx^\top\by + z$, $\bL_{k+1}=\scriptsize\begin{bmatrix}
	\bL_k &\bzero \\
	\bx^\top  & 1 
\end{bmatrix}$, and $\bU_{k+1}=\scriptsize\begin{bmatrix}
\bU_k & \by\\
\bzero & z
\end{bmatrix}$. 
From the assumption, $\bL_k$ and $\bU_k$ are nonsingular. Therefore, we have 
$$
\by = \bL_k^{-1}\bb, \qquad \bx^\top=\bc^\top\bU_k^{-1}, \qquad  z=d - \bx^\top\by.
$$
If, further, we could prove that $z$ is nonzero such that $\bU_{k+1}$ is nonsingular, we complete the proof.

Because all the leading principal minors of $\bA_{k+1}$ are nonzero, 
we have $\det(\bA_{k+1})=$
\footnote{By the fact that if matrix $\bM$ has a block formulation: $\bM=\begin{bmatrixscript}
		\bA & \bB \\
		\bC & \bD 
	\end{bmatrixscript}$, then $\det(\bM) = \det(\bA)\det(\bD-\bC\bA^{-1}\bB)$.}
$\det(\bA_k)\cdot$ $\det(d-\bc^\top\bA_k^{-1}\bb)  
=\det(\bA_k)\cdot(d-\bc^\top\bA_k^{-1}\bb) \neq 0$, since $d-\bc^\top\bA_k^{-1}\bb$ is a scalar.
As $\det(\bA_k)\neq 0$ from the assumption, we conclude that $d-\bc^\top\bA_k^{-1}\bb \neq 0$.  
By substituting $\bb = \bL_k\by$ and $\bc^\top = \bx^\top\bU_k$ into the formula, we have $d-\bx^\top\bU_k\bA_k^{-1}\bL_k\by =d-\bx^\top\bU_k(\bL_k\bU_k)^{-1}\bL_k\by =d-\bx^\top\by \neq 0$, which exactly matches the form of $z\neq 0$. Thus, we find $\bL_{k+1}$ with all the values on the diagonal being 1, and $\bU_{k+1}$ with all the values on the diagonal being nonzero, which means $\bL_{k+1}$ and $\bU_{k+1}$ are nonsingular. \footnote{A triangular matrix (upper or lower) is nonsingular if and only if all the entries on its main diagonal are nonzero.}
This completes the proof.
\end{proof}

We further show that the LU decomposition is unique when no permutation matrix is involved.
\begin{corollary}[Uniqueness of  LU decomposition without permutation]\label{corollary:unique-lu-without-permutation}
Let $\bA$ be an $n\times n$ square matrix with nonzero leading principal minors. Then,  the LU decomposition of $\bA$ is unique.
\end{corollary}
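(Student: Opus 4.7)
The plan is to proceed by the standard contradiction/uniqueness argument: assume that $\bA$ admits two such factorizations $\bA = \bL_1\bU_1 = \bL_2\bU_2$, where each $\bL_i$ is unit lower triangular and each $\bU_i$ is nonsingular upper triangular, and then show $\bL_1 = \bL_2$ and $\bU_1 = \bU_2$. The existence half is supplied by Theorem~\ref{theorem:lu-factorization-without-permutation}, and nonsingularity of $\bU_1,\bU_2$ (hence invertibility) is part of its conclusion; similarly the $\bL_i$'s are unit lower triangular and thus automatically invertible.

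First I would rearrange the equation $\bL_1\bU_1 = \bL_2\bU_2$ into the decoupled form
\begin{equation*}
\bL_2^{-1}\bL_1 \;=\; \bU_2 \bU_1^{-1}.
\end{equation*}
Next I would invoke two easy structural facts used already in Section~\ref{section:gaussian-elimination}: (i) the inverse of a unit lower triangular matrix is unit lower triangular, and the product of two unit lower triangular matrices is unit lower triangular; (ii) the inverse of a nonsingular upper triangular matrix is upper triangular, and the product of two upper triangular matrices is upper triangular. Thus the left-hand side $\bL_2^{-1}\bL_1$ is unit lower triangular, while the right-hand side $\bU_2\bU_1^{-1}$ is upper triangular.

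The key step, and the only one that needs a moment's thought, is the observation that a matrix which is simultaneously unit lower triangular and upper triangular must equal the identity $\bI_n$: its off-diagonal entries are forced to be zero from both sides, and its diagonal entries are all $1$ by the unit-lower-triangular side. Consequently $\bL_2^{-1}\bL_1 = \bI_n = \bU_2\bU_1^{-1}$, which yields $\bL_1 = \bL_2$ and $\bU_1 = \bU_2$, establishing uniqueness.

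I do not anticipate a real obstacle here; the whole argument is essentially three lines once the triangular-inverse and triangular-product closure properties are quoted. The only mild subtlety to make explicit is why we may form $\bU_1^{-1}$ at all, which is precisely the nonsingularity asserted in Theorem~\ref{theorem:lu-factorization-without-permutation} (equivalently, the hypothesis $\det(\bA_{1:k,1:k}) \neq 0$ for all $k$ guarantees nonzero pivots, i.e., nonzero diagonal entries of $\bU$).
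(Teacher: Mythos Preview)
Your proposal is correct and follows essentially the same argument as the paper: rearrange $\bL_1\bU_1=\bL_2\bU_2$ into $\bL_2^{-1}\bL_1=\bU_2\bU_1^{-1}$, observe that the left side is unit lower triangular while the right side is upper triangular, and conclude both equal $\bI_n$. Your write-up is slightly more careful in justifying invertibility of $\bU_1$ and spelling out why a simultaneously unit-lower-triangular and upper-triangular matrix must be the identity, but the core idea is identical.
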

\begin{proof}[of Corollary~\ref{corollary:unique-lu-without-permutation}]
Suppose, for contradiction,  that the LU decomposition is not unique. Then, there exist two decompositions, $\bA=\bL_1\bU_1 = \bL_2\bU_2$, which implies $\bL_2^{-1}\bL_1=\bU_2\bU_1^{-1}$. The left-hand side of the equation is a unit lower triangular matrix, while the right-hand side is an upper triangular matrix. 
Consequently, both sides must be diagonal matrices. Since the inverse of a unit lower triangular matrix is also a unit lower triangular matrix, and the product of unit lower triangular matrices remains a unit lower triangular matrix, we deduce that $\bL_2^{-1}\bL_1 = \bI$.
The equality implies that both sides are identity matrices such that $\bL_1=\bL_2$ and $\bU_1=\bU_2$, resulting in a contradiction.  This completes the proof.
\end{proof}

In the proof of Theorem~\ref{theorem:lu-factorization-without-permutation}, we showed that the diagonal values of the upper triangular matrix are all nonzero if the leading principal minors of $\bA$ are all nonzero. Dividing each row of $\bU$ by its corresponding diagonal entry leads to an equivalent decomposition known as the \textit{LDU decomposition}.

\index{Decomposition: LDU}
\begin{corollaryHigh}[LDU decomposition]\label{corollary:ldu-decom}
Let $\bA$ be an $n\times n$ square matrix with nonzero leading principal minors, i.e., $\det(\bA_{1:k,1:k})\neq 0$, for all $k\in \{1,2,\ldots, n\}$. Then, $\bA$ can be \textbf{uniquely} decomposed as 
\begin{equation}
	\bA = \bL\bD\bU, \nonumber
\end{equation}
where $\bL$ is a unit lower triangular matrix, $\bU$ is a \textbf{unit} upper triangular matrix, and $\bD$ is a diagonal matrix. 
\end{corollaryHigh} 
The result follows directly from the LU decomposition of $\bA = \bL\bR$, where $\bL$ is unit lower triangular and $\bR$ is upper triangular. Define $\bD=\diag(r_{11}, r_{22}, \ldots, r_{nn})$, where $r_{ii}$'s are the diagonal entries of $\bR$. Then, $\bD^{-1}\bR = \bU$ is a unit upper triangular matrix. 
The uniqueness of the LDU decomposition follows from the uniqueness of the LU decomposition: since $\bL$ and $\bR$ are uniquely determined, so too are $\bD$ and $\bU$.

\index{Schur complement}
\section{Existence of  LU Decomposition with Permutation}\label{section:lu-perm}
In Theorem~\ref{theorem:lu-factorization-without-permutation}, we require that  $\bA$ has nonzero leading principal minors. 
However, this condition is not strictly necessary. Even if some leading principal minors are zero, a nonsingular matrix can still admit an LU decomposition, provided that row permutations are allowed. The proof still proceeds by induction. 

To formalize this, we first introduce the concept of the \textit{Schur complement}.
\begin{definition}[Schur complement]\label{definition:schur_comp}
Let $\bA \in \real^{n \times n}$ be a matrix, and suppose its (1,1) entry, $a_{11}$, is nonzero. 
Then,  the \textit{Schur complement} of $a_{11}$ in $\bA$ is $\bA_2 = \bA_{2:n,2:n} -\frac{1}{a_{11}} \bA_{2:n,1}\bA_{1,2:n}$.
\end{definition}

We now provide a rigorous proof for Theorem~\ref{theorem:lu-factorization-with-permutation}.
\begin{proof}[{of Theorem~\ref{theorem:lu-factorization-with-permutation}: LU decomposition with permutation}]
We start with the base case: any $1\times 1$ nonsingular matrix has a full LU decomposition $A=PLU$ by taking $P=1$, $L=1$, and $U=A$.
Assume that every $(n-1)\times (n-1)$ nonsingular matrix admits an LU decomposition with permutation. We now show this holds for any $n\times n$ nonsingular matrix $\bA$.

We will formulate the proof in the following order. \textcolor{black}{If $\bA$ is nonsingular, then its row-permuted matrix $\bB$ is also nonsingular}. And the \textit{Schur complement} of $b_{11}$ in $\bB$ is also nonsingular. Finally, we formulate the decomposition of $\bA$ by $\bB$ from this property.

At least one element in the first column of $\bA$ must be nonzero; otherwise, $\bA$ would be singular.  Therefore, we can apply a row permutation to move a nonzero element to the $(1,1)$ position.
Let $\bP_1$ be a permutation matrix that moves a nonzero entry to the (1,1) position. Define $\bB = \bP_1 \bA$ such that $b_{11} \neq 0$. Since both $\bA$ and $\bP_1$ are nonsingular, their product $\bB$ is also nonsingular.

\paragraph{Schur complement of $\bB$ is also nonsingular.}
The Schur complement of $b_{11}$ in $\bB$ is the $(n-1)\times (n-1)$ matrix given by
$
\widehat{\bB} = \bB_{2:n,2:n} -\frac{1}{b_{11}} \bB_{2:n,1} \bB_{1,2:n}.
$
To prove that $\widehat{\bB}$ is nonsingular,  assume there exists an $(n-1)$-vector $\bx$ such that
$
\widehat{\bB} \bx = \bzero.
$
Then, $\bx$ and $y=-\frac{1}{b_{11}}\bB_{1,2:n} \cdot \bx  $ satisfy
$$
\bB 
\begin{bmatrix}
y \\
\bx 
\end{bmatrix}
=
\begin{bmatrix}
	b_{11} & \bB_{1,2:n} \\
	\bB_{2:n,1} & \bB_{2:n,2:n}
\end{bmatrix}
\begin{bmatrix}
	y\\
	\bx 	 
\end{bmatrix}
=
\begin{bmatrix}
	0\\
	\bzero 	
\end{bmatrix}
.
$$
Since $\bB$ is nonsingular, $\bx$ and $y$ must be zero. Therefore, $\widehat{\bB} \bx = \bzero$ holds only if $\bx=\bzero$, which implies that the null space of $\widehat{\bB}$ has dimension 0. Hence, $\widehat{\bB}$ is nonsingular.

By the induction assumption, any $(n-1)\times(n-1)$ nonsingular matrix can be written in the full LU decomposition form:
$$
\widehat{\bB} = \bP_2\bL_2\bU_2.
$$
We then factor $\bA$ as
\begin{equation*}
\begin{aligned}
\bA &= \bP_1^\top  
\begin{bmatrix}
b_{11} & \bB_{1,2:n} \\
\bB_{2:n,1} & \bB_{2:n,2:n}
\end{bmatrix}
= \bP_1^\top  
\left[
\begin{matrix}
1 & 0 \\
0 & \bP_2
\end{matrix}
\right] 
\left[
\begin{matrix}
b_{11} & \bB_{1,2:n} \\
\bP_2^\top \bB_{2:n,1} &\bP_2^\top \bB_{2:n,2:n}
\end{matrix}
\right]\\
&= \bP_1^\top  
\left[
\begin{matrix}
1 & 0 \\
0 & \bP_2
\end{matrix}
\right] 
\left[
\begin{matrix}
b_{11} & \bB_{1,2:n} \\
\bP_2^\top \bB_{2:n,1} & \textcolor{mylightbluetext}{\bL_2\bU_2}+\bP_2^\top \textcolor{mylightbluetext}{\frac{1}{b_{11}} \bB_{2:n,1} \bB_{1,2:n}}
\end{matrix}
\right]\\
&= \bP_1^\top  
\left[
\begin{matrix}
1 & 0 \\
0 & \bP_2
\end{matrix}
\right] 
\left[
\begin{matrix}
1 & 0 \\
\frac{1}{b_{11}}\bP_2^\top \bB_{2:n,1} & \bL_2
\end{matrix}
\right] 
\left[
\begin{matrix}
b_{11} & \bB_{1,2:n} \\
\bzero & \bU_2
\end{matrix}
\right].\\
\end{aligned}
\end{equation*}
Thus, the full LU decomposition of $\bA=\bP\bL\bU$ is given by:
$$
\bP = \bP_1^\top  
\left[
\begin{matrix}
	1 & 0 \\
	0 & \bP_2
\end{matrix}
\right], \qquad
\bL=\left[
\begin{matrix}
	1 & 0 \\
	\frac{1}{b_{11}}\bP_2^\top \bB_{2:n,1} & \bL_2
\end{matrix}
\right], \qquad
\bU=
\left[
\begin{matrix}
	b_{11} & \bB_{1,2:n} \\
	\bzero & \bU_2
\end{matrix}
\right].
$$
This completes the proof.
\end{proof}

\section{Bandwidth Preserving in  LU Decomposition without Permutation}
We will demonstrate that the bandwidth of a matrix remains unchanged after LU decomposition, provided no row permutations are used.
The bandwidth of a matrix is defined as follows.
\begin{definition}[Matrix bandwidth\index{Matrix bandwidth}]\label{defin:matrix-bandwidth}
For any matrix $\bA\in \real^{n\times n}$, where the entry at position $(i,j)$ is denoted as $a_{ij}$, the matrix $\bA$ has  \textit{upper bandwidth $q$} if $a_{ij} =0$ whenever  $j>i+q$, and  \textit{lower bandwidth $p$} if $a_{ij}=0$ whenever  $i>j+p$. 
\end{definition}
An example of a $6\times 6$ matrix with  upper bandwidth  $2$ and lower bandwidth  $3$ is shown below:
$$
\begin{bmatrixfoot}
\boxtimes & \boxtimes & \boxtimes & 0& 0 & 0\\
\boxtimes & \boxtimes & \boxtimes & \boxtimes& 0 & 0\\
\boxtimes & \boxtimes & \boxtimes & \boxtimes& \boxtimes & 0\\
\boxtimes & \boxtimes & \boxtimes & \boxtimes& \boxtimes & \boxtimes\\
0 & \boxtimes & \boxtimes & \boxtimes& \boxtimes & \boxtimes\\
0 & 0 & \boxtimes & \boxtimes& \boxtimes & \boxtimes\\
\end{bmatrixfoot}.
$$

We now prove that the bandwidth of a matrix is preserved during LU decomposition when no row permutations are applied.
\begin{lemma}[Bandwidth preserving]\label{lemma:lu-bandwidth-presev}
Let $\bA\in \real^{n\times n}$ have  upper bandwidth  $q$ and  lower bandwidth $p$. If $\bA$ admits the LU decomposition $\bA=\bL\bU$, then  $\bL$ has a lower bandwidth of $p$, and  $\bU$ has an upper bandwidth of $q$.
\end{lemma}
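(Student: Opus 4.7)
The plan is to proceed by induction on $n$, mirroring the block-partition argument used in the proof of Theorem~\ref{theorem:lu-factorization-without-permutation}. The case $n=1$ is immediate since $\bL=1$ and $\bU=\bA$ trivially preserve the (vacuous) bandwidth. For the inductive step, partition
$$
\bA = \begin{bmatrix} a_{11} & \bb^\top \\ \bc & \bA_{22} \end{bmatrix},
$$
and recall from the existence proof that the LU factors take the block form
$$
\bL = \begin{bmatrix} 1 & \bzero^\top \\ \bc/a_{11} & \bL_{22} \end{bmatrix}, \qquad
\bU = \begin{bmatrix} a_{11} & \bb^\top \\ \bzero & \bU_{22} \end{bmatrix},
$$
where $\bL_{22}\bU_{22}$ is the LU decomposition of the Schur complement $\widehat{\bA} = \bA_{22} - \bc\bb^\top/a_{11}$.

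From the bandwidth hypothesis, $\bc$ is supported in its first $p$ entries (so the first column of $\bL$ already has lower bandwidth $p$) and $\bb$ is supported in its first $q$ entries (so the first row of $\bU$ already has upper bandwidth $q$). The trailing block $\bA_{22}$ inherits lower bandwidth $p$ and upper bandwidth $q$ by a simple index shift. Thus, if I can show that the Schur complement $\widehat{\bA}$ still has the same bandwidths, the induction hypothesis applied to $\widehat{\bA}$ gives $\bL_{22}$ lower bandwidth $p$ and $\bU_{22}$ upper bandwidth $q$, and reassembling the blocks yields the desired bandwidths on $\bL$ and $\bU$.

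The crux is therefore to verify that forming $\widehat{\bA} = \bA_{22} - \bc\bb^\top/a_{11}$ cannot create fill-in outside the band. The key observation is that the rank-one correction $\bc\bb^\top$ has its support confined to the top-left $p\times q$ block: entry $(i,j)$ of $\bc\bb^\top$ vanishes whenever $i>p$ or $j>q$. I will check that whenever an off-band index $(i,j)$ in $\widehat{\bA}$ satisfies $i>j+p$ or $j>i+q$, the forbidden region for $\bc\bb^\top$ is automatically entered (since $i>j+p\ge p+1$ forces $i>p$, and symmetrically $j>i+q$ forces $j>q$), so the correction contributes nothing and the zero entry of $\bA_{22}$ survives.

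The only potential obstacle is this index bookkeeping across the two shifts (the one inside $\widehat{\bA}$ relative to $\bA_{22}$, and the one relating $\bA_{22}$ to the original $\bA$), but it reduces to the two short implications above and is routine. Once those are in place, the block reassembly of $\bL$ and $\bU$ is immediate and the induction closes.
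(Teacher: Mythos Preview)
Your proof is correct and follows essentially the same route as the paper: block-partition $\bA$, note that the first column of $\bL$ and first row of $\bU$ already have the right support, and recurse on the Schur complement. One small remark: the paper asserts the Schur complement has bandwidths $p-1$ and $q-1$, whereas you (correctly) claim only $p$ and $q$---a tridiagonal example shows yours is the sharp statement, and it is exactly what the induction requires.
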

\begin{proof}[of Lemma~\ref{lemma:lu-bandwidth-presev}]
The LU decomposition without permutation can be obtained as follows:
$$
\bA=
\begin{bmatrix}
	a_{11} & \bA_{1,2:n} \\
	\bA_{2:n,1} & \bA_{2:n,2:n}
\end{bmatrix}
=
\begin{bmatrix}
	1 & \bzero \\
\frac{1}{a_{11}} \bA_{2:n,1}  & \bI_{n-1}
\end{bmatrix}
\begin{bmatrix}
	a_{11} & \bA_{1,2:n}\\
	\bzero & \bS
\end{bmatrix}
 = \bL_1 \bU_1,
$$
where $\bS =\bA_{2:n,2:n} - \frac{1}{a_{11}}\bA_{2:n,1}\bA_{1,2:n}$ is the Schur complement of $a_{11}$ in $\bA$.
This process is referred to as the  $s$-decomposition of $\bA$.
The first column of $\bL_1$ and the first row of $\bU_1$ retain the required bandwidth properties (bandwidth $p$ and $q$, respectively). 
Furthermore, the Schur complement $\bS$ of $a_{11}$ has  upper bandwidth  $q-1$ and lower bandwidth  $p-1$, respectively. 
By applying induction on the $s$-decomposition to $\bS$, the result follows.
\end{proof}

\section{Block LU Decomposition}
Another form of LU decomposition involves factoring a matrix into block triangular matrices.
\begin{theoremHigh}[Block LU decomposition without permutation]\label{theorem:block-lu-factorization-without-permutation}
For any $n\times n$ square matrix $\bA$, if the first $m$ leading principal block submatrices are nonsingular, then $\bA$ can be uniquely factored as 
\begin{equation}
\bA = \bL\bU
=
\begin{bmatrix}
\bI  & & & \\
\bL_{21} & \bI & & \\
\vdots & & \ddots & \\
\bL_{m1} & \ldots & \bL_{m,m-1} & \bI 
\end{bmatrix}
\begin{bmatrix}
\bU_{11}  &\bU_{12} & \ldots & \bU_{1m}\\
  & \bU_{22} & &  \vdots \\
  & & \ddots & \bU_{m-1,m}\\
  & & & \bU_{mm}\\
\end{bmatrix}
, \nonumber
\end{equation}
where $\bL_{ij}$'s and $\bU_{ij}$'s are certain  block matrices. 
\end{theoremHigh}
It is important to note that that matrix $\bU$ in the above theorem is not necessarily upper triangular. For example:
$$
\bA = 
\footnotesize
\left[\begin{array}{cc;{2pt/2pt}cc}
0& 1 &   1 & 1\\
-1& 2 &   -1 & 2\\\hdashline[2pt/2pt]
2& 1 &   4 & 2\\	
1& 2 &   3 & 3\\
\end{array}\right]
=
\left[\begin{array}{cc;{2pt/2pt}cc}
	1& 0 &   0& 0\\
	0& 1 &   0 & 0\\\hdashline[2pt/2pt]
	5& -2 &   1 & 0\\	
	4& -1 &   0 & 1\\
\end{array}\right]
\left[\begin{array}{cc;{2pt/2pt}cc}
	0& 1 &   1 & 1\\
	-1& 2 &   -1 & 2\\\hdashline[2pt/2pt]
	0& 0 &   -3 & 1\\	
	0& 0 &   -2 & 1\\
\end{array}\right].
$$
The standard non-block LU decomposition fails for $\bA$ because the  $(1,1)$ entry is zero. However, the block LU decomposition still applies.

\index{Partial pivoting}
\section{Partial, Complete, and Rook Pivoting}\label{section:pivoting}
In the field of numerical linear algebra, solving systems of linear equations is a fundamental task that often relies on matrix factorization techniques such as LU decomposition.
However, the direct application of LU decomposition can lead to numerical instability, especially when dealing with matrices that have small diagonal entries in their upper triangular form $\bU$. To address this issue, \textit{pivoting} strategies are employed to improve the robustness and accuracy of the decomposition process.
This section explores the concept of pivoting, focusing specifically on \textit{partial pivoting, complete pivoting}, and \textit{rook pivoting}.

\subsection{Partial Pivoting}\label{section:partial-pivot-lu}
In practice, it is often advantageous to apply pivoting even when it is not strictly necessary. When solving a linear system using LU decomposition, as described in Algorithm~\ref{alg:linear-equation-by-LU}, selecting the pivot as the largest entry in the current column---especially  when the diagonal elements of $\bU$ are small---helps mitigate inaccuracies in the solutions. This technique, known as \textit{partial pivoting}, is widely adopted to enhance numerical stability.
For example, in the partial pivoting process applied to a $4\times 4$ matrix, the following transformation may occur:
\begin{equation}\label{equation:elmination-steps2}
\footnotesize
\begin{sbmatrix}{\bA}
\boxtimes & \boxtimes & \boxtimes & \boxtimes \\
\boxtimes & \boxtimes & \boxtimes & \boxtimes \\
\boxtimes & \boxtimes & \boxtimes & \boxtimes \\
\boxtimes & \boxtimes & \boxtimes & \boxtimes
\end{sbmatrix}
\stackrel{\bE_1}{\longrightarrow}
\begin{sbmatrix}{\bE_1\bA}
\boxtimes & \boxtimes & \boxtimes & \boxtimes \\
0 & \bm{2} & \bm{\boxtimes} & \bm{\boxtimes} \\
0 & \bm{5} & \bm{\boxtimes} & \bm{\boxtimes} \\
0 & \bm{7} & \bm{\boxtimes} & \bm{\boxtimes}
\end{sbmatrix}
\stackrel{\bP_1}{\longrightarrow}
\begin{sbmatrix}{\bP_1\bE_1\bA}
\boxtimes & \boxtimes & \boxtimes & \boxtimes \\
0 & \textcolor{mylightbluetext}{7} & \textcolor{mylightbluetext}{\boxtimes} & \textcolor{mylightbluetext}{\boxtimes} \\
0  & 5 & \boxtimes & \boxtimes \\
0 & \textcolor{mylightbluetext}{2} & \textcolor{mylightbluetext}{\boxtimes} & \textcolor{mylightbluetext}{\boxtimes}
\end{sbmatrix}
\stackrel{\bE_2}{\longrightarrow}
\begin{sbmatrix}{\bE_2\bP_1\bE_1\bA}
\boxtimes & \boxtimes & \boxtimes & \boxtimes \\
0 &  7 & \boxtimes & \boxtimes \\
0 & \bm{0}  & \textcolor{mylightbluetext}{\bm{\boxtimes}} & \bm{\boxtimes} \\
0 & \bm{0} & \bm{0} & \textcolor{mylightbluetext}{\bm{\boxtimes}}
\end{sbmatrix}.
\end{equation}
In this example, after applying the transformation $\bE_1$, the element 7 is chosen as the pivot. Although this pivot is not strictly necessary, it ensures that no multiplier exceeds an absolute value of 1 during Gaussian elimination.

\index{Floating point operations (flops)}
The general procedure for computing the  LU decomposition with partial pivoting for a matrix $\bA\in\real^{n\times n}$ is detailed in Algorithm~\ref{alg:lu-partial-pivot}.
The computational cost of this algorithm is approximately $\sim2/3(n^3)$ \textit{floating point operations (flops)}, along with $(n-1)+(n-2)+\ldots + 1 \sim \mathcalO(n^2)$  comparisons  due to the pivoting process \citep{lu2021numerical}. 
The result of this process is an upper triangular matrix $\bU$, given by
\begin{equation}\label{equation:partial_pivot_u1}
\bU = \bE_{n-1}\bP_{n-1} \ldots \bE_2\bP_2\bE_1\bP_1\bA.
\end{equation}

\begin{algorithm}[htp] 
	\caption{LU Decomposition with Partial Pivoting} 
	\label{alg:lu-partial-pivot} 
	\begin{algorithmic}[1] 
		\Require 
		Matrix $\bA$ of size $n\times n$;  
		\State Set $\bU = \bA$;
		\For{$k=1$ to $n-1$} \Comment{i.e., get the $k$-th column of $\bU$}
		\State Find a row permutation $\bP_k$ that swaps $u_{kk}$ with the largest element in $|\bU_{k:n,k}|$;
		\State $\bU =\bP_k\bU$;
		\State \algoalign{Compute the Gaussian transformation $\bE_k $ to zero out elements below the diagonal in the $k$-th column of $\bU$;}
		\State $\bU = \bE_k\bU$;
		\EndFor
		\State Output $\bU$;
	\end{algorithmic} 
\end{algorithm}
\paragraph{Computing the final $\bL$.} We still need to reconstruct the LU decomposition in the standard form:
$$
\bA = \bP\bL\bU,
$$
where $\bP=\bP_1 \bP_2\ldots\bP_{n-1}$ represents the overall permutation matrix, $\bU$ is the upper triangular matrix obtained directly from the algorithm, and $\bL$ is a unit lower triangular matrix with $\abs{l_{ij}}\leq 1$ for all $1 \leq i,j\leq n$. 
The submatrix $\bL_{k+1:n,k}$ is a permuted version of $\bE_k$'s multipliers. To see this, we notice that the permutation matrices used in the algorithm involve only  swaps of two rows. \textit{This implies each $\bP_k$ is symmetric and satisfies $\bP_k^2 = \bI$, for $k\in \{1,2,\ldots,n-1\}$}.
Let
$$
\bM_k = (\bP_{n-1} \ldots \bP_{k+1}) \bE_k (\bP_{k+1} \ldots \bP_{n-1}).
$$
Substituting this into Equation~\eqref{equation:partial_pivot_u1}, $\bU$ can be expressed as 
$
\bU = \bM_{n-1}\ldots \bM_2\bM_1 \bP^\top \bA.
$
To analyze $\bM_k$, recall that each $\bP_{k+1}$ is a permutation matrix with an identity submatrix in the upper-left $k\times k$ block. Thus,
$$
\begin{aligned}
\bM_k &= (\bP_{n-1} \ldots \bP_{k+1}) (\bI_n-\bz_k\be_k^\top) (\bP_{k+1} \ldots \bP_{n-1})\\
&=\bI_n - (\bP_{n-1} \ldots \bP_{k+1})(\bz_k\be_k^\top)(\bP_{k+1} \ldots \bP_{n-1}) \\
&=\bI_n - (\bP_{n-1} \ldots \bP_{k+1}\bz_k) (\be_k^\top\bP_{k+1} \ldots \bP_{n-1}) \\
&=\bI_n - (\bP_{n-1} \ldots \bP_{k+1}\bz_k)\be_k^\top.  \qquad &\text{(since $\be_k^\top\bP_{k+1} \ldots \bP_{n-1} = \be_k^\top$)}
\end{aligned}
$$
This shows that $\bM_k$ is unit lower triangular, with its $k$-th column being a permuted version of  $\bE_k$. Consequently, the final lower triangular matrix is:
$$
\bL = \bM_1^{-1}\bM_2^{-1} \ldots \bM_{n-1}^{-1}.
$$
Thus, we obtain the full LU decomposition $\bA =\bP\bL\bU$.

\begin{algorithm}[H] 
	\caption{LU Decomposition with Complete Pivoting} 
	\label{alg:lu-complete-pivot} 
	\begin{algorithmic}[1] 
		\Require 
		Matrix $\bA$ with size $n\times n$;  
		\State Set $\bU = \bA$;
		\For{$k=1$ to $n-1$} \Comment{the value $k$ is to get the $k$-th column of $\bU$}
		\State \algoalign{Find a row permutation matrix $\bP_k$ and a column permutation $\bQ_k$ that swaps $u_{kk}$ with the largest element in $|\bU_{k:n,k:n}|$, say $u_{ab} = \max{|\bU_{k:n,k:n}|}$;}
		\State $\bU =\bP_k\bU\bQ_k$;
		\State \algoalign{Determine the Gaussian transformation $\bE_k $ to introduce zeros below the diagonal in column $k$  of $\bU$;}
		\State $\bU = \bE_k\bU$;
		\EndFor
		\State Output $\bU$;
	\end{algorithmic} 
\end{algorithm}
\index{Complete pivoting}
\subsection{Complete Pivoting}\label{section:complete-pivoting}
In partial pivoting, zeros below the diagonal in the $k$-th column of $\bU$ are introduced by selecting the pivot as the largest absolute value in the subcolumn $\bU_{k:n,k}$. In contrast, \textit{complete pivoting} identifies the pivot as  the largest absolute entry in the current submatrix $\bU_{k:n,k:n}$, which is then moved to the  $(k,k)$ position in  $\bU$. This requires applying an additional  \textit{column permutation} $\bQ_k$ at each step. The final upper triangular matrix $\bU$ is expressed as 
$$
\bU = \bE_{n-1}\bP_{n-1}\ldots (\bE_2\bP_2(\bE_1\bP_1\bA\bQ_1)\bQ_2) \ldots \bQ_{n-1}.
$$
The complete pivoting procedure is detailed in Algorithm~\ref{alg:lu-complete-pivot}.

The algorithm requires $~2/3(n^3)$ flops, along with $(n^2+(n-1)^2+\ldots +1^2)\sim \mathcalO(n^3)$ comparisons due to the more extensive pivoting process. 
With $\bP=\bP_1 \bP_2\ldots\bP_{n-1}$, $\bQ = \bQ_1 \bQ_2\ldots\bQ_{n-1}$,  
$$
\bM_k = (\bP_{n-1} \ldots \bP_{k+1}) \bE_k (\bP_{k+1} \ldots \bP_{n-1}), \qquad \text{for all $k\in \{1,2,\ldots,n-1\}$},
$$
and 
$
\bL = \bM_1^{-1}\bM_2^{-1} \ldots \bM_{n-1}^{-1},
$
the final decomposition is  $\bA = \bP\bL\bU \bQ^\top$, or equivalently, $\bP^\top\bA\bQ = \bL\bU$.

\index{Pivoting}
\index{Rook pivoting}
\subsection{Rook Pivoting}
\textit{Rook pivoting} provides an alternative to partial and complete pivoting strategies. Instead of selecting the largest absolute value in $|\bU_{k:n,k:n}|$ at the $k$-th step, it identifies an element  that is \textit{maximal in both its row and column} within that submatrix. This method is non-unique; multiple elements may satisfy the criteria.
For instance, consider the following submatrix:
$$
\bU_{k:n,k:n} = 
\begin{bmatrix}
1 & 2 & 3 & 4 \\
2 & 3 & 7 & 3 \\
5 & 2 & 1 & 2 \\
2 & 1 & 2 & 1 \\
\end{bmatrix}.
$$
In this case, complete pivoting would select the element $7$. 
In contrast, rook pivoting could select any of the entries $5, 4,$ or $7$, since each of these values is the maximum in both its respective row and column.

\index{Rank-revealing}\index{Rank-revealing LU}
\section{Rank-Revealing LU Decomposition}\label{section:rank-reveal-lu-short}
In many  applications, applying Gaussian elimination with pivoting to a matrix $\bA$ of rank $r$ results in a factorization that reveals the rank structure in the following form:
$$
\bP\bA\bQ = 
\begin{bmatrix}
\bL_{11} & \bzero \\
\bL_{21}^\top & \bI 
\end{bmatrix}
\begin{bmatrix}
\bU_{11} & \bU_{12} \\
\bzero & \bzero 
\end{bmatrix},
$$
where $\bL_{11}\in \real^{r\times r}$ and $\bU_{11}\in \real^{r\times r}$ are nonsingular, $\bL_{21}, \bU_{21}\in \real^{r\times (n-r)}$, and $\bP$ and $\bQ$ are permutation matrices. Such a factorization can be obtained using Gaussian elimination with either rook pivoting or complete pivoting; see \citet{hwang1992rank, higham2002accuracy} for more details.

\section{Application: Linear System via  LU Decomposition}\label{section:lu-linear-sistem}
For a well-determined linear system $\bA\bx = \bb$, where $\bA$ is an $n\times n $  nonsingular matrix, directly computing $\bA^{-1}$ is computationally  inefficient. Instead, the system can be solved using LU decomposition. If $\bA$ admits an LU decomposition $\bA = \bP\bL\bU$, the solution  can be obtained using the following algorithm:
\begin{algorithm}[H] 
	\caption{Solving Linear Equations by LU Decomposition} 
	\label{alg:linear-equation-by-LU} 
	\begin{algorithmic}[1] 
		\Require  $\bA$ is a nonsingular $n\times n $ matrix; solve $\bA\bx=\bb$; 
		\State LU decomposition: factor $\bA$ as $\bA=\bP\bL\bU$; \Comment{(2/3)$n^3$ flops}
		\State Apply permutation: compute $\bw = \bP^\top\bb$; \Comment{0 flops }
		\State Solve $\bL\bv = \bw$ using forward substitution; \Comment{$n^2$ flops}
		\State Solve $\bU\bx= \bv$ using backward substitution; \Comment{$n^2$ flops}
	\end{algorithmic} 
\end{algorithm}

The LU decomposition requires a computational  complexity of  $(2/3)n^3$ flops \citep{lu2021numerical}.
Both the backward and forward substitution steps  require $n^2$ flops in total, which can be derived from the sum  $1+3+\ldots + (2n-1)=n^2$ flops. 
Therefore, the overall computational cost is approximately $(2/3)n^3 + 2n^2$ flops. For large values of $n$, the dominant cost comes from the LU decomposition step, which scales as  $(2/3)n^3$ flops. 
Additionally, in the case of a block LU decomposition, where $\bA=\bL\bU$, solving the systems $\bL\bv = \bw$ and $\bU\bx = \bv$  involves additional computational effort. This is because $\bU$ is generally not upper triangular, unlike in standard LU decomposition. 


\index{Nonsingular matrix}
\index{Inverse of a matrix}
\section{Application: Computing the Inverse of Nonsingular Matrices}\label{section:inverse-by-lu}
By Theorem~\ref{theorem:lu-factorization-with-permutation},  any nonsingular matrix $\bA\in \real^{n\times n}$ admits a full LU factorization  of the form $\bA=\bP\bL\bU$. The inverse of $\bA$ can be obtained by solving the matrix equation:
$
\bA\bX = \bI,
$
which involves solving $n$ linear systems of the form $\bA\bx_i = \be_i$ for all $i \in \{1, 2, \ldots, n\}$, where $\bx_i$ is the $i$-the column of $\bX$ and $\be_i$ represents the $i$-th column of $\bI$ (i.e., the $i$-th standard basis vector). 

\begin{theorem}[Inverse of nonsingular matrix by linear system]
	Computing the inverse of a nonsingular matrix $\bA \in \real^{n\times n}$ using $n$ linear systems requires $\sim (2/3)n^3 + n(2n^2)=(8/3)n^3$ flops, where $(2/3)n^3$ corresponds to the cost of performing the LU decomposition of $\bA$.
\end{theorem}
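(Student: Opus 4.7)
The plan is to simply re-use the work already done in Section~\ref{section:lu-linear-sistem} for a single linear system, noting that the LU factorization of $\bA$ needs to be computed only once but can be recycled across all $n$ right-hand sides $\be_1,\be_2,\ldots,\be_n$. First I would compute the full LU decomposition $\bA=\bP\bL\bU$ by Theorem~\ref{theorem:lu-factorization-with-permutation}; by the flop count cited in Algorithm~\ref{alg:linear-equation-by-LU}, this one-time cost is $(2/3)n^3$ flops. Then, for each $i\in\{1,2,\ldots,n\}$, I would obtain the $i$-th column $\bx_i$ of $\bA^{-1}$ by solving $\bA\bx_i=\be_i$ using the already cached factors: apply the permutation $\bw_i=\bP^\top\be_i$ (zero flops, pure reindexing), forward-substitute $\bL\bv_i=\bw_i$ at cost $n^2$ flops, and back-substitute $\bU\bx_i=\bv_i$ at cost $n^2$ flops.

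Summing over the $n$ right-hand sides gives $n\cdot(0+n^2+n^2)=2n^3$ flops for the triangular-solve phase, and adding the single LU cost yields the claimed total
\[
\underbrace{(2/3)n^3}_{\text{LU once}}\;+\;\underbrace{n\cdot 2n^2}_{n\text{ triangular solves}}\;=\;(8/3)n^3\text{ flops.}
\]

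There is no genuine obstacle here; the content of the statement is really the accounting observation that the cubic-cost LU step is amortized across $n$ right-hand sides rather than being paid $n$ times. The only thing worth emphasizing in the write-up is that a naive approach which re-factors $\bA$ for every column $\be_i$ would cost $(2/3)n^4+2n^3$ flops, whereas caching $\bP,\bL,\bU$ after the first factorization collapses this to $(8/3)n^3$. A secondary remark is that the permutation contributes $0$ flops only if we treat $\bP$ as an index vector (cf.\ Definition~\ref{definition:permutation-matrix}); otherwise one should add a negligible $O(n^2)$ term for the $n$ permutations, which does not affect the leading-order $(8/3)n^3$ count.
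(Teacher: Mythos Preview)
Your proposal is correct and matches the paper's approach exactly: the paper simply states that the proof is trivial by using Algorithm~\ref{alg:linear-equation-by-LU}, and you have spelled out precisely that accounting---one LU factorization at $(2/3)n^3$ flops amortized over $n$ forward/backward substitution pairs at $2n^2$ flops each.
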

This result follows directly from Algorithm~\ref{alg:linear-equation-by-LU}.
However, computational efficiency can be improved by leveraging the triangular structure of $\bU$ and $\bL$.
Specifically, the inverse of $\bA$ can be expressed as $\bA^{-1} = \bU^{-1}\bL^{-1}\bP^{-1}=\bU^{-1}\bL^{-1}\bP^\top$. Using this approach, the total computational cost can be  reduced from $(8/3)n^3$ to $2n^3$ flops \citep{lu2021numerical}.

\section{Application: Computing the Determinant}
The LU decomposition also simplifies the computation of the determinant of a matrix. 
If $\bA=\bL\bU$, then $\det(\bA) = \det(\bL\bU) = \det(\bL)\det(\bU) = u_{11}u_{22}\ldots u_{nn}$, where $u_{ii}$ denotes the $i$-th diagonal element of $\bU$ (for $i\in \{1,2,\ldots,n\}$).~\footnote{The determinant of a lower triangular matrix (or an upper triangular matrix) is the product of its diagonal entries.}

Furthermore, for an LU decomposition with permutation, where $\bA=\bP\bL\bU$, the determinant of $\bA$ becomes $\det(\bA) = \det(\bP\bL\bU) = \det(\bP)u_{11}u_{22}\ldots u_{nn}$.
The determinant of a permutation matrix is either 1 or –1 because after
changing rows around (which changes the sign of the determinant~\footnote{The determinant changes sign when two rows are exchanged (sign reversal).}), a permutation matrix
becomes the identity matrix $\bI$, whose determinant is one.

\index{Row equivalent}
\begin{problemset}
\item Solve the following system of equations using row reduction:
$$
\begin{aligned}
	2x_1 + 3x_2 + 4x_3 &= 9,\\
	x_1 + 2x_2+ 3x_3 &= 5,\\
	3x_1 + 4x_2 + 5x_3 &= 7.
\end{aligned}
$$

\item Two matrices $\bA$ and $\bB$ are said to be \textit{row equivalent} (denoted by $\bA\stackrel{r}{\sim}\bB$) 
if $\bA$  can be transformed into $\bB$ using a sequence of elementary row operations.
\begin{itemize}
	\item Show that $\bA\stackrel{r}{\sim}\bB$ if and only if $\bA=\bP\bB$ for some nonsingular matrix $\bP$.
	\item Show that if $\bA\stackrel{r}{\sim}\bC$ and $\bB\stackrel{r}{\sim}\bC$, then $\bA\stackrel{r}{\sim}\bB$.
	\item Show that if $\bA\stackrel{r}{\sim}\bB$ and $\bB\stackrel{r}{\sim}\bC$, then $\bA\stackrel{r}{\sim}\bC$.
	\item Show that if $\bA\stackrel{r}{\sim}\bB$, then $\bB\stackrel{r}{\sim}\bA$.
	\item Show that $\bA\stackrel{r}{\sim}\bB$ if both $\bA$ and $\bB$ are nonsingular.
\end{itemize}

\item Let $\bA_1, \bA_2, \ldots,\bA_n$ be $n \times n$ matrices that are \textit{strictly upper triangular} (having zeros on the diagonal). Show that the product of $\bA_1, \bA_2, \ldots, \bA_n$ is the zero matrix.

\item Given two matrices $\bE$ and $\bF$ obtained from the identity matrix by adding multiples of row $i$ to rows $j$ and $k$ with $i\neq j$ and $i\neq k$, respectively, show that $\bE\bF=\bF\bE$.

\item Show that the LU decomposition of the matrix $\scriptsize\begin{bmatrix}
	0& 1\\
	1 & 0
\end{bmatrix}$
does not exist.

\item Suppose $\bL_1$ and $\bL_2$ are nonsingular lower triangular, and $\bU_1$ and $\bU_2$ are nonsingular upper triangular.
Prove that $\bL_1\bU_1=\bL_2\bU_2$ if and only if there exists an nonsingular diagonal matrix $\bD$ such that $\bL_1=\bL_2\bD$ and $\bU_1=\bD^{-1}\bU_2$.

\item We know that elementary row operations on a matrix can be represented by left-multiplying the matrix with a corresponding transformation matrix (Definition~\ref{definition:elemen_trans}).
Describe the transformation matrices for:
\begin{itemize}
\item Interchanging two rows,
\item Multiplying all elements of a row by a scalar,
\item Adding a scalar multiple of one row to another row.
\end{itemize}
Extend this discussion to the three elementary column transformations.

\item \label{problem:row_det_chg} Consider the three types of elementary row transformation defined in Definition~\ref{definition:elemen_trans}. 
Show the following effects on the determinant:
\begin{itemize}
\item  Type-1 (row interchange): Multiplies the determinant by $-1$.
\item Type-2 (row scaling): Multiplies the determinant by the scaling factor.
\end{itemize}

\item Let $\bP\in \real^{n\times n}$ be a permutation matrix. Discuss how the matrix $\bP$ can be converted to the identity matrix using at most $n$ elementary row transformations of a single type. Use this fact to express $\bA$ as the product of at most $n$ elementary matrix operators.

\item Suppose we reorder all the columns of an invertible matrix $\bA$  using a random permutation, and we already know $\bA^{-1}$, the inverse of the original matrix. Show how we can compute the inverse of this reordered matrix directly from $\bA^{-1}$ without having to perform a full inversion from scratch. Use elementary matrices in the explanation.

\item Prove or disprove each of the following statements by providing a counterexample if applicable:
\begin{enumerate}
\item The sequence in which two elementary row transformations are applied to a matrix does not influence the final outcome.
\item The sequence in which one elementary row transformation and one elementary column transformation are applied to a matrix does not influence the final outcome.
\end{enumerate}

\item \label{prob:compl1} \textbf{Complexity of vector inner product.}
	Given two vectors $\bv,\bw\in \real^{n}$, the  inner product of the two vectors $\bv^\top\bw$ is calculated as $\bv^\top\bw=v_1w_1+v_2w_2+\ldots v_nw_n$. Show   that the computational complexity of evaluating the inner product is $2n-1$ floating-point operations (flops).

\item \label{prob:compl2} \textbf{Complexity of matrix multiplication.} Given two matrices $\bA\in\real^{m\times n}$ and $\bB\in \real^{n\times k}$, show that the computational complexity of their product, $\bA\bB$, is $mk(2n-1)$ flops.

\item Discuss and provide algorithms used to compute the LU decomposition of a matrix. Use the results from Problems~\ref{prob:compl1} and \ref{prob:compl2} to determine the computational complexity of the decomposition.

\item \textbf{Matrix inversion lemma.}
Let $\bA$ be an invertible $n \times n$ matrix and let $\bB, \bC$ be $n \times k$ nonzero matrices for some small value of $k$. Show that the matrix $\bA + \bB\bC^\top$ is invertible if and only if the $k \times k$ matrix $(\bI + \bC^\top\bA^{-1}\bB)$ is invertible. Furthermore, show that the inverse is given by the following:
$$
(\bA + \bB\bC^\top)^{-1} = \bA^{-1} - \bA^{-1}\bB(\bI + \bC^\top\bA^{-1}\bB)^{-1}\bC^\top\bA^{-1}.
$$
This is also known as the \textit{Sherman--Morrison--Woodbury identity}. \textit{Hint: Use Schur complements}.

\item \textbf{Matrix inversion lemma.}
Let $\bP\in\real^{n\times n}$ be any matrix. Show that
\begin{equation}\label{equation:inv_sum_idd}
(\bI +\bP)^{-1} = \bI -  (\bI+\bP)^{-1}\bP= \bI -  \bP(\bI+\bP)^{-1}.
\end{equation}
\textit{Hint: Premultiply and postmultiply the  above identities with  appropriate matrices. }

\item \textbf{Push-through identity \citep{aggarwal2020linear}.}
Let $\bA$ and $\bB$ be two $m \times n$ matrices. Show the following result:
\begin{equation}
\bA^\top(\bI_m + \bB\bA^\top)^{-1} = (\bI_n + \bA^\top\bB)^{-1}\bA^\top.
\end{equation}
Use the above result to show the following for any $m \times n$ matrix $\bC$ and scalar $\lambda > 0$:
\begin{equation}
\bC^\top(\lambda \bI_m + \bC\bC^\top)^{-1} = (\lambda \bI_n + \bC^\top \bC)^{-1}\bC^\top.
\end{equation}
The push-through identity derives its name from the fact that we push in a matrix on the left and it comes out on the right. 

\item Show that the inverse of a symmetric matrix is symmetric using LU decomposition.

\item Consider the $3\times 3$ \textit{row addition transformation} $\bA=\begin{bmatrixscript}
	1 & c & 0\\
	0 & 1 & 0\\
	0 & 0 & 1
\end{bmatrixscript}$ with $c\neq 0$. When  multiplied with another matrix on the right, it adds a multiple of one row to another row.
Derive the inverse of $\bA$  by inverting a sum of matrices and using \eqref{equation:inv_sum_idd}.
\end{problemset}

\newpage
\chapter{Cholesky Decomposition}

\section{Cholesky Decomposition}


The property of positive definiteness or positive semidefiniteness is a significant characteristic of matrices. It not only provides insights into a matrix's fundamental nature but also underpins various mathematical and computational applications.
In this chapter, we introduce decomposition methods for two special types of matrices, examining their unique properties and applications. We begin with the widely recognized Cholesky decomposition, a powerful method  for revealing the positive definiteness of a matrix by factoring it into the product of a lower (or an upper) triangular matrix and its transpose. This decomposition facilitates numerical computations and is indispensable in optimization, statistical modeling, and other fields where ensuring positive definiteness is essential.

\index{Decomposition: Cholesky}
\begin{theoremHigh}[Cholesky decomposition]\label{theorem:cholesky-factor-exist}
Every \textit{positive definite} (PD) matrix $\bA\in \real^{n\times n}$ can be decomposed as 
$$
\bA = \bR^\top\bR,
$$
where $\bR \in \real^{n\times n}$ is an upper triangular matrix \textbf{with positive diagonal entries}. This factorization is known as the \textit{Cholesky decomposition}  of $\bA$, and  $\bR$ is referred to  as the \textit{Cholesky factor} or \textit{Cholesky triangle} of $\bA$.

Alternatively, $\bA$ can be expressed as $\bA=\bL\bL^\top$, where $\bL=\bR^\top$ is a lower triangular matrix \textit{with positive diagonal elements}.
Importantly, the Cholesky decomposition is \textbf{unique} (see Corollary~\ref{corollary:unique-cholesky-main}).
\end{theoremHigh}

The Cholesky decomposition derives its name from the French military officer and mathematician, \textit{Andr\'{e}-Louis Cholesky} (1875--1918), credited with its development during his surveying work. 
Similar to the LU decomposition, the Cholesky decomposition is primarily used to solve linear systems involving positive definite matrices. The approach for solving such systems parallels that of the LU decomposition, as discussed in Section~\ref{section:lu-linear-sistem}, and will not be repeated here.

\index{Gaussian process}
\index{Variational autoencoder}
\index{Generative process}
\paragraph{Applications: an overview.}
We will discuss additional applications of the Cholesky decomposition in Sections~\ref{section:app_cho_md_newton}$\sim$\ref{section:cho_lowrank}. Here, we provide a brief overview of its general use.
Given a covariance matrix $\bSigma$, by applying the Cholesky decomposition $\bSigma = \bL\bL^\top$, we can transform independent standard normal random variables $\rvz$ into multivariate normal random variables $\rvx$ with covariance matrix $\bSigma$ through the transformation $\rvx = \bL \rvz$.
Mathematically, this process can be described as follows:
\begin{itemize}
\item Let $\rvz = [\rz_1, \rz_2, \ldots, \rz_n]^\top$ be a vector of independent standard normal random variables, i.e., $\rz_i \sim \normal(0, 1)$ for all $i$.
\item Let $\bSigma$ denote  the corresponding $n \times n$ positive definite covariance matrix.
\item The Cholesky decomposition of $\bSigma$ gives us $\bSigma = \bL\bL^\top$, where $\bL$ is a lower triangular matrix with positive diagonal entries.
\item Then, the random vector $\rvx = \bL \rvz$ follows a multivariate normal distribution with mean vector $\bmu = \bzero$ and covariance matrix $\bSigma$, since
$
\Cov[\rvx] = \Cov[\bL \rvz] = \bL \Cov[\rvz] \bL^\top = \bSigma.
$
\end{itemize}
This transformation plays a crucial role in simulation processes across various domains.
In finance, \textit{Monte Carlo simulations} are widely used to model portfolios containing multiple assets. Since asset returns are often correlated, accurately capturing these dependencies is essential \citep{lu2022autoencoding}. Using the Cholesky decomposition, one can generate simulated paths of asset returns that reflect historical correlations embedded in the covariance matrix. Applications include \textit{Value-at-Risk (VaR)} estimation, stress testing, and pricing multi-asset derivatives \citep{turkay2003correlation}.
The method is also valuable in machine learning, particularly in sampling from \textit{Gaussian processes}, which are used in Gaussian process regression and Bayesian optimization \citep{williams2006gaussian, lu2021rigorous}.
Additionally, in probabilistic generative models such as \textit{variational autoencoders (VAEs)} or diffusion models, sampling from a multivariate normal distribution is a key step in the generation process \citep{kingma2019introduction, lu2023bayesian}.
In \textit{quantization} of large language or neural network models, the computational process of the Cholesky decomposition can be applied for efficient computation of quantization \citep{frantar2022gptq}.

In summary, the Cholesky decomposition offers a computationally efficient method for generating correlated random variables from uncorrelated ones. This makes it an essential tool in stochastic simulation and probabilistic modeling across diverse fields.

On the other hand, this decomposition has wide applications in optimization algorithms.
For example, the goal of a \textit{quadratic programming} problem is to minimize a quadratic function while satisfying a set of linear constraints. The standard form of a quadratic programming problem can be expressed as:
$$
\min_{\bx} \frac{1}{2} \bx^\top \bA \bx - \bb^\top \bx,
$$
where $\bx$ is the vector of decision variables, $\bA$ is a symmetric positive definite matrix, and $\bb$ is a constant vector.
When the matrix $\bA$ is symmetric and positive definite, Cholesky decomposition can be used to simplify the solving process. The specific steps follow by
replacing $\bA$ in the original objective function with its Cholesky decomposition $\bL\bL^\top$, resulting in the new objective function $\frac{1}{2} \bx^\top (\bL\bL^\top) \bx - \bb^\top \bx$.
Let $\by = \bL^\top \bx$, then the optimization problem becomes:
$$
\min_{\by} \frac{1}{2} \by^\top \by - (\bL^{-\top} \bb)^\top \by.
$$
This is a simple quadratic function in terms of $\by$, which is easy to solve.
By solving the quadratic function in terms of $\by$, we obtain the optimal solution $\by^*$.
This, in turn, yields the optimal solution $\bx^*$ of the original problem using backward substitution.

\section{Existence of Cholesky Decomposition via Recursive Calculation}\label{section:recursi_choles}

In this section, we demonstrate the existence of the Cholesky decomposition using recursive calculation. In Section~\ref{section:cholesky-by-qr-spectral}, we will provide an alternative proof of its existence using QR decomposition and spectral decomposition.
Before proving the existence of the Cholesky decomposition, we introduce the following definitions and lemmas.
\begin{definition}[Positive definite and positive semidefinite\index{Positive definite}\index{Positive semidefinite}]\label{definition:psd-pd-defini}
A matrix $\bA\in \real^{n\times n}$ is \textit{positive definite (PD)} if $\bx^\top\bA\bx>0$ for all nonzero $\bx\in \real^n$, denoted as $\bA\succ \bzero$.
And a matrix $\bA\in \real^{n\times n}$ is \textit{positive semidefinite (PSD)} if $\bx^\top\bA\bx \geq 0$ for all $\bx\in \real^n$, denoted as $\bA\succeq \bzero$.
\footnote{In discussions regarding positive definite or positive semidefinite matrices, it is essential to note that these matrices are symmetric. Therefore, the concept of a positive definite matrix holds significance only in the context of symmetric matrices.}
\end{definition}

One requirement for the existence of the Cholesky decomposition is the concept of positive definiteness. Several key properties of positive definite matrices are summarized below:
\begin{tcolorbox}[title={Positive Definite Matrix Property 1 of 5},colback=\mdframecolorTheorem]
 A matrix $\bA$ is positive definite if and only if all of its eigenvalues are positive. Similarly, $\bA$ is positive semidefinite if and only if all of its eigenvalues are nonnegative.
A detailed proof of this equivalence is presented in Section~\ref{section:equivalent-pd-psd}, based on the spectral theorem.	

\end{tcolorbox}

While not all components of a positive definite matrix need to be positive, it is guaranteed  that the diagonal components of such a matrix are positive:
\begin{tcolorbox}[title={Positive Definite Matrix Property 2 of 5},colback=\mdframecolorTheorem]
\begin{lemma}[Positive diagonals of positive definite matrices]\label{lemma:positive-in-pd}
The diagonal elements of a positive definite matrix $\bA$ are all \textit{positive}. Likewise, the diagonal elements of a positive semidefinite matrix $\bB$ are all \textit{nonnegative}.
\end{lemma}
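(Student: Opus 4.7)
The plan is to apply the defining inequality of positive definiteness (resp.\ positive semidefiniteness) to the standard basis vectors $\be_1, \be_2, \ldots, \be_n \in \real^n$, since testing against these vectors picks out exactly the diagonal entries of $\bA$.

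First I would observe the simple algebraic identity $\be_i^\top \bA \be_i = a_{ii}$ for each $i \in \{1,2,\ldots,n\}$, which follows because $\bA \be_i$ is the $i$-th column of $\bA$ and $\be_i^\top$ then extracts its $i$-th entry. With this identity in hand, the PD case is immediate: since $\be_i \neq \bzero$, Definition~\ref{definition:psd-pd-defini} forces $a_{ii} = \be_i^\top \bA \be_i > 0$ for every $i$. The PSD case for matrix $\bB$ is handled identically, replacing the strict inequality with $b_{ii} = \be_i^\top \bB \be_i \geq 0$.

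There is essentially no obstacle here — the lemma is a direct consequence of unpacking the definition. The only minor care needed is to make clear that the $\be_i$ are nonzero (which legitimizes the strict inequality in the PD case), and to note that this argument does not rely on any symmetry assumption on $\bA$ or $\bB$; the conclusion about diagonal entries follows purely from the quadratic form condition. I would keep the proof to essentially two lines, one for each case.
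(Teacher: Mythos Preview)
Your proposal is correct and matches the paper's own proof essentially line for line: the paper also applies the PD inequality to the unit vectors $\be_i$, obtains $\be_i^\top\bA\be_i = a_{ii} > 0$, and notes that the PSD case follows similarly.
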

\end{tcolorbox}
\begin{proof}[of Lemma~\ref{lemma:positive-in-pd}]
By definition, for a positive definite matrix $\bA$, we have  $\bx^\top\bA \bx >0$ for all nonzero vectors $\bx$. In particular, let $\bx=\be_i$, where $\be_i$ is the $i$-th standard basis vector with 1 in the $i$-th position and 0 elsewhere. Then:
$$
\be_i^\top\bA \be_i = a_{ii}>0, \qquad \forall\, i \in \{1, 2, \ldots, n\},
$$	
where $a_{ii}$ represents the $i$-th diagonal component. A similar argument applies to PSD matrices, where $a_{ii}\geq 0$. This completes the proof.
\end{proof}

Like the LU decomposition, the existence of the Cholesky decomposition also relies on properties of the Schur complement.
\begin{tcolorbox}[title={Positive Definite Matrix Property 3 of 5},colback=\mdframecolorTheorem]
\begin{lemma}[Schur complement of positive definite matrices\index{Schur complement}]\label{lemma:pd-of-schur}
For a positive definite matrix $\bA\in \real^{n\times n}$, the Schur complement of $a_{11}$ is given by $\bS_{n-1}=\bA_{2:n,2:n}-\frac{1}{a_{11}} \bA_{2:n,1}\bA_{2:n,1}^\top$. The Schur complement $\bS_{n-1}$ is also positive definite. 

\paragraph{A note on notation.} The subscript $n-1$   indicates that $\bS_{n-1}$ is an $(n-1)\times (n-1)$ matrix obtained from an $n\times n$ positive definite matrix. 
This notation will be used consistently in the following sections.
\end{lemma}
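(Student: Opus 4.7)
The plan is to show $\by^\top \bS_{n-1} \by > 0$ for every nonzero $\by \in \real^{n-1}$ by embedding $\by$ into a carefully chosen nonzero vector $\bx \in \real^n$ and invoking positive definiteness of $\bA$. First I would record the simple observation that $a_{11} > 0$ (by the preceding Lemma~\ref{lemma:positive-in-pd}), so dividing by $a_{11}$ in the formula for $\bS_{n-1}$ is legitimate; and since $\bA$ is symmetric (the formula implicitly uses $\bA_{1,2:n} = \bA_{2:n,1}^\top$), I may treat $\bA_{2:n,1}$ and $\bA_{1,2:n}$ interchangeably via transpose.

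Next, given any nonzero $\by \in \real^{n-1}$, I would define
$$
\bx = \begin{bmatrix} x_1 \\ \by \end{bmatrix}, \qquad x_1 = -\frac{1}{a_{11}}\bA_{1,2:n}\,\by,
$$
so that $\bx \neq \bzero$ because the last $n-1$ coordinates of $\bx$ coincide with $\by$. The choice of $x_1$ is the standard ``completing the square'' trick that kills the cross term when expanding $\bx^\top \bA \bx$.

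Then I would expand
$$
\bx^\top \bA \bx = a_{11} x_1^2 + 2 x_1 \bA_{1,2:n}\by + \by^\top \bA_{2:n,2:n}\by,
$$
and substitute the definition of $x_1$. The first term becomes $\frac{1}{a_{11}}\by^\top \bA_{2:n,1}\bA_{1,2:n}\by$ and the middle term becomes $-\frac{2}{a_{11}}\by^\top \bA_{2:n,1}\bA_{1,2:n}\by$, so the two together give $-\frac{1}{a_{11}}\by^\top \bA_{2:n,1}\bA_{2:n,1}^\top\by$. Collecting everything yields
$$
\bx^\top \bA \bx = \by^\top\Bigl(\bA_{2:n,2:n} - \tfrac{1}{a_{11}}\bA_{2:n,1}\bA_{2:n,1}^\top\Bigr)\by = \by^\top \bS_{n-1}\by.
$$
Since $\bx \neq \bzero$ and $\bA$ is positive definite, $\bx^\top \bA \bx > 0$, hence $\by^\top \bS_{n-1} \by > 0$, as desired.

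I don't foresee a serious obstacle here: the argument is essentially a one-line computation once the right $\bx$ is written down. The only subtlety worth flagging is to justify that $\bA$ may be taken symmetric (otherwise the stated formula for $\bS_{n-1}$ must be read as $\bA_{2:n,2:n}-\tfrac{1}{a_{11}}\bA_{2:n,1}\bA_{1,2:n}$), and to verify that the resulting $\bS_{n-1}$ is itself symmetric so that positive definiteness in the sense of Definition~\ref{definition:psd-pd-defini} genuinely applies to it; both are immediate from the construction.
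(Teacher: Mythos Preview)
Your proposal is correct and follows essentially the same approach as the paper: both construct the vector $\bx = \begin{bmatrix} -\tfrac{1}{a_{11}}\bA_{2:n,1}^\top\by \\ \by \end{bmatrix}$ and verify that $\bx^\top\bA\bx = \by^\top\bS_{n-1}\by$, then invoke positive definiteness of $\bA$. The only cosmetic difference is that the paper carries out the computation via block matrix multiplication whereas you expand the quadratic form term by term; your added remarks about $a_{11}>0$ and the implicit symmetry assumption are helpful clarifications.
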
	
\end{tcolorbox}

\begin{proof}[of Lemma~\ref{lemma:pd-of-schur}]
Let $\bv\in \real^{n-1}$ be  any nonzero vector. Construct a corresponding vector $\bx\in \real^n$ as 
$
\bx = 
\begin{bmatrixscript}
-\frac{1}{a_{11}} \bA_{2:n,1}^\top  \bv \\
\bv
\end{bmatrixscript},
$
which is nonzero. Now compute:
$$
\begin{aligned}
\bx^\top\bA\bx 
&= \left[-\frac{1}{a_{11}} \bv^\top \bA_{2:n,1}\qquad \bv^\top\right]
\begin{bmatrix}
a_{11} & \bA_{2:n,1}^\top \\
\bA_{2:n,1} & \bA_{2:n,2:n}
\end{bmatrix}
\begin{bmatrix}
-\frac{1}{a_{11}} \bA_{2:n,1}^\top  \bv \\
\bv
\end{bmatrix} \\
&= \left[-\frac{1}{a_{11}} \bv^\top \bA_{2:n,1}\qquad \bv^\top\right]
\begin{bmatrix}
0 \\
\bS_{n-1}\bv
\end{bmatrix} 
= \bv^\top\bS_{n-1}\bv.
\end{aligned}
$$
Since $\bA$ is positive definite, we have $\bx^\top\bA\bx = \bv^\top\bS_{n-1}\bv >0$ for all nonzero $\bv$. Thus,  $\bS_{n-1}$ is positive definite as well.
\end{proof}

This argument extends to PSD matrices as well: if $\bA$ is PSD, then its Schur complement $\bS_{n-1}$ is also PSD.

In the proof of Theorem~\ref{theorem:lu-factorization-with-permutation}, we showed that the Schur complement $\bS_{n-1}=\bA_{2:n,2:n}-\frac{1}{a_{11}} \bA_{2:n,1}\bA_{2:n,1}^\top$ is  nonsingular if $\bA$ is nonsngular and $a_{11}\neq 0$. Similarly, the Schur complement of $a_{nn}$ in $\bA$ is given by ${\bS}^\prime_{n-1} =\bA_{1:n-1,1:n-1} - \frac{1}{a_{nn}}\bA_{1:n-1,n} \bA_{1:n-1,n}^\top$, which is also positive definite if $\bA$ is positive definite.This property is critical in proving that the leading principal minors of a PD matrix are all positive; further details can be found in Section~\ref{appendix:leading-minors-pd}.

\index{Recursive algorithm}
Using these results, we now demonstrate the existence of the Cholesky decomposition via recursion.
\begin{proof}[{of Theorem~\ref{theorem:cholesky-factor-exist}: existence of Cholesky decomposition recursively}]
For any positive definite matrix $\bA$, note that $a_{11} > 0$ by Lemma~\ref{lemma:positive-in-pd}. We can express $\bA$ as:
$$
\setlength{\arraycolsep}{2pt}
\begin{aligned}
\bA &= 
\begin{bmatrix}
a_{11} & \bA_{2:n,1}^\top \\
\bA_{2:n,1} & \bA_{2:n,2:n}
\end{bmatrix} 
=\begin{bmatrix}
\sqrt{a_{11}} &\bzero\\
\frac{1}{\sqrt{a_{11}}} \bA_{2:n,1} &\bI 
\end{bmatrix}
\begin{bmatrix}
\sqrt{a_{11}} & \frac{1}{\sqrt{a_{11}}}\bA_{2:n,1}^\top \\
\bzero & \bA_{2:n,2:n}-\frac{1}{a_{11}} \bA_{2:n,1}\bA_{2:n,1}^\top
\end{bmatrix}\\
&=\begin{bmatrix}
\sqrt{a_{11}} &\bzero\\
\frac{1}{\sqrt{a_{11}}} \bA_{2:n,1} &\bI 
\end{bmatrix}
\begin{bmatrix}
1 & \bzero \\
\bzero & \bA_{2:n,2:n}-\frac{1}{a_{11}} \bA_{2:n,1}\bA_{2:n,1}^\top
\end{bmatrix}
\begin{bmatrix}
\sqrt{a_{11}} & \frac{1}{\sqrt{a_{11}}}\bA_{2:n,1}^\top \\
\bzero & \bI
\end{bmatrix}
=\bR_1^\top
\begin{bmatrix}
1 & \bzero \\
\bzero & \bS_{n-1}
\end{bmatrix}
\bR_1,
\end{aligned}
$$
where  
$\bR_1 = 
\scriptsize
\begin{bmatrix}
\sqrt{a_{11}} & \frac{1}{\sqrt{a_{11}}}\bA_{2:n,1}^\top \\
\bzero & \bI
\end{bmatrix}.
$
By Lemma~\ref{lemma:pd-of-schur}, $\bS_{n-1}$ is positive definite. Thus, we can factor it similarly:
$
\bS_{n-1}=
\widehat{\bR}_2^\top
\scriptsize
\begin{bmatrix}
1 & \bzero \\
\bzero & \bS_{n-2}
\end{bmatrix}
\normalsize
\widehat{\bR}_2
$,
where $\bS_{n-2}$ is also positive definite.
Substituting this back, we obtain:
$$
\setlength{\arraycolsep}{1.2pt}
\footnotesize
\begin{aligned}
\bA &= \bR_1^\top
\begin{bmatrix}
1 & \bzero \\
\bzero & \widehat{\bR}_2^\top
\begin{bmatrix}
1 & \bzero \\
\bzero & \bS_{n-2}
\end{bmatrix}
\widehat{\bR}_2.
\end{bmatrix}
\bR_1
=
\bR_1^\top
\begin{bmatrix}
1 &\bzero \\
\bzero &\widehat{\bR}_2^\top
\end{bmatrix}
\begin{bmatrix}
1 &\bzero \\
\bzero &\begin{bmatrix}
1 & \bzero \\
\bzero & \bS_{n-2}
\end{bmatrix}
\end{bmatrix}
\begin{bmatrix}
1 &\bzero \\
\bzero &\widehat{\bR}_2
\end{bmatrix}
\bR_1
=
\bR_1^\top \bR_2^\top
\begin{bmatrix}
1 &\bzero \\
\bzero &\begin{bmatrix}
1 & \bzero \\
\bzero & \bS_{n-2}
\end{bmatrix}
\end{bmatrix}
\bR_2 \bR_1.
\end{aligned}
$$
Repeating this process recursively, we eventually express $\bA$ as:
$$
\begin{aligned}
\bA &= \bR_1^\top\bR_2^\top\ldots \bR_n^\top \bR_n\ldots \bR_2\bR_1
= \bR^\top \bR,
\end{aligned}
$$
where $\bR_1, \bR_2, \ldots, \bR_n$ are upper triangular matrices with positive diagonal elements, and $\bR=\bR_1\bR_2\ldots\bR_n$ is also an upper triangular matrix with positive diagonal elements, from which the result follows.
\end{proof}

The above process can also be used to compute the Cholesky decomposition and analyze the computational complexity of the algorithm.

To go in the reverse direction, we can prove that the scatter matrix $\bR^\top\bR$ is positive definite under mild conditions.
\begin{lemma}[$\bR^\top\bR$ is PD]\label{lemma:r-to-pd}\index{Positive definite}
Given any upper triangular matrix $\bR$ with positive diagonal elements, the matrix 
$
\bA = \bR^\top\bR
$
is positive definite.
\end{lemma}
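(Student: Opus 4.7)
The plan is to verify the two defining conditions of positive definiteness directly from the factorization $\bA = \bR^\top\bR$: symmetry, and strict positivity of the quadratic form on nonzero vectors. Symmetry is immediate since $\bA^\top = (\bR^\top\bR)^\top = \bR^\top\bR = \bA$, so the real content is the quadratic form condition.

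For any $\bx \in \real^n$, I would rewrite the quadratic form as
\begin{equation*}
\bx^\top \bA \bx \;=\; \bx^\top \bR^\top \bR \bx \;=\; (\bR\bx)^\top(\bR\bx) \;=\; \|\bR\bx\|^2 \;\ge\; 0.
\end{equation*}
So the only thing left to rule out is equality at a nonzero $\bx$, which happens exactly when $\bR\bx = \bzero$. Hence it suffices to show that $\bR$ is nonsingular.

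This is where the positive-diagonal hypothesis enters. A triangular matrix (upper or lower) is nonsingular if and only if all its diagonal entries are nonzero, a fact already invoked earlier in the excerpt in the proof of Theorem~\ref{theorem:lu-factorization-without-permutation}. Since $\bR$ is upper triangular with strictly positive diagonal entries, every diagonal entry is nonzero, so $\bR$ is invertible. Therefore $\bR\bx = \bzero$ forces $\bx = \bzero$, which means $\|\bR\bx\|^2 > 0$ for every nonzero $\bx$, and thus $\bx^\top\bA\bx > 0$ for every nonzero $\bx$. Combined with symmetry, this shows $\bA$ is positive definite by Definition~\ref{definition:psd-pd-defini}.

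There is no real obstacle here; the only subtlety is being explicit about where the positive-diagonal assumption is used (namely, to guarantee the invertibility of $\bR$ and thereby rule out the degenerate case $\bR\bx = \bzero$ with $\bx \ne \bzero$). One could even slightly weaken the hypothesis to ``nonzero diagonal'' for the same conclusion, although positivity is the natural condition in context since it is needed for uniqueness of the Cholesky factor.
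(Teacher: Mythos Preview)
Your proof is correct and follows essentially the same route as the paper: rewrite $\bx^\top\bA\bx = \|\bR\bx\|^2$ and then argue that the positive diagonal of $\bR$ forces a trivial null space, so the quadratic form is strictly positive on nonzero vectors. The only cosmetic difference is that the paper phrases the key step as ``full column rank $\Rightarrow$ $\dim\nspace(\bR)=0$'' via the fundamental theorem of linear algebra, whereas you invoke invertibility of a triangular matrix with nonzero diagonal directly; your added symmetry check is fine but not required under the paper's Definition~\ref{definition:psd-pd-defini}.
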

\begin{proof}[of Lemma~\ref{lemma:r-to-pd}]
Since  $\bR$ has positive diagonals, it has full column rank, and its null space is of dimension 0 by the fundamental theorem of linear algebra (Theorem~\ref{theorem:fundamental-linear-algebra}). 
Consequently, $\bR\bx \neq \bzero$ for any nonzero vector $\bx$. Therefore, $\bx^\top\bA\bx = \norm{\bR\bx}^2 >0$ for any nonzero vector $\bx$.
\end{proof}
This lemma extends to any $\bR$ with linearly independent columns.

\paragraph{A word on the two claims.} Combining  Theorem~\ref{theorem:cholesky-factor-exist} and Lemma~\ref{lemma:r-to-pd}, we conclude that a matrix $\bA$ is positive definite if and only if $\bA$ can be factored as $\bA=\bR^\top\bR$, where $\bR$ is an upper triangular matrix with positive diagonals.

\begin{algorithm}[H] 
	\caption{Cholesky Decomposition via Recursive Algorithm: Pseudo Code} 
	\label{alg:compute-choklesky11} 
	\begin{algorithmic}[1] 
		\Require 
		Positive definite matrix $\bA$ with size $n\times n$; 
		\For{$k=1$ to $n$} \Comment{compute the $k$-th row of $\bR$}
		\State $r_{kk} \leftarrow \sqrt{a_{kk}}$; \Comment{first element of $k$-th row}
		\State $\bR_{k,k+1:n} \leftarrow \frac{1}{r_{kk}} \bA_{k,k+1:n}$; \Comment{the rest elements of $k$-th row}
		\State $\bA_{k+1:n,k+1:n} \leftarrow \bA_{k+1:n,k+1:n} - \bR_{k,k+1:n}^\top\bR_{k,k+1:n}$; 
		\EndFor
		\State Output $\bA=\bR^\top\bR$.
	\end{algorithmic} 
\end{algorithm}

\paragraph{An alternative perspective of the recursive algorithm.}
The previous proof of the Cholesky decomposition can also be used to compute the decomposition itself; see Algorithm~\ref{alg:compute-choklesky11}.
Since $\bL= \bR^\top$ is lower triangular. The lower triangular factor $\bL$ can be computed as a product of a sequence  of lower triangular matrices. 
To see this, we have
$$
\bA=
\begin{aligned}
\begin{bmatrix}
a_{11} & \bA_{1,2:n} \\
\bA_{2:n,1} & \bA_{2:n,2:n}
\end{bmatrix} 
=
\begin{bmatrix}l_{11} & \bzero \\ 
\bL_{21} & \bL_{22} 
\end{bmatrix}
\begin{bmatrix}
l_{11} & \bL_{21}^\top \\ 
\bzero & \bL_{22}^\top 
\end{bmatrix} 
\end{aligned} 
= \bL\bL^\top.
$$
Then we still have 
$$
\begin{bmatrix}
a_{11} & \bA_{1,2:n} \\
\bA_{2:n,1} & \bA_{2:n,2:n}
\end{bmatrix}  
= 
\begin{bmatrix}
l_{11}^2 & l_{11}\bL_{21}^\top \\ 
l_{11}\bL_{21} & \bL_{21}\bL_{21}^\top+\bL_{22}\bL_{22}^\top 
\end{bmatrix} 
\implies
\begin{cases}
l_{11} &= \sqrt{a_{11}}; \\ 
\bL_{21} &= \frac{1}{l_{11}}\bA_{2:n,1}; \\ 
\bL_{22}\bL_{22}^\top &= \bA_{2:n,2:n} - \bL_{21}\bL_{21}^\top .
\end{cases}
$$
The second perspective involves constructing $n+1$ set of $n \times n$ matrices: $\bA^{(1)},\bA^{(2)}, \ldots, \bA^{(n+1)}$, where $\bA^{(1)}=\bA$, and we want to obtain $\bA^{(n+1)}=\bI$ via the relation:
\begin{equation}\label{equation:choes_recur_secon}
\bA^{(i)} = \bL^{(i)}\bA^{(i+1)}\bL^{(i)^\top}, \ \forall\, i \in\{1,2,\ldots,n\}.
\end{equation}
If these $\bL^{(i)}, \ \forall\, i$ are lower triangular, then we obtain the Cholesky decomposition by 
$$
\bA= 
(\bL^{(1)} \bL^{(2)}\ldots \bL^{(n)}) (\bL^{(1)} \bL^{(2)}\ldots \bL^{(n)})^\top
= \bL\bL^\top.
$$
This is indeed the case. To see this, we can construct 
$$
\bA^{(i)} =
\begin{bmatrix} 
\bI_{i-1} & 0 & \bzero \\ 
0 & a_{ii} & \bb_i^\top \\ 
\bzero & \bb_i & \bB^{(i)} 
\end{bmatrix} 
\qquad\text{and}\qquad
\bL^{(i)} = 
\begin{bmatrix} 
\bI_{i-1} & 0 & \bzero \\ 
0 & \sqrt{a_{ii}} & \bzero \\ 
\bzero & \frac{1}{\sqrt{a_{ii}}}\bb_i & \bI_{n-i} 
\end{bmatrix},
$$
satisfying  $\bA^{(i)} = \bL^{(i)}\bA^{(i+1)}(\bL^{(i)})^\top$:
$$
\begin{aligned} \bA^{(i+1)} 
&= \begin{bmatrix} 
\bI_{i-1} & 0 & \bzero \\ 
0 & 1 & \bzero \\ 
\bzero & \bzero & \bB^{(i)}-\frac{1}{a_{ii}}\bb_i\bb_i^\top 
\end{bmatrix} 
= 
\begin{bmatrix} 
\bI_i & 0 & \bzero \\ 
0 & a_{i+1, i+1} & \bb_{i+1}^\top \\ 
\bzero & \bb_{i+1} & \bB^{(i+1)} 
\end{bmatrix}.
\end{aligned}
$$
Therefore, $\bA$ can be decomposed as a set of lower triangular matrices in \eqref{equation:choes_recur_secon}.
Using the result in Exercise~\ref{exercise:choe_recur_sec} can show that the algorithm for this perspective is equivalent to Algorithm~\ref{alg:compute-choklesky11}.

\begin{exercise}\label{exercise:choe_recur_sec}
Verify that $\bL^{(i)}_{i:,i} = \bL_{i:,i}, i = 1,2,\ldots, n$.
\end{exercise}

\section{Sylvester's Criterion: Leading Principal Minors of PD Matrices}\label{appendix:leading-minors-pd}

In Lemma~\ref{lemma:pd-of-schur}, we proved that for any positive definite matrix $\bA\in \real^{n\times n}$, the Schur complement of $a_{11}$ is given by $\bS_{n-1}=\bA_{2:n,2:n}-\frac{1}{a_{11}} \bA_{2:n,1}\bA_{2:n,1}^\top$, which is also positive definite. 
Similarly, the Schur complement of $a_{nn}$,  $\bS_{n-1}^\prime = \bA_{1:n-1,1:n-1} -\frac{1}{a_{nn}} \bA_{1:n-1,n}\bA_{1:n-1,n}^\top$, is also positive definite.

We now claim that all leading principal minors (Definition~\ref{definition:leading-principle-minors}) of a positive definite matrix $\bA \in \real^{n \times n}$ are positive, a result known as \textit{Sylvester's criterion} \citep{swamy1973sylvester, gilbert1991positive}. Recall that these positive leading principal minors imply the existence of the LU decomposition for any positive definite matrix, as established in Theorem~\ref{theorem:lu-factorization-without-permutation}.

To prove Sylvester's criterion, we begin by establishing the following lemma:
\begin{tcolorbox}[title={Positive Definite Matrix Property 4 of 5},colback=\mdframecolorTheorem]
\begin{lemma}[Quadratic PD]\label{lemma:quadratic-pd}
Let $\bE$ be any invertible matrix. Then $\bA$ is positive definite if and only if $\bE^\top\bA\bE$ is also positive definite.
\end{lemma}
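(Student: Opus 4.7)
The plan is to exploit the invertibility of $\bE$ to set up a bijective change of variables between the two quadratic forms, and then read off positive definiteness in both directions.

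For the forward implication, I would assume $\bA$ is positive definite and take an arbitrary nonzero $\by \in \real^n$. Setting $\bx = \bE\by$, the fact that $\bE$ is invertible guarantees $\bx \neq \bzero$ (otherwise $\by = \bE^{-1}\bx = \bzero$, a contradiction). Then a one-line calculation
$$
\by^\top (\bE^\top \bA \bE) \by = (\bE\by)^\top \bA (\bE\by) = \bx^\top \bA \bx > 0
$$
establishes that $\bE^\top \bA \bE$ is positive definite by Definition~\ref{definition:psd-pd-defini}.

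For the converse, I would apply the forward direction symmetrically. Since $\bE$ is invertible, so is $\bE^{-1}$, and one can write $\bA = (\bE^{-1})^\top (\bE^\top \bA \bE) \bE^{-1}$. If $\bE^\top \bA \bE$ is assumed positive definite, then invoking the forward implication already proved (with $\bE^\top \bA \bE$ playing the role of ``$\bA$'' and $\bE^{-1}$ playing the role of ``$\bE$'') immediately yields that $\bA$ is positive definite.

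There is no real obstacle here: the entire argument is essentially a substitution trick, and the only subtle point is making sure the nonzero vector $\by$ maps to a nonzero vector $\bx$, which is precisely why invertibility of $\bE$ is required. Without invertibility, the forward direction would still give positive semidefiniteness of $\bE^\top \bA \bE$, but not strict positive definiteness. I would highlight this point explicitly in the proof to motivate the invertibility hypothesis.
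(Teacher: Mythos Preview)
Your proof is correct and follows essentially the same approach as the paper: both directions rest on the change of variables $\bx = \bE\by$ (or its inverse) and the fact that invertibility of $\bE$ preserves nonzero vectors. The only cosmetic difference is that for the converse you invoke the already-proved forward implication with $\bE^{-1}$ in place of $\bE$, whereas the paper repeats the substitution argument directly; these are the same idea.
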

\end{tcolorbox}
\begin{proof}[of Lemma~\ref{lemma:quadratic-pd}]
If $\bA$ is positive definite, then for any nonzero vector $\bx$, $\bx^\top \bE^\top\bA\bE \bx = \by^\top\bA\by > 0$, since $\bE$ is invertible such that $\bE\bx$ is nonzero. \footnote{Since the null space of $\bE$ is of dimension 0 and the only solution for $\bE\bx=\bzero$ is the trivial solution $\bx=\bzero$.} Thus, $\bE^\top\bA\bE$ is PD.

Conversely, if $\bE^\top\bA\bE$ is positive definite, for any nonzero $\bx$, $\bx^\top \bE^\top\bA\bE \bx>0$. For any nonzero $\by$, there exists a nonzero $\bx$ such that $\by =\bE\bx$, since $\bE$ is invertible. Hence, $\bA$ is also PD.
\end{proof}

We now provide a rigorous proof of Sylvester's criterion.
\begin{tcolorbox}[title={Positive Definite Matrix Property 5 of 5},colback=\mdframecolorTheorem]
\begin{theorem}[Sylvester's criterion\index{Sylvester's criterion}]\label{lemma:sylvester-criterion}
A real symmetric matrix $\bA\in \real^{n\times n}$ is positive definite if and only if all of its leading principal minors  are positive. 

\end{theorem}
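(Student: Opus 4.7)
The plan is to prove the two implications separately. For the forward direction (PD $\Rightarrow$ all leading principal minors positive), I would fix $k \in \{1,\ldots,n\}$ and show that the $k$-th leading principal submatrix $\bA_k = \bA_{1:k,1:k}$ is itself positive definite: for any nonzero $\bx \in \real^k$, pad with zeros to form $\widetilde{\bx} = [\bx;\bzero] \in \real^n$, which is nonzero, and observe $\widetilde{\bx}^\top \bA \widetilde{\bx} = \bx^\top \bA_k \bx > 0$. Once $\bA_k$ is PD, Property 1 of the PD list (every eigenvalue is positive) gives $\det(\bA_k) > 0$, which is precisely the $k$-th leading principal minor.

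For the reverse direction I would use induction on $n$. The case $n=1$ is immediate since $\bA = [a_{11}]$ and the hypothesis gives $a_{11} > 0$. For the inductive step, suppose the claim holds for size $n-1$ and let $\bA\in\real^{n\times n}$ be symmetric with all leading principal minors positive. The first $n-1$ leading principal minors of $\bA_{n-1} := \bA_{1:n-1,1:n-1}$ coincide with those of $\bA$, so by the induction hypothesis $\bA_{n-1}$ is PD. Applying Theorem~\ref{theorem:cholesky-factor-exist} already proved for size $n-1$, we get an upper-triangular $\bR_{n-1}$ with positive diagonal such that $\bA_{n-1} = \bR_{n-1}^\top\bR_{n-1}$.

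Writing $\bA = \begin{bmatrix}\bA_{n-1} & \bb \\ \bb^\top & c\end{bmatrix}$, I would look for a Cholesky factor of the form $\bR = \begin{bmatrix}\bR_{n-1} & \by \\ \bzero & z\end{bmatrix}$ with $z>0$. Expanding $\bR^\top\bR$ and matching blocks forces $\by = \bR_{n-1}^{-\top}\bb$ and $z^2 = c - \bb^\top\bA_{n-1}^{-1}\bb$. The block determinant formula (the same one used in the proof of Theorem~\ref{theorem:lu-factorization-without-permutation}) gives $\det(\bA) = \det(\bA_{n-1})\cdot(c - \bb^\top\bA_{n-1}^{-1}\bb)$; since both $\det(\bA)$ and $\det(\bA_{n-1})$ are positive by hypothesis, the Schur-complement scalar $c - \bb^\top\bA_{n-1}^{-1}\bb$ is positive, so a real positive $z$ exists. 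The resulting $\bR$ is upper triangular with positive diagonal, and Lemma~\ref{lemma:r-to-pd} then yields that $\bA = \bR^\top\bR$ is positive definite, closing the induction.

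The main obstacle is the reverse direction, and specifically the step that converts the sign information on $\det(\bA)$ and $\det(\bA_{n-1})$ into positivity of the candidate pivot $z^2$. Everything else is bookkeeping once one commits to building the Cholesky factor incrementally; the key leverage comes from pairing the block determinant identity with the inductively produced factorization of $\bA_{n-1}$, which is why starting the induction from the Cholesky existence theorem (rather than, say, directly from the definition of positive definiteness) is natural. A cleaner alternative I would keep in reserve is to invoke the existence and uniqueness of the LDU decomposition (Corollary~\ref{corollary:ldu-decom}) together with symmetry to deduce $\bA = \bL\bD\bL^\top$ with $d_k = \det(\bA_k)/\det(\bA_{k-1}) > 0$, and then apply Lemma~\ref{lemma:r-to-pd} to $\bR = \bD^{1/2}\bL^\top$; this avoids the explicit induction but relies on more of the machinery already built.
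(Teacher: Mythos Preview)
Your proof is correct in both directions, but both differ from the paper's argument.

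For the forward implication, the paper proceeds by induction using the Schur complement of the bottom-right entry: writing the $(k+1)\times(k+1)$ PD matrix as $\bM = \begin{bmatrix}\bA & \bb\\ \bb^\top & d\end{bmatrix}$, it invokes (the extension of) Lemma~\ref{lemma:pd-of-schur} to conclude $\bS_k = \bA - d^{-1}\bb\bb^\top$ is PD, then uses the block determinant formula $\det(\bM) = d\cdot\det(\bS_k)$. Your padding argument is more direct and more transparent. One caution: you appeal to Property~1 (PD $\Rightarrow$ positive eigenvalues $\Rightarrow$ positive determinant), which in the paper's logical order is a forward reference to Lemma~\ref{lemma:eigens-of-PD-psd}. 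You can avoid this entirely by using what is already in hand: once $\bA_k$ is PD, the recursive Cholesky proof (Theorem~\ref{theorem:cholesky-factor-exist}) gives $\bA_k = \bR_k^\top\bR_k$ with positive diagonal, so $\det(\bA_k) = (\det\bR_k)^2 > 0$.

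For the reverse implication, the paper takes a genuinely different route: it constructs congruence transformations $\bE_i$ (built from unit-triangular row/column operations) so that $\bE\bA\bE^\top$ is diagonal with the same leading principal minors as $\bA$, hence with positive diagonal, and then invokes Lemma~\ref{lemma:quadratic-pd} to pull positive definiteness back to $\bA$. Your inductive Cholesky construction is cleaner and stays within machinery already established (Theorem~\ref{theorem:cholesky-factor-exist} via the recursive proof, the block determinant identity, and Lemma~\ref{lemma:r-to-pd}); there is no circularity because the recursive proof of Cholesky does not use Sylvester. Your reserve LDU route is also sound, and in fact anticipates the paper's later Section~\ref{section:cholesky-diagonals}, where the identity $d_k = \Delta_k/\Delta_{k-1}$ is derived; note that this section appears after Sylvester in the text, but its derivation is independent of it.
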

\end{tcolorbox}

\begin{proof}[of Theorem~\ref{lemma:sylvester-criterion}]
We  prove  the forward implication by induction.  Base case ($n = 1$): Since all the components on the diagonal of positive definite matrices are  positive (Lemma~\ref{lemma:positive-in-pd}), for a scalar matrix $\bA$, $\det(\bA) > 0$ if $\bA$ is positive definite.

Assume all leading principal minors of any $k \times k$ positive definite matrix are positive. For a $(k+1) \times (k+1)$ positive definite matrix $\bM$, expressed in block form as $\bM=\scriptsize\begin{bmatrix}
\bA & \bb\\
\bb^\top & d
\end{bmatrix}$, where $\bA$ is a $k\times k$ positive definite submatrix. Its Schur complement of $d$, $\bS_{k} = \bA - \frac{1}{d} \bb\bb^\top$, is also positive definite, and its determinant is positive by the inductive hypothesis. Therefore, $\det(\bM) = \det(d)\det( \bA - \frac{1}{d} \bb\bb^\top) $=
\footnote{By the fact that if matrix $\bM$ has a block formulation: $\bM=\tiny\begin{bmatrix}
\bA & \bB \\
\bC & \bD 
\end{bmatrix}$, then $\det(\bM) = \det(\bD)\det(\bA-\bB\bD^{-1}\bC)$.}  
$d\cdot \det( \bA - \frac{1}{d} \bb\bb^\top)>0$, establishing the result for $(k+1) \times (k+1)$ matrices.

Conversely, if  all the leading principal minors of $\bA\in \real^{n\times n}$ are positive, then all  leading principal submatrices are nonsingular. Denote the $(i,j)$-th entry of $\bA$ as $a_{ij}$. By assumption, $a_{11} > 0$.
To simplify $\bA$, subtract appropriate multiples of its first row from the rows below to zero out the entries in the first column beneath the diagonal element $a_{11}$. This operation can be expressed as:
$$
\bA = 
\begin{bmatrix}
a_{11} & a_{12} & \ldots &a_{1n}\\
a_{21} & a_{22} & \ldots &a_{2n}\\
\vdots & \vdots & \ddots &\vdots\\
a_{n1} & a_{n2} & \ldots &a_{nn}\\
\end{bmatrix}
\stackrel{\bE_1 \bA}{\longrightarrow}
\begin{bmatrix}
a_{11} & a_{12} & \ldots &a_{1n}\\
0 & a_{22} & \ldots &a_{2n}\\
\vdots & \vdots & \ddots &\vdots\\
0 & a_{n2} & \ldots &a_{nn}\\
\end{bmatrix}.
$$

Next, subtract appropriate multiples of the first column of $\bE_1 \bA$ from the other columns to zero out the entries in the first row to the right of the diagonal element $a_{11}$. Due to the symmetry of $\bA$, this operation can also be performed by multiplying $\bE_1 \bA$ on the right by $\bE_1^\top$. The result is:
$$
\bA = 
\begin{bmatrix}
a_{11} & a_{12} & \ldots &a_{1n}\\
a_{21} & a_{22} & \ldots &a_{2n}\\
\vdots & \vdots & \ddots &\vdots\\
a_{n1} & a_{n2} & \ldots &a_{nn}\\
\end{bmatrix}
\stackrel{\bE_1 \bA}{\longrightarrow}
\begin{bmatrix}
a_{11} & a_{12} & \ldots &a_{1n}\\
0 & a_{22} & \ldots &a_{2n}\\
\vdots & \vdots & \ddots &\vdots\\
0 & a_{n2} & \ldots &a_{nn}\\
\end{bmatrix}
\stackrel{\bE_1 \bA\bE_1^\top}{\longrightarrow}
\begin{bmatrix}
a_{11} & 0 & \ldots &0\\
0 & a_{22} & \ldots &a_{2n}\\
\vdots & \vdots & \ddots &\vdots\\
0 & a_{n2} & \ldots &a_{nn}\\
\end{bmatrix}.
$$
This operation preserves the principal minors of $\bA$. Consequently, the leading principal minors of $\bE_1 \bA \bE_1^\top$ are identical to those of $\bA$.

By repeating this process iteratively, we transform $\bA$ into a diagonal matrix of the form $\bE_n \ldots \bE_1 \bA \bE_1^\top \ldots \bE_n^\top$, where the diagonal entries match the diagonal entries of $\bA$ and are positive.
Let $\bE = \bE_n \ldots \bE_2 \bE_1$, which is an invertible matrix. Clearly, $\bE \bA \bE^\top$ is PD, which implies that $\bA$ is also PD, as per Lemma~\ref{lemma:quadratic-pd}.
\end{proof}

\section{Existence of Cholesky Decomposition via  LU  without Permutation}
By Theorem~\ref{lemma:sylvester-criterion} on Sylvester's criterion and  Theorem~\ref{theorem:lu-factorization-without-permutation} regarding the existence of an LU decomposition without permutation, a unique LU decomposition exists for a positive definite matrix $\bA$ of the form $\bA = \bL \bU_0$, where $\bL$ is a unit lower triangular matrix and $\bU_0$ is an upper triangular matrix.
It is also established that \textit{the signs of the pivots of a symmetric matrix are the same as the signs of the eigenvalues} \citep{strang1993introduction}: \index{Pivot}
$$
\text{number of positive pivots = number of positive eigenvalues. }
$$
The decomposition $\bA = \bL \bU_0$ can be expressed as follows:
$$
\begin{aligned}
\bA = \bL\bU_0 &=
\begin{bmatrix}
1 & 0 & \ldots & 0 \\
l_{21} & 1 & \ldots & 0\\
\vdots & \vdots & \ddots & \vdots\\
l_{n1} & l_{n2} & \ldots & 1
\end{bmatrix}
\begin{bmatrix}
u_{11} & u_{12} & \ldots & u_{1n} \\
0 & u_{22} & \ldots & u_{2n}\\
\vdots & \vdots & \ddots & \vdots\\
0 & 0 & \ldots & u_{nn}
\end{bmatrix}.\\
\end{aligned}
$$
Here, the diagonal entries of $\bU_0$ correspond to the pivots of $\bA$. Moreover, as all eigenvalues of PD matrices are positive (by Lemma~\ref{lemma:eigens-of-PD-psd}, a consequence of the spectral decomposition), it follows that the diagonal entries of $\bU_0$ are also positive.

Let us now arrange the diagonal entries of $\bU_0$ into a diagonal matrix $\bD$ such that $\bU_0 = \bD \bU$. Substituting this into the decomposition yields:
$$
\setlength{\arraycolsep}{2.5pt}
\begin{aligned}
\bA = \bL\bU_0 =
\begin{bmatrix}
1 & 0 & \ldots & 0 \\
l_{21} & 1 & \ldots & 0\\
\vdots & \vdots & \ddots & \vdots\\
l_{n1} & l_{n2} & \ldots & 1
\end{bmatrix}
\begin{bmatrix}
u_{11} & 0 & \ldots & 0 \\
0 & u_{22} & \ldots & 0\\
\vdots & \vdots & \ddots & \vdots\\
0 & 0 & \ldots & u_{nn}
\end{bmatrix}
\begin{bmatrix}
1 & u_{12}/u_{11} & \ldots & u_{1n}/u_{11} \\
0 & 1 & \ldots & u_{2n}/u_{22}\\
\vdots & \vdots & \ddots & \vdots\\
0 & 0 & \ldots & 1
\end{bmatrix}=\bL\bD\bU.
\end{aligned}
$$
This simplifies to $\bA = \bL \bD \bU$, where $\bU$ is a \textit{unit} upper triangular matrix.
By the uniqueness of the LU decomposition without permutation in Corollary~\ref{corollary:unique-lu-without-permutation} and the symmetry of $\bA$, we conclude that $\bU = \bL^\top$, and hence $\bA = \bL\bD\bL^\top$. 
Since the diagonal entries of $\bD$ are positive, we can define $\bR = \bD^{1/2} \bL^\top$, where $\bD^{1/2}=\diag(\sqrt{u_{11}}, \sqrt{u_{22}}, \ldots, \sqrt{u_{nn}})$.
Thus, we obtain $\bA = \bR^\top \bR$, which represents the Cholesky decomposition of $\bA$. The matrix $\bR$ is upper triangular with positive diagonal entries.

\index{Upper triangular}
\subsection{Diagonal Values of the Upper Triangular Matrix}\label{section:cholesky-diagonals}

Assume that $\bA$ is a positive definite matrix. We can express $\bA$ as a block matrix $\bA = \scriptsize\begin{bmatrix}
\bA_{k} & \bA_{12} \\
\bA_{21} & \bA_{22}
\end{bmatrix}$, where $\bA_{k}\in \real^{k\times k}$. The block LU decomposition of $\bA$ is given by 
$$
\begin{aligned}
\bA &= \begin{bmatrix}
\bA_{k} & \bA_{12} \\
\bA_{21} & \bA_{22}
\end{bmatrix}
=\bL\bU_0=
\begin{bmatrix}
\bL_{k} & \bzero \\
\bL_{21} & \bL_{22}
\end{bmatrix}
\begin{bmatrix}
\bU_{k} & \bU_{12} \\
\bzero & \bU_{22}
\end{bmatrix} 
=\begin{bmatrix}
\bL_{k}\bU_{k} & \bL_{k}\bU_{12} \\
\bL_{21}\bU_{k11}  & \bL_{21}\bU_{12}+\bL_{22}\bU_{22}
\end{bmatrix}.
\end{aligned}
$$
The $k$-th order leading principal minor of $\bA$ is defined as $\Delta_k = \det(\bA_{1:k, 1:k}) = \det(\bA_k)$ (Definition~\ref{definition:leading-principle-minors}). From the block LU decomposition, we have: 
$$
\Delta_k = \det(\bA_{k}) = \det(\bL_{k}\bU_{k} ) = \det(\bL_{k} )\det(\bU_{k}).
$$
Since $\bL_k$ is a unit lower triangular matrix, its determinant is 1.  
Moreover, by the fact that \textit{the determinant of a lower triangular matrix (or an upper triangular matrix) is equal to the product of the diagonal entries},  we obtain:
$$
\Delta_k = \det(\bU_{k})= u_{11} u_{22}\ldots u_{kk},
$$
i.e., the $k$-th order leading principal minor of $\bA$ is equal to the determinant of the $k\times k$ leading submatrix of $\bU_0$, which is also the product of the first $k$ diagonal entries of $\bD$ (from the decomposition $\bA = \bL\bD\bL^\top$). Let $\bD = \diag(d_1, d_2, \ldots, d_n)$. Then:
$$
\Delta_k = d_1 d_2\ldots d_k = \Delta_{k-1}d_k.
$$
The entries of $\bD$ can also be expressed in terms of the leading principal minors of $\bA$ as:
$$
\bD = \diag\left(\Delta_1, \frac{\Delta_2}{\Delta_1}, \ldots, \frac{\Delta_n}{\Delta_{n-1}}\right),
$$
where $\Delta_k$ denotes the $k$-th order leading principal minor of $\bA$, for all $k\in \{1,2,\ldots, n\}$. Consequently, the diagonal entries of $\bR$ (from the Cholesky decomposition $\bA = \bR^\top\bR$) are:
$$
\diag\left(\sqrt{\Delta_1}, \sqrt{\frac{\Delta_2}{\Delta_1}}, \ldots, \sqrt{\frac{\Delta_n}{\Delta_{n-1}}}\right).
$$

\subsection{Block Cholesky Decomposition}
Building on the previous discussion, let $\bA\in\real^{n\times n}$ be a PD matrix expressed in block form as $\bA = \scriptsize\begin{bmatrix}
\bA_k & \bA_{12} \\
\bA_{21} & \bA_{22}
\end{bmatrix}$, where $\bA_k\in \real^{k\times k}$. Its block LU decomposition is given by:
$$
\begin{aligned}
\bA &= \begin{bmatrix}
\bA_k & \bA_{12} \\
\bA_{21} & \bA_{22}
\end{bmatrix}
=\bL\bU_0=
\begin{bmatrix}
\bL_k & \bzero \\
\bL_{21} & \bL_{22}
\end{bmatrix}
\begin{bmatrix}
\bU_k & \bU_{12} \\
\bzero & \bU_{22}
\end{bmatrix}
=\begin{bmatrix}
\bL_k\bU_k & \bL_{k}\bU_{12} \\
\bL_{21}\bU_{k}  & \bL_{21}\bU_{12}+\bL_{22}\bU_{22}
\end{bmatrix}.\\
\end{aligned}
$$
The $k$-th order leading principal submatrix $\bA_k$ of $\bA$ also admits its own LU decomposition: $\bA_k = \bL_k\bU_k$.  This implies that the Cholesky decomposition of an $n \times n$ matrix $\bA$ contains $n-1$ smaller Cholesky decompositions for its leading principal submatrices: $\bA_k = \bR_k^\top\bR_k$, for all $k\in \{1,2,\ldots, n-1\}$. This is particularly true because any leading principal submatrix $\bA_k$ of a positive definite matrix $\bA$ is also positive definite. To see this, for a PD matrix $\bA_{k+1} \in \real^{(k+1) \times (k+1)}$, consider a vector $\bx_k \in \mathbb{R}^k$ extended by a zero element, $\bx_{k+1} = \scriptsize\begin{bmatrix}
	\bx_k\\
	0
\end{bmatrix}$. 
Then, 
$$
\bx_k^\top\bA_k\bx_k = \bx_{k+1}^\top\bA_{k+1}\bx_{k+1} >0.
$$
Thus, $\bA_k$ is positive definite. 
By recursively applying this argument starting from $\bA \in \mathbb{R}^{n \times n}$, we demonstrate that $\bA_{n-1}, \bA_{n-2}, \ldots, \bA_1$ are all positive definite. Consequently, each of these matrices admits a Cholesky decomposition.

\section{Existence of Cholesky Decomposition via Induction}
In the previous section, we demonstrated the existence of the Cholesky decomposition using the LU decomposition without permutation. Building on the proof of the LU decomposition presented in Section~\ref{section:exist-lu-without-perm}, we now demonstrate that the existence of the Cholesky decomposition can also be directly established using mathematical induction.
\begin{proof}[{of Theorem~\ref{theorem:cholesky-factor-exist}: existence of Cholesky decomposition by induction\index{Induction}}]
We will use induction to prove that every $n\times n$ positive definite matrix $\bA$ can be decomposed as $\bA=\bR^\top\bR$. 
The based case for $1\times 1$ matrices is straightforward; setting $R=\sqrt{A}$ renders $A=R^2$.

Assume that  any $k\times k$ positive definite matrix $\bA_k$ admits a Cholesky decomposition. We now show that any $(k+1)\times(k+1)$ PD matrix $\bA_{k+1}$ can also be factored as this Cholesky decomposition. Write  $\bA_{k+1}$ as a block matrix
$
\bA_{k+1} = 
\scriptsize
\begin{bmatrix}
\bA_k & \bb \\
\bb^\top & d
\end{bmatrix},
$
where $\bA_k$ is a $k\times k$ PD matrix. By the inductive hypothesis, $\bA_k $ admits a Cholesky decomposition: $\bA_k = \bR_k^\top\bR_k$. 
Construct the upper triangular matrix
$
\bR_{k+1}=
\scriptsize
\begin{bmatrix}
\bR_k & \br\\
0 & s
\end{bmatrix}.
$
Then,
$$
\bR_{k+1}^\top\bR_{k+1} = 
\begin{bmatrix}
\bR_k^\top\bR_k & \bR_k^\top \br\\
\br^\top \bR_k & \br^\top\br+s^2
\end{bmatrix}.
$$
Therefore, if we can prove $\bR_{k+1}^\top \bR_{k+1} = \bA_{k+1}$ is the Cholesky decomposition of $\bA_{k+1}$ (which requires the value $s$ to be positive), then we complete the proof. That is, we need to prove
$$
\begin{aligned}
\bb &= \bR_k^\top \br
\qquad \text{and}\qquad 
d = \br^\top\br+s^2.
\end{aligned}
$$
Since $\bR_k$ is nonsingular, we have a unique solution for $\br$ and $s$ that 
$$
\begin{aligned}
\br &= \bR_k^{-\top}\bb
\qquad \text{and}\qquad 
s = \sqrt{d - \br^\top\br} = \sqrt{d - \bb^\top\bA_k^{-1}\bb},
\end{aligned}
$$
where we assume $s$ is nonnegative. To ensure $s>0$, note that since $\bA_k$ is PD, from Sylvester's criterion and the fact that if matrix $\bM$ has a block formulation: $\bM=\scriptsize\begin{bmatrix}
\bA & \bB \\
\bC & \bD 
\end{bmatrix}$, then $\det(\bM) = \det(\bA)\det(\bD-\bC\bA^{-1}\bB)$, we have
$$
\det(\bA_{k+1}) = \det(\bA_k)\det(d- \bb^\top\bA_k^{-1}\bb) =  \det(\bA_k)(d- \bb^\top\bA_k^{-1}\bb)>0.
$$
Since $ \det(\bA_k)>0$, it follows that $(d- \bb^\top\bA_k^{-1}\bb)>0$. Thus, $s>0$, and this completes the proof.
\end{proof}

\section{Uniqueness of Cholesky Decomposition}

This uniqueness of the Cholesky decomposition follows directly from the uniqueness of the LU decomposition without permutation. Alternatively, a more detailed proof of this uniqueness is provided below.
\begin{corollary}[Uniqueness of Cholesky decomposition\index{Uniqueness}]\label{corollary:unique-cholesky-main}
The Cholesky decomposition $\bA=\bR^\top\bR$ of any positive definite matrix $\bA\in \real^{n\times n}$ is unique.
\end{corollary}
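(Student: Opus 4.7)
My plan is to reduce the uniqueness of the Cholesky decomposition to a ``triangular meets triangular'' argument, which is the same style used earlier in the proof of Corollary~\ref{corollary:unique-lu-without-permutation}. Suppose for contradiction (or simply for comparison) that $\bA = \bR_1^\top \bR_1 = \bR_2^\top \bR_2$ with both $\bR_1,\bR_2$ upper triangular and having strictly positive diagonal entries. Since positive diagonals imply invertibility, I can rearrange this equality as
\[
\bR_2 \bR_1^{-1} = \bR_2^{-\top} \bR_1^{\top}.
\]

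The key observation is then the following: the inverse of an upper triangular matrix is upper triangular, and a product of upper triangular matrices is upper triangular, so the left-hand side is upper triangular. By the same reasoning applied to transposes, the right-hand side is lower triangular. A matrix that is simultaneously upper and lower triangular must be diagonal; call this diagonal matrix $\bD$. Substituting $\bR_2 = \bD \bR_1$ into $\bR_2^\top \bR_2 = \bR_1^\top \bR_1$ yields $\bR_1^\top \bD^2 \bR_1 = \bR_1^\top \bR_1$, and since $\bR_1$ is invertible this forces $\bD^2 = \bI$, i.e., each diagonal entry of $\bD$ is $\pm 1$. Finally, since each diagonal entry of $\bD = \bR_2 \bR_1^{-1}$ is the ratio of the corresponding positive diagonal entries of $\bR_2$ and $\bR_1$, every diagonal entry of $\bD$ is positive, so $\bD = \bI$ and $\bR_1 = \bR_2$.

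An alternative route, which I would mention if space permits, is to invoke the uniqueness of the LDU decomposition already established in Corollary~\ref{corollary:unique-lu-without-permutation}. Any Cholesky factor $\bR$ gives rise to a factorization $\bA = \bL \bD \bL^\top$ with $\bL$ unit lower triangular and $\bD$ a positive diagonal matrix, as recorded in Section~\ref{section:cholesky-diagonals}, by setting $\bL = (\bD^{1/2})^{-1}\bR^\top$. Uniqueness of $\bL$ and $\bD$ follows from the uniqueness of the LDU decomposition; then the requirement that $\bR$ have \emph{positive} diagonals pins down the square root $\bD^{1/2}$ uniquely, and hence pins down $\bR = \bD^{1/2} \bL^\top$.

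I do not expect a real obstacle here: the content is entirely algebraic, and the only subtlety is remembering to use the \emph{positivity} of the diagonals of $\bR_1$ and $\bR_2$ to eliminate the sign ambiguity in $\bD = \pm\bI$. Without that hypothesis the decomposition is only unique up to a diagonal sign matrix, so the step where I pass from $\bD^2 = \bI$ to $\bD = \bI$ is the one place where a careful reader needs to be reminded of the standing assumption on the Cholesky factor.
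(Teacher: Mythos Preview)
Your proof is correct and follows essentially the same ``upper triangular equals lower triangular'' route as the paper: both set up $\bR_2\bR_1^{-1}$ (or its inverse) as a diagonal matrix and then use positivity of the diagonals to kill the $\pm 1$ ambiguity. The paper finishes by observing $\bR_1\bR_2^{-1} = \bR_2\bR_1^{-1}$ and comparing diagonal entries $r_{ii}/s_{ii} = s_{ii}/r_{ii}$, while you substitute $\bR_2 = \bD\bR_1$ back into $\bR_2^\top\bR_2 = \bR_1^\top\bR_1$ to get $\bD^2 = \bI$; these are the same computation in slightly different packaging, and your mention of the LDU alternative also matches the paper's remark.
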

\begin{proof}[of Corollary~\ref{corollary:unique-cholesky-main}]
Suppose, for contraction, that the Cholesky decomposition is not unique. Then, there exist  two distinct decompositions such that $\bA=\bR_1^\top\bR_1 = \bR_2^\top\bR_2$. This implies
$
\bR_1\bR_2^{-1}= \bR_1^{-\top} \bR_2^\top.
$
From the fact that the inverse of an upper triangular matrix is also an upper triangular matrix, and the product of two upper triangular matrices is also an upper triangular matrix, \footnote{Similarly, the inverse of a lower triangular matrix is also a lower triangular matrix, and the product of two lower triangular matrices is also a lower triangular matrix.} we realize that the left-hand side of the previous equation is an upper triangular matrix, while the right-hand side  is a lower triangular matrix. 
For both sides to be equal, they must both be diagonal matrices, and $\bR_1^{-\top} \bR_2^\top= (\bR_1^{-\top} \bR_2^\top)^\top = \bR_2\bR_1^{-1}$.
Let $\bLambda = \bR_1\bR_2^{-1}= \bR_2\bR_1^{-1}$ be the diagonal matrix. We notice that each diagonal value of $\bLambda$ is the product of the corresponding diagonal values of $\bR_1$ and $\bR_2^{-1}$ (or $\bR_2$ and $\bR_1^{-1}$). Suppose
$$
\bR_1=\begin{bmatrix}
r_{11} & r_{12} & \ldots & r_{1n} \\
0 & r_{22} & \ldots & r_{2n}\\
\vdots & \vdots & \ddots & \vdots\\
0 & 0 & \ldots & r_{nn}
\end{bmatrix}
\qquad \text{and}\qquad 
\bR_2=
\begin{bmatrix}
s_{11} & s_{12} & \ldots & s_{1n} \\
0 & s_{22} & \ldots & s_{2n}\\
\vdots & \vdots & \ddots & \vdots\\
0 & 0 & \ldots & s_{nn}
\end{bmatrix}.
$$
We have
$$
\begin{aligned}
\bR_1\bR_2^{-1}=
\begin{bmatrix}
\frac{r_{11}}{s_{11}} & 0 & \ldots & 0 \\
0 & \frac{r_{22}}{s_{22}} & \ldots & 0\\
\vdots & \vdots & \ddots & \vdots\\
0 & 0 & \ldots & \frac{r_{nn}}{s_{nn}}
\end{bmatrix}
=
\begin{bmatrix}
\frac{s_{11}}{r_{11}} & 0 & \ldots & 0 \\
0 & \frac{s_{22}}{r_{22}} & \ldots & 0\\
\vdots & \vdots & \ddots & \vdots\\
0 & 0 & \ldots & \frac{s_{nn}}{r_{nn}}
\end{bmatrix}
=\bR_2\bR_1^{-1}.
\end{aligned}
$$ 
Given that both $\bR_1$ and $\bR_2$ have positive diagonals, it follows that $r_{11}=s_{11}, r_{22}=s_{22}, \ldots, r_{nn}=s_{nn}$. And $\bLambda = \bR_1\bR_2^{-1}= \bR_2\bR_1^{-1}  =\bI$.
In other words, $\bR_1=\bR_2$, which contradicts the assumption that the decomposition is not unique.
\end{proof}

As a consequence of this proof, if we do not require the diagonal entries of $\bR_1$ and $\bR_2$ to be positive, then for each diagonal entry, we could have $r_{ii} = \pm s_{ii}$. In that case, the factorization $\bA=\bR^\top\bR$ would not be unique.

\section{Computing  Cholesky Decomposition}
We presented a recursive algorithm for computing the Cholesky decomposition in Algorithm~\ref{alg:compute-choklesky11}.
It is also common to compute the Cholesky decomposition using element-level equations derived directly from  the matrix equation $\bA=\bR^\top\bR$. 
Observe that the  $(i,j)$-th entry of $\bA$ is $a_{ij} = \bR_{:,i}^\top \bR_{:,j} = \sum_{k=1}^{i} r_{ki}r_{kj}$ if $i<j$. This further implies, if $i<j$, we have
$$
\begin{aligned}
	a_{ij} &= \bR_{:,i}^\top \bR_{:,j} = \sum_{k=1}^{i} r_{ki}r_{kj} 
	= \sum_{k=1}^{i-1} r_{ki}r_{kj} + r_{ii}r_{ij}
	\implies
	r_{ij} = (a_{ij} - \sum_{k=1}^{i-1} r_{ki}r_{kj})/r_{ii},
	\quad 
	\text{if }i<j.
\end{aligned}
$$
On the other hand, if $i=j$, we have 
\begin{equation}\label{equation:cho_elem_eq2}
\begin{aligned}
	a_{jj} &= \sum_{k=1}^{j} r_{kj}^2=\sum_{k=1}^{j-1} r_{kj}^2 + r_{jj}^2
	&\implies
	r_{jj} = \sqrt{a_{jj} - \sum_{k=1}^{j-1} r_{kj}^2}.
\end{aligned}
\end{equation}
If we equate the elements of $\bR$ by taking a column at a time and start with $r_{11} = \sqrt{a_{11}}$, we arrive at the element-level formulation of the Cholesky decomposition, as described in Algorithm~\ref{alg:compute-choklesky-element-level}.

\begin{algorithm}[H] 
\caption{Cholesky Decomposition Element-Wise: $\bA=\bR^\top\bR$} 
\label{alg:compute-choklesky-element-level} 
\begin{algorithmic}[1] 
\Require 
Positive definite matrix $\bA$ with size $n\times n$; 
\For{$j=1$ to $n$} \Comment{Compute the $j$-th column of $\bR$}
\For{$i=1$ to $j-1$} 
\State $r_{ij} \leftarrow (a_{ij} - \sum_{k=1}^{i-1} r_{ki}r_{kj})/r_{ii}$, since $i<j$;
\EndFor
\State $r_{jj} \leftarrow \sqrt{a_{jj}- \sum_{k=1}^{j-1}r_{kj}^2}$;
\EndFor
\State Output $\bA=\bR^\top\bR$.
\end{algorithmic} 
\end{algorithm}

On the other hand, Algorithm~\ref{alg:compute-choklesky-element-level}  can be adapted to compute the Cholesky decomposition in the form $\bA=\bL\bD\bL^\top$, where $\bL$ is unit lower triangular and $\bD$ is diagonal, as outlined in Algorithm~\ref{alg:compute-choklesky-_ldl}, where Step 3 and Step 5 are derived from (since $l_{ii}=1, \,\forall\, i\in\{1,2,\ldots,n\}$):
$$
\begin{aligned}
a_{jj}&=\sum_{k=1}^{j-1}d_{kk} l_{jk}^2 + d_{jj};
\qquad \;
a_{ij}= d_{jj} l_{ij}+ \sum_{k=1}^{j-1} d_{kk} l_{ik}l_{jk}, \gap \text{if }i>j.
\end{aligned}
$$
\begin{exercise}
Derive the complexity of Algorithms~\ref{alg:compute-choklesky-element-level}  and \ref{alg:compute-choklesky-_ldl}.
\end{exercise}

\index{Condition number}
This alternative  form of the Cholesky decomposition is particularly  useful for estimating  the \textit{condition number}  of a PD matrix. 
In essence, the condition number of a function measures how sensitive its output is to small perturbations in the input; a smaller condition number indicates greater numerical stability.
For positive definite linear systems, the condition number is defined as the ratio of the largest eigenvalue to the smallest eigenvalue of the PD matrix.
The condition number of a positive definite matrix is lower bounded by the diagonal matrix obtained from its Cholesky decomposition:
\begin{equation}\label{equation:cond_pd_ineq}
	\cond(\bA) \geq \cond(\bD).
\end{equation}
This inequality  can be proven by showing that $\lambda_{\max}\geq d_{\max}$ and $\lambda_{\min}\leq d_{\min}$, where $\lambda_{\max}$ and $\lambda_{\min}$ denote  the largest and smallest eigenvalues of $\bA$, and $d_{\max}$ and $d_{\min}$ represent the largest and smallest diagonals of $\bD$.
Therefore, this form of the Cholesky decomposition can be used to improve the numerical behavior of \textit{Newton's method}; see \S~\ref{section:app_cho_md_newton}.

\begin{algorithm}[h] 
	\caption{Cholesky Decomposition Element-Wise: $\bA=\bL\bD\bL^\top$}  
	\label{alg:compute-choklesky-_ldl} 
	\begin{algorithmic}[1] 
		\Require 
		Positive definite matrix $\bA$ with size $n\times n$; 
		\For{$j=1$ to $n$} \Comment{Compute the $j$-th column of $\bL$}
		\State $l_{jj}\leftarrow1$;
		\State $d_{jj}\leftarrow a_{jj}-\sum_{k=1}^{j-1}d_{kk} l_{jk}^2$;
		\For{$i=j+1$ to $n$} 
		\State $c_{ij}\leftarrow a_{ij}-\sum_{k=1}^{j-1}d_{kk} l_{ik}l_{jk}$, since $i>j$;
		\State $l_{ij}\leftarrow \frac{c_{ij}}{d_{jj}}$;
		\EndFor
		\EndFor
		\State Output $\bA=\bL\bD\bL^\top$, where $\bD=\diag(d_{11}, d_{22},\ldots,d_{nn})$.
	\end{algorithmic} 
\end{algorithm}

\section{Final Remarks on Positive Definite Matrices}
In Section~\ref{section:equivalent-pd-psd}, we will prove that a matrix $\bA$ is PD if and only if $\bA$ can be factored as $\bA=\bP^\top\bP$, where $\bP$ is nonsingular. Furthermore, in Section~\ref{section:unique-posere-pd}, we will demonstrate that a PD matrix $\bA$ admits a unique factorization $\bA =\bB^2$, where $\bB$ is also PD. 
Both results are derived from the spectral decomposition of positive definite matrices.
To summarize, for a PD matrix $\bA$, we can obtain the following factorizations:
\begin{itemize}
\item $\bA=\bR^\top\bR$, where $\bR$ is an upper triangular matrix with positive diagonals, as established in Theorem~\ref{theorem:cholesky-factor-exist} via the Cholesky decomposition; 
\item $\bA = \bP^\top\bP$, where $\bP$ is nonsingular, as stated in  Theorem~\ref{lemma:nonsingular-factor-of-PD}; 
\item and $\bA = \bB^2$, where $\bB$ is PD, as given in Theorem~\ref{theorem:unique-factor-pd}. 
\end{itemize}
For a comprehensive overview, these factorizations of a positive definite matrix $\bA$ are summarized in Figure~\ref{fig:pd-summary}.

\begin{figure}[htbp]
\centering
\begin{widepage}
\centering
\resizebox{0.75\textwidth}{!}{%
\begin{tikzpicture}[>=latex]

\tikzstyle{state} = [draw, very thick, fill=white, rectangle, minimum height=3em, minimum width=6em, node distance=8em, font={\sffamily\bfseries}]
\tikzstyle{stateEdgePortion} = [black,thick];
\tikzstyle{stateEdge} = [stateEdgePortion,->];
\tikzstyle{stateEdge2} = [stateEdgePortion,<->];
\tikzstyle{edgeLabel} = [pos=0.5, text centered, font={\sffamily\small}];

\node[ellipse, name=pdmatrix, draw,font={\sffamily\bfseries},  node distance=7em, xshift=-9em, yshift=-1em,fill={colorals}]  {PD Matrix $\bA$};
\node[state, name=bsqure, below of=pdmatrix, xshift=0em, yshift=1em, fill={colorlu}] {$\bB^2$};
\node[state, name=lq, right of=bsqure, xshift=3em, fill={colorlu}] {$\bP^\top\bP$};
\node[state, name=rsqure, left of=bsqure, xshift=-3em, fill={colorlu}] {$\bR^\top\bR$};
\node[ellipse, name=utv, below of=pdmatrix,draw,  node distance=7em, xshift=0em, yshift=-4em,font={\tiny},fill={coloruppermiddle}]  {PD $\bB$};
\node[ellipse, name=upperr, left of=utv, draw, node distance=8em, xshift=-3em,font={\tiny},fill={coloruppermiddle}]  {\parbox{6em}{Upper \\Triangular $\bR$}};
\node[ellipse, name=nonp, right of=utv,draw,  node distance=8em, xshift=3em, font={\tiny},fill={coloruppermiddle}]  {Nonsingular $\bP$};

\coordinate (lq2inter3) at ($(pdmatrix.east -| lq.north) + (-0em,0em)$);
\draw (pdmatrix.east) edge[stateEdgePortion] (lq2inter3);
\draw (lq2inter3) edge[stateEdge] 
node[edgeLabel, text width=7.25em, yshift=0.8em]{\parbox{5em}{Spectral\\Decomposition}} (lq.north);

\coordinate (rqr2inter1) at ($(pdmatrix.west) + (0,0em)$);
\coordinate (rqr2inter3) at ($(rqr2inter1-| rsqure.north) + (-0em,0em)$);
\draw (rqr2inter1) edge[stateEdgePortion] (rqr2inter3);
\draw (rqr2inter3) edge[stateEdge] 
node[edgeLabel, text width=8em, yshift=0.8em]{\parbox{2em}{LU/\\Spectral/\\Recursive}} (rsqure.north);

\draw (pdmatrix.south)
edge[stateEdge] node[edgeLabel,yshift=0.5em]{\parbox{5em}{Spectral\\Decomposition} } 
(bsqure.north);

\draw (upperr.north)
edge[stateEdge] node[edgeLabel,yshift=0.5em]{} 
(rsqure.south);

\draw (utv.north)
edge[stateEdge] node[edgeLabel,yshift=0.5em]{} 
(bsqure.south);

\draw (nonp.north)
edge[stateEdge] node[edgeLabel,yshift=0.5em]{} 
(lq.south);

\end{tikzpicture}
}
\end{widepage}
\caption{Demonstration of different factorizations for a positive definite matrix $\bA$.}
\label{fig:pd-summary}
\end{figure}

\section{Decomposition for Semidefinite Matrices}
For positive semidefinite matrices, the Cholesky decomposition can still exist, though slight modifications are required.
\begin{theoremHigh}[Semidefinite decomposition\index{Positive semidefinite}]\label{theorem:semidefinite-factor-exist}
Every positive semidefinite matrix $\bA\in \real^{n\times n}$ can be factored as 
$$
\bA = \bR^\top\bR,
$$
where $\bR \in \real^{n\times n}$ is an upper triangular matrix. The diagonal elements of $\bR$ may be zero, and it is important to note that the factorization is generally \textbf{not unique}.
\end{theoremHigh}
In such a decomposition, the diagonal elements of $\bR$ may not accurately reflect the rank of $\bA$ \citep{higham2009cholesky}. 
\begin{example}[\citep{higham2009cholesky}]
Consider the matrix 
$$
\bA = 
\begin{bmatrix}
1 & -1 & 1 \\
-1 & 1 & -1 \\
1 & -1 & 2 
\end{bmatrix}.
$$
A semidefinite decomposition is given by 
$$
\bA = 
\begin{bmatrix}
1 & 0 & 0 \\
-1 & 0 & 0 \\
1 & 1 & 0 
\end{bmatrix}
\begin{bmatrix}
1 & -1 & 1 \\
0 & 0 & 1 \\
0 & 0 & 0
\end{bmatrix}
=\bR^\top\bR.
$$
The matrix $\bA$ has a rank of 2, while $\bR$  has only one nonzero diagonal element.
\end{example}

It is worth noting that all PD matrices are full-rank, a property that plays a key role in many earlier proofs. Sylvester's criterion (Theorem~\ref{lemma:sylvester-criterion}) confirms this by stating that all leading principal minors of PD matrices are positive.
Alternatively, one can argue that if a PD matrix $\bA$ were rank-deficient, the null space of $\bA$ would have a positive dimension. This would imply the existence of a vector $\bx$ in the null space such that $\bA\bx=\bzero$, contradicting the definition of positive definiteness.

However, this property does not hold for PSD matrices, whose null space may have a dimension greater than zero. To address this limitation, we introduce a more general, rank-revealing decomposition for semidefinite matrices.

\index{Rank-revealing}\index{Semidefinite rank-revealing}
\begin{theoremHigh}[Semidefinite rank-revealing decomposition\index{Rank-revealing}]\label{theorem:semidefinite-factor-rank-reveal}
Let $\bA\in \real^{n\times n}$ be a positive semidefinite matrix  of rank $r$. Then, it can be factored as 
$$
\bP^\top \bA\bP  = \bR^\top\bR, \qquad \mathrm{with} \qquad 
\bR = \begin{bmatrix}
\bR_{11} & \bR_{12}\\
\bzero &\bzero 
\end{bmatrix} \in \real^{n\times n},
$$
where $\bR_{11} \in \real^{r\times r}$ is an upper triangular matrix with positive diagonal elements, and $\bR_{12}\in \real^{r\times (n-r)}$. 
\end{theoremHigh}
\begin{proof}[of Theorem~\ref{theorem:semidefinite-factor-rank-reveal}]
The proof is constructive and follows a similar approach to the second perspective used for constructing the Cholesky decomposition, as described in \textcolor{black}{Section~\ref{section:recursi_choles}}.
The algorithm begins with  $ \bA^{(1)} = \bA  $ and generates a sequence of matrices defined as
$$
\bA^{(k)} = [a_{ij}^{(k)}] = 
\begin{bmatrixfoot}
\bI_{k-1} & \bzero \\
\bzero & \bB^{(k)}
\end{bmatrixfoot}, \quad k = 1, 2, \ldots,\quad\text{with } \bB^{(k)}\in\real^{(n-k+1)\times (n-k+1)}.
$$
At the beginning  of step $ k $, we select the largest diagonal element of $ \bA^{(k)} $,
$$
s_q^{(k)} = \max_{k \leq i \leq n} a_{ii}^{(k)},
$$
and interchange rows and columns $q$ and $k$ to bring this into pivot position; that is, $s_q^{(k)}$ appears in the ($k,k$) position of $\bP^{(k)\top}\bA^{(k)}\bP^{(k)}$, where the permutation matrix $\bP^{(k)}$ has the form 
$$
\bP^{(k)}= \begin{bmatrixfoot}
\bI_{k-1} & \bzero \\
\bzero & \widetildebP^{(k)}
\end{bmatrixfoot},
$$ 
and $\widetildebP^{(k)}\in\real^{(n-k+1)\times (n-k+1)}$ is a smaller permutation matrix. 
This pivot must be positive for $k < r$, because otherwise $\bB^{(k)} = \bzero$, which implies that $\rank(\bA) < r$. Next, the elements in the permuted $\bA^{(k)}$ are transformed according to the Cholesky Algorithm~\ref{alg:compute-choklesky11}:
$$
\begin{aligned}
r_{kk} &= \sqrt{a_{kk}^{(k)}}, \quad r_{kj} = a_{kj}^{(k)} / r_{kk}, \quad j = k + 1:n,\\
a_{ij}^{(k+1)} &= a_{ij}^{(k)} - r_{ki} r_{kj}, \quad i,j = k + 1:n.
\end{aligned}
$$
This process is equivalent to subtracting a symmetric rank-one matrix $\br_j \br_j^\top$ from $\bA^{(k)}$, where $\br_j = \be_j^\top \bR$ is the $j$-th row of $\bR$. The algorithm stops when $k = r + 1$. Then all the remaining diagonal elements are zero, which implies that $\bA^{(r+1)} = \begin{bmatrixscript}
\bI_r & \bzero \\
\bzero & \bzero 
\end{bmatrixscript}$.

\paragraph{Construction algorithm.} Below contains more constructive analysis.
Following the  second perspective for computing the Cholesky decomposition in \textcolor{black}{Section~\ref{section:recursi_choles}}, 
we can construct 
$$
\bP^{(k)\top}\bA^{(k)} \bP^{(k)}=
\begin{bmatrixfoot} 
\bI_{k-1} & 0 & \bzero \\ 
0 & a_{kk}^{(k)} & \bb_k^\top \\ 
\bzero & \bb_k & \bB^{(k)} 
\end{bmatrixfoot} 
\qquad\text{and}\qquad
\bL^{(k)} = 
\begin{bmatrixfoot} 
\bI_{k-1} & 0 & \bzero \\ 
0 & \sqrt{a_{kk}^{(k)}} & \bzero \\ 
\bzero & \frac{1}{\sqrt{a_{kk}^{(k)}}}\bb_k & \bI_{n-k} 
\end{bmatrixfoot},
$$
satisfying  $\bP^{(k)\top}\bA^{(k)}\bP^{(k)} = \bL^{(k)}\bA^{(k+1)}(\bL^{(k)})^\top$:
$$
\begin{aligned} \bA^{(k+1)} 
&= \begin{bmatrixfoot} 
\bI_{k-1} & 0 & \bzero \\ 
0 & 1 & \bzero \\ 
\bzero & \bzero & \bB^{(k)}-\frac{1}{a_{kk}^{(k)}}\bb_k\bb_k^\top 
\end{bmatrixfoot} 
= 
\begin{bmatrixfoot} 
\bI_k & 0 & \bzero \\ 
0 & a_{k+1, k+1}^{(k+1)} & \bb_{k+1}^\top \\ 
\bzero & \bb_{k+1} & \bB^{(k+1)} 
\end{bmatrixfoot}.
\end{aligned}
$$

However, we notice that these permutation matrices $\bP^{(1)}, \bP^{(2)}, \ldots, \bP^{(r)}$ are used to permute two columns; therefore, they are symmetric satisfying $\bP^{(k)}\cdot \bP^{(k)} = \bI$ for all $k$.
Let $\bP= \bP^{(1)}\bP^{(2)}\ldots\bP^{(r)}$. 
Since $(\bA^{(r+1)})^2=\bA^{(r+1)}$, $\bA^{(1)}=\bA$ can be expressed as 
\begin{align}
&\bP^\top\bA^{(1)}\bP = \bL\bL^\top;\\
&\bL= \left\{\bP^{(r)}\bP^{(r-1)}\ldots \bP^{(2)} \bP^{(1)}\right\}
\left\{\bP^{(1)}\bL^{(1)} \right\} \left\{\bP^{(2)}\bL^{(2)} \right\}
\ldots \left\{\bP^{(r)}\bL^{(r)} \right\} \bA^{(r+1)}. \label{equation:rrcho_low1}
\end{align}
To complete the proof, it suffice to show that $\bL$ is lower triangular with the rank-revealing property.
On the other hand, each lower triangular $\bL^{(k)}$ can be written as 
$$
\bL^{(k)} = 
\bI - \bl_k \be_k^\top
\quad \text{with}\quad \bl_k = [\bzero_{k-1}, l_{k}, l_{k+1}, \ldots, l_n]^\top,
$$
where $\be_k$ is the $k$-th standard unit basis, and $\bl_k$ is a vector containing $k-1$ zeros. Note that $1-l_k \equiv \sqrt{a_{kk}^{(k)}}$ in this notation.
For $k\in\{1,2,\ldots,r-1\}$, define 
$$
\begin{aligned}
\bM_k&= \bP^{(r)}\bP^{(r-1)}\ldots \bP^{(k+1)}\bL^{(k)}\bP^{(k+1)}\ldots \bP^{(r-1)}\bP^{(r)}\\
&=\bP^{(r)}\bP^{(r-1)}\ldots \bP^{(k+1)} (\bI - \bl_k\be_k^\top)\bP^{(k+1)}\ldots \bP^{(r-1)}\bP^{(r)}\\
&=\bI - (\bP^{(r)}\bP^{(r-1)}\ldots \bP^{(k+1)} \bl_k) (\be_k^\top\bP^{(k+1)}\ldots \bP^{(r-1)}\bP^{(r)})\\
&=\bI - (\bP^{(r)}\bP^{(r-1)}\ldots \bP^{(k+1)} \bl_k) \be_k^\top,
\end{aligned}
$$
where the last equality follows since  $\be_k^\top\bP^{(k+1)}\ldots \bP^{(r-1)}\bP^{(r)}=\be_k^\top$.
This implies $\bM_k$ is lower triangular with its $k$-th column representing a permuted version of $\bL^{(k)}$.
Therefore, it holds that 
$$
\begin{aligned}
&\bM_1 \bM_2\ldots \bM_{r-1} = 
\left\{\bP^{(r)}\bP^{(r-1)}\ldots \bP^{(2)}\right\} \left\{\bL^{(1)} \bP^{(2)}\right\} 
\left\{\bL^{(2)}\bP^{(3)}\right\} \ldots \left\{\bL^{(r-1)}\bP^{(r)}\right\};\\
&\bL \equiv \bM_1 \bM_2\ldots \bM_{r-1} \bL^{(r)}\bA^{(r+1)}.
\end{aligned}
$$
From the above analysis, $\bM_1 \bM_2\ldots \bM_{r-1}$ is lower triangular, and $\bL^{(r)}\bA^{(r+1)}$ has the form
$$
\bL^{(r)}\bA^{(r+1)}
=
\begin{bmatrixfoot}
\bM_{11} &\bzero \\
\bM_{21} &\bzero
\end{bmatrixfoot},
\quad \text{with lower triangular $\bM_{11}$}. 
$$
Therefore, $\bL = \bM_1 \bM_2\ldots \bM_{r-1} \bL^{(r)}\bA^{(r+1)}$ has the desired form
$$
\bL=
\begin{bmatrixfoot}
\bL_{11} &\bzero \\
\bL_{21} &\bzero
\end{bmatrixfoot},
\quad 
\text{with lower triangular $\bL_{11}$}. 
$$
This completes the proof.
\end{proof}

A more compact proof of this rank-revealing decomposition for semidefinite matrices will be presented in Section~\ref{section:semi-rank-reveal-proof}, relying on the spectral decomposition (Theorem~\ref{theorem:spectral_theorem}) and the column-pivoted QR decomposition (Theorem~\ref{theorem:rank-revealing-qr-general}).
Whereas, the  proof for the trivial semidefinite decomposition Theorem~\ref{theorem:semidefinite-factor-exist} can be derived directly from the spectral decomposition and the standard QR decomposition (Theorem~\ref{theorem:qr-decomposition}).

When the matrix $\bA$ is symmetric and indefinite, we can employ  a \textit{symmetric indefinite  decomposition} or \textit{Bunch--Kaufman decomposition} \citep{bunch1977some}. 
\begin{theoremHigh}[Bunch--Kaufman decomposition]\label{theorem:Bunch_Kaufman}
Let $\bA\in \real^{n\times n}$ be a symmetric (indefinite) matrix. Then, it can be factored as 
$$
\bP^\top \bA\bP  = \bL\bB\bL^\top, 
$$
where $\bP$ is a permutation matrix, $\bL$ is a unit lower triangular matrix, and $\bB$ is a block-diagonal matrix with each diagonal block of $\bB$ being either a  $1\times 1$ or a  $2\times 2$ matrix.
\end{theoremHigh}
This type of decomposition is sometimes referred to as an \textit{$\bL\bB\bL^\top$ decomposition}.
It is particularly useful in practical applications, such as solving linear systems and computing  eigenvalues of matrices, especially in cases where a direct Cholesky decomposition cannot be applied (e.g., when the matrix is not positive definite) \citep{dumas2018symmetric}.

\index{Least squares}
\section{Application: Rank-One and Rank-Two Update/Downdate}\label{section:cholesky-rank-one-update}

Updating linear systems after low-rank modifications of the system matrix is a common procedure in fields such as machine learning, statistics, and more \citep{lu2021rigorous}.
For example, when computing the least squares solution using Cholesky decomposition (see Section~\ref{section:application-ls-qr}),  we may want to add or remove one or more data points from the data matrix $\bA$ and the observed data vector $\by$ (that is, to add or delete a row in both $\bA$ and $\by$) in order to analyze the performance of the updated system.
However, it is well known that such updates can become numerically unstable in the presence of round-off errors \citep{seeger2004low}. If the system matrix is positive definite, a more numerically stable approach involves using a representation based on the Cholesky decomposition. In this section, we will provide a proof of the rank-one update/downdate using Cholesky decomposition.

\subsection{Rank-One Update}\index{Rank-one update}
A rank-one update $\bA^\prime$ of a matrix $\bA \in \real^{n\times n}$ by a vector $\bv$ is defined as follows:
\begin{equation*}
\begin{aligned}
\bA^\prime &= \bA + \bv \bv^\top;\\
\bR^{\prime\top}\bR^\prime &= \bR^\top\bR + \bv \bv^\top.
\end{aligned}
\end{equation*}
If we have already calculated the Cholesky factor $\bR$ of $\bA$, then the Cholesky factor $\bR^\prime$ of $\bA^\prime$ can be calculated efficiently.  
This avoids recomputing the decomposition from scratch, reducing the computational cost from $\mathcalO(n^3)$ to $\mathcalO(n^2)$.
Specifically,  $\bR^\prime$ is obtained via a \textit{rank-one Cholesky update}, leveraging the fact that  $\bA^\prime$ differs from $\bA$ only by a symmetric rank-one matrix.
To derive $\bR^\prime$, 
consider a set of orthogonal matrices $\bQ_n \bQ_{n-1}\ldots \bQ_1$ such that:
$$
\bQ_n \bQ_{n-1}\ldots \bQ_1 
\begin{bmatrix}
\bv^\top \\
\bR
\end{bmatrix}
=
\begin{bmatrix}
\bzero \\
\bR^\prime
\end{bmatrix}.
$$
The Cholesky factor $\bR^\prime$ can be determined by analyzing the above transformation.
Specifically, the left-hand side  of the equation, when multiplied by its transpose, yields:
$$
\begin{bmatrix}
\bv & \bR^\top
\end{bmatrix}
\bQ_1^\top \ldots \bQ_{n-1}^\top\bQ_n^\top
\bQ_n \bQ_{n-1}\ldots \bQ_1 
\begin{bmatrix}
\bv^\top \\
\bR
\end{bmatrix}
= \bR^\top\bR + \bv \bv^\top.
$$ 
Similarly, the right-hand side, when multiplied by its transpose, results in:
$$
\begin{bmatrix}
\bzero & \bR^{\prime\top}
\end{bmatrix}
\begin{bmatrix}
\bzero \\
\bR^\prime
\end{bmatrix}=\bR^{\prime\top}\bR^\prime,
$$
which agrees with the left-hand side equation. \textit{Givens rotations} are such orthogonal matrices that can transfer $\bR$ and $\bv$ into $\bR^\prime$. 

\begin{definition}[$n$-th Order Givens rotation]\label{definition:givens-rotation-in-qr}
An $n$-th order \textit{Givens rotation} is a matrix $\bG_{kl}$ of the following form:
\begin{equation}\label{equa:givens_eq1}
\bG_{kl}= \bI + (c-1)(\bdelta_k\bdelta_k^\top + \bdelta_l\bdelta_l^\top) + s(\bdelta_k\bdelta_l^\top -\bdelta_l\bdelta_k^\top ),
\end{equation}
where $\bdelta_k \in \real^n$ is the  $k$-th standard unit basis. 
The subscripts $k$ and $l$ indicate that the \textbf{rotation occurs  in the plane defined by the $k$-th and $l$-th dimensions}.
In other words, we have
$$
\bG_{kl}=
\begin{bmatrixfoot}
1 &          &   &  &   &   & &  & &\\
& \ddots  &  &  &  & && & &\\
&      & 1 &  & & &  && &\\
&      &  & c &  &  &  & s & &\\
&& &   & 1 & & && &\\
&& &   &   &\ddots &  && &\\
&& &  &   &  & 1&& &\\
&& & -s &  &  & &c& &\\
&& & &  &  & & &1 & \\
&& & &  &  & & & &\ddots
\end{bmatrixfoot}_{n\times n},
$$
where the $(k,k), (k,l), (l,k), (l,l)$ entries are $c, s, -s, c$ respectively, and $s = \cos \theta$ and $c=\cos \theta$ for some angle $\theta$.
Using this angle, the Given rotation in \eqref{equa:givens_eq1} can be more precisely denoted as 
\begin{equation}\label{equa:givens_eq2}
\bG_{kl} = \bG_{kl}(\theta).
\end{equation}
Specifically, one can also define the $n$-th order Givens rotation, where $(k,k), (k,l), (l,k),$ and $ (l,l)$ entries are $c, \textcolor{mylightbluetext}{-s, s}, $ and $c$, respectively. The ideas are the same.
\end{definition}
Some fundamental significance of Givens rotations, crucial for proving the existence of the QR decomposition, will be discussed shortly in Section~\ref{section:qr-givens}. 

It can be easily verified that the $n$-th order Givens rotation is  orthogonal, and its determinant is 1. For any vector $\bx =[x_1, x_2, \ldots, x_n]^\top \in \real^n$, 
the effect of applying the Givens rotation matrix $\bG_{kl}$ to $\bx$ is given by:
$$ 
\left\{
\begin{aligned}
&y_k = c \cdot x_k + s\cdot x_l;   \\
&y_l = -s\cdot x_k +c\cdot x_l;  \\
&y_j = x_j , &  (j\neq k,l) 
\end{aligned}
\right.
$$
In other words, a Givens rotation applied to $\bx$ rotates the components $x_k$ and $x_l$ of $\bx$ by an angle $\theta$, while leaving all other components unchanged.

Now suppose we have an $(n+1)$-th order Givens rotation indexed from $0$ to $n$:
$$
\bG_k = \bI + (c_k-1)(\bdelta_0\bdelta_0^\top + \bdelta_k\bdelta_k^\top) + s_k(\bdelta_0\bdelta_k^\top -\bdelta_k\bdelta_0^\top ),
$$
where $c_k = \cos \theta_k, s_k=\sin\theta_k$ for some angle $\theta_k$, $\bG_k \in \real^{(n+1)\times (n+1)}$, and $\bdelta_k\in \real^{n+1}$ is a zero vector except that its $(k+1)$-th entry is 1.

Taking out the $k$-th column of the following transformation 
$$
\begin{bmatrix}
\bv^\top \\
\bR
\end{bmatrix}
\rightarrow 
\begin{bmatrix}
\bzero \\
\bR^\prime
\end{bmatrix}.
$$
Let the $k$-th element of $\bv$ be $v_k$, and the $k$-th diagonal of $\bR$ be $r_{kk}$.
Since $\sqrt{v_k^2 + r_{kk}^2} \neq 0$, we can define $c_k = \frac{r_{kk}}{\sqrt{v_k^2 + r_{kk}^2}}$, $s_k=-\frac{v_k}{\sqrt{v_k^2 + r_{kk}^2}}$. Then, 
$$ 
\left\{
\begin{aligned}
&v_k \rightarrow c_kv_k+s_kr_{kk}=0;   \\
&r_{kk}\rightarrow -s_k v_k +c_kr_{kk}= \sqrt{v_k^2 + r_{kk}^2} = r^\prime_{kk} .  \\
\end{aligned}
\right.
$$
In other words, the Givens rotation $\bG_k$ will set  the $k$-th element of $\bv$ to zero and assign a nonzero value to $r_{kk}$.
This result is essential for performing a rank-one update.
A sequence of Givens rotations $\bG_n \bG_{n-1}\ldots \bG_1 $ transforms the augmented matrix as follows:
$$
\bG_n \bG_{n-1}\ldots \bG_1 
\begin{bmatrix}
\bv^\top \\
\bR
\end{bmatrix}
=
\begin{bmatrix}
\bzero \\
\bR^\prime
\end{bmatrix}.
$$
Each  rotation requires $6n$  floating-point operations (flops), yielding a total computational cost of $6n^2$ flops for  $n$ such rotations. 
This approach significantly reduces the complexity of calculating the Cholesky factor of $\bA^\prime$ from $\frac{1}{3} n^3$ to $6n^2$ flops, assuming the Cholesky factor of $\bA$ is already known \citep{lu2021numerical}. 
The algorithm is particularly useful in reducing the computational complexity of posterior calculations in Bayesian inference for \textit{Gaussian mixture models} \citep{lu2021bayes}.
At each stage, $k$ new samples are added or removed from an existing cluster, which corresponds to performing $k$ rank-one updates.

\index{Gaussian mixture models}
\subsection{Rank-One Downdate}
Now suppose that the Cholesky factor of $\bA$ has been computed, and  $\bA^\prime$ is a \textit{rank-one downdate} of $\bA$, defined as:
\begin{equation*}
\begin{aligned}
\bA^\prime &= \bA - \bv \bv^\top;\\
\bR^{\prime\top}\bR^\prime &= \bR^\top\bR - \bv \bv^\top. 
\end{aligned}
\end{equation*}
The algorithm for performing such a downdate follows a similar procedure:
\begin{equation}\label{equation:rank-one-downdate}
\bG_1 \bG_{2}\ldots \bG_n
\begin{bmatrix}
\bzero \\
\bR
\end{bmatrix}
=
\begin{bmatrix}
\bv^\top \\
\bR^\prime
\end{bmatrix}.
\end{equation}
Once again, each transformation, $
\bG_k = \bI + (c_k-1)(\bdelta_0\bdelta_0^\top + \bdelta_k\bdelta_k^\top) + s_k(\bdelta_0\bdelta_k^\top -\bdelta_k\bdelta_0^\top ),
$ can be constructed in the following way.
Taking out the $k$-th column of the following equation 
$$
\begin{bmatrix}
\bzero \\
\bR
\end{bmatrix}
\rightarrow 
\begin{bmatrix}
\bv^\top \\
\bR^\prime
\end{bmatrix}.
$$
We realize that $r_{kk} \neq 0$, and let $c_k=\frac{\sqrt{r_{kk}^2 - v_k^2}}{r_{kk}}$, $s_k = \frac{v_k}{r_{kk}}$. Then, 
$$ 
\left\{
\begin{aligned}
& 0 \rightarrow s_kr_{kk}=v_k;   \\
&r_{kk}\rightarrow c_k r_{kk}= \sqrt{r_{kk}^2-v_k^2  }=r^\prime_{kk} .  \\
\end{aligned}
\right.
$$
To ensure that $\bA^\prime$ remains positive definite, it is necessary that $r^2_{kk} > v_k^2$. 
If this condition is not satisfied, then $c_k$, as defined above, will not be real-valued, and the update cannot proceed.
As a verification step, one can check that  multiplying the left-hand side of \eqref{equation:rank-one-downdate} by its transpose yields:
$$
\begin{bmatrix}
\bzero & \bR^\top
\end{bmatrix}
\bG_n^\top \ldots \bG_{2}^\top\bG_1^\top
\bG_1 \bG_{2}\ldots \bG_n
\begin{bmatrix}
\bzero \\
\bR
\end{bmatrix} =\bR^\top\bR.
$$
Similarly, multiplying the right-hand side by its transpose gives:
$$
\begin{bmatrix}
\bv & \bR^{\prime\top} 
\end{bmatrix}
\begin{bmatrix}
\bv^\top \\
\bR^\prime
\end{bmatrix}=\bv\bv^\top + \bR^{\prime\top}\bR^\prime.
$$
This confirms that $\bR^{\prime\top}\bR^\prime = \bR^\top\bR - \bv \bv^\top$.

\section{Application: Indefinite Rank-Two Update}\index{Rank-two update}

Let $\bA = \bR^\top\bR$ be the Cholesky decomposition of $\bA$. \citet{goldfarb1976factorized, seeger2004low} introduced a stable method for performing  an indefinite rank-two update of the form 
$$
\bA^\prime = (\bI+\bv\bu^\top)\bA(\bI+\bu\bv^\top).
$$
Let
$$
\bigg\{ 
\begin{aligned}
\bz &= \bR^{-\top}\bv,   \\
\bw &= \bR\bu,
\end{aligned}
\qquad 
\implies
\qquad  
\bigg\{
\begin{aligned}
\bv &= \bR^{\top}\bz,   \\
\bu &= \bR^{-1}\bw.
\end{aligned}
$$
Now suppose that the LQ decomposition \footnote{This will be introduced in Theorem~\ref{theorem:lq-decomposition}.} of $\bI+\bz\bw^\top$ is given by $\bI+\bz\bw^\top =\bL\bQ$, where $\bL$ is lower triangular and $\bQ$ is orthogonal. Then, we can express $\bA^\prime$ as
$$
\begin{aligned}
\bA^\prime &= (\bI+\bv\bu^\top)\bA(\bI+\bu\bv^\top)
= (\bI+\bR^{\top}\bz   \bw^\top \bR^{-\top })\bA(\bI+\bR^{-1}\bw \bz^\top\bR)\\
&= \bR^\top (\bI+\bz\bw^\top)(\bI+\bw\bz^\top)\bR
= \bR^\top\bL \bQ\bQ^\top \bL^\top\bR 
= \bR^\top\bL \bL^\top\bR.
\end{aligned}
$$
Finally, let $\bR^\prime = \bR^\top\bL$, which is a lower triangular matrix. This establishes the Cholesky decomposition of $\bA^\prime$.

\index{Newton's method}
\index{Modified Newton's method}
\section{Application: Modified Newton's Method and Nearest Correlation}\label{section:app_cho_md_newton}
When optimizing or minimizing a function $f(\bx)$ over $\bx$, 
the standard Newton's method~\footnote{See, for example, \citet{lu2025practical}.} updates the estimate at the  $t$-th iteration as
$$
\bx^{(t+1)} \leftarrow \bx^{(t)} + \bd^{(t)},
$$
where $(\nabla^2 f(\bx^{(t)}) )\bd^{(t)} = -\nabla f(\bx^{(t)})$ determines the ``candidate" descent direction $\bd^{(t)}$.
The vector $\bd^{(t)}$ is  a descent direction only when the Hessian $(\nabla^2 f(\bx^{(t)}) )$ is PD, which is not always the case.

The modified Newton's method addresses this issue by approximating the Hessian with $\bH^{(t)} = \nabla^2 f(\bx^{(t)}) +\bE^{(t)}$, ensuring that $\bH^{(t)}$ is PD \citep{gill2019practical, lu2025practical}.
Given the Cholesky decomposition in the form $\nabla^2 f(\bx^{(t)})=\bL\bD\bL^\top=\bR^\top\bR$ (where $\bR=\bD^{1/2}\bL^\top$) and the condition number inequality $\cond(\nabla^2 f(\bx^{(t)}))\geq \cond(\bD)$ (see {Equation~\eqref{equation:cond_pd_ineq}}), the goal of the modified Newton's method can be approximately achieved by adjusting the diagonals of $\bD$.
To be more specific, when computing the Cholesky decomposition using Algorithm~\ref{alg:compute-choklesky-_ldl}, the modified Newton's method imposes bounds on the diagonal $d_{jj}$, given two parameters $\alpha$ and $\beta$, such that 
$$
d_{jj} \geq \alpha,  \gap  l_{ij}\sqrt{d_{jj}}\leq \beta, \,i=\{j+1,j+2,\ldots,n\}.
$$
The latter constraint serves  to upper-bound each row of $\bR$, since $\bR=\bD^{1/2}\bL^\top$. And this is equivalent to updating each $d_{jj}$ in {Algorithm~\ref{alg:compute-choklesky-_ldl}} by
$$
d_{jj} \leftarrow \max\left\{ \abs{c_{jj}},\, \beta,\, \mathop{\max}_{i>j}\abs{c_{ij}}\right\}.
$$

\index{Nearest correlation matrix problem}
\paragraph{Nearest correlation matrix problem.}
The modified Cholesky decomposition discussed above can also be applied to the \textit{nearest correlation matrix (NCM)} problem.
In statistical modeling, a correlation matrix is often used to represent the correlation coefficients between a set of two or more random variables. The $(i,j)$-th entry of such a matrix represents the correlation coefficient between the variables $\bx_i$ and $\bx_j$. Clearly, such a matrix must be symmetric, have ones along the diagonal, and be positive semidefinite.

In many practical applications, however, a matrix that is intended to represent correlations between variables may fail to be a valid correlation matrix---most commonly because it is not positive semidefinite. There are several reasons this might occur, but it is typically due to missing data being estimated or matrix entries being altered, either intentionally or out of necessity. One specific example where this issue arises is in financial stress testing, which often involves modifying the elements of a matrix that represents the correlations among various stocks \citep{higham2002computing, higham2016bounds, mcsweeney2017modified}.
In such cases, we often seek to find the nearest correlation matrix to the given one, which can then serve as the ``true" matrix for further computations. This problem has long been of interest, especially in the finance industry.

\section{Application: Obtain Orthonormal Basis}\label{section:cho_orthonbasis}

The Cholesky decomposition can be used to orthonormalize a basis set in an $n$-dimensional vector space.
Let $\bS\in\real^{n\times n}$ be a full-rank matrix with Cholesky decomposition $\bS^\top\bS=\bL\bL^\top$, where $\bL$ is a lower triangular matrix.
Consider the transformation $\bQ = \bS (\bL^{-1})^\top$. We verify that $\bQ$ is orthogonal by computing:
\begin{equation}
\bQ^\top \bQ = \bL^{-1} \bS^\top \bS (\bL^{-1})^\top = \bL^{-1} \bL \bL^\top (\bL^{-1})^\top = \bL^{-1} \bL (\bL^{-1} \bL)^\top = \bI.
\end{equation}
To implement this transformation in a program, we can transpose both sides:
$
\bQ^\top = \bL^{-1} \bS^\top.
$
Denoting the $i$-th row vectors of $\bQ$ and $\bS$ as $\bq_i$ and $\bs_i$, respectively, we obtain:
$$
\bq_i = \bL^{-1} \bs_i  
\quad \implies \quad 
\bL \bq_i = \bs_i,\quad i = 1, 2,\ldots, n.
$$
Since $\bL$ is lower triangular, each system can be efficiently solved using forward substitution. For simplicity, dropping the index $i$, consider the system $
\bL\bq=\bs$. The solution via forward substitution is given by the following recursion:
$$
q_1 = \frac{s_1}{l_{11}},
\qquad 
q_i = \frac{1}{l_{ii}} \left( s_i - \sum_{j=1}^{i-1} l_{ij} q_j \right), \quad i = 2,3, \ldots, n.
$$

\begin{figure}[h]
\centering                       
\vspace{-0.35cm}               
\subfigtopskip=2pt              
\subfigbottomskip=2pt           
\subfigcapskip=-15pt            
\includegraphics[width=0.9\textwidth]{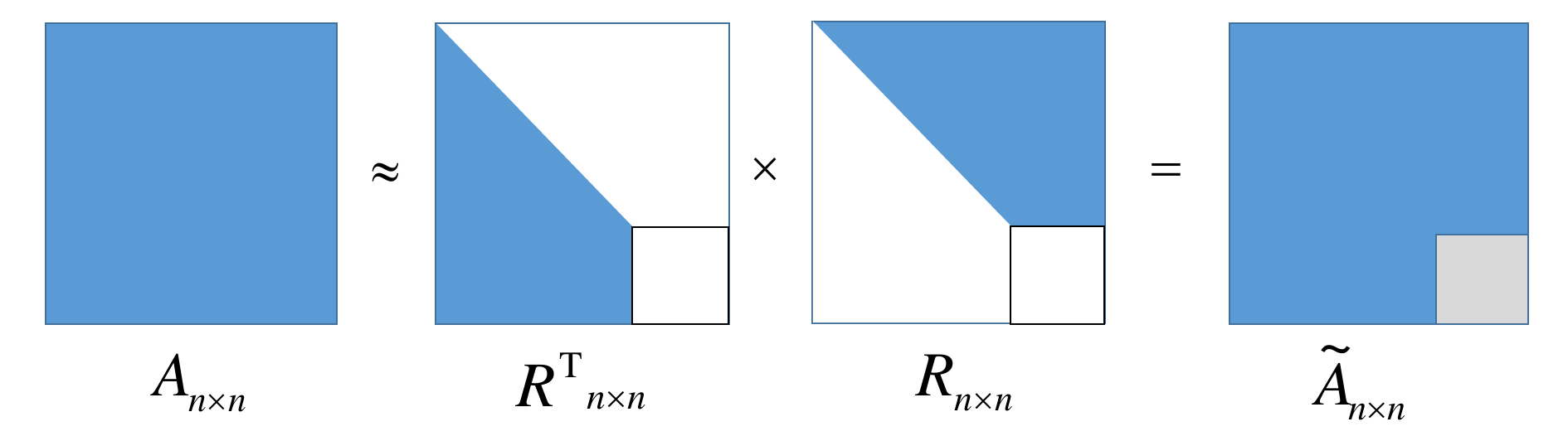}
\caption{
	Demonstration of low-rank approximation using pivoted Cholesky decomposition. White entries represent zeros, while \textcolor{darkgray}{gray} entries represent the approximated values.}
\label{fig:cholesky_lowrank}
\end{figure}
\index{Low-rank approximation}
\section{Application: Low-Rank Approximation}\label{section:cho_lowrank}
We will discuss low-rank approximation in more detail in Section~\ref{section:svd-low-rank-approxi} and Chapter~\ref{chapter:als}.
The Cholesky decomposition of a positive definite matrix can also be used to construct a low-rank approximation of that matrix.
Let $\bA=\bR^\top\bR$ be the Cholesky decomposition of a PD matrix $\bA\in\real^{n\times n}$.
We then observe that the $(i,j)$-th entry of $\bA$ is given by
\begin{equation}\label{equation:cho_elem_eq2_var}
a_{ij} = \sum_{k=1}^{\min{(i,j)}} r_{ki} r_{kj} , \quad \forall\, i,j.
\end{equation}
A low-rank approximation of $\bA$ can be obtained by truncating summation over $k$ in \eqref{equation:cho_elem_eq2_var} at some $k \leq r \ll n$. This is achieved by applying row and column permutations during each iteration of the Cholesky decomposition so that the largest diagonal element appears at the top of the currently considered submatrix \citep{golub2013matrix}. 
This approach is equivalent to performing a complete pivoting strategy (see Section~\ref{section:complete-pivoting}), since in a PD matrix, the largest absolute entry always lies on the diagonal (see Problem~\ref{prob:large_pd_diag}).

This method is implemented in the pivoted Cholesky algorithm described in Algorithm~\ref{alg:low_piv_cho}, where the iterations are truncated once the largest remaining diagonal element falls below a specified threshold $\delta$.
At the termination of the algorithm, $r$ represents the rank of the approximation, and the resulting rank-$r$ approximation of $\bA$ is given by
\begin{equation}
a_{ij} \approx \sum_{k=1}^{\min{(i,j,r)}} r_{ki} r_{kj}, \quad \forall\, i,j.
\end{equation}
This process is illustrated in Figure~\ref{fig:cholesky_lowrank}.

\begin{algorithm}[H]
\caption{Low-Rank Approximation via Pivoted Cholesky decomposition}
\label{alg:low_piv_cho}
\begin{algorithmic}[1]
\Require 
Positive definite matrix $\bA$ with size $n\times n$; 
\For{$j = 1$ to $n$} \Comment{Compute the $j$-th column of $\bR$}
\State $v = \argmax_{k \in \{j,\ldots,n\}} a_{kk}$;
\If{$a_{vv} < \delta$}
\State $r = j - 1$;
\State \textbf{break}
\EndIf
\State $\bA_{j,:} \leftrightarrow \bA_{v,:}$; \Comment{Swap $j$-th and $v$-th rows}
\State $\bA_{:,j} \leftrightarrow \bA_{:,v}$; \Comment{Swap $j$-th and $v$-th columns}
\For{$i = 1$ to $j-1$}
\State $r_{ij} = ( a_{ij} - \sum_{k=1}^{i-1} r_{ki} r_{kj} )/r_{ii}$, since $i<j$;
\EndFor
\State $r_{jj} \leftarrow \sqrt{a_{jj} - \sum_{k=1}^{j-1} r_{kj}^2}$;
\EndFor
\State Output $\bA=\bR^\top\bR$ and rank $r$.
\end{algorithmic}
\end{algorithm}


\begin{problemset}
	
\item \label{prob:large_pd_diag} Show that the largest element in a positive definite matrix lies on the diagonal. And a similar argument applies to positive semidefinite matrices.

\item Suppose that $\bA_1$ and $\bA_2$ are $n \times n$ positive semidefinite matrices of ranks $k_1$ and $k_2$, respectively, where $k_2 > k_1$. Prove that $\bA_1 - \bA_2$ cannot be positive semidefinite.

\item \label{prob:pd_cmequiv} \textbf{PD.} Let $\bA\in\real^{n\times n}$. Show that the following two statements are equivalent:
\begin{itemize}
	\item $\bx^*\bA\bx>0$ for all nonzero $\bx\in\complex^n$.
	\item $\bx^\top\bA\bx>0$ for all nonzero $\bx\in\real^n$.
\end{itemize}

\item \label{prob:tr_de_pd} \textbf{Trace, det of PD/PSD/ND matrices.} Let $\bA$ be positive definite (resp., positive semidefinite), show that $\trace(\bA), \det(\bA)$, and the principal minors of $\bA$ are all positive (resp., nonnegative). Moreover, $\trace(\bA)=0$ if and only if $\bA=\bzero$.
Let $\bB\in\real^{n\times n}$ be negative definite. Show that $\trace(\bB)$ is negative; $\det(\bB)$ is negative for odd $n$ and positive for even $n$.

\item Show that the following matrix is positive definite and compute  its Cholesky decomposition:
$
\bA = 
\begin{bmatrixscript}
	5 & -1 & 0 \\
	-1 & 4 & 2 \\
	0 & 2 & 8
\end{bmatrixscript}.
$

\item 
Given two positive semidefinite matrices $\bA,\bB\in\real^{n\times n}$, show that $\bA+\bB$ is also positive semidefinite.
\item 
Given two symmetric matrices $\bA\in\real^{n\times n}$ and $\bB\in\real^{m\times m}$. Prove that the following two claims are equivalent:
\begin{enumerate}
\item $\bA$ and $\bB$ are positive semidefinite.
\item $\scriptsize\begin{bmatrix}
\bA& \bzero \\
\bzero & \bB
\end{bmatrix}$ is positive semidefinite.
\end{enumerate}
\item Let $\bB\in\real^{n\times k}$ and  $\bA=\bB\bB^\top$. Show that 
$\bA$ is positive semidefinite; and $\bA$ is positive definite if and only if $\bB$ has full row rank.

\item Show that if $\bA$ is positive semidefinite, then $\bA^{-1}$ is positive definite (if exists).

\item  Prove that any positive definite matrix $\bA$ is nonsingular. \textit{Hint: Consider $\bA\bx=\bzero$ and analyze $\bx^\top\bA\bx=0$.}

\item \label{prob:sing_psd}
Let $\bA$ be positive semidefinite. Show that $\bx^\top\bA\bx=0$ if and only if $\bA\bx=\bzero$.
Furthermore, prove that a positive semidefinite $\bA$ is positive definite if and only if it is nonsingular.


\item \textbf{Quadratic form.} Consider the quadratic form $L(\bx) = \frac{1}{2} \bx^\top \bA \bx - \bb^\top \bx + c$, where $\bA\in\real^{d\times d}$, $ \bx\in \real^d$, and $c\in\real$. Suppose $\bA$ is positive semidefinite. 
Show that $L(\bx)$ is bounded below over $\real^d$ if and only if $\bb$ is in the column space of $\bA$.

\item \textbf{Quadratic form.} Consider the quadratic form $L(\bx) = \frac{1}{2} \bx^\top \bA \bx - \bb^\top \bx + c$. Show that $L(\bx)$ is \textit{coercive} if and only if $\bA$ is PD (A function $f(\bx):\real^n\rightarrow \real$ is called coercive if $\mathop{\lim}_{\norm{\bx}\rightarrow \infty} f(\bx)=\infty$.).

\item \label{prob:sam_quad} \textbf{Quadratic form.} Let $\bA\in\real^{n\times n}$ be a general square matrix (not necessarily symmetric). Show that $\bx^\top\bA\bx=\bx^\top[\frac{1}{2}(\bA+\bA^\top)]\bx$. The latter quadratic form is induced from a symmetric matrix.

\item \textbf{Symmetric form.} Define $P(\bA)=\frac{1}{2}(\bA+\bA^\top)$ for $\bA\in\real^{n\times n}$. Show that 
\begin{itemize}
	\item \textit{Null space.} $\nspace(\bA)\subset \nspace(P(\bA))$ and $\nspace(\bA^\top)\subset \nspace(P(\bA))$ such that $\rank(P(\bA))\leq \rank(\bA)$.
	\item When $\rank(P(\bA))= \rank(\bA)$, then $\bA$, $\bA^\top$, and $P(\bA)$ have the same null space.
\end{itemize}
\textit{Hint: Consider the quadratic form $\bx^\top\bA\bx$ and $\bx^\top\bA^\top\bx$, and  use Problem~\ref{prob:sing_psd}.}


\item  Let $\bA, \bB\in\real^{n\times n}$. Show that the matrix $\bA\bB - \bB\bA$ can never be positive
semidefinite unless it is the zero matrix. \textit{Hint: Use the fact that the trace of a symmetric matrix is equal to the sum of its eigenvalues.}

\item \label{prob:givens_rot1} \textbf{Givens rotation and rotary embedding.} 
Let $\bG(\theta) = \begin{bmatrixfoot}
	\cos(\theta) & \sin(\theta) \\
	-\sin(\theta) & \cos(\theta)
\end{bmatrixfoot}$ be a Givens rotation matrix, 
and let $\bv(\theta) = \begin{bmatrixfoot}
	\sin(\theta) \\ 
	\cos(\theta)
\end{bmatrixfoot}$.
Show that $\bG(\theta \delta) \bv(\theta t) = \bv(\theta (t+\delta))$, where $t$ can represent time or position.

\item \textbf{Givens rotation and rotary embedding.}  Using the notation from Problem~\ref{prob:givens_rot1}, plot the dot product $\bv(\theta t)^\top \bv(\theta (t+\delta))$ as a function of  $\delta$. What do you observe?

\item \textbf{Givens rotation and rotary embedding.}   Using the notation from Problem~\ref{prob:givens_rot1}, plot the dot product $\bv(\theta t)^\top \bW\bv(\theta (t+\delta))$ as a function of  $\delta$, where $\bW$ is an appropriately chosen random matrix. What behavior do you observe?

\item What is the difference between a Givens rotation  with entries $(k,k), (k,l), (l,k),$ and $ (l,l)$ set to $c, \textcolor{mylightbluetext}{-s, s}, $ and $c$, versus one where those entries are set to $c, \textcolor{mylightbluetext}{s, -s}, $ and $c$?

\item Verify that a Givens rotation is an orthogonal matrix, and its determinant is 1.

\end{problemset}

\part{Triangularization, Orthogonalization, and Gram--Schmidt Process}

\newpage
\chapter{QR Decomposition}

\index{Span}
\index{Subspace}
\index{Column space}
\index{Linearly independent}
\section{QR Decomposition}
In many applications,  the column space of a matrix $\bA=[\ba_1, \ba_2, \ldots, \ba_n] \in \real^{m\times n}$ is of particular interest. The sequence of subspaces spanned by the columns $\ba_1, \ba_2, \ldots$ of $\bA$ is given by
$$
\cspace([\ba_1])\,\,\,\, \subseteq\,\,\,\, \cspace([\ba_1, \ba_2]) \,\,\,\,\subseteq\,\,\,\, \cspace([\ba_1, \ba_2, \ba_3])\,\,\,\, \subseteq\,\,\,\, \ldots,
$$
where $\cspace([\ldots])$ denotes the subspace spanned by the vectors enclosed in the brackets.
The principle behind \textit{QR decomposition} is to construct an orthonormal basis set $\bq_1, \bq_2, \ldots$ that spans the same sequence of subspaces:
$$
\big\{\cspace([\bq_1])=\cspace([\ba_1])\big\}\subseteq 
\big\{\cspace([\bq_1, \bq_2])=\cspace([\ba_1, \ba_2])\big\}\subseteq
\big\{\cspace([\bq_1, \bq_2, \bq_3])=\cspace([\ba_1, \ba_2, \ba_3])\big\} 
\subseteq \ldots.
$$
Orthogonal basis sets have many useful properties, such as simplifying coordinate transformations, projections, and distance computations.
The QR decomposition, stated below, summarizes the result. A detailed discussion of its existence follows in subsequent sections.

\index{Decomposition: QR}
\begin{theoremHigh}[QR decomposition]\label{theorem:qr-decomposition}
Any $m\times n$ matrix $\bA=[\ba_1, \ba_2, \ldots, \ba_n]$ (whether its columns are linearly independent or not) with $m\geq n$ can be decomposed as 
$$
\bA = \bQ\bR,
$$
where 
\begin{enumerate}
\item  \textbf{Reduced}: $\bQ$ is an $m\times n$ matrix with orthonormal columns, and $\bR$ is an $n\times n$ upper triangular matrix, known as the \textit{reduced QR decomposition} or \textit{economy} QR decomposition;
\item \textbf{Full}: $\bQ$ is an $m\times m$ matrix with orthonormal columns, and $\bR$ is an $m\times n$ upper triangular matrix,  known as the \textit{full QR decomposition}. If the upper triangular matrix is further restricted to be square, the full QR decomposition can be expressed as:
	$$
	\bA = \bQ\begin{bmatrix}
		\bR_0\\
		\bzero
	\end{bmatrix},
	$$
	where $\bR_0$ is an $n\times n$ upper triangular matrix. 
\end{enumerate}
If $\bA$ has full rank, i.e., $\bA$ has linearly independent columns, $\bR$ also has linearly independent columns, and $\bR$ is nonsingular in the \textit{reduced} case. This implies the diagonals of $\bR$ are nonzero. Under the additional condition that the diagonal entries of $\bR$ are positive, the reduced QR decomposition is  \textbf{unique}. However, the full QR decomposition is typically not unique because the rightmost $(m-n)$ columns of $\bQ$ can be arranged in any order.
\end{theoremHigh}

Note that geometrically, the diagonal element $r_{ii}$ of the upper triangular matrix $\bR_0$ is the distance (w.r.t. the $\ell_2$ norm) between $\ba_i$ (the $i$-th column of $\bA$) and $\spn\{\ba_1,\ba_2, \ldots,\ba_{i-1}\}$, $i=2,3,\ldots,n$; see Section~\ref{section:project-onto-a-vector}.

Once the decomposition $\bA = \bQ\bR$ is known (for a square invertible $\bA$), inverting $\bA$ is easy:
$$
\bA^{-1} = \bR^{-1} \bQ^\top. 
$$
Since $\bR$ is upper triangular, computing $\bR^{-1}$ by backward substitution is much simpler and more stable than inverting $\bA$ directly (see Problem~\ref{prob:qr_inver}). Numerical software exploits this fact (often under the hood) to compute inverses or pseudo-inverses via QR factorizations.

The method for computing the QR decomposition was formally introduced by Erhard Schmidt in 1907  \citep{schmidt1907theorie}. However, Schmidt himself observed that similar mathematical expressions had already appeared in the earlier work of Gram in 1883 \citep{gram1883ueber}. 
Despite this historical overlap, contemporary literature generally distinguishes between the two formulations. The procedure based on Schmidt's derivation is commonly referred to as the classical Gram--Schmidt process, whereas the version derived from Gram's original approach is known as the modified Gram--Schmidt process. For a more in-depth comparison and analysis, refer to Section~\ref{section:qr-gram-compute}.

\section{Project a Vector Onto Another Vector and Onto a Plane}\label{section:project-onto-a-vector}
An important concept in deriving the QR decomposition of a matrix is the projection of a vector onto another vector or onto a subspace.
\paragraph{Project a vector onto another vector.}
Projecting a vector $\ba$ onto another vector $\bb$ involves finding the vector that is closest to $\ba$ along the line defined by $\bb$. 
The projected vector, denoted as $\widehat{\ba}$, is a scalar multiple of $\bb$: $\widehat{\ba} = \widehat{x} \bb$. By construction, $\ba - \widehat{\ba}$ is perpendicular to $\bb$, as illustrated in Figure~\ref{fig:project-line}. 
This orthogonality condition leads to the following result:
\begin{tcolorbox}[title={Project vector $\ba$ onto vector $\bb$}]
$\ba^\perp =\ba-\widehat{\ba}$ is perpendicular to $\bb$, so $(\ba-\widehat{x}\bb)^\top\bb=0$: $\widehat{x}$ = $\frac{\ba^\top\bb}{\bb^\top\bb}$ and $\widehat{\ba} = \frac{\ba^\top\bb}{\bb^\top\bb}\bb = \frac{\bb\bb^\top}{\bb^\top\bb}\ba$.
\end{tcolorbox}

\begin{figure}[h]
	\centering
	\vspace{-0.8cm}
	\subfigtopskip=2pt
	\subfigbottomskip=2pt
	\subfigcapskip=-5pt
	\subfigure[Project onto a line.]{\label{fig:project-line}
		\includegraphics[width=0.37\linewidth]{./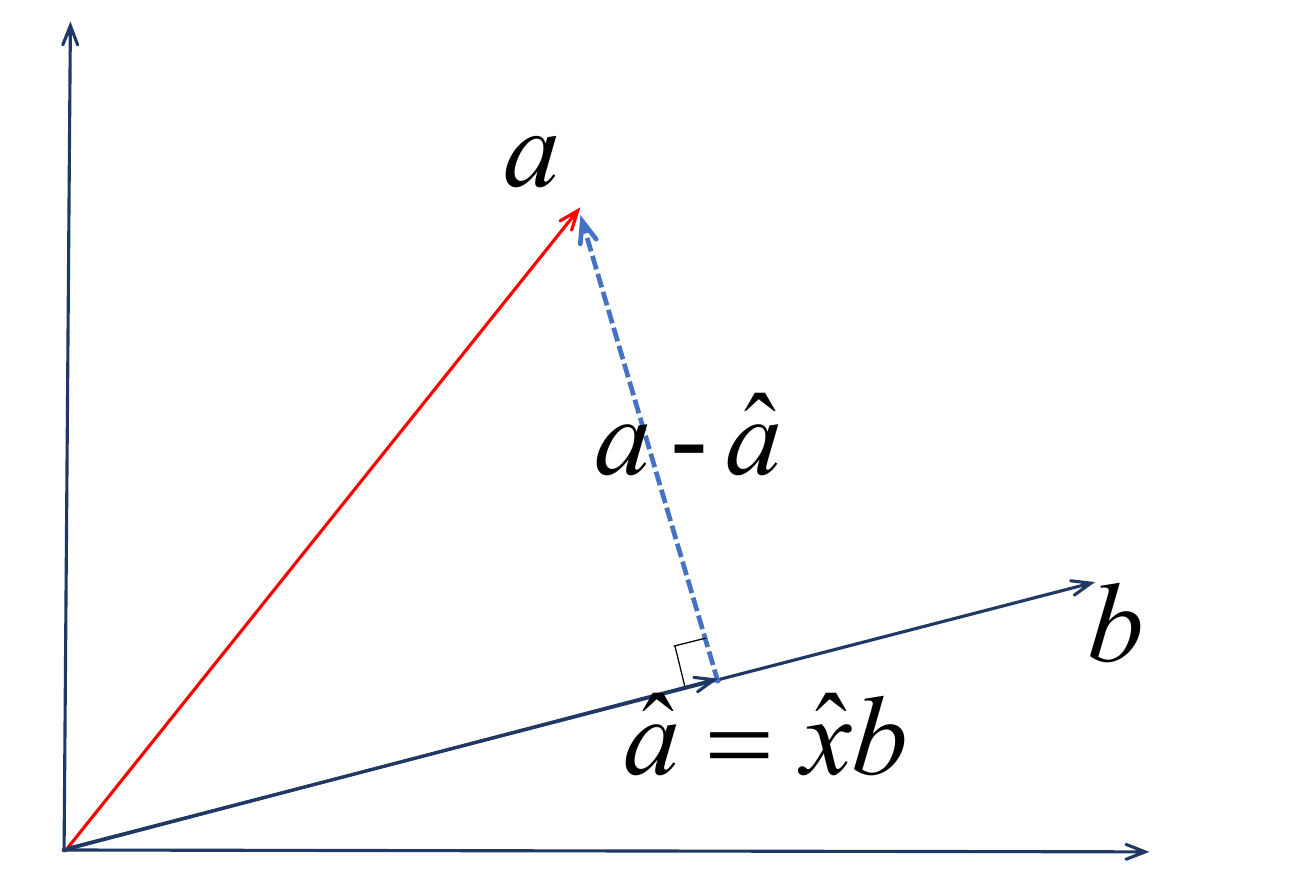}}
	\quad 
	\subfigure[Project onto a space.]{\label{fig:project-space}
		\includegraphics[width=0.37\linewidth]{./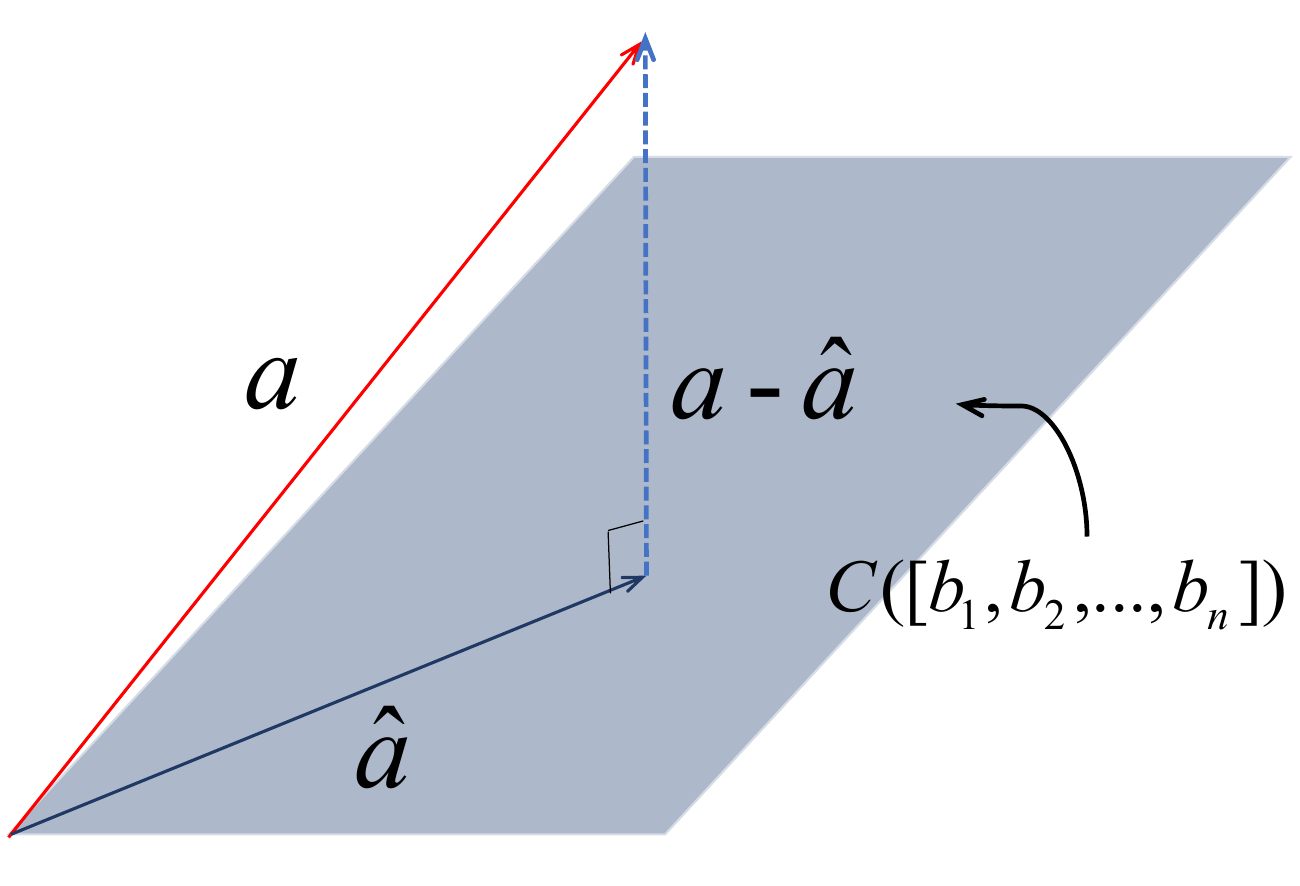}}
	\caption{Project a vector onto a line and a space.}
	\label{fig:projection-qr}
\end{figure}

\index{Normal equation}
\paragraph{Project a vector onto a plane.}
Similarly, the projection of a vector $\ba$ onto a space spanned by the vectors $\bb_1, \bb_2, \ldots, \bb_n$ involves finding the vector that is closest to $\ba$ within the column space of $\bB=[\bb_1, \bb_2, \ldots, \bb_n]$. The projected vector, denoted as $\widehat{\ba}$, is expressed as a linear combination of $\bb_1, \bb_2, \ldots, \bb_n$: $\widehat{\ba} = \widehat{x}_1\bb_1+ \widehat{x}_2\bb_2+\ldots+\widehat{x}_n\bb_n$. 
This process defines a least squares problem, which is solved using the normal equation: $\bB^\top\bB\widehat{\bx} = \bB^\top\ba$, where $\bB=[\bb_1, \bb_2, \ldots, \bb_n]$ and $\widehat{\bx}=[\widehat{x}_1, \widehat{x}_2, \ldots, \widehat{x}_n]$. 
Each individual projection of $\ba$ onto a single vector $\bb_i$ can be computed as:
$
\widehat{\ba}_i = \frac{\bb_i\bb_i^\top}{\bb_i^\top\bb_i}\ba,  \forall i \in \{1,2,\ldots, n\}.
$
The total projection is then obtained by summing all of these individual projections: $\widehat{\ba}=\sum_{i=1}^{n}\widehat{\ba}_i$. Consequently, the residual vector is orthogonal to the entire column space of $\bB$:
$
\ba^\perp = (\ba-\widehat{\ba}) \perp \cspace(\bB),
$ 
as shown in Figure~\ref{fig:project-space}.

\index{Gram--Schmidt}

\section{Existence of  QR Decomposition via  Gram--Schmidt Process}\label{section:gram-schmidt-process}

Given three linearly independent vectors $\ba_1, \ba_2,$ and $\ba_3$ that span  a space denoted by $\cspace{([\ba_1, \ba_2, \ba_3])}$---which corresponds to the column space of the matrix $[\ba_1, \ba_2, \ba_3]$---we aim to construct three orthogonal vectors $\{\bb_1, \bb_2, \bb_3\}$ such that $\cspace{([\bb_1, \bb_2, \bb_3])}$ = $\cspace{([\ba_1, \ba_2, \ba_3])}$. By normalizing these orthogonal vectors (dividing each by its norm), we obtain three mutually orthonormal vectors: $\bq_1 = \frac{\bb_1}{\norm{\bb_1}}$, $\bq_2 = \frac{\bb_2}{\norm{\bb_2}}$, and $\bq_2 = \frac{\bb_2}{\norm{\bb_2}}$.

To achieve this, we begin by setting $\bb_1 = \ba_1$. The second vector,  $\bb_2$,	 must be orthogonal to $\bb_1$. 
It is obtained by subtracting the projection of $\ba_2$ onto $\bb_1$:
\begin{equation}
\begin{aligned}
\bb_2 &= \ba_2- \frac{\bb_1 \bb_1^\top}{\bb_1^\top\bb_1} \ba_2 = \left(\bI- \frac{\bb_1 \bb_1^\top}{\bb_1^\top\bb_1} \right)\ba_2   \qquad &(\text{Projection view})\\
 &= \ba_2-  \underbrace{\frac{ \bb_1^\top \ba_2}{\bb_1^\top\bb_1} \bb_1}_{\widehat{\ba}_2}. \qquad &(\text{Combination view}) \nonumber
\end{aligned}
\end{equation}
The first equation shows that $\bb_2$ is computed by applying the matrix $\left(\bI- \frac{\bb_1 \bb_1^\top}{\bb_1^\top\bb_1} \right)$ to $\ba_2$, which projects $\ba_2$ onto the orthogonal complement of $\cspace{([\bb_1])}$. 
The second equality  expresses $\ba_2$ as a linear combination of its projection onto $\bb_1$ and a component orthogonal to $\bb_1$:  $\bb_2\perp\bb_1$.
This ensures that $\cspace([\bb_1, \bb_2]) =\cspace([\ba_1, \ba_2])$. Figure~\ref{fig:gram-schmidt1} illustrates the process, where  \textbf{the direction of $\bb_1$ is aligned with the $x$-axis of a  Cartesian coordinate system}. $\widehat{\ba}_2$ is the projection of $\ba_2$ onto the line defined by $\bb_1$. From the figure, it is clear that  $\bb_2 = \ba_2 - \widehat{\ba}_2$ is the component of $\ba_2$ orthogonal to $\bb_1$.

Similarly, the third vector, $\bb_3$, must be orthogonal to both  $\bb_1$ and $\bb_2$. It is constructed by subtracting the projections of $\ba_3$ onto the subspaces spanned by $\bb_1$ and $\bb_2$:
\begin{equation}\label{equation:gram-schdt-eq2}
\begin{aligned}
\bb_3 &= \ba_3- \frac{\bb_1 \bb_1^\top}{\bb_1^\top\bb_1} \ba_3 - \frac{\bb_2 \bb_2^\top}{\bb_2^\top\bb_2} \ba_3 = \left(\bI- \frac{\bb_1 \bb_1^\top}{\bb_1^\top\bb_1}  - \frac{\bb_2 \bb_2^\top}{\bb_2^\top\bb_2} \right)\ba_3   \qquad &(\text{Projection view})\\
&= \ba_3- \underbrace{\frac{ \bb_1^\top\ba_3}{\bb_1^\top\bb_1} \bb_1}_{\widehat{\ba}_3} - \underbrace{\frac{ \bb_2^\top\ba_3}{\bb_2^\top\bb_2}  \bb_2}_{\bar{\ba}_3}.    \qquad &(\text{Combination view})
\end{aligned}
\end{equation}
Once again, the first equation shows that the third vector $\bb_3$ is a multiplication of the matrix $\left(\bI- \frac{\bb_1 \bb_1^\top}{\bb_1^\top\bb_1}  - \frac{\bb_2 \bb_2^\top}{\bb_2^\top\bb_2} \right)$ and the vector $\ba_3$, i.e., projecting $\ba_3$ onto the orthogonal complement space of $\cspace{([\bb_1, \bb_2])}$. The second equality  expresses $\ba_3$ as a linear combination of $\bb_1, \bb_2, $ and $\bb_3$. We will see this property is essential in the idea of the QR decomposition.
Again, it can be shown that the space spanned by $\bb_1, \bb_2, \bb_3$ is identical to the space spanned by $\ba_1, \ba_2, \ba_3$. Figure~\ref{fig:gram-schmidt2} illustrates this step, where  \textbf{the direction of $\bb_2$ is aligned with the $y$-axis of the Cartesian coordinate system}. Here, $\widehat{\ba}_3$ is the projection of $\ba_3$ onto  $\bb_1$, while $\bar{\ba}_3$ is the projection of $\ba_3$ onto  $\bb_2$. 
The figure also shows that  the component of $\ba_3$ orthogonal to both $\bb_1$ and $\bb_2$ is $\bb_3=\ba_3-\widehat{\ba}_3-\bar{\ba}_3$.

Finally, each vector is normalized to produce the orthonormal set: $\bq_1 = \frac{\bb_1}{\norm{\bb_1}}$, $\bq_2 = \frac{\bb_2}{\norm{\bb_2}}$, and  $\bq_2 = \frac{\bb_2}{\norm{\bb_2}}$.

\begin{figure}[h]
	\centering
	\vspace{-0.35cm}
	\subfigtopskip=2pt
	\subfigbottomskip=2pt
	\subfigcapskip=-5pt
	\subfigure[Project $\ba_2$ onto the space perpendicular to $\bb_1$.]{\label{fig:gram-schmidt1}
		\includegraphics[width=0.37\linewidth]{./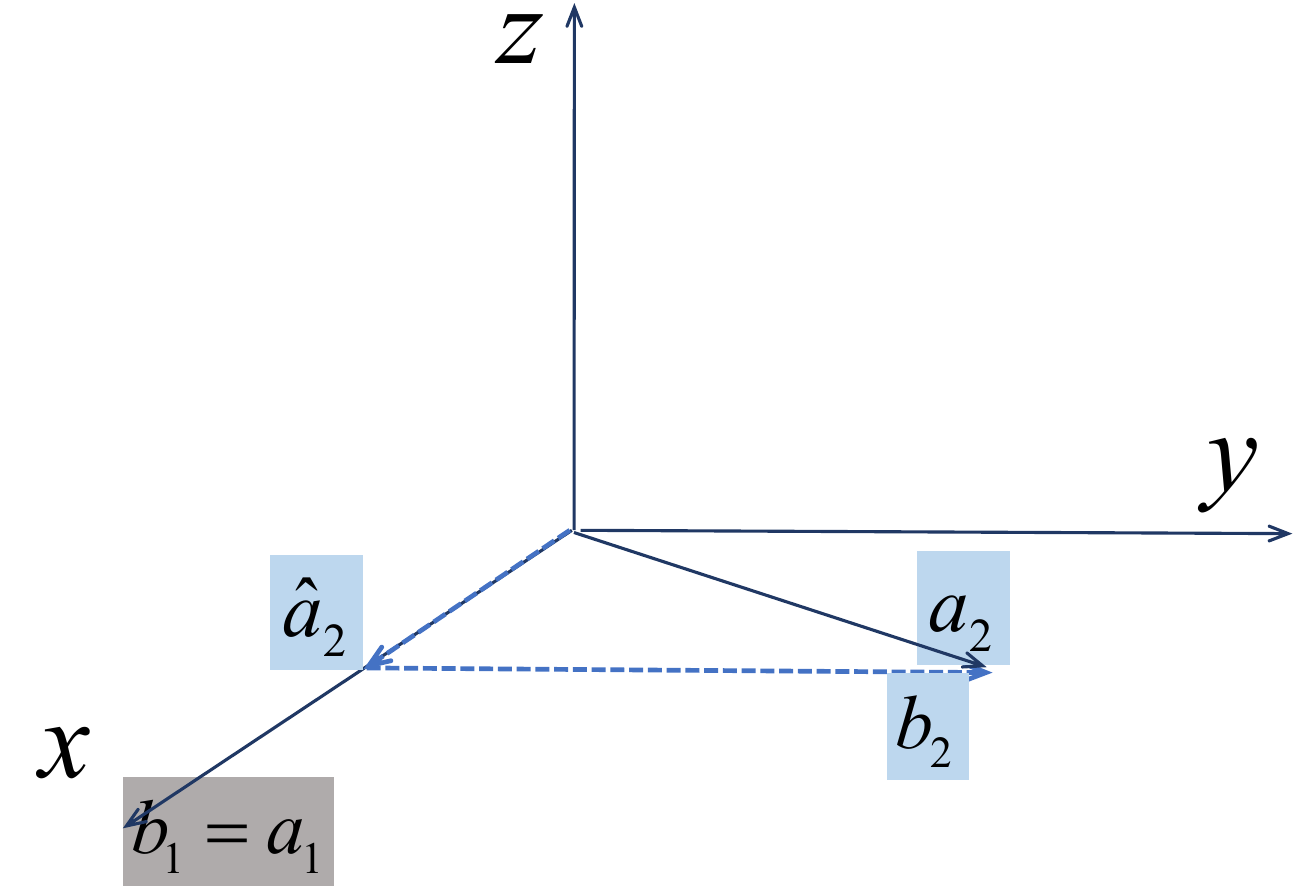}}
	\quad 
	\subfigure[Project $\ba_3$ onto the space perpendicular to $\bb_1, \bb_2$.]{\label{fig:gram-schmidt2}
		\includegraphics[width=0.37\linewidth]{./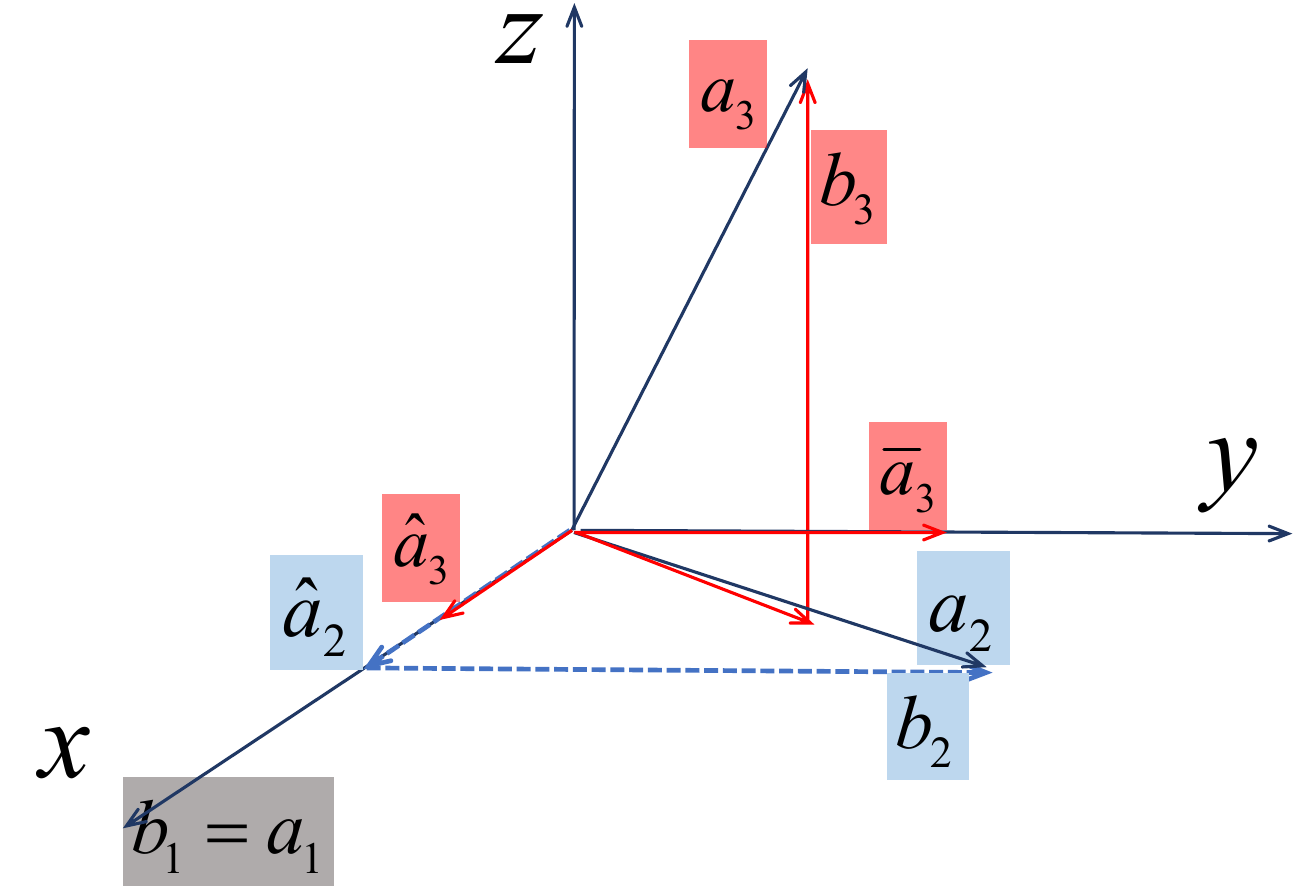}}
	\caption{The Gram--Schmidt process.}
	\label{fig:gram-schmidt-12}
\end{figure}

This process, called the \textit{(classical) Gram--Schmidt process}, generalizes to any set of linearly independent vectors \citep{gram1883ueber, schmidt1907theorie}. The method effectively triangularizes the matrix $\bA$, yielding its QR decomposition. 

As mentioned previously, the goal of the QR decomposition is to construct a sequence of orthonormal vectors $\bq_1, \bq_2, \ldots$ that span the same successive subspaces as the original vectors:
$$
\big\{\cspace([\bq_1])=\cspace([\ba_1])\big\} \subseteq
\big\{\cspace([\bq_1, \bq_2])=\cspace([\ba_1, \ba_2])\big\} \subseteq
\big\{\cspace([\bq_1, \bq_2, \bq_3])=\cspace([\ba_1, \ba_2, \ba_3])\big\} 
\subseteq \ldots.
$$
This implies that any vector $\ba_k$ lies in the space spanned by $\cspace([\bq_1, \bq_2, \ldots, \bq_k])$.~\footnote{And also, any vector $\bq_k$ lies in the space spanned by $\cspace([\ba_1, \ba_2, \ldots, \ba_k])$.} Once the orthonormal vectors are determined, the original matrix $\bA$ can be reconstructed as $\bA=\bQ\bR$, where $\bQ$ is orthogonal and $\bR$ is upper triangular.

While the Gram--Schmidt process is a classical approach to obtain the QR decomposition of a matrix, it is not the only one. Other algorithms, such as \textit{Householder reflections} and \textit{Givens rotations}, are also widely used and often preferred due to their superior numerical stability in the presence of rounding errors. Additionally, these alternative methods may process the columns of 
$\bA$ in a different order; see Sections~\ref{section:qr-via-householder} and \ref{section:qr-givens}.

\index{Orthogonal matrix}
\index{Orthogonal}
\index{Orthonormal}
\section{Orthogonal vs Orthonormal}\label{section:orthogonal-orthonormal-qr}
The vectors $\bq_1, \bq_2, \ldots, \bq_n\in \real^m$ are said to be \textit{mutually orthogonal} if their dot products satisfy $\bq_i^\top\bq_j=0$  whenever $i \neq j$. 
If each of these vectors is normalized to have unit length, they become \textit{mutually orthonormal}. These orthonormal vectors can be arranged as columns in a matrix  $\bQ$:
\begin{itemize}
\item When $m\neq n$: the matrix $\bQ$ is easy to work with because $\bQ^\top\bQ=\bI \in \real^{n\times n}$. Such a matrix $\bQ$ with $m> n$ is sometimes referred to as a \textit{semi-orthogonal} matrix.

\item When $m= n$: the matrix $\bQ$ is square, and the condition $\bQ^\top\bQ=\bI$ implies that $\bQ^\top=\bQ^{-1}$, meaning the transpose of $\bQ$ is its inverse. Then we also have $\bQ\bQ^\top=\bI$, i.e., $\bQ^\top$ is the \textit{two-sided inverse} of $\bQ$. In this case, $\bQ$ is called an  \textit{orthogonal matrix}.~\footnote{Although $\bQ$ has orthonormal columns, the term ``orthonormal matrix" is \textbf{not} used due to historical conventions.} 
\end{itemize}
Orthogonal matrices can be interpreted as transformations that change the basis of a vector space while preserving both angles (inner products) and lengths. Specifically:
\begin{itemize}
\item The length of a vector is also preserved:  $ \norm{\bQ\bu} = \norm{\bu}.$
\item The angle (inner product) between two vectors remains invariant: $ \bu^\top \bv = (\bQ\bu)^\top(\bQ\bv).$ 
\end{itemize}
In real-valued cases, multiplying a vector by an orthogonal matrix $\bQ$ results in a \textit{rotation} (if $\det(\bQ)=1$) or a \textit{reflection} (if $\det(\bQ)=-1$) in the vector space. Many decomposition algorithms produce two orthogonal matrices, leading to  two such transformations (rotations or reflections); see Chapters~\ref{chapter:ulv-urv-decomposition} and \ref{chapter:SVD_realc}.

\index{Classical Gram--Schmidt process}
\index{Modified Gram--Schmidt process}
\index{CGS}
\index{MGS}
\index{Numerical stability}
\section{Computing  Reduced QR Decomposition via CGS and MGS}\label{section:qr-gram-compute}
We express the reduced QR decomposition in the form $\bA = \bQ\bR$, where $\bQ\in \real^{m\times n}$ and $\bR\in \real^{n\times n}$, as follows:
\begin{equation}
\bA=\left[
\begin{matrix}
	\ba_1 & \ba_2 & \ldots & \ba_n
\end{matrix}
\right] 
=\left[
\begin{matrix}
	\bq_1 & \bq_2 & \ldots & \bq_n
\end{matrix}
\right] 
\footnotesize
\begin{bmatrix}
	r_{11} & r_{12}& \dots & r_{1n}\\
	       & r_{22}& \dots & r_{2n}\\
	       &       &    \ddots  & \vdots \\
	\multicolumn{2}{c}{\raisebox{1.3ex}[0pt]{\Huge0}} & & r_{nn} \nonumber
\end{bmatrix}.
\end{equation}
The orthogonal matrix $\bQ$, with orthonormal columns,  can be easily calculated using  the Gram--Schmidt process. 
To understand why the matrix $\bR$ is upper triangular, we explicitly write the corresponding equations:
\begin{equation*}
\begin{aligned}
\ba_1 & = r_{11}\bq_1 &= \sum_{i=1}^{1} r_{i1}\bq_1, \\
&\vdots& \vdots\\
\ba_k &= r_{1k}\bq_1 + r_{2k}\bq_2 + \ldots + r_{kk}\bq_k  &= \sum_{i=1}^{k} r_{ik} \bq_k,\\ 
&\vdots& \vdots.\\
\end{aligned}
\end{equation*}
This formulation aligns with the second equation in Equation~\eqref{equation:gram-schdt-eq2} and confirms the upper triangular structure of $\bR$. 
Extending the idea of Equation~\eqref{equation:gram-schdt-eq2} to the $k$-th term, we obtain:
\begin{equation}
\begin{aligned}
\ba_k &= \sum_{i=1}^{k-1}(\bq_i^\top\ba_k)\bq_i + \ba_k^\perp = \sum_{i=1}^{k-1}(\bq_i^\top\ba_k)\bq_i + \norm{\ba_k^\perp}\cdot \bq_k, 
\end{aligned}
\end{equation}
which implies that we can gradually orthonormalize $\bA$ to obtain an orthonormal set $\bQ=[\bq_1, \bq_2, \ldots, \bq_n]$ by 
\begin{equation}\label{equation:qr-gsp-equation}
\left\{
\begin{aligned}
	r_{ik} &= \bq_i^\top\ba_k, \,\,\,\,\forall i \in \{1,2,\ldots, k-1\};\\ 
	\ba_k^\perp&= \ba_k-\sum_{i=1}^{k-1}r_{ik}\bq_i;\\
	r_{kk} &= \norm{\ba_k^\perp};\\
	\bq_k &= \ba_k^\perp/r_{kk}.
\end{aligned}
\right.
\end{equation}
This again shows that the diagonal element $r_{ii}$ of the upper triangular matrix  is the distance (w.r.t. the $\ell_2$ norm) between $\ba_i$ (the $i$-th column of $\bA$) and $\spn\{\ba_1,\ba_2, \ldots,\ba_{i-1}\}$, $i=2,3,\ldots,n$.
The procedure is outlined in Algorithm~\ref{alg:reduced-qr}.
\begin{algorithm}[h] 
\caption{Reduced QR Decomposition via Gram--Schmidt Process} 
\label{alg:reduced-qr} 
\begin{algorithmic}[1] 
\Require Matrix $\bA\in\real^{m\times n}$ with linearly independent columns, where $m\geq n$; 
\For{$k=1$ to $n$} \Comment{compute the $k$-th column of $\bQ,\bR$}
\For{$i=1$ to $k-1$}
\State $r_{ik} \leftarrow \bq_i^\top\ba_k$; \Comment{entry ($i,k$) of $\bR$}
\EndFor  
\State $\ba_k^\perp\leftarrow \ba_k-\sum_{i=1}^{k-1}r_{ik}\bq_i$;
\State $r_{kk} \leftarrow \norm{\ba_k^\perp}$; \Comment{main diagonal of $\bR$}
\State $\bq_k \leftarrow \ba_k^\perp/r_{kk}$; 
\EndFor
\State Output $\bQ=[\bq_1, \ldots, \bq_n]$ and $\bR$ with entry $(i,k)$ being $r_{ik}$.

\end{algorithmic} 
\end{algorithm}

\index{Orthogonal projection}
\index{Projection matrix (projector)}
\paragraph{Orthogonal projection.}
From Equation~\eqref{equation:qr-gsp-equation}, particularly Steps 2 to  6 of Algorithm~\ref{alg:reduced-qr}, we observe  that the first two equalities imply that 
\begin{equation}\label{equation:qr-gsp-equation2}
\left.
\begin{aligned}
	r_{ik} &= \bq_i^\top\ba_k, \,\,\,\,\forall i \in \{1,2,\ldots, k-1\}\\ 
	\ba_k^\perp&= \ba_k-\sum_{i=1}^{k-1}r_{ik}\bq_i\\
\end{aligned}
\right\}
\rightarrow 
\ba_k^\perp= \ba_k- \bQ_{k-1}\bQ_{k-1}^\top \ba_k=(\bI-\bQ_{k-1}\bQ_{k-1}^\top )\ba_k,
\end{equation}
where $\bQ_{k-1}=[\bq_1,\bq_2,\ldots, \bq_{k-1}]$. This implies $\bq_k$ can be computed as:
$$
\bq_k = \frac{\ba_k^\perp}{\norm{\ba_k^\perp}} = \frac{(\bI-\bQ_{k-1}\bQ_{k-1}^\top )\ba_k}{\norm{(\bI-\bQ_{k-1}\bQ_{k-1}^\top )\ba_k}}.
$$
The matrix $(\bI-\bQ_{k-1}\bQ_{k-1}^\top )$ in the above expression is known as an \textit{orthogonal projection matrix} (symmetric and idempotent; see Problem~\ref{prob:orthogo_proj}) that  projects $\ba_k$ \textbf{along} the column space of $\bQ_{k-1}$, ensuring the projected vector is orthogonal to the column space of $\bQ_{k-1}$ \citep{lu2021numerical}. 
As a result, the vector $\ba_k^\perp$ or $\bq_k$ calculated in this manner will be orthogonal to  $\cspace(\bQ_{k-1})$, i.e., it lies in the null space of $\bQ_{k-1}^\top$: $\nspace(\bQ_{k-1}^\top)$, according to  the fundamental theorem of linear algebra (Theorem~\ref{theorem:fundamental-linear-algebra}). 

\index{Fundamental theorem}

Let $\bP_1=(\bI-\bQ_{k-1}\bQ_{k-1}^\top )$. We assert that $\bP_1=(\bI-\bQ_{k-1}\bQ_{k-1}^\top )$ is an orthogonal projection matrix, which  projects any vector $\bv$ onto the null space of $\bQ_{k-1}^\top$. 
Additionally, let $\bP_2=\bQ_{k-1}\bQ_{k-1}^\top$. Then $\bP_2$ is also an orthogonal projection matrix, such that $\bP_2\bv$  projects any vector $\bv$ onto the column space of $\bQ_{k-1}$.

Why can the matrices $\bP_1$ and $\bP_2$ effectively project  vectors onto the corresponding subspaces?  It can be  shown that the column space of $\bQ_{k-1}$ is equal to the column space of $\bQ_{k-1}\bQ_{k-1}^\top$: 
$
\cspace(\bQ_{k-1})=\cspace(\bQ_{k-1}\bQ_{k-1}^\top)=\cspace(\bP_2).
$
Hence, $\bP_2\bv$ represents  a linear combination of the columns of $\bP_2$, which lies in the column space of $\bP_2$ or the column space of $\bQ_{k-1}$. 

A \textit{projection matrix} $\bP$ is formally defined as an idempotent matrix satisfying $\bP^2=\bP$.
This property reflects the intuitive idea that projecting a vector twice is the same as projecting it once.
What distinguishes  $\bP_2=\bQ_{k-1}\bQ_{k-1}^\top $ is that the projection $\widehat{\bv}$ of any vector $\bv$ is orthogonal to $\bv-\widehat{\bv}$:
$$
(\widehat{\bv}=\bP_2\bv) \perp (\bv-\widehat{\bv}).
$$
This property is the defining characteristic of an \textit{orthogonal projection matrix}. 
In contrast, a projection that is not orthogonal is called an \textit{oblique projection matrix}. 
When $\bP_2$ is an orthogonal projection matrix, the matrix $\bP_1=\bI-\bP_2$ is also an orthogonal projection matrix, projecting any vector onto the space perpendicular to  $\cspace(\bQ_{k-1})$, i.e., $\nspace(\bQ_{k-1}^\top)$ (see Proposition~\ref{proposition:orthogonal-projection_tmp}). 
Thus, we conclude that there are two complementary orthogonal projections:
$$
\left\{
\begin{aligned}
	\bP_1: &\gap \text{project onto $\nspace(\bQ_{k-1}^\top)$, along the column space of $\bQ_{k-1}$;} \\
	\bP_2: &\gap \text{project onto $\cspace(\bQ_{k-1})$, onto the column space of $\bQ_{k-1}$} .
\end{aligned}
\right.
$$

\paragraph{Modified Gram--Schmidt process (MGS).}
An additional noteworthy result arises  when the columns of $\bQ_{k-1}$ are mutually orthonormal. 
In this case, we observe the following decomposition:
\begin{equation}\label{equation:qr-orthogonal-equality}
{\bP_1 = \bI - \bQ_{k-1}\bQ_{k-1}^\top = (\bI-\bq_1\bq_1^\top)(\bI-\bq_2\bq_2^\top)\ldots (\bI-\bq_{k-1}\bq_{k-1}^\top),}
\end{equation}
where $\bQ_{k-1}=[\bq_1,\bq_2,\ldots, \bq_{k-1}]$, and each term $(\bI-\bq_i\bq_i^\top)$ serves to project a vector onto the subspace orthogonal to $\bq_i$.
This finding is crucial for advancing towards a \textit{modified Gram--Schmidt process (MGS)}, where projections and subtractions are performed iteratively.
To avoid confusion, the original  Gram--Schmidt method is often referred to as the \textit{classical Gram--Schmidt process (CGS)}.

The primary distinction between CGS and MGS lies in how they perform projections and subtractions. 
In CGS, the same vector is projected onto all previously computed orthonormal vectors before performing the subtraction. Conversely, in MGS, projection and subtraction are interleaved.
To illustrate this difference, consider a three-column matrix $\bA=[\ba_1,\ba_2,\ba_3]$, as shown in Figure~\ref{fig:projection-mgs-demons-3d}, where each step is represented  using a different color. Below is a summary of the processes for computing $\bq_k$ from the $k$-th column $\ba_k$ of $\bA$, given the orthonormalized vectors  $\{\bq_1, \bq_2, \ldots, \bq_{k-1}\}$:
$$
\begin{aligned}
	\text{(CGS)}: &\, \text{obtain $\bq_k$ by normalizing $\ba_k^\perp=(\bI-\bQ_{k-1}\bQ_{k-1}^\top)\ba_k$;} \\
	\text{(MGS)}: &\, \text{obtain $\bq_k$ by normalizing  $\ba_k^\perp=\left\{(\bI-\bq_{k-1}\bq_{k-1}^\top)\ldots\left[(\bI-\bq_2\bq_2^\top)\left((\bI-\bq_1\bq_1^\top) \ba_k\right)\right]\right\}$}, 
\end{aligned}
$$
where the nested parentheses in MGS indicate  the order of operations---each projection is applied sequentially, and the result is updated immediately.
\begin{figure}[h]
\centering
\vspace{-0.35cm}
\subfigtopskip=2pt
\subfigbottomskip=2pt
\subfigcapskip=-5pt
\subfigure[CGS, step 1: \textcolor{mylightbluetext}{blue} vector; step 2: \textcolor{mydarkgreen}{green} vector; step 3: \textcolor{mydarkpurple}{purple} vector.]{\label{fig:projection-mgs-demons-cgs}
	\includegraphics[width=0.47\linewidth]{./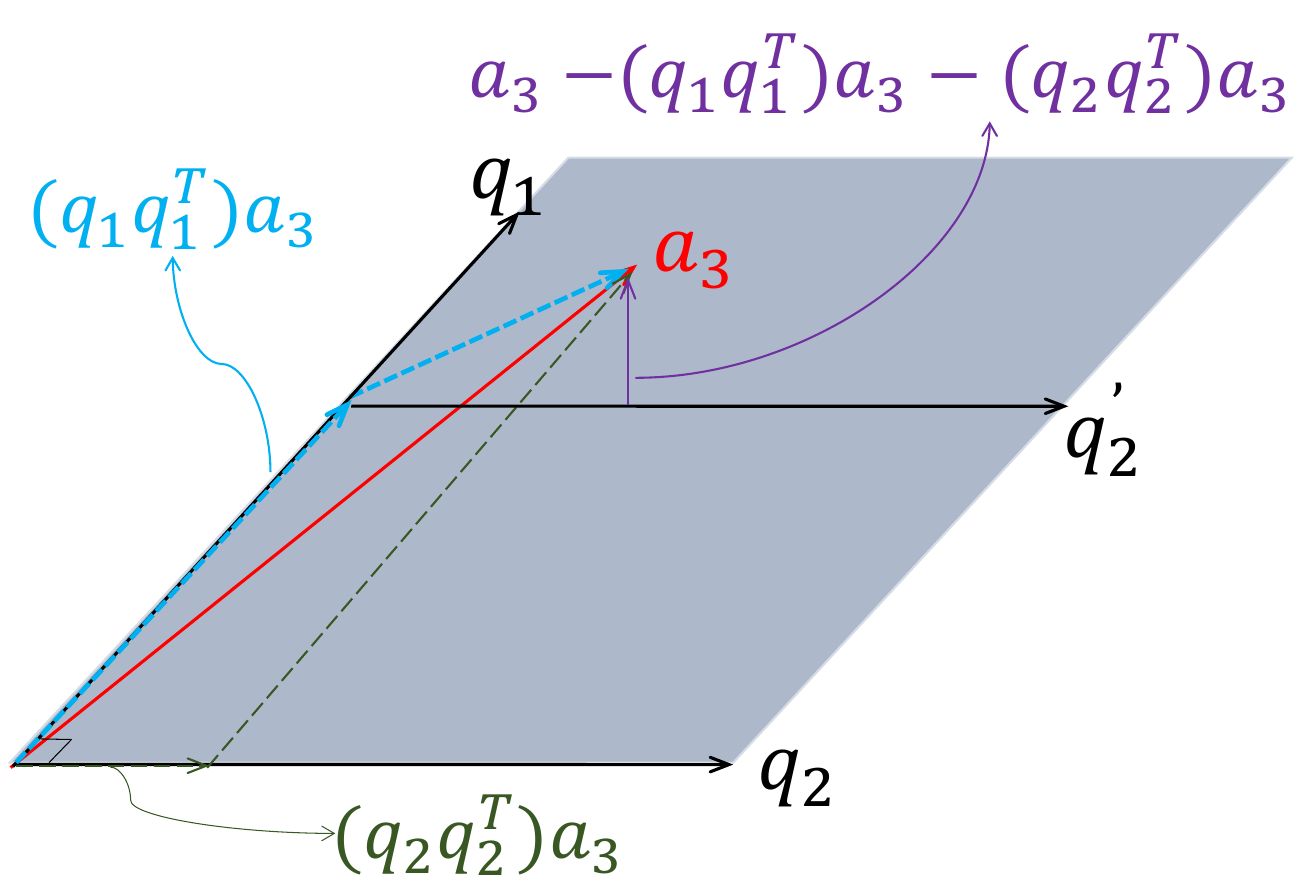}}
\quad 
\subfigure[MGS, step 1: \textcolor{mylightbluetext}{blue} vector; step 2: \textcolor{mydarkpurple}{purple} vector.]{\label{fig:projection-mgs-demons-mgs}
	\includegraphics[width=0.47\linewidth]{./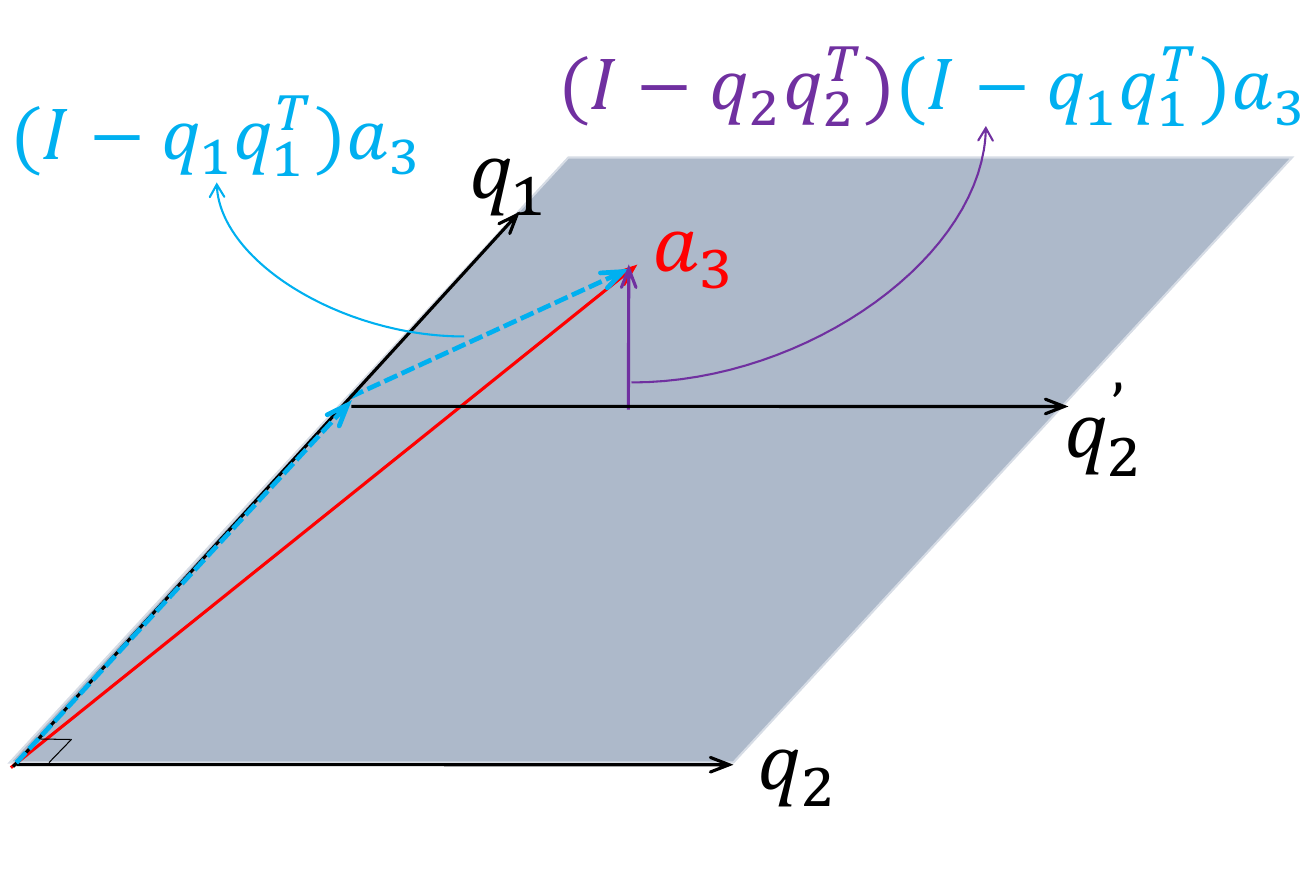}}
\caption{Comparison of CGS and MGS in three-dimensional space. Here, $\bq_2^\prime$ is parallel to $\bq_2$, so  projecting onto $\bq_2$ is equivalent to projecting onto $\bq_2^\prime$.}
\label{fig:projection-mgs-demons-3d}
\end{figure}

\paragraph{What's the difference?}
To illustrate, consider the three-column matrix  $\bA=[\ba_1, \ba_2, \ba_3]$. Suppose we have already computed $\{\bq_1, \bq_2\}$, where  $\spn\{\bq_1, \bq_2\}=\spn\{\ba_1, \ba_2\}$, our objective is to  compute $\bq_3$.

In the CGS algorithm, the orthogonalization of column $\ba_3$ against  $\{\bq_1, \bq_2\}$ is accomplished by simultaneously projecting the original column $\ba_3$ of $\bA$ onto $\bq_1$ and $\bq_2$, followed by subtracting these projections at once (Figure~\ref{fig:projection-mgs-demons-cgs}):
\begin{equation}\label{equation:cgs-3d-exmp}
\left\{
\begin{aligned}
\ba_3^\perp &= \ba_3 - (\bq_1^\top\ba_3)\bq_1 - (\bq_2^\top\ba_3)\bq_2= \ba_3 - (\bq_1\bq_1^\top)\ba_3 - \boxed{(\bq_2\bq_2^\top)\textcolor{mylightbluetext}{\ba_3}};\\
\bq_3 &=  \frac{\ba_3^\perp}{\norm{\ba_3^\perp}}.
\end{aligned}
\right.
\end{equation}

In contrast, the MGS algorithm subtracts the components along $\{\bq_1, \bq_2\}$ from $\ba_3$ sequentially as they are computed.
Therefore, the orthogonalization of column $\ba_3$ against $\{\bq_1, \bq_2\}$ is not performed by projecting the original column $\ba_3$ against $\{\bq_1, \bq_2\}$ as it is in the CGS, 
but rather by projecting onto updated intermediate vectors---those obtained after subtracting previously computed components.
This distinction is crucial because it leads to smaller error components of $\bq_3$ in $\spn\{\bq_1, \bq_2\}$ (a point discussed further in the following paragraphs).

More precisely, in the MGS algorithm, the orthogonalization of column $\ba_3$ against $\bq_1$ is accomplished by subtracting the component of $\bq_1$ from the vector $\ba_3$:
$$
\ba_3^{(1) }=  (\bI-\bq_1\bq_1^\top)\ba_3 = \ba_3 - (\bq_1\bq_1^\top)\ba_3,
$$
where $\ba_3^{(1) }$ represents the component of $\ba_3$ that is orthogonal to $\bq_1$. The subsequent step is then executed by 
\begin{equation}\label{equation:mgs-3d-exmp}
\begin{aligned}
\ba_3^{(2) }=  (\bI-\bq_2\bq_2^\top)\ba_3^{(1) }&=\ba_3^{(1) }-(\bq_2\bq_2^\top)\ba_3^{(1) }=\ba_3 - (\bq_1\bq_1^\top)\ba_3-\boxed{(\bq_2\bq_2^\top)\textcolor{mylightbluetext}{\ba_3^{(1) }}},
\end{aligned}
\end{equation}
where $\ba_3^{(2) }$ denotes the component of $\ba_3^{(1) }$ that is orthogonal  to $\bq_2$. The distinction from CGS (Equation~\eqref{equation:cgs-3d-exmp}) is highlighted in \textcolor{mylightbluetext}{blue} text.
Consequently, $\ba_3^{(2) }$ corresponds to the component of $\ba_3$ that is orthogonal to the entire subspace $\{\bq_1, \bq_2\}$, as shown in Figure~\ref{fig:projection-mgs-demons-mgs}. 

\index{Cancellation}
\paragraph{Main difference and catastrophic cancellation.} The key difference is that the vector $\ba_3$ can in general have large components in $\spn\{\bq_1, \bq_2\}$, in which case one starts with large
values and ends up with small values that result in large relative errors in them---a phenomenon known as \textit{catastrophic cancellation}. 
In contrast, in MGS, the intermediate vector $\ba_3^{(1) }$ is already orthogonal to $\bq_1$ and has only a small ``error" (residual) component in the direction of $\bq_1$. 
This significantly reduces the chance of large cancellations occurring in subsequent steps.
A comparison of the  \fbox{boxed} terms in Equations~\eqref{equation:cgs-3d-exmp} and \eqref{equation:mgs-3d-exmp} reveals that $(\bq_2\bq_2^\top)\ba_3^{(1) }$ in Equation~\eqref{equation:mgs-3d-exmp} is computed more accurately than $(\bq_2\bq_2^\top)\ba_3$ in CGS, as argued above.
Because of this reduced error in each projection step, 
the MGS method generally results in smaller orthogonalization errors at each stage compared to CGS. In fact, this difference can be quantified.
It can be shown that the final orthogonal matix $\bQ$ obtained using CGS satisfies the bound:
$$
\norm{\bI-\bQ\bQ^\top} \leq \mathcalO(\epsilon \kappa^2(\bA)),
$$
where $\kappa(\bA)$ is a value larger than 1 determined by $\bA$.
Whereas, in the MGS, the corresponding error satisfies
$$
\norm{\bI-\bQ\bQ^\top} \leq \mathcalO(\epsilon \kappa(\bA)).
$$
That is, the matrix $\bQ$ obtained via MGS is ``more orthogonal" than that obtained via CGS, making MGS a more numerically stable algorithm in practice.

\paragraph{More to go, preliminaries for Householder and Givens methods.} While MGS generally outperforms CGS in practice, it is not entirely immune to the \textit{catastrophic cancellation} issue. 
For example, in iteration $k$ of the MGS algorithm, if $\ba_k$ is nearly in the span of $\{\bq_1, \bq_2, \ldots, \bq_{k-1}\}$, then the resulting $\ba_k^\perp$ will have only a small component  perpendicular to $\spn\{\bq_1, \bq_2, \ldots, \bq_{k-1}\}$. This amplifies the ``error" component in $\spn\{\bq_1, \bq_2, \ldots, \bq_{k-1}\}$, leading to a less orthogonal $\bQ$. 
In such scenarios, a more robust approach involves finding a sequence of orthogonal matrices  $\{\bQ_1, \bQ_2, \ldots, \bQ_l\}$ such that the product $\bQ_l\ldots\bQ_2\bQ_1\bA$ becomes triangular. In this case, the resulting orthogonal matrix $\bQ=(\bQ_l\ldots\bQ_2\bQ_1)^\top$ will be ``more" orthogonal than those produced by either  CGS or  MGS. 
These more stable techniques will be explored in Section~\ref{section:qr-via-householder} and Section~\ref{section:qr-givens} using Householder reflectors and Givens rotations.

\section{Computing  Full QR Decomposition via  Gram--Schmidt Process}\label{section:silentcolu_qrdecomp}
A full QR decomposition of an $m\times n$ matrix with linearly independent columns involves extending the decomposition by appending additional $m-n$ orthonormal columns to $\bQ$, transforming it into an $m\times m$ orthogonal matrix. Simultaneously, rows of zeros are added to $\bR$, making it an $m\times n$ upper triangular matrix. The additional columns in $\bQ$ are referred to as \textit{silent columns}, while the additional rows in $\bR$ are called \textit{silent rows}. 
These do not affect the original decomposition but complete $\bQ$ to be a full orthogonal matrix.
Figure~\ref{fig:qr-comparison} illustrates the differences between the reduced and full QR decompositions, where silent columns in $\bQ$ are denoted in \textcolor{mydarkgray}{gray}, blank entries are zero, and \textcolor{mylightbluetext}{blue} entries indicate elements that are not necessarily zero.

\begin{figure}[h]
	\centering
	\vspace{-0.35cm}
	\subfigtopskip=2pt
	\subfigbottomskip=2pt
	\subfigcapskip=-5pt
	\subfigure[Reduced QR decomposition.]{\label{fig:gphalf}
		\includegraphics[width=0.47\linewidth]{./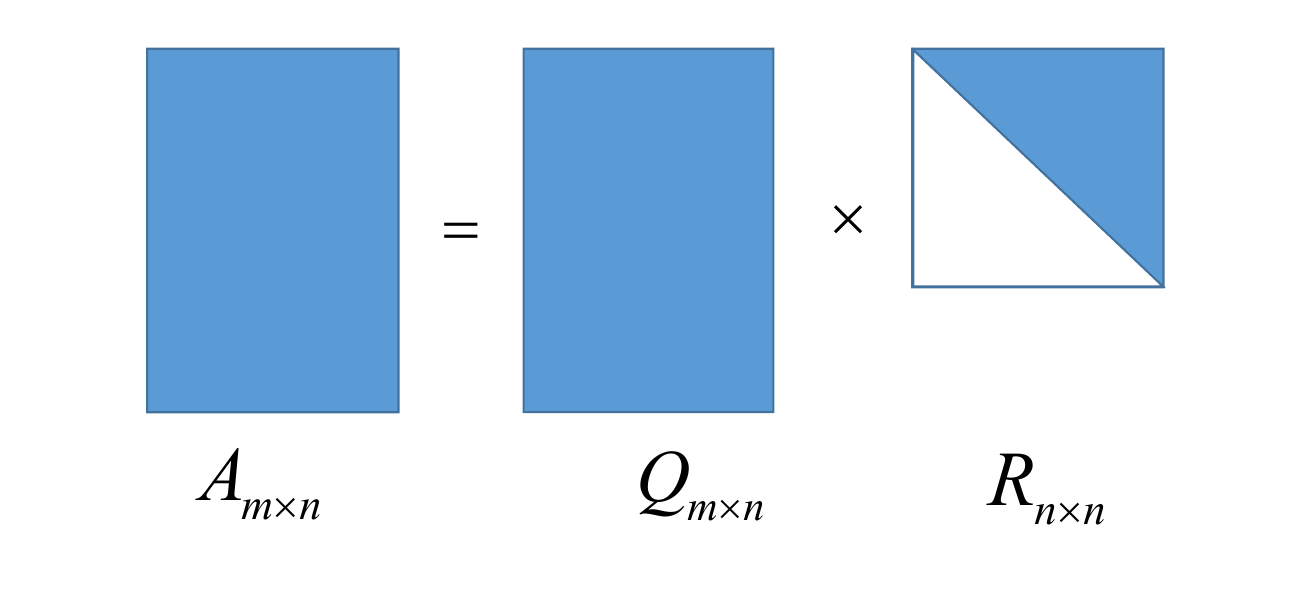}}
	\quad 
	\subfigure[Full QR decomposition.]{\label{fig:gpall}
		\includegraphics[width=0.47\linewidth]{./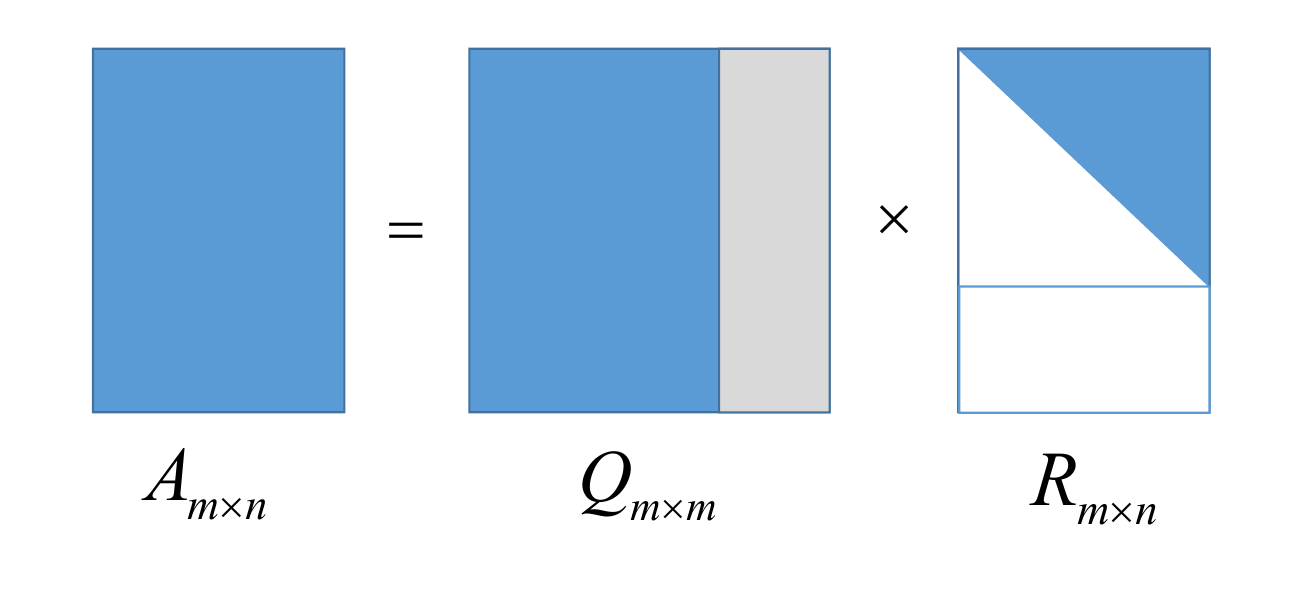}}
	\caption{Comparison between the reduced and full QR decompositions.}
	\label{fig:qr-comparison}
\end{figure}

\index{Independence check}
\section{Dependent Columns}\label{section:dependent-gram-schmidt-process}
Previously, we assumed that the matrix $\bA$ has linearly independent columns. However, this assumption is not always necessary.
Suppose that in step $k$ of the CGS or MGS process, the vector $\ba_k$ lies in the plane spanned by $\bq_1, \bq_2, \ldots, \bq_{k-1}$, which is equivalent to the space spanned by $\ba_1, \ba_2, \ldots, \ba_{k-1}$, i.e., the vectors $\ba_1, \ba_2, \ldots, \ba_k$ are dependent.
When this dependency occurs, the entry $r_{kk}$  becomes zero (see Equation~\eqref{equation:qr-gsp-equation} or Algorithm~\ref{alg:reduced-qr}), rendering $\bq_k$ undefined due to division by zero. 
To handle such scenarios, one can arbitrarily choose $\bq_k$ as any normalized vector orthogonal to the column space $\cspace([\bq_1, \bq_2, \ldots, \bq_{k-1}])$ and proceed with the Gram--Schmidt process.
For a matrix  $\bA$ with dependent columns, both reduced and full QR decomposition algorithms are still applicable. The procedure for step $k$ in the algorithm is redefined as follows:
$$
\bq_k=\left\{
\begin{aligned}
	&(\ba_k-\sum_{i=1}^{k-1}r_{ik}\bq_i)/r_{kk}, \qquad r_{ik}=\bq_i^\top\ba_k, r_{kk}=\norm{\ba_k-\sum_{i=1}^{k-1}r_{ik}\bq_i}, &\mathrm{if\,} r_{kk}\neq0, \\
	&\text{pick  one vector in }\cspace^{\bot}([\bq_1, \bq_2, \ldots, \bq_{k-1}]), \text{ and normalize},\qquad &\mathrm{if\,} r_{kk}=0.
\end{aligned}
\right.
$$

This idea can be further extended:  when $\bq_k$ does not exist, we simply skip the current step and add the silent columns at the end of the process. 
Consequently, the QR decomposition of a matrix with dependent columns is generally not unique. 

This framework also provides a practical method for determining linear independence. If $r_{kk}=0$ at any step in CGS or MGS, the vectors $\ba_1, \ba_2, \ldots, \ba_k$ are reported as linearly dependent.
At this point, the algorithm can be terminated for the purpose of detecting linear dependence.

\index{Permutation matrix}
\section{QR with Column Pivoting: Column-Pivoted QR (CPQR)}\label{section:cpqr}

If the columns of $\bA$ are linearly dependent, a \textit{column-pivoted QR (CPQR)} decomposition can be obtained as follows:
\begin{theoremHigh}[Column-pivoted QR decomposition\index{Column-pivoted QR (CPQR)}]\label{theorem:rank-revealing-qr-general}
Any $m\times n$ matrix $\bA=[\ba_1, \ba_2, \ldots, \ba_n]$ with $m\geq n$ and rank $r$ can be decomposed as 
$$
\bA\bP = \bQ
\begin{bmatrix}
\bR_{11} & \bR_{12} \\
\bzero   & \bzero 
\end{bmatrix},
$$
where $\bR_{11} \in \real^{r\times r}$ is upper triangular, $\bR_{12} \in \real^{r\times (n-r)}$, $\bQ\in \real^{m\times m}$ is an orthogonal matrix, and $\bP$ is a permutation matrix. This is  known as the \textit{full} CPQR decomposition. Similarly, the \textit{reduced} version is given by 
$$
\bA\bP = \bQ_r
\begin{bmatrix}
	\bR_{11} & \bR_{12} \\ 
\end{bmatrix},
$$
where $\bR_{11} \in \real^{r\times r}$ is upper triangular, $\bR_{12} \in \real^{r\times (n-r)}$, $\bQ_r\in \real^{m\times r}$ contains orthonormal columns, and $\bP$ is a permutation matrix.
\end{theoremHigh}
\index{CPQR}
\index{Column pivoting}
\subsection{A Simple CPQR via CGS}
The CPQR decomposition can be computed using the classical Gram--Schmidt process. 
In the context of QR decomposition for matrices with linearly dependent columns, if $r_{kk}=0$, this indicates that column $k$ of $\bA$ is linearly dependent on the previous $k-1$ columns.  
In such cases, a column permutation is performed, moving the dependent column to the end, after which the Gram--Schmidt process continues.
Here, $\bP$ represents the permutation matrix that reorders the dependent columns into the last $n-r$ positions. 
Suppose the first $r$ columns of $\bA\bP$ are $[\widehat{\ba}_1, \widehat{\ba}_2, \ldots, \widehat{\ba}_r]$. The span of these columns is equivalent to the span of $\bQ_r$ (in the reduced version) or the span of $\bQ_{:,1:r}$ (in the full version):
$$
\cspace([\widehat{\ba}_1, \widehat{\ba}_2, \ldots, \widehat{\ba}_r]) = \cspace(\bQ_r) = \cspace(\bQ_{:,1:r}).
$$
The matrix $\bR_{12}$ recovers the dependent $n-r$ columns from the column space of $\bQ_r$ or  $\bQ_{:,1:r}$. 
Figure~\ref{fig:qr-comparison-rank-reveal} compares the reduced and full CPQR decompositions, where silent columns in $\bQ$ are shown in \textcolor{mydarkgray}{gray}, blank entries represent zeros, and \textcolor{mylightbluetext}{blue}/\textcolor{orange}{orange} entries denote elements that are not necessarily zero.

\begin{figure}[H]
	\centering
	\vspace{-0.35cm}
	\subfigtopskip=2pt
	\subfigbottomskip=2pt
	\subfigcapskip=-5pt
	\subfigure[Reduced CPQR decomposition.]{\label{fig:gphalf-rank-reveal}
		\includegraphics[width=0.47\linewidth]{./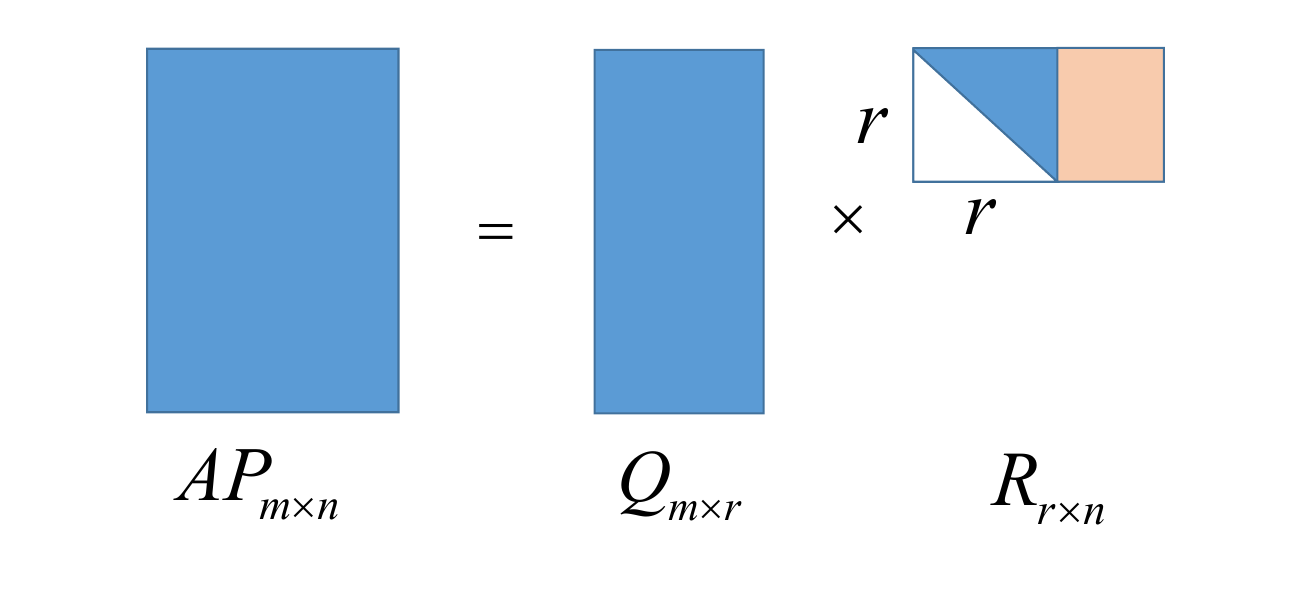}}
	\quad 
	\subfigure[Full CPQR decomposition.]{\label{fig:gpall-rank-reveal}
		\includegraphics[width=0.47\linewidth]{./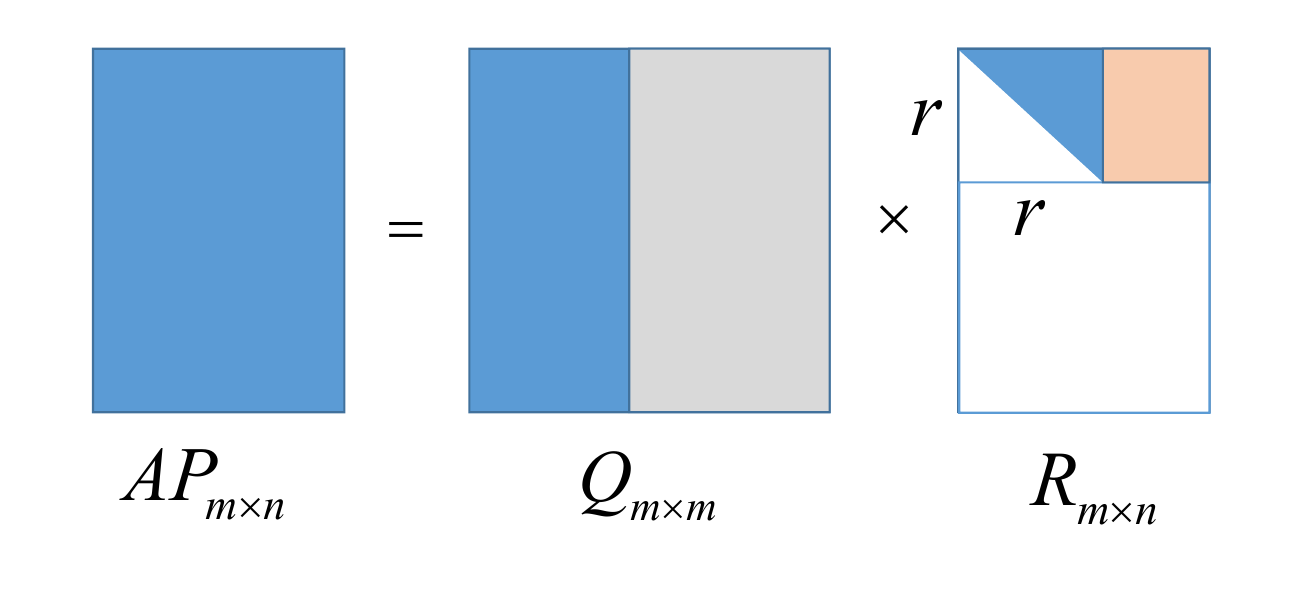}}
	\caption{Comparison between the reduced and full CPQR decompositions.}
	\label{fig:qr-comparison-rank-reveal}
\end{figure}

\subsection{A Practical CPQR via CGS}\label{section:practical-cpqr-cgs}
We observe that the simple CPQR algorithm pivots the first $r$ independent columns to become the first $r$ columns of $\bA\bP$. Let $\bA_1$ represent the first $r$ columns of $\bA\bP$, and $\bA_2$ represent the remaining columns. In the context of the full CPQR decomposition, we have:
$$
\bA\bP=
[\bA_1, \bA_2] = 
\bQ \begin{bmatrix}
	\bR_{11} & \bR_{12} \\
	\bzero   & \bzero 
\end{bmatrix}
=\left[
\bQ \begin{bmatrix}
	\bR_{11}  \\
	\bzero   
\end{bmatrix}
,
\bQ \begin{bmatrix}
	\bR_{12}  \\
	\bzero   
\end{bmatrix}
\right]
.
$$ 
It is evident that
$$
\norm{\bA_2} = \left\Vert\bQ \begin{bmatrix}
	\bR_{12}  \\
	\bzero   
\end{bmatrix}\right\Vert
=
\left\Vert\begin{bmatrix}
	\bR_{12}  \\
	\bzero   
\end{bmatrix}\right\Vert
=
\left\Vert \bR_{12}  \right\Vert,
$$
where the second equality follows from the invariance of the matrix norm under orthogonal transformations. Therefore, the norm of $\bR_{12}$ is directly determined  by the norm of $\bA_2$. 
For a well-conditioned CPQR, it is desirable for $\bR_{12}$ to have a small norm. A practical CPQR algorithm achieves this by first permuting the columns of $\bA$ so that they are ordered in decreasing vector norm:
$
\widetildebA = \bA\bP_0 = [\ba_{j_1}, \ba_{j_2}, \ldots, \ba_{j_n}],
$
where $\{j_1, j_2, \ldots, j_n\}$ is a permuted index set of $\{1,2,\ldots, n\}$, and 
$$
\norm{\ba_{j_1}}\geq  \norm{\ba_{j_2}}\geq  \ldots\geq  \norm{\ba_{j_n}}.
$$
The ``simple" reduced CPQR decomposition process is then applied to $\widetildebA$, resulting in $\widetildebA \bP_1= \bQ_r[\bR_{11}, \bR_{12}]$. The ``practical" reduced CPQR of $\bA$ is then recovered as
$$
\bA\underbrace{\bP_0\bP_1}_{\bP} =\bQ_r[\bR_{11}, \bR_{12}].
$$
The CPQR algorithm can be further enhanced by using the MGS process. This improved  approach has the additional advantage of stopping automatically when the factorization encounters a rank-deficient submatrix, thereby revealing the numerical rank of the matrix. This method is known as \textit{partial factorization}; see, for example, \citet{lu2021numerical} for more details.

\index{Column pivoting}
\index{Revealing rank-one deficiency}
\index{Rank-revealing}
\section{QR with Column Pivoting: Revealing Rank-One Deficiency}\label{section:rankone_defi}
Column-pivoted QR (CPQR) is one of several methods used to determine an appropriate column permutation when the matrix $\bA$ is rank-deficient. This process rearranges the first $r$ linearly independent  columns of $\bA$ to occupy the first $r$ columns of  $\bA\bP$. 
If $\bA$ is nearly rank-one deficient, the goal becomes identifying  a column permutation of $\bA$ that minimizes the pivotal element $r_{nn}$ in the resulting QR decomposition. This is commonly known as the \textit{revealing rank-one deficiency} problem for \textit{rank-revealing QR (RRQR)} decomposition.

The RRQR problem is particularly useful in the sense that it allows us to infer the numerical rank of a matrix without explicitly computing its singular value decomposition (SVD), which can be a significant advantage in terms of computational time and resources.
In least squares problems, where one seeks the best approximate solution to an overdetermined system of equations, the rank of the coefficient matrix plays a crucial role. An RRQR factorization can help identify the effective/numerical rank and thus the number of linearly independent equations, which is essential for solving such problems accurately.
On the other hand, in statistical modeling and machine learning, selecting a subset of regressors that best explains the variability in the response variable is a common task. RRQR factorization can assist in identifying the most relevant subset of variables by revealing the rank structure of the matrix formed by these variables, e.g., finding independent and significant alpha signals for quantitative strategies \citep{lu2022feature}.
\begin{theoremHigh}[Revealing rank-one deficiency \citep{chan1987rank}]\label{theorem:finding-good-qr-ordering}
Let $\bA\in \real^{m\times n}$ and let $\bv\in \real^n$ be a unit  vector (i.e., $\norm{\bv}=1$). There exists a permutation matrix $\bP$ such that the reduced QR decomposition
$$
\bA\bP = \bQ\bR
$$ 
satisfies $r_{nn} \leq \sqrt{n} \epsilon$, where $\epsilon = \norm{\bA\bv}$, and $r_{nn}$ is the $n$-th diagonal element of $\bR$. In this decomposition, $\bQ\in \real^{m\times n}$ and $\bR\in \real^{n\times n}$.
\end{theoremHigh}
\begin{proof}[of Theorem~\ref{theorem:finding-good-qr-ordering}]
Let $\bP\in \real^{n\times n}$ be a permutation matrix such that $\bw=\bP^\top\bv$, where 
$$
|w_n| = \max |v_i|,  \,\,\,\, \forall i \in \{1,2,\ldots,n\}.
$$
That is, we swap the entry with the largest magnitude to the last position, ensuring that the last component of $\bw$ equals the maximal component of $\bv$ in absolute value. 
Then we have $|w_n| \geq 1/\sqrt{n}$. Suppose the QR decomposition of $\bA\bP$ is $\bA\bP = \bQ\bR$. Then, 
$$
\epsilon = \norm{\bA\bv} = \norm{(\bQ^\top\bA\bP) (\bP^\top\bv)} = \norm{\bR\bw} = 
\norm{\begin{bmatrix}
		\vdots \\
		r_{nn} w_n
\end{bmatrix}}
\geq |r_{nn} w_n| \geq |r_{nn}|/\sqrt{n},
$$
where the second equality follows from the invariance of vector norms under orthogonal transformations, and  $\bP$ is an orthogonal matrix satisfying $\bP\bP^\top=\bI$. This concludes the proof.
\end{proof}
The following discussion makes use of the singular value decomposition (SVD), which will be introduced in Section~\ref{section:SVD}. You may skip this paragraph on a first reading.
Suppose the SVD of $\bA$ is given by $\bA = \sum_{i=1}^{n} \sigma_i \bu_i\bv_i^\top$, where $\sigma_i$'s are singular values satisfying $\sigma_1 \geq \sigma_2 \geq \ldots \geq \sigma_n$, i.e., $\sigma_n$ is the smallest singular value, and $\bu_i$'s and $\bv_i$'s are the corresponding left and right singular vectors, respectively. Then, if we let $\bv = \bv_n$ such that $\bA\bv_n = \sigma_n \bu_n$, \footnote{We will prove that the right singular vector of $\bA$ is equal to the right singular vector of $\bR$ if  $\bA$ admits the QR decomposition $\bA=\bQ\bR$ in Lemma~\ref{lemma:svd-for-qr}. The claim can also be applied to the singular values. So $\bv_n$ here is also a right singular vector of $\bR$.} we have
$$
\norm{\bA\bv} = \sigma_n. 
$$ 
By constructing a permutation matrix $\bP$ satisfying
$$
|\bP^\top \bv|_n = \max |v_i|,  \,\,\,\, \forall i \in \{1,2,\ldots,n\},
$$
we obtain a QR decomposition of $\bA\bP=\bQ\bR$ where the pivotal element $r_{nn}$ satisfies  $r_{nn} \leq \sqrt{n}\sigma_n$. If $\bA$ is rank-one deficient ($\sigma_n\approx 0$), then  $r_{nn}$ will also be close to zero, effectively revealing the matrix's near-rank deficiency.

\index{Revealing rank r deficiency}
\section{QR with Column Pivoting: Revealing Rank-r Deficiency*}\label{section:rank-r-qr}\index{Rank-revealing QR}
Building on the previous section, we now focus on computing the reduced QR decomposition of a matrix $\bA\in \real^{m\times n}$ that is approximately rank-$r$ deficient~\footnote{Note that rank $r$ here means  the matrix has a rank of $(\min\{m,n\}-r)$, not $r$.} with $r>1$. The goal now becomes finding a permutation matrix $\bP$ such that:
\begin{equation}\label{equation:rankr-reval-qr}
\bA\bP = 
\bQ\bR=
\bQ
\begin{bmatrix}
	\bL & \bM \\
	\bzero & \bN
\end{bmatrix},
\end{equation}
where $\bN \in \real^{r\times r}$, and $\norm{\bN}$ is small in some norm.
A recursive algorithm can be employed to achieve this. Suppose we have already isolated a small $k\times k$ block $\bN_k$. If we can isolate a small $(k+1)\times (k+1)$ block $\bN_{k+1}$, the permutation matrix can be determined recursively.
To reiterate, assume the existence of a permutation $\bP_k$ such that $\bN_k \in \real^{k\times k}$ has a small norm:
$$
\bA\bP_k = \bQ_k \bR_k=
\bQ_k 
\begin{bmatrix}
\bL_k & \bM_k \\
\bzero & \bN_k
\end{bmatrix}.
$$
Now, we aim to find a permutation $\bP_{k+1}$ such that $\bN_{k+1} \in \real^{(k+1)\times (k+1)}$ also has a small norm:
$$
\bA\bP_{k+1} = \bQ_{k+1} \bR_{k+1}=
\bQ_{k+1}
\begin{bmatrix}
	\bL_{k+1} & \bM_{k+1} \\
	\bzero & \bN_{k+1}
\end{bmatrix}.
$$
Using the algorithm described earlier, there exists an $(n-k)\times (n-k)$ permutation matrix $\widetilde{\bP}_{k+1}$ such that the matrix $\bL_k \in \real^{(n-k)\times (n-k)}$ has the QR decomposition $\bL_k \widetilde{\bP}_{k+1} = \widetilde{\bQ}_{k+1}\widetilde{\bL}_k$, where the entry $(n-k, n-k)$ of $\widetilde{\bL}_k$ is small. We then construct the following:
$$
\bP_{k+1} = \bP_k
\begin{bmatrix}
\widetilde{\bP}_{k+1} & \bzero \\
\bzero & \bI 
\end{bmatrix}
\qquad \text{and}\qquad 
\bQ_{k+1} = \bQ_k 
\begin{bmatrix}
\widetilde{\bQ}_{k+1} & \bzero \\
\bzero & \bI 
\end{bmatrix}.
$$
This leads to:
$$
{
\bA \bP_{k+1} = \bQ_{k+1}
\begin{bmatrix}
\widetilde{\bL}_k & \widetilde{\bQ}_{k+1}^\top  \bM_k \\
\bzero & \bN_k  
\end{bmatrix}}.
$$
Since the $(n-k, n-k)$-th entry of $\widetilde{\bL}_k$ is small, proving that  the last row of $\widetilde{\bQ}_{k+1}^\top  \bM_k$ is also small in norm will reveal the rank-$(k+1)$ deficiency of $\bA$ (see \citet{chan1987rank} for a formal proof).

\index{Householder transformation}
\index{Householder reflector}
\section{Existence of  QR Decomposition via  Householder Reflector}\label{section:qr-via-householder}

\textit{Householder matrices}, also known  as \textit{Householder reflectors}, are fundamental tools in numerical linear algebra. 
They are widely used in solving linear systems, estimating least squares solutions, and reducing matrices to Hessenberg form. This section illustrates how Householder reflectors can be utilized to prove the existence of the QR decomposition.

We begin by formally defining a Householder reflector and then examine  its key properties.
\begin{definition}[Householder reflector]\label{definition:householder-reflector}
Let $\bu \in \real^n$ be a vector of unit length (i.e., $\norm{\bu}=1$). The matrix $\bH = \bI - 2\bu\bu^\top$ is called a \textit{Householder reflector} or a \textit{Householder transformation}. This matrix is associated with the unit vector $\bu$, which is referred to as the \textit{Householder vector}. 
When a vector $\bx$ is multiplied by $\bH$, it is reflected across the hyperplane orthogonal to $\spn\{\bu\}$ (denoted as $\spn\{\bu\}^\perp$).

If $\norm{\bu} \neq 1$, the Householder reflector is defined as: $\bH = \bI - 2  \frac{\bu\bu^\top}{\bu^\top\bu} $.
\end{definition}
Derived from the definition of the Householder reflector, we obtain the following corollary, indicating that certain vectors remain unaltered when subjected to the Householder reflector.
\begin{corollary}[Unreflected by Householder]\label{corollary:unreflec_house}
Given a unit vector $\bu$,  the Householder reflector $\bH=\bI-2\bu\bu^\top$  leaves  any vector $\bv$ that is orthogonal to $\bu$ unchanged.  In other words, if  $\bu^\top\bv=0$, then $\bH\bv=\bv$. 
\end{corollary}
This result follows directly from substitution: $(\bI - 2\bu\bu^\top)\bv = \bv - 2\bu\bu^\top\bv=\bv$.

Let $\bu$ be a unit vector with $\norm{\bu}=1$, and let $\bv$ be a vector orthogonal to $\bu$. Then any vector $\bx$ in the plane can be decomposed into two components: $\bx = \bx_{\bv} + \bx_{\bu}$, where the first component $\bx_{\bu}$ is parallel to $\bu$ and the second one $\bx_{\bv}$ is orthogonal to $\bu$ (i.e., parallel to $\bv$). 
Using the projection formula from Section~\ref{section:project-onto-a-vector}, the component parallel to $\bx_{\bu}$ is $\bx_{\bu} = \frac{\bu\bu^\top}{\bu^\top\bu} \bx = \bu\bu^\top\bx$. 
Applying the Householder reflector $\bH=\bI - 2\bu\bu^\top$ to $\bx$, we get: $\bH\bx = (\bI - 2\bu\bu^\top)(\bx_{\bv} + \bx_{\bu}) = \bx_{\bv} -\bu\bu^\top \bx = \bx_{\bv} - \bx_{\bu}$. This demonstrates that the Householder reflector reflects  $\bx$ across the hyperplane $\spn\{\bu\}^\perp$.
In other words, the subspace perpendicular  to $\bu$ acts as a mirror, reflecting $\bx$. This transformation is illustrated in Figure~\ref{fig:householder}.

\begin{SCfigure}
\centering
\includegraphics[width=0.5\textwidth]{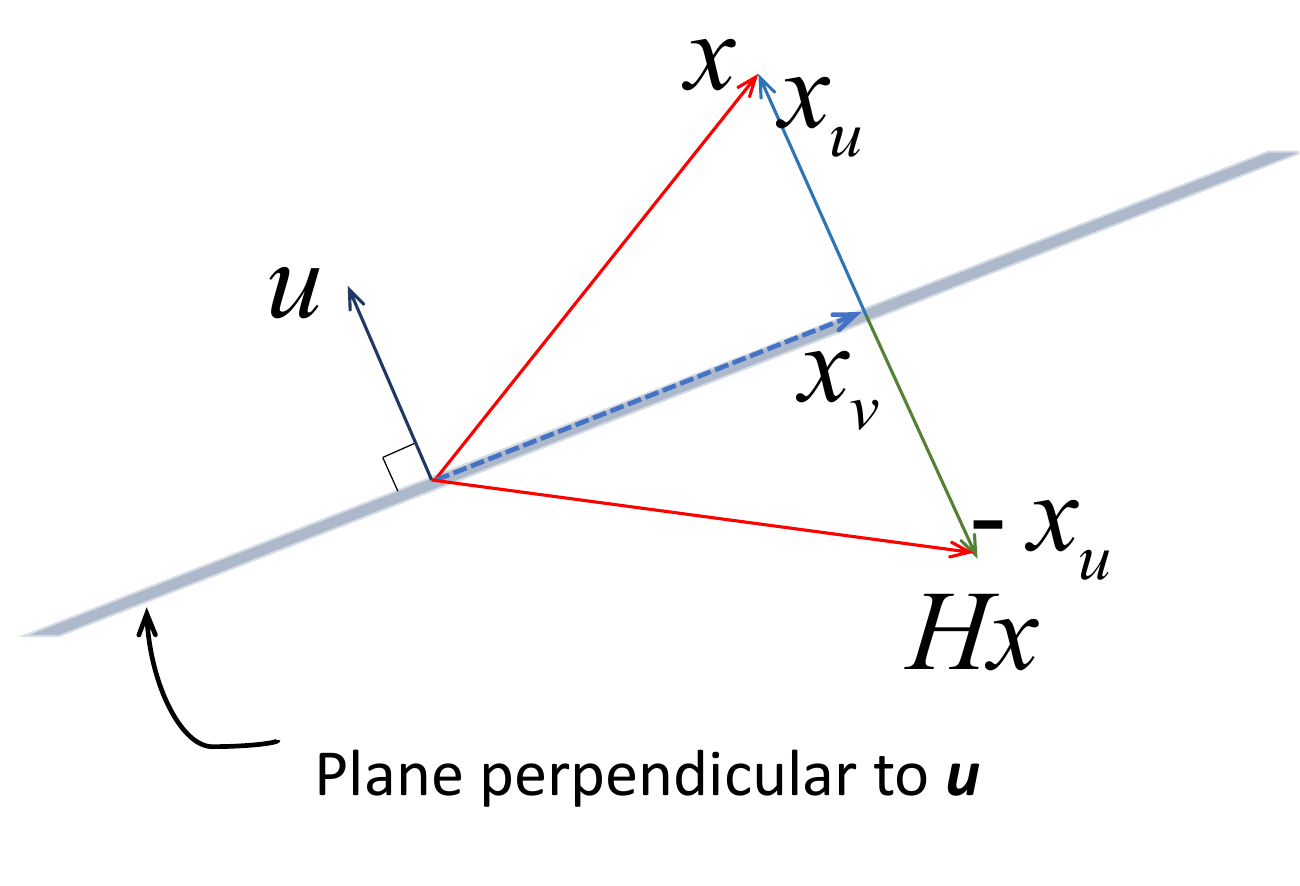}
\caption{Illustration  of the Householder reflector. The Householder reflector obtained by $\bH=\bI-2\bu\bu^\top$, where $\norm{\bu}=1$,  reflects a vector $\bx$ across the hyperplane orthogonal to $\bu$: $\bx=\bx_{\bv} + \bx_{\bu} \rightarrow \bx_{\bv} - \bx_{\bu}$.}
\label{fig:householder}
\end{SCfigure}

The previous explanation  explains how to determine the reflection of a vector using a Householder reflector. 
However, an additional question arises: Given two vectors that are reflections of each other via a Householder transformation, how can we determine the corresponding Householder reflector?
This question is especially relevant in the context of QR decomposition, where the objective is often to transform a column of a matrix into a desired form---typically one with zeros below the diagonal.
\begin{corollary}[Finding the Householder reflector]\label{corollary:householder-reflect-finding}
Suppose a vector $\bx$ is reflected to another vector $\by$ by a Householder reflector, and assume that  $\norm{\bx} = \norm{\by}$. Then, the corresponding Householder reflector can be computed as:
$$
\bH = \bI - 2 \bu\bu^\top, \text{ where } \bu = \frac{\bx-\by}{\norm{\bx-\by}}.
$$
\end{corollary}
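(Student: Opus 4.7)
The plan is to verify directly that the candidate $\bH = \bI - 2\bu\bu^\top$ with $\bu = (\bx-\by)/\|\bx-\by\|$ satisfies $\bH\bx = \by$, thereby producing the required reflector. (Implicit in the statement is the nondegenerate assumption $\bx \neq \by$, since otherwise $\bu$ is ill-defined; in that trivial case any Householder reflector associated with a vector orthogonal to $\bx$ leaves $\bx$ fixed.) First I would check that $\bu$ is a unit vector, which is immediate from the normalization, so $\bH$ genuinely is a Householder reflector in the sense of Definition~\ref{definition:householder-reflector}.

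Next I would compute $\bH\bx$ explicitly:
\begin{equation*}
\bH\bx \;=\; \bx \;-\; 2\,(\bu^\top \bx)\,\bu \;=\; \bx \;-\; \frac{2(\bx-\by)^\top \bx}{\|\bx-\by\|^2}\,(\bx-\by).
\end{equation*}
The whole proof then reduces to showing that the scalar coefficient on $(\bx-\by)$ equals $1$, i.e.\ that $2(\bx-\by)^\top \bx = \|\bx-\by\|^2$. Expanding the right-hand side gives $\|\bx\|^2 - 2\bx^\top\by + \|\by\|^2$, while the left-hand side is $2\|\bx\|^2 - 2\bx^\top\by$. These are equal precisely because of the hypothesis $\|\bx\| = \|\by\|$. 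Substituting back gives $\bH\bx = \bx - (\bx-\by) = \by$, as required.

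There is no serious obstacle here; the only subtlety is ensuring the hypothesis $\|\bx\|=\|\by\|$ is used (indeed, it is exactly what makes the coefficient collapse to $1$), and briefly noting that $\bH\by = \bx$ follows symmetrically, so $\bH$ is an involution exchanging the two vectors. Geometrically this matches the picture in Figure~\ref{fig:householder}: the vector $\bx-\by$ is normal to the mirror hyperplane $\operatorname{span}\{\bx-\by\}^\perp$, and reflecting across that hyperplane maps $\bx$ to $\by$. If desired one could also motivate the formula by decomposing $\bx = \bx_\bv + \bx_\bu$ with $\bx_\bu = \tfrac{1}{2}(\bx-\by)$ and $\bx_\bv = \tfrac{1}{2}(\bx+\by)$, and verifying that the first is parallel to $\bu$ while the second is orthogonal to $\bu$ (using $\|\bx\|=\|\by\|$ again), so that $\bH\bx = \bx_\bv - \bx_\bu = \by$.
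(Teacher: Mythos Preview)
Your proof is correct and follows essentially the same approach as the paper: both compute $\bH\bx$ directly and use $\|\bx\|=\|\by\|$ to reduce the scalar coefficient on $(\bx-\by)$ to $1$. You are simply more explicit than the paper about the algebraic step $2(\bx-\by)^\top\bx = \|\bx-\by\|^2$, and your added remarks on the geometric decomposition are sound but not required.
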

\begin{proof}[of Corollary~\ref{corollary:householder-reflect-finding}]
Starting from the definition of the reflection,  we have:
$$
\begin{aligned}
\bH\bx &= \bx - 2 \bu\bu^\top\bx =\bx - 2\frac{(\bx-\by)(\bx^\top-\by^\top)}{(\bx-\by)^\top(\bx-\by)} \bx
= \bx - (\bx-\by) = \by.
\end{aligned}
$$
The condition $\norm{\bx} = \norm{\by}$ is necessary to ensure that this result holds true.
\end{proof}

Householder reflectors are particularly useful for zeroing out specific components of a vector.
For example, it is often desirable to eliminate all elements of a vector $\ba \in \real^n$  except for the $i$-th element. 
In such cases, the Householder vector can be defined as:
$$
\bu = \frac{\ba - r\be_i}{\norm{\ba - r\be_i}}, \qquad \text{where } r = \pm\norm{\ba},
$$
which is a valid Householder vector since $\norm{\ba} = \norm{r\be_i} = \abs{r}$. Specifically, if $r=\norm{\ba}$, then the Householder reflector $\bH = \bI - 2 \bu\bu^\top$ reflects $\ba$ to $\norm{\ba}\be_i$; conversely, if $r=-\norm{\ba}$, the reflector maps $\ba$  to $-\norm{\ba}\be_i$.

\begin{remark}[Householder properties]
A Householder reflector $\bH$ has the following properties:
\begin{itemize}
\item $\bH\bH = \bI$:  reflecting a vector twice yields the original vector.

\item Symmetry: $\bH = \bH^\top$.

\item Orthogonality: $\bH^\top\bH = \bH\bH^\top = \bI$, meaning $\bH$ is an orthogonal matrix.

\item $\bH\bu = -\bu$, if $\bH = \bI - 2 \bu\bu^\top$.
\end{itemize}
\end{remark}

\paragraph{QR using Householder.}
In the Gram--Schmidt process, we observe that the QR decomposition employs a triangular matrix to orthogonalize a given matrix $\bA$.
An alternative and computationally efficient approach involves constructing a sequence of orthogonal matrices that iteratively reduce $\bA$ to upper triangular form---this also results in a QR decomposition.
For example, consider an orthogonal matrix $\bQ_1$ that introduces zeros into all elements of the first column of $\bA$  except for the entry (1,1); similarly, another orthogonal matrix  $\bQ_2$ zeros out all entries of the second column below (2,2); and so forth. 
By applying such a sequence of transformations, we can obtain the QR decomposition of $\bA$.
This method uses reflections to align columns of the matrix with the standard basis vector $\be_1$, which has all entries equal to zero except for the first one.

To be more specific, let $\bA=[\ba_1, \ba_2, \ldots, \ba_n]\in \real^{m\times n}$ be the column partition of $\bA$. Define:
\begin{equation}\label{equation:qr-householder-to-chooose-r-numeraically}
r_1 = \norm{\ba_1},\qquad  \bu_1 = \frac{\ba_1 - r_1 \be_1}{\norm{\ba_1 - r_1 \be_1}}, \qquad \text{and}\qquad \bH_1 = \bI - 2\bu_1\bu_1^\top.
\end{equation}
Here, $\be_1 = [1;0;0;\ldots;0]\in \real^m$ denotes the first standard basis vector in $\real^m$.	Applying the reflector $\bH_1$ to $\bA$ gives:
\begin{equation}\label{equation:householder-qr-projection-step1}
\bH_1\bA = [\bH_1\ba_1, \bH_1\ba_2, \ldots, \bH_1\ba_n] = 
\begin{bmatrix}
r_1 & \bR_{1,2:n} \\
\bzero&  \bB_2
\end{bmatrix}.
\end{equation}
This operation reflects $\ba_1$ to $r_1\be_1$,  zeroing out all entries below the diagonal in the first column. 
Notably, we reflect $\ba_1$ to $\norm{\ba_1}\be_1$, where the two vectors have the same length (i.e., the transformation preserves the norm) rather than directly to $\be_1$  to ensure {numerical stability}; and this aligns with the conditions stated in Corollary~\ref{corollary:householder-reflect-finding}.

Next, we apply the same process to the submatrix $\bB_2$ from Equation~\eqref{equation:householder-qr-projection-step1}, aiming to zero out all elements below the (2,2) entry. This selective application ensures that previously introduced zeros in the first column are preserved.
Let $\bB_2 = [\bb_2, \bb_3, \ldots, \bb_n]$ be the column partition of $\bB_2$, and define 
$$
r_2 = \norm{\bb_2},\qquad \bu_2 = \frac{\bb_2 - r_2 \be_1}{\norm{\bb_2 - r_2 \be_1}}, \qquad  \qquad \widetilde{\bH}_2 = \bI - 2\bu_2\bu_2^\top, \qquad \text{and}\qquad  \bH_2 =
 \begin{bmatrix}
	1 & \bzero \\
	\bzero & \widetilde{\bH}_2
\end{bmatrix}.
$$
In this context, $\be_1$ now denotes the first unit basis in $\real^{m-1}$, and $\bH_2$ is orthogonal because $\widetilde{\bH}_2$ is orthogonal. Applying $\bH_2$ yields:
$$
\bH_2\bH_1\bA = [\bH_2\bH_1\ba_1, \bH_2\bH_1\ba_2, \ldots, \bH_2\bH_1\ba_n] = 
\begin{bmatrixfoot}
	r_1 & r_{12} & \bR_{1,3:n} \\
	0 & r_2 & \bR_{2,3:n} \\
	\bzero &  \bzero &\bC_3
\end{bmatrixfoot}.
$$

By repeating this process iteratively, we eventually transform $\bA$ into upper triangular form: $\bA = (\bH_n \bH_{n-1}\ldots\bH_1)^{-1} \bR = \bQ\bR$. Since each $\bH_i$ is symmetric and orthogonal, the inverse simplifies to:  $\bQ=(\bH_n \bH_{n-1}\ldots\bH_1)^{-1} = \bH_1 \bH_2\ldots\bH_n$.

For example, consider applying this method to a $5\times 4$ matrix. The transformation proceeds as follows, where $\boxtimes$ represents a value that is not necessarily zero, and \textbf{boldface} indicates the value has just been changed:
$$
\footnotesize
\begin{aligned}
\begin{sbmatrix}{\bA}
	\boxtimes & \boxtimes & \boxtimes & \boxtimes \\
	\boxtimes & \boxtimes & \boxtimes & \boxtimes \\
	\boxtimes & \boxtimes & \boxtimes & \boxtimes \\
	\boxtimes & \boxtimes & \boxtimes & \boxtimes \\
	\boxtimes & \boxtimes & \boxtimes & \boxtimes
\end{sbmatrix}
&\stackrel{\bH_1}{\rightarrow}
\begin{sbmatrix}{\bH_1\bA}
	\bm{\boxtimes} & \bm{\boxtimes} & \bm{\boxtimes} & \bm{\boxtimes} \\
	\bm{0} & \bm{\boxtimes} & \bm{\boxtimes} & \bm{\boxtimes} \\
	\bm{0} & \bm{\boxtimes} & \bm{\boxtimes} & \bm{\boxtimes} \\
	\bm{0} & \bm{\boxtimes} & \bm{\boxtimes} & \bm{\boxtimes} \\
	\bm{0} & \bm{\boxtimes} & \bm{\boxtimes} & \bm{\boxtimes}
\end{sbmatrix}
\stackrel{\bH_2}{\rightarrow}
\begin{sbmatrix}{\bH_2\bH_1\bA}
	\boxtimes & \boxtimes & \boxtimes & \boxtimes \\
	0 & \bm{\boxtimes} & \bm{\boxtimes} & \bm{\boxtimes} \\
	0 & \bm{0} & \bm{\boxtimes} & \bm{\boxtimes} \\
	0 & \bm{0} & \bm{\boxtimes} & \bm{\boxtimes} \\
	0 & \bm{0} & \bm{\boxtimes} & \bm{\boxtimes}
\end{sbmatrix}
\stackrel{\bH_3}{\rightarrow}
\begin{sbmatrix}{\bH_3\bH_2\bH_1\bA}
	\boxtimes & \boxtimes & \boxtimes & \boxtimes \\
	0 & \boxtimes & \boxtimes & \boxtimes \\
	0 & 0 & \bm{\boxtimes} & \bm{\boxtimes} \\
	0 & 0 & \bm{0} & \bm{\boxtimes} \\
	0 & 0 & \bm{0} & \bm{\boxtimes}
\end{sbmatrix}
\stackrel{\bH_4}{\rightarrow}
\begin{sbmatrix}{\bH_4\bH_3\bH_2\bH_1\bA}
	\boxtimes & \boxtimes & \boxtimes & \boxtimes \\
	0 & \boxtimes & \boxtimes & \boxtimes \\
	0 & 0 & \boxtimes & \boxtimes \\
	0 & 0 & 0 & \bm{\boxtimes} \\
	0 & 0 & 0 & \bm{0}
\end{sbmatrix}.
\end{aligned}
$$

The Householder algorithm is a powerful technique for transforming a matrix into upper triangular form using a sequence of orthogonal transformations.
In contrast to the Gram--Schmidt process (both  CGS  and MGS), which employs a triangular matrix to orthogonalize a given matrix, the Householder algorithm relies on orthogonal matrices to achieve triangularization. 
This key distinction can be summarized as follows:
\begin{itemize}
	\item Gram--Schmidt algorithm (triangular orthogonalization): Uses projections to orthogonalize vectors, resulting in a triangular matrix.
	\item Householder algorithm (orthogonal triangularization): Applies orthogonal transformations to triangularize the matrix.
\end{itemize}

Moreover, both the Householder algorithm and the Givens rotation method (to be discussed shortly) produce a \textit{full} QR decomposition by applying a sequence of orthogonal transformations. In contrast, the QR decomposition obtained via CGS or MGS typically results in a \textit{reduced} QR factorization. While it is possible to extend the reduced decomposition to full form by appending silent orthogonal columns or rows, this extension is not inherent to the CGS or MGS methods.

\index{Givens rotation}
\section{Existence of  QR Decomposition via Givens Rotation}\label{section:qr-givens}
In Definition~\ref{definition:givens-rotation-in-qr}, we introduced the concept of a Givens rotation, particularly in the context of finding the rank-one update or downdate of the Cholesky decomposition. Let us now examine the specific effects of Givens rotations through illustrative examples.
Consider the following $2\times 2$ orthogonal matrices:
$$
\bF = 
\begin{bmatrix}
-c & s\\
s & c
\end{bmatrix}, 
\qquad 
\bJ=
\begin{bmatrix}
c & -s \\
s & c
\end{bmatrix},
\qquad \text{and}\qquad 
\bG=
\begin{bmatrix}
	c & s \\
	-s & c
\end{bmatrix},
$$
where $s = \sin \theta$ and $c=\cos \theta$ for some angle $\theta$. The first matrix has determinant $\det(\bF)=-1$ and represents a special case of a Householder reflector in two dimensions. It can be expressed as  $\bF=\bI-2\bu\bu^\top$, where $\bu=\begin{bmatrix}
	\sqrt{\frac{1+c}{2}}, &\sqrt{\frac{1-c}{2}}
\end{bmatrix}^\top$ or $\bu=\begin{bmatrix}
-\sqrt{\frac{1+c}{2}}, &-\sqrt{\frac{1-c}{2}}
\end{bmatrix}^\top$. 
This matrix reflects vectors across a specific axis.
In contrast, the matrices $\bJ$ and $\bG$ have determinants $\det(\bJ)=\det(\bG)=1$ and perform rotations rather than reflections. Such matrices are referred to as  {\textit{Givens rotations}}.

\begin{figure}[H]
	\centering
	\vspace{-0.35cm}
	\subfigtopskip=2pt
	\subfigbottomskip=2pt
	\subfigcapskip=-5pt
	\subfigure[$\by = \bJ\bx$, counter-clockwise rotation.]{\label{fig:rotation1}
		\includegraphics[width=0.4\linewidth]{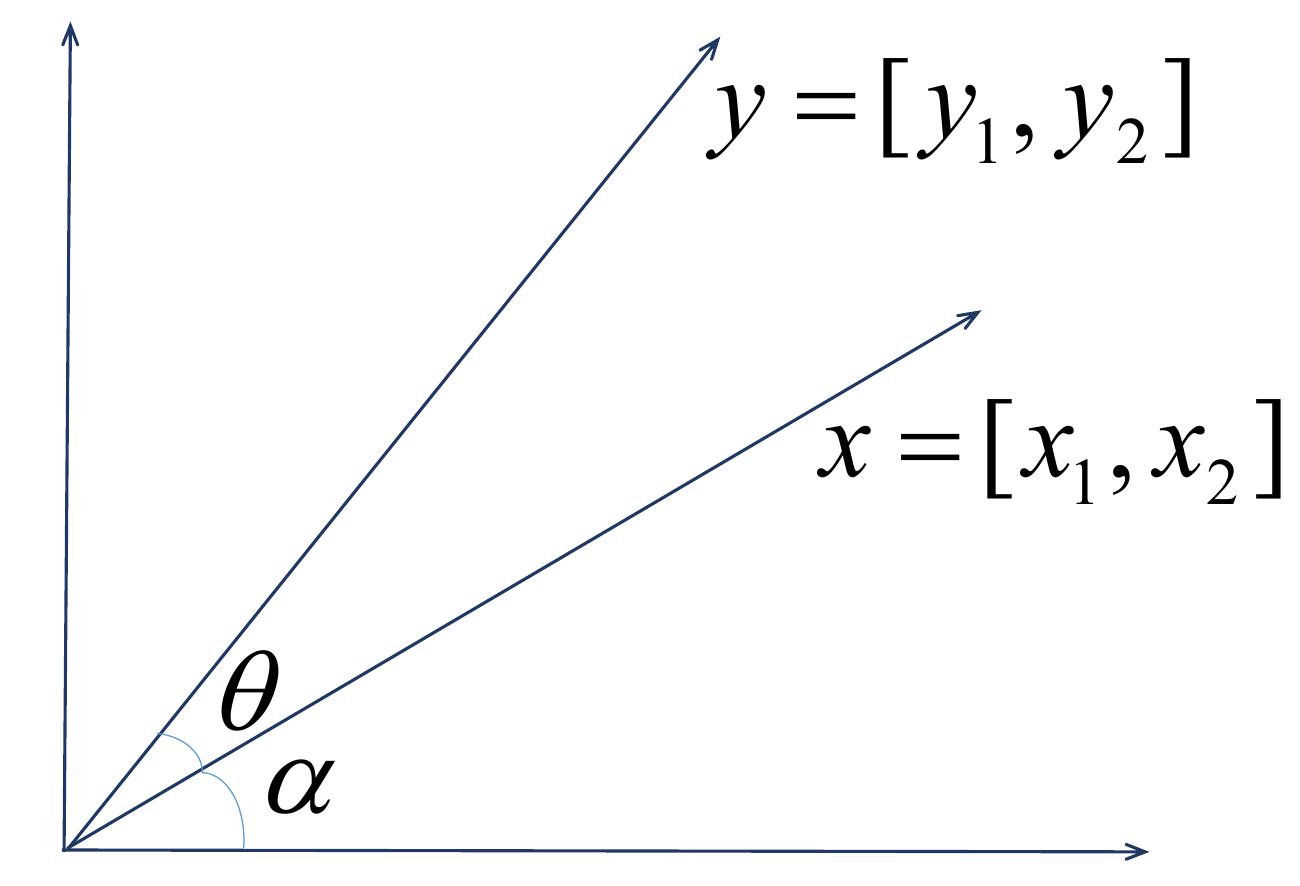}}
	\quad 
	\subfigure[$\by = \bG\bx$, clockwise rotation.]{\label{fig:rotation2}
		\includegraphics[width=0.4\linewidth]{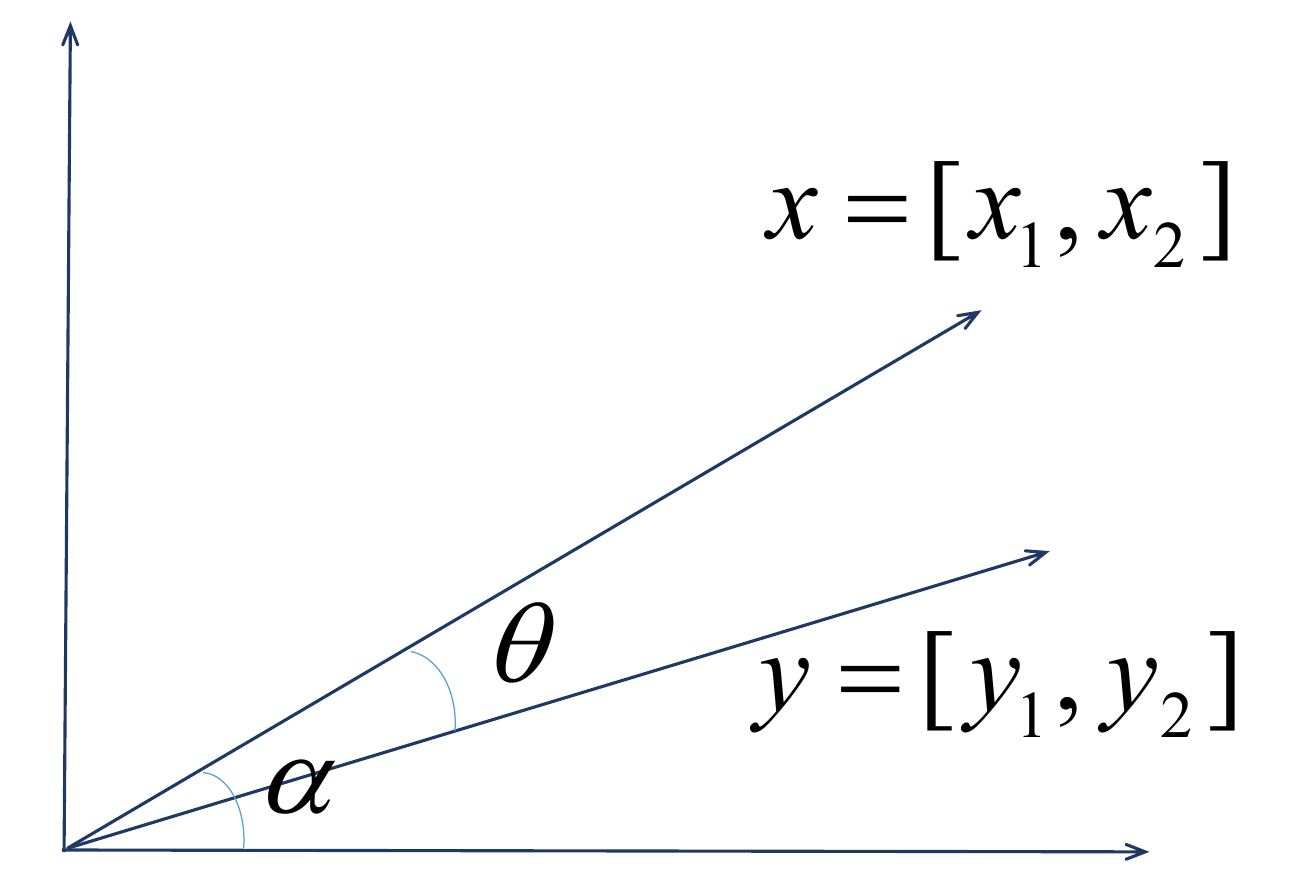}}
	\caption{Illustration  of two Givens rotations.}
	\label{fig:rotation}
\end{figure}

Figure~\ref{fig:rotation1} demonstrate the rotation of a vector $\bx$  under the action of the matrix $\bJ$, resulting in  $\by = \bJ\bx$ with
$
\left\{
\begin{aligned}
	&y_1 = c\cdot x_1 - s\cdot x_2;   \\
	&y_2 = s \cdot x_1 + c\cdot x_2.
\end{aligned}
\right.
$
We aim to verify that the angle between the  vectors $\bx$ and $\by$ is indeed $\theta$ (indicating a counter-clockwise rotation) under the Givens rotation $\bJ$.  
Firstly, we note that 
$$ 
\left\{
\begin{aligned}
	&\cos(\alpha) =\frac{x_1}{\sqrt{x_1^2+x_2^2}};   \\
	&\sin (\alpha) =\frac{x_2}{\sqrt{x_1^2+x_2^2}},
\end{aligned}
\right.
\qquad 
\text{and }\qquad 
\left\{
\begin{aligned}
	&\cos(\theta) =c;  \\
	&\sin (\theta) =s.
\end{aligned}
\right.
$$
This implies that $\cos(\theta+\alpha) = \cos(\theta)\cos(\alpha)-\sin(\theta)\sin(\alpha)$.
If we can show that $\cos(\theta+\alpha) = \cos(\theta)\cos(\alpha)-\sin(\theta)\sin(\alpha)$ is equal to $\frac{y_1}{\sqrt{y_1^2+y_2^2}}$, then we complete the proof.

For the former one, $\cos(\theta+\alpha) = \cos(\theta)\cos(\alpha)-\sin(\theta)\sin(\alpha)=\frac{c\cdot x_1 - s\cdot x_2}{\sqrt{x_1^2+x_2^2}}$. For the latter one, we can verify that $\sqrt{y_1^2+y_2^2}=\sqrt{x_1^2+x_2^2}$, and $\frac{y_1}{\sqrt{y_1^2+y_2^2}} = \frac{c\cdot x_1 - s\cdot x_2}{\sqrt{x_1^2+x_2^2}}$. This completes the proof. Similarly, we can also show that the angle between the vectors $\by=\bG\bx$ and $\bx$ is also $\theta$ in Figure~\ref{fig:rotation2}, and the rotation is clockwise.

It can be easily verified that the $n$-th order Givens rotation (Definition~\ref{definition:givens-rotation-in-qr}) is an orthogonal matrix with determinant 1. For any vector $\bx =[x_1, x_2, \ldots, x_n]^\top \in \real^n$, applying a Givens rotation yields $\by = \bG_{kl}\bx$ (the subscripts $k,l$ indicate the rotations occur \textbf{in plane $k$ and $l$}), where
$$ 
\left\{
\begin{aligned}
	&y_k = c \cdot x_k + s\cdot x_l,   \\
	&y_l = -s\cdot x_k +c\cdot x_l,  \\
	&y_j = x_j . &  (j\neq k,l) 
\end{aligned}
\right.
$$
In other words, a Givens rotation applied to $\bx$ rotates two components of $\bx$ by an angle $\theta$, while leaving all other components unchanged.
When $\sqrt{x_k^2 + x_l^2} \neq 0$,
let $c = \frac{x_k}{\sqrt{x_k^2 + x_l^2}}$ and $s=\frac{x_l}{\sqrt{x_k^2 + x_l^2}}$. Then we have 
$$ 
\left\{
\begin{aligned}
	&y_k = \sqrt{x_k^2 + x_l^2},   \\
	&y_l = 0,  \\
	&y_j = x_j . &  (j\neq k,l) 
\end{aligned}
\right.
$$
This result is critical for implementing the QR decomposition using Givens rotations.

\begin{corollary}[Basis transformation using forward Givens rotations]\label{corollary:basis-from-givens}
For any vector $\bx \in \real^n$, there exists a set of Givens rotations $\{\bG_{12}, \bG_{13}, \ldots, \bG_{1n}\}$ such that $\bG_{1n}\ldots \bG_{13}\bG_{12}\bx = \norm{\bx}\be_1$, where $\be_1\in \real^n$ is the first standard basis vector in $\real^n$.
\end{corollary}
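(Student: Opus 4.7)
The plan is to construct the rotations iteratively, zeroing out one coordinate at a time from position $2$ up to position $n$, while accumulating the full norm into position $1$. Concretely, I would set $\bx^{(0)} = \bx$, then for each $k = 2, 3, \ldots, n$ define $\bG_{1k}$ to act on coordinates $1$ and $k$ of $\bx^{(k-2)}$ so as to annihilate the $k$-th entry, and set $\bx^{(k-1)} = \bG_{1k}\bx^{(k-2)}$.

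The key invariant I would prove (by induction on $k$) is that $\bx^{(k-1)}$ has the form
$$
\bx^{(k-1)} = \left(\sqrt{x_1^2 + x_2^2 + \cdots + x_k^2},\; 0,\; \ldots,\; 0,\; x_{k+1},\; \ldots,\; x_n\right)^\top.
$$
At step $k$, write $\rho_{k-1} = \sqrt{x_1^2 + \cdots + x_{k-1}^2}$ for the current first entry. If $\rho_{k-1}^2 + x_k^2 > 0$, I would choose
$$
c_k = \frac{\rho_{k-1}}{\sqrt{\rho_{k-1}^2 + x_k^2}}, \qquad s_k = \frac{x_k}{\sqrt{\rho_{k-1}^2 + x_k^2}},
$$
and plug into the formulas in Definition~\ref{definition:givens-rotation-in-qr}: the new first entry becomes $c_k\rho_{k-1} + s_k x_k = \sqrt{\rho_{k-1}^2 + x_k^2} = \rho_k$, the new $k$-th entry becomes $-s_k\rho_{k-1} + c_k x_k = 0$, and entries indexed by $j \neq 1, k$ are untouched — in particular the zeros already placed in positions $2, \ldots, k-1$ are preserved because $\bG_{1k}$ only mixes coordinates $1$ and $k$. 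If $\rho_{k-1}^2 + x_k^2 = 0$, simply take $\bG_{1k} = \bI$ (i.e.\ $c_k = 1$, $s_k = 0$), which trivially maintains the invariant since the $k$-th entry is already zero.

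Iterating up to $k = n$ yields $\bx^{(n-1)} = (\rho_n, 0, \ldots, 0)^\top$, and since each Givens rotation is orthogonal, $\rho_n = \|\bx^{(n-1)}\| = \|\bx\|$. Setting $\bG_{1n}\cdots\bG_{13}\bG_{12}\bx = \bx^{(n-1)}$ then gives $\|\bx\|\be_1$, as required.

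There is no real obstacle here — the corollary is essentially a direct unwinding of the one-step zeroing property of a $2$-plane Givens rotation, applied $n-1$ times. The only subtlety worth flagging is the edge case where the partial norm $\rho_{k-1}^2 + x_k^2$ vanishes (so the formulas for $c_k, s_k$ are undefined), which is handled by choosing the identity rotation; and the bookkeeping point that rotating in the $(1,k)$ plane does not disturb the zeros produced in earlier steps, which is immediate from the structure of $\bG_{1k}$.
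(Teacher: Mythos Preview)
Your proof is correct and follows exactly the same approach as the paper: iteratively apply $\bG_{1k}$ for $k=2,\ldots,n$ to zero out the $k$-th coordinate while accumulating $\sqrt{x_1^2+\cdots+x_k^2}$ in the first entry. Your treatment is in fact more complete than the paper's, which simply writes out the first three steps and says ``continue this process'' --- you add the explicit inductive invariant, the observation that $\bG_{1k}$ leaves the previously-zeroed entries untouched, and the degenerate case $\rho_{k-1}^2+x_k^2=0$, none of which the paper spells out.
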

\begin{proof}[of Corollary~\ref{corollary:basis-from-givens}]
From earlier results, we can construct a sequence of Givens rotations $\bG_{12}, \bG_{13},$ and $\bG_{14}$ such that 
$$
\begin{aligned}
\bG_{12}\bx &= \left[\sqrt{x_1^2 + x_2^2}, 0, x_3, \ldots, x_n \right]^\top;\\
\bG_{13}\bG_{12}\bx &= \left[\sqrt{x_1^2 + x_2^2+x_3^2}, 0, 0, x_4, \ldots, x_n \right]^\top;\\
\bG_{14}\bG_{13}\bG_{12}\bx &= \left[\sqrt{x_1^2 + x_2^2+x_3^2+x_4^2},0, 0, 0, x_5, \ldots, x_n \right]^\top.
\end{aligned}
$$
By continuing this process, we eventually obtain: $\bG_{1n}\ldots \bG_{13}\bG_{12} = \norm{\bx}\be_1$.
This completes the proof.
\end{proof}

\begin{remark}[Basis transformation using reverse Givens rotations]\label{remark:basis-from-givens2}
In Corollary~\ref{corollary:basis-from-givens}, the Givens rotations are applied in forward order, introducing zeros starting from the second entry  up to the $n$-th entry. 
However, in some cases, it may be desirable to apply the rotations in reverse order, introducing zeros from the $n$-th entry down to the second entry, such that $\bG_{12}\bG_{13}\ldots  \bG_{1n}\bx = \norm{\bx}\be_1$, where $\be_1\in \real^n$ is the first unit basis in $\real^n$.

The procedure is analogous to the forward case. Specifically, we construct Givens rotations $\bG_{1n},\bG_{1,(n-1)}, \bG_{1,(n-2)}$ as follows:
$$
\begin{aligned}
\bG_{1n}\bx &= \left[\sqrt{x_1^2 + x_n^2}, x_2, x_3, \ldots, x_{n-1}, 0 \right]^\top;\\
\bG_{1,(n-1)}\bG_{1n}\bx &= \left[\sqrt{x_1^2 + x_{n-1}^2+x_n^2}, x_2, x_3, \ldots, x_{n-2}, 0, 0  \right]^\top;\\
\bG_{1,(n-2)}\bG_{1,(n-1)}\bG_{1n}\bx &= \left[\sqrt{x_1^2 + x_{n-2}^2+x_{n-1}^2+x_n^2}, x_2, x_3,  \ldots,x_{n-3},0, 0, 0  \right]^\top.
\end{aligned}
$$
Continuing this process, we ultimately obtain: $\bG_{12}\bG_{13}\ldots  \bG_{1n}\bx = \norm{\bx}\be_1$.

Alternatively, there exists another sequence of rotations $\{\bG_{12}, \bG_{23}, \ldots,\bG_{(n-1),n}\}$ such that $\bG_{12} \bG_{23} \ldots \bG_{(n-1),n}\bx = \norm{\bx}\be_1$, where
$$
\begin{aligned}
\bG_{(n-1),n}\bx &= \left[x_1, x_2, \ldots, x_{n-2},\sqrt{x_{n-1}^2 + x_n^2}, 0 \right]^\top;\\
\bG_{(n-2),(n-1)}\bG_{(n-1),n}\bx &= \left[x_1, x_2, \ldots,x_{n-3}, \sqrt{x_{n-2}^2+x_{n-1}^2 + x_n^2}, 0, 0  \right]^\top;\\
\bG_{(n-3),(n-2)}\bG_{(n-2),(n-1)}\bG_{(n-1),n}\bx &=
\bigg[x_1,  x_2, \ldots ,x_{n-4}, \sqrt{x_{n-3}^2+x_{n-2}^2+x_{n-1}^2 + x_n^2},0, 0, 0  \bigg]^\top.
\end{aligned}
$$
By continuing this process, we ultimately obtain: $\bG_{12} \bG_{23} \ldots \bG_{(n-1),n}\bx = \norm{\bx}\be_1$.

This reverse application of Givens rotations will prove useful in the context of rank-one updates to the QR decomposition (Section~\ref{section:qr-rank-one-changes}).
\end{remark}

\paragraph{QR using Givens.}
From Corollary~\ref{corollary:basis-from-givens}, we know that we can introduce zeros by {rotating} the columns of a matrix to align with the basis vector $\be_1$.
Let $\bA=[\ba_1, \ba_2, \ldots, \ba_n] \in \real^{m\times n}$ be the column partition of $\bA$, and let 
\begin{equation}\label{equation:qr-rotate-to-chooose-r-numeraically}
 \bG_1 = \bG_{1m}\ldots \bG_{13}\bG_{12}.
\end{equation}
Then,
\begin{equation}\label{equation:rotate-qr-projection-step1}
\begin{aligned}
	\bG_1\bA &= [\bG_1\ba_1, \bG_1\ba_2, \ldots, \bG_1\ba_n] = 
\begin{bmatrix}
	\norm{\ba_1} & \bR_{1,2:n} \\
	\bzero&  \bB_2
\end{bmatrix},
\end{aligned}
\end{equation}
which rotates the first column $\ba_1$ to $\norm{\ba_1}\be_1$, introducing zeros below the diagonal in the first column.

Next, we apply this process to the submatrix $\bB_2$ from Equation~\eqref{equation:rotate-qr-projection-step1}, aiming to eliminate all entries below the (2,2) position.
Suppose $\bB_2 = [\bb_2, \bb_3, \ldots, \bb_n]$, and let 
$$
\bG_2 = \bG_{2m}\ldots\bG_{24}\bG_{23},
$$
where $\bG_{2n}, \ldots, \bG_{24}, \bG_{23}$ can be inferred from the context.
Applying both rotations yields:
$$
\begin{aligned}
\bG_2\bG_1\bA &= [\bG_2\bG_1\ba_1, \bG_2\bG_1\ba_2, \ldots, \bG_2\bG_1\ba_n] = 
\begin{bmatrixfoot}
	\norm{\ba_1} & r_{12} & \bR_{1,3:n} \\
	0 & \norm{\bb_2} & \bR_{2,3:n} \\
	\bzero &  \bzero &\bC_3
\end{bmatrixfoot}.
\end{aligned}
$$

This procedure can be repeated iteratively until the entire matrix $\bA$ is upper triangularized. The final result is: $\bA = (\bG_n \bG_{n-1}\ldots\bG_1)^{-1} \bR = \bQ\bR$. Since each matrix $\bG_i$ is orthogonal for $i\in\{1,2,\ldots,n\}$, we have $\bQ=(\bG_n \bG_{n-1}\ldots\bG_1)^{-1} = \bG_1^\top \bG_2^\top\ldots\bG_n^\top $, and 
\begin{equation}\label{equation:givens-q}
\begin{aligned}
	\bG_1^\top \bG_2^\top\ldots\bG_n^\top &=(\bG_n \ldots \bG_2 \bG_1)^\top \\
	&=\left\{(\bG_{nm} \ldots \bG_{n,(n+1)}) \ldots (\bG_{2m}\ldots \bG_{23})  ( \bG_{1m} \ldots \bG_{12} )\right\}^\top .
\end{aligned}
\end{equation}

In practice, the Givens rotation algorithm often outperforms the Householder method when the matrix $\bA$ already contains many zeros below the main diagonal.
Therefore, Givens rotations are particularly suited for rank-one changes in the QR decomposition, as these changes introduce only a small number of nonzero values (Section~\ref{section:qr-rank-one-changes}).
An example of a $5\times 4$ matrix is presented below, where $\boxtimes$ represents a value that is not necessarily zero, and \textbf{boldface} indicates the value has just been changed. 

\paragraph{Givens rotations in $\bG_1$.} For a $5\times 4$ example, we can express $\bG_1 = \bG_{15}\bG_{14}\bG_{13}\bG_{12}$. The process is shown below:
$$
\footnotesize
\begin{aligned}
\begin{sbmatrix}{\bA}
\boxtimes & \boxtimes & \boxtimes & \boxtimes \\
\boxtimes & \boxtimes & \boxtimes & \boxtimes \\
\boxtimes & \boxtimes & \boxtimes & \boxtimes \\
\boxtimes & \boxtimes & \boxtimes & \boxtimes \\
\boxtimes & \boxtimes & \boxtimes & \boxtimes
\end{sbmatrix}
&\stackrel{\bG_{12}}{\rightarrow}
\begin{sbmatrix}{\bG_{12}\bA}
\bm{\boxtimes} & \bm{\boxtimes} & \bm{\boxtimes} & \bm{\boxtimes} \\
\bm{0} & \bm{\boxtimes} & \bm{\boxtimes} & \bm{\boxtimes} \\
\boxtimes & \boxtimes & \boxtimes & \boxtimes \\
\boxtimes & \boxtimes & \boxtimes & \boxtimes \\
\boxtimes & \boxtimes & \boxtimes & \boxtimes
\end{sbmatrix}
\stackrel{\bG_{13}}{\rightarrow}
\begin{sbmatrix}{\bG_{13}\bG_{12}\bA}
\bm{\boxtimes} & \bm{\boxtimes} & \bm{\boxtimes} & \bm{\boxtimes} \\
0 & \boxtimes & \boxtimes &\boxtimes \\
\bm{0} & \bm{\boxtimes} & \bm{\boxtimes} & \bm{\boxtimes} \\
\boxtimes & \boxtimes & \boxtimes & \boxtimes \\
\boxtimes & \boxtimes & \boxtimes & \boxtimes
\end{sbmatrix}
\stackrel{\bG_{14}}{\rightarrow}
\begin{sbmatrix}{\bG_{14}\bG_{13}\bG_{12}\bA}
\bm{\boxtimes} & \bm{\boxtimes} & \bm{\boxtimes} & \bm{\boxtimes} \\
0 & \boxtimes & \boxtimes &\boxtimes \\
0 & \boxtimes & \boxtimes & \boxtimes \\
\bm{0} & \bm{\boxtimes} & \bm{\boxtimes} & \bm{\boxtimes} \\
\boxtimes & \boxtimes & \boxtimes & \boxtimes
\end{sbmatrix}
\stackrel{\bG_{15}}{\rightarrow}
\begin{sbmatrix}{\bG_{15}\bG_{14}\bG_{13}\bG_{12}\bA}
\bm{\boxtimes} & \bm{\boxtimes} & \bm{\boxtimes} & \bm{\boxtimes} \\
0 & \boxtimes & \boxtimes &\boxtimes \\
0 & \boxtimes & \boxtimes & \boxtimes \\
0 & \boxtimes & \boxtimes & \boxtimes \\
\bm{0} & \bm{\boxtimes} & \bm{\boxtimes} & \bm{\boxtimes} \\
\end{sbmatrix}.
\end{aligned}
$$

\paragraph{Givens rotation as a big picture.} 
When we consider $\bG_1, \bG_2, \bG_3, \bG_4$ as a single matrix, we have:
$$
\footnotesize
\begin{aligned}
\begin{sbmatrix}{\bA}
\boxtimes & \boxtimes & \boxtimes & \boxtimes \\
\boxtimes & \boxtimes & \boxtimes & \boxtimes \\
\boxtimes & \boxtimes & \boxtimes & \boxtimes \\
\boxtimes & \boxtimes & \boxtimes & \boxtimes \\
\boxtimes & \boxtimes & \boxtimes & \boxtimes
\end{sbmatrix}
&\stackrel{\bG_1}{\rightarrow}
\begin{sbmatrix}{\bG_1\bA}
\bm{\boxtimes} & \bm{\boxtimes} & \bm{\boxtimes} & \bm{\boxtimes} \\
\bm{0} & \bm{\boxtimes} & \bm{\boxtimes} & \bm{\boxtimes} \\
\bm{0} & \bm{\boxtimes} & \bm{\boxtimes} & \bm{\boxtimes} \\
\bm{0} & \bm{\boxtimes} & \bm{\boxtimes} & \bm{\boxtimes} \\
\bm{0} & \bm{\boxtimes} & \bm{\boxtimes} & \bm{\boxtimes}
\end{sbmatrix}
\stackrel{\bG_2}{\rightarrow}
\begin{sbmatrix}{\bG_2\bG_1\bA}
\boxtimes & \boxtimes & \boxtimes & \boxtimes \\
0 & \bm{\boxtimes} & \bm{\boxtimes} & \bm{\boxtimes} \\
0 & \bm{0} & \bm{\boxtimes} & \bm{\boxtimes} \\
0 & \bm{0} & \bm{\boxtimes} & \bm{\boxtimes} \\
0 & \bm{0} & \bm{\boxtimes} & \bm{\boxtimes}
\end{sbmatrix}
\stackrel{\bG_3}{\rightarrow}
\begin{sbmatrix}{\bG_3\bG_2\bG_1\bA}
\boxtimes & \boxtimes & \boxtimes & \boxtimes \\
0 & \boxtimes & \boxtimes & \boxtimes \\
0 & 0 & \bm{\boxtimes} & \bm{\boxtimes} \\
0 & 0 & \bm{0} & \bm{\boxtimes} \\
0 & 0 & \bm{0} & \bm{\boxtimes}
\end{sbmatrix}
\stackrel{\bG_4}{\rightarrow}
\begin{sbmatrix}{\bG_4\bG_3\bG_2\bG_1\bA}
\boxtimes & \boxtimes & \boxtimes & \boxtimes \\
0 & \boxtimes & \boxtimes & \boxtimes \\
0 & 0 & \boxtimes & \boxtimes \\
0 & 0 & 0 & \bm{\boxtimes} \\
0 & 0 & 0 & \bm{0}
\end{sbmatrix}.
\end{aligned}
$$

\index{Uniqueness}
\section{Uniqueness of  QR Decomposition}\label{section:nonunique-qr}
The results of QR decomposition can vary depending on the method used---such as the Gram–Schmidt process, the Householder algorithm, or the Givens algorithm.
Even within the Householder algorithm, different strategies exist for selecting the sign of  $r_1$ in Equation~\eqref{equation:qr-householder-to-chooose-r-numeraically}. As a result, the QR decomposition of a matrix is not necessarily unique.

However, the uniqueness of the \textit{reduced} QR decomposition for a full-column-rank matrix $\bA$ is guaranteed when the diagonal elements of $\bR$ are positive. 
Here, we provide a proof for the uniqueness of the {reduced} QR decomposition under the assumption that the diagonal elements of $\bR$ are positive. This proof also offers insight into the \textit{implicit Q theorem} used in Hessenberg decomposition (Section~\ref{section:hessenberg-decomposition}) and tridiagonal decomposition (Section~\ref{section:tridiagonal-decomposition}).
\begin{corollary}[Uniqueness of  reduced QR decomposition]\label{corollary:unique-qr}
Let  $\bA$ be an $m\times n$ matrix with full column rank $n$, where $m\geq n$. Then, the \textit{reduced} QR decomposition is \textbf{unique} if the main diagonal values of $\bR$ are positive.
\end{corollary}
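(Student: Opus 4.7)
The plan is to reduce the uniqueness of the reduced QR decomposition to the uniqueness of the Cholesky decomposition, which was already established in Corollary~\ref{corollary:unique-cholesky-main}. Assume by contradiction (or by direct comparison) that there exist two reduced QR decompositions $\bA = \bQ_1\bR_1 = \bQ_2\bR_2$ where $\bQ_1,\bQ_2 \in \real^{m \times n}$ have orthonormal columns and $\bR_1,\bR_2 \in \real^{n\times n}$ are upper triangular with positive diagonals. I want to conclude $\bR_1=\bR_2$ and $\bQ_1=\bQ_2$.

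First I would observe that since $\bA$ has full column rank $n$, the Gram matrix $\bA^\top\bA$ is symmetric positive definite (this follows from $\bx^\top\bA^\top\bA\bx = \|\bA\bx\|^2 > 0$ for nonzero $\bx$, using that $\bA\bx\neq \bzero$ by the full rank assumption). Computing this Gram matrix through either factorization, and using $\bQ_i^\top\bQ_i = \bI_n$, I get
\begin{equation*}
\bA^\top\bA = \bR_1^\top\bQ_1^\top\bQ_1\bR_1 = \bR_1^\top\bR_1 = \bR_2^\top\bR_2.
\end{equation*}
Both $\bR_1^\top\bR_1$ and $\bR_2^\top\bR_2$ are Cholesky decompositions of the positive definite matrix $\bA^\top\bA$, since $\bR_1$ and $\bR_2$ are upper triangular with positive diagonal entries. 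By the uniqueness of the Cholesky decomposition (Corollary~\ref{corollary:unique-cholesky-main}), it follows immediately that $\bR_1 = \bR_2$.

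Next, since $\bR_1$ has positive (hence nonzero) diagonal entries, it is an invertible upper triangular matrix. From $\bQ_1\bR_1 = \bQ_2\bR_2 = \bQ_2\bR_1$, I can right-multiply by $\bR_1^{-1}$ to obtain $\bQ_1 = \bQ_2$, completing the argument.

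I do not expect any serious obstacle here: the main step is simply recognizing that $\bA^\top\bA$ admits a Cholesky decomposition in two ways, which is legal because $\bR_1,\bR_2$ are upper triangular with strictly positive diagonal entries (exactly the normalization needed for Cholesky uniqueness). The full column rank hypothesis is used only to guarantee that $\bA^\top\bA$ is positive definite rather than merely positive semidefinite, and that $\bR_1$ is invertible so that $\bQ_1 = \bQ_2$ follows. If one preferred a self-contained proof that does not cite Cholesky, an alternative route would be to set $\bT = \bR_2\bR_1^{-1}$, note that $\bT$ is upper triangular with positive diagonal, derive $\bT^\top\bT = \bI$ from $\bQ_2\bT = \bQ_1$ together with $\bQ_i^\top\bQ_i = \bI$, and conclude that $\bT$ is a diagonal orthogonal matrix with positive entries, hence $\bT = \bI$.
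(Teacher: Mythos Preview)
Your proof is correct, but the paper takes a genuinely different route. Rather than passing through $\bA^\top\bA$ and invoking Cholesky uniqueness, the paper first completes both reduced factorizations to \emph{full} QR decompositions (so $\bQ_1,\bQ_2\in\real^{m\times m}$ are square orthogonal), sets $\bV=\bQ_1^{-1}\bQ_2$, and writes $\bR_1=\bV\bR_2$. It then argues column by column: the first column of this identity forces $v_{21}=\cdots=v_{m1}=0$ and $r_{11}=v_{11}s_{11}$; unit-norm columns give $v_{11}=\pm 1$, and positivity of the diagonals forces $v_{11}=1$; orthogonality of $\bV$ then kills the rest of the first row of $\bV$. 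Iterating on the remaining submatrix yields $\bV[1{:}n,1{:}n]=\bI_n$, hence $\bR_1=\bR_2$ and the first $n$ columns of $\bQ_1,\bQ_2$ agree.

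Your Cholesky reduction is shorter and more conceptual, and it cleanly isolates where the full-rank hypothesis and the positive-diagonal normalization enter. The paper's entrywise argument is more elementary (it does not appeal to Cholesky) and is chosen with an eye toward what comes later: the same ``first column plus orthogonality determines the rest'' mechanism is exactly what drives the implicit $Q$ theorems for the Hessenberg and tridiagonal decompositions, where no analogue of the Cholesky shortcut is available. Your alternative sketch via $\bT=\bR_2\bR_1^{-1}$ is closer in spirit to the paper's line of reasoning, though still packaged differently.
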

\begin{proof}[of Corollary~\ref{corollary:unique-qr}]
Assume that the reduced QR decomposition is not unique. Then, it can be extended to a full QR decomposition, and we can find two such decompositions satisfying $\bA=\bQ_1\bR_1 = \bQ_2\bR_2$. This implies $\bR_1 = \bQ_1^{-1}\bQ_2\bR_2 = \bV \bR_2$, where $\bV= \bQ_1^{-1}\bQ_2$ is an orthogonal matrix. Expanding this equation gives:
$$
\begin{aligned}
\bR_1 &= 
\begin{bmatrixfoot}
	r_{11} & r_{12}& \dots & r_{1n}\\
	& r_{22}& \dots & r_{2n}\\
	&       &    \ddots  & \vdots \\
	\multicolumn{2}{c}{\raisebox{1.3ex}[0pt]{\Huge0}} & & r_{nn} \\
	\bzero & \bzero &\ldots & \bzero
\end{bmatrixfoot}=
\begin{bmatrix}
	v_{11}& v_{12} & \ldots & v_{1m}\\
	v_{21} & v_{22} & \ldots & v_{2m}\\
	\vdots & \vdots & \ddots & \vdots\\
	v_{m1} & v_{m2} & \ldots & v_{\textcolor{black}{mm}}
\end{bmatrix}
\begin{bmatrixfoot}
	s_{11} & s_{12}& \dots & s_{1n}\\
	& s_{22}& \dots & s_{2n}\\
	&       &    \ddots  & \vdots \\
	\multicolumn{2}{c}{\raisebox{1.3ex}[0pt]{\Huge0}} & & s_{nn} \\
		\bzero & \bzero &\ldots & \bzero
\end{bmatrixfoot}= \bV\bR_2,
\end{aligned}
$$
This implies 
$$
r_{11} = v_{11} s_{11}, \qquad v_{21}=v_{31}=v_{41}=\ldots=v_{m1}=0.
$$
Since $\bV$ is an orthogonal matrix and its columns are mutually orthonormal, with the first column having a norm of 1, it follows that $v_{11} = \pm 1$.
By assumption, $r_{ii}> 0$ and $s_{ii}> 0$ for $i\in \{1,2,\ldots,n\}$, meaning that $r_{11}> 0$ and $s_{11}> 0$, so $v_{11}$ must be positive 1. Since $\bV$ is an orthogonal matrix, we also have 
$$
v_{12}=v_{13}=v_{14}=\ldots=v_{1m}=0. 
$$
By applying this reasoning to the submatrices of $\bR_1, \bV, $ and $\bR_2$, we find that the upper-left submatrix of $\bV$ is the identity: $\bV[1:n,1:n]=\bI_n$, indicating $\bR_1=\bR_2$. This implies $\bQ_1[:,1:n]=\bQ_2[:,1:n]$ and leads to a contradiction. Therefore, the reduced QR decomposition is unique.
\end{proof}

\index{Row space}
\section{LQ, RRLQ, QL, RQ Decomposition}\label{section:lq_ql_rq}
We previously established the existence of the QR decomposition using the Gram--Schmidt process, which is applied to the column space of a matrix $\bA=[\ba_1, \ba_2, \ldots, \ba_n] \in \real^{m\times n}$. 
However, in many applications (see, for example, \citet{schilders2009solution} or Chapter~\ref{chapter:ulv-urv-decomposition}), there is also interest in the row space of a matrix 
$\bB=[\bb_1^\top; \bb_2^\top; \ldots;\bb_m^\top] \in \real^{m\times n}$, where $\bb_i$ denotes the $i$-th row of $\bB$. The successive spaces spanned by the rows $\bb_1, \bb_2, \ldots$ of $\bB$ are
$$
\cspace([\bb_1])\,\,\,\, \subseteq\,\,\,\, \cspace([\bb_1, \bb_2]) \,\,\,\,\subseteq\,\,\,\, \cspace([\bb_1, \bb_2, \bb_3])\,\,\,\, \subseteq\,\,\,\, \ldots.
$$
The QR decomposition has an analogous  counterpart that characterizes the orthogonal row space. 
If we perform the QR decomposition on $\bB^\top = \bQ_0\bR$, we obtain the LQ decomposition of the matrix $\bB = \bL \bQ$, where $\bQ = \bQ_0^\top$ and $\bL = \bR^\top$.
The LQ decomposition is useful in demonstrating the existence of the UTV decomposition in the following chapter.

\begin{theoremHigh}[LQ decomposition]\label{theorem:lq-decomposition}
Any $m\times n$ matrix $\bB$ (whether its rows are linearly independent or not) with $n\geq m$ can be decomposed as 
$$
\bB = \bL\bQ,
$$
where 
\begin{enumerate}
\item \textbf{Reduced}: $\bL$ is an $m\times m$ lower triangular matrix, and $\bQ$ is $m\times n$ with orthonormal rows, known as the \textit{reduced LQ decomposition};

\item \textbf{Full}: $\bL$ is an $m\times n$ lower triangular matrix, and $\bQ$ is $n\times n$ with orthonormal rows, known as the \textit{full LQ decomposition}. If we further restrict the lower triangular matrix to be square, the full LQ decomposition can be written as:
$$
\bB = \begin{bmatrix}
	\bL_0 & \bzero
\end{bmatrix}\bQ,
$$
where $\bL_0$ is an $m\times m$ square lower triangular matrix.
\end{enumerate}
\end{theoremHigh}

\paragraph{Row-pivoted LQ (RPLQ).\index{Row-pivoted}\index{RPLQ}} Additionally, similar to the column-pivoted QR discussed in Section~\ref{section:cpqr}, there exists a \textit{row-pivoted LQ (RPLQ)} decomposition  for a rank-$r$ matrix $\bB\in\real^{m\times n}$: 
$$
\left\{
\begin{aligned}
	\text{Reduced RPLQ: }&\qquad 
	\bP\bB &=& 
	\underbrace{\begin{bmatrix}
			\bL_{11} \\
			\bL_{21}
	\end{bmatrix}}_{m\times r}
	\underbrace{\bQ_r }_{r\times n};\\
	\text{Full RPLQ: }&\qquad 
	\bP\bB &=& 
	\underbrace{\begin{bmatrix}
			\bL_{11} & \bzero \\
			\bL_{21} & \bzero 
	\end{bmatrix}}_{m\times m}
	\underbrace{\bQ }_{m\times n},\\
\end{aligned}
\right.
$$
where $\bL_{11}\in \real^{r\times r}$ is lower triangular, $\bQ_r$ or $\bQ_{1:r,:}$ spans the same row space as $\bB$, and $\bP$ is a permutation matrix that interchanges independent rows into the upper-most rows.

\begin{exercise}[QL and RQ decomposition]
Let $\bA\in\real^{m\times n}$ with $m\geq n$. Show that there exists a permutation matrix $\bP$ such that $\bA\bP=\bQ\bL$, where $\bQ$ is orthogonal and $\bL$ is lower triangular.
Similarly, let $\bB\in\real^{m\times n}$ with $m\leq n$. Show that there exists a permutation matrix $\bP$ such that $\bP\bB=\bR\bQ$, where $\bQ$ is orthogonal and $\bR$ is upper triangular.
\end{exercise}

\section{Two-Sided Orthogonal Decomposition}
To this end, an immediate consequence of the CPQR and RPLQ decompositions is the \textit{two-sided orthogonal decomposition}, which simultaneously identifies orthonormal bases for both the row space and column space of a matrix.
\begin{theoremHigh}[Two-sided orthogonal decomposition]\label{theorem:two-sided-orthogonal}
Let $\bA\in \real^{n\times n}$ be  a square matrix of rank $r$. Suppose the full CPQR and RPLQ decompositions of $\bA$ are given by 
$$\bA\bP_1=\bQ_1
\begin{bmatrix}
	\bR_{11} & \bR_{12}\\
	\bzero & \bzero
\end{bmatrix}
\qquad \text{and}\qquad
\bP_2\bA=
\begin{bmatrix}
	\bL_{11} & \bzero \\
	\bL_{21} & \bzero 
\end{bmatrix}
\bQ_2,
$$ 
respectively. Then, we obtain:
$$
\bA\bP\bA = \bQ_1
\underbrace{\begin{bmatrix}
		\bR_{11}\bL_{11}+\bR_{12}\bL_{21} & \bzero \\
		\bzero & \bzero 
\end{bmatrix}}_{\text{rank $r$}}
\bQ_2,
$$
where the first $r$ columns of $\bQ_1$ span the same column space as $\bA$, the first $r$ rows of $\bQ_2$ span the same row space as $\bA$, and $\bP$ is a permutation matrix. This decomposition is known as  the \textit{two-sided orthogonal decomposition}.
\end{theoremHigh}

This decomposition exhibits a similarity with the singular value decomposition (SVD): $\bA=\bU\bSigma\bV^\top$, where the first $r$ columns of $\bU$ span the same column space as $\bA$, and the first $r$ columns of $\bV$ span the same row space as $\bA$ (as will be shown in Proposition~\ref{proposition:svd-four-orthonormal-Basis}). Thus, the two-sided orthogonal decomposition can be viewed as a computationally inexpensive alternative to the SVD for square matrices. The orthonormal bases from the two-sided orthogonal decomposition are collected in the following proposition.

\index{Fundamental spaces}
\index{Orthonormal basis}
\begin{proposition}[Four orthonormal basis]
Given the two-sided orthogonal decomposition of the matrix $\bA\in \real^{n\times n}$ with rank $r$: $\bA\bP\bA = \bU \bF\bV^\top$, where $\bU=[\bu_1, \bu_2, \ldots,\bu_n]$ and $\bV=[\bv_1, \bv_2, \ldots, \bv_n]$ are the column partitions of $\bU$ and $\bV$, respectively, the following properties hold:
\begin{itemize}
\item  $\{\bv_1, \bv_2, \ldots, \bv_r\} $ is an orthonormal basis of $\cspace(\bA^\top)$;

\item $\{\bv_{r+1},\bv_{r+2}, \ldots, \bv_n\}$ is an orthonormal basis of $\nspace(\bA)$;

\item $\{\bu_1,\bu_2, \ldots,\bu_r\}$ is an orthonormal basis of $\cspace(\bA)$;

\item $\{\bu_{r+1}, \bu_{r+2},\ldots,\bu_n\}$ is an orthonormal basis of $\nspace(\bA^\top)$. 
\end{itemize}
\end{proposition}

\index{Rank-one update}
\index{Rank-one change}
\section{Application: Rank-One Changes}\label{section:qr-rank-one-changes}
In Section~\ref{section:cholesky-rank-one-update}, we discussed the rank-one update and downdate of the Cholesky decomposition.
In the context of least squares problems, the Cholesky decomposition (along with its rank-one update/downdate) is applied to the Gram matrix of the data matrix $\bA\in\real^{m\times n}$: $\bA^\top\bA = \bR^\top\bR$; see Section~\ref{section:application-ls-qr}. Each row of the data matrix represents a data point, while each column corresponds to a feature variable.
Therefore, the rank-one update/downdate of the Choleaky decomposition is useful for efficiently adding or removing a small number of data points from the least squares problem.

Similarly, a rank-one change of a matrix $\bA$ in the QR decomposition is defined as follows:
$$
\begin{aligned}
	\bA^\prime &= \bA + \bu\bv^\top, \\
	\downarrow &\gap  \downarrow\\
	\bQ^\prime\bR^\prime &=\bQ\bR + \bu\bv^\top,
\end{aligned}
$$
where the downdate form can be recovered by setting $\bA^\prime = \bA - (-\bu)\bv^\top$. This shows that the update and downdate forms in the QR decomposition are essentially equivalent.
Since the least squares problem using the QR decomposition is applied directly to the data matrix  ($\bA=\bQ\bR$; see Section~\ref{section:application-ls-qr}),  rank-one changes to the QR decomposition cannot be be applied for adding or deleting a few data points from the least squares problem. However, it can be used for modifying the features in the system.
For example, when $\bu=\bone_m$ and $\bv=\bone_n$, the rank-one change effectively adds one to each feature across all data points.

To restate, the rank-one update/downdate problem involves determining the QR decomposition of $\bA^\prime$ efficiently, given the QR decomposition of $\bA$.
Let $\bw = \bQ^\top\bu$. Then we have
$$
\bA^\prime = \bQ(\bR + \bw\bv^\top).
$$
Using the second form from Remark~\ref{remark:basis-from-givens2}, which introduces zeros in reverse order, there exists a set of Givens rotations $\bG_{12} \bG_{23} \ldots \bG_{(n-1),n}$  such that
$$
\bG_{12} \bG_{23} \ldots \bG_{(n-1),n} \bw = \pm \norm{\bw} \be_1,
$$
where $\bG_{(k-1),k}$ denotes the Givens rotation in the plane corresponding to indices $k-1$ and $k$, and it zeros out the $k$-th entry of $\bw$. Applying these same rotations to $\bR$, we have
$$
\bG_{12} \bG_{23} \ldots \bG_{(n-1),n}\bR = \bH_0 ,
$$
where the Givens rotations in this \textit{reverse order}  (\textit{backward rotations}) are useful to transform the upper triangular $\bR$ into a ``simple" \textit{upper Hessenberg matrix}---a nearly triangular matrix (see Definition~\ref{definition:upper-hessenbert} that  will be introduced in the Hessenberg decomposition). 
In contrast, using forward rotations to transform $\bw$ into $\pm \norm{\bw}\be_1$, as demonstrated in Corollary~\ref{corollary:basis-from-givens}, does not yield an upper Hessenberg matrix. 
Instead, it typically produces a dense matrix. 
For example, considering $\bR\in \real^{4\times 4}$, backward rotations preserve many zeros, simplifying the structure, where $\boxtimes$ represents a value that is not necessarily zero, and \textbf{boldface} indicates the value has just been changed:
$$
\begin{aligned}
\text{\parbox{5.5em}{Backward\\(Right Way)}: }
\footnotesize
\begin{sbmatrix}{\bR}
\boxtimes & \boxtimes & \boxtimes & \boxtimes \\
0 & \boxtimes & \boxtimes & \boxtimes \\
0 & 0 & \boxtimes & \boxtimes \\
0 & 0 & 0 & \boxtimes \\
\end{sbmatrix}
\stackrel{\bG_{34}}{\rightarrow}
\footnotesize
\begin{sbmatrix}{\bG_{34}\bR}
\boxtimes & \boxtimes & \boxtimes & \boxtimes \\
0 & \boxtimes & \boxtimes & \boxtimes \\
0 & 0 & \bm{\boxtimes} & \bm{\boxtimes} \\
0 & 0 & \bm{\boxtimes} & \bm{\boxtimes} \\
\end{sbmatrix}
\stackrel{\bG_{23}}{\rightarrow}
\footnotesize
\begin{sbmatrix}{\bG_{23}\bG_{34}\bR}
\boxtimes & \boxtimes & \boxtimes & \boxtimes \\
0 & \bm{\boxtimes} & \bm{\boxtimes} & \bm{\boxtimes} \\
0 & \bm{\boxtimes} & \bm{\boxtimes} & \bm{\boxtimes} \\
0 & 0 & \boxtimes & \boxtimes \\
\end{sbmatrix}
\stackrel{\bG_{12}}{\rightarrow}
\footnotesize
\begin{sbmatrix}{\bG_{12}\bG_{23}\bG_{34}\bR}
\bm{\boxtimes} & \bm{\boxtimes} & \bm{\boxtimes} & \bm{\boxtimes} \\
\bm{\boxtimes} & \bm{\boxtimes} & \bm{\boxtimes} & \bm{\boxtimes} \\
0 & \boxtimes & \boxtimes & \boxtimes \\
0 & 0 & \boxtimes & \boxtimes \\
\end{sbmatrix}.
\end{aligned}
$$
While forward rotations eliminate these zeros, resulting in a \textbf{dense matrix}:
$$
\begin{aligned}
\text{\parbox{5.5em}{Forward\\(Wrong Way)}: }
\footnotesize
\begin{sbmatrix}{\bR}
\boxtimes & \boxtimes & \boxtimes & \boxtimes \\
0 & \boxtimes & \boxtimes & \boxtimes \\
0 & 0 & \boxtimes & \boxtimes \\
0 & 0 & 0 & \boxtimes \\
\end{sbmatrix}
&\stackrel{\bG_{12}}{\rightarrow}
\footnotesize
\begin{sbmatrix}{\bG_{12}\bR}
\bm{\boxtimes} & \bm{\boxtimes} & \bm{\boxtimes} & \bm{\boxtimes} \\
\bm{\boxtimes} & \bm{\boxtimes} & \bm{\boxtimes} & \bm{\boxtimes} \\
0 & 0 & \boxtimes & \boxtimes \\
0 & 0 & 0 & \boxtimes \\
\end{sbmatrix}
\stackrel{\bG_{23}}{\rightarrow}
\footnotesize
\begin{sbmatrix}{\bG_{23}\bG_{12}\bR}
\boxtimes & \boxtimes & \boxtimes & \boxtimes \\
\bm{\boxtimes} & \bm{\boxtimes} & \bm{\boxtimes} & \bm{\boxtimes} \\
\bm{\boxtimes} & \bm{\boxtimes} & \bm{\boxtimes} & \bm{\boxtimes} \\
0 & 0 & 0 & \boxtimes \\
\end{sbmatrix}
\stackrel{\bG_{34}}{\rightarrow}
\footnotesize
\begin{sbmatrix}{\bG_{34}\bG_{23}\bG_{12}\bR}
\boxtimes & \boxtimes & \boxtimes & \boxtimes \\
\boxtimes & \boxtimes & \boxtimes & \boxtimes \\
\bm{\boxtimes} & \bm{\boxtimes} & \bm{\boxtimes} & \bm{\boxtimes} \\
\bm{\boxtimes} & \bm{\boxtimes} & \bm{\boxtimes} & \bm{\boxtimes} \\
\end{sbmatrix}.
\end{aligned}
$$
In general, backward rotations yield:
$$
\bG_{12} \bG_{23} \ldots \bG_{(n-1),n} (\bR+\bw\bv^\top) = \bH_0  \pm \norm{\bw} \be_1 \bv^\top = \bH, 
$$
which is also upper Hessenberg. Subsequently, as in the triangularization process via Givens rotation in Section~\ref{section:qr-givens}, there exists a set of rotations $\bJ_{12}, \bJ_{23}, \ldots, \bJ_{(n-1),n}$ such that 
$$
\bJ_{(n-1),n} \ldots \bJ_{23}\bJ_{12}\bH = \bR^\prime
$$
is upper triangular. 
To illustrate this process, following the example of a $4\times 4$ matrix, the sequence of rotations progressively simplifies the matrix, preserving and introducing zeros step by step:
$$
\begin{aligned}
\underbrace{\bH_0  \pm \norm{\bw} \be_1 \bv^\top}_{\bH} =
\footnotesize
\begin{sbmatrix}{\bH}
\boxtimes & \boxtimes & \boxtimes & \boxtimes  \\
\boxtimes & \boxtimes & \boxtimes & \boxtimes  \\
0 & \boxtimes & \boxtimes & \boxtimes \\
0 & 0 & \boxtimes & \boxtimes \\
\end{sbmatrix}
&\stackrel{\bJ_{12}}{\rightarrow}
\footnotesize
\begin{sbmatrix}{\bJ_{12}\bH}
\bm{\boxtimes} & \bm{\boxtimes} & \bm{\boxtimes} & \bm{\boxtimes}  \\
\bm{0} & \bm{\boxtimes} & \bm{\boxtimes} & \bm{\boxtimes}  \\
0 & \boxtimes & \boxtimes & \boxtimes \\
0 & 0 & \boxtimes & \boxtimes \\
\end{sbmatrix}
\stackrel{\bJ_{23}}{\rightarrow}
\footnotesize
\begin{sbmatrix}{\bJ_{23}\bJ_{12}\bH}
\boxtimes & \boxtimes & \boxtimes & \boxtimes  \\
0 & \bm{\boxtimes} & \bm{\boxtimes} & \bm{\boxtimes}  \\
0 & \bm{0} & \bm{\boxtimes} & \bm{\boxtimes}  \\
0 & 0 & \boxtimes & \boxtimes \\
\end{sbmatrix}
\stackrel{\bJ_{34}}{\rightarrow}
\footnotesize
\begin{sbmatrix}{\bJ_{34}\bJ_{23}\bJ_{12}\bH}
\boxtimes & \boxtimes & \boxtimes & \boxtimes  \\
0 & \boxtimes & \boxtimes & \boxtimes  \\
0 & 0 & \bm{\boxtimes} & \bm{\boxtimes}  \\
0 & 0 & \bm{0} & \bm{\boxtimes}  \\
\end{sbmatrix}
.
\end{aligned}
$$
The QR decomposition of $\bA^\prime$ can then be expressed as:
$
\bA^\prime = \bQ^\prime \bR^\prime,
$
where 
\begin{equation}\label{equation:qr-rank-one-update}
\left\{
\begin{aligned}
\bR^\prime &=(\bJ_{(n-1),n} \ldots \bJ_{23}\bJ_{12}) (\bG_{12} \bG_{23} \ldots \bG_{(n-1),n}) (\bR+\bw\bv^\top);\\
\bQ^\prime &= \bQ\left\{(\bJ_{(n-1),n} \ldots \bJ_{23}\bJ_{12}) (\bG_{12} \bG_{23} \ldots \bG_{(n-1),n}) \right\}^\top .\\
\end{aligned}
\right.
\end{equation}

\section{Application: Appending or Deleting a Column}\label{section:append-column-qr}
In certain applications, such as an $F$-test for least squares via QR decomposition \citep{lu2021rigorous}, it is often necessary to either delete or append a column (i.e., a feature variable) from the observed matrix. 
The goal, once again, is to efficiently compute the QR decomposition of the modified matrix.
\paragraph{Deleting a column.}
Suppose the QR decomposition of a matrix $\bA\in \real^{m\times n}$ is given by $\bA=\bQ\bR$, where the columns  of $\bA$ are partitioned as  $\bA=[\ba_1,\ba_2,\ldots,\ba_n]$. If the $k$-th column of $\bA$ is removed, the resulting matrix is denoted by $\bA^\prime = [\ba_1,\ldots,\ba_{k-1},\ba_{k+1},\ldots,\ba_n] \in \real^{m\times (n-1)}$. Our goal is to efficiently compute the QR decomposition of $\bA^\prime$. 
The matrix $\bR$ can be expressed using the following block structure:
$$
\begin{aligned}
	\begin{blockarray}{ccccc}
		\begin{block}{c[ccc]c}
			&	\bR_{11} & \ba & \bR_{12} & k-1  \\
			\bR = 	&  \bzero    &	r_{kk} & \bb^\top  & 1 \\
			&	\bzero  &\bzero& \bR_{22} & m-k  \\
		\end{block}
		& k-1 & 1 &  n-k & \\
	\end{blockarray},\\
\end{aligned}
\quad\text{such that}\quad 
\bQ^\top \bA^\prime = 
\begin{bmatrix}
	\bR_{11} &\bR_{12} \\
	\bzero & \bb^\top \\
	\bzero & \bR_{22}
\end{bmatrix} = \bH
$$
is upper Hessenberg. 
An illustrative example is presented below, demonstrating the scenario of a $6\times 5$ matrix. Here, $k=3$, and the column corresponding to $k$ has been removed:
$$
\begin{aligned}
\footnotesize
\begin{sbmatrix}{\bR = \bQ^\top\bA}
\boxtimes & \boxtimes & \boxtimes & \boxtimes & \boxtimes \\
0 & \boxtimes & \boxtimes & \boxtimes & \boxtimes \\
0 & 0 & \boxtimes & \boxtimes& \boxtimes \\
0 & 0 & 0 & \boxtimes& \boxtimes \\
0 & 0 & 0 & 0& \boxtimes \\
0 & 0 & 0 & 0& 0
\end{sbmatrix}
&\longrightarrow
\footnotesize
\begin{sbmatrix}{\bH = \bQ^\top\bA^\prime}
\boxtimes & \boxtimes  & \boxtimes & \boxtimes \\
0 & \boxtimes  & \boxtimes & \boxtimes \\
0 & 0 & \boxtimes& \boxtimes \\
0 & 0  & \boxtimes& \boxtimes \\
0 & 0  & 0& \boxtimes \\
0 & 0  & 0& 0
\end{sbmatrix}.
\end{aligned}
$$ 

To transform $\bH$ into a triangular matrix, we apply a sequence of Givens rotations $\bG_{k,k+1}$, $\bG_{k+1,k+2}$, $\ldots$, $\bG_{n-1,n}$ to eliminate specific off-diagonal entries $h_{k+1,k}$, $h_{k+2,k+1}$, $\ldots$, $h_{n,n-1}$ of $\bH$.
The resulting triangular matrix $\bR^\prime$ is then computed as:
$$
\bR^\prime = \bG_{n-1,n}\ldots \bG_{k+1,k+2}\bG_{k,k+1}\bQ^\top \bA^\prime.
$$
The updated orthogonal matrix is given by:
\begin{equation}\label{equation:qr-delete-column-finalq}
	\bQ^\prime = (\bG_{n-1,n}\ldots \bG_{k+1,k+2}\bG_{k,k+1}\bQ^\top )^\top = \bQ \bG_{k,k+1}^\top  \bG_{k+1,k+2}^\top \ldots \bG_{n-1,n}^\top,
\end{equation}
such that $\bA^\prime = \bQ^\prime\bR^\prime$. 
The $6\times 5$ example is shown below, where $\boxtimes$ represents a value that is not necessarily zero, and \textbf{boldface} indicates the value has just been changed:
$$
\begin{aligned}
\footnotesize
\begin{sbmatrix}{\bR = \bQ^\top\bA}
	\boxtimes & \boxtimes & \boxtimes & \boxtimes & \boxtimes \\
	0 & \boxtimes & \boxtimes & \boxtimes & \boxtimes \\
	0 & 0 & \boxtimes & \boxtimes& \boxtimes \\
	0 & 0 & 0 & \boxtimes& \boxtimes \\
	0 & 0 & 0 & 0& \boxtimes \\
	0 & 0 & 0 & 0& 0
\end{sbmatrix}
&\stackrel{k=3}{\rightarrow}
\footnotesize
\begin{sbmatrix}{\bH = \bQ^\top\bA^\prime}
	\boxtimes & \boxtimes  & \boxtimes & \boxtimes \\
	0 & \boxtimes  & \boxtimes & \boxtimes \\
	0 & 0 & \boxtimes& \boxtimes \\
	0 & 0  & \boxtimes& \boxtimes \\
	0 & 0  & 0& \boxtimes \\
	0 & 0  & 0& 0
\end{sbmatrix}
\stackrel{\bG_{34}}{\rightarrow}
\footnotesize
\begin{sbmatrix}{\bG_{34}\bH }
	\boxtimes & \boxtimes  & \boxtimes & \boxtimes \\
	0 & \boxtimes  & \boxtimes & \boxtimes \\
	0 & 0 & \bm{\boxtimes}& \bm{\boxtimes} \\
	0 & 0  & \bm{0}& \bm{\boxtimes} \\
	0 & 0  & 0& \boxtimes \\
	0 & 0  & 0& 0
\end{sbmatrix}
\stackrel{\bG_{45}}{\rightarrow}
\footnotesize
\begin{sbmatrix}{\bG_{45}\bG_{34}\bH}
	\boxtimes & \boxtimes  & \boxtimes & \boxtimes \\
	0 & \boxtimes  & \boxtimes & \boxtimes \\
	0 & 0 & \boxtimes & \boxtimes \\
	0 & 0  &0 & \bm{\boxtimes} \\
	0 & 0  & 0& \bm{0} \\
	0 & 0  & 0& 0
\end{sbmatrix}.
\end{aligned}
$$

\paragraph{Appending a column.}
Similarly, consider the case where a vector $\bw$ is appended as the $(k+1)$-th column of $\bA$, resulting in the updated matrix $\widetildebA = [\ba_1,\ba_k,\bw,\ba_{k+1},\ldots,\ba_n]$. 
The goal becomes to efficiently compute the QR decomposition of $\widetildebA$.
Applying the orthogonal transformation $\bQ^\top$ to $\widetildebA$, we have
$$
\bQ^\top \widetildebA = [\bQ^\top\ba_1,\ldots, \bQ^\top\ba_k, \bQ^\top\bw, \bQ^\top\ba_{k+1}, \ldots,\bQ^\top\ba_n] = \widetilde{\bH}.
$$
Next, a sequence of Givens rotations $\bJ_{m-1,m}, \bJ_{m-2,m-1}, \ldots, \bJ_{k+1,k+2}$ can be applied to zero out the elements $\widetilde{h}_{m,k+1}$, $\widetilde{h}_{m-1,k+1}$, $\ldots$, $\widetilde{h}_{k+2,k+1}$ in $\widetilde{\bH}$, transforming it into an upper triangular matrix:
$$
\widetilde{\bR} =\bJ_{k+1,k+2}\ldots \bJ_{m-2,m-1}\bJ_{m-1,m} \bQ^\top \widetildebA.
$$
To illustrate, suppose $\widetilde{\bH}$ is a $6\times 5$ matrix, and $k=2$. Then the process is shown as follows:
$$
\footnotesize
\begin{aligned}
\begin{sbmatrix}{\widetilde{\bH}}
\boxtimes & \boxtimes & \boxtimes & \boxtimes & \boxtimes \\
0 & \boxtimes & \boxtimes & \boxtimes & \boxtimes \\
0 & 0 & \boxtimes & \boxtimes& \boxtimes \\
0 & 0 & \boxtimes & 0& \boxtimes \\
0 & 0 & \boxtimes & 0& 0 \\
0 & 0 & \boxtimes & 0& 0
\end{sbmatrix}
&\stackrel{\bJ_{56}}{\rightarrow}
\begin{sbmatrix}{\bJ_{56}\widetilde{\bH} \rightarrow \widetilde{h}_{63}=0}
\boxtimes & \boxtimes & \boxtimes & \boxtimes & \boxtimes \\
0 & \boxtimes & \boxtimes & \boxtimes & \boxtimes \\
0 & 0 & \boxtimes & \boxtimes& \boxtimes \\
0 & 0 & \boxtimes & 0& \boxtimes \\
0 & 0 & \bm{\boxtimes} & 0& 0 \\
0 & 0 & \bm{0} & 0& 0
\end{sbmatrix}
\stackrel{\bJ_{45}}{\rightarrow}
\begin{sbmatrix}{\bJ_{45}\bJ_{56}\widetilde{\bH}\rightarrow \widetilde{h}_{53}=0}
\boxtimes & \boxtimes & \boxtimes & \boxtimes & \boxtimes \\
0 & \boxtimes & \boxtimes & \boxtimes & \boxtimes \\
0 & 0 & \boxtimes & \boxtimes& \boxtimes \\
0 & 0 & \bm{\boxtimes} & 0& \bm{\boxtimes} \\
0 & 0 & \bm{0} & 0& \bm{\boxtimes}\\
0 & 0 & 0 & 0& 0
\end{sbmatrix}\stackrel{\bJ_{34}}{\rightarrow}
\begin{sbmatrix}{\bJ_{34}\bJ_{45}\bJ_{56}\widetilde{\bH}\rightarrow \widetilde{h}_{43}=0}
\boxtimes & \boxtimes & \boxtimes & \boxtimes & \boxtimes \\
0 & \boxtimes & \boxtimes & \boxtimes & \boxtimes \\
0 & 0 & \bm{\boxtimes}  & \bm{\boxtimes} & \bm{\boxtimes}  \\
0 & 0 & \bm{0}  & \bm{\boxtimes} & \bm{\boxtimes}  \\
0 & 0 & 0 & 0& \boxtimes\\
0 & 0 & 0 & 0& 0
\end{sbmatrix}.
\end{aligned}
$$ 
Finally, the updated orthogonal matrix is given by:
\begin{equation}\label{equation:qr-add-column-finalq}
\widetilde{\bQ} = (\bJ_{k+1,k+2}\ldots \bJ_{m-2,m-1}\bJ_{m-1,m} \bQ^\top )^\top = \bQ \bJ_{m-1,m}^\top  \bJ_{m-2,m-1}^\top \ldots \bJ_{k+1,k+2}^\top,
\end{equation}
such that $\widetildebA = \widetilde{\bQ}\widetilde{\bR}$.

\paragraph{Real world application.} This method is particularly valuable for efficient variable selection in least squares problems using QR decomposition. At each step, a column of the data matrix $\bA$ is removed, and an  $F$-test is performed to assess  the significance of the corresponding variable.  Variables that are statistically insignificant are removed, leading to a simpler and more interpretable model \citep{lu2021rigorous}.

\section{Application: Appending or Deleting a Row}\label{section:append-row-qr}
Analogously, in the context of least squares problems using the QR decomposition (see Section~\ref{section:application-ls-qr}), it may become necessary to append or delete a row (representing a data point) from the observed matrix.
This is often done to evaluate how the updated data affects system performance or to accommodate an online data setting, in which data arrives sequentially.
The objective, as before, is to efficiently compute the QR decomposition of the updated matrix.
\paragraph{Appending a row.}
Suppose the full QR decomposition of a matrix $\bA\in \real^{m\times n}$ is given by $\bA= \scriptsize \begin{bmatrix}
	\bA_1 \\
	\bA_2 
\end{bmatrix}=\bQ\bR$, where $\bA_1\in \real^{k\times n}$ and $\bA_2 \in \real^{(m-k)\times n}$. Now, if we append a row, the resulting matrix becomes  $\bA^\prime =\scriptsize \begin{bmatrix}
	\bA_1 \\
	\bw^\top\\
	\bA_2 
\end{bmatrix} \in \real^{(m+1)\times n}$. 
Our goal is to efficiently compute the full QR decomposition of $\bA^\prime$. 
To achieve this, we construct a permutation matrix:
$$
\bP=
\begin{bmatrix}
	\bzero & 1 & \bzero  \\
	\bI_k & \bzero & \bzero \\
	\bzero & \bzero & \bI_{m-k}
\end{bmatrix}
\longrightarrow
\bP 
\begin{bmatrix}
	\bA_1 \\
	\bw^\top\\
	\bA_2 
\end{bmatrix}
=
\begin{bmatrix}
	\bw^\top\\
	\bA_1 \\
	\bA_2 
\end{bmatrix}
\quad\implies\quad
\begin{bmatrix}
	1 & \bzero \\
	\bzero & \bQ^\top 
\end{bmatrix} 
\bP 
\bA^\prime 
=
\begin{bmatrix}
	\bw^\top \\
	\bR 
\end{bmatrix}=\bH,
$$
such that $\bH$ is upper Hessenberg. Similarly, a set of rotations $\bG_{12}, \bG_{23}, \ldots, \bG_{n,n+1}$ can be applied to introduce zeros in the elements $h_{21}$, $h_{32}$, $\ldots$, $h_{n+1,n}$ of $\bH$. The triangular matrix $\bR^\prime$ is given by 
$$
\bR^\prime = \bG_{n,n+1}\ldots \bG_{23}\bG_{12}\begin{bmatrix}
	1 & \bzero \\
	\bzero & \bQ^\top 
\end{bmatrix} 
\bP   \bA^\prime.
$$
The updated orthogonal matrix is then computed as
$$
\bQ^\prime = \left(\bG_{n,n+1}\ldots \bG_{23}\bG_{12}\begin{bmatrix}
	1 & \bzero \\
	\bzero & \bQ^\top 
\end{bmatrix} 
\bP \right)^\top 
= 
\bP^\top 
\begin{bmatrix}
	1 & \bzero \\
	\bzero & \bQ 
\end{bmatrix}  
\bG_{12}^\top  \bG_{23}^\top \ldots \bG_{n,n+1}^\top,
$$
such that $\bA^\prime = \bQ^\prime\bR^\prime$ gives the QR decomposition of the updated matrix $\bA^\prime$.

\paragraph{Deleting a row.} Suppose $\bA = \scriptsize\begin{bmatrix}
	\bA_1 \\
	\bw^\top\\
	\bA_2 
\end{bmatrix} \in \real^{m\times n}$, where $\bA_1\in\real^{k\times n}$, $\bA_2 \in \real^{(m-k-1)\times n}$, and the full QR decomposition is given by $\bA=\bQ\bR$, with $\bQ\in \real^{m\times m}$ being orthogonal and  $\bR\in \real^{m\times n}$ being upper triangular. We aim to compute the full QR decomposition of $\widetildebA = \scriptsize \begin{bmatrix}
	\bA_1 \\
	\bA_2 
\end{bmatrix}$ efficiently (assuming $m-1\geq n$). Similarly, to achieve this, we construct a permutation matrix $\bP$ as follows:
$$
\bP = 
\begin{bmatrix}
	\bzero & 1 & \bzero  \\
	\bI_k & \bzero & \bzero \\
	\bzero & \bzero & \bI_{m-k-1}
\end{bmatrix}
\implies
\bP\bA = 
\begin{bmatrix}
	\bzero & 1 & \bzero  \\
	\bI_k & \bzero & \bzero \\
	\bzero & \bzero & \bI_{m-k-1}
\end{bmatrix}
\begin{bmatrix}
	\bA_1 \\
	\bw^\top\\
	\bA_2 
\end{bmatrix}=
\begin{bmatrix}
	\bw^\top \\
	\bA_1\\
	\bA_2 
\end{bmatrix} = \bP\bQ\bR =\bM\bR ,
$$
where $\bM = \bP\bQ$ is an orthogonal matrix. Let $\bmm^\top$ denote the first row of $\bM$. A series of Givens rotations, $\bG_{m-1,m}, \bG_{m-2,m-1}, \ldots, \bG_{1,2}$, can be applied to zero out the elements $m_m, m_{m-1}, \ldots, m_2$ of $\bmm$, resulting in $\bG_{1,2}\ldots \bG_{m-2,m-1}\bG_{m-1,m}\bmm = \alpha \be_1$, where $\alpha = \pm 1$. Consequently,
$$
\bG_{1,2}\ldots \bG_{m-2,m-1}\bG_{m-1,m} \bR = 
	\begin{blockarray}{cc}
	\begin{block}{[c]c}
		\bv^\top & 1 \\
		\bR_1& m-1  \\
	\end{block}
\end{blockarray},
$$ 
which is upper Hessenberg with $\bR_1\in \real^{(m-1)\times n}$ being upper triangular. And 
$$
\bM   \bG_{m-1,m}^\top \bG_{m-2,m-1}^\top \ldots  \bG_{1,2}^\top  = 
\begin{bmatrix}
	\alpha & \bzero \\
	\bzero & \bQ_1 
\end{bmatrix},
$$
where $\bQ_1\in \real^{(m-1)\times (m-1)}$ is an orthogonal matrix. The bottom-left block of the above matrix is a zero vector because $\alpha=\pm 1$ and $\bM$ is orthogonal. To see this, let $\bG=\bG_{m-1,m}^\top \bG_{m-2,m-1}^\top $ $\ldots  \bG_{1,2}^\top $, with its first  column denoted as $\bg$. Writing $\bM$ as the row partition $\bM = [\bmm^\top; \bmm_2^\top; \bmm_3^\top; \ldots, \bmm_{m}^\top]$,  we have
$$
\begin{aligned}
	\bmm^\top\bg &= \pm 1 \qquad  \rightarrow \qquad  \bg = \pm \bmm, \\
	\bmm_i^\top \bmm &=0,  \qquad \forall i \in \{2,3,\ldots,m\}.
\end{aligned}
$$
Thus, we can write:
$$
\begin{aligned}
\bP\bA&=\bM\bR=(\bM   \bG_{m-1,m}^\top \bG_{m-2,m-1}^\top \ldots  \bG_{1,2}\top ) (\bG_{1,2}\ldots \bG_{m-2,m-1}\bG_{m-1,m} \bR ) \\
&=
\begin{bmatrix}
\alpha & \bzero \\
\bzero & \bQ_1 
\end{bmatrix}
\begin{bmatrix}
\bv^\top \\
\bR_1 
\end{bmatrix} = 
\begin{bmatrix}
\alpha \bv^\top \\
\bQ_1\bR_1 
\end{bmatrix}
=
\begin{bmatrix}
\bw^\top \\
\widetildebA
\end{bmatrix}
.
\end{aligned}
$$
This shows that $\bQ_1\bR_1$ is the full QR decomposition of $\widetildebA=\begin{bmatrixfoot}
\bA_1\\
\bA_2 
\end{bmatrixfoot}$.

\index{Newton's method}
\index{Gauss--Newton method}
\index{Nonlinear least squares}
\section{Application: Gauss--Newton and Levenberg--Marquardt Method}\label{section:qr_gausnew_lev}
The QR decomposition is helpful for solving the Gauss--Newton and Levenberg--Marquardt methods for nonlinear least squares problems.
In Section~\ref{section:application-ls-qr}, we will introduce the (linear) least squares problem for linear systems:
\begin{equation}\label{equation:ls_qr_triv} 
\min_{\bx} \frac{1}{2}\normtwo{\bA\bx-\bb}^2.
\end{equation} 
When the residual $\br(\bx)$ in Equation~\eqref{equation:ls_qr_triv} is nonlinear, we obtain the \textit{nonlinear least squares} problem~\footnote{More details can refer to, for example, \citet{madsen2004methods}.}:
$$
\bx^* = \mathop{\argmin}_{\bx} \left\{f(\bx) = \frac{1}{2} \normtwo{\br(\bx)}^2\right\}, 
\gap \br(\bx)\in\real^m, \,\bx\in\real^n, \, m\geq n.
$$
When $\br(\bx)=\bA\bx-\bb$, this reduces to the linear least squares problem given in \eqref{equation:ls_qr_triv}.
The gradient and Hessian of $f(\bx)$ are 
\begin{equation}\label{equation:gaus_new_jaco}
	\begin{aligned}
		\nabla f(\bx) &= \bJ(\bx)^\top \br(\bx)
		\quad\text{and} \quad
		\nabla^2 f(\bx) = \bJ(\bx)^\top\bJ(\bx) + \sum_{i=1}^{m} r_i(\bx)\nabla^2r_{i}(\bx),
	\end{aligned}
\end{equation}
where $\bJ(\bx)\in\real^{m\times n}$ is the Jacobian matrix (see Problem~\ref{problem:gaus_new_jco}).
The standard Newton's method (see Section~\ref{section:app_cho_md_newton}) is an iterative optimization algorithm. 
At the $t$-th iteration, the update is given by:
$$
\bx^{(t+1)} \leftarrow \bx^{(t)} + \bd^{(t)},
$$
where $(\nabla^2 f(\bx^{(t)}) )\bd^{(t)} = -\nabla f(\bx^{(t)})$ determines  the ``candidate" descent direction $\bd^{(t)}$.
For brevity, we omit the superscript $t$ and 
apply a linear Taylor's approximation:
$$
\nabla f(\bx + \bd) 
\approx
\nabla f(\bx ) +\nabla^2 f(\bx )^\top \bd.
$$
Therefore, Newton's method can be interpreted  as finding a direction $\bd$ such that $\nabla f(\bx + \bd) =\nabla f(\bx ) +\nabla^2 f(\bx )^\top \bd$ approaches $\bzero$ (i.e., a \textit{stationary point}).
To see this, taking the quadratic Taylor's approximation, we have 
\begin{equation}\label{equation:ng_secondapp}
	f(\bx + \bd) 
	= 
	f(\bx ) +\nabla f(\bx )^\top \bd + \frac{1}{2}\bd^\top \nabla^2f(\bx) \bd + o(\normtwo{\bd}^2).
\end{equation}
If $\bx$ is a stationary point, then $\nabla f(\bx )=\bzero$. Suppose further that the Hessian of $f(\bx)$ is positive definite: $\nabla^2f(\bx)\succ 0$; this implies  that the smallest eigenvalue $\lambda_{\min}$ of  $\nabla^2f(\bx)$ satisfies $\lambda_{\min}>0$ (see Section~\ref{section:recursi_choles}), and $\bd^\top \nabla^2f(\bx) \bd \geq \lambda \normtwo{\bd}^2$ for all $\lambda_{\min}>\lambda>0$. 
This in turn implies that the third term in \eqref{equation:ng_secondapp} dominates the fourth term. Therefore, $\bx$ is a \textit{local minimizer} (a minimum point within a neighborhood of $\bx$ with some radius $r$) when $\bx$ is a stationary point and $\nabla^2f(\bx)$ is positive definite (as long as $\norm{\bd}$ is small enough).

\paragraph{Gauss--Newton method.}
However, since  the Hessian $\nabla^2r_{i}(\bx)$ can be difficult to compute or intractable, the \textit{Gauss--Newton method}  approximates the Hessian $\nabla^2 f(\bx)$ using only $\bJ(\bx)^\top\bJ(\bx)$. 
This leads to the following equation for determining the ``candidate" descent direction
$$
\bJ(\bx^{(t)})^\top\bJ(\bx^{(t)})\bd^{(t)} = -\bJ(\bx^{(t)})^\top \br(\bx^{(t)}).
$$
The ``candidate" descent direction can also be equivalently obtained by solving the following optimization problem:
\begin{equation}\label{equation:gasnewton}
	\textbf{(Gauss--Newton):}\gap \bd^{(t)} = \mathop{\argmin}_{\bd} \normtwo{\bJ(\bx^{(t)})\bd+\br(\bx^{(t)})}^2,
\end{equation}
which is a linear least squares problem and can be solved using QR decomposition  (see Theorem~\ref{theorem:qr-for-ls}, when $\bJ(\bx^{(t)})$ has full rank).
Let $\bJ(\bx^{(t)})$ admit the reduced QR decomposition $\bJ(\bx^{(t)})=\bQ^{(t)}\bR^{(t)}$. Then the ``candidate" descent direction can be obtained by 
$$
\bd^{(t)} \leftarrow -(\bR^{(t)})^{-1}(\bQ^{(t)})^\top\br(\bx^{(t)}).
$$
This approach avoids the need to explicitly compute the inverse of $\bJ(\bx^{(t)})^\top\bJ(\bx^{(t)})$.
When $(\bd^{(t)})^\top\nabla f(\bx^{(t)})\leq 0$, the  direction $\bd^{(t)}$ is called a ``true" descent direction (as opposed to the ``candidate" descent direction we used previously).
We can verify that when $\bJ(\bx^{(t)})$ has full rank $n$ (since $m\geq n$), we have 
$$
(\bd^{(t)})^\top\nabla f(\bx^{(t)}) = (\bd^{(t)})^\top \bJ(\bx^{(t)})^\top \br(\bx^{(t)}) = -\normtwo{\bJ(\bx^{(t)})\bd^{(t)} }^2\leq 0.
$$
Therefore, the resulting direction $\bd^{(t)}$ is indeed a ``true" descent direction.

\index{Levenberg--Marquardt method}
\index{Trust region method}
\index{KKT condition}
\paragraph{Levenberg--Marquardt (LM) method.}
Additionally, the \textit{Levenberg--Marquardt method} also addresses the same problem in \eqref{equation:gasnewton}, but introduces an additional constraint $\normtwo{\bd}\leq \Delta^{(t)}$ \citep{levenberg1944method, marquardt1963algorithm, wright1985inexact}:
\begin{equation}\label{equation:lmmethod}
	\textbf{(LM-1):}\gap \bd^{(t)} = \mathop{\argmin}_{\bd} \normtwo{\bJ(\bx^{(t)})\bd+\br(\bx^{(t)})}^2, \gap \text{s.t.}\gap \normtwo{\bd}\leq \Delta^{(t)}.
\end{equation}
This is equivalently to, using Lagrange  multiplier, the following problem
\begin{equation}\label{equation:lmmethod_lag}
	\begin{aligned}
		\textbf{(LM-2):}\gap 
		\bd^{(t)} &= \mathop{\argmin}_{\bd} \normtwo{\bJ(\bx^{(t)})\bd+\br(\bx^{(t)})}^2 + \lambda\normtwo{\bd}^2\\
		&= \mathop{\argmin}_{\bd} \normtwo{\begin{bmatrix}
				\bJ(\bx^{(t)})\\
				\sqrt{\lambda}\bI 
			\end{bmatrix}\bd
			+
			\begin{bmatrix}
				\br(\bx^{(t)})\\
				\bzero 
			\end{bmatrix}
		}^2,
	\end{aligned}
\end{equation}
where $\lambda$ is a Lagrange multiplier associated with the trust-region radius $\Delta^{(t)}$. The second form above represents an updated least squares problem. Given the knowledge of the QR decomposition of $\bJ(\bx^{(t)}) =\bQ^{(t)}\bR^{(t)}$, the least squares problem can be solved using the \textit{update of least squares problems} (i.e., appending rows to the existing data matrix; see Section~\ref{section:append-row-qr}).

\index{Low-rank approximation}
\section{Application: Low-Rank Approximation}\label{section:qr_lowrank}

We will discuss low-rank approximation or dimensionality reduction in more detail in Section~\ref{section:svd-low-rank-approxi} and Chapter~\ref{chapter:als}.
The QR decomposition of a data matrix (which may be triangular) can also be used to construct a low-rank approximation of that matrix.
In this context, the goal is to approximate a large matrix $\bA \in \real^{m \times n}$ with a low-rank matrix $\widetildebA$ of rank $k \ll \min(m, n)$.
This is particularly useful when $\bA$ is too large to store or process directly, or when the data in $\bA$ approximately lies in a lower-dimensional subspace. In such cases, we may wish to compress the data, reduce noise, or accelerate downstream computations.

To achieve this, we will introduce the \textit{truncated SVD} in Section~\ref{section:svd-low-rank-approxi}. Given the SVD of 
$
\bA = \bU \bSigma \bV^\top
$,
we keep only the top $k$ singular values: $
\bA \approx \widetildebA_k = \bU_k \bSigma_k \bV_k^\top,
$
where $\bU_k \in \real^{m \times k}$, $\bSigma_k \in \real^{k \times k}$, and $\bV_k \in \real^{n \times k}$.
This gives the best rank-$k$ approximation to $\bA$ (in terms of Frobenius or spectral norm). But it's computationally  expensive, requiring $\mathcalO(mn^2)$ operations.

In such cases, we seek a faster method to compute an approximate basis for the column space of $\bA$. 
One efficient approach is to use a randomized range finder combined with QR decomposition:
\begin{itemize}
\item \textit{Generate a random test matrix.}  
Let
$
\bOmega \in \real^{n \times k}
$
be a random Gaussian matrix or structured random matrix (e.g., subsampled Hadamard; see, for example, \citet{mahoney2016lecture}).

\item  \textit{Form a sample matrix.} Compute $\bY = \bA \bOmega \in \real^{m \times k}$, which projects $\bA$ onto random directions.  
This means  each column of $\bY$ is a random linear combination of the columns of $\bA$. If the top $k$-dimensional column space dominates, then $\bY$ will ``capture" most of it.

\item  \textit{Compute a reduced QR decomposition of $\bY$.}  
Let
$
\bY = \bQ \bR $,
where
$\bQ \in \real^{m \times k}$ with orthonormal columns (i.e., $\bQ^\top \bQ = \bI_k$), and
$\bR \in \real^{k \times k}$.
Now, the columns of $\bQ$ form an orthonormal basis for an approximate column space of $\bA$. This step is sometimes called \textit{orthonormalization of the sample space.}

\item \textit{Project $\bA$ onto the subspace spanned by $\bQ$.}
Compute $\widetildebA = \bQ\bQ^\top \bA$, where $\bQ\bQ^\top$ is an orthogonal projector onto the $k$-dimensional subspace spanned by $\bQ$ (see Section~\ref{section:qr-gram-compute}). That is, $\widetildebA \in \real^{m \times n}$ is a rank-$k$ approximation to $\bA$ \citep{drineas2006fast}.

\item  Optionally, compute a small matrix $\bB \in \real^{k \times n}$:
$$
\bB = \bQ^\top \bA
\quad\implies\quad
\widetildebA = \bQ \bB.
$$
Now we've reduced the problem to a small matrix $\bB$, making further computations (e.g. SVD, regression, classification, clustering) more efficient.
\end{itemize}

The randomized QR algorithm has a computational cost of $\mathcalO(mnk)$, which is faster than the truncated SVD algorithm, whose cost is $\mathcalO(mn^2)$ for approximating the matrix.
This efficiency can be very beneficial in practice.
For example, suppose we have a large document-term matrix $\bA \in \real^{100000 \times 10000}$ from some natural language processing (NLP) tasks.
We can compute $\bQ \in \real^{100000 \times 200}$ that captures the dominant 200-dimensional structure using randomized QR, and then work with $\bQ^\top\bA \in \real^{200 \times 10000}$ instead---greatly reducing both time and memory requirements.

\index{Adjugate}
\begin{problemset}
\item \label{prob:orthogo_proj} \textbf{Orthogonal projection.} Prove that an orthogonal projection $\bH$ is an idempotent and symmetric matrix such that $\bH\bv \perp (\bv-\bH\bv)$ and $\bH\bv\in\cspace(\bH)$ for any vector $\bv\notin \cspace(\bH)$.

\item \textbf{Adjugate of orthogonal.} Let $\bQ\in\real^{n\times n}$ be orthogonal. Show that $\adjugate(\bQ) = \det(\bQ)\bQ^\top$ such that $\adjugate(\bQ)$ is also orthogonal (Definition~\ref{definition:adjugate}).

\item Prove that if $\bA$ is triangular and orthogonal, then $\bA$ must be diagonal.

\item Let $\bA\in\real^{n\times n}$ be skew-symmetric ($\bA^\top=-\bA$). Show that the matrix $(\bI-\bA)^{-1}(\bI+\bA)$ is orthogonal.

\item Let  $\bu$ and $\bv$ be two orthogonal unit vectors. Show that $\bu+\bv$ is orthogonal to $\bu-\bv$.

\item \textbf{Reflector.} Let  $\bu\in\real^n$ and $\bv\in\real^n$ be two orthogonal  vectors (not necessarily unit), where $\bu\in\mathcalV$ and $\bv\in\mathcalV^\perp$. Define $\ba=\bu+\bv$ and $\bb=\bu-\bv$. Show that there exists a unique Householder reflector $\bH\in\real^{n\times n}$ (Definition~\ref{definition:householder-reflector}) such that $\bH\ba = \bb$.
Moreover, if $\mathcalV=\{\bw\}^\perp$, show that $\bH=\bI-2\frac{\bw\bw^\top}{\bw^\top\bw}$.

\item Let $\bu=[-\sin(\theta), \cos(\theta)]^\top$ be a unit vector. Show that the Householder reflector determined by $\bu$ is $\bH=\begin{bmatrixscript}
\cos(2\theta) & \sin (2\theta) \\
\sin(2\theta) & -\cos(2\theta) 
\end{bmatrixscript}$.

\item \label{prob:semi_eq1} Let $\bQ,\bU\in\real^{m\times n}$ be two semi-orthogonal matrices with $m\geq n$. Show that $\bQ$ and $\bU$ have the same column space if and only if there exists an orthogonal matrix $\bP\in\real^{n\times n}$ such that $\bQ=\bU\bP$.

\item  \label{prob:semi_eq2} Let $\bQ,\bU\in\real^{n\times n}$ be orthogonal. Show that there exists an orthogonal matrix $\bP$ such that $\bQ=\bP\bU$.

\item  Let $\bQ,\bU\in\real^{m\times n}$ be two semi-orthogonal matrices with $m\geq n$. Show that there exists an orthogonal matrix $\bP\in\real^{m\times m}$ such that $\bQ=\bP\bU$. Compare this result with Problems~\ref{prob:semi_eq1} and \ref{prob:semi_eq2}. \textit{Hint: Complete the semi-orthogonal matrices into $m\times m$ orthogonal matrices.}

\item Let $\bA$ admit the QR decomposition $\bA=\bQ\bR$. Show that $\bA=\bQ\bR$ is normal ($\bA^\top\bA=\bA\bA^\top$) if and  only if $\bR\bQ$ is normal.

\item \label{problem:part_ortho} Consider the partition of an orthogonal matrix
$
\bQ = \begin{bmatrixscript}
\underset{p\times p}{\bA} &\underset{p\times q}{\bB} \\
\underset{q\times p}{\bC} & \underset{q\times q}{\bD}
\end{bmatrixscript}\in\real^{n\times n}.
$
Show that $\rank(\bB)=\rank(\bC)$ and $\rank(\bD)=n+\rank(\bA)-2p$.

\item Consider the rank of matrices:
\begin{itemize}
\item Suppose matrices $\bA$ and $\bB$ have full column ranks. Show that $\bA\bB$ has full column rank.
\item Suppose $\bA\bB$ has full column ranks. Show that $\bB$ also has full column rank, but $\bA$ may not necessarily have full column rank.
\item Discuss the rank of the upper triangular matrices  obtained from the QR decompositions of $\bA\bB$, $\bA$, and $\bB$ in various cases of the matrices involved.
\end{itemize}

\item In Theorem~\ref{theorem:qr-decomposition}, we stated that
$\bR$ is nonsingular in the reduced QR decomposition when 
$\bA$ has full column rank $n$.
Suppose $\bA$ does not have full column rank. Examine  the relationship between the rank of $\bA$ and the number of nonzero entries in $\bR$.

\item Use the Gram--Schmidt process, Householder transformations, Givens rotations to find an orthonormal basis for the space spanned by the vectors
$$
\begin{aligned}
\bv_1 &= [1,3,7,5]^\top, \gap 
\bv_2 &= [6,3,6,3]^\top,  \gap 
\bv_3 &= [5,2,7,4]^\top.
\end{aligned}
$$

\item \textbf{Distance between a vector and a hyperplane.} Given a nonzero vector $\bzero\neq \ba\in\real^n$ and a scalar $\beta$,   define  the hyperplane $H(\ba, \beta) = \{\bx\in\real^n:\ba^\top\bx+\beta=0\}$. For any $\by\in\real^n$, use the projection along a line (see Section~\ref{section:project-onto-a-vector}) to show that the distance between $\by$ and $H(\ba, \beta)$ is given by $
d(\by, H(\ba, \beta)) = \frac{\abs{\ba^\top\by + \beta}}{\normtwo{\ba}}.$
\textit{Hint: Choose two random points on the plane and first show that $\ba$ is orthogonal to the plane.}

\item Although we have used the fact that every orthogonal (or orthonormal) list of vectors is linearly independent throughout our discussions, provide a rigorous proof of this claim. \textit{Hint: Assume the vectors are linearly dependent and derive a contradiction.}

\item Let $\bA\in\real^{m\times n}$ be given with $m\geq n$. Provide an algorithm using Householder reflectors to compute an orthogonal matrix $\bQ\in\real^{m\times m}$ such that $\bA=\bQ\bL$, where  $\bL[1:n, 1:n]$ is  lower triangular and $\bL[n + 1:m, :] = \bzero$.

\item Let $\bA\in\real^{n\times n}$ with rank $r$. Show that $\bA$ is range-symmetric (i.e., $\cspace(\bA)=\cspace(\bA^\top)$) if and only if there exist a nonsingular matrix $\bS\in\real^{n\times n}$ and a nonsingular matrix $\bM\in\real^{r\times r}$ such that 
$
\bA=\bS\begin{bmatrixscript}
	\bM & \bzero\\
	\bzero & \bzero 
\end{bmatrixscript}
\bS^\top .
$
\textit{Hint: Consider the QR decomposition of $\bS=\bQ\bR$.}

\item Let $\bA\in\real^{n\times n}$ be skew-symmetric (i.e., $\bA^\top=-\bA$). Show that $\bI+\bA$ is nonsingular,  $\bB=(\bI-\bA)(\bI+\bA)^{-1}$ is orthogonal, $\bI+\bB = 2(\bI+\bA)^{-1}$, and $\det(\bB)=1$.

\item Prove that the following statements about a square matrix $\bQ\in\real^{n\times n}$ are equivalent:
\begin{itemize}
	\item $\bQ$ is orthogonal.
	\item $\bQ^\top$ is orthogonal.
	\item $\bQ$ is nonsingular and $\bQ^\top=\bQ^{-1}$.
	\item The rows of $\bQ$ are orthogonormal.
	\item The columns of $\bQ$ are orthonormal.
	\item For all $\bx\in\real^n$, it follows that $\normtwo{\bx}=\normtwo{\bQ\bx}$.
\end{itemize}

\item \label{prob:ortho_prese1} \textbf{Orthogonal preservation.} Let $\bQ\in\real^{n\times n}$ be orthogonal. Show that $\bx,\by\in\real^n$ are orthogonal if and only if $\bQ\bx$ and $\bQ\by$ are orthogonal.

\item \label{prob:ortho_presen} \textbf{Orthogonal preservation.} Let $\bQ\in\real^{n\times n}$ be orthogonal, and  let $\lambda$ be an  eigenvalue of $\bQ$. Show that $\lambda=\pm 1$, and $\bx\in\real^n$ is a (right) eigenvector of $\bQ$ associated with $\lambda$ if and only if $\bx$ is a left eigenvector
of $\bQ$ associated with $\lambda$.

\item \label{prob:qr_inver} \textbf{Inverses with QR decomposition.} Suppose you perform QR decomposition of an invertible $n \times n$ matrix as $\bA = \bQ\bR$. Show how you can use this decomposition relationship for finding the inverse of $\bA$ by solving $n$ different triangular systems of linear equations, each of which can be solved by back-substitution. Show how to compute the left or right inverse of a matrix with QR decomposition and back-substitution.

\item Use the results from Problems~\ref{prob:compl1} and \ref{prob:compl2} to determine the computational complexity of  QR decomposition using the CGS, MGS, Householder,and  Givens approaches.

\item \textbf{Elementary row interchanging as a rotation and a reflection.} Prove that an $n \times n$ elementary row interchange matrix can be expressed as the product of a $90^\circ$ Givens rotation (i.e., of the form $\bG(i, j, \theta) = \bI + (\cos(\theta) - 1)(\be_i \be_i^\top + \be_j \be_j^\top) + \sin(\theta)(\be_i \be_j^\top - \be_j \be_i^\top)$ with $\theta=90^\circ$) and a Householder reflector. \textit{Hint: We need to understand the properties and forms of these matrices.}

\index{Givens geometric decomposition}
\item  \textbf{Givens geometric decomposition.} Show that all $n \times n$ orthogonal matrices can be written as  a product of at most $\mathcalO(n^2)$ Givens rotations and at most a single {elementary reflection} matrix (obtained by negating one diagonal element of the identity matrix).

\index{Householder geometric decomposition}
\item  \textbf{Householder geometric decomposition.} Show that all $n \times n$ orthogonal matrices can be written as a product of at most $n$ Householder reflectors.

\item Demonstrate that a sequence of $k$ Householder transformations, whose corresponding unit vectors are mutually orthonormal, can be represented as $\bI-2\bQ\bQ^\top$, where $\bQ$ is an $n\times k$ semi-orthogonal matrix. Identify the $(n - k)$-dimensional plane across which this reflection occurs.

\item Consider the $4 \times 4$ Givens rotation matrix $\bG_{2,4}(90^\circ)$ (Definition~\ref{definition:givens-rotation-in-qr}). This matrix performs a $90^\circ$ clockwise rotation of a 4-dimensional  vector in the plane of the second and fourth dimensions (see Figure~\ref{fig:rotation}). Show how to obtain this matrix {as the product of two Householder reflectors}. \textit{Hint: Think geometrically.}

\item Consider two orthogonal matrices 
$
\bQ_1 = \begin{bmatrixscript}
	-1 &0\\
0& -1
\end{bmatrixscript}
$
and 
$
\bQ_2 = \begin{bmatrixscript}
	1 &0\\
0 & -1
\end{bmatrixscript}.
$
Are these matrices rotation or reflection matrices?

\item Use Householder reflectors or Givens rotations to compute the LQ, QL, and RQ decompositions discussed in Section~\ref{section:lq_ql_rq}.

\index{Nonlinear least squares}
\item \label{problem:gaus_new_jco} Prove Equation~\eqref{equation:gaus_new_jaco}, the gradient and Hessian of nonlinear least squares problems. \textit{Hint: Derive element-wise: 
$$
\frac{\partial f(\bx)}{\partial x_j} = \sum_{i=1}^{m} r_i(\bx) \frac{\partial r_i (\bx)}{\partial x_j}, 
\gap
\frac{\partial^2 f(\bx)}{\partial x_j \partial x_k} =\sum_{i=1}^{m} 
\left( 
\frac{\partial r_i(\bx)}{\partial x_j}\frac{\partial r_i(\bx)}{\partial x_k}
+
r_i(\bx)\frac{\partial^2 r_i(\bx)}{\partial x_j\partial x_k}
\right).
$$}

\end{problemset}

\chapter{UTV Decomposition: ULV and URV Decomposition}\label{chapter:ulv-urv-decomposition}

\section{UTV Decomposition}\label{section:ulv-urv-decomposition}
The UTV decomposition generalizes the QR factorization of a matrix $\bA$ into two orthogonal matrices, $\bU$ and $\bV$, and a (upper or lower) triangular matrix $\bT$, such that $\bA=\bU\bT\bV$.~\footnote{These decompositions belong to a class known as  \textit{double-sided orthogonal decomposition}. We will see  the UTV decomposition, complete orthogonal decomposition (Theorem~\ref{theorem:complete-orthogonal-decom}), and singular value decomposition  are all instances of this framework.} 
The triangular matrix $\bT$ supports rank estimation. 
The decomposition takes different forms depending on the triangular structure of $\bT$: if $\bT$ is lower triangular, it is called the  \textit{ULV decomposition}; if $\bT$  is upper triangular, it is referred to as the URV decomposition.  
The UTV decomposition framework resembles the singular value decomposition (SVD; see Section~\ref{section:SVD}) in structure and serves as a computationally efficient alternative to the SVD. Both methods can be applied to find the least squares solution for rank-deficient matrices (Theorem~\ref{theorem:qr-for-ls-urv}).

\index{Decomposition: UTV}
\index{Orthogonal}
\index{Upper triangular}
\begin{theoremHigh}[Full ULV decomposition]\label{theorem:ulv-decomposition}
Any $m\times n$ matrix $\bA$ with rank $r$ can be decomposed as 
$$
\bA = \bU \begin{bmatrix}
\bL & \bzero \\
\bzero & \bzero 
\end{bmatrix}\bV,
$$
where $\bU\in \real^{m\times m}$ and $\bV\in \real^{n\times n}$ are  orthogonal matrices, and $\bL\in \real^{r\times r}$ is a lower triangular matrix  of full rank.
\end{theoremHigh}

The existence of the ULV decomposition follows  from those of the QR and LQ decomposition.
\begin{proof}[of Theorem~\ref{theorem:ulv-decomposition}]
For any rank-$r$ matrix $\bA=[\ba_1, \ba_2, \ldots, \ba_n]$, a column permutation matrix $\bP$ (Definition~\ref{definition:permutation-matrix}) can be used to reorder the columns of $\bA$, placing its linearly independent columns in the first $r$ positions of  $\bA\bP$. 
Without loss of generality, let $\bb_1, \bb_2, \ldots, \bb_r$ denote the $r$ linearly independent columns of $\bA$. Then, 
$$
\bA\bP = [\bb_1, \bb_2, \ldots, \bb_r, \bb_{r+1}, \ldots, \bb_n].
$$
Define $\bZ = [\bb_1, \bb_2, \ldots, \bb_r] \in \real^{m\times r}$. Since each $\bb_i$ lies in the column space of $\bZ$, there exists a matrix $\bE\in \real^{r\times (n-r)}$ such that 
$$
[\bb_{r+1}, \bb_{r+2}, \ldots, \bb_n] = \bZ \bE.
$$
Consequently, 
$$
\bA\bP = [\bb_1, \bb_2, \ldots, \bb_r, \bb_{r+1}, \ldots, \bb_n] = \bZ
\begin{bmatrix}
\bI_r & \bE
\end{bmatrix},
$$
where $\bI_r$ is the $r\times r$ identity matrix. Additionally, the matrix $\bZ\in \real^{m\times r}$ has full column rank, so it admits the full QR decomposition: $\bZ = \bU\begin{bmatrixscript}
\bR \\
\bzero
\end{bmatrixscript}$, where $\bR\in \real^{r\times r}$ is an upper triangular matrix of full rank, and $\bU$ is an orthogonal matrix. Substituting this into the previous expression gives:
\begin{equation}\label{equation:ulv-smpl}
\bA\bP = \bZ
\begin{bmatrix}
\bI_r & \bE
\end{bmatrix}
=
\bU\begin{bmatrix}
\bR \\
\bzero
\end{bmatrix}
\begin{bmatrix}
\bI_r & \bE
\end{bmatrix}
=
\bU\begin{bmatrix}
\bR & \bR\bE \\
\bzero & \bzero 
\end{bmatrix}.
\end{equation}
Since $\bR$ has full rank, 
$\begin{bmatrix}
\bR & \bR\bE 
\end{bmatrix}$ also has full rank. Its full LQ decomposition is given by:
$\begin{bmatrix}
\bL & \bzero 
\end{bmatrix} \bV_0$, where $\bL\in \real^{r\times r}$ is a lower triangular matrix, and $\bV_0$ is an orthogonal matrix. Substituting  this into Equation~\eqref{equation:ulv-smpl}, we have 
$$
\bA = \bU \begin{bmatrix}
\bL & \bzero \\
\bzero & \bzero 
\end{bmatrix}\bV_0 \bP^{-1}.
$$
Finally, let $\bV =\bV_0 \bP^{-1}$, which is orthogonal since it is  a product of two orthogonal matrices. This completes the proof.
\end{proof}
An alternative proof of the ULV decomposition will be discussed in Theorem~\ref{theorem:complete-orthogonal-decom} using the rank-revealing QR decomposition and the standard QR decomposition.

Now, suppose the ULV decomposition of a matrix $\bA$ is given by
$
\bA = \bU 
\scriptsize\begin{bmatrix}
\bL & \bzero \\
\bzero & \bzero 
\end{bmatrix}
\normalsize\bV
$.
Let $\bU_0 = \bU_{:,1:r}$ and $\bV_0 = \bV_{1:r,:}$, where $\bU_0$ consists of  the first $r$ columns of $\bU$, and $\bV_0$ consists of  the first $r$ rows of $\bV$. Then, we can write $\bA = \bU_0 \bL\bV_0$. This form is called the \textit{reduced ULV decomposition}.
Similarly, the URV decomposition can be derived as follows:
\begin{theoremHigh}[URV decomposition]\label{theorem:urv-decomposition}
Any $m\times n$ matrix $\bA$ with rank $r$ can be decomposed as 
$$
\bA = \bU \begin{bmatrix}
\bR & \bzero \\
\bzero & \bzero 
\end{bmatrix}\bV,
$$
where $\bU\in \real^{m\times m}$ and $\bV\in \real^{n\times n}$ are two orthogonal matrices, and $\bR\in \real^{r\times r}$ is an upper triangular matrix  of full rank.
The \textit{reduced URV decomposition} can be obtained as $\bA=\bU_{:,1:r}\bR\bV_{1:r,:}$.
\end{theoremHigh}
The proof closely resembles that of the ULV decomposition, and is left as an exercise.
Collectively, the ULV and URV decompositions are referred to as the UTV decomposition framework \citep{hanson1969extensions, fierro1997low, golub2013matrix}.

\index{Basis}
\index{Row space}
\index{Column space}
\index{Orthonormal basis}
\paragraph{Range and null space.}
This decomposition framework, first introduced by \citet{hanson1969extensions}, provides explicit orthogonal bases for the range and null space of $\bA$, as well as a representation for the pseudo-inverse (see Problem~\ref{prob:utv_pseudo}). 
We will soon observe that the structures of ULV and URV decompositions closely resemble that of the singular value decomposition (SVD). 
All three decompositions factorize  the matrix $\bA$ into two orthogonal matrices.  
More specifically, both ULV and URV decompositions provide orthonormal bases for the four fundamental subspaces of $\bA$, as described in the fundamental theorem of linear algebra (Theorem~\ref{theorem:fundamental-linear-algebra}). 
For example, in the ULV decomposition, the first $r$ columns of $\bU$ form an orthonormal basis for the column space $\cspace(\bA)$, while the last $(m-r)$ columns of $\bU$ form an orthonormal basis for the left null space $\nspace(\bA^\top)$. Similarly, the first $r$ rows of $\bV$ form an orthonormal basis for the row space $\cspace(\bA^\top)$, while the last $(n-r)$ rows provide an orthonormal basis for the  null space $\nspace(\bA)$ (resembling  the two-sided orthogonal decomposition; Theorem~\ref{theorem:two-sided-orthogonal}):
\begin{equation}\label{equation:ortho_space_utv}
\begin{aligned}
\cspace(\bA) &= \spn\{\bu_1, \bu_2, \ldots, \bu_r\}, \qquad
\nspace(\bA) &=& \spn\{\bv_{r+1}, \bv_{r+2}, \ldots, \bv_n\}, \\
\cspace(\bA^\top) &= \spn\{ \bv_1, \bv_2, \ldots,\bv_r \} ,\qquad
\nspace(\bA^\top)&=&\spn\{\bu_{r+1}, \bu_{r+2}, \ldots, \bu_m\}.
\end{aligned}
\end{equation}
The SVD extends this framework by establishing direct relationships between the corresponding two pairs of orthonormal bases. It characterizes the linear transformations between the column space and row space, as well as between the left null space and (right) null space: $\bA\bv_i=\sigma_i\bu_i$ for all $i$.  These connections will be explored in greater detail in the chapter on the SVD.

\section{Complete Orthogonal Decomposition}
The UTV decomposition is closely related to the concept of the \textit{complete orthogonal decomposition}, which also involves factoring a matrix into two orthogonal matrices.
\begin{theoremHigh}[Complete orthogonal decomposition]\label{theorem:complete-orthogonal-decom}
Any $m\times n$ matrix $\bA$ with rank $r$ can be factored as 
$$
\bA = \bU \begin{bmatrix}
\bT & \bzero \\
\bzero & \bzero 
\end{bmatrix}\bV,
$$
where $\bU\in \real^{m\times m}$ and $\bV\in \real^{n\times n}$ are two orthogonal matrices, and $\bT\in \real^{r\times r}$ is a matrix of full rank $r$.
\end{theoremHigh}
\begin{proof}[of Theorem~\ref{theorem:complete-orthogonal-decom}]
Using the column-pivoted QR decomposition (Theorem~\ref{theorem:rank-revealing-qr-general}), the matrix $\bA$ can be decomposed as 
$
\bQ_1^\top \bA\bP = 
\scriptsize
\begin{bmatrix}
\bR_{11} & \bR_{12} \\
\bzero   & \bzero 
\end{bmatrix},
$
where $\bR_{11} \in \real^{r\times r}$ is upper triangular, $\bR_{12} \in \real^{r\times (n-r)}$, $\bQ_1\in \real^{m\times m}$ is an orthogonal matrix, and $\bP$ is a permutation matrix. Next, we construct a decomposition that satisfies:
\begin{equation}\label{equation:orthogonal-complete-qr-or-not}
\begin{bmatrix}
\bR_{11}^\top \\
\bR_{12}^\top
\end{bmatrix}
=
\bQ_2
\begin{bmatrix}
\bS \\
\bzero 
\end{bmatrix},
\end{equation}
where $\bQ_2$ is an orthogonal matrix, and $\bS$ is a rank-$r$ matrix. This decomposition is valid because the matrix $\scriptsize\begin{bmatrix}
\bR_{11}^\top \\
\bR_{12}^\top
\end{bmatrix} \in \real^{n\times r}$ has rank $r$ of which the columns stay in a subspace of $\real^{n}$. Nevertheless, the columns of $\bQ_2$ span the entire space  $\real^n$, where we can assume that the first $r$ columns of $\bQ_2$ span the same space as that of  $\scriptsize\begin{bmatrix}
\bR_{11}^\top \\
\bR_{12}^\top
\end{bmatrix}$. The matrix $\scriptsize\begin{bmatrix}
\bS \\
\bzero 
\end{bmatrix}$ serves to map $\bQ_2$ back to $\scriptsize\begin{bmatrix}
\bR_{11}^\top \\
\bR_{12}^\top
\end{bmatrix}$.
Finally, substituting this decomposition, we find:
$
\bQ_1^\top \bA\bP \bQ_2 = 
\scriptsize
\begin{bmatrix}
\bS^\top & \bzero \\
\bzero & \bzero 
\end{bmatrix}.
$
Setting $\bU = \bQ_1$, $\bV=\bQ_2^\top\bP^\top$, and $\bT = \bS^\top$, we complete the proof.
\end{proof}

Note that the complete orthogonal decomposition is quite  general. When Equation~\eqref{equation:orthogonal-complete-qr-or-not} is interpreted as the reduced QR decomposition of $\scriptsize\begin{bmatrix}
\bR_{11}^\top \\
\bR_{12}^\top
\end{bmatrix}$, the complete orthogonal decomposition simplifies to the ULV decomposition.

\index{CPQR}
\section{Computing the UTV Decomposition}
The CPQR decomposition introduced in Section~\ref{section:cpqr} can be applied to find the UTV decomposition of a matrix.
The CPQR factorization of a rank-deficient matrix $\bA \in \real^{m \times n}$ is given by
$$
\bA\bP = [\bQ_1, \bQ_2] \begin{bmatrix} \bR_{11} & \bR_{12} \\ \bzero & \bzero \end{bmatrix},
$$
where $\bR_{11} \in \real^{r \times r}$ is nonsingular ($r < n$). Here $\bQ_1$ and $\bQ_2$ give orthogonal bases for $\cspace(\bA)$ and $\nspace(\bA^\top)$, respectively. 
However, this factorization is less useful for applications that need a basis for $\nspace(\bA)$. 
To address this, the off-diagonal block $\bR_{12}$ then can be annihilated by postmultiplying $\bR$ with a sequence of Householder reflectors:
\begin{equation}\label{equation:cpqr_utv_pro1}
[\bR_{11} , \bR_{12}] \bH_r \ldots \bH_2 \bH_1 = [\widehat{\bR} , \bzero], \quad \bH_j = \bI - 2 \bu_j \bu_j^\top,
\end{equation}
$j = r, r-1, \ldots, 1$, where each vector  $\bu_j$ has nonzero entries only in positions $j, r+1, \ldots, n$. 
This process is equivalent to performing a QL factorization on the transpose of the triangular factor $\bR$:
\begin{equation}\label{equation:utv_r_fac_ql}
\begin{bmatrix} 
\bR_{11}^\top  & \bzero \\ 
\bR_{12}^\top  & \bzero 
\end{bmatrix} 
= \widehat{\bQ} 
\begin{bmatrix} 
\widehat{\bR}^\top \\ 
\bzero 
\end{bmatrix},
\end{equation}
where the Householder reflectors are applied from the left rather than from the right.
And  this requires $2r^2(n-r)$ flops (see Problem~\ref{prob:comple_qr_utvr}). 
As a result, we obtain a URV decomposition of the form:
\begin{equation}\label{equation:urv_cpqr}
	\bA\bP = \bQ \begin{bmatrix} \widehat{\bR} & \bzero \\ \bzero & \bzero \end{bmatrix} \bV^\top, \quad \bV = \bH_1\bH_2 \ldots \bH_r.
\end{equation}

For example, the first three steps for a matrix with $n=6$ and $r=4$ in the reduction are shown below:
$$
\begin{sbmatrix}{\bA}
\boxtimes & 0 & 0 & 0  \\ 
\boxtimes & \boxtimes & 0 & 0  \\ 
\boxtimes & \boxtimes & \boxtimes & 0  \\ 
\boxtimes & \boxtimes & \boxtimes & \boxtimes  \\ 
\boxtimes & \boxtimes & \boxtimes & \boxtimes  \\ 
\boxtimes & \boxtimes & \boxtimes & \boxtimes  
\end{sbmatrix}
\stackrel{\bH_4}{\rightarrow}
\begin{sbmatrix} {\bH_4\bA}
\boxtimes & 0 & 0 & 0  \\ 
\boxtimes & \boxtimes & 0 & 0  \\ 
\boxtimes & \boxtimes & \boxtimes & 0  \\ 
\bm{\boxtimes} & \bm{\boxtimes} & \bm{\boxtimes} & \bm{\boxtimes}  \\ 
\bm{\boxtimes} & \bm{\boxtimes} & \bm{\boxtimes} & \bm{0}  \\ 
\bm{\boxtimes} & \bm{\boxtimes} & \bm{\boxtimes} & \bm{0}  
\end{sbmatrix}
\stackrel{\bH_3}{\rightarrow}
\begin{sbmatrix} {\bH_3\bH_4\bA}
\boxtimes & 0 & 0 & 0  \\ 
\boxtimes & \boxtimes & 0 & 0  \\ 
\bm{\boxtimes} & \bm{\boxtimes} & \bm{\boxtimes} & 0  \\ 
\bm{\boxtimes} & \bm{\boxtimes} & \bm{\boxtimes} & \boxtimes  \\ 
\bm{\boxtimes} & \bm{\boxtimes} & \bm{0} & 0  \\ 
\bm{\boxtimes} & \bm{\boxtimes} & \bm{0} & 0  
\end{sbmatrix}  
\stackrel{\bH_2}{\rightarrow}
\begin{sbmatrix} {\bH_2\bH_3\bH_4\bA}
\boxtimes & 0 & 0 & 0  \\ 
\bm{\boxtimes} & \bm{\boxtimes} & 0 & 0  \\ 
\bm{\boxtimes} & \bm{\boxtimes} & {\boxtimes} & 0  \\ 
\bm{\boxtimes} & \bm{\boxtimes} & \boxtimes & \boxtimes  \\ 
\bm{\boxtimes} & \bm{\boxtimes} & {0} & 0  \\ 
\bm{\boxtimes} & \bm{\boxtimes} & {0} & 0  
\end{sbmatrix}  
\ldots.
$$
Note that the application of $\bH_3$  does not affect the last column, and the premultiplication of $\bH_2$ does not affect the last two columns, as explained by Corollary~\ref{corollary:unreflec_house}.

\begin{exercise}[ULV]
Find a way to compute the ULV decomposition of a matrix.
\end{exercise}

\index{Decomposition: RR UTV}
\section{Rank-Revealing UTV Decomposition and Other Issues}\label{section:rrutv_other}

\paragraph{Rank-revealing URV.}
For matrices $\bA \in \real^{m \times n}$ that are nearly rank-deficient with rank $r < n$, \citet{stewart2002updating} introduced the ranking-revealing URV decomposition. This decomposition takes the form
\begin{equation}\label{equation:rr_urv}
	\bA\bP = \bU \begin{bmatrix} \bR_{11} & \bR_{12} \\ \bzero & \bR_{22} \end{bmatrix} \bV^\top, \quad \bR_{11} \in \real^{r \times r},
\end{equation}
where $\bU = [\bU_1 , \bU_2] \in \real^{m\times m}$ and $\bV = [\bV_1, \bV_2]\in\real^{n\times n}$ are orthogonal matrices, and  $\bR_{11} \in \real^{r \times r}$ and $\bR_{22} \in \real^{(m-r) \times (n-r)}$ are upper triangular. 
If the singular values~\footnote{Once again, see Section~\ref{section:SVD} for more details.} of $\bA$ are ordered such that 
$$
\sigma_1 \geq \sigma_2 \geq \ldots \geq \sigma_r \gg \sigma_{r+1} \geq \ldots \geq \sigma_n,
$$
then the decomposition \eqref{equation:rr_urv} is said to be rank-revealing if it satisfies the following conditions:
$$
\sigma_r(\bR_{11}) \geq \sigma_r/c, \quad (\normf{\bR_{12}}^2 + \normf{\bR_{22}}^2)^{1/2} \leq c \sigma_{r+1},
$$
where $c$ is bounded by a low-degree polynomial in terms of $r$ and $n$. For $\bP = \bI$, it follows from \eqref{equation:rr_urv} that
$$
\normf{\bA \bV_2} = \normf{\begin{bmatrix} \bR_{12} \\ \bR_{22} \end{bmatrix}} \leq c \sigma_{r+1}.
$$
Thus, $\bV_2$ forms an orthogonal basis for the approximate null space of $\bA$. The URV decomposition is particularly useful in applications such as  subspace tracking in signal processing, where there is a need to compute an approximate null space and update this basis as rows are added or removed from $\bA$ \citep{bjorck2024numerical}.

The rank-revealing process begins with a pivoted QR decomposition (Theorem~\ref{theorem:finding-good-qr-ordering}) and identifies  a vector $\bv$ such that $\normtwo{\bR\bv}$ is small. 
Such a vector exists. For example, $\bv=\bv_n$, where $\bv_n$ is the  right singular vector of $\bR$ corresponding to the smallest singular value $\sigma_n$ and left singular vector $\bu_n$ such that $\bR\bv_n=\sigma_n\bu_n$ and $\normtwo{\bR\bv_n}=\sigma_n$. 
If $\bA$ or $\bR$ is rank-deficient, $\sigma_n$ is small; see Sections~\ref{section:rankone_defi} and \ref{section:rank-r-qr}.
Next, a sequence of Givens rotations $\bG_{12},  \bG_{23}, \ldots, \bG_{n-1,n}$ is determined such that
$$
\bG^\top \bv = \bG_{n-1,n}^\top \ldots \bG_{23}^\top\bG_{12}^\top \bv = \normtwo{\bv} \be_n.
$$
Then, an orthogonal matrix $\bU$ is computed such that $\bU^\top \bR \bG = \bU^\top \bR\bG_{12} \ldots \bG_{n-1,n}$ is upper triangular. 
When applying $\bG_{i-1,i}$, a nonzero element---known as a ``\textit{bulge}"---is introduced just below the diagonal of $\bR$. To restore the triangular form, a left rotation is used to ``\textit{chase the bulge}." These left rotations amount to the orthogonal matrix $\bU$.

An example is shown below for a $4\times 4$ upper triangular matrix $\bR$, where ${\boxtimes}$ denotes an upper triangular entry of $\bR$,  \textbf{boldface} indicates a value that has just been modified,  $\textcolor{brown}{\bm{\boxtimes} }$ denotes a bulge value, and $\textcolor{mylightbluetext}{\bzero}$ denotes the zero is introduced back during the process of chasing the bulge:
$$
\begin{aligned}
\begin{sbmatrix}{\bR}
	\boxtimes & \boxtimes & \boxtimes & \boxtimes \\
	0 & \boxtimes & \boxtimes & \boxtimes \\
	0 & 0 & \boxtimes & \boxtimes \\
	0 & 0 & 0 & \boxtimes 
\end{sbmatrix}
&\stackrel{\times \bG_{12}}{\rightarrow}
\begin{sbmatrix}{\bR\bG_{12}}
	\bm{\boxtimes} & \bm{\boxtimes} & \bm{\boxtimes} & \bm{\boxtimes} \\
	\textcolor{brown}{\bm{\boxtimes} } & \bm{\boxtimes} & \bm{\boxtimes} & \bm{\boxtimes} \\
	0 & 0 & \boxtimes & \boxtimes \\
	0 & 0 & 0 & \boxtimes 
\end{sbmatrix}
\stackrel{ \bU_{12}\times}{\rightarrow}
\begin{sbmatrix}{\bU_{12}\bR\bG_{12}}
	\bm{\boxtimes} & \bm{\boxtimes} & \boxtimes & \boxtimes \\
	\textcolor{mylightbluetext}{\bzero} & \bm{\boxtimes} & \boxtimes & \boxtimes \\
	\bzero & \bzero & \boxtimes & \boxtimes \\
	\bzero & \bzero & 0 & \boxtimes 
\end{sbmatrix}
\stackrel{ \times\bG_{23}}{\rightarrow}
\begin{sbmatrix}{\bU_{12}\bR\bG_{12}\bG_{23}}
	0 & \boxtimes & \boxtimes & \boxtimes \\
	\bzero &  \bm{\boxtimes} & \bm{\boxtimes} & \bm{\boxtimes} \\
	\bzero & \textcolor{brown}{\bm{\boxtimes} } & \bm{\boxtimes} & \bm{\boxtimes} \\
	0 & 0 & 0 & \boxtimes 
\end{sbmatrix}\\
&\stackrel{ \bU_{23}\times}{\rightarrow}
\begin{sbmatrix}{\bU_{23}\bU_{12}\bR\bG_{12}\bG_{23}}
	\boxtimes & \bm{\boxtimes} & \bm{\boxtimes} & \boxtimes \\
	0 & \bm{\boxtimes} & \bm{\boxtimes} & \boxtimes \\
0 & \textcolor{mylightbluetext}{\bzero} & \bm{\boxtimes} & \boxtimes \\
0 & \bzero & 0 & \boxtimes 
\end{sbmatrix}
\stackrel{ \times\bG_{34}}{\rightarrow}
\begin{sbmatrix}{\bU_{23}\bU_{12}\bR\ldots\bG_{34}}
\boxtimes & \boxtimes & \boxtimes & \boxtimes \\
0 & \boxtimes & \boxtimes & \boxtimes \\
\bzero & \bzero & \bm{\boxtimes} & \bm{\boxtimes} \\
\bzero & \bzero & \textcolor{brown}{\bm{\boxtimes} } & \bm{\boxtimes} 
\end{sbmatrix}
\stackrel{ \bU_{34}\times}{\rightarrow}
\begin{sbmatrix}{\bU_{34}\ldots\bR\ldots\bG_{34}}
\boxtimes & \boxtimes & \bm{\boxtimes} & \bm{\boxtimes} \\
0 & \boxtimes & \bm{\boxtimes} & \bm{\boxtimes} \\
0 & 0 & \bm{\boxtimes} & \bm{\boxtimes} \\
0 & 0 & \textcolor{mylightbluetext}{\bzero} & \bm{\boxtimes} 
\end{sbmatrix}
=
\widehat{\bR}.
\end{aligned}
$$
This process of transforming $\bR$ to $\widehat{\bR}$ requires $\mathcalO(n^2)$ multiplications. We now have
$$
\bU^\top \bR \bv = (\underbrace{\bU^\top \bR \bG}_{=\widehat{\bR}})(\bG^\top \bv) =\normtwo{\bv} \widehat{\bR} \widehat{\be}_n.
$$
Since $\bU$ is orthogonal, it follows that if $\normtwobig{\bR \bv} < \abs{r_{nn}}$, then $\normtwo{\widehat{\bR}\widehat{\be}_n} < \gamma/\normtwo{\bv}$ for some $\gamma$. This bounds the norm for the last column of the transformed matrix $\widehat{\bR}$. If $\abs{r_{n-1,n-1}}$ is small, this process can be continued on the leading principal submatrix of order $n-1$ of $\widehat{\bR}$.

\paragraph{Appending a row.}
Just as with the rank-one update of the Cholesky decomposition (Section~\ref{section:cholesky-rank-one-update}) and the addition of a row to a QR decomposition (Section~\ref{section:append-row-qr}), we are often interested in efficiently updating solutions to least squares problems when new data arrive, particularly in online or streaming data settings; see Section~\ref{section:application-ls-qr} for related applications.
In such cases, we may want to append a new row to the observed data matrix $\bA$ and compute the (rank-revealing) UTV decomposition along with its corresponding least squares solution (Theorem~\ref{theorem:qr-for-ls-urv}) in an efficient manner.
For simplicity in notation, we denote the rank-revealing URV decomposition in \eqref{equation:rr_urv} as
\begin{equation}\label{equation:rr_urv2}
\bA = \bU 
\begin{bmatrix} 
\bR & \bJ \\ 
\bzero & \bF
\end{bmatrix} \bV^\top, \quad \bR \in \real^{r \times r},
\end{equation}
where $\bU$ and $\bV$ are orthogonal, and $\bR \in \real^{r \times r}$ and $\bF \in \real^{(m-r) \times (n-r)}$ are upper triangular. Let $\sigma_1 \geq \sigma_2 \geq \ldots \geq \sigma_n$ be the singular values of $\bA$, and assume that for some $r < n$, we have $\sigma_r \gg \sigma_{r+1} \leq \delta$, where $\delta$ is a given tolerance. 
Then, the numerical $\delta$-rank of $\bA$ equals $r$ (see Definition~\ref{definition:effective-rank-in-svd}). Furthermore, if
$$
\sigma_r(\bR) \geq \frac{1}{c} \sigma_r, \quad (\normf{\bJ}^2 + \normf{\bF}^2)^{1/2} \leq c \sigma_{r+1}
$$
for some constant $c$, the decomposition \eqref{equation:rr_urv2} reveals  the rank and null space of $\bA$. The URV decomposition can be updated in $\mathcalO(n^2)$ operations when a new row $\ba^\top$ is added to $\bA$. 
To see this, we have 
\begin{equation}\label{eq:urv_update}
\begin{bmatrix} \bU^\top & \bzero \\ \bzero & 1 \end{bmatrix} 
\begin{bmatrix} \bA \\ \ba^\top \end{bmatrix} 
\bV = 
\begin{bmatrix} \bR & \bJ \\ \bzero & \bF \\ \bx^\top  &  \by^\top \end{bmatrix},
\end{equation}\
where $\ba^\top \bV = [\bx^\top, \by^\top]$ and $(\normf{\bJ}^2 + \normf{\bF}^2)^{1/2} = \nu \leq \delta$. In the simplest case the inequality
\begin{equation}\label{eq:simple_case}
\sqrt{\nu^2 + \normtwo{\by}^2} \leq \delta
\end{equation}
is satisfied. In this case,  it suffices to reduce the matrix in \eqref{eq:urv_update} to upper triangular form using a sequence of left Givens rotations. Note that the updated matrix $\bR$ cannot become effectively rank-deficient because its singular values cannot decrease.

If \eqref{eq:simple_case} is not satisfied, we first reduce $\by^\top$ in \eqref{eq:urv_update} so that it becomes proportional to $\be_1^\top$, while preserving the upper triangular structure of $\bF$. This can be achieved by a sequence of (interleaved) right and left Givens rotations.

An example is shown  below for a matrix $\bA$ with dimensions $m-r=3$ and $n-r=3$. 
Note that here the $j$'s represent entire \textbf{columns} of $\bJ$, $f$ denotes an element of $\bF$,  $y$ denotes an element of $\by$, and \textbf{boldface} indicates the value has just been changed. 
Additionally,  $\boxtimes$ denotes a nonzero value, known as a bulge, introduced by the right Givens rotations, and $\textcolor{mylightbluetext}{\bzero}$ denotes a zero value that is reintroduced (i.e., chasing the bulge).
\paragraph{Step 1: Interleaved left and right Givens rotations.} 
We first consider the right-most part of 
$\begin{bmatrixscript} \bU^\top & \bzero \\ \bzero & 1 \end{bmatrixscript} 
\begin{bmatrixscript} \bA \\ \ba^\top \end{bmatrixscript} 
\bV$, which is defined as
 $
\bB=
\begin{bmatrixscript}
	\bJ\\
	\bF\\
	\by^\top
\end{bmatrixscript}$:
$$
\begin{sbmatrix}{\bB}
j & j & j \\ 
f & f & f \\ 
0 & f & f \\ 
0 & 0 & f \\ 
y & y & y  \\
\end{sbmatrix} 
\stackrel{\times \bH_1}{\rightarrow}
\begin{sbmatrix} {\bB\bH_1}
j & \bj & \bj \\ 
f & \bff & \bff \\ 
0 & \bff & \bff \\ 
0 & \bm{\boxtimes} & \bff \\ 
y & \by & \bzero
\end{sbmatrix} 
\stackrel{\bG_1\times}{\rightarrow}
\begin{sbmatrix} {\bG_1\bB\bH_1}
j & j & j \\ 
f & f & f \\ 
\bzero & \bff & \bff \\ 
\bzero & \textcolor{mylightbluetext}{\bzero} & \bff \\ 
y & y & 0 
\end{sbmatrix} 
\stackrel{\times \bH_2}{\rightarrow}
\begin{sbmatrix}  {\bG_1\bB\bH_2\bH_2}
\bj & \bj & j \\ 
\bff & \bff & f \\ 
\bm{\boxtimes} & \bff & f \\ 
\bzero & \bzero & f \\ 
\bsigma & \bzero & 0 
\end{sbmatrix} 
\stackrel{\bG_2\times}{\rightarrow}
\begin{sbmatrix}  {\bG_2\bG_1\bB\bH_2\bH_2}
	j & j & j \\ 
	\bff & \bff & \bff \\ 
	\textcolor{mylightbluetext}{\bzero} & \bff & \bff \\ 
	0 & 0 & f \\ 
	\sigma & 0 & 0 
\end{sbmatrix} .
$$
In this part of the reduction, $\bR$ and $\bx^\top$ are not involved. 

\paragraph{Step 2: Triangularization using Givens rotations.} 
At this point, the matrix has the form:
$$
\begin{bmatrix} 
\bR & \widetildebj & \widetildebH \\ 
\bzero  & \widetildebf & \widetildebN \\  
\bx^\top & \sigma & \bzero 
\end{bmatrix}.
$$
This matrix is then reduced to upper triangular form using Givens rotations from the left, and the rank $r$ is increased by 1.
This procedure is similar to the QR decomposition using Givens rotations; see Section~\ref{section:qr-givens} for more details.
The entire update process requires $\mathcalO(n^2)$ floating-point operations.

\section{Application: Row Rank equals Column Rank via UTV}
The UTV framework enables the proof of a fundamental theorem in linear algebra: the equality of the row rank and column rank of a matrix; Theorem~\ref{theorem:equal-dimension-rank}.
It is worth noting that when using the UTV decomposition for this proof, a slight adjustment is needed in framing the existence claim.
For instance, in Theorem~\ref{theorem:ulv-decomposition}, the initial assumption about the matrix $\bA$ is that its rank is $r$. However, since having rank $r$ inherently implies the equality of row  and column ranks, a more precise assertion in this context would be to state that $\bA$ has \textbf{column rank} $r$ in Theorem~\ref{theorem:ulv-decomposition}. 
For further discussion, see \citet{lu2021column}.

\begin{proof}[{of Theorem~\ref{theorem:equal-dimension-rank},  second approach}]
Any $m\times n$ matrix $\bA$ with rank $r$ can be factored as 
$$
\bA = \bU_0 \begin{bmatrix}
\bL & \bzero \\
\bzero & \bzero 
\end{bmatrix}\bV_0,
$$
where $\bU_0\in \real^{m\times m}$ and $\bV_0\in \real^{n\times n}$ are two orthogonal matrices, and $\bL\in \real^{r\times r}$ is a lower triangular matrix \footnote{Instead of using the ULV decomposition, some texts  use elementary transformations $\bE_1$ and $\bE_2$, such that $
\bA = \bE_1 \begin{bmatrix}
\bI_r & \bzero \\
\bzero & \bzero 
\end{bmatrix}\bE_2,
$ to prove the result.}. Let $\bD = \scriptsize\begin{bmatrix}
\bL & \bzero \\
\bzero & \bzero 
\end{bmatrix}$. Clearly, the row rank and column rank of $\bD$ are equal. If we can show that the column rank and row rank of $\bA$ are equal to those of  $\bD$, the proof is complete.

Let $\bU = \bU_0^\top$ and $\bV=\bV_0^\top$. Then, $\bD = \bU\bA\bV$. The proof can be broken into two parts: first prove that the row rank and column rank of $\bA$ are equal to those of $\bU\bA$; then prove that the row rank and column rank of $\bU\bA $ are equal to those of $\bU\bA\bV$.

\paragraph{Row  and column ranks of $\bA$ equal those of $\bU\bA$.} Let $\bB = \bU\bA$. Partition $\bA$ and $\bB$ by columns: $\bA=[\ba_1,\ba_2,\ldots, \ba_n]$ and $\bB=[\bb_1,\bb_2,\ldots,\bb_n]$, where $\bb_i=\bU\ba_i$ for all $i$.  If a linear combination $x_1\ba_1+x_2\ba_2+\ldots+x_n\ba_n=\bzero$, then premultiplying by $\bU$ gives
$$
\bU(x_1\ba_1+x_2\ba_2+\ldots+x_n\ba_n) = x_1\bb_1+x_2\bb_2+\ldots+x_n\bb_n = \bzero.
$$
Thus, any independent subset of columns in $\bA$ corresponds to an independent subset of columns in $\bB$, implying:
$
\dim(\cspace(\bB)) \leq \dim(\cspace(\bA)).
$
Similarly, since $\bA = \bU^\top\bB$, we have
$
\dim(\cspace(\bA)) \leq \dim(\cspace(\bB)).
$
This proves 
$
\dim(\cspace(\bB)) = \dim(\cspace(\bA)).
$
Applying the same reasoning to $\bB^\top$ and $\bA^\top$ shows that
$
\dim(\cspace(\bB^\top)) = \dim(\cspace(\bA^\top)).
$
This implies the row rank and column rank of $\bA$ and $\bB=\bU\bA$ are the same.

\paragraph{Row  and column ranks of $\bU\bA$ equal those of $\bU\bA\bV$.}
Using the same reasoning as above, applied to  $\bU\bA$ and $\bU\bA\bV$, we conclude that their row and column ranks are equal.
This completes the proof.
\end{proof}

\begin{problemset}
\item Use the UTV decompositions of $\bA$ and $\bB$ to derive  the UTV decomposition of $\diag(\bA,\bB)$.
\item Prove rigorously that the four subspaces described in Equation~\eqref{equation:ortho_space_utv} can be spanned using the UTV decomposition.

\item Let $\bA=\bU\bB\bV$ be given, where $\bU$ and $\bV$ are orthogonal matrices. Prove that $\sum_{i,j}\abs{a_{ij}}^2=\sum_{i,j} \abs{b_{ij}}^2$.
\textit{Hint: Verify this by showing that $\trace(\bA^\top\bA)=\trace(\bB^\top\bB)$.}

\item Explain how to compute the UTV decomposition using either Householder reflectors or Givens rotations. Provide insights into the advantages and limitations of each approach.

\item Present a detailed and rigorous proof of the URV decomposition, as outlined in Theorem~\ref{theorem:urv-decomposition}.

\item Let $\bA,\bB\in\complex^{n\times n}$ be given, where $\bB$ is nonsingular. Prove that there exist unitary matrices $\bU,\bV\in\complex^{n\times n}$ such that $\bA=\bU\bT_A\bV$ and $\bB=\bU\bT_B\bV$, where $\bT_A$ and $\bT_B$ are upper triangular matrices. Furthermore, show that the main diagonal entries of $\bT_B^{-1}\bT_A$ are the eigenvalues $\bB^{-1}\bA$. 
When these eigenvalues are real, show that all the underlying matrices can be chosen to be real, and $\bU$ and $\bV$ are orthogonal.
\textit{Hint: Use the Schur decomposition for  $\bB^{-1}\bA=\bU\bT\bU^*$ (Theorem~\ref{theorem:schur-decomposition_complex}) and the QR decomposition for $\bB\bU$.}

\item \label{prob:comple_qr_utvr} Prove that the computational complexity of computing the triangular factor $\bR$ in \eqref{equation:utv_r_fac_ql}  requires $2r^2(n-r)$ flops.

\item \textbf{Read Section~\ref{section:SVD} first.} Let $\bA,\bB\in\real^{m\times n}$. Show that~\footnote{
Let $\bA,\bB\in\real^{m\times n}$. 
Then $\bA$ and $\bB$ are \textit{left equivalent (resp., orthogonally left equivalent)} if there exists a nonsingular (resp., orthogonal) $\bZ_1\in\real^{m\times m}$ such that $\bA=\bZ_1\bB$;  
$\bA$ and $\bB$ are \textit{right equivalent (resp., orthogonally right equivalent)} if there exists a nonsingular (resp., orthogonal) matrix $\bZ_2\in\real^{n\times n}$ such that $\bA=\bB\bZ_2$; 
$\bA$ and $\bB$ are \textit{biequivalent (resp., orthogonally biequivalent)} if there exist nonsingular (resp., orthogonal) matrices $\bZ_1\in\real^{m\times m}$ and  $\bZ_2\in\real^{n\times n}$ such that $\bA=\bZ_1\bB\bZ_2$.
}
\begin{itemize}
\item  $\bA$ and $\bB$ are orthogonally left equivalent if and only if $\bA^\top\bA=\bB^\top\bB$.
\item   $\bA$ and $\bB$ are orthogonally right equivalent if and only if $\bA\bA^\top =\bB\bB^\top$.
\item  $\bA$ and $\bB$  are orthogonally biequivalent if and only if $\bA$ and $\bB$ have the same singular values with the same multiplicity.
\end{itemize}

\item  Let $\bA,\bB\in\real^{m\times n}$. Show that 
\begin{itemize}
	\item  The matrices $\bA$ and $\bB$ are left equivalent if and only if $\nspace(\bA) = \nspace(\bB)$.
	\item  The matrices $\bA$ and $\bB$ are right equivalent if and only $\cspace(\bA) = \cspace(\bB)$.
	\item  The matrices $\bA$ and $\bB$ are biequivalent if and only if $\rank(\bA)=\rank(\bB)$.
\end{itemize}

\item \label{prob:utv_pseudo} 
Let $
\bA = \bU \begin{bmatrixscript}
	\bR & \bzero \\
	\bzero & \bzero 
\end{bmatrixscript}\bV
$ be a UTV decomposition of $\bA\in\real^{m\times n}$.  Show that the  pseudo-inverse of $\bA$ is
$
\bA^+ = \bV^\top \begin{bmatrixscript} \bR^{-1} & \bzero \\ \bzero & \bzero \end{bmatrixscript} \bU^\top.
$

\item Following the procedure outlined in Section~\ref{section:rrutv_other} for appending a row to a rank-revealing URV decomposition, write out the complete pseudo-code for the algorithm and prove that it requires $\mathcalO(n^2)$ flops.

\item Recover the URV and SVD decompositions using the complete orthogonal decomposition.

\end{problemset}

\part{Data Interpretation and Information Distillation}\label{part:data-interation}

\newpage
\chapter{CR Decomposition}\label{section:cr-decomposition}

\index{Linearly independent}
\section{CR Decomposition}
The CR decomposition of a matrix, introduced in \citet{strang2021every, stranglu}, offers valuable insights into the matrix’s rank and the relationships between its columns and rows. As is customary, we begin by presenting the result, deferring the discussion of its existence and derivation to later sections.

\index{Decomposition: CR}
\index{Pseudo-inverse}
\index{Rank}
\index{Data storage}
\begin{theoremHigh}[CR decomposition]\label{theorem:cr-decomposition}
Let $\bA \in \real^{m \times n}$ be a matrix of  rank $r$. Then it can be factored as 
$$
\underset{m\times n}{\bA} = \underset{m\times r}{\bC} \gapthree \underset{r\times n}{\bR},
$$
where $\bC$ consists of the first $r$ linearly independent columns of $\bA$, and $\bR$ is an $r\times n$ matrix that reconstructs all the columns of $\bA$ from $\bC$. Specifically, $\bR$ corresponds to the \textit{reduced row echelon form (RREF)} of $\bA$, with the zero rows removed.

The storage required for this decomposition changes from $mn$ floating-point numbers to $r(m+n)$ floating-point numbers, which may either reduce or, in some cases, increase memory usage depending on the matrix dimensions and rank.
\end{theoremHigh}

The CR decomposition offers several key advantages: It highlights the independent columns and rows of the matrix, which are crucial for understanding its rank and the structure of its column and row spaces.
For large matrices, the decomposition can approximate the matrix by retaining only the most significant columns and rows, reducing computational complexity.
It simplifies the solution of linear systems $\bA\bx = \bb $ by transforming the problem into a more manageable form using the matrices $ \bC $ and $ \bR $.
Since both $\bC$ and $\bR$ have full rank, the decomposition provides an efficient way to compute the (Moore-Penrose) pseudo-inverse: $ \bA^+ = \bR^+\bC^+ $, where $ \bR^+ $ and $ \bC^+ $ denote the pseudo-inverses of $ \bR $ and $ \bC $, respectively (see Problem~\ref{prob:cr_pseudo}).
The CR decomposition is also useful for analyzing the incidence matrix of a graph. It helps  in studying conserved quantities, such as current flow in electrical circuits, as described by Kirchhoff's Current Law \citep{strang2021three}.

\section{Existence of  CR Decomposition}

The CR decomposition of a matrix can be obtained through the following steps.
Given that the matrix $\bA$ has  rank  $r$, it contains $r$ linearly independent columns. These columns can be identified and collected  in the matrix $\bC$ as follows:
\begin{itemize}
\item If column 1 of $\bA$ is nonzero, include it as a column of $\bC$. 
\item If column 2 of $\bA$ is not a scalar multiple of column 1, include it as a column of $\bC$.
\item If column 3 of $\bA$ is not a linear combination of columns 1 and 2, include it as a column of $\bC$.
\item Continue this process until $r$ linearly independent columns have been selected. If $r$ is not known in advance, continue until all linearly independent columns have been identified. 
\end{itemize}

Once $r$ linearly independent columns are extracted from $\bA$, the CR decomposition can be constructed by interpreting matrix multiplication in terms of the column space.
The product of two matrices, $\bD\in \real^{m\times k}$ and $\bE\in \real^{k\times n}$, results in the matrix $\bA=\bD\bE$. This can be expressed as $\bA=\bD[\be_1, \be_2, \ldots, \be_n] = [\bD\be_1, \bD\be_2, \ldots, \bD\be_n]$. 
In this interpretation, each column of $\bA$ is a linear combination of the columns of $\bD$.

\begin{proof}[of Theorem~\ref{theorem:cr-decomposition}]
Since $\bA$ has rank $r$ and $\bC$ is constructed from $r$ linearly independent columns of $\bA$, the column space of $\bC$ is the same as that of  $\bA$.
Therefore, any other column $\ba_i$ of $\bA$ can be represented as a linear combination of the columns of $\bC$, i.e., there exists a vector $\br_i$ such that $\ba_i = \bC \br_i$, $\forall i\in \{1, 2, \ldots, n\}$. By arranging  these vectors $\br_i$'s as the columns of a matrix $\bR$, we obtain 
$$
\bA = [\ba_1, \ba_2, \ldots, \ba_n] = [\bC \br_1, \bC \br_2, \ldots, \bC \br_n]= \bC \bR.
$$
Thus, the decomposition $
\bA=\bC\bR$ is established, completing the proof.
\end{proof}

\section{Reduced Row Echelon Form (RREF)}\label{section:rref-cr}
In Section~\ref{section:gaussian-elimination} on Gaussian elimination, we introduced the elimination matrix (a lower triangular matrix; see \eqref{equation:elimination_mat}) and the permutation matrix to facilitate transforming  $\bA$ into an upper triangular form. 
Let us now revisit the Gaussian elimination process for a $4\times 4$ square matrix, where $\boxtimes$ denotes a value that is not necessarily zero, and \textbf{boldface} indicates the value has just been changed:
$$
\footnotesize
\begin{sbmatrix}{\bA}
\boxtimes & \boxtimes & \boxtimes & \boxtimes \\
\boxtimes & \boxtimes & \boxtimes & \boxtimes \\
\boxtimes & \boxtimes & \boxtimes & \boxtimes \\
\boxtimes & \boxtimes & \boxtimes & \boxtimes
\end{sbmatrix}
\stackrel{\bE_1}{\longrightarrow}
\begin{sbmatrix}{\bE_1\bA}
\boxtimes & \boxtimes & \boxtimes & \boxtimes \\
\bm{0} & \bm{0} & \bm{\boxtimes} & \bm{\boxtimes} \\
\bm{0} & \bm{\boxtimes} & \bm{\boxtimes} & \bm{\boxtimes} \\
\bm{0} & \bm{\boxtimes} & \bm{\boxtimes} & \bm{\boxtimes}
\end{sbmatrix}
\stackrel{\bP_1}{\longrightarrow}
\begin{sbmatrix}{\bP_1\bE_1\bA}
\boxtimes & \boxtimes & \boxtimes & \boxtimes \\
\bm{0} & \textcolor{mylightbluetext}{\bm{\boxtimes}} & \bm{\boxtimes} & \bm{\boxtimes} \\
\bm{0}  & \bm{0} & \textcolor{mylightbluetext}{\bm{\boxtimes}} & \bm{\boxtimes} \\
0 & \boxtimes & \boxtimes & \boxtimes
\end{sbmatrix}
\stackrel{\bE_2}{\longrightarrow}
\begin{sbmatrix}{\bE_2\bP_1\bE_1\bA}
\boxtimes & \boxtimes & \boxtimes & \boxtimes \\
0 &  \textcolor{mylightbluetext}{\boxtimes} & \boxtimes & \boxtimes \\
0 & 0  & \textcolor{mylightbluetext}{\boxtimes} & \boxtimes \\
0 & \bm{0}  & \bm{0} & \textcolor{mylightbluetext}{\bm{\boxtimes}}
\end{sbmatrix}.
$$

\index{Pivot}
Moreover, Gaussian elimination can also be applied to rectangular matrices. Below, we demonstrate the process for a $4\times 5$ matrix:
$$
\footnotesize
\begin{sbmatrix}{\bA}
\textcolor{mylightbluetext}{2} & \boxtimes & 10 & 9 & \boxtimes\\
\boxtimes & \boxtimes & \boxtimes & \boxtimes & \boxtimes\\
\boxtimes & \boxtimes & \boxtimes & \boxtimes & \boxtimes\\
\boxtimes & \boxtimes & \boxtimes & \boxtimes & \boxtimes\\
\end{sbmatrix}
\stackrel{\bE_1}{\longrightarrow}
\begin{sbmatrix}{\bE_1\bA}
\textcolor{mylightbluetext}{2} & \boxtimes & 10 & 9 & \boxtimes\\
\bm{0} & \bm{0} & \textcolor{mylightbluetext}{\bm{5}} & \bm{6} & \bm{\boxtimes}\\
\bm{0} & \bm{0} & \bm{2} & \bm{\boxtimes} & \bm{\boxtimes}\\
\bm{0} & \bm{0} & \bm{\boxtimes} & \bm{\boxtimes} & \bm{\boxtimes}\\
\end{sbmatrix}
\stackrel{\bE_2}{\longrightarrow}
\begin{sbmatrix}{\bE_2\bE_1\bA}
\textcolor{mylightbluetext}{2} & \boxtimes & 10 & 9 & \boxtimes\\
0 & 0 & \textcolor{mylightbluetext}{5} & 6 & \boxtimes\\
0 & 0 & \bm{0} & \textcolor{mylightbluetext}{\bm{3}} & \bm{\boxtimes}\\
0 & 0 & \bm{0} & \bm{0} & \bm{0}\\
\end{sbmatrix},
$$
\noindent where the numbers highlighted in \textcolor{mylightbluetext}{blue} are \textit{pivots}, as previously defined  (Definition~\ref{definition:pivot})~\footnote{In the context of Gaussian elimination, a pivot element is the first nonzero element in a row when performing row operations to transform a matrix into its row echelon form or reduced row echelon form.}. 
The resulting matrix is referred to as the \textit{row echelon form} of  $\bA$. 
In this example, the fourth row becomes a zero row. To continue, we perform additional row operations to ensure that all entries above the pivots are zero:
$$
\footnotesize
\begin{sbmatrix}{\bE_2\bE_1\bA}
\textcolor{mylightbluetext}{2} & \boxtimes & 10 & 9 & \boxtimes\\
0 & 0 & \textcolor{mylightbluetext}{5} & 6 & \boxtimes\\
0 & 0 & 0 & \textcolor{mylightbluetext}{3} & \boxtimes\\
0 & 0 & 0 & 0 & 0\\
\end{sbmatrix}
\stackrel{\bE_3}{\longrightarrow}
\begin{sbmatrix}{\bE_3\bE_2\bE_1\bA}
\textcolor{mylightbluetext}{2} & \boxtimes & \bm{0}  & \bm{-3}  & \bm{\boxtimes} \\
0 & 0 & \textcolor{mylightbluetext}{5} & 6 & \boxtimes\\
0 & 0 & 0 & \textcolor{mylightbluetext}{3} & \boxtimes\\
0 & 0 & 0 & 0 & 0\\
\end{sbmatrix}
\stackrel{\bE_4}{\longrightarrow}
\begin{sbmatrix}{\bE_4\bE_3\bE_2\bE_1\bA}
\textcolor{mylightbluetext}{2} & \boxtimes & 0 & \bm{0} & \bm{\boxtimes}\\
0 & 0 & \textcolor{mylightbluetext}{5} & \bm{0} & \bm{\boxtimes}\\
0 & 0 & 0 & \textcolor{mylightbluetext}{3} & \boxtimes\\
0 & 0 & 0 & 0 & 0\\
\end{sbmatrix},
$$
\noindent where $\bE_3$ subtracts twice the second row from the first row, while $\bE_4$ adds the third row to the first row and subtracts twice the third row from the second row. To achieve the full \textit{reduced row echelon form (RREF)}, we need to ensure that all pivots are set to 1:
\begin{equation}\label{equation:cr_exp1}
\footnotesize
\begin{sbmatrix}{\bE_4\bE_3\bE_2\bE_1\bA}
\textcolor{mylightbluetext}{2} & \boxtimes & 0 & 0 & \boxtimes\\
0 & 0 & \textcolor{mylightbluetext}{5} & 0 & \boxtimes\\
0 & 0 & 0 & \textcolor{mylightbluetext}{3} & \boxtimes\\
0 & 0 & 0 & 0 & 0\\
\end{sbmatrix}
\stackrel{\bE_5}{\longrightarrow}
\begin{sbmatrix}{\bE_5\bE_4\bE_3\bE_2\bE_1\bA}
\textcolor{mylightbluetext}{\bm{1}} & \bm{\boxtimes} & \bm{0} & \bm{0} & \bm{\boxtimes}\\
\bm{0} & \bm{0} & \textcolor{mylightbluetext}{\bm{1}} & \bm{0} & \bm{\boxtimes}\\
\bm{0} & \bm{0} & \bm{0} & \textcolor{mylightbluetext}{\bm{1}} & \bm{\boxtimes}\\
0 & 0 & 0 & 0 & 0\\
\end{sbmatrix},
\end{equation}
\noindent where $\bE_5$ scales the pivots so that they equal 1. 
Unlike the transformation matrices used in LU decomposition, which are generally lower triangular, the transformation matrices $\bE_1, \bE_2, \ldots, \bE_5$ may also include permutation matrices or other types of matrices. 
The resulting matrix is the \textit{reduced row echelon form (RREF)} of $\bA$, characterized by having pivots equal to 1 and zeros above the pivots.

For a general matrix $\bA\in\real^{m\times n}$, let its row echelon form (with zeros above the pivots) be denoted as $\bF$. To transform $\bF$ into the RREF of $\bA$, we apply a sequence of transformations $\bE_1, \bE_2, \ldots, \bE_r$, defined as: 
$$
\bE_i=
\bE_{i;\eta} =
\begin{bmatrixscript}
1&&&&&&\\
&\ddots&&&&&\\
&& 1&&&&\\
&&& \eta &&&\\
&&&& 1 &&\\
&&&&& \ddots &\\
&&&&&&1\\
\end{bmatrixscript}
=
\bI + (\eta-1)\be_i\be_i^\top,
\gap 
\text{with $1\leq i\leq r$},
$$
where $\be_i$ is the $i$-th unit basis vector, $r$ is the rank of $\bA$ (i.e., the number of nonzero rows in $\bF$), and $\eta$ is the inverse of the $i$-th pivot in $\bF$. Each $\bE_i$ is invertible, with $\bE_{i;\eta}^{-1} = \bE_{i;\eta^{-1}} = \bI + (\frac{1}{\eta}-1)\be_i\be_i^\top$.
The reduced row echelon form of $\bA$  can then be obtained by $(\bE_{r}\bE_{r-1}\ldots\bE_1\bF)$.

We formally define the reduced row echelon form (RREF) of a matrix as follows:
\begin{definition}[Reduced row echelon form, RREF]
Let $\bA\in\real^{m\times n}$. The \textit{row echelon form (REF)} of $\bA$ satisfies the following conditions:
\begin{enumerate}
\item[1.] The leading nonzero entry (called a pivot, as defined in Definition~\ref{definition:pivot}) of the $(i+1)$-th row appears  to the right of the leading nonzero entry of the $i$-th row.
\item[2.] All entries below a pivot in a given column are zeros.
\item[3.] Any row that contains only zeros is positioned at the bottom of the matrix.
\end{enumerate} 
Although Property 2 is a consequence of  Property 1, we include it here for emphasis.
If a matrix in row echelon form satisfies the following additional conditions, it is said to be 
in \textit{reduced row echelon form (RREF)}:
\begin{enumerate}
\item[4.] The leading nonzero entry (pivot) in each row is equal to 1.
\item[5.] All entries above each pivot are zeros.
\end{enumerate}
\end{definition}
\begin{exercise} Let $\bA\in\real^{m\times  n}$ be any matrix. Show that there exists a sequence of row transformations $\bE_1, \bE_2, \ldots, \bE_k$ such that $\bB = \bE_k\bE_{k-1}\ldots\bE_1\bA$ is in reduced row echelon form. \textit{Hint: Use induction}.
\end{exercise}

The pivots  (in its RREF) play a key role in estimating the rank of a matrix.
\begin{lemma}[Rank and pivots]\label{lemma:rank-is-pivots}
The rank of a matrix $\bA$ is equal to the number of pivots (in its reduced row echelown form).
Consequently, the rank of a matrix is the same as the rank of its RREF.
\end{lemma}

\begin{proof}[of Lemma~\ref{lemma:rank-is-pivots}]
According to Proposition~\ref{proposition:rowspa_rowele}, the row space of $\bA$ is identical to the row space  of its RREF. Since the rank of the RREF is defined as the number of its pivots, it follows that the rank of $\bA$ is also equal to the number of pivots.
\end{proof}

We now show that the RREF and the CR decomposition are closely related.
\begin{lemma}[RREF in CR\index{Reduced row echelon form}]\label{lemma:r-in-cr-decomposition}
The reduced row echelon form of the matrix $\bA$, excluding zero rows, corresponds to the matrix $\bR$ in the CR decomposition.
\end{lemma}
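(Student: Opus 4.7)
The plan is to show that the matrix $\bR$ obtained from the CR construction and the nonzero block of the RREF of $\bA$ satisfy the same defining property, and therefore must coincide. The key observation is that both matrices are characterized by: (i) their columns at the pivot positions form the $r \times r$ identity, and (ii) each nonpivot column records the unique expansion of the corresponding column of $\bA$ in the basis formed by the pivot columns of $\bA$. Since the first $r$ linearly independent columns of $\bA$ are precisely the pivot columns of the RREF (this follows from the elimination procedure in Section~\ref{section:rref-cr}, where a pivot is introduced in column $k$ if and only if column $k$ is not a linear combination of the preceding columns), the pivot indices in the RREF match exactly the column indices selected to form $\bC$.

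First I would let $\bE$ denote the invertible matrix (product of row operations) for which $\bE\bA = \begin{bmatrix}\bR_0 \\ \bzero\end{bmatrix}$, where $\bR_0 \in \real^{r \times n}$ is the RREF of $\bA$ with zero rows removed, using Lemma~\ref{lemma:rank-is-pivots} to guarantee exactly $r$ nonzero rows. Applying $\bE$ to both sides of the factorization $\bA = \bC\bR$ gives $\begin{bmatrix}\bR_0 \\ \bzero\end{bmatrix} = \bE\bC\,\bR$. Partitioning $\bE\bC = \begin{bmatrix}\bF \\ \bG\end{bmatrix}$ with $\bF \in \real^{r\times r}$, this yields $\bF\bR = \bR_0$ and $\bG\bR = \bzero$. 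Since $\bR$ contains an $r \times r$ identity submatrix at the pivot columns (shown next), $\bR$ has full row rank $r$, hence $\bG = \bzero$ and $\bF\bR = \bR_0$.

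Next I would verify the pivot-column identity for both matrices. Let $J = \{j_1,\ldots,j_r\}$ be the pivot indices. By construction $\bA[:,J] = \bC$, so comparing $\bA = \bC\bR$ restricted to the columns in $J$ gives $\bC = \bC\,\bR[:,J]$; since $\bC$ has full column rank, $\bR[:,J] = \bI_r$. On the other hand, by the defining property of RREF, $\bR_0[:,J] = \bI_r$ as well. Substituting into $\bF\bR = \bR_0$ and restricting to columns in $J$ gives $\bF = \bI_r$, so $\bR_0 = \bR$.

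The main obstacle is not the algebra, which is short, but pinning down that the pivot columns of the RREF are exactly the columns selected into $\bC$ by the greedy ``first $r$ linearly independent columns'' rule. I would handle this by tracking the elimination process of Section~\ref{section:rref-cr}: in the elimination, column $k$ receives a new pivot precisely when it fails to be a linear combination of columns $1,\ldots,k-1$, which is the same criterion used to admit a column into $\bC$. Once this alignment of indices is established, the rest of the argument is a two-line consistency check, and uniqueness of $\bR$ in the CR decomposition follows as a free corollary from the full column rank of $\bC$.
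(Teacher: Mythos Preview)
Your proof is correct, and in fact it supplies considerably more than the paper does: the paper states this lemma without a formal proof, following it only with a short computational description (compute $rref(\bA)$, take $\bC$ to be the pivot columns of $\bA$, take $\bR$ to be the nonzero rows of $rref(\bA)$). Your argument---applying the invertible row-operation matrix $\bE$ to both sides of $\bA=\bC\bR$, then using the identity submatrix at the pivot positions to force $\bF=\bI_r$---is a clean way to make the claim rigorous, and your final paragraph correctly isolates the only nontrivial point, namely that the greedy selection rule for $\bC$ picks out exactly the pivot columns of the RREF. One minor presentational note: you invoke ``$\bR$ has full row rank because $\bR[:,J]=\bI_r$'' before actually establishing $\bR[:,J]=\bI_r$; swapping the order of those two steps would make the argument read linearly.
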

\begin{proof}[Informal proof of Lemma~\ref{lemma:r-in-cr-decomposition}]
Informally, using the example provided earlier in  \eqref{equation:cr_exp1}, we express the matrix $\bA$ as:
$$
\bE_5\bE_4\bE_3\bE_2\bE_1\bA = \bR_0 \quad \longrightarrow \quad  \bA = (\bE_5\bE_4\bE_3\bE_2\bE_1)^{-1}\bR_0.
$$
We observe that  columns 1, 3, and 4 of $\bR_0$ each contain a single nonzero entry, which is equal to 1. This observation allows us to construct a matrix $\bC$ (identical to the  ``column matrix" in the CR decomposition) whose first three columns are equal to columns 1, 3, and 4 of  $\bA$, i.e., $\bC=[\ba_1, \ba_3, \ba_4]$. Additionally, because the last row of $\bR_0$ consists entirely of zeros, the last row of $\bR_0$ can be safely disregarded in computations. 
Notably, this matrix $\bC$ is unique in its ability to reconstruct columns 1, 3, and 4 of $\bA$, as the pivots of $\bR_0$ are all equal to 1. 
Thus, we obtain the CR decomposition:
$
\bA=\bC\bR
$.
\end{proof}

\index{Uniqueness}
Nest, we present a rigorous proof of the uniqueness of the RREF of a matrix.
\begin{theorem}[Uniqueness of RREF]\label{theorem:unique_rref}
Let $\bA\in\real^{m\times n}$ be any matrix of rank $r$. 
Suppose $\bX$ and $\bY$ are two reduced row echelon forms of $\bA$, obtained by applying two sequences of elementary row operations $\bE_1, \bE_2, \ldots, \bE_p$ and $\bF_1, \bF_2, \ldots, \bF_q$, respectively, where
$$
\bX = {\bE_p\ldots \bE_2\bE_1}\bA =\bE\bA
\gap\text{and}\gap
\bY = {\bF_q\ldots \bF_2\bF_1}\bA =\bF\bA.
$$
Then, the two reduced row echelon forms are identical, i.e., $\bX=\bY$ and $\bE=\bF$.

\end{theorem}
\begin{proof}[of Theorem~\ref{theorem:unique_rref}]
Let $\bB = \bE\bF^{-1}=\bE_p\ldots\bE_2\bE_1\bF_1^{-1}\bF_2^{-1}\ldots \bF_q^{-1}$. Then we have $\bX = \bB\bY$ and $\bY = \bB^{-1}\bX$.
The  $i$-th column of $\bX$ and $\bY$ can be expressed as $\bx_i =\bX\be_i$ and $\by_i = \bY\be_i$, respectively, where $\be_i$ represents the $i$-th standard basis vector in $\real^n$.
\paragraph{Zero columns match.}If $\bx_i=\bzero$, then $\by_i =\bB^{-1}\bx_i =\bzero$. Similarly, if  $\by_i=\bzero$, then $\bx_i =\bB\by_i =\bzero$. 
Thus, the zero columns in $\bX$ and $\bY$ are aligned.
Without loss of generality, we assume that $\bX$ and $\bY$ contain no zero columns for the rest of the analysis.
\paragraph{First column.} Since we assume $\bX$ and $\bY$ do not contain zero columns, their first columns must be  $\bx_1=\by_1=\be_1$. This also implies the first column of $\bB$ is $\be_1$.
We refer to columns in $\bX$ or $\bY$ that contain pivots as \textit{pivot columns}, and those do not  as \textit{non-pivot columns}.

\index{Pivot columns}
\index{Non-pivot columns}

\paragraph{Non-pivot columns between the first and second pivot columns.}
Suppose the indices of the pivot columns in $\bX$ are $\{i_1, i_2, \ldots, i_r\}$, and the indices of the pivot columns in $\bY$ are $\{j_1, j_2, \ldots, j_r\}$. According to Lemma~\ref{lemma:rank-is-pivots}, there are $r$ pivot columns. 
And we have already shown that $i_1=j_1=1$. Then for $k\in\{2,3,\ldots, j_2-1\}$, we suppose $\by_k=\lambda \be_1$ for some nonzero $\lambda$. We have
$$
\bx_k = \bB\by_k = \bB\lambda \be_1=\lambda\bb_1  =\lambda\be_1
\quad\implies\quad \bx_k = \by_k, \gap \forall k\in\{2,3,\ldots, j_2-1\}.
$$
Conversely, suppose $\bx_k=\lambda \be_1$ for some nonzero $\lambda$ with $k\in\{2,3,\ldots, i_2-1\}$. In this case, we also find that $\bx_k=\by_k$. This implies the non-pivot columns $k=2, 3, \ldots, j_2-1$ of $\bX$ and $\bY$ are the same, and moreover $i_2=j_2$ (the indices of the second pivot columns in $\bX$ and $\bY$ are the same, and $\bx_{j_2} = \by_{j_2}=\be_2$).

To conclude, we have demonstrated that the first $j_2$ columns of  $\bX$ and $\bY$ are identical.
\paragraph{Non-pivot columns between the second and third pivot columns.} Since $\bx_{j_2} = \by_{j_2} = \be_2$, we have $\bx_{j_2}=\bB\by_{j_2} = \bB\be_2 = \bb_2=\be_2$, i.e., the second column of $\bB$ is $\be_2$. On the other hand, considering the non-pivot columns of $k\in\{j_2+1, j_2+2, \ldots, j_3-1\}$ in $\bY$, we assume $\by_k = \lambda_1\be_1+\lambda_2\be_2$. Then we have 
$$
\begin{aligned}
\bx_k = \bB\by_k =\bB(\lambda_1\be_1+&\lambda_2\be_2) =\lambda_1\bb_1+\lambda_2\bb_2 =\lambda_1\be_1+\lambda_2\be_2 \\
&\quad\implies\quad \bx_k=\by_k,\gap\forall  
k\in\{j_2+1, j_2+2, \ldots, j_3-1\}.
\end{aligned}
$$
Conversely, suppose $\bx_k = \lambda_1\be_1+\lambda_2\be_2$ with $k\in\{i_2+1, i_2+2, \ldots, i_3-1\}$, we also have $\bx_k=\by_k$. 
Consequently, the non-pivot columns $k=j_2+1, j_2+2, \ldots, j_3-1$ of $\bX$ and $\bY$ are the same, and also $j_3=i_3$ (the indices of the third pivot columns in $\bX$ and $\bY$ are the same, and $\bx_{j_3} = \by_{j_3}=\be_3$).

By repeating this argument for all pivot positions, we show that all corresponding columns of 
$\bX$ and $\bX$ are equal, completing the proof.
\end{proof}


\begin{exercise}[Determinant of RREF]
Show that the determinant of a matrix $\bA$ is nonzero if and only if its RREF is the identity matrix $\bI$.
\end{exercise}
Using the above result and the multiplicative property of determinants (i.e., $\det(\bA\bB)=\det(\bA)\det(\bB)$), the determinant of $\bA$ can be determined by tracking the elementary row operations performed during the process of transforming $\bA$ into its RREF. 

\index{Rank decomposition}
In summary, we begin by calculating the reduced row echelon form of matrix $\bA$, denoted as $rref(\bA)$. 
Then, in the CR decomposition, the matrix $\bC$  is formed by selecting from $\bA$ only those columns that correspond to pivot columns in $rref(\bA)$.  
Simultaneously, the factor $\bR$ is obtained by removing all zero rows from $rref(\bA)$. 
This process represents a special case of rank decomposition (Theorem~\ref{theorem:rank-decomposition}), but it is notable because it explicitly involves the RREF. Hence, we introduce it here due to its specific relevance.

An important property of $\bR$  is that  a subset of its  $r$ columns, each containing a pivot, together form an $r\times r$ identity matrix. It's worth reiterating that we can obtain this matrix $\bR$ simply by eliminating the zero rows from the RREF. As noted in \citet{strang2021every}, a notation for the RREF that retains the zero rows is denoted by $\bR_0$:
$$
\bR_0 = rref(\bA)=
\begin{bmatrix}
\bR \\
\bzero
\end{bmatrix}=
\begin{bmatrix}
\bI_r & \bF \\
\bzero & \bzero
\end{bmatrix}\bP,
$$
where the $n\times n$ permutation matrix $\bP$ arranges the columns of the $r\times r$ identity matrix $\bI_r$ into their correct positions, aligning them with the first $r$ linearly independent columns of the original matrix $\bA$.

\section{Rank Decomposition}

We previously noted that the CR decomposition is a special case of rank decomposition. 
We formally prove that such a decomposition exists for any matrix.

\index{Rank decomposition}
\begin{theoremHigh}[Rank decomposition]\label{theorem:rank-decomposition}
Let $\bA \in \real^{m \times n}$ be any matrix of rank $r$. Then $\bA$ can be factored into what is known as the \textit{rank decomposition} as follows:
$$
\underset{m\times n}{\bA }= \underset{m\times r}{\bD}\gapthree  \underset{r\times n}{\bF},
$$
where  $\bD \in \real^{m\times r}$ and $\bF \in \real^{r\times n}$ both have (full) rank $r$. 
\end{theoremHigh}
\begin{proof}[of Theorem~\ref{theorem:rank-decomposition}]
From the ULV decomposition in Theorem~\ref{theorem:ulv-decomposition}, we can express $\bA$ as 
$
\bA = \bU \begin{bmatrixscript}
\bL & \bzero \\
\bzero & \bzero 
\end{bmatrixscript}\bV.
$
Let $\bU_0 = \bU_{:,1:r}$ and $\bV_0 = \bV_{1:r,:}$, i.e., $\bU_0$ comprises  the first $r$ columns of $\bU$, and $\bV_0$ consists of  the first $r$ rows of $\bV$. 
Thus,  $\bA$  can also be written as:  $\bA = \bU_0 \bL\bV_0$, where $\bU_0 \in \real^{m\times r}$ and $\bV_0\in \real^{r\times n}$. This is also referred to as the reduced ULV decomposition. Let \{$\bD = \bU_0\bL$ and $\bF =\bV_0$\}  or \{$\bD = \bU_0$ and $\bF =\bL\bV_0$\}, we obtain a valid rank decomposition of $\bA$.
\end{proof}

The rank decomposition is \textbf{not unique}. In fact, using elementary row and column operations, we can also write:
$
\bA = 
\bE_1
\begin{bmatrixscript}
\bZ & \bzero \\
\bzero & \bzero 
\end{bmatrixscript}
\bE_2,
$
where $\bE_1 \in \real^{m\times m}$ and $\bE_2\in \real^{n\times n}$ are products of nonsingular elementary row and column operations, and $\bZ\in \real^{r\times r}$. There exist many possible choices for $\bE_1,\bE_2,$ and $\bZ$. 
When $\bZ=\bI_r$, where $r$ is the rank of  $\bA$, this decomposition is known as the \textit{Smith decomposition or Smith form} of $\bA$ \citep{bernstein2009matrix}.
By using similar constructions as in the proof above, we can derive alternative rank decompositions from other matrix factorizations, such as SVD, URV, CR, and CUR.
However, we can also establish a general relationship between different rank decompositions using the following corollary.
\index{Decomposition: Smith}
\index{Smith decomposition}
\begin{corollary}[Connection between rank decompositions]\label{corollary:connection-rank-decom}
Let $\bA=\bD_1\bF_1=\bD_2\bF_2\in\real^{m\times n}$ be  two rank decompositions of $\bA$. Then there exists a nonsingular matrix $\bP$ such that 
$$
\bD_1 = \bD_2\bP
\qquad
\text{and}
\qquad 
\bF_1 = \bP^{-1}\bF_2.
$$
More generally, given $\bA,\bB\in\real^{m\times n}$,  $\bA$ and $\bB$ are biequivalent~\footnote{$\bA$ and $\bB$ are \textit{biequivalent} if there exist nonsingular $\bZ_1\in\real^{m\times m}$ and  $\bZ_2\in\real^{n\times n}$ such that $\bA=\bZ_1\bB\bZ_2$.} if and only if $\bA$ and $\bB$ share the same Smith form.
\end{corollary}
\begin{proof}[of Corollary~\ref{corollary:connection-rank-decom}]
Given  $\bD_1\bF_1=\bD_2\bF_2$, postmultiplying by $\bF_1^\top$ yields $\bD_1\bF_1\bF_1^\top=\bD_2\bF_2\bF_1^\top$. Since $\rank(\bF_1\bF_1^\top)=\rank(\bF_1)=r$, $\bF_1\bF_1^\top$ is a square matrix with full rank, hence nonsingular. 
Therefore, we have  $\bD_1=\bD_2\bF_2\bF_1^\top(\bF_1\bF_1^\top)^{-1}$. Let $\bP=\bF_2\bF_1^\top(\bF_1\bF_1^\top)^{-1}$, we have $\bD_1=\bD_2\bP$ and $\bF_1 = \bP^{-1}\bF_2$. The second part of the corollary can be proven similarly.
\end{proof}

\index{Fundamental theorem}
\index{Pseudo-inverse}
\index{Idempotent}
\index{Basis}
\section{Application: Idempotent Matrix and Matrix Rank}
The CR decomposition or rank decomposition plays a pivotal role in proving several essential theorems in linear algebra.
For instance, it is instrumental in establishing the existence of the pseudo-inverse; it it helps determine a basis for the four fundamental subspaces in linear algebra \citep{lu2021numerical}.

Moreover, the CR factorization finds practical applications in data analysis and computational problem-solving. For instance, it proves valuable in solving  least squares problems by reducing the system to a minimal set of variables, thereby eliminating redundancy.

The CR decomposition is also a powerful tool for analyzing the rank characteristics of idempotent matrices. Its utility in orthogonal projections is further explored in the Appendix of \citet{lu2021numerical}.
\begin{proposition}[Rank and trace of an idempotent matrix\index{Trace}]\label{proposition:rank-of-symmetric-idempotent2_tmp}
Let $\bA$ be an idempotent matrix (i.e., $\bA^2 = \bA$). Then the rank of $\bA$ is equal to its trace.
\end{proposition}
\begin{proof}[of Proposition~\ref{proposition:rank-of-symmetric-idempotent2_tmp}]
Consider an  $n\times n$ idempotent matrix $\bA$ of rank $r$. By the CR decomposition, we can express $\bA$ as $\bA = \bC\bR$, where $\bC\in\real^{n\times r}$ and $\bR\in \real^{r\times n}$ are both of full rank $r$. Therefore,
$$
\begin{aligned}
\bA^2 = \bA
\quad\implies \quad
\bC\bR\bC\bR = \bC\bR
\quad\implies \quad 
\bR\bC\bR =\bR
\quad\implies \quad
\bR\bC =\bI_r,
\end{aligned}
$$ 
where $\bI_r$ denotes the $r\times r$ identity matrix. Consequently,
$
\trace(\bA) = \trace(\bC\bR) =\trace(\bR\bC) = \trace(\bI_r) = r, 
$
which corresponds to the rank of $\bA$. This equality holds due to the invariance of the trace under cyclic permutations.
\end{proof}

On the other hand,  we previously established a fundamental theorem in linear algebra using the UTV framework, which demonstrated that the row rank and column rank of any matrix are equal  (Theorem~\ref{theorem:equal-dimension-rank}).
The CR decomposition offers an alternative explanation of this result.
\begin{proof}[{of Theorem~\ref{theorem:equal-dimension-rank}, the third way}]
Consider the CR decomposition of  $\bA = \bC\bR$, where $\bR$ can be expressed as  $\bR = [\bI_r, \bF]\bP$, and $\bP$ is an $n\times n$ permutation matrix used to arrange the columns of the $r\times r$ identity matrix $\bI_r$ in their appropriate positions. 
It is straightforward to verify that the $r$ rows of $\bR$ are linearly independent due to the nonsingular submatrix  $\bI_r$. Therefore,  the row rank of $\bR$ is $r$. 

First, by the definition of the CR decomposition, the $r$ columns of $\bC$ are selected  from $r$ linearly independent columns of $\bA$, and the column rank of $\bA$ is $r$. Furthermore,
\begin{itemize}
\item Since $\bA=\bC\bR$, every row of $\bA$ can be represented as a  linear combinations of the rows of $\bR$. Hence, the row space of $\bA$ is contained within the row space of $\bR$: $\cspace(\bA^\top)\subseteq \cspace(\bR^\top)$.

\item From  $\bA=\bC\bR$, we also have $(\bC^\top\bC)^{-1}\bC^\top\bC\bR = (\bC^\top\bC)^{-1}\bC^\top\bA$, simplifying to $\bR = (\bC^\top\bC)^{-1}\bC^\top\bA$ (Because $\bC$ has full column rank $r$, $\bC^\top\bC$ is nonsingular). 
Consequently, the rows of $\bR$ are  linear combinations of the rows of $\bA$, meaning the row space of $\bR$ is contained within the row space of $\bA$: $\cspace(\bR^\top)\subseteq \cspace(\bA^\top)$.
\end{itemize}
By this ``sandwich" argument, the row spaces of  $\bA$ and $\bR$ are equal, and thus their row ranks are equal: $\cspace(\bA^\top)= \cspace(\bR^\top)$.

Since the column rank of $\bA$ is also $r$ by the definition of the CR decomposition, it follows that both the row rank and column rank of $\bA$ are equal to $r$.
\end{proof}

\begin{problemset}
\item Discuss what rank number $r$ in Theorem~\ref{theorem:cr-decomposition} and Theorem~\ref{theorem:rank-decomposition} can reduce storage requirements.
\item Determine the reduced row echelon form and the CR decomposition for the matrix 
$
\bA = 
\begin{bmatrixscript}
1 & 3 & 2 \\
3 & 7 & 6 \\
4 & 5 & 8
\end{bmatrixscript}.
$

\item Apply the RREF process to the matrix
$
\bA = \begin{bmatrixscript}
1 & 2 & 1 & 1 \\
1 & 4 & 2 & 3 \\
1 & 1 & 2 & -1 \\
-3 & -1 & 4 & 0
\end{bmatrixscript}.
$

\item \label{prob:cr_pseudo} Find the pseudo-inverse of a matrix $\bA$ using its CR decomposition.

\item \label{problem:aug_lin} Show that the solution of the linear system $\bA\bx=\bb$ remains unchanged if the same sequence of elementary row transformations  is applied to both $\bA$ and $\bb$. Consequently, the solution can be revealed by finding the RREF of the augmented matrix $[\bA,\bb]$.

\item Following Problem~\ref{problem:aug_lin}, show that the two linear systems $\bA_1\bx=\bb_1$ and $\bA_2\bx=\bb_2$ have the same set of solutions if and only if $[\bA_1,\bb_1]$ and $[\bA_2,\bb_2]$ have the same RREF.

\item Show that if a system of linear equations has two distinct solutions, then it must have infinitely many solutions.
\item Show that if a linear system $\bA\bx=\bb$ has more than one solution, 
then the corresponding homogeneous system $\bA\bx=\bzero$  also has nontrivial solutions.

\item A system of linear equations with fewer equations than unknowns is sometimes referred to as an \textit{underdetermined} system. Provide an example of an inconsistent underdetermined system of two equations in three unknowns (If there is at least one solution, the linear system is called \textit{consistent}; otherwise, it is called \textit{inconsistent}).

\item Suppose an underdetermined system is consistent. Explain why such a system must have an infinite number of solutions.

\item A system of linear equations with more equations than unknowns is sometimes referred to as an \textit{overdetermined} system. Discuss the conditions under which such a system can be consistent.

\item Two matrices are called \textit{row equivalent} if there is a sequence of elementary row operations that transforms one matrix into the other. Show that if matrices $\bA$ and $\bB$ are row equivalent, they have the same RREF.

\item Let $\bA\bx=\bb$ be a consistent system where $\bA\in\real^{m\times n}$. Show that $\bA$ has $m$ pivot columns. Furthermore, let  $m=n$; show that the RREF of $\bA$ is the identity matrix.

\item 
Let $ \bA=\begin{bmatrixscript}
a & b \\
c & d
\end{bmatrixscript}$ be any $ 2 \times 2 $  nonsingular matrix. 
Show that there exists an nonsingular matrix $\bS$ such that
$
\bS\bA = \begin{bmatrixscript}
1 & 0 \\
0 & ad - bc
\end{bmatrixscript},
$
where $\bS$ is the product of at most four elementary matrices of the form $ \bE_{i,j;\alpha}=\bI+\alpha\be_i\be_j^\top\in\real^{n\times n}$.

\item Let $ \bA\in\real^{n\times n} $ be nonsingular. Show that there is a matrix $\bS$ such that
$
\bS\bA = \begin{bmatrixscript}
\bI_{n-1} & 0 \\
0 & d
\end{bmatrixscript},
$
where $d = \det(\bA) $, and  $\bS $ is again a product of elementary matrices of the form $ \bE_{i,j;\alpha} =\bI+\alpha\be_i\be_j^\top\in\real^{n\times n}$.

\end{problemset}

\chapter{Skeleton/CUR and Interpolative Decomposition}

\index{Skeleton}
\section{Skeleton/CUR Decomposition}
The CR decomposition utilizes  actual columns of a matrix, whereas the \textit{skeleton decomposition} extends this concept by incorporating both  actual columns and rows.
\index{Decomposition: Skeleton}
\index{Linearly independent}
\index{Data storage}
\begin{theoremHigh}[Skeleton/CUR decomposition]\label{theorem:skeleton-decomposition}
Any rank-$r$ matrix $\bA \in \real^{m \times n}$ can be decomposed as 
$$
\underset{m\times n}{\bA }= 
\underset{m\times r}{\bC} \gapthree \underset{r\times r}{\bU^{-1} }\gapthree \underset{r\times n}{\bR},
$$
where $\bC$ contains some $r$ linearly independent columns of $\bA$, $\bR$ contains some $r$ linearly independent rows of $\bA$, and $\bU$ is the nonsingular submatrix formed by the intersection of these selected rows and columns.
\begin{itemize}
\item The storage requirement for this decomposition may be reduced (or potentially increased) compared to storing the full matrix, from  $mn$ floating-point numbers to $r(m+n)+r^2$ floating-point numbers.
\item Alternatively, if we only record the indices of the selected rows and columns, it requires $mr$ and $nr$ floating-point numbers for storing $\bC$ and $\bR$, respectively. Additionally,  $2r$ integers are required to store the positions of the selected columns in $\bC$ and rows in $\bR$ within $\bA$, which allows reconstruction of $\bU$ from $\bC$ and $\bR$.
\end{itemize}
\end{theoremHigh}

The skeleton decomposition is also  referred to as the \textit{CUR decomposition}, named after its component. 
Compared to the singular value decomposition (SVD), CUR offers significant  advantages in terms of \textit{reification} and \textit{interpretability}. While SVD relies on artificial singular vectors that may not accurately reflect physical realities, CUR uses  actual columns and rows from the original  matrix, which makes it more interpretable and better  aligned with the structure of the original data \citep{mahoney2009cur}. Moreover, CUR preserves sparsity in the underlying data, making it particularly suitable for applications involving sparse matrices.

On the other hand, like SVD, CUR is a versatile tool widely used across various domains for tasks like data compression, feature extraction, and data analysis. It provides a computationally efficient way to approximate matrices, making it well-suited for handling large-scale datasets \citep{mahoney2009cur, an2012large, lee2008cur+}. 
For example, CUR reduces the storage and computational requirements by selecting only a subset of the original matrix's rows and columns. This results in a low-rank approximation that retains the essential information. It is particularly useful in numerical linear algebra for tasks like solving linear systems, eigenvalue problems, and matrix inversion.
CUR is also employed for image compression and analysis. By approximating the original image matrix with a lower-dimensional representation, CUR reduces storage costs while preserving key visual features.
In machine learning, CUR is effective for dimensionality reduction, feature extraction, and data representation, which can enhance the efficiency of machine learning algorithms and reduce computational overhead.
In collaborative filtering, CUR approximates large user-item interaction matrices in recommendation systems, improving scalability and efficiency.
CUR decomposition can also be extended to higher-dimensional arrays (tensors) for applications in multi-linear algebra and data analysis, enabling the processing of complex datasets \citep{kishore2017literature}.

An illustration of CUR decomposition is shown in Figure~\ref{fig:skeleton}, where  \textcolor{mydarkyellow}{yellow} vectors denote the linearly independent columns of $\bA$, and \textcolor{mydarkgreen}{green} vectors denote the linearly independent rows of $\bA$. Specifically, given  index vectors $\sI$ and $\sJ$, both of size $r$, containing the indices of rows and columns selected from $\bA$ to form $\bR$ and $\bC$, respectively, the submatrix $\bU$ can be expressed as $\bU=\bA[\sI,\sJ]$ using  Matlab-style notation.

\begin{figure}[h]
\centering
\includegraphics[width=0.7\textwidth]{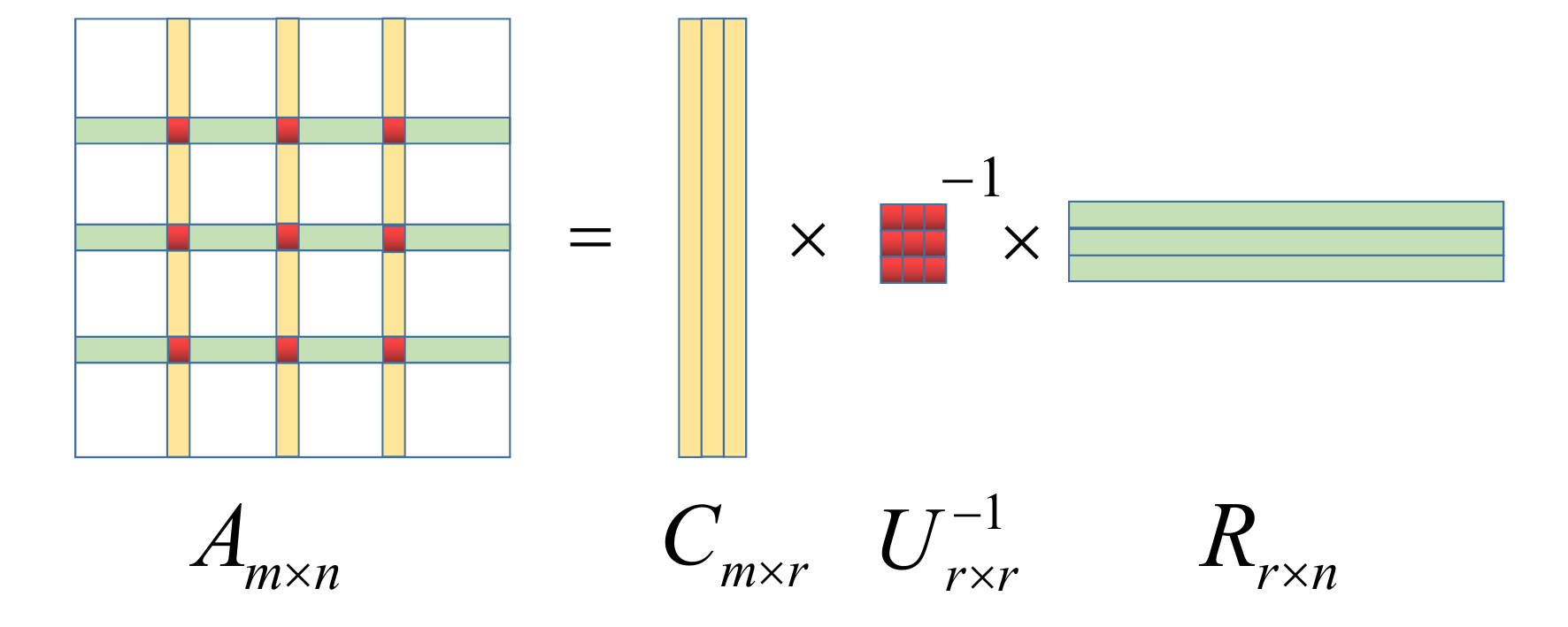}
\caption{Demonstration of the skeleton (CUR) decomposition of a matrix.}
\label{fig:skeleton}
\end{figure}

\index{Row space}
\index{Nonsingular matrix}
\section{Existence of the Skeleton Decomposition}
In Theorem~\ref{theorem:equal-dimension-rank}, we established that the row rank and  column rank of a matrix are equal. In other words, the dimension of the column space is equal to the dimension of the row space. This property is essential for the existence of the skeleton decomposition. 
We  now prove the existence of the skeleton decomposition. The proof is straightforward and relies on fundamental concepts from linear algebra.
\begin{proof}[of Theorem~\ref{theorem:skeleton-decomposition}]
The proof is based on demonstrating the existence of a nonsingular matrix $\bU$, which is central to the skeleton decomposition.

\paragraph{Existence of such a nonsingular matrix $\bU$.} Since the matrix $\bA$ is of rank $r$, we can select $r$ linearly independent columns from $\bA$. 
Let these columns be $\ba_{i1}, \ba_{i2}, \ldots, \ba_{ir}$ and form the $m\times r$ matrix $\bN=[\ba_{i1}, \ba_{i2}, \ldots, \ba_{ir}] \in \real^{m\times r}$. 
The column space of $\bN$ has  dimension $r$, which implies, by Theorem~\ref{theorem:equal-dimension-rank}, that its row space  also has dimension  $r$.
From the rows of $\bN$, we select $r$ linearly independent rows $\bn_{j1}^\top,\bn_{j2}^\top, \ldots, \bn_{jr}^\top $ to construct an $r\times r$ matrix $\bU = [\bn_{j1}^\top; \bn_{j2}^\top; \ldots; \bn_{jr}^\top]\in \real^{r\times r}$. 
Applying Theorem~\ref{theorem:equal-dimension-rank}  again,  the column space of $\bU$  also has  dimension  $r$, meaning $\bU$ has  $r$ linearly independent columns. Thus, $\bU$ is such a nonsingular matrix of size $r\times r$.

\paragraph{Main proof.}
Upon identifying a nonsingular $r\times r$ matrix $\bU$ within $\bA$, we proceed to establish the skeleton decomposition.
Let $\bU=\bA[\sI,\sJ]$, where $\sI$ and $\sJ$ are index vectors of size $r$ representing the selected rows and columns. Since $\bU$ is a nonsingular matrix, its columns  are linearly independent. Thus, the columns of the matrix $\bC$, formed by selecting the same  $r$ columns from $\bA$ are also linearly independent: $\bC=\bA[:,\sJ]$. Here, the matrix $\bC$ is equivalent to the previously constructed $\bN$. 

Because the rank of  $\bA$ is $r$, any column $\ba_i$ of $\bA$ can be expressed as a linear combination of the columns of $\bC$. Specifically, there exists a vector $\bx$ such that $\ba_i = \bC \bx$, for all $ i\in \{1, 2, \ldots, n\}$. Let $r$ rows (entries) of $\ba_i\in\real^n$ corresponding to the row entries of $\bU$ be $\br_i \in \real^r$ for all $i\in \{1, 2, \ldots, n\}$ (i.e., $\br_i$ contains $r$ entries of $\ba_i$). That is, select the $r$ entries of $\ba_i$'s corresponding to the entries of $\bU$ as follows:
$$
\bA = [\ba_1,\ba_2, \ldots, \ba_n]\in \real^{m\times n} \qquad \longrightarrow \qquad
\bA[\sI,:]=[\br_1, \br_2, \ldots, \br_n] \in \real^{r\times n}.
$$
Since $\ba_i = \bC\bx$, $\bU$ is a submatrix inside $\bC$, and $\br_i$ is a subvector inside $\ba_i$, we have $\br_i = \bU \bx$, which states that $\bx = \bU^{-1} \br_i$. Thus, for every $i\in\{1,2,\ldots,n\}$, we have $\ba_i = \bC \bU^{-1} \br_i$. Combining the $n$ columns of such $\br_i$ into $\bR=[\br_1, \br_2, \ldots, \br_n]$, we obtain
$$
\bA = [\ba_1, \ba_2, \ldots, \ba_n] = \bC \bU^{-1} \bR,
$$
from which the result follows.

In summary, the skeleton decomposition is constructed by identifying $r$ linearly independent columns of $\bA$ and placing them into $\bC\in \real^{m\times r}$. Subsequently, we extract an $r\times r$ nonsingular submatrix $\bU$ from $\bC$. The $r$ rows of $\bA$, corresponding to the entries of $\bU$, contribute to reconstruct the columns of $\bA$. This process is visually illustrated in Figure~\ref{fig:skeleton}.
\end{proof}

In the special case where $\bA$ is square and invertible,  the skeleton decomposition simplifies to $\bA=\bC\bU^{-1} \bR$, with $\bC=\bR=\bU=\bA$. Thus, the decomposition essentially reduces to $\bA = \bA\bA^{-1}\bA$.

\paragraph{CR decomposition vs skeleton decomposition.} The CR decomposition and skeleton decompositions share a similar structure and even comparable notation, with $\bA=\bC\bR$ for the CR decomposition and $\bA=\bC\bU^{-1}\bR$ for the skeleton decomposition. 

In both  decompositions, we have the flexibility to select  the \textbf{first} $r$ linearly independent columns to form the matrix $\bC$ (denoted the same way in both decompositions).
Consequently, the $\bC$ matrices in the CR  and skeleton decompositions are identical when the same columns are selected. However, the distinction lies in the interpretation of $\bR$: in the CR decomposition, it represents the reduced row echelon form without zero rows, while in the skeleton decomposition, it corresponds to $r$ linearly independent rows selected directly from $\bA$. 
This difference reflects a fundamental variation in how the two methods conceptualize $\bR$.


To summarize, the construction of the skeleton decomposition involves selecting $r$ linearly independent columns from $\bA$ to form the matrix $\bC\in\real^{m\times r}$. Subsequently,  we extract an $r\times r$ nonsingular submatrix $\bU$ from $\bC$. Finally, we identify the $r$ rows of $\bA$ that correspond to the entries of $\bU$ to form the row matrix $\bR\in\real^{r\times n}$.
This naturally leads to the following question: If matrix $\bA$ has rank $r$, matrix $\bC$ contains $r$ linearly independent columns of $\bA$, and matrix $\bR$ contains $r$ linearly independent rows of $\bA$, is the $r\times r$ ``intersection" of $\bC$ and $\bR$ necessarily invertible?~\footnote{We express our gratitude to Gilbert Strang for raising this  question.}

\begin{corollary}[Nonsingular intersection]\label{corollary:invertible-intersection}
If matrix $\bA \in \real^{m\times n}$ has rank $r$, matrix $\bC$ contains $r$ linearly independent columns of $\bA$, and matrix $\bR$ contains $r$ linearly independent rows of $\bA$, then the $r\times r$ ``intersection" matrix $\bU$ of $\bC$ and $\bR$ is invertible.
\end{corollary}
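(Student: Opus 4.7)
The plan is to leverage the equality of row rank and column rank (Theorem~\ref{lemma:equal-dimension-rank}), which was already used to establish the existence of skeleton decomposition. Since $\bA$ has rank $r$ and $\bC$ contains $r$ linearly independent columns of $\bA$, the column space of $\bC$ coincides with $\cspace(\bA)$. Therefore every column of $\bA$ is a linear combination of the columns of $\bC$, which I would express as a factorization $\bA = \bC \bX$ for some coefficient matrix $\bX \in \real^{r \times n}$. Symmetrically, since $\bR$ contains $r$ linearly independent rows and $\cspace(\bA^\top)$ has dimension $r$, I obtain $\bA = \bY \bR$ for some $\bY \in \real^{m \times r}$.

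Next, let $I$ and $J$ be the index sets such that $\bR = \bA[I,:]$ and $\bC = \bA[:,J]$, so that $\bU = \bA[I,J]$ is the intersection. The key step is to restrict the first factorization $\bA = \bC\bX$ to the rows indexed by $I$: this yields
\begin{equation*}
\bR = \bA[I,:] = \bC[I,:] \, \bX = \bU \bX.
\end{equation*}
Because $\bR$ has rank $r$ by hypothesis, and $\text{rank}(\bU\bX) \le \min\{\text{rank}(\bU),\text{rank}(\bX)\}$, we obtain $r = \text{rank}(\bR) \le \text{rank}(\bU) \le r$, forcing $\text{rank}(\bU) = r$. Since $\bU$ is $r \times r$, it is nonsingular, which completes the proof.

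I expect no serious obstacle here; the argument is essentially bookkeeping once one recognizes that both $\bC$ and $\bR$ inherit full column/row spaces of $\bA$. The only subtle point worth flagging is that the symmetric chain $\bA = \bY \bR$ is not strictly needed for the conclusion, but including it clarifies why the setup is symmetric in $\bC$ and $\bR$, and alternately one could restrict $\bA = \bY\bR$ to the columns indexed by $J$ to get $\bC = \bY \bU$ and argue identically that $\bU$ must have rank $r$. This dual derivation is a useful sanity check that the chosen $I, J$ are consistent and that $\bU$ is forced to be invertible regardless of which of the two factorizations one starts from.
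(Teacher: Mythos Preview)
Your proof is correct and follows essentially the same approach as the paper: both set up the factorization $\bA = \bC\bX$ (the paper does this column-by-column as $\ba_i = \bC\bx_i$), restrict to the rows indexed by $I$ to obtain $\bR = \bU\bX$, and then use the fact that $\bR$ has rank $r$ together with a rank bound to force $\bU$ to have rank $r$. Your matrix-level presentation is slightly more streamlined than the paper's column-by-column version, but the underlying idea is identical.
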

\begin{proof}[of Corollary~\ref{corollary:invertible-intersection}]
Let $\sI$ and $\sJ$ be the indices of the rows and columns selected from $\bA$ to form $\bR$ and $\bC$, respectively. Then, $\bR$ can be denoted as $\bR=\bA[\sI, :]$, $\bC$ can be represented as $\bC = \bA[:,\sJ]$, and $\bU$ can be denoted as $\bU=\bA[\sI,\sJ]$.

Since $\bC$ contains $r$ linearly independent columns of $\bA$, any column $\ba_i$ of $\bA$ can be represented as $\ba_i = \bC\bx_i = \bA[:,\sJ]\bx_i$ for all $i \in \{1,2,\ldots, n\}$. This implies that the $r$ entries of $\ba_i$ corresponding to the  indices in $\sI$ can be represented by the columns of $\bU$ such that $\ba_i[\sI] = \bU\bx_i \in \real^{r}$ for all $i \in \{1,2,\ldots, n\}$, i.e.,
$$
\ba_i = \bC\bx_i = \bA[:,\sJ]\bx_i \in \real^{m} \qquad \longrightarrow  \qquad
\ba_i[\sI] =\bA[\sI,\sJ]\bx_i= \bU\bx_i \in \real^{r}.
$$ 
Since $\bR$ contains $r$ linearly independent rows of $\bA$, the row rank and column rank of $\bR$ are equal to $r$. Combining the facts above, the $r$ columns of $\bR$ corresponding to the indices in $\sJ$ (i.e., the $r$ columns of $\bU$) are linearly independent. 

Finally, by Theorem~\ref{theorem:equal-dimension-rank}, the row space of $\bU$ also has dimension $r$. This implies that $\bU$ has $r$ linearly independent rows, making it invertible. 
\end{proof}

\section{Interpolative Decomposition (ID)}

A factorization closely related to the skeleton decomposition is the \textit{interpolative decomposition (ID) framework}. We begin by discussing the \textit{column interpolative decomposition}, which we will refer to simply as interpolative decomposition or ID when the context is clear.

The column interpolative decomposition (ID) factorizes a matrix into the product of two matrices: one consisting of selected columns from the original matrix, and the other containing a subset of columns that includes an identity matrix and entries whose magnitudes do not exceed 1. Formally, the details of the column ID are given in the following theorem.

\index{Decomposition: ID}
\index{Linearly independent}
\begin{theoremHigh}[Column interpolative decomposition]\label{theorem:interpolative-decomposition}
Any rank-$r$ matrix $\bA \in \real^{m \times n}$ can be decomposed as 
$$
\underset{m \times n}{\bA} = \underset{m\times r}{\bC} \gapthree  \underset{r\times n}{\bW},
$$
where $\bC\in \real^{m\times r}$ contains $r$ linearly independent columns of $\bA$, and $\bW\in \real^{r\times n}$ is the matrix used to reconstruct $\bA$. The factor $\bW$ contains an $r\times r$ identity submatrix (under a mild column permutation) and satisfies:
$$
\max |w_{ij}|\leq 1, \,\, \forall \, i\in \{1,2,\ldots,r\}, j\in \{1,2,\ldots,n\}.
$$

The storage requirements for this decomposition are reduced (or potentially increased) from $mn$ floating-point numbers to $mr$ and  $(n-r)r$ floating-point numbers for storing $\bC$ and $\bW$, respectively. 
Additionally, $r$ integers are needed to track  the position of each column of $\bC$ within $\bA$.
\end{theoremHigh}

\begin{figure}[h]
	\centering
	\includegraphics[width=0.7\textwidth]{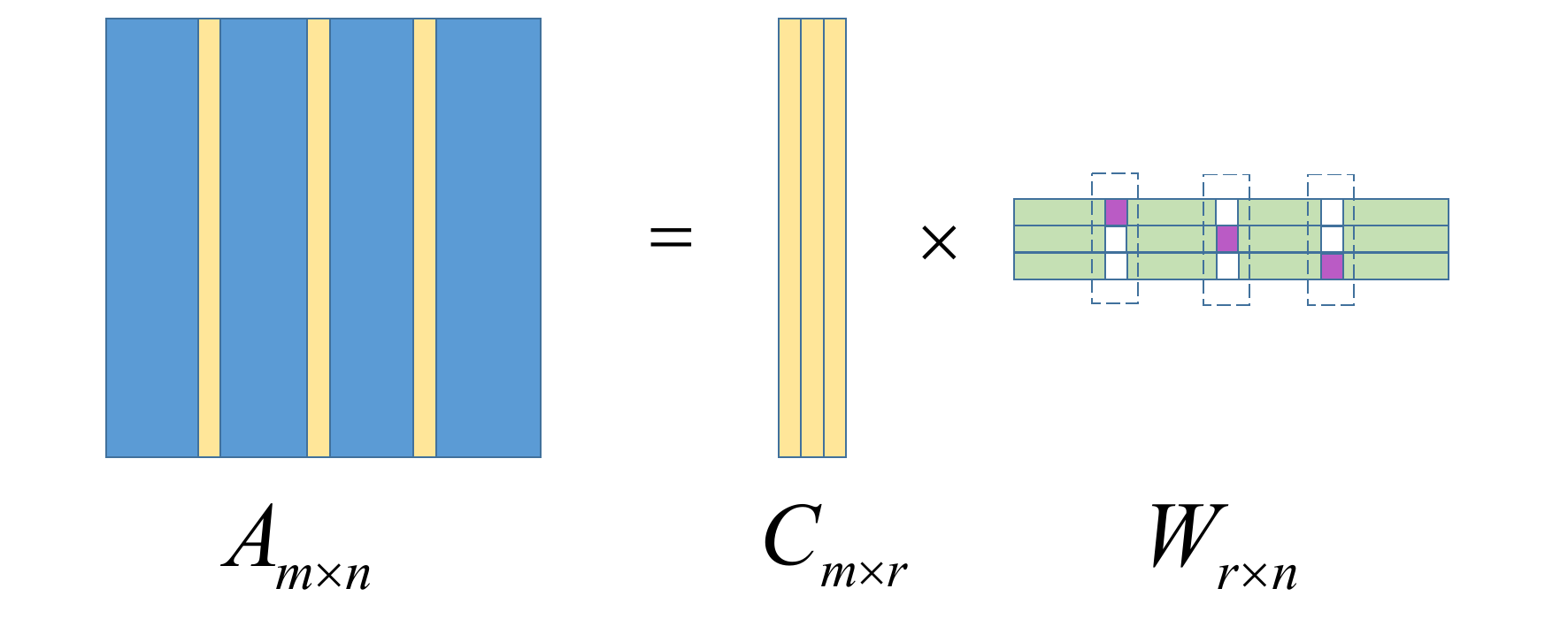}
	\caption{Demonstration of the column ID of a matrix, where the \textcolor{mydarkyellow}{yellow} vectors denote some linearly independent columns of $\bA$, white entries denote zero, and \textcolor{mydarkpurple}{purple} entries denote one.}
	\label{fig:column-id}
\end{figure}

Figure~\ref{fig:column-id} demonstrates a column ID of a matrix, where the \textcolor{mydarkyellow}{yellow} vectors represent some linearly independent columns of $\bA$, and the \textcolor{mydarkpurple}{purple} entries in $\bW$ form an $r\times r$ identity submatrix. The positions of these \textcolor{mydarkpurple}{purple} entries inside $\bW$  correspond to the positions of the \textcolor{mydarkyellow}{yellow} vectors in $\bA$. 
The column ID closely resembles the CR decomposition (Theorem~\ref{theorem:cr-decomposition}): both methods select $r$ linearly independent columns into the first factor, and the second factor contains an  $r\times r$ identity submatrix. 
However, in the CR decomposition, the first $r$ linearly independent columns are specifically chosen, and the identity submatrix corresponds to the pivot columns  (Definition~\ref{definition:pivot}). The second factor in the CR decomposition is derived from the reduced row echelon form (Lemma~\ref{lemma:r-in-cr-decomposition}).
As a result, column ID can be used in similar contexts as the CR decomposition.  
For example, it is useful in proving that the rank of a symmetric idempotent matrix equals its trace (Proposition~\ref{proposition:rank-of-symmetric-idempotent2_tmp}), and in demonstrating the fundamental linear algebra theorem that the column rank equals the row rank of a matrix (Theorem~\ref{theorem:equal-dimension-rank}).
Moreover,  column ID also serves as a special case of the rank decomposition (Theorem~\ref{theorem:rank-decomposition}) and is apparently not unique. The relationships among different column IDs are described in  Corollary~\ref{corollary:connection-rank-decom}.

\index{Rank decomposition}
\index{Matlab-style notation}
\paragraph{Notations for subsequent sections.} Using Matlab-style notation, let  $\sJ_s$ be an index vector of size $r$,  containing the indices of columns selected from $\bA$ to form the matrix $\bC$. Then, $\bC$ can be expressed as $\bC=\bA[:,\sJ_s]$.
The matrix $\bC$ contains the ``skeleton" columns of $\bA$, hence the subscript $s$ in $\sJ_s$. From the ``skeleton" index vector $\sJ_s$, the $r\times r$ identity submatrix inside $\bW$ can be recovered as
$$
\bW[:,\sJ_s] = \bI_r \in \real^{r\times r}.
$$ 
Let $\sJ_r$ denote the indices of the remaining columns of $\bA$, where 
$$
\sJ_s\cap \sJ_r=\varnothing \qquad \text{and}\qquad \sJ_s\cup \sJ_r = \{1,2,\ldots, n\}.
$$
The remaining $n-r$ columns of $\bW$ consist of an $r\times (n-r)$ \textit{expansion matrix}:
$$
\bE = \bW[:,\sJ_r] \in \real^{r\times (n-r)},
$$
where the entries of $\bE$ are called \textit{expansion coefficients}. 
Finally, let $\bP\in \real^{n\times n}$ be a (column) permutation matrix (Definition~\ref{definition:permutation-matrix}) defined as $\bP=\bI_n[:,(\sJ_s, \sJ_r)]$, such that
$$
\bA\bP = \bA[:,(\sJ_s, \sJ_r)] = \left[\bC, \bA[:,\sJ_r]\right],
$$
and 
\begin{equation}\label{equation:interpolatibve-w-ep}
\bW\bP = \bW[:,(\sJ_s, \sJ_r)] =\left[\bI_r, \bE \right] \leadto \bW = \left[\bI_r, \bE \right] \bP^\top.
\end{equation}

\section{Existence of the Column Interpolative Decomposition}\label{section:proof-column-id}
\index{Cramer's rule}
\paragraph{Cramer's rule.}
The proof of the existence of the column ID relies on  \textit{Cramer's rule}, which we will briefly review here; see Problem~\ref{prob:cramer_adj_1}$\sim$\ref{prob:cramer_adj_4} for more details. Consider a system of $n$ linear equations in $n$ unknowns, expressed in matrix form as:
$$
\bM \bx = \bl,
$$
where $\bM\in \real^{n\times n}$ is nonsingular, and $\bx,\bl \in \real^n$. Cramer's rule states that this system has a unique solution, where each unknown is given by:
$$
x_i = \frac{\det(\bM_i)}{\det(\bM)}, \qquad \text{for all}\; i\in \{1,2,\ldots, n\},
$$
where $\bM_i$ is the matrix formed by replacing the $i$-th column of $\bM$ with the column vector $\bl$. 
In a more general setting, consider the matrix equation:
$$
\bM\bX = \bL,
$$
where $\bM\in \real^{n\times n}$ is nonsingular, and $\bX,\bL\in \real^{n\times m}$. Let $\sI=[i_1, i_2, \ldots, i_k]$ and $\sJ=[j_1,j_2,\ldots, j_k]$ be two index vectors, where $1\leq i_1\leq i_2\leq \ldots\leq i_k\leq n$ and $1\leq j_1\leq j_2\leq \ldots\leq j_k\leq n$. Then, $\bX[\sI,\sJ]$ denotes a $k\times k$ submatrix of $\bX$. Let further $\bM_{\bL}(\sI,\sJ)$ be the $n\times n$ matrix formed by replacing the $(i_s)$-th column of $\bM$ with the $(j_s)$-th column of $\bL$ for all $s\in \{1,2,\ldots, k\}$. Then, we have:
$$
\det(\bX[\sI,\sJ]) = \frac{\det\left(\bM_{\bL}(\sI,\sJ)\right)}{\det(\bM)}.
$$ 
When $\sI$ and $\sJ$ are of size 1, this simplifies to:
\begin{equation}\label{equation:cramer-rule-general}
	x_{ij} = \frac{\det\left(\bM_{\bL}(i,j)\right)}{\det(\bM)}.
\end{equation}

With this background, we are now ready to prove the existence of the column ID.
\begin{proof}[of Theorem~\ref{theorem:interpolative-decomposition}]
We  mentioned that the proof relies on  Cramer's rule. To complete the proof, we will show that the entries of $\bW$ can be expressed using the formula in Equation~\eqref{equation:cramer-rule-general}, where the absolute value of the numerator is less than or equal to the denominator. 
Note that the denominator in Equation~\eqref{equation:cramer-rule-general} corresponds to the determinant of a square matrix. Here is the key idea.

\paragraph{Step 1: column ID for a full row rank matrix.}
First, consider a full row rank matrix $\bA$ (which implies $r=m$, $m\leq n$, and $\bA\in \real^{r\times n}$ such that the matrix $\bC\in \real^{r\times r}$ is a square matrix in the column ID $\bA=\bC\bW$ that we want to obtain). Determine the ``skeleton" index vector $\sJ_s$ by 
\begin{equation}\label{equation:interpolative-choose-js}
\boxed{	\sJ_s = \mathop{\arg\max}_{\sJ} \left\{\abs{\det(\bA[:,\sJ])}: \text{$\sJ$ is a subset of $\{1,2,\ldots, n\}$ with size $r=m$} \right\},}
\end{equation}
i.e., $\sJ_s$ is the index vector that is determined by maximizing the magnitude of the determinant of $\bA[:,\sJ_s]$. 
From earlier discussion, there exists a (column) permutation matrix $\bP$ such that:
$$
\bA\bP = 
\begin{bmatrix}
\bA[:,\sJ_s]&\bA[:,\sJ_r]
\end{bmatrix}.
$$
Since $\bC=\bA[:,\sJ_s]$ has full column rank $r=m$, it is then nonsingular. Rewriting  $\bA$:
$$
\begin{aligned}
\bA
&=\begin{bmatrix}
\bA[:,\sJ_s] &\bA[:,\sJ_r]
\end{bmatrix}\bP^\top
= 
\bA[:,\sJ_s]
\bigg[
\bI_r \gap \bA[:,\sJ_s]^{-1}\bA[:,\sJ_r]
\bigg]
\bP^\top\\
&= \bC 
\underbrace{\begin{bmatrix}
\bI_r & \bC^{-1}\bA[:,\sJ_r]
\end{bmatrix}
\bP^\top}_{\bW},
\end{aligned}
$$
where the matrix $\bW$ is given by 
$
\begin{bmatrix}
\bI_r & \bC^{-1}\bA[:,\sJ_r]
\end{bmatrix}\bP^\top
=
\begin{bmatrix}
\bI_r & \bE
\end{bmatrix}\bP^\top
$, from Equation~\eqref{equation:interpolatibve-w-ep}. To prove the claim that the magnitude of $\bW$ is at most 1,  it suffices to show that each entry of $\bE=\bC^{-1}\bA[:,\sJ_r]\in \real^{r\times (n-r)}$ satisfies $\abs{e_{kl}} \leq 1$ for all $k\in\{1,2,\ldots,r\}, l\in\{1,2,\ldots,n-r\}$.

Define the index vector $[j_1,j_2,\ldots, j_n]$ as a permutation of $[1,2,\ldots, n]$ such that 
$$
[j_1,j_2,\ldots, j_n] = [1,2,\ldots, n] \bP = [\sJ_s, \sJ_r].
$$
Thus, it follows from $\bC\bE=\bA[:,\sJ_r]$ that 
$$
\begin{aligned}
\underbrace{	[\ba_{j_1}, \ba_{j_2}, \ldots, \ba_{j_r}]}_{=\bC=\bA[:,\sJ_s]} \bE &=
\underbrace{[\ba_{j_{r+1}}, \ba_{j_{r+2}}, \ldots, \ba_{j_n}]}_{=\bA[:,\sJ_r]= \bB},
\end{aligned}
$$ 
where $\ba_i$ denotes the $i$-th column of $\bA$, and we let $\bB=\bA[:,\sJ_r]$.
Therefore, by Cramer's rule in Equation~\eqref{equation:cramer-rule-general}, we have 
\begin{equation}\label{equation:column-id-expansionmatrix}
e_{kl} = 
\frac{\det\left(\bC_{\bB}(k,l)\right)}
{\det\left(\bC\right)},
\end{equation}
where $e_{kl}$ is the entry ($k,l$) of $\bE$, and $\bC_{\bB}(k,l)$ is the $r\times r$ matrix formed by replacing the $k$-th column of $\bC$ with the $l$-th column of $\bB$. For example, 
$$
\begin{aligned}
e_{11} &= 
\frac{\det\left([\textcolor{mylightbluetext}{\ba_{j_{r+1}}}, \ba_{j_2}, \ldots, \ba_{j_r}]\right)}
{\det\left([\ba_{j_1}, \ba_{j_2}, \ldots, \ba_{j_r}]\right)},
\qquad 
&e_{12} &=
\frac{\det\left([\textcolor{mylightbluetext}{\ba_{j_{r+2}}}, \ba_{j_2},\ldots, \ba_{j_r}]\right)}
{\det\left([\ba_{j_1}, \ba_{j_2}, \ldots, \ba_{j_r}]\right)},\\
e_{21} &= 
\frac{\det\left([\ba_{j_1},\textcolor{mylightbluetext}{\ba_{j_{r+1}}}, \ldots, \ba_{j_r}]\right)}
{\det\left([\ba_{j_1}, \ba_{j_2}, \ldots, \ba_{j_r}]\right)},
\qquad 
&e_{22} &= 
\frac{\det\left([\ba_{j_1},\textcolor{mylightbluetext}{\ba_{j_{r+2}}}, \ldots, \ba_{j_r}]\right)}
{\det\left([\ba_{j_1}, \ba_{j_2}, \ldots, \ba_{j_r}]\right)}.
\end{aligned}
$$
Since $\sJ_s$  was chosen to maximize $\det(\bC)$ in Equation~\eqref{equation:interpolative-choose-js}, it follows that 
$$
\abs{e_{kl}}\leq 1, \qquad \text{for all}\gap k\in \{1,2,\ldots, r\}, l\in \{1,2,\ldots, n-r\}.
$$

\paragraph{Step 2: apply to general matrices.}
To summarize, for any matrix $\bF\in \real^{r\times n}$ with \textbf{full} rank $r\leq n$, the column ID exists such that $\bF=\bC_0\bW$, where the entries of  $\bW$ are bounded by 1 in absolute value.

For a general matrix $\bA\in \real^{m\times n}$ with rank $r\leq \{m,n\}$, the matrix admits a rank decomposition (Theorem~\ref{theorem:rank-decomposition}) of the form:
$$
\underset{m\times n}{\bA} = \underset{m\times r}{\bD}\gapthree \underset{r\times n}{\bF},
$$
where $\bD$ and $\bF$ have full column rank $r$ and full row rank $r$, respectively. 
Applying the column ID to $\bF=\bC_0\bW$, where $\bC_0=\bF[:,\sJ_s]$ contains $r$ linearly independent columns of $\bF$. We notice from $\bA=\bD\bF$ such that 
$$
\bA[:,\sJ_s]=\bD\bF[:,\sJ_s],
$$
i.e., the columns indexed by $\sJ_s$ of $(\bD\bF)$ can be obtained by $\bD\bF[:,\sJ_s]$, which in turn are the columns of $\bA$ indexed by $\sJ_s$. This makes
$$
\underbrace{\bA[:,\sJ_s]}_{\bC}= \underbrace{\bD\bF[:,\sJ_s]}_{\bD\bC_0},
$$
and 
$$
\bA = \bD\bF =\bD\bC_0\bW = \underbrace{\bD\bF[:,\sJ_s]}_{\bC}\bW=\bC\bW.
$$
This completes the proof.
\end{proof}
\index{CPQR}

The above proof provides  an intuitive way to compute the ``optimal" column ID of a matrix $\bA$. However, any algorithm guaranteed to achieve such an optimally conditioned factorization necessarily involves combinatorial complexity due to the need to search for the best column subset $\sJ_s$ that maximizes $\abs{\det(\bC)}$ \citep{martinsson2019randomized, lu2022bayesian, lu2022comparative}. To address this, randomized algorithms, along with approximations via column-pivoted QR (Section~\ref{section:cpqr}) and rank-revealing QR (Section~\ref{section:rank-r-qr}), are commonly employed to obtain a relatively well-conditioned column ID decomposition.
In these approaches, the matrix $\bW$ is designed to have a small norm rather than strictly ensure that all its entries are within the range $[-1,1]$. On the other hand,  Bayesian approaches can strictly constrain the entries of $\bW$ to lie within $[-1,1]$ \citep{lu2022bayesian, lu2022comparative}. However, these methods involve more advanced techniques and is beyond the scope of this discussion; and we will not elaborate on them here.

\begin{example}[Compute the column ID]\label{example:column-id-a}
Let
$$
\bA=
\begin{bmatrix}
56 & 41 & 30\\
32 & 23 & 18\\
80 & 59 & 42
\end{bmatrix}
$$
be a rank-2 matrix. We now demonstrate the process of computing a column ID of  $\bA$.
We begin by finding a rank decomposition of $\bA$:
$$
\bA = \bD\bF=
\begin{bmatrix}
1 & 0 \\
0 & 1 \\
2 &-1
\end{bmatrix}
\begin{bmatrix}
56 & 41 & 30 \\
32 & 23 & 18 
\end{bmatrix}.
$$
Since $\rank(\bA)=2$, the index vector $\sJ_s$ can take one of the following values: $[1,2], [0,2], [0,1]$, where the absolute determinants of $\bF[:,\sJ_s]$ are $48, 48$, and $ 24$, respectively. We proceed with $\sJ_s=[0,2]$, which yields:
$$
\begin{aligned}
\widetildebC &= \bF[:,\sJ_s]=
\begin{bmatrix}
56 & 30 \\
32 & 18 
\end{bmatrix},\qquad 
\bM = \bF[:,\sJ_r]=\begin{bmatrix}
41 \\
23
\end{bmatrix}.
\end{aligned}
$$
Thus, 
$$
\bF\bP = \bF[:(\sJ_s,\sJ_r)] = \bF[:,(0,2,1)]
\quad\implies\quad 
\bP = 
\begin{bmatrix}
1 &  & \\
& &1\\
& 1 & 
\end{bmatrix}.
$$
In this example, $\bE\in \real^{2\times 1}$:
$$
\begin{aligned}
e_{11} &=
\det\left(
\begin{bmatrix}
41 & 30 \\
23 & 18
\end{bmatrix}\right)\bigg/
\det\left(
\begin{bmatrix}
56 & 30 \\
32 & 18
\end{bmatrix}\right)=1;\\
e_{21} &=
\det\left(
\begin{bmatrix}
56 & 41 \\
32 & 23
\end{bmatrix}\right)\bigg/
\det\left(
\begin{bmatrix}
56 & 30 \\
32 & 18
\end{bmatrix}\right)=-\frac{1}{2}.
\end{aligned}
$$
This makes 
$$
\bE = 
\begin{bmatrix}
1\\-\frac{1}{2}
\end{bmatrix}
\quad \implies\quad  
\bW = [\bI_2, \bE]\bP^\top =
\begin{bmatrix}
1 & 1 & 0\\
0 & -\frac{1}{2} & 1
\end{bmatrix}.
$$
The final  selected columns and the resulting decomposition are:
$$
\bC = \bA[:,\sJ_s] = 
\begin{bmatrix}
56 & 30\\
32 & 18\\
80 & 42
\end{bmatrix}
\quad\implies\quad
\bA=\bC\bW =
\begin{bmatrix}
	56 & 30\\
	32 & 18\\
	80 & 42
\end{bmatrix}
\begin{bmatrix}
	1 & 1 & 0\\
	0 & -\frac{1}{2} & 1
\end{bmatrix}.
$$
As expected, the entries of $\bW$ have magnitudes no greater than 1.
\end{example}

To conclude this section, we discuss  the non-uniqueness of the column ID.
\begin{remark}[Non-uniqueness of the column ID]
The column ID is not unique, as illustrated in Example~\ref{example:column-id-a}. Specifically, both $\bF[:,(1,2)]$ and $\bF[:,(0,2)]$ yield   the  maximum absolute determinant. 
Either choice results in a valid column ID for $\bA$. Whilst, we only select one $\sJ_s$ from $[1,2], [0,2]$, and $ [0,1]$. 
Additionally, when selecting the index set $\sJ_s$, any permutation of it is also valid. For example,  $\sJ_s=[0,2]$ and $\sJ_s=[2,0]$ are both acceptable. 
This flexibility in selecting the column indices introduces non-uniqueness into the column ID.
\end{remark}

\section{Row ID and Two-Sided ID}
The decomposition described above is called the column interpolative decomposition, a name that is not arbitrary---it is closely related to other types of interpolative decompositions, as explained below:
\begin{theoremHigh}[The whole interpolative decomposition]\label{theorem:interpolative-decomposition-row}
Any rank-$r$ matrix $\bA \in \real^{m \times n}$ can be decomposed as 
$$
\begin{aligned}
\text{Column ID: }&\gap \underset{m \times n}{\bA} &=& \boxed{\underset{m\times r}{\bC}} \gap  \underset{r\times n}{\bW} ; \\
\text{Row ID: } &\gap &=&\underset{m\times r}{\bZ} \gap  \boxed{\underset{r\times n}{\bR}}; \\
\text{Two-Sided ID: } &\gap &=&\underset{m\times r}{\bZ} \gap \boxed{\underset{r\times r}{\bU}}  \gap  \underset{r\times n}{\bW}, \\
\end{aligned}
$$
where
\begin{itemize}
\item $\bC=\bA[:,\sJ_s]\in \real^{m\times r}$ contains  $r$ linearly independent columns of $\bA$, $\bW\in \real^{r\times n}$ is the matrix used to reconstruct $\bA$, which contains an $r\times r$ identity submatrix (under a mild column permutation): $\bW[:,\sJ_s]=\bI_r$;
\item $\bR=\bA[\sI_s,:]\in \real^{r\times n}$ contains  $r$ linearly independent rows of $\bR$, $\bZ\in \real^{m\times r}$ is the matrix used to reconstruct $\bA$, which contains an $r\times r$ identity submatrix (under a mild row permutation): $\bZ[\sI_s,:]=\bI_r$;
\item The entries in $\bW$ and $\bZ$ have values no larger than 1 in magnitude: $\max |w_{ij}|\leq 1$ and $\max |z_{ij}|\leq 1$;
\item $\bU=\bA[\sI_s,\sJ_s] \in \real^{r\times r}$ is the nonsingular submatrix at the intersection of $\bC$ and $\bR$;
\item The three matrices $\bC,\bR,$ and $\bU$ in the $\boxed{\text{boxed}}$ representations share the same notation and interpretation as in the skeleton decomposition (Theorem~\ref{theorem:skeleton-decomposition}). Specifically, $\bA=\bC\bU^{-1}\bR$ represents the skeleton decomposition.
\end{itemize}
\end{theoremHigh}
The proof of the row ID follows similarly from the column ID  by transposing. Assume the column ID of $\bA^\top$ is given as $\bA^\top=\bC_0\bW_0$, where $\bC_0$ contains $r$ linearly independent columns of $\bA^\top$ (i.e., $r$ linearly independent rows of $\bA$). Let $\bR=\bC_0^\top$ and $\bZ=\bW_0^\top$. Then, the row ID is obtained as $\bA=\bZ\bR$. 

From the skeleton decomposition, where $\bU$ is the intersection of $\bC$ and $\bR$, it follows that $\bA=\bC\bU^{-1}\bR$. Using the row ID, we get $\bC\bU^{-1}=\bZ$, which implies $\bC=\bZ\bU$. Substituting into the column ID yields $\bA=\bC\bW=\bZ\bU\bW$, thereby  proving the existence of the two-sided ID.

\index{Data storage}
\paragraph{Data storage.} For each ID, the storage requirements are summarized as follows:
\begin{itemize}
\item \textit{Column ID.} It requires $mr$ and $(n-r)r$ floating-point numbers to store $\bC$ and $\bW$, respectively, and $r$ integers to store the indices of the selected columns in $\bA$;
\item \textit{Row ID.} It requires $nr$ and $(m-r)r$ floating-point numbers to store $\bR$ and $\bZ$, respectively, and $r$ integers to store the indices of the selected rows in $\bA$;
\item \textit{Two-Sided ID.} It requires $(m-r)r$, $(n-r)r$, and $r^2$ floating-point numbers to store $\bZ,\bW$, and $\bU$, respectively. And an extra $2r$ integers are required to store the indices of the selected rows and columns in $\bA$.
\end{itemize}

\paragraph{Storage reduction for sparse matrices.} 
For sparse matrices, further storage  savings are possible. Consider the column ID: $\bA=\bC\bW$, where $\bC=\bA[:,\sJ_s]$, and a good spanning row index set $\sI_s$  of $\bC$ exists such that:
$$
\bA[\sI_s,:] = \bC[\sI_s,:]\bW.
$$ 
Since $\bC[\sI_s,:] = \bA[\sI_s,\sJ_s]\in \real^{r\times r}$ is nonsingular, we can compute:
$$
\bW = (\bA[\sI_s,\sJ_s])^{-1} \bA[\sI_s,:].
$$
Thus, $\bW$ does not need to be explicitly stored; only $\bA[\sI_s,:]$ and $(\bA[\sI_s,\sJ_s])^{-1}$ are required.
Alternatively, If the inverse of $\bA[\sI_s,\sJ_s]$ is computed dynamically, only $r$ integers for $\sJ_s$ are necessary, as $\bA[\sI_s,\sJ_s]$ can be reconstructed from $\bA[\sI_s,:]$. This approach is particularly efficient for sparse matrices, where the storage of $\bA[\sI_s,:]$ is economical.

\index{Pseudo-inverse}
\index{Low-rank approximation}
\section{Application: Low-Rank Approximation via Pseudoskeleton}\label{section:pseudoskeleton}
We will explore singular value decomposition (SVD) in detail in Section~\ref{section:SVD}. For now, we assume a basic understanding of SVD and demonstrate how it can be used to approximate skeleton decomposition. This section can be skipped during an initial reading.

Given a  matrix $\bA\in\real^{m\times n}$, our goal is to construct a rank-$\gamma$ approximation of $\bA$, where  $\gamma \leq \min(m,n)$, using skeleton decomposition. 
Specifically, we approximate $\bA$  as $\bA\approx \bC\bU^{-1}\bR$, where $\bC$ and $\bR$ are matrices containing $\gamma$ selected columns and rows, respectively, and $\bU$ is the submatrix formed by the intersection of these selected rows and columns. 
More precisely, if $\sI$ and $\sJ$ denote the indices of the selected rows and columns, then   $\bU=\bA[\sI,\sJ]$. Note that $\gamma$ does not necessarily equal the rank $r$ of $\bA$, thus forming a low-rank approximation.

Unlike standard skeleton decomposition, which selects $r$ linearly independent columns from $\bA$, we instead choose $k$ random columns (where $k > r$ or even $k = \min\{m, n\}$) to form $\bC$. The column indices $\sJ$ determine $\bC = \bA[:, \sJ] \in \mathbb{R}^{m \times k}$. 
Simultaneously, $k$ rows of $\bA$ are selected using the indices $\sI$, forming $\bR = \bA[\sI, :]$. 
These rows are chosen such that the intersection matrix $\bU = \bA[\sI, \sJ]$ has maximal volume; that is, $\det(\bU)$ is maximized. While the matrix $\bC$ is selected randomly, the choice of $\bR$ is deterministic. This leads to the decomposition:
$$
\bA = \bC_{m\times k}\bU_{k\times k}^{-1}\bR_{k\times n}.
$$
However, the inverse of  $\bU_{k\times k}$ can be numerically unstable due to the random selection of $\bC$.
To address this issue, we perform a full SVD on $\bU_{k\times k}$ (see Section~\ref{section:SVD} for the distinction between reduced and full SVD):
$$
\bU_{k\times k} = \bU_k\bSigma_k\bV_k^\top,
$$
where $\bU_k, \bV_k\in \real^{k\times k}$ are orthogonal matrices, and $\bSigma_k$ is a diagonal matrix containing $k$ singular values $\sigma_1 \geq \sigma_2 \geq \ldots \geq \sigma_k$, some of which may be zero.
We then select $\gamma$ singular values greater than a threshold $\epsilon$ and truncate $\bU_k$, $\bV_k$, and $\bSigma_k$ accordingly, approximating $\bU_{k \times k}$ by a rank-$\gamma$ matrix: $\bU_{k\times k} \approx \bU_\gamma\bSigma_\gamma\bV_\gamma^\top$, where $\bU_\gamma,\bV_\gamma\in \real^{k\times \gamma}$, and $\bSigma_\gamma \in \real^{\gamma\times \gamma}$. Therefore, the pseudo-inverse of $\bU_{k\times k}$ is
$$
\bU^+ = (\bU_\gamma\bSigma_\gamma\bV_\gamma^\top)^{+} =\bV_\gamma \bSigma_\gamma^{-1}\bU_\gamma^\top.
$$
Using this, we can approximate $\bA$ as a rank-$\gamma$ matrix:
\begin{equation}\label{equation:skeleton-low-rank}
	\begin{aligned}
		\bA 
		&\approx \bC\bV_\gamma \bSigma_\gamma^{-1}\bU_\gamma^\top \bR\\
		&=\bC_2 \bR_2,  \qquad (\text{let $\bC_2=\bC\bV_\gamma \bSigma_\gamma^{-1/2}$ and $\bR_2=\bSigma_\gamma^{-1/2}\bU_\gamma^\top \bR$})
	\end{aligned}
\end{equation}
where $\bC_2$ and $\bR_2$ are rank-$\gamma$ matrices. 
For guidance on selecting the threshold $\epsilon$, refer to \citet{goreinov1997pseudo} and \citet{kishore2017literature}. 
In the above approach, $\bC$ is chosen randomly, while $\bR$ is determined. Algorithms such as those in \citet{zhu2011randomised}, which select both $\bC$ and $\bR$ randomly, tend to yield more stable approximations.

Note that  data interpretation methods like CR and skeleton decomposition help identify subsets of columns or rows that retain essential information in the matrix. 
While Equation~\eqref{equation:skeleton-low-rank} may not immediately reflect this capability, we also notice that $\bC_2 = \bC(\bV_\gamma\bSigma_\gamma^{-1/2})$, and the columns of $\bV_\gamma\bSigma_\gamma^{-1/2}$ are mutually orthogonal. Thus, the columns of $\bC_2$ represent those of $\bC$ in a different coordinate system. 
Similarly, the rows of $\bR_2$ correspond to a transformed version of the rows of $\bR$;  see Section~\ref{section:coordinate-transformation}.

\section{Application: Feature Selection}
In quantitative finance, ID and its enhanced variant, \textit{intervened interpolative decomposition (IID)}, are useful tools for feature selection,  particularly in the context of developing and optimizing algorithmic trading strategies  \citep{lu2022bayesian, lu2022feature}.

Since ID can serve as a low-rank matrix approximation technique that reconstructs a full data matrix using only a subset of its columns, this allows it to extract key patterns or information from large volumes of market data while preserving sparsity and nonnegativity. IID builds on this idea by incorporating an importance score for each column (i.e., each feature), prioritizing those features considered most relevant or impactful to the model.

In quantitative investing, it's common to encounter thousands---or even millions---of alpha factors (signals that predict future stock returns). Standard ID algorithms can help identify a small set of representative factors from this large pool. However, these selected factors may not necessarily be the ones with the strongest predictive performance. In contrast, the IID approach selects factors that are both representative---meaning they can reconstruct other alphas with minimal error---and desirable, such as those with high \textit{RankIC scores}, which reflect strong predictive power.

Given the high dimensionality and potential multicollinearity among alpha factors in financial markets, including all available factors in a model can lead to overfitting and computational inefficiency. By applying ID or IID, practitioners can select a smaller but representative subset of alpha factors. This helps reduce overfitting while improving scalability and computational efficiency.

For example, experiments conducted by \citet{lu2022feature} used data from ten assets across various sectors in the Chinese market, including banking, public utilities, and ETFs. Applying the ID or IID method to historical data enabled the identification of significant alpha factors, which were then used to construct effective trading strategies.

In summary, ID and IID provide quantitative analysts with powerful tools to identify the most influential variables when working with complex financial datasets. These techniques contribute to the development of more robust and effective trading strategies. They are especially valuable in handling high-dimensional data, as they reduce computational costs, improve model interpretability, and enhance predictive accuracy.

\begin{problemset}

\item Find the CUR decomposition for the matrix 
$$
\bA = 
\begin{bmatrix}
1 & 3 & 2 \\
3 & 7 & 6 \\
4 & 5 & 8
\end{bmatrix}.
$$

\item Using the CUR decomposition, compute the pseudo-inverse of the matrix $\bA$.

\item \label{problem:nons_sub} Consider a matrix $\bA\in\real^{n\times n}$ in block form $\bA=\scriptsize\begin{bmatrix}
\bK & \bL\\
\bM & \bN
\end{bmatrix}$, where $\bK\in\real^{r\times r}$ and $\bN\in\real^{(n-r)\times (n-r)}$. Show that 
\begin{itemize}
\item If $\bK$ is nonsingular, then $[\bK, \bL]$ and $\scriptsize\begin{bmatrix}
	\bK\\
	\bM 
\end{bmatrix}$ has full ranks.
\item If $\rank(\bA) = \rank([\bK, \bL])=\rank(\scriptsize\begin{bmatrix}
	\bK\\
	\bM 
\end{bmatrix}\normalsize)$, then $\bK$ is nonsingular.
\end{itemize}

\item \label{problem:rank_pcin} Let $\bA\in\real^{n\times n}$ be  a symmetric or skew-symmetric matrix. Show that 
\begin{itemize}
\item $\rank(\bA[\sI, :])=\rank(\bA[:,\sI])$ for any index set $\sI\subseteq \{1,2,\ldots,n\}$.
\item The matrix $\bA$ is rank principal (i.e., it has a nonsingular $r\times r$ principal submatrix; Definition~\ref{definition:principle-minors}).
\end{itemize}

\index{Gram--Schmidt}
\item Discuss algorithms for computing the CUR decomposition using Gaussian elimination or the Gram--Schmidt process. Determine the computational complexity of these algorithms.

\item Investigate how different column and row selection strategies affect the accuracy of the CUR decomposition. Generate a random matrix $\bA\in\real^{100\times 100}$, and implement different selection strategies for columns and rows (e.g., random, based on column norms, leverage scores).
For each strategy, perform CUR decomposition/approximation with $r=10$. Evaluate the approximation error for each strategy. Discuss which strategy provides the best approximation.

\item Compare the CUR decomposition with other matrix decomposition methods like SVD and QR decomposition in terms of accuracy and computational efficiency.
Generate a random matrix $\bA\in\real^{100\times 100}$. Perform CUR decomposition, SVD, and QR decomposition on $\bA$. For each method, compute the approximation error using an appropriate norm. Measure the computational time for each method. Discuss the trade-offs between accuracy and computational efficiency for each method.


\item Suppose that you have an $n\times n$ matrix where  the absolute value of every entry is at most 1. Show that the absolute value of the determinant of this matrix is also at most $(n)^{n/2}$. Additionally, provide an example of a $2 \times 2$ matrix for which the determinant achieves this upper bound.

\item \label{prob:pro_adjug} \textbf{Adjugate.}
Let $\bA\in\real^{n\times n}$. Show that 
\begin{itemize}
\item $\adjugate(c\bA)=c^{n-1}\adjugate(\bA)\implies \adjugate(c\bI)=c^{n-1}\bI$.
\item $\det(\adjugate(\bA)) = (\det(\bA))^{n-1}$.
\item If $\bA$ is nonsingular, $\bA^{-1}=(\det(\bA))^{-1}\adjugate(\bA)$.
\item If $\bA$ is nonsingular, $\adjugate(\bA^{-1})=\bA/\det(\bA)$.
\item If $\bA$ is singular and $\rank(\bA)\leq n-2$, $\adjugate(\bA)=\bzero$.
\item If $\bA$ is singular and $\rank(\bA)= n-1$, $\rank(\adjugate(\bA))=1$.
\item If $\bA,\bB$ are nonsingular, $\adjugate(\bA\bB)=\adjugate(\bA)\adjugate(\bB)$. (This actually can be extended to all $\bA,\bB$ due to continuity.)
\item If $\bA$ is nonsingular, $\adjugate(\adjugate(\bA))=(\det(\bA))^{n-2}\bA$. (This actually can be extended to all $\bA$ due to continuity.)
\item If $\bA$ is nonsingular, $\adjugate(\bA^\top)=\adjugate(\bA)^\top$. (This actually can be extended to all $\bA$ due to continuity.) 
\item The adjugate is the transpose of the gradient of $\det(\bA)$: $\adjugate(\bA)_{ij}=\frac{\partial}{\partial a_{ji}}\det(\bA)$.
\end{itemize}

\index{Adjugate}
\index{Cramer's rule}
\item \label{prob:cramer_adj_1} \textbf{Cramer's rule.}	Consider the linear system $\bM\bx=\bl$, where $\bM\in\real^{n\times n}$, and $\bx,\bl\in\real^{n}$. Let $\bM_{\bl}(i)$ represent the matrix formed by replacing the $i$-th column of $\bM$ with $\bl$. Show that the $i$-th element of $\adjugate(\bM)\bl\in\real^n$ (where $\adjugate(\bM)$ is the adjugate of $\bM$; see Definition~\ref{definition:adjugate}) is 
\begin{equation}\label{equation:cramer_adj_1}
	\big(\adjugate(\bM)\bl\big)_i = \det(\bM_{\bl}(i)), \gap i\in\{1,2,\ldots,n\}.
\end{equation}
Now consider the linear system $\bM\bX=\bL$, where $\bM\in\real^{n\times n}$, and $\bX,\bL\in\real^{n\times m}$. Let $\bM_{\bL}(i,j)$ be the matrix formed by replacing the $i$-th column of $\bM$ with the $j$-th column $\bl_j$ of $\bL$. Show that the $(i,j)$-th element of $\adjugate(\bM)\bL\in\real^{n\times m}$ is 
\begin{equation}\label{equation:cramer_adj_2}
	\big(\adjugate(\bM)\bL\big)_{ij} = \det(\bM_{\bL}(i,j)), \gap i\in\{1,2,\ldots,n\}, j\in\{1,2,\ldots,m\}.
\end{equation}
\textit{Hint: Examine the definitions of determinant and adjugate (Definition~\ref{definition:determinant},~\ref{definition:adjugate}).}

\item \label{prob:cramer_adj_2} \textbf{Cramer's rule.} In the same setting as Problem~\ref{prob:cramer_adj_1}, represent  the vector  $\adjugate(\bM)\bl\in\real^n$ and  the matrix 
$\adjugate(\bM)\bL\in\real^{n\times m}$ as follows:
$$
\adjugate(\bM)\bl = \big[\det(\bM_{\bl}(i))\big]_{i=1}^n\in\real^n,
\gap 
\adjugate(\bM)\bL = \big[\det(\bM_{\bl}(i,j))\big]_{i,j=1}^{n,m}\in\real^{n\times m},
$$
i.e., the $i$-th element of the vector is $\det(\bM_{\bl}(i))$,  the  $(i,j)$-th element of the matrix is $\det(\bM_{\bL}(i,j))$. 
Show that
\begin{equation}\label{equation:cramer_adj_3}
	\begin{aligned}
		\bM \big[\det(\bM_{\bl}(i))\big]_{i=1}^n &= \bM \adjugate(\bM) \bl=\det(\bM)\bl;\\
		\bM \big[\det(\bM_{\bL}(i,j))\big]_{i,j=1}^{n,m} &= \bM \adjugate(\bM) \bL=\det(\bM)\bL.
	\end{aligned}
\end{equation}
\textit{Hint: Examine the definition of adjugate (Definition~\ref{definition:adjugate}) and Problem~\ref{prob:pro_adjug}.}

\item \label{prob:cramer_adj_3} \textbf{Cramer's rule.} In the same setting as Problem~\ref{prob:cramer_adj_1},  assume further that $\bM$ is nonsingular. Show that the $i$-th element of the solution $\bx$ is 
\begin{equation}\label{equation:cramer_adj_res1}
	x_i = \frac{\det(\bM_{\bl}(i))}{\det(\bM)}, \gap \forall i\in\{1,2,\ldots,n\}.
\end{equation}
Similarly, show that the $(i,j)$-th element of the solution $\bX$ is 
\begin{equation}\label{equation:cramer_adj_res2}
	x_{ij} = \frac{\det\left(\bM_{\bL}(i,j)\right)}{\det(\bM)}, \gap \forall i\in\{1,2,\ldots,n\}, j\in\{1,2,\ldots,m\}.
\end{equation}
That is,  Cramer's rule.

\item \label{prob:cramer_adj_4} \textbf{Cramer's rule: the simple way.}
In the same setting as Problem~\ref{prob:cramer_adj_1}, assume further that $\bM$ is nonsingular. We notice that 
\begin{equation}\label{equation:cramer_adj_4}
	\bM \bI_{\bl}(i) = \bM_{\bl}(i),  \gap \forall i\in\{1,2,\ldots,n\},
\end{equation}
where $\bI_{\bl}(i)$ represents the identity matrix with  the $i$-th column replaced by $\bl$. 
Taking the determinant yields:
\begin{equation}\label{equation:cramer_adj_5}
	\det(\bM) \det(\bI_{\bl}(i)) = \det(\bM_{\bl}(i)).
\end{equation}
Show that  $\det(\bI_{\bl}(i))=x_i$, thereby verifying the result in \eqref{equation:cramer_adj_res1}.

\index{Schur complement}
\item \label{prob:cramer_adj_5} \textbf{Determinant of inverses for subsets, Jacobi's equality.} Let $\bM\in\real^{n\times n}$, and let   $\sI,\sJ\subseteq\{1,2,\ldots,n\}$ be two index sets (their complementary sets are $\comple{\sI}$ and $\comple{\sJ}$, respectively). 
Show that 
\begin{equation}
	\det\big(\bM^{-1}[\comple{\sI},\comple{\sJ}]\big) = (-1)^{\gamma} \frac{\det(\bM[\sJ,\sI])}{\det(\bM)},
\end{equation}
where $\gamma=\sum_{i\in \sI} i +\sum_{j\in \sJ}j$ is the sum of indices.
When $\sI=\sJ$, this also indicates
\begin{equation}
	\det\big(\bM^{-1}[\comple{\sI},\comple{\sI}]\big) = \frac{\det(\bM[\sI,\sI])}{\det(\bM)},
\end{equation}
which is known as \textit{Jacobi's equality}.
\textit{Hint: Examine the definitions of determinant and adjugate (Definitions~\ref{definition:determinant} and \ref{definition:adjugate}). Alternatively, you may prove this using the Schur complement.}

\item Determine the column ID for the matrix 
$
\bA =
\scriptsize
\begin{bmatrix}
	1 & 3 & 2 \\
	3 & 7 & 6 \\
	4 & 5 & 8
\end{bmatrix}.
$

\item \label{problem:mtb_dis} \textbf{Matlab-style notation.} Consider a rectangular matrix $\bA\in\real^{m\times n}$ of rank $r$, which admits a rank decomposition $\bA=\bD\bF$. Let  $\sI,\sJ\subseteq \{1,2,\ldots,m\}$ and $\sK,\sL\subseteq\{1,2,\ldots,n\}$ be index sets with cardinality $\abs{\sI}=\abs{\sJ}=\abs{\sK}=\abs{\sL}=r$. Then, we have  $\bA[\sI,\sK]=\bD[\sI,:]\bF[:,\sK]$.
Show that 
\begin{itemize}
	\item $\bA[\sI,\sK]$ is nonsingular if and only if $\rank(\bD[\sI,:])=\rank(\bF[:,\sK])=r$.
	\item $\det(\bA[\sI,\sK]) \det(\bA[\sJ,\sL])=\det(\bA[\sI,\sL])\det(\bA[\sJ,\sK])$.
\end{itemize}

\item Discuss algorithms for computing the column ID of  a matrix using its column-pivoted QR decomposition. 
\end{problemset}

\part{Reduction to Hessenberg, Tridiagonal, and Bidiagonal Form}

\chapter{Hessenberg and Tridiagonal Decomposition}

\section*{Preliminary}
In real applications, we often aim to factor a matrix $\bA$ into two orthogonal matrices such that $\bA =\bQ\bLambda\bQ^\top$, where $\bLambda$ is either diagonal or upper triangular. Common examples include eigenanalysis via  Schur decomposition and principal component analysis (PCA) using spectral decomposition.  This type of factorization can be computed through a sequence of \textit{orthogonal similarity transformations}:
$$
\underbrace{\bQ_k^\top\ldots \bQ_2^\top \bQ_1^\top}_{\bQ^\top} \bA \underbrace{\bQ_1\bQ_2\ldots\bQ_k}_{\bQ},
$$
which converges to $\bLambda$. However, this transformation can be difficult to handle in practice; for instance, when using Householder reflectors.
Following the approach used in QR decomposition with Householder reflectors,  the sequence of orthogonal similarity transformations can be constructed using  Householder reflectors:
$$
\begin{aligned}
	\footnotesize
	\begin{sbmatrix}{\bA}
		\boxtimes & \boxtimes & \boxtimes & \boxtimes& \boxtimes \\
		\boxtimes & \boxtimes & \boxtimes & \boxtimes& \boxtimes \\
		\boxtimes & \boxtimes & \boxtimes & \boxtimes& \boxtimes \\
		\boxtimes & \boxtimes & \boxtimes & \boxtimes & \boxtimes\\
		\boxtimes & \boxtimes & \boxtimes & \boxtimes& \boxtimes
	\end{sbmatrix}
	&\stackrel{\bH_1\times }{\rightarrow}
	\footnotesize
	\begin{sbmatrix}{\bH_1\bA}
		\bm{\boxtimes} & \bm{\boxtimes} & \bm{\boxtimes} & \bm{\boxtimes}& \bm{\boxtimes} \\
		\bm{0} & \bm{\boxtimes} & \bm{\boxtimes} & \bm{\boxtimes}& \bm{\boxtimes} \\
		\bm{0} & \bm{\boxtimes} & \bm{\boxtimes} & \bm{\boxtimes}& \bm{\boxtimes} \\
		\bm{0} & \bm{\boxtimes} & \bm{\boxtimes} & \bm{\boxtimes} & \bm{\boxtimes}\\
		\bm{0} & \bm{\boxtimes} & \bm{\boxtimes} & \bm{\boxtimes}& \bm{\boxtimes}
	\end{sbmatrix}
	\stackrel{\times \bH_1^\top }{\rightarrow}
	\footnotesize
	\begin{sbmatrix}{\bH_1\bA\bH_1^\top}
		\bm{\boxtimes} & \bm{\boxtimes} & \bm{\boxtimes} & \bm{\boxtimes}& \bm{\boxtimes} \\
		\bm{\boxtimes} & \bm{\boxtimes} & \bm{\boxtimes} & \bm{\boxtimes}& \bm{\boxtimes} \\
		\bm{\boxtimes} & \bm{\boxtimes} & \bm{\boxtimes} & \bm{\boxtimes} & \bm{\boxtimes}\\
		\bm{\boxtimes} & \bm{\boxtimes} & \bm{\boxtimes} & \bm{\boxtimes} & \bm{\boxtimes}\\
		\bm{\boxtimes} & \bm{\boxtimes} & \bm{\boxtimes} & \bm{\boxtimes} & \bm{\boxtimes}
	\end{sbmatrix},
\end{aligned}
$$
where the left Householder ($\bH_1\times$) introduces zeros in the first column below the main diagonal (see Section~\ref{section:qr-via-householder}), but  unfortunately, the right Householder ($\times \bH_1^\top$) will undo  the zeros created  by the left Householder.

However, we can simplify the process by splitting the algorithm into two phases. In the first phase, we transform the matrix into a Hessenberg matrix (Definition~\ref{definition:upper-hessenbert}) or a tridiagonal matrix (Definition~\ref{definition:tridiagonal-hessenbert}). Then, in the second phase, we apply an iterative algorithm to convert the results from the first phase into the desired form. This leads to the following algorithmic structure:
$$
\begin{aligned}
	\footnotesize
	\begin{sbmatrix}{\bA}
		\boxtimes & \boxtimes & \boxtimes & \boxtimes& \boxtimes \\
		\boxtimes & \boxtimes & \boxtimes & \boxtimes& \boxtimes \\
		\boxtimes & \boxtimes & \boxtimes & \boxtimes& \boxtimes \\
		\boxtimes & \boxtimes & \boxtimes & \boxtimes & \boxtimes\\
		\boxtimes & \boxtimes & \boxtimes & \boxtimes& \boxtimes
	\end{sbmatrix}
	&\stackrel{\bH_1\times }{\rightarrow}
	\footnotesize
	\begin{sbmatrix}{\bH_1\bA}
		\boxtimes & \boxtimes & \boxtimes & \boxtimes& \boxtimes \\
		\bm{\boxtimes} & \bm{\boxtimes} & \bm{\boxtimes} & \bm{\boxtimes}& \bm{\boxtimes} \\
		\bm{0} & \bm{\boxtimes} & \bm{\boxtimes} & \bm{\boxtimes} & \bm{\boxtimes}\\
		\bm{0} & \bm{\boxtimes} & \bm{\boxtimes} & \bm{\boxtimes} & \bm{\boxtimes}\\
		\bm{0} & \bm{\boxtimes} & \bm{\boxtimes} & \bm{\boxtimes}& \bm{\boxtimes}
	\end{sbmatrix}
	\stackrel{\times \bH_1^\top }{\rightarrow}
	\footnotesize
	\begin{sbmatrix}{\bH_1\bA\bH_1^\top}
		\boxtimes & \bm{\boxtimes} & \bm{\boxtimes} & \bm{\boxtimes}& \bm{\boxtimes} \\
		\boxtimes & \bm{\boxtimes} & \bm{\boxtimes} & \bm{\boxtimes}& \bm{\boxtimes} \\
		0 & \bm{\boxtimes} & \bm{\boxtimes} & \bm{\boxtimes} & \bm{\boxtimes}\\
		0 & \bm{\boxtimes} & \bm{\boxtimes} & \bm{\boxtimes} & \bm{\boxtimes}\\
		0 & \bm{\boxtimes} & \bm{\boxtimes} & \bm{\boxtimes} & \bm{\boxtimes}
	\end{sbmatrix}\ldots.
\end{aligned}
$$
In this case, the left Householder does not affect the first row, and the right Householder does not disturb  the first column. A Phase 2 algorithm, typically iterative, for finding the triangular matrix is as follows:
$$
\begin{aligned}
	\footnotesize
	\begin{sbmatrix}{\bH_3\bH_2\bH_1\bA\bH_1^\top\bH_2^\top \bH_3^\top}
		\boxtimes & \boxtimes  & \boxtimes  & \boxtimes & \boxtimes  \\
		\boxtimes & \boxtimes  & \boxtimes  & \boxtimes & \boxtimes  \\
		0 & \boxtimes  & \boxtimes  & \boxtimes & \boxtimes  \\
		0 & 0 & \boxtimes  & \boxtimes & \boxtimes  \\
		0 & 0 & 0 & \boxtimes  & \boxtimes 
	\end{sbmatrix}
	\stackrel{\text{Phase 2} }{\longrightarrow}
	\footnotesize
	\begin{sbmatrix}{\bLambda}
		\bm{\boxtimes} & \bm{\boxtimes} & \bm{\boxtimes} & \bm{\boxtimes}& \bm{\boxtimes} \\
		\bm{0} & \bm{\boxtimes} & \bm{\boxtimes} & \bm{\boxtimes}& \bm{\boxtimes} \\
		0 & \bm{0}  & \bm{\boxtimes} & \bm{\boxtimes}& \bm{\boxtimes} \\
		0 & 0 & \bm{0}  & \bm{\boxtimes}& \bm{\boxtimes} \\
		0 & 0 & 0 & \bm{0}  & \bm{\boxtimes}
	\end{sbmatrix}
\end{aligned}
$$

As discussed above, to compute  spectral decomposition, Schur decomposition, or singular value decomposition (SVD), we often make a trade-off. In the first phase, we reduce the matrix to Hessenberg, tridiagonal, or bidiagonal form. 
The second stage then completes the decomposition using an iterative method  \citep{van2012families, van2014restructuring, trefethen1997numerical}.

\section{Hessenberg Decomposition}

The \textit{Hessenberg decomposition} is a technique used to transform a matrix into an upper Hessenberg form. This transformation simplifies the matrix structure, making it an effective first step in various algorithms, as it reduces computational complexity. Let's begin with a formal definition of upper Hessenberg matrices.

\index{Orthogonal}
\begin{definition}[Upper Hessenberg matrix\index{Hessenbert matrix}]\label{definition:upper-hessenbert}
An \textit{upper Hessenberg matrix} (simply called Hessenberg matrix when the context is clear)  is a square matrix in which all  entries below the subdiagonal  are zero. Similarly, a \textit{lower Hessenberg matrix} is a square matrix in which all the entries above the superdiagonal are zero.
The definition can be extended to rectangular matrices, where the structure is implied by the context.

Formally, for a matrix $\bH\in \real^{n\times n}$, with elements $h_{ij}$ for  $i,j\in \{1,2,\ldots, n\}$, $\bH$ is an upper Hessenberg matrix if $h_{ij}=0$ for all $i\geq j+2$.

Additionally, if $i$ is the smallest positive integer for which $h_{i+1, i}=0$ for $i\in \{1,2,\ldots, n-1\}$, then $\bH$ is called  \textbf{unreduced} if $i=n$.
\end{definition}

Consider a $5\times 5$ matrix. In an upper Hessenberg matrix, all elements below the first subdiagonal are zero:
$$
\footnotesize
\begin{sbmatrix}{possibly\,\, unreduced}
\boxtimes & \boxtimes & \boxtimes & \boxtimes & \boxtimes\\
\boxtimes & \boxtimes & \boxtimes & \boxtimes & \boxtimes\\
0 & \boxtimes & \boxtimes & \boxtimes & \boxtimes\\
0 & 0 & \boxtimes & \boxtimes & \boxtimes\\
0 & 0 & 0 & \boxtimes & \boxtimes
\end{sbmatrix}
\qquad 
\text{or}
\qquad 
\footnotesize
\begin{sbmatrix}{reduced}
	\boxtimes & \boxtimes & \boxtimes & \boxtimes & \boxtimes\\
	\boxtimes & \boxtimes & \boxtimes & \boxtimes & \boxtimes\\
	0 & \boxtimes & \boxtimes & \boxtimes & \boxtimes\\
	0 & 0 & \textcolor{mylightbluetext}{0} & \boxtimes & \boxtimes\\
	0 & 0 & 0 & \boxtimes & \boxtimes
\end{sbmatrix}.
$$
We now state the Hessenberg decomposition:
\begin{theoremHigh}[Hessenberg decomposition\index{Decomposition: Hessenberg}]\label{theorem:hessenberg-decom}
Any $n\times n$ square matrix $\bA$ can be decomposed as 
$$
\bA = \bQ\bH\bQ^\top, 
$$
where $\bH$ is an upper Hessenberg matrix, and $\bQ$ is an orthogonal matrix.
\end{theoremHigh}
For a lower Hessenberg decomposition, the transpose of $\bA$, $\bA^\top$, admits the decomposition $\bA^\top = \bQ\bH^\top\bQ^\top$ if $\bA$ admits the Hessenberg decomposition $\bA = \bQ\bH\bQ^\top$. The Hessenberg decomposition is conceptually similar to the QR decomposition in that both aim to reduce a matrix to a sparser form with zeros in the lower portion.

While the left orthogonal matrix $\bQ$ introduces zeros in $\bH$ (similar to  the QR decomposition), the right orthogonal matrix $\bQ^\top$ does not simplify the matrix further. Then why use Hessenberg decomposition instead of QR decomposition, which achieves zeros even in the lower subdiagonal? 
The answer lies in the intended application. Hessenberg decomposition serves as a preparatory step (phase one) for more advanced factorizations like singular value decomposition (SVD) or UTV decomposition. A more aggressive transformation (e.g., QR decomposition) would introduce zeros in the subdiagonal but disrupt zeros during subsequent transformations.

Furthermore, the form $\bA = \bQ\bH\bQ^\top$ is an  \textit{orthogonal similarity transformation} (Definition~\ref{definition:similar-matrices}), preserving key properties of $\bA$, such as its eigenvalues, rank, and trace (Proposition~\ref{proposition:eigenvalue-similar-matrices}). Thus, studying $\bH$ provides a simplified way to understand the behavior of $\bA$.

Moreover, let $\bA=\bQ\bH\bQ^\top\in\real^{n\times n}$ be given. In certain scenarios, we may need to solve the linear system $(\bA+\gamma\bI)\bx=\bb$ for different values of $\gamma\in\real$ and $\bb\in\real^n$.
The linear system can be equivalently expressed as $(\bH+\gamma\bI)\bQ^\top\bx=\bQ^\top\bb$. Since $\bH$ is upper Hessenberg, the system can be solved efficiently using methods like forward and backward substitution.

\index{Spectrum}
\section{(Orthogonal) Similarity Transformation}
As mentioned earlier, the Hessenberg decomposition introduced in this section, the tridiagonal decomposition in the next section, the Schur decomposition (Theorem~\ref{theorem:schur-decomposition}), and the spectral decomposition (Theorem~\ref{theorem:spectral_theorem}) all share a common structure:  they transform a matrix into another matrix that is similar to it. Below, we formally define similar matrices and similarity transformations.
\index{Similar matrices}\index{Similarity transformation}
\begin{definition}[Similar matrices and similarity transformation]\label{definition:similar-matrices}
Two matrices $\bA$ and $\bB$ are said to be \textit{similar matrices} if there exists a nonsingular matrix $\bP$ such that $\bB = \bP\bA\bP^{-1}$. 

In simpler terms, given any nonsingular matrix $\bP$, the matrices $\bA$ and $\bP\bA\bP^{-1}$ are similar. 
The transformation $\bP\bA\bP^{-1}$
is referred to as a \textit{similarity transformation}  of the matrix $\bA$.

Furthermore, if $\bP$ is an orthogonal matrix, the transformation $\bP\bA\bP^\top$ is also known as an \textit{orthogonal similarity transformation} of $\bA$.
Orthogonal similarity transformations are particularly significant because the condition number of the transformed matrix $\bP\bA\bP^\top$ is no worse than that of the original matrix $\bA$.~\footnote{Note that two matrices $\bA$ and $\bB$ are referred to as  \textit{congruent} if $\bB = \bS\bA\bS^\top$ for some nonsingular matrix $\bS$. In this sense, an orthogonal similarity transformation is both a similarity transformation and a congruence transformation.}
\end{definition}
The distinction  between  similarity transformations and orthogonal similarity transformations will be further clarified in the context of coordinate transformations (Section~\ref{section:coordinate-transformation}). We now proceed to establish some important properties of similar matrices, which will prove useful in later discussions.
\begin{proposition}[Eigenvalue, trace, and rank of similar matrices\index{Trace}]\label{proposition:eigenvalue-similar-matrices}
Any eigenvalue of $\bA$ is also an eigenvalue of $\bP\bA\bP^{-1}$, and vice versa. That is, $\Lambda(\bA) = \Lambda(\bB)$, where $\Lambda(\bX)$ denotes the spectrum of matrix $\bX$ (Definition~\ref{definition:spectrum}).

Moreover, the trace and rank of $\bA$ are equal to those of  $\bP\bA\bP^{-1}$ for any nonsingular matrix $\bP$.
\end{proposition}
\begin{proof}[of Proposition~\ref{proposition:eigenvalue-similar-matrices}]
Let  $(\lambda, \bx)$ be any eigenpair of $\bA$ so that $\bA\bx =\lambda \bx$. 
Then we have $\lambda \bP\bx = \bP\bA\bP^{-1} \bP\bx$ such that $\bP\bx$ is an eigenvector of $\bP\bA\bP^{-1}$ corresponding to $\lambda$.
Conversely, for any eigenpair $(\lambda, \bx)$ of $\bP\bA\bP^{-1}$, we have $\bP\bA\bP^{-1} \bx = \lambda \bx$. Then we have $\bA\bP^{-1} \bx = \lambda \bP^{-1}\bx$ such that $\bP^{-1}\bx$ is an eigenvector of $\bA$ corresponding to $\lambda$. 

Next, consider the trace. Using the cyclic invariance property of the trace, we have: $\trace(\bP\bA\bP^{-1}) = \trace(\bA\bP^{-1}\bP) = \trace(\bA)$.

For the rank, we proceed in two steps:
\paragraph{Rank claim 1: $\rank(\bZ\bA)=\rank(\bA)$ if $\bZ$ is nonsingular.}
For any vector $\bn$ in the null space of $\bA$ (i.e., $\bA\bn = \bzero$), we have $\bZ\bA\bn = \bzero$. Hence, $\bn$ is also in the null space of $\bZ\bA$. And this implies $\nspace(\bA)\subseteq \nspace(\bZ\bA)$.

Conversely, for any vector $\bmm$ in the null space of $\bZ\bA$ (i.e., $\bZ\bA\bmm = \bzero$), we have $\bA\bmm = \bZ^{-1} \bzero=\bzero$. That is, $\bmm$ is also in the null space of $\bA$. And this indicates $\nspace(\bZ\bA)\subseteq \nspace(\bA)$.

Combining both inclusions, we conclude:
$$
\nspace(\bA) = \nspace(\bZ\bA)\quad  \implies \quad \rank(\bZ\bA)=\rank(\bA).
$$

\paragraph{Rank claim 2: $\rank(\bA\bZ)=\rank(\bA)$ if $\bZ$ is nonsingular.}
Using the equality of row and column ranks (Theorem~\ref{theorem:equal-dimension-rank}), we have $\rank(\bA\bZ) = \rank(\bZ^\top\bA^\top)$. Since $\bZ^\top$ is nonsingular, applying claim 1 gives  $\rank(\bZ^\top\bA^\top) = \rank(\bA^\top) = \rank(\bA)$, where the last equality follows again from the fact that the row rank is equal to the column rank for any matrix. This results in $\rank(\bA\bZ)=\rank(\bA)$, as claimed.

Combining these results, and noting that both $\bP$ and $\bP^{-1}$ are nonsingular, we  have $\rank(\bP\bA\bP^{-1}) = \rank(\bA\bP^{-1}) = \rank(\bA)$. This completes the proof.
\end{proof}

\section{Existence of  Hessenberg Decomposition}
We will demonstrate  that any $n\times n$ matrix can be transformed  into Hessenberg form through a sequence of Householder transformations  applied alternately from the left and the right.  
These transformations are performed in an interleaved manner.
Previously, we used Householder reflectors to triangularize matrices by introducing zeros below the diagonal, as part of the QR decomposition process. A similar strategy can be employed to introduce zeros below the subdiagonal, enabling the transformation to Hessenberg form.
Before delving into the mathematical construction of this decomposition, we highlight the following remark, which will prove essential for deriving the decomposition.
\begin{remark}[Left and right multiplied by a matrix with block identity]\label{remark:left-right-identity}
Let $\bA\in \real^{n\times n}$ be a square matrix, and let 
$
\bB = \scriptsize\begin{bmatrix}
	\bI_k &\bzero \\
	\bzero & \bB_{n-k}
\end{bmatrix},
$
where $\bI_k$ is the a $k\times k$ identity matrix. Then, $\bB\bA$ does not alter the first $k$ rows of $\bA$, and $\bA\bB$ does not alter the first $k$ columns of $\bA$.
\end{remark}

\subsection*{\textbf{First Step: Introduce Zeros for the First Column}}	
Let $\bA=[\ba_1, \ba_2, \ldots, \ba_n]$ be the column partition of $\bA$, where each $\ba_i \in \real^{n}$. Suppose $\bar{\ba}_1, \bar{\ba}_2, \ldots, \bar{\ba}_n \in \real^{n-1}$ are the vectors obtained by removing the first component in $\ba_i$'s. Define 
$$
 r_1 = \norm{\bar{\ba}_1}, \qquad \bu_1 = \frac{\bar{\ba}_1 - r_1 \be_1}{\norm{\bar{\ba}_1 - r_1 \be_1}}, \qquad \text{and}\qquad \widetilde{\bH}_1 = \bI - 2\bu_1\bu_1^\top \in \real^{(n-1)\times (n-1)},
$$
where $\be_1$  is the first unit basis in $\real^{n-1}$, i.e., $\be_1=[1;0;0;\ldots;0]\in \real^{n-1}$. To introduce zeros below the subdiagonal and operate on the submatrix $\bA_{2:n,1:n}$, we append the Householder reflector into
$
\bH_1 = \scriptsize
\begin{bmatrix}
	1 &\bzero \\
	\bzero & \widetilde{\bH}_1
\end{bmatrix},
$
in which case, $\bH_1\bA$ will introduce zeros in the first column of $\bA$ below entry (2,1). The first row of $\bA$ remains unchanged, as noted in  Remark~\ref{remark:left-right-identity}. Furthermore, it is straightforward to verify that both $\bH_1$ and $\widetilde{\bH}_1$ are symmetric and orthogonal matrices. To obtain the form in Theorem~\ref{theorem:hessenberg-decom}, we multiply $\bH_1\bA$ on the right by $\bH_1^\top$,  resulting in $\bH_1\bA\bH_1^\top$. The multiplication  on the right will not affect the first column of $\bH_1\bA$, preserving the zeros introduced in that column.

An example of a $5\times 5$ matrix is shown as follows, where $\boxtimes$ represents a value that is not necessarily zero, and \textbf{boldface} indicates the value has just been changed:
$$
\begin{aligned}
\footnotesize
\begin{sbmatrix}{\bA}
\boxtimes & \boxtimes & \boxtimes & \boxtimes & \boxtimes \\
\boxtimes & \boxtimes & \boxtimes & \boxtimes & \boxtimes\\
\boxtimes & \boxtimes & \boxtimes & \boxtimes & \boxtimes\\
\boxtimes & \boxtimes & \boxtimes & \boxtimes & \boxtimes\\
\boxtimes & \boxtimes & \boxtimes & \boxtimes & \boxtimes
\end{sbmatrix}
\stackrel{\bH_1\times}{\rightarrow}
&\footnotesize\begin{sbmatrix}{\bH_1\bA}
\boxtimes & \boxtimes & \boxtimes & \boxtimes & \boxtimes \\
\bm{\boxtimes} & \bm{\boxtimes} & \bm{\boxtimes} & \bm{\boxtimes} & \bm{\boxtimes}\\
\bm{0} & \bm{\boxtimes} & \bm{\boxtimes} & \bm{\boxtimes} & \bm{\boxtimes}\\
\bm{0} & \bm{\boxtimes} & \bm{\boxtimes} & \bm{\boxtimes} & \bm{\boxtimes}\\
\bm{0} & \bm{\boxtimes} & \bm{\boxtimes} & \bm{\boxtimes} & \bm{\boxtimes}
\end{sbmatrix}
\stackrel{\times\bH_1^\top}{\rightarrow}
\footnotesize
\begin{sbmatrix}{\bH_1\bA\bH_1^\top}
\boxtimes & \bm{\boxtimes} & \bm{\boxtimes} & \bm{\boxtimes} & \bm{\boxtimes} \\
\boxtimes & \bm{\boxtimes} & \bm{\boxtimes} & \bm{\boxtimes} & \bm{\boxtimes}\\
0 & \bm{\boxtimes} & \bm{\boxtimes} & \bm{\boxtimes} & \bm{\boxtimes}\\
0 & \bm{\boxtimes} & \bm{\boxtimes} & \bm{\boxtimes} & \bm{\boxtimes}\\
0 & \bm{\boxtimes} & \bm{\boxtimes} & \bm{\boxtimes} & \bm{\boxtimes}
\end{sbmatrix}\\
\end{aligned}
$$

\subsection*{\textbf{Second Step: Introduce Zeros for the Second Column}}	
Let $\bB = \bH_1\bA\bH_1^\top$, where the entries in the first column below entry (2,1) are all zeros. The goal  now is to introduce zeros in the second column below entry (3,2). 
Define $\bB_2 = \bB_{2:n,2:n}=[\bb_1, \bb_2, \ldots, \bb_{n-1}]$. Let again $\bar{\bb}_1, \bar{\bb}_2, \ldots, \bar{\bb}_{n-1} \in \real^{n-2}$ be the vectors obtained by removing the first component from each  $\bb_i$. 
We can again construct a Householder reflector:
\begin{equation}\label{equation:householder-qr-lengthr}
r_1 = \norm{\bar{\bb}_1},  \qquad \bu_2 = \frac{\bar{\bb}_1 - r_1 \be_1}{\norm{\bar{\bb}_1 - r_1 \be_1}}, \qquad \text{and}\qquad \widetilde{\bH}_2 = \bI - 2\bu_2\bu_2^\top\in \real^{(n-2)\times (n-2)},
\end{equation}
where $\be_1$ is now the first unit basis in $\real^{n-2}$. To introduce zeros below the subdiagonal and operate on the submatrix $\bB_{3:n,1:n}$, we extend the Householder reflector into
$
\bH_2 = \scriptsize\begin{bmatrix}
	\bI_2 &\bzero \\
	\bzero & \widetilde{\bH}_2
\end{bmatrix},
$
where $\bI_2$ is the $2\times 2$ identity matrix. We can see that the product $\bH_2\bH_1\bA\bH_1^\top$ does not alter the first two rows of $\bH_1\bA\bH_1^\top$; and since the Householder transformation cannot reflect a zero vector, the zeros in the first column are preserved. 
Again, applying $\bH_2^\top$ to the right of $\bH_2\bH_1\bA\bH_1^\top$ will not change the first two columns, thus preserving the previously introduced zeros.

Following the example of a $5\times 5$ matrix, the second step is shown as follows:
$$
\begin{aligned}
	\footnotesize
	\begin{sbmatrix}{\bH_1\bA\bH_1^\top}
		\boxtimes & \boxtimes & \boxtimes & \boxtimes &  \boxtimes  \\
		\boxtimes & \boxtimes & \boxtimes & \boxtimes &  \boxtimes  \\
		0 & \boxtimes & \boxtimes & \boxtimes &  \boxtimes  \\
		0 & \boxtimes & \boxtimes & \boxtimes &  \boxtimes  \\
		0 & \boxtimes & \boxtimes & \boxtimes &  \boxtimes 
	\end{sbmatrix}
	\stackrel{\bH_2\times}{\rightarrow}
	\footnotesize
	\begin{sbmatrix}{\bH_2\bH_1\bA\bH_1^\top}
		\boxtimes & \boxtimes & \boxtimes & \boxtimes & \boxtimes \\
		\boxtimes & \boxtimes & \boxtimes & \boxtimes & \boxtimes \\
		0 & \bm{\boxtimes} & \bm{\boxtimes} & \bm{\boxtimes} & \bm{\boxtimes}\\
		0 & \bm{0} & \bm{\boxtimes} & \bm{\boxtimes} & \bm{\boxtimes}\\
		0 & \bm{0} & \bm{\boxtimes} & \bm{\boxtimes} & \bm{\boxtimes}
	\end{sbmatrix}
	\stackrel{\times\bH_2^\top}{\rightarrow}
	\footnotesize
	\begin{sbmatrix}{\bH_2\bH_1\bA\bH_1^\top\bH_2^\top}
		\boxtimes & \boxtimes & \bm{\boxtimes} & \bm{\boxtimes} & \bm{\boxtimes} \\
		\boxtimes & \boxtimes & \bm{\boxtimes} & \bm{\boxtimes} & \bm{\boxtimes} \\
		0 & \boxtimes & \bm{\boxtimes} & \bm{\boxtimes} & \bm{\boxtimes}\\
		0 & 0 & \bm{\boxtimes} & \bm{\boxtimes} & \bm{\boxtimes}\\
		0 & 0 & \bm{\boxtimes} & \bm{\boxtimes} & \bm{\boxtimes}
	\end{sbmatrix}.
\end{aligned}
$$

This process continues iteratively, and a total of $n-2$ such steps are required. 
In the end, the matrix will be transformed into Hessenberg form:
$$
\bH = \bH_{n-2} \bH_{n-3}\ldots\bH_1 \bA\bH_1^\top\bH_2^\top\ldots\bH_{n-2}^\top.
$$
Since each $\bH_i$ is symmetric and orthogonal, this simplifies to:
$$
\bH =\bH_{n-2} \bH_{n-3}\ldots\bH_1 \bA\bH_1\bH_2\ldots\bH_{n-2}.
$$
Note that only $n-2$ stages are required, rather than $n-1$ or $n$. This can be verified using the full example  for a $5\times 5$ matrix:
$$
\begin{aligned}
	\footnotesize
\begin{sbmatrix}{\bA}
	\boxtimes & \boxtimes & \boxtimes & \boxtimes & \boxtimes \\
	\boxtimes & \boxtimes & \boxtimes & \boxtimes & \boxtimes\\
	\boxtimes & \boxtimes & \boxtimes & \boxtimes & \boxtimes\\
	\boxtimes & \boxtimes & \boxtimes & \boxtimes & \boxtimes\\
	\boxtimes & \boxtimes & \boxtimes & \boxtimes & \boxtimes
\end{sbmatrix}
\stackrel{\bH_1\times}{\rightarrow}
&\footnotesize\begin{sbmatrix}{\bH_1\bA}
	\boxtimes & \boxtimes & \boxtimes & \boxtimes & \boxtimes \\
	\bm{\boxtimes} & \bm{\boxtimes} & \bm{\boxtimes} & \bm{\boxtimes} & \bm{\boxtimes}\\
	\bm{0} & \bm{\boxtimes} & \bm{\boxtimes} & \bm{\boxtimes} & \bm{\boxtimes}\\
	\bm{0} & \bm{\boxtimes} & \bm{\boxtimes} & \bm{\boxtimes} & \bm{\boxtimes}\\
	\bm{0} & \bm{\boxtimes} & \bm{\boxtimes} & \bm{\boxtimes} & \bm{\boxtimes}
\end{sbmatrix}
\stackrel{\times\bH_1^\top}{\rightarrow}
\footnotesize
\begin{sbmatrix}{\bH_1\bA\bH_1^\top}
	\boxtimes & \bm{\boxtimes} & \bm{\boxtimes} & \bm{\boxtimes} & \bm{\boxtimes} \\
	\boxtimes & \bm{\boxtimes} & \bm{\boxtimes} & \bm{\boxtimes} & \bm{\boxtimes}\\
	0 & \bm{\boxtimes} & \bm{\boxtimes} & \bm{\boxtimes} & \bm{\boxtimes}\\
	0 & \bm{\boxtimes} & \bm{\boxtimes} & \bm{\boxtimes} & \bm{\boxtimes}\\
	0 & \bm{\boxtimes} & \bm{\boxtimes} & \bm{\boxtimes} & \bm{\boxtimes}
\end{sbmatrix}\\
 \stackrel{\bH_2\times}{\rightarrow}
\footnotesize
&\footnotesize\begin{sbmatrix}{\bH_2\bH_1\bA\bH_1^\top}
	\boxtimes & \boxtimes & \boxtimes & \boxtimes & \boxtimes \\
	\boxtimes & \boxtimes & \boxtimes & \boxtimes & \boxtimes \\
	0 & \bm{\boxtimes} & \bm{\boxtimes} & \bm{\boxtimes} & \bm{\boxtimes}\\
	0 & \bm{0} & \bm{\boxtimes} & \bm{\boxtimes} & \bm{\boxtimes}\\
	0 & \bm{0} & \bm{\boxtimes} & \bm{\boxtimes} & \bm{\boxtimes}
\end{sbmatrix}
\stackrel{\times\bH_2^\top}{\rightarrow}
\footnotesize
\begin{sbmatrix}{\bH_2\bH_1\bA\bH_1^\top\bH_2^\top}
	\boxtimes & \boxtimes & \bm{\boxtimes} & \bm{\boxtimes} & \bm{\boxtimes} \\
	\boxtimes & \boxtimes & \bm{\boxtimes} & \bm{\boxtimes} & \bm{\boxtimes} \\
	0 & \boxtimes & \bm{\boxtimes} & \bm{\boxtimes} & \bm{\boxtimes}\\
	0 & 0 & \bm{\boxtimes} & \bm{\boxtimes} & \bm{\boxtimes}\\
	0 & 0 & \bm{\boxtimes} & \bm{\boxtimes} & \bm{\boxtimes}
\end{sbmatrix}\\
\stackrel{\bH_3\times}{\rightarrow}
\footnotesize
&\footnotesize\begin{sbmatrix}{\bH_3\bH_2\bH_1\bA\bH_1^\top\bH_2^\top}
	\boxtimes & \boxtimes & \boxtimes & \boxtimes & \boxtimes \\
	\boxtimes & \boxtimes & \boxtimes & \boxtimes & \boxtimes \\
	0 & \boxtimes & \boxtimes & \boxtimes & \boxtimes \\
	0 & 0 & \bm{\boxtimes} & \bm{\boxtimes} & \bm{\boxtimes}\\
	0 & 0 & \bm{0} & \bm{\boxtimes} & \bm{\boxtimes}
\end{sbmatrix}
\stackrel{\times\bH_3^\top}{\rightarrow}
\footnotesize
\begin{sbmatrix}{\bH_3\bH_2\bH_1\bA\bH_1^\top\bH_2^\top\bH_3^\top}
	\boxtimes & \boxtimes & \boxtimes & \bm{\boxtimes} & \bm{\boxtimes} \\
	\boxtimes & \boxtimes & \boxtimes & \bm{\boxtimes} & \bm{\boxtimes} \\
	0 & \boxtimes & \boxtimes & \bm{\boxtimes} & \bm{\boxtimes}\\
	0 & 0 & \boxtimes & \bm{\boxtimes} & \bm{\boxtimes}\\
	0 & 0 & 0 & \bm{\boxtimes} & \bm{\boxtimes}
\end{sbmatrix}.
\end{aligned}
$$

\section{Properties of  Hessenberg Decomposition}\label{section:hessenberg-decomposition}
The Hessenberg decomposition is not unique, as there are multiple ways to construct the Householder reflectors (e.g.,  Equation~\eqref{equation:householder-qr-lengthr}). However, under mild conditions, different decompositions exhibit a similar structure.

\begin{theorem}[Implicit Q theorem for Hessenberg decomposition\index{Implicit Q theorem}]\label{theorem:implicit-q-hessenberg}
Let $\bA \in \real^{n \times n}$ be a matrix with two Hessenberg decompositions, $\bA = \bU\bH\bU^\top = \bV\bG\bV^\top$, where $\bU = [\bu_1, \bu_2, \ldots, \bu_n]$ and $\bV = [\bv_1, \bv_2, \ldots, \bv_n]$ are the column partitions of $\bU$ and $\bV$, respectively. Assume $k$ is the smallest positive integer such that $h_{k+1,k} = 0$, where $h_{ij}$ denotes the $(i,j)$ entry of $\bH$. Then:
\begin{itemize}
\item If $\bu_1=\bv_1$, then $\bu_i = \pm \bv_i$ and $|h_{i,i-1}| = |g_{i,i-1}|$ for $i\in \{2,3,\ldots,k\}$. 
\item When $k=n$, the Hessenberg matrix $\bH$ is called \textit{unreduced}. Otherwise, if $k<n$, then $g_{k+1,k}=0$.
\end{itemize}
\end{theorem}
\begin{proof}[of Theorem~\ref{theorem:implicit-q-hessenberg}]
Define the orthogonal matrix $\bZ=\bV^\top\bU$. We have:
$$
\left. 
\begin{aligned}
	\bG\bZ &= \bV^\top\bA\bV \bV^\top\bU = \bV^\top\bA\bU \\
	\bZ\bH &= \bV^\top\bU \bU^\top\bA\bU = \bV^\top\bA\bU 
\end{aligned}
\right\}
\quad\implies\quad  
\bG\bZ = \bZ\bH.
$$
For the $(i-1)$-th column, we have
$
\bG\bz_{i-1} = \bZ\bh_{i-1},
$
where $\bz_{i-1}$ and $\bh_{i-1}$ are the $(i-1)$-th columns of $\bZ$ and $\bH$, respectively. Since $h_{l,i-1}=0$ for $l\geq i+1$ (as per the definition of upper Hessenberg matrices), $\bZ\bh_{i-1}$ can be represented as 
$$
\bZ\bh_{i-1} = \sum_{j=1}^{i} h_{j,i-1} \bz_j = h_{i,i-1}\bz_i + \sum_{j=1}^{i-1} h_{j,i-1} \bz_j.
$$
Combining results, we have
$
h_{i,i-1}\bz_i  = \bG\bz_{i-1} - \sum_{j=1}^{i-1} h_{j,i-1} \bz_j.
$
A moment of reflexion reveals that $[\bz_1, \bz_2, \ldots,\bz_k]$ is upper triangular. Since $\bZ$ is orthogonal, it must be diagonal, and each value on the diagonal is in $\{-1, 1\}$ for $i\in \{2,\ldots, k\}$. Then $\bz_1=\be_1$ and $\bz_i = \pm \be_i$ for $i\in \{2,\ldots, k\}$. Additionally,  $\bz_i =\bV^\top\bu_i$ and $h_{i,i-1}=\bz_i^\top (\bG\bz_{i-1} - \sum_{j=1}^{i-1} h_{j,i-1} \bz_j)=\bz_i^\top \bG\bz_{i-1}$. Therefore, for $i\in \{2,\ldots,k\}$, $\bz_i^\top \bG\bz_{i-1}$ is just $\pm g_{i,i-1}$. It follows that 
$
\begin{aligned}
	|h_{i,i-1}| &= |g_{i,i-1}| \,\, \text{and}\,\,   \bu_i = \pm\bv_i,\,\, \forall i\in \{2,3,\ldots,k\}.
\end{aligned}
$ 
This proves the first part. For the second part, if $k<n$, 
$$
\begin{aligned}
	g_{k+1,k} &= \be_{k+1}^\top\bG\be_{k} = \pm \be_{k+1}^\top\underbrace{\bG\bZ}_{\bZ\bH} \be_{k} =  \pm \be_{k+1}^\top\underbrace{\bZ\bH \be_{k}}_{\text{$k$-th column of $\bZ\bH$}} \\
	&= \pm \be_{k+1}^\top \bZ\bh_k = \pm \be_{k+1}^\top \sum_{j=1}^{k+1} h_{jk}\bz_j 
	=\pm \be_{k+1}^\top \sum_{j=1}^{\textcolor{mylightbluetext}{k}} h_{jk}\bz_j=0,
\end{aligned}
$$
where the penultimate equality is derived from the assumption that $h_{k+1,k}=0$. This completes the proof.
\end{proof}
From the above theorem, we observe that if two Hessenberg decompositions of a matrix $\bA$ are both unreduced and share the same first column in their respective orthogonal matrices, the corresponding Hessenberg matrices $\bH$ and $\bG$ are similar matrices such that $\bH = \bD\bG\bD^{-1}$, where $\bD=\diag(\pm 1, \pm 1, \ldots, \pm 1)$. \textit{Moreover, and most importantly, if we impose the condition that the elements on the lower subdiagonal of the Hessenberg matrix $\bH$ are positive (if possible), then the Hessenberg decomposition $\bA=\bQ\bH\bQ^\top$ is uniquely determined by $\bA$ and the first column of $\bQ$.} 
This property is analogous to the uniqueness of the QR decomposition (as established in Corollary~\ref{corollary:unique-qr}) and is crucial for simplifying the \textit{QR algorithm}, which is widely used for computing the singular value decomposition or eigenvalues of a matrix \citep{golub2013matrix, lu2021numerical}.

\index{QR algorithm}

The next concept we introduce is that of a Krylov matrix, defined as follows:
\begin{definition}[Krylov matrix\index{Krylov matrix}]\label{definition:krylov-matrix}
	Given a matrix $\bA\in \real^{n\times n}$, a vector $\bq\in \real^n$, and a scalar $k$, the \textit{Krylov matrix} is defined as:
	$$
	\bK(\bA, \bq, k) = 
	\begin{bmatrix}
		\bq, & \bA\bq, & \ldots, & \bA^{k-1}\bq 
	\end{bmatrix}
	\in \real^{n\times k}.
	$$
\end{definition}

\begin{theorem}[Unreduced Hessenberg]\label{theorem:implicit-q-hessenberg-v2}
Suppose there exists an orthogonal matrix $\bQ$ such that a matrix $\bA\in \real^{n\times n}$ can be factored as $\bA = \bQ\bH\bQ^\top$. Then, $\bQ^\top\bA\bQ=\bH$ is an unreduced upper Hessenberg matrix if and only if $\bR=\bQ^\top \bK(\bA, \bq_1, n)$ is nonsingular and upper triangular, where $\bq_1$ is the first column of $\bQ$.  

 If $\bR$ is singular and $k$ is the smallest index such that $r_{kk}=0$, then $k$ is also the smallest index satisfying $h_{k,k-1}=0$.
\end{theorem}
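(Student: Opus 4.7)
The whole argument rests on one explicit computation, so I would open with that. From $\bA = \bQ\bH\bQ^\top$ it follows inductively that $\bA^{j} = \bQ\bH^{j}\bQ^\top$, and since $\bq_1 = \bQ\be_1$ we obtain $\bA^{j}\bq_1 = \bQ\bH^{j}\be_1$. Therefore
\begin{equation*}
\bR \;=\; \bQ^\top \bK(\bA,\bq_1,n) \;=\; \bigl[\be_1,\; \bH\be_1,\; \bH^{2}\be_1,\; \ldots,\; \bH^{n-1}\be_1\bigr].
\end{equation*}
Everything to be proved is now a statement about the $n$ columns $\bH^{j}\be_1$, and the matrix $\bA$ and the similarity $\bQ$ can be forgotten entirely.

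For the forward implication, assume $\bH$ is unreduced upper Hessenberg. I would prove by induction on $j$ that $\bH^{j}\be_1 \in \mathrm{span}\{\be_1,\ldots,\be_{j+1}\}$, using the Hessenberg structure (each $\bH\be_{i}\in\mathrm{span}\{\be_1,\ldots,\be_{i+1}\}$) to push from stage $j$ to stage $j+1$. This makes $\bR$ upper triangular. For the diagonal, I would compute $\be_{k+1}^\top \bH^{k}\be_1$ by the same induction, tracking only the ``leading'' coefficient at each step; the key observation is that the only way to reach position $k+1$ in $k$ applications of $\bH$ is to move down by exactly one subdiagonal at every step, giving
\begin{equation*}
r_{k+1,k+1} \;=\; h_{k+1,k}\,h_{k,k-1}\,\cdots\,h_{2,1}, \qquad r_{1,1}=1.
\end{equation*}
Since ``unreduced'' means $h_{i+1,i}\neq 0$ for all $i=1,\ldots,n-1$, the matrix $\bR$ is nonsingular.

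For the reverse implication I expect the main obstacle: given only that $\bR$ is upper triangular and nonsingular, I must recover the Hessenberg structure of $\bH$ itself. My plan is to induct on the column index $j$ with the statement $\bH\be_j \in \mathrm{span}\{\be_1,\ldots,\be_{j+1}\}$. The base $j=1$ is immediate from $\bH\be_1 = \bR\be_2 \in \mathrm{span}\{\be_1,\be_2\}$. For the inductive step, write $\bH^{j}\be_1 = \bR\be_{j+1} \in \mathrm{span}\{\be_1,\ldots,\be_{j+1}\}$ and expand $\bH^{j}\be_1 = \bH(\bH^{j-1}\be_1)$ using the inductive descriptions of $\bH\be_1,\ldots,\bH\be_{j}$ plus the unknown $\bH\be_{j}$; isolating $\bH\be_j$ requires dividing by the coefficient of $\be_j$ in $\bH^{j-1}\be_1$, which is exactly $r_{j,j}=h_{j,j-1}\cdots h_{2,1}$, nonzero by assumption. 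This both forces $\bH\be_j \in \mathrm{span}\{\be_1,\ldots,\be_{j+1}\}$ and shows $h_{j+1,j}\neq 0$, completing the induction and yielding the unreduced upper Hessenberg form.

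Finally, the singular addendum is immediate once the identity $r_{k+1,k+1}=\prod_{i=1}^{k} h_{i+1,i}$ is available: if $k$ is the smallest index with $r_{kk}=0$, then $r_{jj}\neq 0$ for $j<k$ forces $h_{i+1,i}\neq 0$ for $i<k-1$, and then $r_{kk}=0$ forces $h_{k,k-1}=0$, so $k$ is simultaneously the smallest subdiagonal zero of $\bH$. The only delicate step in the entire proof is the coefficient-tracking in the reverse induction, where it matters that $r_{jj}\neq 0$ to invert the relation between $\bH\be_j$ and the already-known columns.
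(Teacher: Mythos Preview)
Your proposal is correct and takes essentially the same approach as the paper: both reduce to the identity $\bR = [\be_1, \bH\be_1, \ldots, \bH^{n-1}\be_1]$, both use the product formula $r_{k+1,k+1} = h_{k+1,k}\cdots h_{2,1}$ for the diagonal, and both handle the converse via the recursion $\br_{k+1} = \bH\br_k$. Your inductive treatment of the converse is more carefully spelled out than the paper's (which simply asserts the conclusion from $\br_{k+1}=\bH\br_k$), but the underlying argument is identical.
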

\begin{proof}[of Theorem~\ref{theorem:implicit-q-hessenberg-v2}] 
Assume $\bH$ is an unreduced upper Hessenberg matrix. Write out the following matrix 
$$
\bR = \bQ^\top \bK(\bA, \bq_1, n) = [\be_1, \bH\be_1, \ldots, \bH^{n-1}\be_1],
$$
where, obviously, $\bR$ is upper triangular with $r_{11}=1$. Observe that $r_{ii} = h_{21}h_{32}\ldots h_{i,i-1}$ for $i\in \{2,3,\ldots, n\}$. When $\bH$ is unreduced, $\bR$ is nonsingular as well. 

Conversely, assume $\bR$ is upper triangular and nonsingular. We observe the recurrence $\br_{k+1} = \bH\br_{k}$, which implies that the $(k+2:n)$-th rows of $\bH[:,1:k]$ are zero and $h_{k+1,k}\neq 0$ for $k\in \{1,2,\ldots, n-1\}$. Thus, $\bH$ is unreduced.

If $\bR$ is singular and $k$ is the smallest index satisfying $r_{kk}=0$, then 
$$
\left. 
\begin{aligned}
r_{k-1,k-1}&=h_{21}h_{32}\ldots h_{k-1,k-2}&\neq 0 \\
r_{kk}&=h_{21}h_{32}\ldots h_{k-1,k-2} h_{k,k-1}&= 0
\end{aligned}
\right\}
\leadto 
h_{k,k-1} =0,
$$
from which the result follows.
\end{proof}

\section{Hessenberg-Triangular Decomposition}\label{section:ht_form}
A factorization that is closely related to the Hessenberg decomposition is called the \textit{Hessenberg-triangular decomposition} for a pair of matrices. 
Given a matrix pair $(\bA, \bB)$, where $\bA, \bB \in \real^{n \times n}$, a preprocessing step of the \textit{QZ decomposition or generalized Schur decomposition} \citep{moler1973algorithm} for solving the regular \textit{generalized eigenvalue problem} $(\bA - \lambda \bB)\bx = \bzero$ involves  computing orthogonal matrices $\bQ, \bZ \in \real^{n \times n}$ such that $\bQ^\top \bA \bZ$ is upper Hessenberg while $\bQ^\top \bB \bZ$ is upper triangular. This so-called \textit{Hessenberg-triangular (HT) form} of the matrix pair $(\bA, \bB)$  significantly reduces the computational cost during the iterative part of the QZ algorithm, which in turn plays a crucial role in the computation of quadratic eigenvalue problems \citep{zhang2017matrix}.

The reduction to HT form begins by computing a QR decomposition $\bB = \bQ_0 \bB_0$, where $\bQ_0$ is orthogonal and $\bB_0$ is upper triangular. The matrices $\bA$ and $\bB$ are then overwritten by $\bQ_0^\top \bA$ and $\bQ_0^\top \bB = \bB_0$, respectively. 
Thus, for the rest of this section, we assume that the matrix $\bB$ in the pair $(\bA, \bB)$ is already in upper triangular form. 
In the HT algorithm, the matrix $\bA$ is then reduced to Hessenberg form by applying a sequence of Givens rotations.
The goal is to reduce $\bA$ to Hessenberg form while maintaining the triangular form of $\bB$. 
This is achieved by premultiplying $\bA$ with Householder reflections or Givens rotations to annihilate elements below the first subdiagonal, and postmultiplying $\bB$ with a different set of Householder reflections or Givens rotations to preserve its triangular form (we use Givens rotations in Algorithm~\ref{alg:hess_tri_form}).

\begin{algorithm}[H]
\caption{Moler and Stewart's HT reduction \citep{moler1973algorithm}}
\label{alg:hess_tri_form} 
\begin{algorithmic}[1] 
	\Require A general matrix $\bA \in \real^{n \times n}$ and an upper triangular matrix $\bB \in \real^{n \times n}$;
	\Ensure Orthogonal  $\bQ, \bZ \in \real^{n \times n}$ such that $(\bH,\bT) = (\bQ^\top \bA \bZ, \bQ^\top \bB \bZ)$ is in HT form;
	\State \textbf{Remark:} $\bL_{i-1,i}, \bR_{i,i-1} \in \real^{n \times n}$ denote  Givens rotations (Section~\ref{section:qr-givens}) acting on rows/columns $i-1$ and $i$. 
	\State Initially set $\bQ \leftarrow \bI_n$, $\bZ \leftarrow \bI_n$, $\bH\leftarrow \bA$, and $\bT\leftarrow \bB$;
	\For{$j = 1, 2, \ldots, n-2$} \Comment{Introduce zeros in the $j$-th column of $\bA$}
	\For{$i = n, n-1, \ldots, j+2$}
	\State Construct $\bL_{i-1,i}$ such that the $(i,j)$-th entry of $\bL_{i-1,i}^\top \bH$ is zero.
	\State Update $\bH \leftarrow \bL_{i-1,i}^\top \bH$, $\bT \leftarrow \bL_{i-1,i}^\top \bT$, $\bQ \leftarrow \bQ \bL_{i-1,i}$.
	\State Construct $\bR_{i,i-1}$ such that the fill-in $(i,i-1)$ entry of $\bT \bR_{i,i-1}$ is zero.
	\State Update $\bH \leftarrow \bH \bR_{i,i-1}$, $\bT \leftarrow \bT \bR_{i,i-1}$, $\bZ \leftarrow \bZ \bR_{i,i-1}$.
	\EndFor
	\EndFor
	\State Output $(\bH,\bT) = (\bQ^\top \bA \bZ, \bQ^\top \bB \bZ)$.
\end{algorithmic}
\end{algorithm}

An example of a $7\times 7$ matrix is shown as follows at $i=5$ and $j=2$, where $\boxtimes$ represents a value that is not necessarily zero, and \textbf{boldface} indicates the value has just been changed. The \textcolor{mylightbluetext}{blue} elements are introduced to zero from a nonzero value; while the \textcolor{brown}{brown} elements are modified to nonzero from a zero value:
$$
\setlength{\arraycolsep}{2pt}
\begin{sbmatrix}{\bL_{4,5}^\top\bH}
	\boxtimes & \boxtimes & \boxtimes & \boxtimes & \boxtimes & \boxtimes & \boxtimes \\
	\boxtimes & \boxtimes & \boxtimes & \boxtimes & \boxtimes & \boxtimes & \boxtimes \\
	0 & \boxtimes & \boxtimes & \boxtimes & \boxtimes & \boxtimes & \boxtimes \\
	0 & \bm{\boxtimes} & \bm{\boxtimes} & \bm{\boxtimes} & \bm{\boxtimes} & \bm{\boxtimes} & \bm{\boxtimes} \\
	0 & \textcolor{mylightbluetext}{\bm{0}} & \bm{\boxtimes} & \bm{\boxtimes} & \bm{\boxtimes} & \bm{\boxtimes} & \bm{\boxtimes} \\
	0 & 0 & \boxtimes & \boxtimes & \boxtimes & \boxtimes & \boxtimes \\
	0 & 0 & \boxtimes & \boxtimes & \boxtimes & \boxtimes & \boxtimes \\
\end{sbmatrix}
\begin{sbmatrix}{\bL_{4,5}^\top\bT}
	\boxtimes & \boxtimes & \boxtimes & \boxtimes & \boxtimes & \boxtimes & \boxtimes \\
	0 & \boxtimes & \boxtimes & \boxtimes & \boxtimes & \boxtimes & \boxtimes \\
	0 & 0 & \boxtimes & \boxtimes & \boxtimes & \boxtimes & \boxtimes \\
	0 & 0 & 0 & \bm{\boxtimes} & \bm{\boxtimes} & \bm{\boxtimes} & \bm{\boxtimes} \\
	0 & 0 & 0 & \textcolor{brown}{\bm{\boxtimes}} & \bm{\boxtimes} & \bm{\boxtimes} & \bm{\boxtimes} \\
	0 & 0 & 0 & 0 & 0 & \boxtimes & \boxtimes \\
	0 & 0 & 0 & 0 & 0 & 0 & \boxtimes \\
\end{sbmatrix}
\rightarrow
\begin{sbmatrix}{\bL_{4,5}^\top\bH\bR_{5,4}}
	\boxtimes & \boxtimes & \boxtimes & \bm{\boxtimes} & \bm{\boxtimes} & {\boxtimes} & \boxtimes \\
	\boxtimes & \boxtimes & \boxtimes & \bm{\boxtimes} & \bm{\boxtimes} & {\boxtimes} & \boxtimes \\
	\boxtimes & \boxtimes & \boxtimes & \bm{\boxtimes} & \bm{\boxtimes} & {\boxtimes} & \boxtimes \\
	0 & \boxtimes & \boxtimes & \bm{\boxtimes} & \bm{\boxtimes} & {\boxtimes} & \boxtimes \\
	0 & 0 & \boxtimes & \bm{\boxtimes} & \bm{\boxtimes} & {\boxtimes} & \boxtimes \\
	0 & 0 & \boxtimes & \bm{\boxtimes} & \bm{\boxtimes} & {\boxtimes} & \boxtimes \\
	0 & 0 & \boxtimes & \bm{\boxtimes} & \bm{\boxtimes} & {\boxtimes} & \boxtimes \\
\end{sbmatrix}
\begin{sbmatrix}{\bL_{4,5}^\top\bT\bR_{5,4}}
	\boxtimes & \boxtimes & \boxtimes & \bm{\boxtimes} & \bm{\boxtimes} & \boxtimes & \boxtimes \\
	0 & \boxtimes & \boxtimes & \bm{\boxtimes} & \bm{\boxtimes} & \boxtimes & \boxtimes \\
	0 & 0 & \boxtimes & \bm{\boxtimes} & \bm{\boxtimes} & \boxtimes & \boxtimes \\
	0 & 0 & 0 & \bm{\boxtimes} & \bm{\boxtimes} & \boxtimes & \boxtimes \\
	0 & 0 & 0 & \textcolor{mylightbluetext}{\bm{0}} & \bm{\boxtimes} & \boxtimes & \boxtimes \\
	0 & 0 & 0 & 0 & 0 & \boxtimes & \boxtimes \\
	0 & 0 & 0 & 0 & 0 & 0 & \boxtimes \\
\end{sbmatrix}.
$$


\section{Tridiagonal Decomposition: Hessenberg in Symmetric Matrices}
Similar to the Hessenberg decomposition, the \textit{tridiagonal decomposition} simplifies matrices and serves as a preliminary step for other algorithms (e.g., diagonalization of a matrix), reducing their computational complexity. We begin by formally defining tridiagonal matrices.

\index{Orthogonal}
\begin{definition}[Tridiagonal matrix\index{Tridiagonal matrix}]\label{definition:tridiagonal-hessenbert}
A \textit{tridiagonal matrix} is a square matrix, where all the entries below the subdiagonal and the entries above the superdiagonal are zero. In other words, a tridiagonal matrix is a special type of \textit{band matrix}. 

The concept of a tridiagonal matrix can also extend to rectangular matrices, with the form inferred from context.

Formally, consider a matrix $\bT\in \real^{n\times n}$ with  entries $t_{ij}$ for  $i,j\in \{1,2,\ldots, n\}$. The matrix $\bT$ is tridiagonal if  $t_{ij}=0$ for all $i\geq j+2$ and $i \leq j-2$.

Additionally, let $i$ denote the smallest positive integer such that $h_{i+1, i}=0$ for $i\in \{1,2,\ldots, n-1\}$. The matrix $\bT$ is termed \textbf{unreduced} if $i=n$.
\end{definition}

For example, the following $5\times 5$ matrix is a tridiagonal matrix:
$$
\footnotesize
\begin{sbmatrix}{possibly\,\, unreduced}
\boxtimes & \boxtimes & 0 & 0 & 0\\
\boxtimes & \boxtimes & \boxtimes & 0 & 0\\
0 & \boxtimes & \boxtimes & \boxtimes & 0\\
0 & 0 & \boxtimes & \boxtimes & \boxtimes\\
0 & 0 & 0 & \boxtimes & \boxtimes
\end{sbmatrix}
\qquad 
\text{or}
\qquad  
\footnotesize
\begin{sbmatrix}{reduced}
\boxtimes & \boxtimes & 0 & 0 & 0\\
\boxtimes & \boxtimes & \boxtimes & 0 & 0\\
0 & \boxtimes & \boxtimes & \boxtimes & 0\\
0 & 0 & \textcolor{mylightbluetext}{0} & \boxtimes & \boxtimes\\
0 & 0 & 0 & \boxtimes & \boxtimes
\end{sbmatrix}.
$$
Clearly, a tridiagonal matrix is a special case of an upper Hessenberg matrix. This allows us to formulate the tridiagonal decomposition as follows:
\begin{theoremHigh}[Tridiagonal decomposition\index{Decomposition: Tridiagonal}]\label{theorem:tridiagonal-decom}
Any $n\times n$ symmetric matrix $\bA$ can be decomposed as 
$$
\bA = \bQ\bT\bQ^\top, 
$$
where $\bT$ is a \textit{symmetric} tridiagonal matrix, and $\bQ$ is an orthogonal matrix.
\end{theoremHigh}
The existence of the tridiagonal decomposition follows directly from applying the Hessenberg decomposition to the symmetric matrix $\bA$.

\section{Properties of  Tridiagonal Decomposition}\label{section:tridiagonal-decomposition}
Like the Hessenberg decomposition, the tridiagonal decomposition is generally not unique. 
However, a similar implicit Q theorem can be stated.
\begin{theorem}[Implicit Q theorem for tridiagonal\index{Implicit Q theorem}]\label{theorem:implicit-q-tridiagonal}
Let $\bA\in \real^{n\times n}$ be a symmetric matrix with two tridiagonal decompositions: $\bA=\bU\bT\bU^\top=\bV\bG\bV^\top$, where $\bU=[\bu_1, \bu_2, \ldots, \bu_n]$ and $\bV=[\bv_1, \bv_2, \ldots, \bv_n]$ are the column partitions of $\bU$ and $\bV$, respectively. Suppose further that $k$ is the smallest positive integer such that $t_{k+1,k}=0$, where $t_{ij}$ is the entry $(i,j)$ of $\bT$. Then: 
\begin{itemize}
\item If $\bu_1=\bv_1$, then $\bu_i = \pm \bv_i$ and $|t_{i,i-1}| = |g_{i,i-1}|$ for $i\in \{2,3,\ldots,k\}$. 
\item When $k=n$, the tridiagonal matrix $\bT$ is called unreduced. However, if $k<n$, then $g_{k+1,k}=0$.
\end{itemize}
\end{theorem}
From the above theorem, we see that constraining the elements of the subdiagonal of $\bT$ to be positive (if possible) ensures that the tridiagonal decomposition $\bA=\bQ\bT\bQ^\top$ is uniquely determined by $\bA$ and the first column of $\bQ$. This is again analogous to the uniqueness of the QR decomposition (see Corollary~\ref{corollary:unique-qr}).

Similarly, a reduced tridiagonal decomposition can be derived using the Krylov matrix (Definition~\ref{definition:krylov-matrix}).
\index{Krylov matrix}
\begin{theorem}[Unreduced tridiagonal]\label{theorem:implicit-q-tridiagonal-v2}
Suppose there exists an orthogonal matrix $\bQ$ such that $\bA\in \real^{n\times n}$ can be factored as $\bA = \bQ\bT\bQ^\top$. Then, $\bQ^\top\bA\bQ=\bT$ is an unreduced tridiagonal matrix if and only if $\bR=\bQ^\top \bK(\bA, \bq_1, n)$ is nonsingular and upper triangular, where $\bq_1$ is the first column of $\bQ$.  

If $\bR$ is singular and $k$ is the smallest index satisfying $r_{kk}=0$, then $k$ is also the smallest index such that $t_{k,k-1}=0$.
\end{theorem}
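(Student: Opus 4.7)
The plan is to mirror the proof of Theorem~\ref{theorem:implicit-q-hessenberg-v2}, exploiting the fact that a symmetric Hessenberg matrix is automatically tridiagonal. First I would reduce everything to a computation inside $\bT$ by observing that $\bQ^\top \bq_1 = \be_1$ and $\bQ^\top \bA^k \bQ = \bT^k$, so that
\begin{equation*}
\bR = \bQ^\top \bK(\bA,\bq_1,n) = \bigl[\be_1,\ \bT\be_1,\ \bT^2\be_1,\ \ldots,\ \bT^{n-1}\be_1\bigr].
\end{equation*}
This is the one identity the entire proof rests on; once it is in hand, the problem becomes a purely structural statement about powers of a tridiagonal matrix applied to $\be_1$.

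For the forward direction, assuming $\bT$ is unreduced tridiagonal, I would show by induction on $k$ that $\bT^{k-1}\be_1$ is supported on the first $k$ coordinates, with its $k$-th entry equal to $t_{21}t_{32}\cdots t_{k,k-1}$. The inductive step uses only that $\bT$ is Hessenberg (so $\bT$ maps $\operatorname{span}(\be_1,\ldots,\be_{k})$ into $\operatorname{span}(\be_1,\ldots,\be_{k+1})$) together with the explicit lower-subdiagonal multiplication. Unreducedness makes every such product nonzero, so $\bR$ is upper triangular with nonzero diagonal, hence nonsingular.

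For the converse, assume $\bR$ is upper triangular and nonsingular, and let $\br_k$ denote its $k$-th column. Nonsingularity combined with triangularity gives $\operatorname{span}(\br_1,\ldots,\br_k)=\operatorname{span}(\be_1,\ldots,\be_k)$ for every $k$. The relation $\br_{k+1} = \bT\br_k$ therefore forces $\bT$ to map $\operatorname{span}(\be_1,\ldots,\be_k)$ into $\operatorname{span}(\be_1,\ldots,\be_{k+1})$, i.e., $\bT$ is upper Hessenberg. Here is where I use the ambient assumption of the chapter: since $\bA$ is symmetric, $\bT=\bQ^\top\bA\bQ$ is symmetric, and a symmetric Hessenberg matrix is tridiagonal. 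To upgrade tridiagonal to unreduced, I would expand $\br_k = \sum_{i=1}^k r_{ik}\be_i$, compare the $(k+1)$-th coordinate of $\bT\br_k=\br_{k+1}$, and use the Hessenberg structure to collapse the sum to $r_{k+1,k+1} = r_{kk}\, t_{k+1,k}$; since both $r_{kk}$ and $r_{k+1,k+1}$ are nonzero, so is $t_{k+1,k}$.

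Finally, the singular case: if $k$ is the smallest index with $r_{kk}=0$, the forward-direction formula $r_{jj}=t_{21}t_{32}\cdots t_{j,j-1}$ (valid for $j\le k$ since all earlier $\br_j$ still have the same inductive expansion) shows that $t_{21},\ldots,t_{k-1,k-2}$ are nonzero while $t_{21}\cdots t_{k,k-1}=0$, forcing $t_{k,k-1}=0$ and confirming minimality. I expect the only subtle step to be spelling out why, in the converse direction, the chain $\operatorname{span}(\br_1,\ldots,\br_k)=\operatorname{span}(\be_1,\ldots,\be_k)$ together with $\bT\br_k=\br_{k+1}$ is strong enough to conclude Hessenberg structure of $\bT$ (rather than merely that the orbit stays Hessenberg-looking); invoking symmetry afterwards to descend to tridiagonal is then essentially automatic.
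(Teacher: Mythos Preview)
Your proposal is correct and follows essentially the same approach as the paper, which simply states that the proof is identical to that of Theorem~\ref{theorem:implicit-q-hessenberg-v2}. You are in fact more careful than the paper in one respect: in the converse direction you explicitly invoke the symmetry of $\bA$ (hence of $\bT=\bQ^\top\bA\bQ$) to pass from Hessenberg to tridiagonal, a step the paper leaves implicit by merely deferring to the Hessenberg argument.
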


\begin{problemset}
\item Show that if $\lambda$ is a nonzero eigenvalue of $\bA\bB$, then it is also a nonzero eigenvalue of $\bB\bA$. Explain why this reasoning does not hold when $\lambda=0$. 

\item Show that if either $\bA$ or $\bB$ is invertible, then the matrices $\bA\bB$ and $\bB\bA$ are similar.

\item Let $\bA,\bB\in\real^{n\times n}$ be similar matrices. Show that $\adjugate(\bA)$ and $\adjugate(\bB)$ are also similar.
\item Let $\bA$ be given and $\bP$ be nonsingular. Show that if $\bP\bA\bP^{-1}$ is upper triangular, then the diagonal entries of $\bP\bA\bP^{-1}$ are the eigenvalues of $\bA$.
\item \textbf{Power property of similar matrices.} Let $\bB=\bP\bA\bP^{-1}$. Show that $\bB^k = \bP\bA^k\bP^{-1}$ for $k=1,2,\ldots$; that is, $\bB^k$ and $\bA^k$ are similar if $\bB$ and $\bA$ are similar. If one of $\bA$ and $\bB$ is nonsingular, show that $\bB^{-1} = \bP\bA^{-1}\bP^{-1}$ also holds.
\item In the main section, we transform the given matrix $\bA\in\real^{n\times n}$ into its orthogonal similarity transformation. Use Gaussian elimination matrices (see \eqref{equation:elimination_mat}) to transform into its similarity transformation. Discuss the complexity of your algorithm.	

\item Show  that if $\bA=\bE\bC\bE^{-1}$ and $\bB=\bF\bC\bF^{-1}$, then $\bA$ and $\bB$ are similar matrices.

\item Show that the matrices $\scriptsize\begin{bmatrix}
	4 & 1\\
	-1 & 0
\end{bmatrix}$
and 
$\scriptsize\begin{bmatrix}
	1 & 1\\
	0 & 3
\end{bmatrix}$
are similar.

\item \label{prob:hess_poly1} \textbf{Polynomial.} Let $\bA$ and $\bB$ be similar, and consider a polynomial $p(\bC)=\gamma_n\bC^n+\gamma_{n-1}\bC^{n-1}+\ldots+\gamma_0$. Show that 
$p(\bA)$ and $p(\bB)$ are also similar.
\item \label{prob:hess_poly2} \textbf{Polynomial.}  Let $\bA$ and a nonsingular $\bP$ be given, and consider a polynomial $p(\bC)=\gamma_n\bC^n+\gamma_{n-1}\bC^{n-1}+\ldots+\gamma_0$. Show that $p(\bP\bA\bP^{-1}) = \bP p(\bA)\bP^{-1}$.

\index{Similarity transformation}
\item \textbf{Similarity transformation.} Let $\bA\in\real^{n\times n}$ and let $\bP\in\real^{n\times n}$ be nonsingular. Show that $\det(\bP^{-1}\bA\bP - \lambda \bI)=\det(\bA-\lambda\bI)$. This again demonstrates that the eigenvalues remain unchanged under similarity transformations.

\item  Let $\bH\in\real^{n\times n}$ be an unreduced upper Hessenberg matrix. Show that $\rank(\bH-\lambda\bI)\geq n-1$ for any $\lambda\in\real$.
\item Let $\bH\in\real^{n\times n}$ be an unreduced upper Hessenberg matrix. Show that its geometric multiplicity is 1 for any eigenvalue (Definition~\ref{definition:eigen_multipli}). 

\index{Matrix bandwidth}
\item Let $\bA\in\real^{n\times n}$ be given with a lower bandwidth of $p$ (Definition~\ref{defin:matrix-bandwidth}). Provide an algorithm that computes the Hessenberg decomposition of $\bA$ using Householder reflectors or Givens rotations.

\item \textbf{Hessenberg LU.} Let $\bH\in\real^{n\times n}$ be upper Hessenberg. Show that there exists a set of Gaussian elimination matrices $\bE_1, \bE_2, \ldots, \bE_{n-1}$ with entries bounded by unity (see Equation~\eqref{equation:elimination_mat}) and a set of permutation matrices $\bP_1, \bP_2, \bP_{n-1}$ such that $\bE_{n-1}\bP_{n-1}\ldots \bE_2\bP_2\bE_1\bP_1\bH$ is upper triangular. Discuss the complexity of your algorithm.

\item \textbf{Hessenberg QR.} Let $\bH\in\real^{n\times n}$ be upper Hessenberg. Provide an algorithm that computes the QR decomposition of $\bH$ using Givens rotations with a complexity of $\mathcalO(n^2)$ flops.

\item Let $\bH \in \real^{n \times n}$ be upper Hessenberg with an eigenpair  $(\lambda, \bv)$. Provide an algorithm that computes an orthogonal matrix $\bQ$ such that
$ \bQ^\top\bH \bQ = \scriptsize\begin{bmatrix} \lambda & \bu^\top \\ \bzero & \bH_1 \end{bmatrix}, $
where $\bH_1 \in \real^{(n-1) \times (n-1)}$ is also upper Hessenberg. \textit{Hint: Consider $\bQ$ as a product of Givens rotations.}

\item  (Read Chapter~\ref{chapter:spectral-decomposition} first) Consider a $4\times 4$ Hessenberg matrix:
$$
\bH =
\begin{bmatrix}
	b_1 & c_1 & d_1 & e_1 \\
	a_1 & b_2 & c_2 & d_2 \\
	0   & a_2 & b_3 & c_3 \\
	0   & 0   & a_3 & b_4
\end{bmatrix}.
$$
Show that
\begin{itemize}
	\item If $a_1, a_2, a_3$ are all nonzero, and any eigenvalue $\lambda$ of $\bH$ is a real number, then the geometric multiplicity (Definition~\ref{definition:eigen_multipli}) of $\lambda$ must be equal to one.
	\item If $\bH$ is similar to a symmetric matrix $\bA$, and the algebraic multiplicity (Definition~\ref{definition:eigen_multipli}) of some eigenvalue $\lambda$ of $\bA$ is greater than 1, then at least one of  $a_1, a_2, a_3$  must be zero.
\end{itemize}
\index{Algebraic multiplicity}
\index{Geometric multiplicity}

\item Consult \citet{kagstrom2008blocked, bujanovic2018householder} and derive the complexity of the Hessenberg-triangular decomposition.

\item Let $\bA\in\real^{n\times n}$. Show that $\bA$ is idempotent (i.e., $\bA^2=\bA$) if and only if there exists an orthogonal matrix $\bB\in\real^{n\times n}$ such that $\bA$ and $\bB$ are similar.

\item Show that if $\bA\in\real^{n\times n}$ is similar to an orthogonal matrix, then $\bA^{-1}$ is similar to $\bA^\top$.

\item  Show that all Householder reflection matrices are similar.

\item Let $\bA\in\real^{m\times n}$ be a matrix with full column rank. The matrix $\bH=\bA(\bA^\top\bA)^{-1}\bA^\top$ is known as a projection matrix. Show that all projection matrices $\bH$ obtained by varying $\bA$ (but for particular values of $m$ and $n$) are similar.  \textit{Hint: Use the QR decomposition of $\bA$.}

\item Show that all Givens matrices with the same rotation angle $\theta$ are similar.



\item \citep{golub2013matrix} Let $\bA=\bS+\sigma\bu\bu^\top\in\real^{n\times n}$, where $\bS\in\real^{n\times n}$ is skew-symmetric (satisfying $\bA^\top=-\bA$), $\bu\in\real^n$, and $\sigma\in\real$. Show that there exists an orthogonal matrix $\bQ$ such that $\bQ^\top\bA\bQ = \bT+\sigma\be_1\be_1^\top$, where $\bT$ is tridiagonal and skew-symmetric.

\item Let $\bH\in\real^{n\times n}$ be upper Hessenberg. Provide an algorithm that computes the decomposition $\bH\bR=\bR\bT$, where $\bR$ is unit upper triangular, and $\bT$ is tridiagonal.
\item Based on the proofs of Theorems~\ref{theorem:implicit-q-hessenberg} and~\ref{theorem:implicit-q-hessenberg-v2}, prove Theorems~\ref{theorem:implicit-q-tridiagonal} and \ref{theorem:implicit-q-tridiagonal-v2}.

\item Let $\gamma_0, \gamma_1, \ldots, \gamma_n>0$. Show that the following $n\times n$ tridiagonal matrix is positive definite:
$$
\scriptsize
\begin{bmatrix}
	\gamma_0+\gamma_1 & -\gamma_1 & 0 & \ldots & 0\\
	-\gamma_1 & \gamma_1+\gamma_2 & -\gamma_2 & \ldots & 0\\
	0 & -\gamma_2 & \gamma_2+\gamma_3 & \ldots & 0\\
	\vdots & \vdots & \vdots & \ddots & \vdots\\
	0 & 0 & 0 & \ldots & \gamma_{n-1}+\gamma_n
\end{bmatrix}.
$$
\textit{Hint: Consider the leading principal minors.}

\index{Toeplitz matrix}
\item \citep{higham2002accuracy} Let  $\bT_n(a,b,c)\in\real^{n\times n}$ be a tridiagonal matrix defined as:
$$
\textbf{(toeplitz tridiagonal matrix)}:
\qquad 
\bT_n(a,b,c)=
\scriptsize
\begin{bmatrix}
	b & c & 0 & \ldots & 0\\
	a & b & c & \ldots & 0\\
	0 & a & b & \ldots & 0\\
	\vdots & \vdots & \vdots & \ddots & \vdots\\
	0 & 0 & 0 & \ldots & b
\end{bmatrix}.
$$
Show that the eigenvalues of $\bT_n(a,b,c)$ are $b+2\sqrt{ac}\cos(\frac{k\pi}{n+1})$ for $k\in\{1,2,\ldots,n\}$.

\item \citep{noschese2013tridiagonal} Show that the matrix $\bT_n(a,b,c)$ is normal ($\bT_n^\top\bT_n = \bT_n\bT_n^\top$) if and only if $\abs{a}=\abs{c}$.

\item  Let $\bT\in\real^{n\times n}$ be an unreduced tridiagonal matrix. Show that $\rank(\bT-\lambda\bI)\geq n-1$ for any $\lambda\in\real$.
\item Let $\bT\in\real^{n\times n}$ be an unreduced tridiagonal matrix. Show that its geometric multiplicity (Definition~\ref{definition:eigen_multipli}) is 1 for any eigenvalue. 

\item Let $\bA\in\real^{n\times n}$ be tridiagonal. Show that if $a_{i,i+1}a_{i+1,i} > 0$ for all $i\in\{1,2,\ldots, n - 1\}$, then $\bA$ has $n$ distinct real eigenvalues. Moreover, show that if $a_{i,i+1}a_{i+1,i} \geq 0$ for all  $i\in\{1,2,\ldots, n - 1\}$, then all  eigenvalues of $\bA$ are real. \textit{Hint: Use the Jordan decomposition discussed in Chapter~\ref{chapter:eig_jordan}.}
\end{problemset}

\chapter{Bidiagonal Decomposition}
\section{Bidiagonal Decomposition}\label{section:bidiagonal-decompo}

For a non-square symmetric matrix, reducing it to tridiagonal form is not straightforward. However, we can take an alternative approach by considering a decomposition that involves two distinct orthogonal matrices. To begin, we formally define upper bidiagonal matrices:
\index{Bidiagonal matrix}
\index{Upper bidiagonal matrix}
\begin{definition}[Upper bidiagonal matrix]\label{definition:bidiagonal-matrix}	
An \textit{upper bidiagonal matrix} or simply \textit{bidiagonal matrix} is a square matrix characterized by a banded structure, containing nonzero entries only along the \textit{main diagonal} and the \textit{superdiagonal} (i.e., the diagonal directly above the main diagonal). In this case, the matrix contains exactly two diagonals with nonzero entries.

If the nonzero entries instead appear on the diagonal directly below the main diagonal (i.e., the \textit{subdiagonal}), the matrix is referred to as a  \textit{lower bidiagonal matrix}.

This definition can be naturally extended to rectangular matrices, where the bidiagonal structure can be implied based on the context.
\end{definition}

As an example, consider a $7\times 5$ upper bidiagonal matrix. In such a matrix, all entries below the main diagonal and above the superdiagonal are zero:
$$
\footnotesize
\begin{bmatrix}
\boxtimes & \boxtimes & 0 & 0 & 0\\
0 & \boxtimes & \boxtimes & 0 & 0\\
0 & 0 & \boxtimes & \boxtimes & 0\\
0 & 0 & 0 & \boxtimes & \boxtimes\\
0 & 0 & 0 & 0 & \boxtimes\\
0 & 0 & 0 & 0 & 0\\
0 & 0 & 0 & 0 & 0
\end{bmatrix}.
$$

We now state the following result regarding bidiagonal decomposition:
\begin{theoremHigh}[Bidiagonal decomposition\index{Decomposition: Bidiagonal}]\label{theorem:Golub-Kahan-Bidiagonalization-decom}
Any $m\times n$ matrix $\bA$ can be decomposed as 
$$
\bA = \bU\bB\bV^\top, 
$$
where $\bB$ is an upper bidiagonal matrix, and $\bU\in\real^{m\times m}$ and $\bV\in\real^{n\times n}$ are orthogonal matrices.
\end{theoremHigh}
The process of bidiagonalization shares structural similarities with the singular value decomposition (SVD). The key difference lies in the form of $\bB$, which, in the bidiagonal decomposition, contains nonzero entries specifically on the superdiagonal.  This distinction plays an important role in the numerical computation of the singular value decomposition \citep{golub2013matrix, lu2021numerical}.

\section{Existence of  Bidiagonal Decomposition: Three Approaches}\label{section:exist_bidia_gk}

In earlier discussions, we employed Householder reflectors to triangularize matrices, achieving the QR decomposition by introducing zeros below the main diagonal and the Hessenberg decomposition by introducing zeros below the subdiagonal. A similar strategy can be applied to compute the bidiagonal decomposition.

\subsection*{\textbf{First Step 1.1: Introduce Zeros for the First Column}}	
Let $\bA=[\ba_1, \ba_2, \ldots, \ba_n]$ be the column partition of $\bA$, where  each $\ba_i \in \real^{m}$. 
We  construct the Householder reflector as follows:
$$
r_1 = \norm{\ba_1}, \qquad \bu_1 = \frac{\ba_1 - r_1 \be_1}{\norm{\ba_1 - r_1 \be_1}} ,\qquad \text{and}\qquad \bH_1 = \bI - 2\bu_1\bu_1^\top \in \textcolor{mylightbluetext}{\real^{m\times m}},
$$
where $\be_1$ here is the first standard basis vector in $\textcolor{mylightbluetext}{\real^{m}}$, i.e., $\be_1=[1;0;0;\ldots;0]\in \textcolor{mylightbluetext}{\real^{m}}$.
The matrix $\bH_1$ is  symmetric and orthogonal  (from the definition of Householder reflectors).
Applying $\bH_1$ to $\bA$ introduces zeros in the first column of $\bA$ below the $(1,1)$ entry, effectively reflecting $\ba_1$ to $r_1 \be_1$.

For example, consider a $7 \times 5$ matrix $\bA$. The transformation is illustrated below, where $\boxtimes$ represents a potentially  nonzero value, and \textbf{boldface} indicates entries  modified by the transformation:
$$
\begin{aligned}
	\footnotesize
\begin{sbmatrix}{\bA}
\boxtimes & \boxtimes & \boxtimes & \boxtimes & \boxtimes \\
\boxtimes & \boxtimes & \boxtimes & \boxtimes & \boxtimes\\
\boxtimes & \boxtimes & \boxtimes & \boxtimes & \boxtimes\\
\boxtimes & \boxtimes & \boxtimes & \boxtimes & \boxtimes\\
\boxtimes & \boxtimes & \boxtimes & \boxtimes & \boxtimes\\
\boxtimes & \boxtimes & \boxtimes & \boxtimes & \boxtimes\\
\boxtimes & \boxtimes & \boxtimes & \boxtimes & \boxtimes
\end{sbmatrix}
\stackrel{\bH_1\times}{\rightarrow}
&\footnotesize\begin{sbmatrix}{\bH_1\bA}
\bm{\boxtimes} & \bm{\boxtimes} & \bm{\boxtimes} & \bm{\boxtimes} & \bm{\boxtimes}\\
\bm{0} & \bm{\boxtimes} & \bm{\boxtimes} & \bm{\boxtimes} & \bm{\boxtimes}\\
\bm{0} & \bm{\boxtimes} & \bm{\boxtimes} & \bm{\boxtimes} & \bm{\boxtimes}\\
\bm{0} & \bm{\boxtimes} & \bm{\boxtimes} & \bm{\boxtimes} & \bm{\boxtimes}\\
\bm{0} & \bm{\boxtimes} & \bm{\boxtimes} & \bm{\boxtimes} & \bm{\boxtimes}\\
\bm{0} & \bm{\boxtimes} & \bm{\boxtimes} & \bm{\boxtimes} & \bm{\boxtimes}\\
\bm{0} & \bm{\boxtimes} & \bm{\boxtimes} & \bm{\boxtimes} & \bm{\boxtimes}\\
\end{sbmatrix}.
\end{aligned}
$$
At this stage, the process is similar to the steps used in the QR decomposition using Householder reflectors, as described in Section~\ref{section:qr-via-householder}. To proceed, introducing zeros above the superdiagonal in  $\bH_1\bA$ is equivalent to introducing zeros below the subdiagonal of $(\bH_1\bA)^\top$.

\subsection*{\textbf{First Step 1.2: Introduce Zeros for the First Row}}	
Now, consider the \textit{transpose} of $\bH_1\bA$, denoted as $(\bH_1\bA)^\top = \bA^\top\bH_1^\top \in \real^{n \times m}$. The column partition is given by $\bA^\top\bH_1^\top = [\bz_1, \bz_2, \ldots, \bz_m]$, where each $\bz_i \in \real^n$. 
Let $\bar{\bz}_1, \bar{\bz}_2, \ldots, \bar{\bz}_m \in \real^{n-1}$ represent the vectors obtained by removing the first component of each $\bz_i$. 
We can construct the Householder reflector as follows:
$$
r_1 = \norm{\bar{\bz}_1}, \qquad \bv_1 = \frac{\bar{\bz}_1 - r_1 \be_1}{\norm{\bar{\bz}_1 - r_1 \be_1}},  \qquad \text{and}\qquad \widetilde{\bL}_1 = \bI - 2\bv_1\bv_1^\top \in\textcolor{mylightbluetext}{\real^{(n-1)\times (n-1)}},
$$
where $\be_1$ now denotes the first standard basis vector in $\textcolor{mylightbluetext}{\real^{n-1}}$. To introduce zeros below the subdiagonal and operate on the submatrix $(\bA^\top\bH_1^\top)_{2:n,1:m}$, we extend  the Householder reflector into
$
\bL_1 = \scriptsize\begin{bmatrix}
1 &\bzero \\
\bzero & \widetilde{\bL}_1
\end{bmatrix},
$
where both $\bL_1$ and $\widetilde{\bL}_1$ are orthogonal  and  symmetric (by the definition of  Householder reflectors).
In this case, multiplying $(\bA^\top\bH_1^\top)$ by $\bL_1$ on the left introduces zeros in the first column of $(\bA^\top\bH_1^\top)$ below entry (2,1), i.e., reflect $\bar{\bz}_1$ to $r_1\be_1$. The first row of $(\bA^\top\bH_1^\top)$ remains unchanged, as noted in Remark~\ref{remark:left-right-identity}, ensuring that the zeros introduced in step (1.1) are preserved.

Returning to the original (untransposed) matrix $\bH_1\bA$, multiplying on the right by $\bL_1^\top$ introduces zeros in the first row to the right of entry $(1,2)$.
To illustrate, using the same $7\times 5$ matrix, the transformation is shown below:
$$
\begin{aligned}
	\footnotesize
\begin{sbmatrix}{\bA^\top\bH_1^\top}
\boxtimes & 0 & 0 & 0 & 0 & 0 & 0 \\
\boxtimes & \boxtimes & \boxtimes & \boxtimes & \boxtimes & \boxtimes & \boxtimes\\
\boxtimes & \boxtimes & \boxtimes & \boxtimes & \boxtimes & \boxtimes & \boxtimes\\
\boxtimes & \boxtimes & \boxtimes & \boxtimes & \boxtimes & \boxtimes & \boxtimes\\
\boxtimes & \boxtimes & \boxtimes & \boxtimes & \boxtimes & \boxtimes & \boxtimes\\
\end{sbmatrix}
\stackrel{\bL_1\times}{\rightarrow}
\footnotesize
\begin{sbmatrix}{\bL_1 \bA^\top\bH_1^\top}
\boxtimes & 0 & 0 & 0 & 0 & 0 & 0 \\
\bm{\boxtimes} & \bm{\boxtimes} & \bm{\boxtimes} & \bm{\boxtimes} & \bm{\boxtimes} & \bm{\boxtimes} & \bm{\boxtimes}\\
\bm{0} & \bm{\boxtimes} & \bm{\boxtimes} & \bm{\boxtimes} & \bm{\boxtimes} & \bm{\boxtimes} & \bm{\boxtimes}\\
\bm{0} & \bm{\boxtimes} & \bm{\boxtimes} & \bm{\boxtimes} & \bm{\boxtimes} & \bm{\boxtimes} & \bm{\boxtimes}\\
\bm{0} & \bm{\boxtimes} & \bm{\boxtimes} & \bm{\boxtimes} & \bm{\boxtimes} & \bm{\boxtimes} & \bm{\boxtimes}\\
\end{sbmatrix}
\stackrel{(\cdot)^\top}{\rightarrow}
\footnotesize
\begin{sbmatrix}{\bH_1\bA\bL_1^\top }
\boxtimes & \bm{\boxtimes} & \bm{0} & \bm{0} & \bm{0} \\
0 & \bm{\boxtimes} & \bm{\boxtimes} & \bm{\boxtimes} & \bm{\boxtimes}\\
0 & \bm{\boxtimes} & \bm{\boxtimes} & \bm{\boxtimes} & \bm{\boxtimes}\\
0 & \bm{\boxtimes} & \bm{\boxtimes} & \bm{\boxtimes} & \bm{\boxtimes}\\
0 & \bm{\boxtimes} & \bm{\boxtimes} & \bm{\boxtimes} & \bm{\boxtimes}\\
0 & \bm{\boxtimes} & \bm{\boxtimes} & \bm{\boxtimes} & \bm{\boxtimes}\\
0 & \bm{\boxtimes} & \bm{\boxtimes} & \bm{\boxtimes} & \bm{\boxtimes}\\
\end{sbmatrix}.
\end{aligned}
$$

In summary, $\bH_1\bA\bL_1^\top$ achieves the first step by introducing zeros to both the first column and the first row of $\bA$.
The second step is similar, which introduces zeros to both the second column and the second row of $\bA$.
\subsection*{\textbf{Second Step 2.1: Introduce Zeros for the Second Column}}	
Let $\bB = \bH_1\bA\bL_1^\top$, where all entries  in the first column  below entry (1,1) and all entries in the first row to the right of (1,2) are zero.
The goal of this step is to introduce zeros in the second column below the entry at $(2,2)$.
Let $\bB_2 = \bB_{2:m,2:n}=[\bb_1, \bb_2, \ldots, \bb_{n-1}] \in \real^{(m-1)\times (n-1)}$. 
We can again construct a Householder reflector as follows:
$$
r_1 = \norm{\bb_1},\qquad	\bu_2 = \frac{\bb_1 - r_1 \be_1}{\norm{\bb_1 - r_1 \be_1}}, \qquad  \text{and}\qquad \widetilde{\bH}_2 = \bI - 2\bu_2\bu_2^\top\in \textcolor{mylightbluetext}{\real^{(m-1)\times (m-1)}},
$$
where $\be_1$ now is the first standard basis vector for $\textcolor{mylightbluetext}{\real^{m-1}}$. To introduce zeros below the main diagonal and operate on the submatrix $\bB_{2:m,2:n}$, we append the Householder reflector into
$
\bH_2 = \scriptsize\begin{bmatrix}
1 &\bzero \\
\bzero & \widetilde{\bH}_2
\end{bmatrix}.
$
This transformation ensures that $\bH_2(\bH_1\bA\bL_1^\top)$ does not affect the first row of $(\bH_1\bA\bL_1^\top)$, as shown in Remark~\ref{remark:left-right-identity}. Additionally, because a Householder reflector preserves zero vectors, the zeros in the first column remain unchanged.

Continuing from the previous example, the process applied to the $7\times 5$ matrix is illustrated below:
$$
\begin{aligned}
\footnotesize
\begin{sbmatrix}{\bH_1\bA\bL_1^\top }
\boxtimes & \boxtimes & 0 & 0 & 0 \\
0 & \boxtimes & \boxtimes & \boxtimes & \boxtimes\\
0 & \boxtimes & \boxtimes & \boxtimes & \boxtimes\\
0 & \boxtimes & \boxtimes & \boxtimes & \boxtimes\\
0 & \boxtimes & \boxtimes & \boxtimes & \boxtimes\\
0 & \boxtimes & \boxtimes & \boxtimes & \boxtimes\\
0 & \boxtimes & \boxtimes & \boxtimes & \boxtimes\\
\end{sbmatrix}
\stackrel{\bH_2\times }{\rightarrow}
\footnotesize
\begin{sbmatrix}{\bH_2\bH_1\bA\bL_1^\top }
\boxtimes & \boxtimes & 0& 0 & 0 \\
0 & \bm{\boxtimes} & \bm{\boxtimes} & \bm{\boxtimes} & \bm{\boxtimes}\\
0 & \bm{0} & \bm{\boxtimes} & \bm{\boxtimes} & \bm{\boxtimes}\\
0 & \bm{0} & \bm{\boxtimes} & \bm{\boxtimes} & \bm{\boxtimes}\\
0 & \bm{0} & \bm{\boxtimes} & \bm{\boxtimes} & \bm{\boxtimes}\\
0 & \bm{0} & \bm{\boxtimes} & \bm{\boxtimes} & \bm{\boxtimes}\\
0 & \bm{0} & \bm{\boxtimes} & \bm{\boxtimes} & \bm{\boxtimes}\\
\end{sbmatrix}.
\end{aligned}
$$

\subsection*{\textbf{Second Step 2.2: Introduce Zeros for the Second Row}}	
Following step (1.2), consider the \textit{transpose} of $\bH_2\bH_1\bA\bL_1^\top$, given by $\bL_1\bA^\top\bH_1^\top\bH_2^\top \in \real^{n\times m}$. Assume the column partition of $\bL_1\bA^\top\bH_1^\top\bH_2^\top$ is  $\bL_1\bA^\top\bH_1^\top\bH_2^\top = [\bx_1, \bx_2, \ldots, \bx_m]$, where each $\bx_i \in \real^n$. 
Let  $\bar{\bx}_1, \bar{\bx}_2, \ldots, \bar{\bx}_m \in \real^{n-2}$ denote the vectors obtained by removing the first two components of each $\bx_i$. 
We construct the next Householder reflector:
$$
r_1 = \norm{\bar{\bx}_1},\qquad \bv_2 = \frac{\bar{\bx}_1 - r_1 \be_1}{\norm{\bar{\bx}_1 - r_1 \be_1}}, \qquad \text{and}\qquad \widetilde{\bL}_2 = \bI - 2\bv_2\bv_2^\top \in \textcolor{mylightbluetext}{\real^{(n-2)\times (n-2)}},
$$
where $\be_1$ now is the first standard basis vector for $\textcolor{mylightbluetext}{\real^{n-2}}$. To introduce zeros below the subdiagonal and operate on the submatrix $(\bL_1\bA^\top\bH_1\bH_2)_{3:n,1:m}$, we append the Householder reflector into
$
\bL_2 = \scriptsize\begin{bmatrix}
\bI_2 &\bzero \\
\bzero & \widetilde{\bL}_2
\end{bmatrix},
$
where $\bI_2$ is the $2\times 2$ identity matrix. 
By definition, $\bL_2$ and $\widetilde{\bL}_2$ are both symmetric and orthogonal matrices.
In this case, $\bL_2(\bL_1\bA^\top\bH_1^\top\bH_2^\top)$ will introduce zeros in the second column of $(\bL_1\bA^\top\bH_1^\top\bH_2^\top)$ below entry (3,2). The first two rows of $(\bL_1\bA^\top\bH_1^\top\bH_2^\top)$ remain unaffected and kept unchanged, as noted in Remark~\ref{remark:left-right-identity}. {Furthermore, its first column will be kept unchanged as well}. 

Returning to the   \textit{untransposed} matrix $\bH_2\bH_1\bA\bL_1^\top$, multiplying on the right by $\bL_2^\top$   introduces zeros in the second row to the right of entry (2,3). As before, the transformation for the $7\times 5$ matrix is illustrated below:
$$
\begin{aligned}
\footnotesize
\begin{sbmatrix}{\bL_1\bA^\top\bH_1^\top\bH_2^\top }
\boxtimes & 0 & 0 & 0 & 0 & 0 & 0 \\
\boxtimes & \boxtimes & 0 & 0 & 0 & 0 & 0\\
0 & \boxtimes & \boxtimes & \boxtimes & \boxtimes & \boxtimes & \boxtimes\\
0 & \boxtimes & \boxtimes & \boxtimes & \boxtimes & \boxtimes & \boxtimes\\
0 & \boxtimes & \boxtimes & \boxtimes & \boxtimes & \boxtimes & \boxtimes\\
\end{sbmatrix}
\stackrel{\bL_2\times }{\rightarrow}
\footnotesize
\begin{sbmatrix}{\bL_2\bL_1\bA^\top\bH_1^\top\bH_2^\top }
\boxtimes & 0 & 0 & 0 & 0 & 0 & 0 \\
\boxtimes & \boxtimes & 0 & 0 & 0 & 0 & 0\\
0 & \bm{\boxtimes} & \bm{\boxtimes} & \bm{\boxtimes} & \bm{\boxtimes} & \bm{\boxtimes} & \bm{\boxtimes}\\
0 & \bm{0} & \bm{\boxtimes} & \bm{\boxtimes} & \bm{\boxtimes} & \bm{\boxtimes} & \bm{\boxtimes}\\
0 & \bm{0} & \bm{\boxtimes} & \bm{\boxtimes} & \bm{\boxtimes} & \bm{\boxtimes} & \bm{\boxtimes}\\
\end{sbmatrix}
\stackrel{(\cdot)^\top }{\rightarrow}
\footnotesize
\begin{sbmatrix}{\bH_2\bH_1\bA\bL_1^\top\bL_2^\top}
\boxtimes & \boxtimes & 0 & 0 & 0\\
0 & \boxtimes & \bm{\boxtimes} & \bm{0} & \bm{0} \\
0 & 0 & \bm{\boxtimes} & \bm{\boxtimes} & \bm{\boxtimes}\\
0 & 0 & \bm{\boxtimes} & \bm{\boxtimes} & \bm{\boxtimes}\\
0 & 0 & \bm{\boxtimes} & \bm{\boxtimes} & \bm{\boxtimes}\\
0 & 0 & \bm{\boxtimes} & \bm{\boxtimes} & \bm{\boxtimes}\\
0 & 0 & \bm{\boxtimes} & \bm{\boxtimes} & \bm{\boxtimes}
\end{sbmatrix}.
\end{aligned}
$$

Thus, $\bH_2(\bH_1\bA\bL_1^\top)\bL_2^\top$ completes the second step by introducing zeros into the second column and row of $\bA$.

\index{Golub--Kahan process}
This process can be continued iteratively. It is important to observe that there are $n$ left  reflectors, denoted as $\bH_i$, and $n-2$ right  reflectors, denoted as  $\bL_i$ (suppose $m>n$ for simplicity). 
This alternating application of left and right reflectors is commonly referred to as the \textit{Golub--Kahan bidiagonalization} \citep{golub1965calculating}. Ultimately, this procedure yields the following bidiagonalized form:
$$
\bB = \bH_{n} \bH_{n-1}\ldots\bH_1 \bA\bL_1^\top\bL_2^\top\ldots\bL_{n-2}^\top.
$$
Since all  $\bH_i$'s and $\bL_i$'s are symmetric and orthogonal by definition, this can also be expressed as:
$$
\bB =\bH_{n} \bH_{n-1}\ldots\bH_1 \bA\bL_1\bL_2\ldots\bL_{n-2}.
$$
The complete procedure for the $7\times 5$ matrix is shown as follows:
$$
\begin{aligned}
	\footnotesize
\begin{sbmatrix}{\bA}
\boxtimes & \boxtimes & \boxtimes & \boxtimes & \boxtimes \\
\boxtimes & \boxtimes & \boxtimes & \boxtimes & \boxtimes\\
\boxtimes & \boxtimes & \boxtimes & \boxtimes & \boxtimes\\
\boxtimes & \boxtimes & \boxtimes & \boxtimes & \boxtimes\\
\boxtimes & \boxtimes & \boxtimes & \boxtimes & \boxtimes\\
\boxtimes & \boxtimes & \boxtimes & \boxtimes & \boxtimes\\
\boxtimes & \boxtimes & \boxtimes & \boxtimes & \boxtimes
\end{sbmatrix}
\stackrel{\bH_1\times}{\rightarrow}
&\footnotesize\begin{sbmatrix}{\bH_1\bA}
\bm{\boxtimes} & \bm{\boxtimes} & \bm{\boxtimes} & \bm{\boxtimes} & \bm{\boxtimes}\\
\bm{0} & \bm{\boxtimes} & \bm{\boxtimes} & \bm{\boxtimes} & \bm{\boxtimes}\\
\bm{0} & \bm{\boxtimes} & \bm{\boxtimes} & \bm{\boxtimes} & \bm{\boxtimes}\\
\bm{0} & \bm{\boxtimes} & \bm{\boxtimes} & \bm{\boxtimes} & \bm{\boxtimes}\\
\bm{0} & \bm{\boxtimes} & \bm{\boxtimes} & \bm{\boxtimes} & \bm{\boxtimes}\\
\bm{0} & \bm{\boxtimes} & \bm{\boxtimes} & \bm{\boxtimes} & \bm{\boxtimes}\\
\bm{0} & \bm{\boxtimes} & \bm{\boxtimes} & \bm{\boxtimes} & \bm{\boxtimes}\\
\end{sbmatrix}
\stackrel{\times\bL_1^\top}{\rightarrow}
\footnotesize
\begin{sbmatrix}{\bH_1\bA\bL_1^\top}
\boxtimes & \bm{\boxtimes} & \bm{0} & \bm{0} & \bm{0} \\
0 & \bm{\boxtimes} & \bm{\boxtimes} & \bm{\boxtimes} & \bm{\boxtimes}\\
0 & \bm{\boxtimes} & \bm{\boxtimes} & \bm{\boxtimes} & \bm{\boxtimes}\\
0 & \bm{\boxtimes} & \bm{\boxtimes} & \bm{\boxtimes} & \bm{\boxtimes}\\
0 & \bm{\boxtimes} & \bm{\boxtimes} & \bm{\boxtimes} & \bm{\boxtimes}\\
0 & \bm{\boxtimes} & \bm{\boxtimes} & \bm{\boxtimes} & \bm{\boxtimes}\\
0 & \bm{\boxtimes} & \bm{\boxtimes} & \bm{\boxtimes} & \bm{\boxtimes}\\
\end{sbmatrix}\\
\end{aligned}
$$

$$
\begin{aligned}
\qquad \qquad \qquad \qquad \gap \stackrel{\bH_2\times}{\rightarrow}
&\footnotesize\begin{sbmatrix}{\bH_2\bH_1\bA\bL_1^\top}
\boxtimes & \boxtimes & \boxtimes & \boxtimes & \boxtimes \\
0 & \bm{\boxtimes} & \bm{\boxtimes} & \bm{\boxtimes} & \bm{\boxtimes}\\
0 & \bm{0} & \bm{\boxtimes} & \bm{\boxtimes} & \bm{\boxtimes}\\
0 & \bm{0} & \bm{\boxtimes} & \bm{\boxtimes} & \bm{\boxtimes}\\
0 & \bm{0} & \bm{\boxtimes} & \bm{\boxtimes} & \bm{\boxtimes}\\
0 & \bm{0} & \bm{\boxtimes} & \bm{\boxtimes} & \bm{\boxtimes}\\
0 & \bm{0} & \bm{\boxtimes} & \bm{\boxtimes} & \bm{\boxtimes}
\end{sbmatrix}
\stackrel{\times\bL_2^\top}{\rightarrow}
\footnotesize
\begin{sbmatrix}{\bH_2\bH_1\bA\bL_1^\top\bL_2^\top}
\boxtimes & \boxtimes & 0 & 0 & 0\\
0 & \boxtimes & \bm{\boxtimes} & \bm{0} & \bm{0} \\
0 & 0 & \bm{\boxtimes} & \bm{\boxtimes} & \bm{\boxtimes}\\
0 & 0 & \bm{\boxtimes} & \bm{\boxtimes} & \bm{\boxtimes}\\
0 & 0 & \bm{\boxtimes} & \bm{\boxtimes} & \bm{\boxtimes}\\
0 & 0 & \bm{\boxtimes} & \bm{\boxtimes} & \bm{\boxtimes}\\
0 & 0 & \bm{\boxtimes} & \bm{\boxtimes} & \bm{\boxtimes}
\end{sbmatrix}\\
\end{aligned}
$$
$$
\begin{aligned}
\qquad \qquad \qquad \qquad \qquad  \stackrel{\bH_3\times}{\rightarrow}
&\footnotesize\begin{sbmatrix}{\bH_3\bH_2\bH_1\bA\bL_1^\top\bL_2^\top}
\boxtimes & \boxtimes & \boxtimes & \boxtimes & \boxtimes \\
0 & \boxtimes & \boxtimes & \boxtimes & \boxtimes \\
0 & 0 & \bm{\boxtimes} & \bm{\boxtimes} & \bm{\boxtimes}\\
0 & 0 & \bm{0} & \bm{\boxtimes} & \bm{\boxtimes}\\
0 & 0 & \bm{0} & \bm{\boxtimes} & \bm{\boxtimes}\\
0 & 0 & \bm{0} & \bm{\boxtimes} & \bm{\boxtimes}\\
0 & 0 & \bm{0} & \bm{\boxtimes} & \bm{\boxtimes}
\end{sbmatrix}
\stackrel{\times\bL_3^\top}{\rightarrow}
\footnotesize
\begin{sbmatrix}{\bH_3\bH_2\bH_1\bA\bL_1^\top\bL_2^\top\bL_3^\top}
\boxtimes & \boxtimes & 0 & 0 & 0\\
\boxtimes & \boxtimes & \boxtimes & 0 & 0\\
0 & \boxtimes & \boxtimes & \bm{\boxtimes} & \bm{0}\\
0 & 0 & \boxtimes & \bm{\boxtimes} & \bm{\boxtimes}\\
0 & 0 & 0 & \bm{\boxtimes} & \bm{\boxtimes}\\
0 & 0 & 0 & \bm{\boxtimes} & \bm{\boxtimes}\\
0 & 0 & 0 & \bm{\boxtimes} & \bm{\boxtimes}
\end{sbmatrix}\\
\end{aligned}
$$
$$
\begin{aligned}
	\qquad \qquad \qquad \qquad \qquad \qquad 
\stackrel{\bH_4\times}{\rightarrow}
&\footnotesize\begin{sbmatrix}{\bH_4\bH_3\bH_2\bH_1\bA\bL_1^\top\bL_2\bL_3^\top}
\boxtimes & \boxtimes & 0 & 0 & 0 \\
0 & \boxtimes & \boxtimes & 0 & 0 \\
0 & 0 & \boxtimes& \boxtimes & 0\\
0 & 0 & 0 & \bm{\boxtimes} & \bm{\boxtimes}\\
0 & 0 & 0 & \bm{0} & \bm{\boxtimes}\\
0 & 0 & 0 & \bm{0} & \bm{\boxtimes}\\
0 & 0 & 0 & \bm{0} & \bm{\boxtimes}
\end{sbmatrix}
\stackrel{\bH_5\times}{\rightarrow}
\footnotesize
\begin{sbmatrix}{\bH_5\bH_4\bH_3\bH_2\bH_1\bA\bL_1^\top\bL_2\bL_3^\top}
\boxtimes & \boxtimes & 0 & 0 & 0 \\
0 & \boxtimes & \boxtimes & 0 & 0 \\
0 & 0 & \boxtimes& \boxtimes & 0\\
0 & 0 & 0 & \boxtimes & \boxtimes\\
0 & 0 & 0 & 0 & \bm{\boxtimes}\\
0 & 0 & 0 & 0 & \bm{0}\\
0 & 0 & 0 & 0 & \bm{0}
\end{sbmatrix}.
\end{aligned}
$$


In our implementation, each right Householder reflector $\bL_i$ follows immediately after its corresponding left reflector $\bH_i$. A common mistake is to apply all the left reflectors first, followed by all the right reflectors, which essentially combines a QR decomposition with a Hessenberg decomposition. However, this method is problematic because applying the right reflector $\bL_1$ after all left reflectors would undo the zeros introduced by the latter. To preserve the structure, the left and right reflectors must be applied in an interleaved fashion to maintain and reinforce the zero patterns.

Although the Golub--Kahan bidiagonalization is effective, it is not the most computationally efficient approach for calculating a bidiagonal decomposition.
For an $m\times n$ matrix with $m>n$, the method requires $\sim 4mn^2-\frac{4}{3}n^3$ flops to compute a bidiagonal decomposition. Furthermore, if the explicit computation of the orthogonal matrices $\bU$ and $\bV$ is also required, an additional $\sim 4m^2n-2mn^2 + 2n^3$ flops are needed \citep{lu2021numerical}.

\paragraph{LHC Bidiagonalization.}
Nevertheless, when $m\gg n$, we can extract a square triangular matrix through QR decomposition and then apply the Golub--Kahan bidiagonalization to the resulting $n\times n$ square triangular matrix. This procedure, known as the \textit{Lawson-Hanson-Chan (LHC) bidiagonalization} \citep{lawson1995solving, chan1982improved},  is illustrated in Figure~\ref{fig:lhc-bidiagonal}.\index{LHC bidiagonalization}
\begin{figure}[h]
\centering
\includegraphics[width=0.9\textwidth]{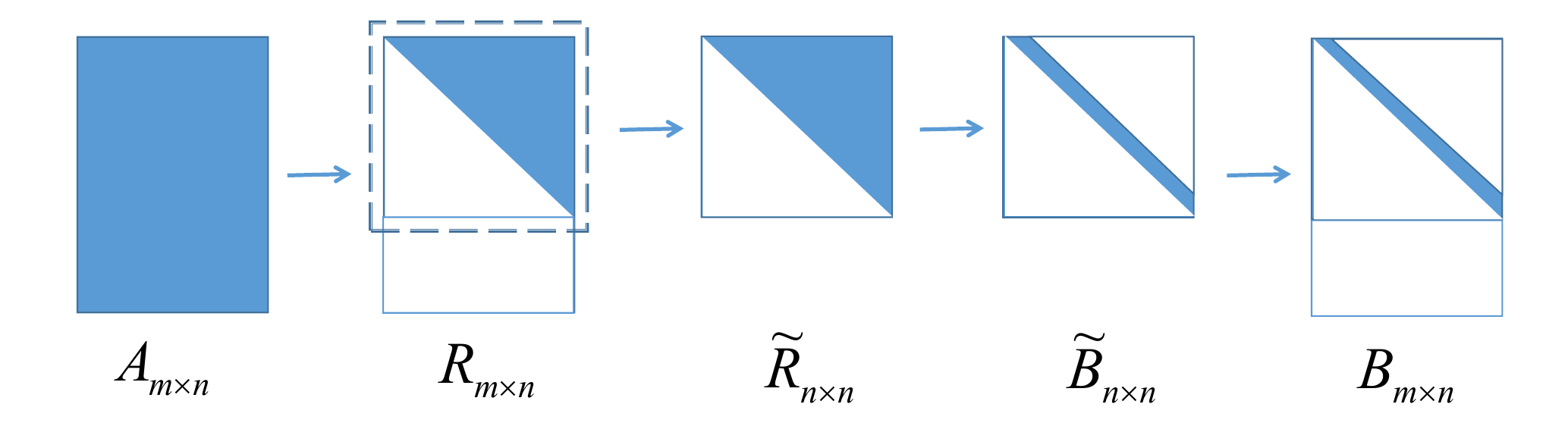}
\caption{Illustration of the LHC bidiagonalization process for  a matrix.}
\label{fig:lhc-bidiagonal}
\end{figure}
The LHC bidiagonalization begins with the full QR decomposition of $\bA$, expressed as $\bA = \bQ\bR$, where $\bQ\in\real^{m\times m}$ is orthogonal and $\bR\in\real^{m\times n}$ is upper triangular. Next, the Golub--Kahan process is applied to the square $n\times n$ triangular submatrix $\widetilde{\bR}$ within $\bR$, resulting in $\widetilde{\bR} = \widetilde{\bU} \widetilde{\bB} \bV^\top$. The matrices $\widetilde{\bU}$ and $\widetilde{\bB} $ are then appended to form 
$$
\bU_0 = 
\begin{bmatrix}
	\widetilde{\bU} & \bzero \\
	\bzero & \bI_{m-n}
\end{bmatrix} \in\real^{m\times m}
\qquad \text{and}\qquad 
\bB = \begin{bmatrix}
	\widetilde{\bB} \\ 
	\bzero_{(m-n)\times n}	
\end{bmatrix}
\in\real^{m\times n},
$$
which gives $\bR=\bU_0\bB \bV^\top$ and $\bA = \bQ\bU_0\bB \bV^\top$. Let $\bU=\bQ\bU_0$, we obtain the desired bidiagonal decomposition of $\bA$. 
The computational cost of the QR decomposition is $\sim 2mn^2-\frac{2}{3}n^3$ flops, while the Golub--Kahan process applied to the $n\times n$ submatrix $\widetilde{\bR}$ requires  $\sim \frac{8}{3}n^3$ \citep{lu2021numerical}. Therefore, the total computational cost for obtaining the bidiagonal matrix $\bB$ through the LHC bidiagonalization is approximately
$$
\text{LHC bidiagonalization:   } \sim 2mn^2 + 2n^3 \text{  flops}.
$$
The LHC process creates zeros and then destroys them again in the lower triangle of the upper $n\times n $ square of $\bR$. 
However, the zeros in the lower $(m-n)\times n$ rectangular submatrix of $\bR$ remain unaffected. Consequently, when $m-n$  is sufficiently large (i.e., $m\gg n$), this approach achieves a net computational gain. 
In fact, simple analysis shows that the LHC bidiagonalization becomes more efficient than the standard Golub--Kahan method when $m>\frac{5}{3}n$.

\begin{figure}[h]
	\centering
	\vspace{-0.35cm}
	\includegraphics[width=0.9\textwidth]{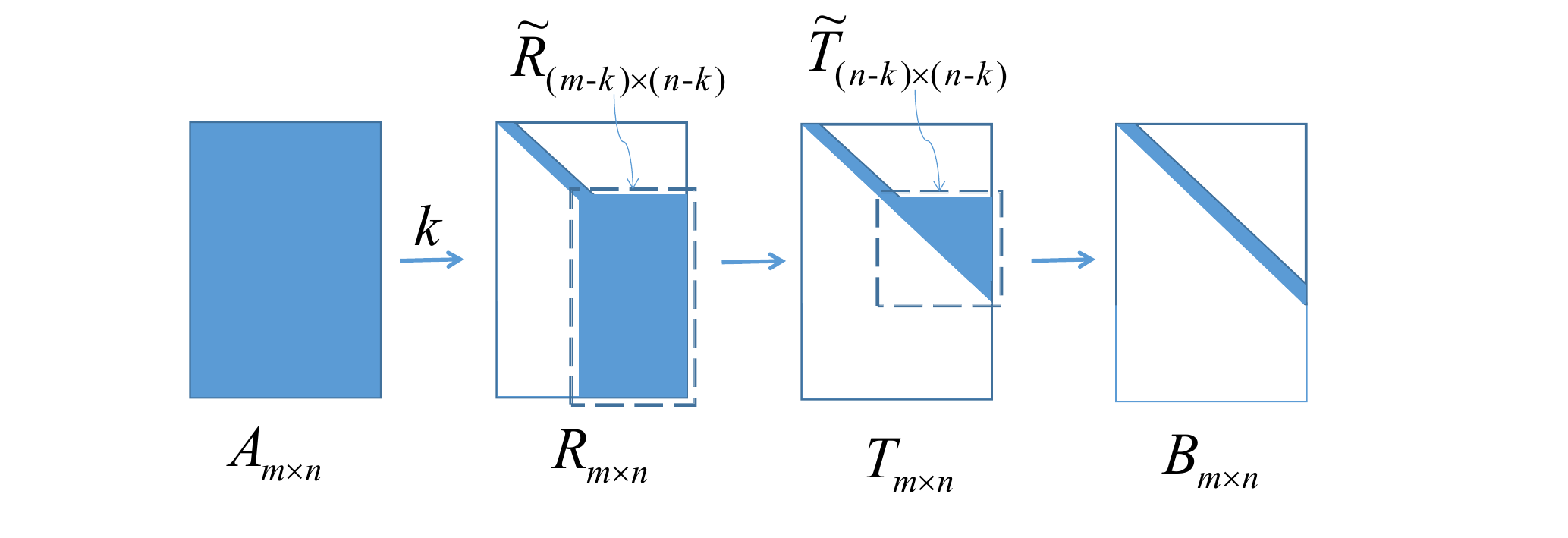}
	\caption{Illustration of the Three-Step bidiagonalization process for a matrix.}
	\label{fig:lhc-bidiagonal2}
\end{figure}
\index{Golub--Kahan process}
\index{LHC bidiagonalization}
\paragraph{Three-Step Bidiagonalization.}
While the LHC method is advantageous when $m>\frac{5}{3}n$, an alternative approach is to apply the QR decomposition at an intermediate stage rather than at the beginning \citep{trefethen1997numerical}. 
This modified process---known as the \textit{Three-Step bidiagonalization} and illustrated in Figure~\ref{fig:lhc-bidiagonal2}---begins with the application of the first $k$ steps of left and right Householder reflectors, as in the Golub--Kahan process, while leaving the bottom-right $(m-k)\times(n-k)$ submatrix ``unreflected."
The LHC procedure is then applied to this submatrix to produce the final bidiagonal decomposition. This adjustment reduces computational complexity in cases where $n<m<2n$.

The computational costs of the three bidiagonalization methods are summarized as follows:
$$ 
\left\{
\begin{aligned}
&\text{Golub--Kahan: }  \sim 4mn^2-\frac{4}{3}n^3 \,\, \text{ flops},   \\
&\text{LHC: } \sim 2mn^2 + 2n^3  \,\, \text{ flops},  \\
&\text{Three-Step: } \sim 2mn^2 + 2m^2n -\frac{2}{3}m^3 -\frac{2}{3}n^3 \,\, \text{ flops} .
\end{aligned}
\right.
$$
When $m>2n$, the LHC method is preferable; when $n<m<2n$, the Three-Step method offers marginal improvements, as shown in Figure~\ref{fig:bidiagonal-loss-compare}, which plots the operation counts of the three methods as a function of $\frac{m}{n}$. Note that the above complexity estimates do not include the cost of computing the orthogonal matrices $\bU$ and $\bV$. These additional costs are omitted here for simplicity.

\begin{SCfigure}
\centering
\includegraphics[width=0.5\textwidth]{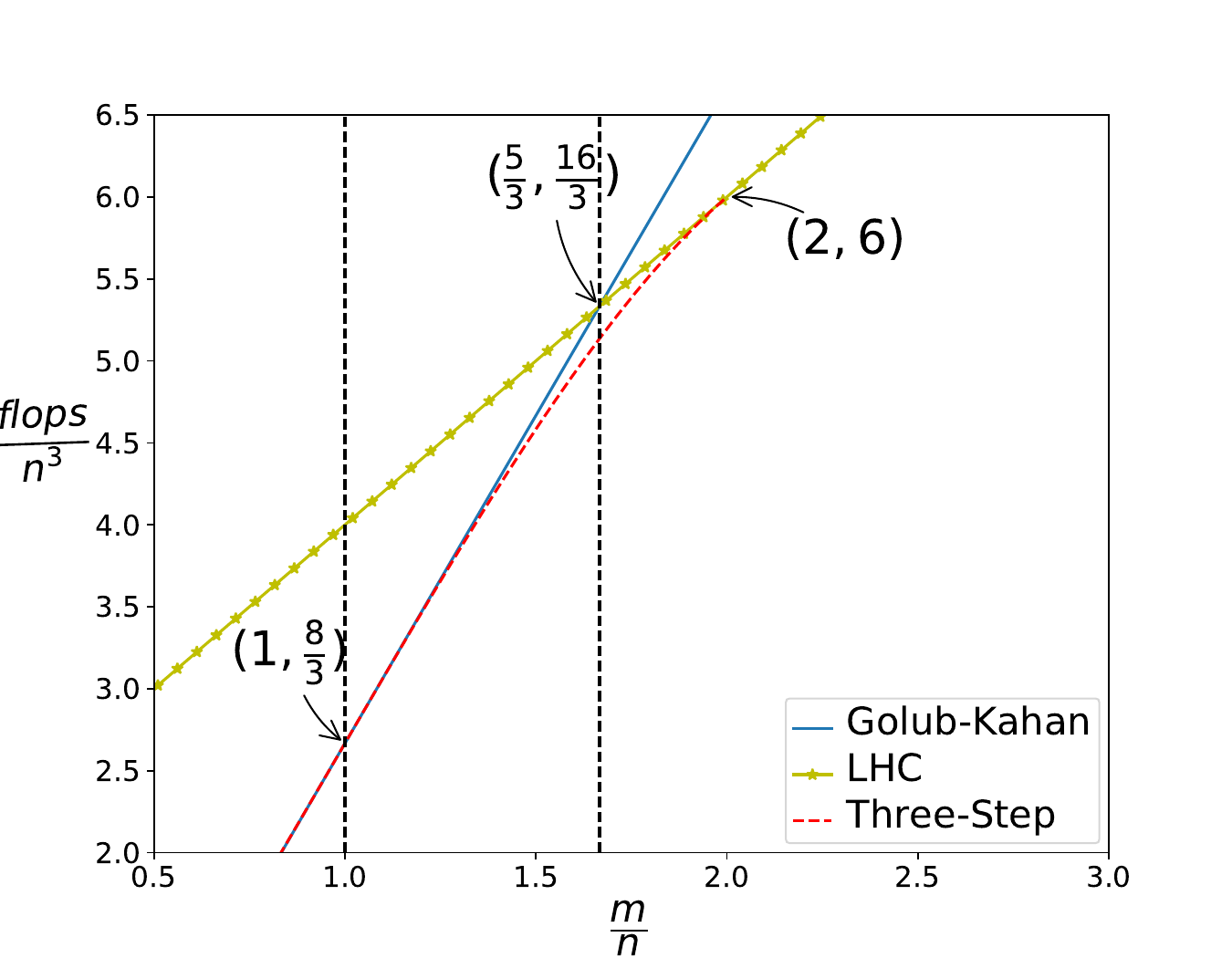}
\caption{Comparison of the computational complexities of the three bidiagonalization methods. When $m>2n$, the LHC method is preferred; when $n<m<2n$, the Three-Step method is slightly more efficient.}
\label{fig:bidiagonal-loss-compare}
\end{SCfigure}

\section{Connection to Tridiagonal Decomposition}

To see the connection to tridiagonal decomposition,
we begin by illustrating the relationship between tridiagonal and bidiagonal decompositions using the following lemma, which explains how to construct a tridiagonal matrix from a bidiagonal one.
\begin{lemma}[Construct tridiagonal from bidiagonal]\label{lemma:construct-triangular-from-bidia}
Let  $\bB\in \real^{n\times n}$ be an upper bidiagonal matrix. Then, $\bT_1=\bB^\top\bB$ and $\bT_2=\bB\bB^\top$ are \textit{symmetric} tridiagonal matrices.
\end{lemma}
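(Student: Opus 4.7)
The plan is to verify the two structural properties — symmetry and tridiagonality — separately, using the index-level description of an upper bidiagonal matrix. Symmetry is immediate in both cases: for any matrix $\bB$, $(\bB^\top\bB)^\top = \bB^\top\bB$ and $(\bB\bB^\top)^\top = \bB\bB^\top$, so the content of the lemma really lies in the banded structure.

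For tridiagonality of $\bT_1 = \bB^\top\bB$, I would write out the entries as $(\bT_1)_{ij} = \sum_{k=1}^n b_{ki}b_{kj}$ and use the fact that for an upper bidiagonal $\bB$, $b_{k\ell} \neq 0$ forces $\ell \in \{k, k+1\}$, equivalently $k \in \{\ell-1, \ell\}$. Thus $b_{ki}$ is nonzero only for $k \in \{i-1, i\}$, and $b_{kj}$ is nonzero only for $k \in \{j-1, j\}$; for the product in the sum to contribute we need these two index sets to intersect, which forces $|i-j| \le 1$. Hence $(\bT_1)_{ij} = 0$ whenever $|i-j| \ge 2$, which is precisely the tridiagonal condition from Definition~\ref{definition:tridiagonal-hessenbert}. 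The argument for $\bT_2 = \bB\bB^\top$ is symmetric: write $(\bT_2)_{ij} = \sum_{k} b_{ik}b_{jk}$, observe $b_{ik} \neq 0$ forces $k \in \{i, i+1\}$ and similarly for $b_{jk}$, so again contributions require $|i-j| \le 1$.

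An alternative route, if one wishes to avoid any index bookkeeping, is to invoke Lemma~\ref{lemma:lu-bandwidth-presev} in spirit: $\bB$ has upper bandwidth $1$ and lower bandwidth $0$, while $\bB^\top$ has upper bandwidth $0$ and lower bandwidth $1$, and the bandwidths of a product satisfy (upper bandwidth of $\bX\bY$) $\le$ (upper bandwidth of $\bX$) $+$ (upper bandwidth of $\bY$) and similarly below the diagonal. This gives upper and lower bandwidth $1$ for both $\bB^\top\bB$ and $\bB\bB^\top$, which is the tridiagonal condition.

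There is really no serious obstacle here — the whole statement is a small index calculation once one unpacks the definition of upper bidiagonal. The one thing worth double-checking is the boundary behavior at $k=1$ and $k=n$ in the sums, so that the argument is clean even when an index like $i-1$ or $j+1$ falls outside $\{1,\dots,n\}$; this is handled by adopting the convention that $b_{k\ell}=0$ whenever the indices are out of range, which does not affect the conclusion.
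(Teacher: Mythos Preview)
Your argument is correct and complete. The paper does not actually supply a proof of this lemma --- it states the result and immediately moves on to the consequence for $\bA\bA^\top$ and $\bA^\top\bA$, treating the claim as self-evident. Your index computation, showing that a nonzero summand in $(\bT_1)_{ij}=\sum_k b_{ki}b_{kj}$ forces $\{i-1,i\}\cap\{j-1,j\}\neq\varnothing$ and hence $|i-j|\le 1$, is exactly the right elementary verification, and the bandwidth-additivity remark is a clean alternative.

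One small cosmetic point: the lemma statement (in the paper and as quoted) says ``symmetric triangular'' where ``symmetric tridiagonal'' is clearly intended; you correctly interpreted and proved the intended claim.
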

This lemma reveals a key property: if  $\bA=\bU\bB\bV^\top$ is the bidiagonal decomposition of $\bA$, then the symmetric matrix $\bA\bA^\top$ admits  a tridiagonal decomposition:
$$
\bA\bA^\top=\bU\bB\bV^\top \bV\bB^\top\bU^\top = \bU\bB\bB^\top\bU^\top.
$$
Similarly, the symmetric matrix $\bA^\top\bA$ also admits a tridiagonal decomposition:
$$
\bA^\top\bA=\bV\bB^\top\bU^\top \bU\bB\bV^\top=\bV\bB^\top\bB\bV^\top.
$$ 
As a final result in this section, we present a theorem that provides the tridiagonal decomposition of a symmetric matrix with nonnegative eigenvalues.
\begin{theoremHigh}[Tridiagonal decomposition for nonnegative eigenvalues]\label{theorem:tri-nonnegative-eigen}
Let   $\bA$ be an $n\times n$ symmetric matrix with nonnegative eigenvalues. Then, there exists a matrix $\bZ$ such that 
$$
\bA=\bZ\bZ^\top.
$$
Furthermore, the tridiagonal decomposition of $\bA$ reduces to finding the bidiagonal decomposition of $\bZ =\bU\bB\bV^\top$ , such that the tridiagonal decomposition of $\bA$ is given by
$$
\bA = \bZ\bZ^\top = \bU\bB\bB^\top\bU^\top.
$$
\end{theoremHigh}
\begin{proof}[of Theorem~\ref{theorem:tri-nonnegative-eigen}]
The eigenvectors of a symmetric matrix can be chosen to be orthogonal (Lemma~\ref{lemma:orthogonal-eigenvectors}), allowing $\bA$ to be decomposed as $\bA=\bQ\bLambda\bQ^\top$ (spectral theorem~\ref{theorem:spectral_theorem}), where $\bLambda$ is a diagonal matrix containing the eigenvalues of $\bA$. When the eigenvalues are nonnegative, $\bLambda$ can be factored as $\bLambda=\bLambda^{1/2} \bLambda^{1/2}$. Setting $\bZ = \bQ\bLambda^{1/2}$, we obtain $\bA=\bZ\bZ^\top$. Combining these results leads to the desired conclusion.
\end{proof}

\begin{problemset}
\item Prove Lemma~\ref{lemma:construct-triangular-from-bidia}.
\item \label{prob:bidia_mln} We discussed the bidiagonalization for a matrix $\bA\in\real^{m\times n}$ with $m\geq n$ in the main section. Provide an algorithm to compute the bidiagonalization when $m<n$, and analyze  its computational complexity. Alternatively, discuss the algorithm for computing $\bA=\bU\bB\bV^\top$ with orthogonal $\bU,\bV$ and lower bidiagonal $\bB$ when $m\geq n$.
\item Prove in detail that the LHC bidiagonalization method is more efficient when $m>\frac{5}{3}n$ compared to the Golub--Kahan bidiagonalization.

\item Prove in detail that the Three-Step bidiagonalization method is more efficient when $n\leq m<2n$ compared to the Golub--Kahan and LHC bidiagonalization methods.

\item (Read Section~\ref{section:SVD} first) Let  $\bA\in\real^{n\times n}$ be upper bidiagonal with  a repeated singular value. Show that $\bA$ must have a zero on its diagonal or superdiagonal.

\item \textbf{Singular values of bidiagonal (read Section~\ref{section:SVD} first \citep{bernstein2008matrix, mathias2014singular}).}
Let $\bA\in\real^{n\times n}$ be upper bidiagonal with the main diagonal values $\{a_1, a_2, \ldots, a_n\}$ and the superdiagonal values $\{b_1,b_2, \ldots, b_{n-1}\}$, and let $\bB\in\real^{n\times n}$ be bidiagonal. Show that
\begin{enumerate}
	\item The singular values of $\bA$ are distinct.
	\item If $\abs{\bB} = \abs{\bA}$, where $\abs{\cdot}$ denotes the element-wise absolute value of a matrix, then $ \bA $ and $ \bB $ have the same singular values.
	\item If $ \abs{\bA} \preceq \abs{\bB} $ (i.e., $\abs{\bB}-\abs{\bA}$ is PSD)  and $ \abs{\bA} \neq \abs{\bB} $, then $ \sigma_{\max}(\bA) < \sigma_{\max}(\bB) $.
	\item If $ \abs{\bI \hadaprod \bA} \preceq \abs{\bI \hadaprod \bB} $ and $ \abs{\bI \hadaprod \bA} \neq \abs{\bI \hadaprod \bB} $, where $\hadaprod$ denotes the Hadamard product, then $ \sigma_{\min}(\bA) < \sigma_{\min}(\bB) $.
	\item If $ \abs{\bI_{\text{up}} \hadaprod \bA} \preceq \abs{\bI_{\text{up}} \hadaprod \bB} $, and $ \abs{\bI_{\text{up}} \hadaprod \bA} \neq \abs{\bI_{\text{up}} \hadaprod \bB} $, where $ \bI_{\text{up}} $ denotes the matrix with all entries on the superdiagonal equal to 1 and all other entries equal to 0, then $ \sigma_{\min}(\bB) < \sigma_{\min}(\bA) $.
\end{enumerate}

\item Explore the process of bidiagonalization using Givens rotations. What happens if the matrix is upper triangular or tridiagonal?

\item Let  $\bA\in\real^{n\times n}$ be upper bidiagonal with $a_{nn} = 0$. Show how to construct orthogonal matrices $\bU$ 
and $\bV$ (as products of Givens rotations) such  that $\bU^\top\bA\bV$ is upper bidiagonal with the $n$-th column being zero.

\item \label{prob:bid_vstruc} Show that the matrix $\bV$ in the bidiagonal decomposition (Theorem~\ref{theorem:Golub-Kahan-Bidiagonalization-decom}) has the  structure 
$
\bV = \scriptsize
\begin{bmatrix}
	1 & \bzero \\
	\bzero & \bQ 
\end{bmatrix},
$
where $\bQ\in\real^{(n-1)\times (n-1)}$ is orthogonal.
\end{problemset}

\part{Eigenvalue Problem}

\chapter{Eigenvalue, Jordan, and Schur Decomposition}\label{chapter:eig_jordan}

\section{Eigenvalue Decomposition}
\index{Decomposition: EVD}
\begin{theoremHigh}[Eigenvalue decomposition]\label{theorem:eigenvalue-decomposition}
Any square matrix $\bA\in \real^{n\times n}$ with linearly independent eigenvectors can be decomposed as 
$$
\bA = \bX\bLambda\bX^{-1},
$$
where $\bX$ contains the eigenvectors of $\bA$ as its columns, and $\bLambda$ is a diagonal matrix $\diag(\lambda_1, \lambda_2,$ $\ldots, \lambda_n)$, with $\lambda_1, \lambda_2, \ldots, \lambda_n$ representing	 the  eigenvalues of $\bA$.
\end{theoremHigh}

This decomposition is known as the \textit{eigenvalue decomposition (EVD)}, or sometimes as \textit{diagonalizing} the matrix   $\bA$.
If all the eigenvalues of $\bA$ are distinct, then its eigenvectors are guaranteed to be linearly independent, and hence $\bA$ can be diagonalized.
Note that without $n$ linearly independent eigenvectors, diagonalization is not possible. In Section~\ref{section:otherform-spectral}, we will explore further conditions under which a matrix has linearly independent eigenvectors.

\begin{proof}[of Theorem~\ref{theorem:eigenvalue-decomposition}]
Let $\bX=[\bx_1, \bx_2, \ldots, \bx_n]$ be the linearly independent eigenvectors of $\bA$. Clearly, we have
$$
\bA\bx_1=\lambda_1\bx_1,\qquad \bA\bx_2=\lambda_2\bx_2, \qquad \ldots, \qquad\bA\bx_n=\lambda_n\bx_n.
$$
Stacking these equations in matrix form yields:
$$
\bA\bX = [\bA\bx_1, \bA\bx_2, \ldots, \bA\bx_n] = [\lambda_1\bx_1, \lambda_2\bx_2, \ldots, \lambda_n\bx_n] = \bX\bLambda.
$$
Since the eigenvectors are assumed to be linearly independent,  the matrix $\bX$ has full rank and is invertible. Therefore, we obtain
$
\bA = \bX\bLambda \bX^{-1}.
$
This completes the proof.
\end{proof}

\index{Geometric multiplicity}
\index{Algebraic multiplicity}
In the spectral decomposition chapter (Chapter~\ref{chapter:spectral-decomposition}), we will discuss similar forms of eigenvalue decomposition, where the matrix $\bA$ is required to be symmetric, and $\bX$ is not only nonsingular but also orthogonal. Alternatively, $\bA$ may be a \textit{simple matrix}, meaning that the algebraic and geometric multiplicities of its eigenvalues are equal. In this case, $\bX$ will be a trivial nonsingular matrix. The decomposition also has a geometric interpretation, which we will explore in Section~\ref{section:coordinate-transformation}.

A matrix decomposition of the form $\bA = \bX\bLambda\bX^{-1}$ has a notable property, allowing for efficient computation of the $m$-th power of $\bA$.
\begin{remark}[$m$-th power]\label{remark:power-eigenvalue-decom}
The $m$-th power of $\bA$ is given by $\bA^m = \bX\bLambda^m\bX^{-1}$ if  $\bA$ can be factored as $\bA=\bX\bLambda\bX^{-1}$.
Computing $\bLambda^m$ is easy because we can apply this operation individually to each diagonal element.
Moreover, if $\bA=\bX\bLambda\bX^{-1}$, then the eigenvalues of $\bA^m$ are precisely the $m$-th powers of the eigenvalues of $\bA$.
\end{remark}

We observe that for the eigenvalue decomposition to exist, the matrix $\bA$ must have a complete set of linearly independent eigenvectors. 
This condition is naturally satisfied under certain circumstances.
\begin{lemma}[Different eigenvalues]\label{lemma:diff-eigenvec-decompo}
If the eigenvalues $\lambda_1, \lambda_2, \ldots, \lambda_n$ of a matrix $\bA\in \real^{n\times n}$ are all distinct, then the corresponding eigenvectors are linearly independent. In other words, any square matrix with distinct eigenvalues can be diagonalized. 
\end{lemma}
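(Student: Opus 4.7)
The plan is to prove linear independence of the eigenvectors $\bx_1, \ldots, \bx_n$ corresponding to the distinct eigenvalues $\lambda_1, \ldots, \lambda_n$; once that is done the diagonalizability claim follows immediately from Theorem~\ref{theorem:eigenvalue-decomposition}, since $\bX = [\bx_1, \ldots, \bx_n]$ will then be invertible. I would argue by induction on $k$, showing that every prefix $\{\bx_1, \ldots, \bx_k\}$ is linearly independent for $k = 1, \ldots, n$.

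The base case $k = 1$ is trivial: an eigenvector is by definition nonzero, hence independent on its own. For the inductive step, I assume $\bx_1, \ldots, \bx_k$ are independent and try to rule out a nontrivial relation of the form $\bx_{k+1} = c_1 \bx_1 + \cdots + c_k \bx_k$. The standard trick is to apply $\bA$ on both sides, giving $\lambda_{k+1}\bx_{k+1} = \sum_i c_i \lambda_i \bx_i$, and then to multiply the original relation through by $\lambda_{k+1}$ and subtract. This produces the clean identity
$$
\sum_{i=1}^{k} c_i (\lambda_i - \lambda_{k+1}) \bx_i = \bzero,
$$
and the inductive hypothesis then forces $c_i (\lambda_i - \lambda_{k+1}) = 0$ for each $i \leq k$.

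At this point the distinctness hypothesis does all the work: because $\lambda_i \neq \lambda_{k+1}$ for every $i \leq k$, the factor $\lambda_i - \lambda_{k+1}$ is nonzero and we conclude $c_i = 0$ across the board. But then the assumed relation collapses to $\bx_{k+1} = \bzero$, contradicting the fact that eigenvectors are nonzero. Hence $\bx_{k+1}$ cannot lie in $\mathrm{span}\{\bx_1, \ldots, \bx_k\}$, the enlarged set is independent, and the induction proceeds to $k+1$.

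There is no real obstacle in this argument; the only delicate point is recognizing that the cancellation step (apply $\bA$, then subtract $\lambda_{k+1}$ times the original relation) is precisely what converts the distinctness hypothesis into the vanishing of the $c_i$'s. If any two eigenvalues were equal, some factor $\lambda_i - \lambda_{k+1}$ could vanish and the induction would break, which is exactly why the hypothesis of pairwise distinct eigenvalues is essential and why the lemma cannot be strengthened to the case of repeated eigenvalues without further assumptions.
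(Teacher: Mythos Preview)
Your proposal is correct and takes essentially the same approach as the paper: both assume a dependence relation, apply $\bA$ to it, subtract the $\lambda$-multiple of the original relation to obtain $\sum_i c_i(\lambda_i - \lambda_{k+1})\bx_i = \bzero$, and then invoke distinctness of the eigenvalues to force all coefficients to vanish. Your version makes the induction on $k$ explicit (which is cleaner), whereas the paper's proof of this lemma argues in one shot with $\bx_n$ against $\bx_1,\ldots,\bx_{n-1}$, but the core mechanism is identical; indeed the paper later restates the same inductive argument in full as Theorem~\ref{theorem:independent-eigenvector-theorem}.
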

\begin{proof}[of Lemma~\ref{lemma:diff-eigenvec-decompo}]
Assume that the eigenvalues $\lambda_1, \lambda_2, \ldots, \lambda_n$ are distinct, but that the eigenvectors $\bx_1,\bx_2, \ldots, \bx_n$ are linearly dependent. Without loss of generality, assume there exists a nonzero vector  $\bc = [c_1,c_2,\ldots,c_{n-1}]^\top$ such that:
$
\bx_n = \sum_{i=1}^{n-1} c_i\bx_{i}. 
$
Then we have 
$$
\begin{aligned}
\bA \bx_n &= \bA \left(\sum_{i=1}^{n-1} c_i\bx_{i}\right) 
=c_1\lambda_1 \bx_1 + c_2\lambda_2 \bx_2 + \ldots + c_{n-1}\lambda_{n-1}\bx_{n-1}.
\end{aligned}
$$
and 
$$
\begin{aligned}
\bA \bx_n &= \lambda_n\bx_n
=\lambda_n (c_1\bx_1 +c_2\bx_2+\ldots +c_{n-1} \bx_{n-1}).
\end{aligned}
$$
Equating these two expressions gives:
$
\sum_{i=1}^{n-1} (\lambda_n - \lambda_i)c_i \bx_i = \bzero .
$
This leads to a contradiction since $\lambda_n \neq \lambda_i$ for all $i\in \{1,2,\ldots,n-1\}$, thus proving that the eigenvectors are linearly independent. 
\end{proof}

There are also several  limitations to the eigenvalue decomposition, which will be  addressed in the following chapters: 
\begin{itemize} 
\item The eigenvectors in $\bX$ are generally not orthogonal, and there may not be enough eigenvectors (i.e., some eigenvalues are repeated). 
\item To compute the eigenvalues and eigenvectors, $\bA\bx = \lambda\bx$, $\bA$ must be square. Rectangular matrices cannot be diagonalized using the eigenvalue decomposition. 
\end{itemize} 	

\section{Jordan Decomposition}
In eigenvalue decomposition, we assume that the matrix $\bA$ has $n$ linearly independent eigenvectors. However, this assumption does not hold for all square matrices. To address this limitation, we introduce a generalized form of eigenvalue decomposition, known as the \textit{Jordan decomposition} or \textit{Jordan canonical form}, named after  \textit{Camille Jordan} \citep{jordan1870traite}.

To describe the Jordan decomposition, we begin by defining \textit{Jordan blocks} and the \textit{Jordan form}.
\begin{definition}[Jordan block]
An $m\times m$ upper triangular matrix $B(\lambda, m)$ is called a \textit{Jordan block} if all its $m$ diagonal elements are equal to $\lambda$, and all superdigonal elements are 1. Mathematically,
$$
B(\lambda, m)=
\scriptsize
\begin{bmatrix}
	\lambda & 1        & 0 & \ldots & 0 & 0 & 0 \\
	0       & \lambda  & 1 & \ldots & 0 & 0 & 0 \\
	0       & 0  & \lambda & \ldots & 0 & 0 & 0 \\
	\vdots  & \vdots   & \vdots & \vdots & \vdots & \vdots\\
	0 & 0 & 0 & \ldots & \lambda & 1& 0\\
	0 & 0 & 0 & \ldots & 0 &\lambda & 1\\
	0 & 0 & 0 & \ldots & 0 & 0 & \lambda
\end{bmatrix}_{m\times m}.
$$
\end{definition}
\begin{definition}[Jordan form\index{Jordan block}]
Given an $n\times n$ matrix $\bA$, a Jordan form $\bJ$ of $\bA$ is a block diagonal matrix of the form:
$$
\bJ=\diag(B(\lambda_1, m_1), B(\lambda_2, m_2), \ldots B(\lambda_k, m_k)),
$$
where $\lambda_1, \lambda_2, \ldots, \lambda_k$ are eigenvalues of $\bA$ (with possible repetitions), and $m_1+m_2+\ldots+m_k=n$.
\end{definition}

\index{Similarity transformation}
Although not all matrices can be decomposed using eigenvalue decomposition, they can be factored using Jordan decomposition. A non-diagonalizable matrix $\bA$ with multiple eigenvalues can be reduced to its Jordan canonical form through a similarity transformation.
\begin{theoremHigh}[Jordan decomposition]\label{theorem:jordan-decomposition}
Any square matrix $\bA\in \real^{n\times n}$ can be decomposed as 
$$
\bA = \bX\bJ\bX^{-1},
$$
where $\bX$ is a nonsingular matrix containing the \textit{generalized eigenvectors} of $\bA$ as its columns, and $\bJ$ is a Jordan form matrix represented as $\diag(\bJ_1, \bJ_2, \ldots, \bJ_k)$. Each block $\bJ_i \in\real^{m_i\times m_i}$ is defined as:
$$
\bJ_i = 
\scriptsize
\begin{bmatrix}
\lambda_i & 1        & 0 & \ldots & 0 & 0 & 0 \\
0       & \lambda_i  & 1 & \ldots & 0 & 0 & 0 \\
0       & 0  & \lambda_i & \ldots & 0 & 0 & 0 \\
\vdots  & \vdots   & \vdots & \vdots & \vdots & \vdots\\
0 & 0 & 0 & \ldots & \lambda_i & 1& 0\\
0 & 0 & 0 & \ldots & 0 &\lambda_i & 1\\
0 & 0 & 0 & \ldots & 0 & 0 & \lambda_i
\end{bmatrix}_{m_i\times m_i},
$$
where $\lambda_i$ is an  eigenvalue of $\bA$, and $m_1+m_2+\ldots +m_k = n$. These blocks $\bJ_i$ are referred to as Jordan blocks.
Furthermore, the nonsingular matrix $\bX$ is called the \textit{matrix of generalized eigenvectors} of $\bA$.
\end{theoremHigh}
For example, a Jordan form $\bJ$ can take the following structure:
$$
\begin{aligned}
\bJ&=\diag(B(\lambda_1, m_1),  \ldots, B(\lambda_k, m_k))
=\footnotesize
\begin{bmatrix}
\begin{bmatrix}
\lambda_1 & 1        & 0 \\
0       & \lambda_1  & 1 \\
0       & 0  & \lambda_1
\end{bmatrix} &  & & &\\  
& \begin{bmatrix}
\lambda_2
\end{bmatrix} & & &\\  
&  &\begin{bmatrix}
\lambda_3 & 1 \\
0 & \lambda_3
\end{bmatrix} & &\\  
&  &   & \ddots & &\\  
&  &   &  & &\begin{bmatrix}
\lambda_k & 1 \\
0 & \lambda_k
\end{bmatrix}\\  
\end{bmatrix}.
\end{aligned}
$$
Note that zeros can appear on the superdiagonal of $\bJ$, and the first column is always a vector containing only eigenvalues of $\bA$ in each block. 
Although Jordan decomposition is theoretically significant, it is rarely used in practice due to its extreme sensitivity to perturbations. Even small random changes to a matrix can render it diagonalizable \citep{van2020advanced}. 
As a result, no major mathematical software libraries or tools provide direct support for computing the Jordan decomposition. Additionally, its proof spans dozens of pages and is beyond the scope of this discussion. Interested readers are encouraged to explore the references for further details
\citep{gohberg1996simple, hales1999jordan, lu2021numerical}.

\section{Schur Decomposition}
\index{Decomposition: Schur}
\index{Upper triangular}
\index{Similarity transformation}
The eigenvalue decomposition is a special case of the \textit{Schur decomposition}.
The latter generalizes the eigenvalue decomposition to all square matrices, even those that are not diagonalizable.
It uses an orthogonal similarity transformation to transform an arbitrary square matrix into an upper triangular matrix. 
This transformation allows many properties of the original matrix to be analyzed using the simpler structure of the upper triangular form.
\begin{theoremHigh}[Schur decomposition]\label{theorem:schur-decomposition}
Any \textbf{real} square matrix $\bA\in \real^{n\times n}$ with \textbf{real} eigenvalues can be decomposed as 
$$
\bA = \bQ\bU\bQ^\top,
$$
where $\bQ$ is a (real) orthogonal matrix, and $\bU$ is a (real) upper triangular matrix. In other words, any real square matrix $\bA$ with real eigenvalues can be \textit{triangularized}.
\end{theoremHigh}

The first columns of $\bA\bQ$ and $\bQ\bU$ are given by $\bA\bq_1$ and $u_{11}\bq_1$, respectively.
Consequently,  $u_{11}$corresponds to an eigenvalue of $\bA$, while  $\bq_1$ serves as its associated eigenvector. 
However, the remaining columns of $\bQ$  are not necessarily eigenvectors of $\bA$. 

\paragraph{Schur decomposition for symmetric matrices.} For a symmetric matrix $\bA=\bA^\top$, the relation $\bQ\bU\bQ^\top = \bQ\bU^\top\bQ^\top$ holds.  In this case, $\bU$ must be diagonal, and this diagonal matrix contains the eigenvalues of $\bA$. Furthermore, all columns of $\bQ$ are eigenvectors of $\bA$.
Thus, we conclude that all symmetric matrices are diagonalizable, even in the presence of repeated eigenvalues; see Chapter~\ref{chapter:spectral-decomposition} for more discussions.

\index{Determinant}
To validate Theorem~\ref{theorem:schur-decomposition}, we rely on the following lemmas.
\begin{lemma}[Determinant intermezzo]\label{lemma:determinant-intermezzo}
The determinant of a matrix (Definition~\ref{definition:determinant}) satisfies the following properties:
\begin{itemize}
\item The determinant of the product of two matrices is given by $\det(\bA\bB)=\det (\bA)\det(\bB)$;

\item The determinant of the transpose of a matrix is the same as the determinant of the original matrix: $\det(\bA^\top) = \det(\bA)$;

\item If matrix $\bA$ has an eigenvalue $\lambda$, then $\det(\bA-\lambda\bI) =0$;

\item The determinant of an identity matrix is $1$;

\item For an orthogonal matrix $\bQ$, the determinant satisfies:
$$
\det(\bQ) = \det(\bQ^\top) = \pm 1, \qquad \text{since  } \det(\bQ^\top)\det(\bQ)=\det(\bQ^\top\bQ)=\det(\bI)=1;
$$

\item For any square matrix $\bA$ and an orthogonal matrix $\bQ$, the determinant relation holds:
$$
\det(\bA) = \det(\bQ^\top) \det(\bA)\det(\bQ) =\det(\bQ^\top\bA\bQ);
$$

\item For a square matrix $\bA \in \real^{n \times n}$, the determinant of $-\bA$ is given by $\det(-\bA) = (-1)^n \det(\bA)$.
\end{itemize}
\end{lemma}

\begin{lemma}[Submatrix with same eigenvalue]\label{lemma:submatrix-same-eigenvalue}
Let  $\bA_{k+1}\in \real^{(k+1)\times (k+1)}$ be a square matrix with real eigenvalues $\lambda_1, \lambda_2, \ldots, \lambda_{k+1}$. Then, we can construct a $k\times k$ matrix $\bA_{k}$ with eigenvalues $\lambda_2, \lambda_3, \ldots, \lambda_{k+1}$ as follows: 
$$
\bA_{k} = 
\begin{bmatrix}
-\bp_2^\top- \\
-\bp_3^\top- \\
\vdots \\
-\bp_{k+1}^\top-
\end{bmatrix}
\bA_{k+1}
\begin{bmatrix}
\bp_2 & \bp_3 &\ldots &\bp_{k+1}
\end{bmatrix},
$$
where $\bp_1$ is a unit-norm eigenvector of $\bA_{k+1}$  corresponding to the eigenvalue $\lambda_1$, and $\bp_2, \bp_3, \ldots, \bp_{k+1}$ denote any mutually orthonormal vectors orthogonal to $\bp_1$, i.e., $\bp_1\in  \cspace^\perp([\bp_2, \bp_3, \ldots, \bp_{k+1}])$. 
\end{lemma}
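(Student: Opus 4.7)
The plan is to assemble the vectors $\bp_1, \bp_2, \ldots, \bp_{k+1}$ into an orthogonal matrix $\bP = [\bp_1, \bp_2, \ldots, \bp_{k+1}] \in \real^{(k+1)\times(k+1)}$ and study the orthogonal similarity transformation $\bP^\top \bA_{k+1} \bP$. By Lemma~\ref{lemma:eigenvalue-similar-matrices}, this transformation preserves the spectrum, so the eigenvalues of $\bP^\top \bA_{k+1} \bP$ are exactly $\lambda_1, \ldots, \lambda_{k+1}$. I then want to show that $\bP^\top \bA_{k+1} \bP$ has a block upper-triangular form that exposes $\lambda_1$ as a pivot and $\bA_k$ as the trailing diagonal block.

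The key step is to compute the first column of $\bA_{k+1}\bP$, which is $\bA_{k+1}\bp_1 = \lambda_1 \bp_1$ by hypothesis. Multiplying on the left by $\bP^\top$ and using the orthonormality relations $\bp_i^\top \bp_1 = \delta_{i1}$, the first column of $\bP^\top \bA_{k+1} \bP$ becomes $\lambda_1 \be_1$. Hence
$$
\bP^\top \bA_{k+1} \bP = \begin{bmatrix} \lambda_1 & \bw^\top \\ \bzero & \bA_k \end{bmatrix},
$$
where the trailing $k\times k$ block is precisely the matrix $\bA_k$ defined in the statement (since the bottom-right entries of $\bP^\top \bA_{k+1} \bP$ are $\bp_i^\top \bA_{k+1} \bp_j$ for $i,j \geq 2$), and $\bw^\top$ collects the entries $\bp_1^\top \bA_{k+1} \bp_j$ for $j \geq 2$, which are irrelevant for the spectrum.

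From the block-triangular structure, the characteristic polynomial factors as
$$
\det\!\bigl(\bP^\top \bA_{k+1} \bP - \lambda \bI_{k+1}\bigr) = (\lambda_1 - \lambda)\,\det(\bA_k - \lambda \bI_k),
$$
so the roots of $\det(\bA_k - \lambda \bI_k) = 0$ are exactly $\lambda_2, \ldots, \lambda_{k+1}$, concluding the proof. There is no serious obstacle here: the only subtlety is verifying that the bottom-left block really is zero (which follows cleanly from $\bA_{k+1}\bp_1 = \lambda_1\bp_1$ together with orthonormality) and invoking Lemma~\ref{lemma:eigenvalue-similar-matrices} to transfer the spectrum. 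Existence of the orthonormal completion $\{\bp_2, \ldots, \bp_{k+1}\}$ of $\bp_1$ is guaranteed since $\{\bp_1\}^\perp$ is a $k$-dimensional subspace that admits an orthonormal basis.
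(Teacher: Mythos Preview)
Your proposal is correct and follows essentially the same approach as the paper: form the orthogonal matrix $\bP_{k+1} = [\bp_1,\ldots,\bp_{k+1}]$, show that $\bP_{k+1}^\top \bA_{k+1}\bP_{k+1}$ is block upper-triangular with $\lambda_1$ and $\bA_k$ on the diagonal, and then factor the characteristic polynomial as $(\lambda_1-\lambda)\det(\bA_k-\lambda\bI_k)$. Your version is in fact slightly more careful, since you correctly record the top-right block as a general row $\bw^\top$ rather than $\bzero$ (which need not vanish when $\bA_{k+1}$ is not symmetric), and your polynomial-factorization argument handles the case $\lambda_1=\lambda_j$ for some $j\geq 2$ without special pleading.
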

\begin{proof}[of Lemma~\ref{lemma:submatrix-same-eigenvalue}]
Let $\bP_{k+1} = [\bp_1, \bp_2, \ldots, \bp_{k+1}]$. 
It follows that $\bP_{k+1}^\top\bP_{k+1}=\bI$, and 
$
\bP_{k+1}^\top \bA_{k+1} \bP_{k+1} =
\begin{bmatrixscript}
\lambda_1 & \bzero \\
\bzero  & \bA_{k}
\end{bmatrixscript}. 
$
For any eigenvalue $\lambda \in \{\lambda_2, \lambda_3, \ldots, \lambda_{k+1}\}$, by Lemma~\ref{lemma:determinant-intermezzo}, we have
$$
\begin{aligned}
\det(\bA_{k+1} -\lambda\bI) &= \det(\bP_{k+1}^\top (\bA_{k+1}-\lambda\bI)  \bP_{k+1}) 
=\det(\bP_{k+1}^\top \bA_{k+1}\bP_{k+1} - \lambda\bP_{k+1}^\top\bP_{k+1}) \\
&= \det\left(
\begin{bmatrix}
\lambda_1-\lambda & \bzero \\
\bzero & \bA_k - \lambda\bI
\end{bmatrix}
\right)
=(\lambda_1-\lambda)\det(\bA_k-\lambda\bI).
\end{aligned}
$$
Since $\lambda$ is an eigenvalue of $\bA$ and $\lambda \neq \lambda_1$, it follows that  $\det(\bA_{k+1} -\lambda\bI) = (\lambda_1-\lambda)\det(\bA_{k}-\lambda\bI)=0$, which implies that  $\lambda$ is also an eigenvalue of $\bA_{k}$.
\end{proof}

We now establish the existence of the Schur decomposition using an inductive proof.
\begin{proof}[{of Theorem~\ref{theorem:schur-decomposition}: Existence of Schur decomposition}]
We begin by noting that the theorem is trivial when $n=1$, as we can simply set $Q=1$ and $U=A$. Now, suppose the theorem holds true for $n=k$ for some $k> 1$. To complete the proof, we must show that the theorem also holds for $n=k+1$.
Assume for $n=k$, the theorem is valid, i.e., any matrix $\bA_k\in\real^{k\times k}$ can be expressed as $\bA_k =\bQ_k \bU_k \bQ_k^\top$, where $\bQ_k$ is orthogonal, and $\bU_k$ is upper triangular. 

For $n=k+1$, let $\bA_{k+1}$ be a matrix with eigenvalues $\lambda_1, \lambda_2, \ldots, \lambda_{k+1}$. Using Lemma~\ref{lemma:submatrix-same-eigenvalue}, construct an orthogonal matrix  $\bP_{k+1} = [\bp_1, \bp_2, \ldots, \bp_{k+1}]$, where $\bp_1$ is a unit-norm eigenvector of $\bA_{k+1}$ corresponding to the eigenvalue $\lambda_1$, and $\bp_2, \ldots, \bp_{k+1}$ are mutually orthonormal vectors orthogonal to $\bp_1$. 
Since we assume  the theorem is true for $n=k$, we can find a matrix $\bA_{k}\in\real^{k\times k}$ with eigenvalues $\lambda_2, \lambda_3, \ldots, \lambda_{k+1}$, satisfying $\bA_k =\bQ_k \bU_k \bQ_k^\top$. 
By Lemma~\ref{lemma:submatrix-same-eigenvalue}, the following properties hold:
$$
\bP_{k+1}^\top \bA_{k+1} \bP_{k+1} = 
\begin{bmatrix}
\lambda_1 &\bzero \\
\bzero & \bA_k
\end{bmatrix} 
\qquad
\implies  
\qquad 
\bA_{k+1} \bP_{k+1} =
\bP_{k+1} 
\begin{bmatrix}
\lambda_1 &\bzero \\
\bzero & \bA_k
\end{bmatrix}.
$$
Let 
$
\bQ_{k+1} = \bP_{k+1}
\scriptsize
\begin{bmatrix}
1 &\bzero \\
\bzero & \bQ_k
\end{bmatrix}.
$
Then, it follows that
$$
\begin{aligned}
\bA_{k+1} \bQ_{k+1} &= 
\bA_{k+1}
\bP_{k+1}
\begin{bmatrix}
1 &\bzero \\
\bzero & \bQ_k
\end{bmatrix}
=
\bP_{k+1} 
\begin{bmatrix}
\lambda_1 &\bzero \\
\bzero & \bA_k
\end{bmatrix}
\begin{bmatrix}
1 &\bzero \\
\bzero & \bQ_k
\end{bmatrix} 
=
\bP_{k+1}
\begin{bmatrix}
\lambda_1 & \bzero \\
\bzero & \bA_k\bQ_k
\end{bmatrix}\\
&=
\bP_{k+1}
\begin{bmatrix}
\lambda_1 & \bzero \\
\bzero & \bQ_k \bU_k   
\end{bmatrix}  
=\bP_{k+1}
\begin{bmatrix}
1 &\bzero \\
\bzero & \bQ_k
\end{bmatrix}
\begin{bmatrix}
\lambda_1 &\bzero \\
\bzero & \bU_k
\end{bmatrix}
=\bQ_{k+1}\bU_{k+1},
\end{aligned}
$$
where we let $\bU_{k+1} = \scriptsize\begin{bmatrix}
\lambda_1 &\bzero \\
\bzero & \bU_k
\end{bmatrix}$.
Therefore, $\bA_{k+1} = \bQ_{k+1}\bU_{k+1}\bQ_{k+1}^\top$, where $\bU_{k+1}$ is an upper triangular matrix, and $\bQ_{k+1}$ is an orthogonal matrix since $\bP_{k+1}$ and 
$\scriptsize\begin{bmatrix}
1 &\bzero \\
\bzero & \bQ_k
\end{bmatrix}$ are both orthogonal matrices.
This completes the inductive step and proves the existence of the Schur decomposition.
\end{proof}

\section{Other Forms of  Schur Decomposition}\label{section:other-form-schur-decom}
In the proof of the Schur decomposition, the upper triangular matrix $\bU_{k+1}$ is constructed by appending the eigenvalue $\lambda_1$ to $\bU_k$. This ensures that the diagonal elements consistently represent the eigenvalues of the underlying matrix. Consequently, the upper triangular matrix can be decomposed into two distinct components.
\begin{corollaryHigh}[Form 2 of Schur decomposition]
Any real matrix $\bA\in \real^{n\times n}$ with real eigenvalues can be decomposed as 
$$
\bQ^\top\bA\bQ = \bLambda +\bT\qquad \text{or} \qquad \bA = \bQ(\bLambda +\bT)\bQ^\top,
$$
where $\bQ$ is an orthogonal matrix, $\bLambda=\diag(\lambda_1, \lambda_2, \ldots, \lambda_n)$ is a diagonal matrix containing the eigenvalues of $\bA$, and $\bT$ is a \textit{strictly upper triangular} matrix (with zeros on the diagonal).
\end{corollaryHigh}
A strictly upper triangular matrix is an upper triangular matrix in which all diagonal and lower-triangular entries are zero. 
Another way to understand this decomposition is by noting that $\bA$ and $\bU$ (where $\bU = \bQ^\top\bA\bQ$) are similar matrices, and therefore share the same eigenvalues (Proposition~\ref{proposition:eigenvalue-similar-matrices}).
Moreover, the eigenvalues of any upper triangular matrices are located on its diagonal. 
To see this, consider any upper triangular matrix $\bR \in \real^{n\times n}$, where the diagonal values are $r_{ii}$ for all $i\in \{1,2,\ldots,n\}$. We have
$$
\bR \be_i = r_{ii}\be_i,
$$
where $\be_i$ is the $i$-th standard basis vector in $\real^n$.
Thus, we can decompose $\bU$ into the sum of $\bLambda$ and $\bT$. 

\begin{remark}[$m$-th power]\label{remark:mth_schur}
The above observation also implies that the eigenvalues of the $m$-th power $\bA^m$ are simply the  $m$-th powers of the eigenvalues of $\bA$.
\end{remark}

A final observation about the second form of the Schur decomposition is  as follows. From the equation $\bA\bQ = \bQ(\bLambda +\bT)$, it follows that 
$
\bA \bq_k = \lambda_k\bq_k + \sum_{i=1}^{k-1}t_{ik}\bq_i,
$
where $t_{ik}$ is the ($i,k$)-th entry of $\bT$. The form is quite similar to the eigenvalue decomposition. However, instead of being eigenvectors, the columns of $\bQ$ form an orthonormal basis that is interrelated.

In the main result of Theorem~\ref{theorem:schur-decomposition}, we focus on real matrices with real eigenvalues. However, this restriction may not always be practical in various applications. 
A more general version is presented in the following theorem.
This decomposition is attributed to \textit{Issai Schur} (1875{\textendash}1941), a Russian mathematician who spent most of his professional  life in Germany.
\index{Decomposition: Complex Shur}
\begin{theoremHigh}[Complex Schur decomposition]\label{theorem:schur-decomposition_complex}
Let  $\bA\in \complex^{n\times n}$ be any \textbf{complex} square matrix. Then, it  can be decomposed as 
$$
\bA = \bU\bT\bU^\ast,
$$
where $\bU\in\complex^{n\times n}$ is a unitary matrix, and $\bT\in\complex^{n\times n}$ is an upper triangular matrix (not necessarily real). 
\end{theoremHigh}
\begin{proof}
See \citet{lu2021numerical}.
\end{proof}

\section{Application: Computing Fibonacci Numbers}
Eigenvalue decomposition offers a powerful method for computing Fibonacci numbers \citep{strang1993introduction}. The Fibonacci sequence is defined recursively: each term   $F_{k+2}$ is the sum of the two preceding terms,  $F_{k+1}+F_{k}$. 
The sequence begins as $0, 1, 1, 2, 3, 5, 8, \ldots$. 
A natural question arises: What is the value of $F_{100}$?
Eigenvalue decomposition allows us to derive a general formula for the Fibonacci sequence.
\index{Fibonacci number}
\index{General formula of a sequence}

Let $\bu_{k}=\scriptsize\begin{bmatrix}
	F_{k+1}\\
	F_k
\end{bmatrix}$. 
By the definition of the Fibonacci sequence, we have $\bu_{k+1}=\scriptsize\begin{bmatrix}
	F_{k+2}\\
	F_{k+1}
\end{bmatrix}=
\scriptsize\begin{bmatrix}
	1&1\\
	1&0
\end{bmatrix}
\bu_k
$.
Define $\bA=\scriptsize\begin{bmatrix}
	1&1\\
	1&0
\end{bmatrix}$. 
It follows that $\bu_{100} = \bA^{100}\bu_0$, where $\bu_0=\scriptsize
\begin{bmatrix}
	1\\
	0
\end{bmatrix}
$.

The eigenvalues of $\bA$  are found by solving  $\det(\bA-\lambda\bI)=0$, where $\lambda$ is an eigenvalue of $\bA$ (Remark~\ref{remark:list_sing_equiv}). Solving the characteristic equation $\det(\bA-\lambda\bI) = \lambda^2-\lambda+1=0$, we obtain the eigenvalues and their corresponding eigenvectors:
$$
(\lambda_1, \bx_1) = \left(\frac{1+\sqrt{5}}{2},  
\,\,\,\,
\begin{bmatrix}
	\lambda_1\\
	1
\end{bmatrix}\right)
\quad\text{and}\quad
(\lambda_2, \bx_2)
= 
\left(\frac{1-\sqrt{5}}{2}, 
\,\,\,\,
\begin{bmatrix}
	\lambda_2\\
	1
\end{bmatrix}\right).
$$
As per Remark~\ref{remark:power-eigenvalue-decom},
we can express $\bA^{100} = \bX\bLambda^{100}\bX^{-1} = \bX
\scriptsize
\begin{bmatrix}
	\lambda_1^{100}&0\\
	0&\lambda_2^{100}
\end{bmatrix}\bX^{-1}$, where $\bX^{-1}$ can be easily calculated as $\bX^{-1} =
\scriptsize
\begin{bmatrix}
	\frac{1}{\lambda_1-\lambda_2} & \frac{-\lambda_2}{\lambda_1-\lambda_2} \\
	-\frac{1}{\lambda_1-\lambda_2} & \frac{\lambda_1}{\lambda_1-\lambda_2}
\end{bmatrix} 
=
\scriptsize
\begin{bmatrix}
	\frac{\sqrt{5}}{5} & \frac{5-\sqrt{5}}{10} \\
	-\frac{\sqrt{5}}{5} & \frac{5+\sqrt{5}}{10}
\end{bmatrix}$. We notice that $\bu_{100} = \bA^{100}\bu_0$ corresponds to  the first column of $\bA^{100}$, which can be represented as:
$$
\bu_{100} = 
\begin{bmatrix}
	F_{101}\\
	F_{100}
\end{bmatrix}=
\begin{bmatrix}
	\frac{\lambda_1^{101}-\lambda_2^{101}}{\lambda_1-\lambda_2}\\
	\frac{\lambda_1^{100}-\lambda_2^{100}}{\lambda_1-\lambda_2}
\end{bmatrix}.
$$
Upon a simple check of the calculation,  we have $F_{100}=3.542248481792631e+20$. Or more generally, we can express $\bu_{K}$ as follows:
$$
\bu_{K} = 
\begin{bmatrix}
	F_{K+1}\\
	F_{K}
\end{bmatrix}=
\begin{bmatrix}
	\frac{\lambda_1^{K+1}-\lambda_2^{K+1}}{\lambda_1-\lambda_2}\\
	\frac{\lambda_1^{K}-\lambda_2^{K}}{\lambda_1-\lambda_2}
\end{bmatrix},
$$
where the general form of $F_K$ is given by $F_K=\frac{\lambda_1^{K}-\lambda_2^{K}}{\lambda_1-\lambda_2}$.

\index{Nonsingular matrix}
\index{Matrix polynomial}
\section{Application: Matrix Polynomials}
We previously demonstrated in Problems~\ref{prob:hess_poly1} and \ref{prob:hess_poly2} that $f(\bP\bA\bP^{-1}) =\bP f(\bA) \bP^{-1}$ if $f(\bC) = \gamma_m\bC^m+\gamma_{m-1}\bC^{m-1}+\ldots+\gamma_0$ is a polynomial.
Let 
$
\bA = \bX\bJ\bX^{-1}
$
be the Jordan decomposition of $\bA\in\real^{n\times n}$,
where $\bX\in\real^{n\times n}$ is a nonsingular matrix containing the \textit{generalized eigenvectors} of $\bA$ as its columns, and $\bJ\in\real^{n\times n}$ is a Jordan form matrix $\diag(\bJ_{m_1}(\lambda_1), \bJ_{m_2}(\lambda_2), \ldots, \bJ_{m_k}(\lambda_k)) =\diag(\bJ_1, \bJ_{2}, \ldots, \bJ_{k})$, where $\bJ_{m_i}(\lambda_i)\in\real^{m_i\times m_i}$ and $\sum_{i=1}^{k}m_i  = n$.
Using this decomposition, we have:
\begin{equation}
	f(\bA) = \bX f(\bJ) \bX^{-1}
	=
	\bX \diag(f(\bJ_1), f(\bJ_2), \ldots, f(\bJ_k)) \bX^{-1},
\end{equation}
where 
\begin{equation}
	{\normalsize f(\bJ_i)} = \footnotesize
	\begin{bmatrix}
		f(\lambda_i) & f'(\lambda_i) & \frac{1}{2!} f''(\lambda_i) &  \ldots & \frac{1}{(m_i-1)!} f^{(m_i-1)}(\lambda_i)\\
		0  & f(\lambda_i) & f'(\lambda_i) &  \ldots & \frac{1}{(m_i-2)!} f^{(m_i-2)}(\lambda_i)\\
		0  & 0 & \ddots &  \ddots & \vdots\\
		0  & 0 & 0 &  f(\lambda_i) &f'(\lambda_i)\\
		0  & 0 & 0 &  \ldots & f(\lambda_i)\\
	\end{bmatrix},
\end{equation}
and $f^{(k)}(x)$ denotes the $k$-th  derivative of $f(x)$.
This representation allows us to extend the concept of matrix functions to many common functions that can also be expressed as power series \citep{zhang2017matrix}. Below are several important examples:

\paragraph{Powers of a matrix.}
$\bA^m=  \bX\bJ^m\bX^{-1} = \bX f(\bJ)\bX^{-1}$, where $f(x) = x^m$ for all $m=1,2,\ldots$.

\paragraph{Matrix logarithm.} Let $f(x) = \ln(1 + x)$. Then,
\begin{equation}
	\ln(\bI + \bA) = \sum_{i=1}^{\infty} \frac{(-1)^{i-1}}{i} \bA^i = \bX \left( \sum_{i=1}^{\infty} \frac{(-1)^{i-1}}{i} \bA^i \right) \bX^{-1} = \bX f(\bJ)\bX^{-1}.
\end{equation}

\paragraph{Sine and cosine functions.} Let  $f_1(x) = \sin(x)$ and $f_2(x) = \cos (x)$. Then,
\begin{align}
	\sin(\bA) &= \sum_{i=0}^{\infty} \frac{(-1)^i}{(2i+1)!} \bA^{2i+1} = \bX \left( \sum_{i=0}^{\infty} \frac{(-1)^i}{(2i+1)!} \bJ^{2i+1} \right) \bX^{-1} = \bX f_1(\bJ)\bX^{-1};\\
	\cos (\bA)&= \sum_{i=0}^{\infty} \frac{(-1)^i}{(2i)!} \bA^{2i} = \bX \left( \sum_{i=0}^{\infty} \frac{(-1)^i}{(2i)!} \bJ^{2i} \right) \bX^{-1} = \bX f_2(\bJ)\bX^{-1}.
\end{align}

\paragraph{Matrix exponentials.} Let $f_1(x) = e^x$ and $f_2(x) = e^{-x}$. Then,
\begin{align}
	e^{\bA} &= \sum_{i=0}^{\infty} \frac{1}{i!} \bA^i = \bX \left( \sum_{i=0}^{\infty} \frac{1}{i!} \bJ^i \right) \bX^{-1} = \bX f_1(\bJ)\bX^{-1}; \label{equation:max_expo}\\
	e^{-\bA} &= \sum_{i=0}^{\infty} \frac{1}{i!} (-1)^i \bA^i = \bX \left( \sum_{i=0}^{\infty} \frac{1}{i!} (-1)^i \bJ^i \right) \bX^{-1} = \bX f_2(\bJ)\bX^{-1}.
\end{align}

\paragraph{Matrix exponential functions.} Let $f_1(x) = e^{xt}$ and $f_2(x) = e^{-xt}$. Then,
\begin{align}
	e^{\bA t} &= \sum_{i=0}^{\infty} \frac{1}{i!} \bA^i t^i = \bX \left( \sum_{i=0}^{\infty} \frac{1}{i!} \bJ^i t^i \right) \bX^{-1} = \bX f_1(\bJ)\bX^{-1};\label{equation:max_expo3}\\
	e^{-\bA t} &= \sum_{i=0}^{\infty} \frac{1}{i!} (-1)^i \bA^i t^i = \bX \left( \sum_{i=0}^{\infty} \frac{1}{i!} (-1)^i \bJ^i t^i \right) \bX^{-1} = \bX f_2(\bJ)\bX^{-1}.
\end{align}

\section{Applications and Properties of Schur Decomposition}
A few results can be easily proved using the Schur decomposition. For example, to prove the existence of the spectral decomposition (Theorem~\ref{theorem:spectral_theorem}), to prove the trace of a matrix is equal to the sum of eigenvalues, to prove the existence of the block-diagonalization, and to prove the Schur inequality \citep{lu2021numerical}.
In this section, we present additional results derived from the Schur decomposition.

\index{Cayley--Hamilton theorem}
\subsection*{Cayley--Hamilton Theorem} 
We now provide a rigorous proof of the \textit{Cayley--Hamilton theorem}.
\begin{theorem}[Cayley--Hamilton Theorem]\label{theorem:cayley_hami}
A matrix satisfies its own characteristic equation. That is, given a matrix $\bA\in\real^{n\times n}$, it holds that $p_{\bA}(\lambda)=\det(\lambda\bI-\bA)=\prod_{i=1}^{n}(\lambda-\lambda_i)$ and $p_{\bA}(\lambda)=0$ if $\lambda$ is an eigenvalue of $\bA$.
Then, $\bA$ also satisfies this characteristic equation: $p_{\bA}(\bA)=\prod_{i=1}^{n}(\bA-\lambda_i\bI)=\bzero$.
\end{theorem}
\begin{proof}[of Theorem~\ref{theorem:cayley_hami}]
Suppose $\bA$ admits the Schur decomposition $\bA=\bQ\bU\bQ^\top$. Then, 
$$
p_{\bA}(\bA)=\prod_{i=1}^{n}(\bQ\bU\bQ^\top-\lambda_i\bI) =\bQ \cdot p_{\bA}(\bU) \cdot\bQ^\top.
$$
Therefore, it suffices to show that $p_{\bA}(\bU)=\prod_{i=1}^{n}(\bU-\lambda_i\bI)=\bzero$.
We observe that the upper left 2-by-2 block of $(\bU-\lambda_1\bI)(\bU-\lambda_2\bI)$ is zero. This again invokes the upper left 3-by-3 block $(\bU-\lambda_1\bI)(\bU-\lambda_2\bI)(\bU-\lambda_3\bI)$ to be zero. Continuing this process, the result follows.
\end{proof}

\index{Characteristic polynomial}
\subsection*{Computation of Inverses}
We have shown in Remark~\ref{remark:power-eigenvalue-decom} that the eigenvalue decomposition can help identify the $m$-th power of a matrix easily. 
The Cayley--Hamilton theorem can be used to express the $m$-th power of a square matrix $\bA\in\real^{n\times n}$ as a linear combination of $\bI, \bA, \bA^2, \ldots, \bA^{m-1}$, i.e., as a linear combination of lower power values.
Let the characteristic polynomial of $\bA$ be given by  $p_{\bA}(\lambda)=\det(\lambda\bI-\bA ) =\lambda^n + \gamma_{n-1} \lambda^{n-1} + \ldots + \gamma_1 \lambda  + \gamma_0$. Then, 
\begin{equation}\label{equation:calhm_eq1}
	\bA^n =- \gamma_{n-1} \bA^{n-1} - \ldots - \gamma_1 \bA  - \gamma_0\bI.
\end{equation}
This also implies that
\begin{equation}\label{equation:calhm_eq2}
\bI = -\frac{1}{\gamma_0} (\bA^{n-1}+\gamma_{n-1}\bA^{n-2}+\ldots+\gamma_1)\bA.
\end{equation}
If $\bA$ is nonsingular, then multiplying \eqref{equation:calhm_eq2} by $\bA^{-1}$ yields
$$
\bA^{-1} = -\left(\frac{1}{\gamma_0}\bA^{n-1} +\frac{\gamma_{n-1}}{\gamma_0} \bA^{n-2} + \ldots + \frac{\gamma_1}{\gamma_0} \bI\right).
$$
That is, the inverse of an invertible $n\times n$ matrix $\bA$ can be expressed as a polynomial of $\bA$ of degree at most $(n - 1)$.

\index{Matrix inverse}
\index{Sylvester's theorem}
\subsection*{Sylvester's Theorem}
Given $\bA,\bX\in\real^{n\times n}$, matrices $\bA$ and $\bX$ are said to \textit{commute} if $\bA\bX=\bX\bA$. 
More generally, consider the  equation $\bA\bX=\bX\bB$, where $\bA\in\real^{n\times n}$, $\bB\in\real^{m\times m}$, and $\bX\in\real^{n\times m}$. 
The Cayley--Hamilton theorem indicates (see Problem~\ref{problem:poly_cay}):
\begin{equation}\label{equation:poly_cay_gen}
	p(\bA)\bX=\bX p(\bB), 
	\gap 
	\text{for any polynomial $p(\lambda)$.}
\end{equation}
This relationship leads to Sylvester's Theorem.

\begin{theorem}[Sylvester's theorem]\label{theorem:sylvesters_theorem}
	Let $\bA\in\real^{n\times n}$ and $\bB\in\real^{m\times m}$. 
	\begin{itemize}
		\item If $\Lambda(\bA)\cup \Lambda(\bB)=\varnothing$ (i.e., the intersection of the spectrum sets is empty),  the equation $\bA\bX-\bX\bB=\bzero$ is satisfied only when $\bX=\bzero\in\real^{n\times m}$.
		\item More generally,  \textit{Sylvester's equation} $\bA\bX-\bX\bB=\bC$ has a unique solution $\bX\in\real^{n\times m}$ for each $\bC\in\real^{n\times m}$ if and only if $\Lambda(\bA)\cup \Lambda(\bB)=\varnothing$.~\footnote{If $\bA$ and $\bB$ are complex, then there is a unique complex solution $\bX$ for each $\bC\in\complex^{n\times m}$.}
	\end{itemize}
\end{theorem}
\begin{proof}[of Theorem~\ref{theorem:sylvesters_theorem}]
	The second part is a direct result of the first part; so we only prove the first part. 
	For the first part, it suffices to show that $p_{\bB}(\bA)\bX=\bX p_{\bB}(\bB)=\bzero$ due to \eqref{equation:poly_cay_gen}.
	Suppose $\bB$ has eigenvalues $\lambda_1, \lambda_2, \ldots,\lambda_n$ and admits the characteristic polynomial $p_{\bB}(\lambda)=\prod_{i=1}^{n}(\lambda-\lambda_i)$ and $p_{\bB}(\bA)=\prod_{i=1}^{n}(\bA-\lambda_i\bI)$.
	If  $\Lambda(\bA)\cup \Lambda(\bB)=\varnothing$, then each component $(\bA-\lambda_i\bI)$ is nonsingular, and $p_{\bB}(\bA)$ is nonsingular.
	Therefore, $p_{\bB}(\bA)\bX=\bzero$ if and only if $\bX=\bzero$.
	Conversely, if $p_{\bB}(\bA)\bX=\bzero$ has a nontrivial solution, then at least one component $(\bA-\lambda_i\bI)$ must be singular. Thus, $\Lambda(\bA)\cup \Lambda(\bB)\neq \varnothing$.
\end{proof}

The existence of the Schur decomposition reveals the eigenvalues of $\bB^{-1}\bA$ (when $\bB$ is nonsingular) from the upper triangular matrices.
\begin{corollary}[Eigenvalues from Schur]\label{corollary:eig_utv}
Suppose $\bA,\bB\in\real^{n\times n}$ admit  decompositions $\bA=\bQ\bT_A\bV^\top$ and $\bB=\bQ\bT_B\bV^\top$, respectively, where $\bQ, \bV$ are orthogonal and $\bT_A, \bT_B$ are upper triangular.
Then, the diagonal elements of $\bT_B^{-1}\bT_A$ are the eigenvalues of $\bB^{-1}\bA$ (we assumes all the eigenvalues are real).
\end{corollary}
\begin{proof}[of Corollary~\ref{corollary:eig_utv}]
The proof relies on the Schur decomposition (Theorem~\ref{theorem:schur-decomposition}), and we assume all the eigenvalues discussed are real for simplicity.
Suppose $\bB^{-1}\bA$ admits  a Schur decomposition $\bB^{-1}\bA=\bV\bU\bV^\top$ ($\bV$ is orthogonal, $\bU$ is upper triangular)~\footnote{In the corollary, if we don't assume real eigenvalues, then $\bU$ can be upper quasi-triangular. And $\bT_A$ shown below is also upper quasi-triangular.}, and $\bB\bV$ admits a QR decomposition $\bB\bV=\bQ\bT_B\implies \bB=\bQ\bT_B\bV^\top$ ($\bQ$ is orthogonal, $\bT_B$ is upper triangular).
Then, $\bA=\bB\bV\bU\bV^\top=\bQ\underbrace{(\bT_B\bU)}_{=\bT_A}\bV^\top$, where $\bT_A= \bT_B\bU$ is upper triangular.
This completes the proof.
\end{proof}

\begin{exercise}
Discuss the connection between the decompositions in Corollary~\ref{corollary:eig_utv} and the UTV decomposition (Section~\ref{section:ulv-urv-decomposition}).
\end{exercise}

\begin{problemset}
\item Show that if a matrix $\bA$ satisfies $\bA^2 = 4\bI$, then all eigenvalues of $\bA$ are 2 and $-2$.

\item Given a matrix $\bA\in\real^{n\times n}$ where all entries are equal to 1, find the $n$ eigenvalues of $\bA$.
\item  Let $\bA\in\real^{n\times n}$ be an idempotent matrix (i.e., $\bA^2=\bA$). Show that the matrices $\bB\bA$ and $\bA\bB\bA$ share the same eigenvalues.

\item Consider a Householder transformation matrix $\bH = \bI - 2\bu\bu^\top\in\real^{n\times n}$, where $\norm{\bu}=1$. Show that $\bu$ is an eigenvector $\bH$ and determine its corresponding eigenvalue. Provide a geometric interpretation of the eigenvalues of $\bH$.
Suppose further  that $\bv^\top\bu=0$, where $\bv$ is a nonzero vector. Show that $\bv$ is also an eigenvector of $\bH$ and find its corresponding eigenvalue.

\item Let $\lambda$ be an eigenvalue of $\bA\in\real^{n\times n}$. Show that $\lambda-\mu$ is an eigenvalue of $\bA-\mu\bI$.

\item Let $\lambda$ be an eigenvalue of a nonsingular matrix $\bA\in\real^{n\times n}$. Show that $\lambda^{-1}$ is an eigenvalue of $\bA^{-1}$.

\item Derive the general formula for $\bu_{K} = \bA^{K}\bu_0$, where $\bA$ is a general $2\times 2$ matrix.

\item \label{prob:speci_eign} Consider the  matrix $\bA=\bone\bone^\top\in\real^{n\times n}$, where all entries are equal to 1, and $\bone\in\real^n$ is the vector of all ones. Find $n$ linearly independent eigenvectors of $\bA$, and determine the corresponding eigenvalues. \textit{Hint: Consider $\bx_i=\bone-n\be_i$ and $\bone$.} 

\item  What are the eigenvalues of the matrix $\bA=\scriptsize\begin{bmatrix}
5 & -1 & -1\\
-1 & 5 & -1\\
-1& -1 & 5
\end{bmatrix}$?

\item \label{prob:geneig1} \textbf{Generalized eigenproblem.} Many scientific packages address the generalized eigenproblem  $\bA\bx=\lambda\bB\bx$, where $\bB$ is nonsingular. If $\bA$ is symmetric and $\bB$ is PD with the Cholesky decomposition $\bB=\bR^\top\bR$, show that the eigenvalue $\lambda$ is a (standard) eigenvalue of $\bC=(\bR^{-1})^\top \bA\bR^{-1}$, corresponding to the eigenvector $\bR\bx$.

\item \label{prob:geneig2} \textbf{Generalized eigenproblem \citep{teukolsky1992numerical}.} Suppose $\bA\lambda^2 +\bB\lambda +\bC=\bzero$. Show that $\lambda$  can be solved by a standard eigenproblem. \textit{Hint: Let $\by=\lambda\bx$, and consider the matrix $\footnotesize\begin{bmatrix}
\bzero & \bI\\
-\bA^{-1}\bC & -\bA^{-1}\bB 
\end{bmatrix}$.}

\item \textbf{Matrix exponentials.} Given the definition of matrix exponentials in \eqref{equation:max_expo}, let  $\bA$ and $\bB$ commute, i.e., $\bA\bB=\bB\bA$. Show that $e^{\bA+\bB} = e^{\bA}\cdot e^{\bB}$.

\item \label{prob:mtexp2} \textbf{Matrix exponentials.} Let $f(t):\real\rightarrow \real^n$ be a function satisfying $f'(t)=\bA f(t)$ with $f(0)=\bx_0\in\real^n$ and $\bA\in\real^{n\times n}$. Show that the unique solution is $f(t)=e^{\bA t}\bx_0$.

\item \textbf{Matrix exponentials.} Consider the matrix exponential function in \eqref{equation:max_expo3}. Let $\bX^{-1}\bA\bX=\bJ=\diag(\bJ_1,\bJ_2,\ldots,\bJ_k)$ be the Jordan form of $\bA$, where $\bJ_i\in\real^{m_i\times m_i}$ contains the eigenvalue $\lambda_i$ along the diagonal. Show that 
$$
e^{\bJ_i t} =
e^{\lambda_i\cdot t}
\begin{bmatrix}
1 & t  & \frac{t^2}{2!} & \ldots & \frac{t^{m_i-1}}{(m_i-1)!} \\
0 &  1 & t & \ldots & \frac{t^{m_i-2}}{(m_i-2)!} \\
\vdots & \vdots & \ddots &\vdots  & \vdots \\
0 &  0 & 0 & \ddots & t \\
0 &  0 & 0 & \ldots & 1 \\
\end{bmatrix} .
$$  
Rewrite the matrix equation in Problem~\ref{prob:mtexp2} in the form  $g'(t)=\bJ g(t)$ and determine the explicit form of $g(t)$.
\textit{Hint: Decompose every Jordan block as a sum of a diagonal matrix and a nilpotent matrix.~\footnote{A matrix $\bA\in \real^{n\times n}$ is \textit{nilpotent} if there exists a $k$ such that $\bA^k = \bzero$.}}

\item We have presented  several important results regarding the determinant of a matrix in Lemma~\ref{lemma:determinant-intermezzo}. Given $\bA\in\real^{n\times n}$, show that
\begin{itemize}
\item $\det(c\bA)=c^n\det(\bA)$; (\textit{Hint: Use induction.})
\item $\det(\bA^{-1}) = 1/\det(\bA)$;
\item $\det(\bA^m) = \det(\bA)^m$;
\item $\det(\bI+\bu\bv^\top) = 1+\bu^\top\bv$.
\end{itemize}

\item Given $\bA\in\real^{n\times n}$, for $n=2$, show that 
\begin{itemize}
\item $\det(\bI+\bA) = 1+\det(\bA) + \tr(\bA)$.
\end{itemize}
For $n=3$, show that
\begin{itemize}
\item $\det(\bI+\bA) = 1+ \det(\bA) +\tr(\bA) + \frac{1}{2} \tr(\bA)^2 - \frac{1}{2}\tr(\bA^2)$.
\end{itemize}

\item Given $\bA\in\real^{n\times n}$, and let $\bB$ be the matrix obtained by interchanging two rows of $\bA$. Prove that $\det(\bB) = -\det(\bA)$. \textit{Hint: Use induction.}

\item Given $\bA\in\real^{n\times n}$, and let $\bB$ be the matrix obtained by multiplying a row of $\bA$ by a nonnegative scalar $\gamma$. Prove that $\det(\bB) = \gamma\det(\bA)$.  

\item Prove Theorem~\ref{theorem:schur-decomposition_complex} rigorously.

\item \label{problem:poly_cay} Given any polynomial $p(\lambda)$, show that $p(\bA)\bX=\bX p(\bB)$ if $\bA\bX=\bX\bB$.

\item Given any polynomial $p(\lambda)$, show that $\bA\bB p(\bA\bB)=\bA p(\bB\bA)\bB$ if $\bA\in\real^{m\times n}$ and $\bB\in\real^{n\times m}$.

\item \label{prob:diag_uppt} \textbf{Diagonalization of upper triangular matrices.} Let  $\bU\in\real^{n\times n}$ be an upper triangular matrix whose $(i,j)$-th entry is denoted by $u_{ij}$, and let $\bD_t=\diag(t, t^2, \ldots,t^n)$ be a diagonal matrix. Show that the similarity transformation on $\bU$ takes the following form:
$$
\bD_t\bU\bD_t^{-1}
=
\begin{bmatrix}
	u_{11} & t^{-1} u_{12} & t^{-2} u_{13} & \ldots & t^{-n+1}u_{1n}\\
	0   &  u_{22} & t^{-1} u_{23} & \ldots & t^{-n+2}u_{2n}\\
	0   &  0  &  u_{33} & \ldots & t^{-n+3}u_{3n}\\
	0   &  \vdots & \vdots & \ddots & \vdots\\
	0   &  0 & 0 & \ldots & u_{nn}\\
\end{bmatrix}.
$$
Thus, when $t$ is sufficiently large, the off-diagonal values can be made arbitrarily  small.

\item Show that $\bA\in\real^{n\times n}$ is nilpotent  if and only if $\trace(\bA^k)=0$ for all $k\in\{1,2,\ldots,n\}$. \textit{Hint: Use the $m$-th power eigenvalues, Remark~\ref{remark:mth_schur}.}

\item \textbf{Rank-one perturbation of Schur decomposition.} Suppose $\bA\in\complex^{n\times n}$ has eigenvalues $\lambda_1, \lambda_2,\ldots,\lambda_n\in\complex$, where $\bA\bx=\lambda_1\bx$. 
Show that, for any vector $\bv\in\complex^n$, the eigenvalues of $\bA+\bx\bv^*$ are $\lambda_1+\bv^*\bx, \lambda_2, \lambda_3, \ldots,\lambda_n$. Show that the Schur decomposition of $\bA+\bx\bv^*$ can be obtained efficiently if the Schur decomposition of $\bA$ is known.
\end{problemset}

\chapter{Spectral Decomposition (Theorem)}\label{chapter:spectral-decomposition}

\section{Spectral Decomposition (Theorem)}\label{section:spectral-decomposition}

The \textit{spectral theorem}, also known as the \textit{spectral decomposition} for symmetric matrices, states that symmetric matrices have   real eigenvalues and  can be diagonalized using  a (real) orthonormal basis \footnote{Note that  for \textit{Hermitian matrices}, the spectral decomposition states that they also have real eigenvalues and  can be diagonalized using a complex orthonormal basis.}. In the following theorem, we will present the main result and defer detailed discussions.

\index{Decomposition: Spectral}
\index{Orthogonal}
\begin{theoremHigh}[Spectral Decomposition]\label{theorem:spectral_theorem}
A real matrix $\bA \in \real^{n\times n}$ is symmetric if and only if there exists an orthogonal matrix $\bQ$ and a diagonal matrix $\bLambda$ such that
\begin{equation*}
\bA = \bQ \bLambda \bQ^\top,
\end{equation*}
where the columns of $\bQ = [\bq_1, \bq_2, \ldots, \bq_n]$ are eigenvectors of $\bA$ and are mutually orthonormal, and the entries of $\bLambda=\diag(\lambda_1, \lambda_2, \ldots, \lambda_n)$ are the corresponding eigenvalues of $\bA$, which are real.  Specifically, the following properties hold:
\begin{enumerate}
\item A symmetric matrix has only \textbf{real eigenvalues}.

\item The eigenvectors are orthogonal and can be chosen to be \textbf{orthonormal} by normalization.

\item The rank of $\bA$ is equal to the number of nonzero eigenvalues.

\item If the eigenvalues are distinct, the eigenvectors are linearly independent.
\end{enumerate}
\end{theoremHigh}

In the eigenvalue decomposition (Theorem~\ref{theorem:eigenvalue-decomposition}), we require the matrix $\bA$ to be square and its eigenvectors to be linearly independent. In contrast, the spectral theorem applies to any symmetric matrix, and the eigenvectors are chosen to be orthonormal.
On the other hand, analogous to  eigenvalue decomposition, the spectral decomposition enables efficient computation of the $m$-th power of a matrix $\bA$:
	If $\bA$ admits a spectral decomposition $\bA=\bQ\bLambda\bQ^\top$, then the $m$-th power of $\bA$ can be computed as  $\bA^m = \bQ\bLambda^m\bQ^\top$.

In Proposition~\ref{proposition:eigenvalue-similar-matrices}, we proved that similar matrices have the same eigenvalues. From the spectral decomposition, we observe that $\bA$ and $\bLambda$ are similar matrices and thus share the same eigenvalues. For any diagonal matrix, the eigenvalues are simply the entries on the main diagonal. \footnote{In fact, in the previous section, we showed that the diagonal entries of triangular matrices are their eigenvalues.} To verify this, observe that
$$
\bLambda \be_i = \lambda_i \be_i,
$$
where $\be_i$ is the $i$-th standard basis vector. Therefore, the diagonal matrix $\bLambda$ contains the eigenvalues of $\bA$.

\section{Existence of  Spectral Decomposition}\label{section:existence-of-spectral}

We will prove the theorem in several steps. 
We begin by showing that all eigenvalues of a symmetric matrix are real.
\begin{tcolorbox}[title={Symmetric Matrix Property 1 of 4},colback=\mdframecolorTheorem]
\begin{lemma}[Real eigenvalues]\label{lemma:real-eigenvalues-spectral}
All eigenvalues of a symmetric matrix are real.
\end{lemma}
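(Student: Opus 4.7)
The plan is to handle the subtlety that although $\bA$ is real, its eigenvalues and eigenvectors are \emph{a priori} defined over $\mathbb{C}$ (they are roots of the characteristic polynomial $\det(\bA - \lambda\bI) = 0$). So I would begin by temporarily allowing $\lambda \in \mathbb{C}$ and a corresponding nonzero eigenvector $\bx \in \mathbb{C}^n$, and then show that $\lambda$ must coincide with its complex conjugate $\bar{\lambda}$, forcing it to be real.

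First I would write the eigen-equation $\bA\bx = \lambda\bx$ and take its entrywise complex conjugate. Since $\bA$ is a real matrix, conjugation leaves $\bA$ unchanged, yielding $\bA\bar{\bx} = \bar{\lambda}\bar{\bx}$. Next I would form two scalars by left-multiplication: premultiplying the first equation by $\bar{\bx}^\top$ gives $\bar{\bx}^\top \bA \bx = \lambda\,\bar{\bx}^\top\bx$, and premultiplying the second equation by $\bx^\top$ gives $\bx^\top \bA \bar{\bx} = \bar{\lambda}\,\bx^\top\bar{\bx}$.

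The key step is to compare these two scalars. Because $\bar{\bx}^\top \bA \bx$ is a $1 \times 1$ quantity it equals its own transpose, and using $\bA^\top = \bA$ this gives $\bar{\bx}^\top \bA \bx = \bx^\top \bA^\top \bar{\bx} = \bx^\top \bA \bar{\bx}$. Similarly $\bar{\bx}^\top\bx = \bx^\top\bar{\bx}$. Substituting, we obtain $(\lambda - \bar{\lambda})\,\bar{\bx}^\top\bx = 0$. Since $\bx \neq \bzero$, the quantity $\bar{\bx}^\top\bx = \sum_i |x_i|^2$ is strictly positive, so we can divide it out to conclude $\lambda = \bar{\lambda}$, i.e.\ $\lambda \in \real$.

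The only real obstacle is being careful about the transposition identity $\bar{\bx}^\top \bA \bx = \bx^\top \bA \bar{\bx}$, which is where the symmetry hypothesis $\bA = \bA^\top$ gets used; without it the argument collapses. Everything else is bookkeeping. I would finish with a brief remark that this justifies the statement, made implicitly throughout the preceding chapters, that the eigenvalues of symmetric matrices lie in $\real$, so the spectral theorem can be pursued entirely within the real field.
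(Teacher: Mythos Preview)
Your proof is correct and essentially identical to the paper's: both conjugate the eigen-equation, form the scalar $\bar{\bx}^\top\bA\bx$ two ways, invoke $\bA=\bA^\top$ to equate $\lambda\,\bar{\bx}^\top\bx$ with $\bar{\lambda}\,\bar{\bx}^\top\bx$, and divide by the strictly positive quantity $\bar{\bx}^\top\bx=\sum_i|x_i|^2$. The only cosmetic difference is that the paper reaches the second scalar by transposing $\bA\bar{\bx}=\bar{\lambda}\bar{\bx}$ to $\bar{\bx}^\top\bA=\bar{\lambda}\bar{\bx}^\top$ before right-multiplying by $\bx$, whereas you left-multiply by $\bx^\top$ and then transpose the resulting $1\times 1$ scalar; the symmetry hypothesis enters at the same point either way.
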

\end{tcolorbox}
\begin{proof}[of Lemma~\ref{lemma:real-eigenvalues-spectral}]
Suppose $\lambda$ is a complex eigenvalue of a symmetric matrix $\bA$, expressed as  $\lambda=a+ib$, where $a$ and $b$ are real numbers. Its complex conjugate is $\bar{\lambda}=a-ib$. Similarly, for the corresponding complex eigenvector $\bx = \bc+i\bd$, its complex conjugate is $\bar{\bx}=\bc-i\bd$, where $\bc$ and $\bd$ are real vectors. The following properties hold:
$$
\bA \bx = \lambda \bx\qquad   \underrightarrow{\text{ leads to }}\qquad  \bA \bar{\bx} = \bar{\lambda} \bar{\bx}\qquad   \underrightarrow{\text{ transpose to }}\qquad  \bar{\bx}^\top \bA =\bar{\lambda} \bar{\bx}^\top.
$$
Taking the dot product of the first equation with $\bar{\bx}$ and the last equation with $\bx$, we get:
$$
\bar{\bx}^\top \bA \bx = \lambda \bar{\bx}^\top \bx \qquad \text{and } \qquad \bar{\bx}^\top \bA \bx = \bar{\lambda}\bar{\bx}^\top \bx.
$$
Equating these, we find $\lambda\bar{\bx}^\top \bx = \bar{\lambda} \bar{\bx}^\top\bx$. Since $\bar{\bx}^\top\bx = (\bc-i\bd)^\top(\bc+i\bd) = \bc^\top\bc+\bd^\top\bd$, which is a real number. 
Therefore, the imaginary part of $\lambda$ must be zero, implying $\lambda$ is real.
\end{proof}

\begin{tcolorbox}[title={Symmetric Matrix Property 2 of 4},colback=\mdframecolorTheorem]
\begin{lemma}[Orthogonal eigenvectors]\label{lemma:orthogonal-eigenvectors}
The eigenvectors  corresponding to distinct eigenvalues of any symmetric matrix are orthogonal. Therefore, these eigenvectors can be normalized to form an orthonormal set because $\bA\bx = \lambda \bx \underrightarrow{\text{ leads to } } \bA\frac{\bx}{\norm{\bx}} = \lambda \frac{\bx}{\norm{\bx}}$, which preserves the eigenvalue $\lambda$.
\end{lemma}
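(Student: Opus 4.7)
The plan is to compute the scalar $\bx_2^\top \bA \bx_1$ in two different ways, using the symmetry of $\bA$ in exactly one of them, and then to play the two expressions off against each other. Concretely, suppose $\lambda_1 \neq \lambda_2$ are distinct eigenvalues with corresponding (nonzero) eigenvectors $\bx_1, \bx_2$, so that $\bA \bx_1 = \lambda_1 \bx_1$ and $\bA \bx_2 = \lambda_2 \bx_2$. First I would left-multiply the first identity by $\bx_2^\top$ to get $\bx_2^\top \bA \bx_1 = \lambda_1 \bx_2^\top \bx_1$. Next I would transpose the second identity to get $\bx_2^\top \bA^\top = \lambda_2 \bx_2^\top$, and then right-multiply by $\bx_1$ to get $\bx_2^\top \bA^\top \bx_1 = \lambda_2 \bx_2^\top \bx_1$. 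The crucial use of symmetry $\bA = \bA^\top$ happens at this step, allowing me to equate the two expressions.

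Subtracting yields $(\lambda_1 - \lambda_2)\,\bx_2^\top \bx_1 = 0$, and since $\lambda_1 \neq \lambda_2$ by hypothesis, we are forced to conclude $\bx_2^\top \bx_1 = 0$, i.e., $\bx_1$ and $\bx_2$ are orthogonal. The normalization claim is essentially automatic: if $\bA \bx = \lambda \bx$ with $\bx \neq \bzero$, then scaling gives $\bA (\bx/\|\bx\|) = \lambda (\bx/\|\bx\|)$, so $\bx / \|\bx\|$ is a unit eigenvector for the same eigenvalue, and mutually orthogonal vectors remain mutually orthogonal after individual rescaling; hence one obtains an orthonormal family.

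There is no real obstacle here beyond being careful with the use of symmetry, which is the single nontrivial step. One caveat worth flagging is that this lemma as stated only handles eigenvectors from \emph{distinct} eigenvalue classes; it does not yet produce an orthonormal basis of eigenvectors when eigenvalues are repeated. That stronger statement, which is what the spectral theorem ultimately requires, will need an additional argument (for instance, choosing an orthonormal basis within each eigenspace via Gram--Schmidt, or invoking the Schur decomposition from Theorem~\ref{theorem:schur-decomposition} together with the symmetry $\bA = \bA^\top$ to force the triangular factor to be diagonal). For the present lemma, however, the two-way computation of $\bx_2^\top \bA \bx_1$ is all that is needed.
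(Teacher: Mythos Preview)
Your proof is correct and essentially identical to the paper's: both compute the bilinear form $\bx_i^\top \bA \bx_j$ two ways (once via each eigen-equation, using symmetry $\bA=\bA^\top$ to transpose one of them) and then cancel the nonzero factor $\lambda_1-\lambda_2$. The only cosmetic difference is that the paper transposes the first eigen-equation and works with $\bx_1^\top\bA\bx_2$, whereas you transpose the second and work with $\bx_2^\top\bA\bx_1$; your added caveat about repeated eigenvalues is a helpful remark that the paper defers to the separate Lemma~\ref{lemma:eigen-multiplicity}.
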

\end{tcolorbox}
\begin{proof}[of Lemma~\ref{lemma:orthogonal-eigenvectors}]
Let eigenvalues $\lambda_1$ and $\lambda_2$ correspond to eigenvectors $\bx_1$ and $\bx_2$, respectively, such that $\bA\bx_1=\lambda \bx_1$ and $\bA\bx_2 = \lambda_2\bx_2$. The following equalities hold:
$$
\bA\bx_1=\lambda_1 \bx_1 \leadto \bx_1^\top \bA =\lambda_1 \bx_1^\top \leadto \bx_1^\top \bA \bx_2 =\lambda_1 \bx_1^\top\bx_2,
$$
and 
$$
\bA\bx_2 = \lambda_2\bx_2 \leadto  \bx_1^\top\bA\bx_2 = \lambda_2\bx_1^\top\bx_2.
$$
Equating these, we get $\lambda_1 \bx_1^\top\bx_2=\lambda_2\bx_1^\top\bx_2$. Since eigenvalues $\lambda_1\neq \lambda_2$, the eigenvectors are orthogonal.
\end{proof}

In Lemma~\ref{lemma:orthogonal-eigenvectors}, we proved that  eigenvectors corresponding to distinct eigenvalues of a symmetric matrix are orthogonal. More generally, we now prove a stronger result: eigenvectors corresponding to distinct eigenvalues of any square matrix are linearly independent.
\begin{theorem}[Independent eigenvector theorem]\label{theorem:independent-eigenvector-theorem}
If a matrix $\bA\in \real^{n\times n}$ has $k$ distinct eigenvalues, then any set of $k$ corresponding (nonzero) eigenvectors are linearly independent.
\end{theorem}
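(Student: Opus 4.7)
The plan is to proceed by induction on the number of distinct eigenvalues $k$, closely mirroring the contradiction argument used in Lemma~\ref{lemma:diff-eigenvec-decompo}, but framed inductively so that it applies to any $k \leq n$ rather than only the full set of $n$ eigenvectors. The base case $k=1$ is immediate: a single eigenvector is by definition nonzero, and any singleton set consisting of a nonzero vector is linearly independent.

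For the inductive step, I would assume the statement holds for any collection of $k-1$ eigenvectors corresponding to $k-1$ distinct eigenvalues, and then consider eigenvectors $\bx_1, \bx_2, \ldots, \bx_k$ of $\bA$ associated with pairwise distinct eigenvalues $\lambda_1, \lambda_2, \ldots, \lambda_k$. Suppose, toward a contradiction or equivalently toward forcing triviality, that
$$
c_1 \bx_1 + c_2 \bx_2 + \ldots + c_k \bx_k = \bzero.
$$
The key trick is to produce two different linear combinations that both equal zero and then subtract them to reduce the number of terms. Applying $\bA$ to both sides yields $\sum_{i=1}^{k} c_i \lambda_i \bx_i = \bzero$, while multiplying the original relation by $\lambda_k$ yields $\sum_{i=1}^{k} c_i \lambda_k \bx_i = \bzero$. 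Subtracting eliminates the $\bx_k$ term and gives
$$
\sum_{i=1}^{k-1} c_i (\lambda_i - \lambda_k)\, \bx_i = \bzero.
$$

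By the inductive hypothesis, the eigenvectors $\bx_1, \ldots, \bx_{k-1}$ are linearly independent, so every coefficient $c_i (\lambda_i - \lambda_k)$ must vanish. Since the eigenvalues are pairwise distinct, $\lambda_i - \lambda_k \neq 0$ for $i < k$, forcing $c_1 = c_2 = \ldots = c_{k-1} = 0$. Substituting back into the original dependence relation leaves $c_k \bx_k = \bzero$, and since $\bx_k \neq \bzero$ by definition of an eigenvector, we conclude $c_k = 0$ as well, completing the induction.

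There is no serious obstacle here; the proof is essentially bookkeeping around the elimination trick. The only point requiring care is to frame the induction on $k$ rather than on $n$, so that the conclusion applies to an arbitrary set of $k$ eigenvectors from distinct eigenspaces rather than only to the special case $k=n$ already handled in Lemma~\ref{lemma:diff-eigenvec-decompo}. Note also that no symmetry, orthogonality, or diagonalizability hypothesis on $\bA$ is used — the argument relies solely on the linearity of $\bA$ and the distinctness of the eigenvalues.
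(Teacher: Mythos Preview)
Your proof is correct and follows essentially the same approach as the paper: induction on the number of eigenvectors, with the inductive step carried out by applying $\bA$ to the dependence relation, multiplying the original relation by one of the eigenvalues, and subtracting to eliminate a term. The only cosmetic differences are that the paper takes $k=2$ as its base case and frames the inductive step as a contradiction (assuming the $(j{+}1)$-st eigenvector lies in the span of the first $j$), whereas you start at $k=1$ and directly show all coefficients vanish; the underlying argument is identical.
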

\begin{proof}[of Theorem~\ref{theorem:independent-eigenvector-theorem}]
We  prove the theorem by induction. First, we show that any two eigenvectors corresponding to distinct eigenvalues are linearly independent. Let eigenvectors $\bv_1$ and $\bv_2$ correspond to distinct eigenvalues $\lambda_1$ and $\lambda_2$, respectively. 
Assume, for contradiction, that $\bv_1$ and $\bv_2$ are linearly dependent. Then there exists a nonzero vector $\bx=[x_1,x_2] \neq \bzero $ such that
\begin{equation}\label{equation:independent-eigenvector-eq1}
x_1\bv_1+x_2\bv_2=\bzero.
\end{equation}
Multiplying  \eqref{equation:independent-eigenvector-eq1} on the left by $\bA$ gives:
\begin{equation}\label{equation:independent-eigenvector-eq2}
x_1 \lambda_1\bv_1 + x_2\lambda_2\bv_2 = \bzero.
\end{equation}
Similarly, multiplying \eqref{equation:independent-eigenvector-eq1} by $\lambda_2$ yields:
\begin{equation}\label{equation:independent-eigenvector-eq3}
x_1\lambda_2\bv_1 + x_2\lambda_2\bv_2 = \bzero.
\end{equation}
Subtracting these two equations yields:
$$
x_1(\lambda_2-\lambda_1)\bv_1 = \bzero.
$$
Since $\lambda_2\neq \lambda_1$ and $\bv_1\neq \bzero$, we must have $x_1=0$. Since $\bv_2\neq \bzero$, substituting $x_1=0$ back into  \eqref{equation:independent-eigenvector-eq1}  shows that $x_2=0$, leading to a contradiction. Thus, $\bv_1$ and $\bv_2$ are linearly independent.

Suppose that any set of $j<k$ eigenvectors corresponding to distinct eigenvalues is linearly independent. 
We aim to show that any $j+1$ eigenvectors are also linearly independent. 
Let $\bv_1, \bv_2, \ldots, \bv_j$ be linearly independent eigenvectors corresponding to distinct eigenvalues $\lambda_1,\lambda_2,\ldots,\lambda_j$.
Assume, for contradiction, that an additional eigenvector $\bv_{j+1}$, corresponding to a different eigenvalue $\lambda_{j+1}$, is linearly dependent on $\bv_1, \bv_2, \ldots, \bv_j$. Then there exist scalars $x_1,x_2,\ldots, x_{j}$, not all zero, such that:
\begin{equation}\label{equation:independent-eigenvector-zero}
\bv_{j+1}=	x_1\bv_1+x_2\bv_2+\ldots+x_j\bv_j .
\end{equation}
Multiplying \eqref{equation:independent-eigenvector-zero} on the left by $\bA$ gives: 
\begin{equation}\label{equation:independent-eigenvector-zero2}
\lambda_{j+1} \bv_{j+1} = x_1\lambda_1\bv_1+x_2\lambda_2\bv_2+\ldots+x_j \lambda_j\bv_j .
\end{equation}
Similarly, multiplying \eqref{equation:independent-eigenvector-zero}  by $\lambda_{j+1}$ yields:
\begin{equation}\label{equation:independent-eigenvector-zero3}
	\lambda_{j+1} \bv_{j+1} = x_1\lambda_{j+1}\bv_1+x_2\lambda_{j+1}\bv_2+\ldots+x_j \lambda_{j+1}\bv_j .
\end{equation}
Subtracting the two equations gives:
$$
x_1(\lambda_{j+1}-\lambda_1)\bv_1+x_2(\lambda_{j+1}-\lambda_2)\bv_2+\ldots+x_j (\lambda_{j+1}-\lambda_j)\bv_j = \bzero. 
$$
From the assumption, $\lambda_{j+1} \neq \lambda_i$ for all $i\in \{1,2,\ldots,j\}$, and $\bv_i\neq \bzero$ for all $i\in \{1,2,\ldots,j\}$. We must have $x_1=x_2=\ldots=x_j=0$, which leads to a contradiction. Thus, the eigenvectors $\bv_1,\bv_2,\ldots,\bv_j,\bv_{j+1}$ are linearly independent. By induction, any $k$ eigenvectors corresponding to $k$ distinct eigenvalues are linearly independent.
\end{proof}

An immediate consequence of the above theorem is the following:
\begin{corollary}[Independent eigenvector theorem, CNT.]\label{theorem:independent-eigenvector-theorem-basis}
If a matrix $\bA\in \real^{n\times n}$ has $n$ distinct eigenvalues, then any set of $n$ corresponding eigenvectors form a basis for $\real^n$.
\end{corollary}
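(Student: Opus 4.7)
The plan is to obtain this corollary as an essentially immediate consequence of the Independent Eigenvector Theorem~\ref{theorem:independent-eigenvector-theorem} combined with a basic dimension count in $\real^n$. Since the hard work on linear independence has already been done in the preceding theorem, nothing substantive remains except to invoke the definition of a basis.

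First I would apply Theorem~\ref{theorem:independent-eigenvector-theorem} with $k=n$: letting $\bv_1,\bv_2,\ldots,\bv_n$ be any choice of eigenvectors corresponding to the $n$ distinct eigenvalues $\lambda_1,\lambda_2,\ldots,\lambda_n$ of $\bA$, the theorem gives at once that $\{\bv_1,\bv_2,\ldots,\bv_n\}$ is linearly independent in $\real^n$. Then I would invoke the elementary fact (from the definitions of basis and dimension given earlier in the introductory material) that any set of $n$ linearly independent vectors in an $n$-dimensional space spans that space and is therefore a basis. Concretely, the span of $\{\bv_1,\ldots,\bv_n\}$ is a subspace of $\real^n$ of dimension $n$, hence equals $\real^n$; combined with linear independence, this is precisely the definition of a basis.

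Since the claim applies to \emph{any} set of $n$ corresponding eigenvectors, I would close by remarking that the argument does not depend on the particular normalization or sign of the chosen eigenvectors: scaling each $\bv_i$ by a nonzero scalar preserves both the eigenvector property and linear independence, so every such choice yields a basis. The only possible obstacle would be a subtlety if some eigenspace had dimension greater than one, but because the $n$ eigenvalues are distinct, each eigenspace associated with a simple eigenvalue is one-dimensional, so ``any set of $n$ corresponding eigenvectors'' is unambiguous up to nonzero scalar multiples. Thus this corollary requires no new machinery beyond the preceding theorem and a one-line dimension argument.
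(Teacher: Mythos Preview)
Your proposal is correct and matches the paper's intent: the paper states this corollary immediately after Theorem~\ref{theorem:independent-eigenvector-theorem} without supplying any separate proof, so the intended argument is exactly the one you give—apply the preceding theorem with $k=n$ to obtain linear independence, then use the standard fact that $n$ linearly independent vectors in $\real^n$ form a basis. Your closing remark about one-dimensional eigenspaces is a nice extra observation but is not needed for the argument.
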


\begin{tcolorbox}[title={Symmetric Matrix Property 3 of 4},colback=\mdframecolorTheorem]
\begin{lemma}[Orthonormal eigenvectors for duplicate eigenvalue]\label{lemma:eigen-multiplicity}
Let $\bA\in\real^{n\times n}$ be symmetric. 
If $\bA$ has a repeated eigenvalue $\lambda_i$ with multiplicity \footnote{Multiplicity is rigorously defined in Definition~\ref{definition:eigen_multipli}.} $k\geq 2$, then there exist $k$ orthonormal eigenvectors corresponding to $\lambda_i$.
\end{lemma}
\end{tcolorbox}
\begin{proof}[of Lemma~\ref{lemma:eigen-multiplicity}]
First, note that there exists at least one eigenvector $\bx_{i1}$ corresponding to $\lambda_i$. 
For this eigenvector $\bx_{i1}$, we can always find  $n-1$ additional orthonormal vectors $\by_2, \by_3, \ldots, \by_n$, such that the set $\{\bx_{i1}, \by_2, \by_3, \ldots, \by_n\}$ forms an orthonormal basis for $\real^n$. 
Define the matrices $\bY_1$ and $\bP_1$ as follows:
$$
\bY_1=[\by_2, \by_3, \ldots, \by_n] \qquad \text{and} \qquad \bP_1=[\bx_{i1}, \bY_1].
$$
Since $\bA$ is symmetric, we then have
$
\bP_1^\top\bA\bP_1 =\scriptsize \begin{bmatrix}
\lambda_i &\bzero \\
\bzero & \bY_1^\top \bA\bY_1
\end{bmatrix}.
$
Since $\bP_1$ is nonsingular and orthogonal, it follows that $\bA$ and $\bP_1^\top\bA\bP_1$ are similar matrices such that they share the same eigenvalues  (see Proposition~\ref{proposition:eigenvalue-similar-matrices}), whence we have
$$
\det(\bP_1^\top\bA\bP_1 - \lambda\bI_n) =
\footnote{By the fact that if matrix $\bM$ has a block formulation: $\bM=\scriptsize\begin{bmatrix}
\bA & \bB \\
\bC & \bD 
\end{bmatrix}$, then $\det(\bM) = \det(\bA)\det(\bD-\bC\bA^{-1}\bB)$.
}~
(\lambda_i - \lambda )\det(\bY_1^\top \bA\bY_1 - \lambda\bI_{n-1}).
$$
If $\lambda_i$ has multiplicity  $k\geq 2$,  then the term $(\lambda_i-\lambda)$ appears  $k$ times in the polynomial from the determinant $\det(\bP_1^\top\bA\bP_1 - \lambda\bI_n)$, i.e., the term appears $k-1$ times in the polynomial from $\det(\bY_1^\top \bA\bY_1 - \lambda\bI_{n-1})$. 
Thus, $\det(\bY_1^\top \bA\bY_1 - \lambda_i\bI_{n-1})=0$, and $\lambda_i$ is an eigenvalue of $\bY_1^\top \bA\bY_1$ with multiplicity $k-1$.

Define $\bB=\bY_1^\top \bA\bY_1$. Since $\det(\bB-\lambda_i\bI_{n-1})=0$, the null space of $\bB-\lambda_i\bI_{n-1}$ is nontrivial. Suppose $(\bB-\lambda_i\bI_{n-1})\bn = \bzero$, i.e., $\bB\bn=\lambda_i\bn$, where  $\bn$ is an eigenvector of $\bB$. 

From $
\bP_1^\top\bA\bP_1 = \scriptsize\begin{bmatrix}
\lambda_i &\bzero \\
\bzero & \bB
\end{bmatrix},
$
we have $
\bA\bP_1 
\scriptsize
\begin{bmatrix}
z \\
\bn 
\end{bmatrix} 
\normalsize
= 
\bP_1
\scriptsize
\begin{bmatrix}
\lambda_i &\bzero \\
\bzero & \bB
\end{bmatrix}
\scriptsize
\begin{bmatrix}
z \\
\bn 
\end{bmatrix}$, where $z$ is any scalar. From the left side of this equation, we have 
\begin{equation}\label{equation:spectral-pro4-right}
\begin{aligned}
\bA\bP_1 
\begin{bmatrix}
z \\
\bn 
\end{bmatrix} 
&=
\begin{bmatrix}
\lambda_i\bx_{i1}, \bA\bY_1
\end{bmatrix}
\begin{bmatrix}
z \\
\bn 
\end{bmatrix} 
=\lambda_iz\bx_{i1} + \bA\bY_1\bn.
\end{aligned}
\end{equation}
From the right side of the equation, we have 
\begin{equation}\label{equation:spectral-pro4-left}
\begin{aligned}
\bP_1
\begin{bmatrix}
\lambda_i &\bzero \\
\bzero & \bB
\end{bmatrix}
\begin{bmatrix}
z \\
\bn 
\end{bmatrix}
&=
\begin{bmatrix}
\bx_{i1} & \bY_1
\end{bmatrix}
\begin{bmatrix}
\lambda_i &\bzero \\
\bzero & \bB
\end{bmatrix}
\begin{bmatrix}
z \\
\bn 
\end{bmatrix}
=
\begin{bmatrix}
\lambda_i\bx_{i1} & \bY_1\bB 
\end{bmatrix}
\begin{bmatrix}
z \\
\bn 
\end{bmatrix}\\
&= \lambda_i z \bx_{i1} + \bY_1\bB \bn 
=\lambda_i z \bx_{i1} + \lambda_i \bY_1 \bn,\\
\end{aligned}
\end{equation}
where the last equality follows from $\bB \bn=\lambda_i\bn$.
Combining Equations~\eqref{equation:spectral-pro4-left} and \eqref{equation:spectral-pro4-right}, we obtain 
$$
\bA\bY_1\bn = \lambda_i\bY_1 \bn,
$$
which means $\bY_1\bn$ is an eigenvector of $\bA$ corresponding to the eigenvalue $\lambda_i$ (the same eigenvalue corresponding to $\bx_{i1}$). Since $\bY_1\bn$ is a linear combination of $\by_2, \by_3, \ldots, \by_n$, which are orthonormal to $\bx_{i1}$, it can be chosen to be orthonormal to  $\bx_{i1}$ by scaling $\bn$.

To summarize, if there exists an  eigenvector, $\bx_{i1}$, corresponding to the eigenvalue $\lambda_i$, with  multiplicity  $k\geq 2$, we can construct a second eigenvector by selecting a vector from the null space of $(\bB-\lambda_i\bI_{n-1})$, as outlined above. Assume that we have constructed this second eigenvector, $\bx_{i2}$, which is orthonormal to $\bx_{i1}$.  
With these eigenvectors, $\bx_{i1}$ and $\bx_{i2}$, we can always find  $n-2$ additional orthonormal vectors $\by_3, \by_4, \ldots, \by_n$, such  that the set $\{\bx_{i1},\bx_{i2}, \by_3, \by_4, \ldots, \by_n\}$ forms an orthonormal basis for $\real^n$. 
Arrange these vectors $\by_3, \by_4, \ldots, \by_n$ into matrix $\bY_2$ and $\{\bx_{i1},\bx_{i2},  \by_3, \by_4, \ldots, \by_n\}$ into matrix $\bP_2$:
$$
\bY_2=[\by_3, \by_4, \ldots, \by_n] \qquad \text{and} \qquad \bP_2=[\bx_{i1}, \bx_{i2},\bY_1].
$$
Since $\bA$ is symmetric, we then have
$$
\bP_2^\top\bA\bP_2 = 
\begin{bmatrix}
\lambda_i & 0 &\bzero \\
0& \lambda_i &\bzero \\
\bzero & \bzero & \bY_2^\top \bA\bY_2
\end{bmatrix}
=
\begin{bmatrix}
\lambda_i & 0 &\bzero \\
0& \lambda_i &\bzero \\
\bzero & \bzero & \bC
\end{bmatrix},
$$
where $\bC=\bY_2^\top \bA\bY_2$ such that $\det(\bP_2^\top\bA\bP_2 - \lambda\bI_n) = (\lambda_i-\lambda)^2 \det(\bC - \lambda\bI_{n-2})$. If the multiplicity of $\lambda_i$ is $k\geq 3$, then $\det(\bC - \lambda_i\bI_{n-2})=0$, implying the null space of $\bC - \lambda_i\bI_{n-2}$ is non-empty. From the null space of $\bC - \lambda_i\bI_{n-2}$, we can select a vector $\bn$ such that $\bC\bn = \lambda_i \bn$. Using this vector, we construct $\scriptsize\begin{bmatrix}
z_1 \\
z_2\\
\bn
\end{bmatrix}\in \real^n $, where $z_1$ and $z_2$ are any scalar values, such that 
$$
\bA\bP_2\begin{bmatrix}
z_1 \\
z_2\\
\bn
\end{bmatrix} = \bP_2 
\begin{bmatrix}
\lambda_i & 0 &\bzero \\
0& \lambda_i &\bzero \\
\bzero & \bzero & \bC
\end{bmatrix}
\begin{bmatrix}
z_1 \\
z_2\\
\bn
\end{bmatrix}.
$$
Similarly, from the left side of the above equation, we will get $\lambda_iz_1\bx_{i1} +\lambda_iz_2\bx_{i2}+\bA\bY_2\bn$. From the right side of the above equation, we will get $\lambda_iz_1\bx_{i1} +\lambda_i z_2\bx_{i2}+\lambda_i\bY_2\bn$. As a result, 
$$
\bA\bY_2\bn = \lambda_i\bY_2\bn,
$$
where $\bY_2\bn$ is an eigenvector of $\bA$, orthogonal to $\bx_{i1}$ and $\bx_{i2}$. This eigenvector can also be normalized to ensure orthonormality with the first two eigenvectors.

By iterating this process, we can construct $k$  orthonormal eigenvectors corresponding to the eigenvalue $\lambda_i$. 

Finally, the dimension of the null space of $\bP_1^\top\bA\bP_1 -\lambda_i\bI_n$ equals the multiplicity $k$. This confirms that if $\lambda_i$ has multiplicity  $k$, there cannot be more than $k$ orthonormal eigenvectors corresponding to $\lambda_i$. Otherwise, we would obtain more than $n$ orthogonal eigenvectors in $\real^n$, which leads to a contradiction.
\end{proof}

The existence of the spectral decomposition can be straightforwardly established using the lemmas presented above. 
Alternatively, we can prove its existence by applying the Schur decomposition.
\begin{proof}[{of Theorem~\ref{theorem:spectral_theorem}: Existence of spectral decomposition, alternative proof}]
According to Theorem~\ref{theorem:schur-decomposition}, a symmetric matrix $\bA=\bA^\top$ can be decomposed as  $\bQ\bU\bQ^\top = \bQ\bU^\top\bQ^\top$, where $\bQ$ is orthogonal and $\bU$ is upper triangular. 
This implies that $\bU$ must be a diagonal matrix whose diagonal entries are the eigenvalues of $\bA$. 
Consequently, the columns of $\bQ$ are the corresponding eigenvectors of $\bA$. 
Therefore, we conclude that every symmetric matrix can be orthogonally diagonalized, even when eigenvalues are repeated.
\end{proof}

For a more concise proof, \citet{lu2021numerical} presents an approach utilizing the Gram--Schmidt process combined with mathematical induction.

For a symmetric matrix $\bA^\top \bA$, the rank remains the same as that of $\bA$, a property we will utilize in proving the singular value decomposition in the next chapter.
However, in general, the rank of a product of two matrices does not exceed the rank of either matrix; see also Exercises~\ref{exercise:rk_ad} and \ref{exercise:rk_prod}.
\begin{lemma}[Rank of $\bA\bB$]\label{lemma:rankAB}
Given matrices $\bA\in \real^{m\times n}$ and $\bB\in \real^{n\times k}$, the rank of their product $\bA\bB\in \real^{m\times k}$ satisfies $\rank$($\bA\bB$)$\leq\min\{\rank(\bA), \rank(\bB)\}$.
\end{lemma}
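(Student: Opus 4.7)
The plan is to establish the two inequalities $\text{rank}(\bA\bB)\leq \text{rank}(\bA)$ and $\text{rank}(\bA\bB)\leq \text{rank}(\bB)$ separately, and then take the minimum. Both follow from elementary column-space and row-space observations combined with Theorem~\ref{lemma:equal-dimension-rank} (row rank equals column rank).

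First I would handle $\text{rank}(\bA\bB)\leq \text{rank}(\bA)$ using the column space viewpoint of matrix multiplication already exploited for the CR decomposition. Writing the column partition $\bB=[\bb_1,\bb_2,\ldots,\bb_k]$, we have $\bA\bB=[\bA\bb_1,\bA\bb_2,\ldots,\bA\bb_k]$, so every column of $\bA\bB$ is a linear combination of the columns of $\bA$. Thus $\cspace(\bA\bB)\subseteq \cspace(\bA)$, and taking dimensions gives $\text{rank}(\bA\bB)\leq \text{rank}(\bA)$.

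Next, for $\text{rank}(\bA\bB)\leq \text{rank}(\bB)$, I would use the analogous row-space argument. If $\bA=[\ba_1^\top;\ba_2^\top;\ldots;\ba_m^\top]$ is the row partition, then the $i$-th row of $\bA\bB$ is $\ba_i^\top\bB$, which is a linear combination of the rows of $\bB$. Hence the row space of $\bA\bB$ is contained in the row space of $\bB$, i.e., $\cspace((\bA\bB)^\top)\subseteq \cspace(\bB^\top)$, giving $\text{rank}((\bA\bB)^\top)\leq \text{rank}(\bB^\top)$. Applying Theorem~\ref{lemma:equal-dimension-rank} twice, $\text{rank}(\bA\bB)=\text{rank}((\bA\bB)^\top)$ and $\text{rank}(\bB)=\text{rank}(\bB^\top)$, so $\text{rank}(\bA\bB)\leq \text{rank}(\bB)$.

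Combining the two inequalities yields $\text{rank}(\bA\bB)\leq \min(\text{rank}(\bA),\text{rank}(\bB))$. No real obstacle is anticipated here; the argument is entirely elementary and the only non-trivial ingredient is the row-rank-equals-column-rank theorem, which is already available. An alternative route would be the null-space inclusion $\nspace(\bB)\subseteq \nspace(\bA\bB)$ together with the rank-nullity part of Theorem~\ref{theorem:fundamental-linear-algebra}, but the column/row space proof above is more direct and more consistent with the style used earlier in the book.
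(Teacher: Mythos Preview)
Your proposal is correct and follows essentially the same approach as the paper: the paper argues that all columns of $\bA\bB$ are combinations of columns of $\bA$ (giving $\text{rank}(\bA\bB)\leq\text{rank}(\bA)$) and all rows of $\bA\bB$ are combinations of rows of $\bB$ (giving $\text{rank}(\bA\bB)\leq\text{rank}(\bB)$). Your explicit invocation of Theorem~\ref{lemma:equal-dimension-rank} to pass between row rank and column rank is a bit more careful than the paper's version, but the substance is identical.
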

\begin{proof}[of Lemma~\ref{lemma:rankAB}]
Considering the matrix product $\bA\bB$: 
\begin{itemize}
\item  Each row of $\bA\bB$ is a linear combination of the rows of $\bB$, implying that the row space of $\bA\bB$ is contained within that of $\bB$. Therefore, $\rank$($\bA\bB$)$\leq$$\rank$($\bB$).

\item Similarly, each column of $\bA\bB$ is a linear combination of the columns of $\bA$, so the column space of $\bA\bB$ is contained within that of $\bA$. Hence, $\rank$($\bA\bB$)$\leq$$\rank$($\bA$).
\end{itemize}
Combining these observations, we conclude that $\rank$($\bA\bB$)$\leq\min\{\rank(\bA), \rank(\bB)\}$.
\end{proof}

\begin{tcolorbox}[title={Symmetric Matrix Property 4 of 4},colback=\mdframecolorTheorem]
\begin{lemma}[Rank of symmetric matrices]\label{lemma:rank-of-symmetric}
If $\bA$ is an $n\times n$ real symmetric matrix, then $\rank(\bA)$ =
the total number of nonzero eigenvalues of $\bA$. 
Furthermore, the column space $\cspace(\bA)$ is the linear subspace spanned by the eigenvectors of $\bA$ corresponding to its nonzero eigenvalues.
\end{lemma}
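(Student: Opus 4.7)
The plan is to leverage the spectral decomposition that was just established (Theorem~\ref{theorem:spectral_theorem}), which makes this lemma essentially a statement about diagonal matrices dressed up in an orthogonal change of basis. First I would write $\bA = \bQ\bLambda\bQ^\top$ with $\bQ$ orthogonal and $\bLambda = \diag(\lambda_1,\ldots,\lambda_n)$. Since orthogonal matrices are nonsingular, I can invoke the rank-invariance argument from the proof of Lemma~\ref{lemma:eigenvalue-similar-matrices} (the two claims that $rank(\bZ\bA)=rank(\bA)$ and $rank(\bA\bZ)=rank(\bA)$ when $\bZ$ is nonsingular) to conclude $rank(\bA) = rank(\bLambda)$. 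Since $\bLambda$ is diagonal, its rank equals the number of nonzero entries on the diagonal, which is exactly the number of nonzero eigenvalues of $\bA$. The ``in particular'' clause is then immediate: $\bA$ has full rank $n$ iff all eigenvalues are nonzero iff $0\notin\Lambda(\bA)$ iff $\bA$ is nonsingular (by the equivalences in the List of Equivalence of Singularity).

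For the column space claim, I would proceed by mutual inclusion. Write $\bQ=[\bq_1,\bq_2,\ldots,\bq_n]$ and, without loss of generality, assume the nonzero eigenvalues are $\lambda_1,\ldots,\lambda_r$ and $\lambda_{r+1}=\cdots=\lambda_n=0$, where $r=rank(\bA)$ by the first part. Expanding the decomposition,
$$
\bA = \sum_{i=1}^{n} \lambda_i \bq_i\bq_i^\top = \sum_{i=1}^{r}\lambda_i\bq_i\bq_i^\top,
$$
so every column of $\bA$ is a linear combination of $\bq_1,\ldots,\bq_r$, giving $\cspace(\bA)\subseteq span\{\bq_1,\ldots,\bq_r\}$. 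Conversely, since $\bA\bq_i = \lambda_i\bq_i$ and $\lambda_i\neq 0$ for $i\leq r$, each such $\bq_i = \lambda_i^{-1}\bA\bq_i$ lies in $\cspace(\bA)$, giving the reverse inclusion. Because the $\bq_i$ are mutually orthonormal (hence linearly independent) and there are exactly $r=rank(\bA)=\dim\cspace(\bA)$ of them, they constitute a basis of $\cspace(\bA)$, proving the final assertion.

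I do not anticipate a genuine obstacle here; the spectral theorem does essentially all of the work, and the only mild care point is to make sure the rank-preservation step cites a tool that is already available at this point in the text (Lemma~\ref{lemma:eigenvalue-similar-matrices} suffices, and one could alternatively argue directly via Lemma~\ref{lemma:rankAB} together with the observation that $\bQ^\top\bA\bQ=\bLambda$ to sandwich $rank(\bA)\leq rank(\bLambda)\leq rank(\bA)$). The most subtle bookkeeping is keeping the indexing of nonzero eigenvalues consistent; reordering the columns of $\bQ$ together with the corresponding diagonal entries of $\bLambda$ is legitimate because it amounts to right-multiplying $\bQ$ by a permutation matrix, which preserves orthogonality of $\bQ$ and the diagonal structure of $\bLambda$.
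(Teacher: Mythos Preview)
Your proposal is correct and follows essentially the same route as the paper: write $\bA=\bQ\bLambda\bQ^\top$ via the spectral theorem and then argue that $rank(\bA)=rank(\bLambda)$, which equals the number of nonzero diagonal entries. The paper opts for the sandwiching argument via Lemma~\ref{lemma:rankAB} (which you list as your alternative), and it does not actually write out the column-space claim; your mutual-inclusion argument for $\cspace(\bA)=span\{\bq_1,\ldots,\bq_r\}$ is a clean addition that fills that gap.
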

\end{tcolorbox}
\begin{proof}[of Lemma~\ref{lemma:rank-of-symmetric}]
For any symmetric matrix $\bA$, it can be expressed in its spectral form as $\bA = \bQ \bLambda\bQ^\top$, where $\bQ$ is an orthogonal matrix and $\bLambda$ is a diagonal matrix containing the eigenvalues of $\bA$. 
Using Lemma~\ref{lemma:rankAB}, we proceed as follows:
\begin{itemize}
\item From $\bA = \bQ \bLambda\bQ^\top$, we have $\rank(\bA) \leq \rank(\bQ \bLambda) \leq \rank(\bLambda)$.
\item From $\bLambda = \bQ^\top\bA\bQ$, we have $\rank(\bLambda) \leq \rank(\bQ^\top\bA) \leq \rank(\bA)$.
\end{itemize}
This implies $\rank(\bA) = \rank(\bLambda)$, which is equal to the total number of nonzero eigenvalues of $\bA$.
\end{proof}

\index{Uniqueness}
\section{Uniqueness of Spectral Decomposition}\label{section:uniqueness-spectral-decomposition}
It's important to note that spectral decomposition of a matrix is generally not unique. This is primarily due to the presence of repeated  eigenvalues.
When two or more eigenvalues $\lambda_i$ and $\lambda_j$ (for $1\leq i,j\leq n$) are identical, swapping their corresponding eigenvectors in the orthogonal matrix $\bQ$ results in a different decomposition that is still mathematically valid and equivalent.

However, the \textit{eigenspaces} associated with each eigenvalue---specifically, the null spaces $\nspace(\bA - \lambda_i\bI)$ for each eigenvalue $\lambda_i$---remain fixed. This means that while the choice of eigenvectors within each eigenspace can vary, leading to different decompositions, the decomposition in terms of eigenspaces is unique. 
In other words, any orthonormal basis for these eigenspaces can be used without affecting the overall spectral decomposition.

\index{Characteristic polynomial}
\section{Other Forms, Connecting Eigenvalue Decomposition*}\label{section:otherform-spectral}
In this section, we examine various types of spectral decomposition under different conditions. To support this discussion, we begin with a formal definition of the characteristic polynomial of a square matrix.
\begin{definition}[Characteristic polynomial\index{Characteristic polynomial}]
For any square matrix $\bA \in \real^{n\times n}$, its \textit{characteristic polynomial} is defined as:
$$
\begin{aligned}
\det(\lambda\bI-\bA ) &=\lambda^n + \gamma_{n-1} \lambda^{n-1} + \ldots + \gamma_1 \lambda  + \gamma_0
=(\lambda-\lambda_1)^{k_1} (\lambda-\lambda_2)^{k_2} \ldots (\lambda-\lambda_m)^{k_m},
\end{aligned}
$$
where $\lambda_1, \lambda_2, \ldots, \lambda_m$ are the distinct roots of $\det( \lambda\bI-\bA)=0$, which are also the eigenvalues of $\bA$. The sum of the multiplicities satisfies $k_1+k_2+\ldots +k_m=n$, indicating that $\det(\lambda\bI-\bA)$ is a polynomial of degree $n$ for any matrix $\bA\in \real^{n\times n}$ (see proof of Lemma~\ref{lemma:eigen-multiplicity}). 
The equation $\det(\lambda\bI - \bA)=0$ is referred to as  the \textit{characteristic equation} of $\bA$.
\end{definition}

The characteristic polynomial is essential in defining two key concepts: algebraic multiplicity and geometric multiplicity.
\begin{definition}[Algebraic multiplicity and geometric multiplicity\index{Algebraic multiplicity}\index{Geometric multiplicity}]\label{definition:eigen_multipli}
Given the characteristic polynomial of  a matrix $\bA\in \real^{n\times n}$:
$$
\begin{aligned}
\det(\lambda\bI-\bA ) =(\lambda-\lambda_1)^{k_1} (\lambda-\lambda_2)^{k_2} \ldots (\lambda-\lambda_m)^{k_m},
\end{aligned}
$$
the integer $k_i$ is called the \textit{algebraic multiplicity} of the eigenvalue $\lambda_i$, i.e., it equals the multiplicity of the corresponding root in the characteristic polynomial.

The \textit{eigenspace associated with the eigenvalue $\lambda_i$} is  the null space of $(\bA - \lambda_i\bI)$, denoted by $\nspace(\bA - \lambda_i\bI)$.
And the dimension of the eigenspace associated with $\lambda_i$, $\nspace(\bA - \lambda_i\bI)$, is known as the \textit{geometric multiplicity} of $\lambda_i$.

For brevity, we denote the algebraic multiplicity of $\lambda_i$ by $alg(\lambda_i)$ and its geometric multiplicity by $geo(\lambda_i)$.
\end{definition}

\index{Multiplicity}
\begin{remark}[Geometric multiplicity]\label{remark:geometric-mul-meaning}
For a matrix $\bA$ and its eigenspace $\nspace(\bA-\lambda_i\bI)$  corresponding to an eigenvalue	$\lambda_i$,  the dimension of the eigenspace reflects the number of linearly independent eigenvectors of $\bA$ associated with $\lambda_i$. This means that while there are infinitely many eigenvectors associated with each eigenvalue $\lambda_i$, they form a subspace that can be described using a finite set of basis vectors. In other words, the geometric multiplicity indicates the maximum number of linearly independent eigenvectors available for $\lambda_i$.
\end{remark}

By definition, the sum of the algebraic multiplicities of all eigenvalues equals $n$, whereas the sum of the geometric multiplicities can be strictly smaller.

\begin{corollary}[Multiplicity in similar matrices\index{Similar matrices}]\label{corollary:multipli-similar-matrix}
Similar matrices share the same algebraic and geometric multiplicities for their eigenvalues.
\end{corollary}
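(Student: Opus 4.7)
The plan is to prove the two equalities separately, exploiting machinery that is already in place. For the algebraic multiplicity, I would first show that similar matrices share the \emph{same characteristic polynomial}, which immediately forces equality of the algebraic multiplicities, since those are defined as the multiplicities of the linear factors of that polynomial. The key calculation is the identity
$$
\lambda\bI - \bP\bA\bP^{-1} \;=\; \bP(\lambda\bI - \bA)\bP^{-1},
$$
to which I apply the multiplicativity of the determinant from Lemma~\ref{lemma:determinant-intermezzo} to get $\det(\lambda\bI - \bP\bA\bP^{-1}) = \det(\bP)\det(\lambda\bI - \bA)\det(\bP^{-1}) = \det(\lambda\bI - \bA)$. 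The two characteristic polynomials are therefore identical, so their factorizations $(\lambda - \lambda_i)^{k_i}$ coincide factor for factor, giving $alg_{\bA}(\lambda_i) = alg_{\bP\bA\bP^{-1}}(\lambda_i)$ for every eigenvalue.

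For the geometric multiplicity, I would show that the map $\bv \mapsto \bP\bv$ is a linear isomorphism between the eigenspaces $\nspace(\bA - \lambda_i\bI)$ and $\nspace(\bP\bA\bP^{-1} - \lambda_i\bI)$. Using the identity above, if $(\bA - \lambda_i\bI)\bv = \bzero$ then
$$
(\bP\bA\bP^{-1} - \lambda_i\bI)(\bP\bv) \;=\; \bP(\bA - \lambda_i\bI)\bP^{-1}\bP\bv \;=\; \bP(\bA - \lambda_i\bI)\bv \;=\; \bzero,
$$
so multiplication by $\bP$ sends one eigenspace into the other; nonsingularity of $\bP$ gives an inverse (multiplication by $\bP^{-1}$) and injectivity, hence a bijection of subspaces, so the dimensions agree. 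Equivalently, one can skip the explicit isomorphism and invoke the rank equality proved inside Lemma~\ref{lemma:eigenvalue-similar-matrices}, applied to $\bA - \lambda_i\bI$ and $\bP\bA\bP^{-1} - \lambda_i\bI = \bP(\bA - \lambda_i\bI)\bP^{-1}$, combined with the rank–nullity part of Theorem~\ref{theorem:fundamental-linear-algebra} to convert equal ranks into equal dimensions of null spaces.

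The main obstacle is essentially cosmetic: both halves follow from results already in the excerpt, so the real work is in presenting the bijection cleanly and being careful not to conflate similarity with congruence (in particular, $\bP$ need not be orthogonal, so the isomorphism is \emph{linear} but not generally isometric, which is fine since only dimension is needed). A minor point worth stating explicitly is that $\bP$ maps $\nspace(\bA-\lambda_i\bI)$ \emph{onto} $\nspace(\bP\bA\bP^{-1} - \lambda_i\bI)$ and not merely into it, so the dimensions match rather than just satisfying a one-sided inequality.
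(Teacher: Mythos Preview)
Your proposal is correct and follows essentially the same approach as the paper: both arguments use the map $\bv \mapsto \bP\bv$ to relate the eigenspaces, with the paper phrasing it as a two-sided inequality (``sandwiching'') rather than a single isomorphism. Your treatment of algebraic multiplicity via equality of characteristic polynomials is actually a bit more complete than the paper's, which simply cites Lemma~\ref{lemma:eigenvalue-similar-matrices} for equality of eigenvalues without explicitly addressing why the multiplicities match.
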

\begin{proof}[of Corollary~\ref{corollary:multipli-similar-matrix}]
From Proposition~\ref{proposition:eigenvalue-similar-matrices}, we know that similar matrices have identical eigenvalues, which implies they also share the same algebraic multiplicities.

Consider two similar matrices $\bA$ and $\bB= \bP\bA\bP^{-1}$, where $\bP$ is nonsingular. Suppose the geometric multiplicity of an eigenvalue $\lambda$ of  $\bA$ is $k$. This means there exist $k$ linearly independent eigenvectors  $\bv_1, \bv_2, \ldots, \bv_k$ forming a basis for the eigenspace $\nspace(\bA-\lambda\bI)$ such that $\bA\bv_i = \lambda \bv_i$ for each $i\in \{1, 2, \ldots, k\}$. Then, $\bw_i = \bP\bv_i$'s are the eigenvectors of $\bB$ associated with  $\lambda$. 
Since $\bP$ is nonsingular, these $\bw_i$'s are  also linearly independent. Thus, the dimension of the eigenspace $\nspace(\bB-\lambda\bI)$ is at least $k$, implying $\dim(\nspace(\bA-\lambda\bI)) \leq \dim(\nspace(\bB-\lambda\bI)) $. 

Conversely, if we start with a set of $k$ linearly independent eigenvectors $\bw_1, \bw_2, \ldots, \bw_k$ for $\bB$ corresponding to $\lambda$, then the vectors $\bv_i = \bP^{-1}\bw_i$ for all $i \in \{1, 2, \ldots, k\}$ are  eigenvectors of $\bA$ associated with $\lambda$. This gives us $\dim(\nspace(\bB-\lambda\bI)) \leq \dim(\nspace(\bA-\lambda\bI)) $.

By combining both inequalities, we conclude that $\dim(\nspace(\bA-\lambda\bI)) = \dim(\nspace(\bB-\lambda\bI)) $, establishing the equality of geometric multiplicities for similar matrices.
\end{proof}

\begin{lemma}[Bounded geometric multiplicity]\label{lemma:bounded-geometri}
For any matrix $\bA\in \real^{n\times n}$ and its eigenvalue $\lambda_i$, the geometric multiplicity is bounded by the algebraic multiplicity:
$$
geo(\lambda_i) \leq alg(\lambda_i).
$$
\end{lemma}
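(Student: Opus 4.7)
The plan is to exploit the similarity transformation induced by a basis of $\real^n$ that extends a basis of the eigenspace $\nspace(\bA - \lambda_i\bI)$, and then read off the algebraic multiplicity from the resulting block-triangular characteristic polynomial.

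First I would set $k = geo(\lambda_i)$ and pick a basis $\bv_1, \bv_2, \ldots, \bv_k$ of the eigenspace $\nspace(\bA - \lambda_i\bI)$, so that $\bA\bv_j = \lambda_i \bv_j$ for each $j \in \{1,\ldots,k\}$ (using Remark~\ref{remark:geometric-mul-meaning}). Extend this to a basis $\bv_1, \bv_2, \ldots, \bv_n$ of $\real^n$ and assemble the vectors into the nonsingular matrix $\bP = [\bv_1, \bv_2, \ldots, \bv_n]$. Note that we cannot in general arrange for $\bP$ to be orthogonal, since $\bA$ is not assumed symmetric; but nonsingularity of $\bP$ is all that is needed.

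Next I would inspect $\bP^{-1}\bA\bP$. Its $j$-th column is $\bP^{-1}(\bA\bv_j)$, and for $j \leq k$ this simplifies to $\lambda_i \bP^{-1}\bv_j = \lambda_i \be_j$. Hence the matrix has the block form
$$
\bP^{-1}\bA\bP = \begin{bmatrix} \lambda_i\bI_k & \bB \\ \bzero & \bC \end{bmatrix}
$$
for some $\bB \in \real^{k\times(n-k)}$ and $\bC \in \real^{(n-k)\times(n-k)}$. Now $\bA$ and $\bP^{-1}\bA\bP$ are similar matrices, so Corollary~\ref{corollary:multipli-similar-matrix} (together with Lemma~\ref{lemma:eigenvalue-similar-matrices}) tells us they have the same characteristic polynomial. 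Applying the block-determinant identity that is used repeatedly throughout the text,
$$
\det(\lambda\bI_n - \bA) = \det\!\left(\lambda\bI_n - \bP^{-1}\bA\bP\right) = \det\!\left((\lambda - \lambda_i)\bI_k\right)\,\det(\lambda\bI_{n-k} - \bC) = (\lambda - \lambda_i)^k\, q(\lambda),
$$
where $q(\lambda) = \det(\lambda\bI_{n-k} - \bC)$ is a polynomial of degree $n-k$.

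Finally, since $(\lambda - \lambda_i)^k$ divides the characteristic polynomial of $\bA$, the algebraic multiplicity of $\lambda_i$ satisfies $alg(\lambda_i) \geq k = geo(\lambda_i)$, which is the desired inequality. There is no essential analytic obstacle: the main thing to be careful about is the bookkeeping that produces the upper-left block $\lambda_i\bI_k$ (a direct consequence of the change-of-basis calculation once the first $k$ columns of $\bP$ are chosen to be eigenvectors) and the clean factorization of the block-triangular determinant, both of which are standard facts already invoked elsewhere in the excerpt.
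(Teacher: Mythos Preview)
Your proof is correct and follows essentially the same approach as the paper: extend a basis of the eigenspace to a basis of $\real^n$, observe that $\bP^{-1}\bA\bP$ is block upper triangular with upper-left block $\lambda_i\bI_k$, and read off the factor $(\lambda-\lambda_i)^k$ from the characteristic polynomial via the block-determinant identity. The only cosmetic differences are notation and your use of $\det(\lambda\bI-\bA)$ in place of the paper's $\det(\bA-\lambda\bI)$.
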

\begin{proof}[of Lemma~\ref{lemma:bounded-geometri}]
Suppose $\bP_1 = [\bv_1, \bv_2, \ldots, \bv_k]$ contains a set of  linearly independent eigenvectors of $\bA$ associated with $\lambda_i$. That is, the $k$ vectors form a basis for the eigenspace $\nspace(\bA-\lambda_i\bI)$, and the geometric multiplicity associated with $\lambda_i$ is $k$. 
Extend $\bP_1$ to a full basis 
$
\bP = [\bP_1, \bP_2] = [\bv_1, \bv_2, \ldots, \bv_k, \bv_{k+1}, \ldots, \bv_n],
$
where $\bP$ is nonsingular. Then we have $\bA\bP = [\lambda_i\bP_1, \bA\bP_2]$.

Now construct a matrix $\bB = \scriptsize\begin{bmatrix}
\lambda_i \bI_k & \bC \\
\bzero  & \bD
\end{bmatrix}$, where $\bA\bP_2 = \bP_1\bC + \bP_2\bD$. 
Then, $\bP^{-1}\bA\bP = \bB$, and therefore $\bA$ and $\bB$ are similar matrices. 
Such matrices $\bC$ and $\bD$ always exist because the vectors $\bv_i$ are linearly independent vectors spanning the entire space $\real^n$, and any column of $\bA\bP_2$ belongs to the column space of $\bP=[\bP_1,\bP_2]$.
Therefore, 
$$
\begin{aligned}
\det(\bA-\lambda\bI) &= \det(\bP^{-1})\det(\bA-\lambda\bI)\det(\bP)  
= \det(\bP^{-1}(\bA-\lambda\bI)\bP) 
=  \det(\bB-\lambda\bI) \\
&= \det\left(\begin{bmatrix}
	(\lambda_i-\lambda) \bI_k  & \bC \\
	\bzero  & \bD - \lambda \bI
\end{bmatrix}\right)
= (\lambda_i-\lambda)^k \det(\bD-\lambda\bI).
\end{aligned}
$$ 
This shows that the algebraic multiplicity of $\lambda_i$ is at least $k$, which is the geometric multiplicity. Therefore,
$
geo(\lambda_i) \leq alg(\lambda_i).
$
And we complete the proof.
\end{proof}

Building on the proof of Lemma~\ref{lemma:eigen-multiplicity}, it becomes evident that for symmetric matrices, the algebraic and geometric multiplicities of all eigenvalues are equal. Such matrices are called \textit{simple matrices}.
\begin{definition}[Simple matrix]
A square matrix is called \textit{simple} if, for each of its eigenvalues, the algebraic multiplicity equals the geometric multiplicity.
\end{definition}

\begin{definition}[Diagonalizable]\label{definition:diagonalizable}
A square matrix $\bA$ is said to be \textit{diagonalizable} if there exists a nonsingular matrix $\bP$ and a diagonal matrix $\bD$ such that $\bA = \bP\bD\bP^{-1}$.
\end{definition}

Diagonal matrices have a particularly simple structure, which makes computations such as determinants and inverses more straightforward. 
The eigenvalue decomposition (Theorem~\ref{theorem:eigenvalue-decomposition}) and the spectral decomposition (Theorem~\ref{theorem:spectral_theorem}) are examples of diagonalization techniques applicable to specific classes of matrices.
\begin{lemma}[Simple matrices are diagonalizable]\label{lemma:simple-diagonalizable}
A matrix is simple if and only if it is diagonalizable.
\end{lemma}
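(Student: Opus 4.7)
The plan is to prove the two implications separately, using the independent eigenvector theorem (Theorem~\ref{theorem:independent-eigenvector-theorem}) and the multiplicity-preservation result for similar matrices (Corollary~\ref{corollary:multipli-similar-matrix}).

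For the forward direction (simple $\Rightarrow$ diagonalizable), I would let $\lambda_1,\ldots,\lambda_m$ denote the distinct eigenvalues of $\bA$ with algebraic multiplicities $k_1,\ldots,k_m$ so that $k_1+\cdots+k_m=n$. Because $\bA$ is simple, the geometric multiplicity of each $\lambda_i$ also equals $k_i$, which by Remark~\ref{remark:geometric-mul-meaning} gives a basis of $k_i$ linearly independent eigenvectors $\{\bv^{(i)}_1,\ldots,\bv^{(i)}_{k_i}\}$ of the eigenspace $\nspace(\bA-\lambda_i\bI)$. The next step is to argue that the union
$$
\mathcal{B} = \bigcup_{i=1}^{m}\{\bv^{(i)}_1,\ldots,\bv^{(i)}_{k_i}\}
$$
is linearly independent; this is the main obstacle, since Theorem~\ref{theorem:independent-eigenvector-theorem} as stated only takes one eigenvector per distinct eigenvalue. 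I would extend that theorem by a short induction: assuming a nontrivial relation $\sum_{i,j} c^{(i)}_j \bv^{(i)}_j = \bzero$, apply $(\bA-\lambda_m\bI)$ to kill the last block, and then use the inductive hypothesis on the remaining $m-1$ blocks (together with the independence within each block) to force all coefficients to vanish. Having produced $n$ linearly independent eigenvectors, I place them into $\bP$ and the corresponding eigenvalues into a diagonal $\bD$, so $\bA\bP=\bP\bD$ gives $\bA=\bP\bD\bP^{-1}$.

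For the reverse direction (diagonalizable $\Rightarrow$ simple), suppose $\bA=\bP\bD\bP^{-1}$ with $\bD=\mathrm{diag}(d_1,\ldots,d_n)$. Then $\bA$ and $\bD$ are similar, and Corollary~\ref{corollary:multipli-similar-matrix} tells us they share algebraic and geometric multiplicities for every eigenvalue. So it suffices to check that every diagonal matrix is simple. For any eigenvalue $\lambda$ of $\bD$, the algebraic multiplicity is just the number of indices $i$ with $d_i=\lambda$ (read off the characteristic polynomial $\det(\lambda\bI-\bD)=\prod_i(\lambda-d_i)$), and the geometric multiplicity is $\dim\nspace(\bD-\lambda\bI)$, which equals the number of zero diagonal entries of $\bD-\lambda\bI$, i.e.\ the same count. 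Hence $\bD$ is simple, and therefore so is $\bA$.

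I expect the first direction to be the real work, because Theorem~\ref{theorem:independent-eigenvector-theorem} does not directly give independence of a \emph{collection} of eigenspace bases; the short inductive upgrade sketched above is the key technical step. The second direction is essentially bookkeeping once Corollary~\ref{corollary:multipli-similar-matrix} is invoked.
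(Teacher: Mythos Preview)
Your proposal is correct and follows essentially the same route as the paper: for the forward direction, collect bases of the eigenspaces and show their union is independent, then invoke diagonalizability from $n$ independent eigenvectors; for the backward direction, use Corollary~\ref{corollary:multipli-similar-matrix} to reduce to the trivially simple diagonal case. The only minor difference is in the independence argument: instead of your induction with $(\bA-\lambda_m\bI)$, the paper groups the combination as $\bw^i=\sum_j x_j^i\bv_j^i$, observes each $\bw^i$ is either zero or an eigenvector for $\lambda_i$, and applies Theorem~\ref{theorem:independent-eigenvector-theorem} directly to force all $\bw^i=\bzero$.
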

\begin{proof}[of Lemma~\ref{lemma:simple-diagonalizable}]
Suppose that $\bA\in \real^{n\times n}$ is a simple matrix, meaning that the algebraic and geometric multiplicities for each eigenvalue are equal. 
For a specific eigenvalue $\lambda_i$, let $\{\bv_1^i, \bv_2^i, \ldots, \bv_{k_i}^i\}$ be a basis for the eigenspace $\nspace(\bA - \lambda_i\bI)$. 
In other words, $\{\bv_1^i, \bv_2^i, \ldots, \bv_{k_i}^i\}$ is a set of linearly independent eigenvectors of $\bA$ associated with $\lambda_i$, where ${k_i}$ is the algebraic or geometric multiplicity of $\lambda_i$: $alg(\lambda_i)=geo(\lambda_i)=k_i$. Suppose there are $m$ distinct eigenvalues. 
Since $k_1+k_2+\ldots +k_m = n$, the set of eigenvectors consists of the union of $n$ vectors. 
Consider a  linear combination of these eigenvectors:
\begin{equation}\label{equation:proof-simple-diagonalize}
	\bz = \sum_{j=1}^{k_1} x_j^1 \bv_j^1+ \sum_{j=1}^{k_2} x_j^2 \bv_j^2 + \ldots+ \sum_{j=1}^{k_m} x_j^m \bv_j^m = \bzero.
\end{equation}
Let $\bw^i = \sum_{j=1}^{k_i} x_j^i \bv_j^i$. Then $\bw^i$ is either an eigenvector associated with $\lambda_i$ or the zero vector. Therefore, $\bz = \sum_{i=1}^{m} \bw^i$ is a sum of either zero vectors or  eigenvectors associated with different eigenvalues of $\bA$. Since eigenvectors associated with different eigenvalues are linearly independent. We must have $\bw^i =\bzero$ for all $i\in \{1, 2, \ldots, m\}$. That is,
$$
\bw^i = \sum_{j=1}^{k_i} x_j^i \bv_j^i = \bzero, \qquad \text{for all $i\in \{1, 2, \ldots, m\}$}.
$$
Since we assume the eigenvectors $\bv_j^i$'s associated with $\lambda_i$ are linearly independent, we must have $x_j^i=0$ for all $i \in \{1,2,\ldots, m\}, j\in \{1,2,\ldots,k_i\}$. Thus, the $n$ vectors are linearly independent:
$$
\{\bv_1^1, \bv_2^1, \ldots, \bv_{k_i}^1\},\{\bv_1^2, \bv_2^2, \ldots, \bv_{k_i}^2\},\ldots,\{\bv_1^m, \bv_2^m, \ldots, \bv_{k_i}^m\}.
$$
According to the eigenvalue decomposition presented in Theorem~\ref{theorem:eigenvalue-decomposition},  $\bA$ is diagonalizable.

Conversely, suppose $\bA$ is diagonalizable. That is, there exists a nonsingular matrix $\bP$ and a diagonal matrix $\bD$ such that $\bA =\bP\bD\bP^{-1} $. 
Then $\bA$ and $\bD$ are similar matrices, and therefore they have the same eigenvalues (Proposition~\ref{proposition:eigenvalue-similar-matrices}), the same algebraic multiplicities, and the same geometric multiplicities (Corollary~\ref{corollary:multipli-similar-matrix}). It can be readily verified that a diagonal matrix has equal algebraic  and geometric multiplicities. Therefore, $\bA$ is a simple matrix.
\end{proof}

From Theorem~\ref{theorem:independent-eigenvector-theorem}, which states that any eigenvectors corresponding to different eigenvalues are linearly independent, and Remark~\ref{remark:geometric-mul-meaning}, which explains that the geometric multiplicity is the dimension of the corresponding eigenspace, 
we can conclude the following: if, for a matrix $\bA\in \real^{n\times n}$, the geometric multiplicity is equal to the algebraic multiplicity (for all eigenvalues), the eigenspaces can span the entire space $\real^n$. 
Hence, the above lemma equivalently claims that if the eigenspaces span the entire space $\real^n$, then $\bA$ can be diagonalized.

\begin{corollary}
A square matrix $\bA$ is considered simple if it has a complete set of linearly independent eigenvectors. Alternatively, any symmetric matrix $\bA$ also qualifies as a simple matrix by definition.
\end{corollary}
The proof of this corollary follows directly from the eigenvalue decomposition given in Theorem~\ref{theorem:eigenvalue-decomposition} and the spectral theorem presented in Theorem~\ref{theorem:spectral_theorem}.

We now present an alternative expression for the spectral decomposition:
\begin{theoremHigh}[Spectral decomposition: the second form]\label{theorem:spectral_theorem_secondForm}
For a \textbf{simple matrix} $\bA \in \real^{n\times n}$,  it can be expressed as a weighted sum of idempotent matrices:
$$
\bA = \sum_{i=1}^{n} \lambda_i \bA_i,
$$
where each $\lambda_i$, for  $i\in \{1,2,\ldots, n\}$, represents an eigenvalue of $\bA$ (potentially repeated). 
The idempotent matrices $\bA_i$ satisfy the following properties:
\begin{enumerate}
\item \textit{Idempotent.} $\bA_i^2 = \bA_i$ for all $i\in \{1,2,\ldots, n\}$;

\item \textit{Orthogonal.} $\bA_i\bA_j = \bzero$ for all $i \neq j$;

\item \textit{Additivity.} $\sum_{i=1}^{n} \bA_i = \bI_n$;

\item \textit{Rank-Additivity.} $\rank(\bA_1) + \rank(\bA_2) + \ldots + \rank(\bA_n) = n$.
\end{enumerate}

\end{theoremHigh}

\begin{proof}[of Theorem~\ref{theorem:spectral_theorem_secondForm}]
Given that $\bA$ is a simple matrix, according to Lemma~\ref{lemma:simple-diagonalizable}, there exists a nonsingular matrix $\bP$ and a diagonal matrix $\bLambda$ such that $\bA=\bP\bLambda\bP^{-1}$, where $\bLambda=\diag(\lambda_1, \lambda_2, \ldots, \lambda_n)$,  $\lambda_i$'s are eigenvalues of $\bA$, and the columns of $\bP$ consist of the corresponding eigenvectors. Let 
$$
\bP = \begin{bmatrix}
\bv_1 & \bv_2&\ldots & \bv_n
\end{bmatrix}
\qquad
\text{and }
\qquad
\bP^{-1} = 
\begin{bmatrix}
\bw_1^\top;
\bw_2^\top ;
\ldots ;
\bw_n^\top
\end{bmatrix}
$$
denote the column and row partitions of $\bP$ and $\bP^{-1}$, respectively. Then, we can rewrite $\bA$ as
$$
\bA= \bP\bLambda\bP^{-1} = 
\begin{bmatrix}
	\bv_1 & \bv_2&\ldots & \bv_n
\end{bmatrix}
\bLambda
\begin{bmatrix}
	\bw_1^\top \\
	\bw_2^\top \\
	\vdots \\
	\bw_n^\top
\end{bmatrix}=
\sum_{i=1}^{n}\lambda_i \bv_i\bw_i^\top.
$$
By defining $\bA_i = \bv_i\bw_i^\top$, we obtain $\bA = \sum_{i=1}^{n} \lambda_i \bA_i$.
It follows from $\bP^{-1}\bP = \bI$  that 
$$ 
\left\{
\begin{aligned}
	&\bw_i^\top\bv_j = 1 ,& \mathrm{\,\,if\,\,} i = j;  \\
	&\bw_i^\top\bv_j = 0 ,& \mathrm{\,\,if\,\,} i \neq j. 
\end{aligned}
\right.
$$
Thus, 
$$ 
\bA_i\bA_j =\bv_i\bw_i^\top\bv_j\bw_j^\top = \left\{
\begin{aligned}
	&\bv_i\bw_i^\top = \bA_i ,& \mathrm{\,\,if\,\,} i = j;  \\
	& \bzero ,& \mathrm{\,\,if\,\,} i \neq j. 
\end{aligned}
\right.
$$
This confirms both the idempotency and orthogonality of the matrices $\bA_i$. Moreover, we have  $\sum_{i=1}^{n}\bA_i = \bP\bP^{-1}=\bI$, which verifies their additivity. 
Finally, the rank-additivity property holds trivially because  $\rank(\bA_i)=1$ for all $i\in \{1,2,\ldots, n\}$.
\end{proof}
This form of the decomposition is closely related to \textit{Cochran's theorem} and is widely used in the distribution theory of linear models \citep{lu2021numerical,lu2021rigorous}.
\index{Cochran's theorem}

Going further, suppose we have $k$ distinct eigenvalues.
Then we have the following result.
\begin{theoremHigh}[Spectral decomposition: the third form]\label{Corollary:spectral_theorem_3Form}
For  a \textbf{simple matrix} $\bA \in \real^{n\times n}$ \textbf{with $k$ distinct eigenvalues}, it can be expressed as a weighted sum of a set of idempotent matrices:
$$
\bA = \sum_{i=1}^{\textcolor{mylightbluetext}{k}} \lambda_i \bA_i,
$$
where each $\lambda_i$, for  $i\in \{1,2,\ldots, \textcolor{mylightbluetext}{k}\}$, represents one of the distinct eigenvalues of $\bA$. 
The idempotent matrices $\bA_i$ satisfy the following properties:
\begin{enumerate}
\item \textit{Idempotent.} $\bA_i^2 = \bA_i$ for all $i\in \{1,2,\ldots, \textcolor{mylightbluetext}{k}\}$;

\item \textit{Orthogonal.} $\bA_i\bA_j = \bzero$ for all $i \neq j$;

\item \textit{Additivity.} $\sum_{i=1}^{\textcolor{mylightbluetext}{k}} \bA_i = \bI_n$;

\item \textit{Rank-Additivity.} $\rank(\bA_1) + \rank(\bA_2) + \ldots + \rank(\bA_{\textcolor{mylightbluetext}{k}}) = n$.
\end{enumerate}

\end{theoremHigh}
\begin{proof}[of Theorem~\ref{Corollary:spectral_theorem_3Form}]
Building on Theorem~\ref{theorem:spectral_theorem_secondForm}, we can express $\bA$ as $\bA =\sum_{j=1}^{n} \beta_j \bB_j$, where $\beta_j's$ are the eigenvalues and $\bB_j's$ are the corresponding idempotent matrices from the second form of the spectral decomposition. 
Assume without loss of generality that the eigenvalues are ordered such that $\beta_1 \leq \beta_2 \leq \ldots \leq\beta_n$, allowing for duplicates. 
Let $\{\lambda_1, \lambda_2, \ldots, \lambda_k\}$ denote the set of $k$ distinct eigenvalues, and let $\bA_i$ represent the sum of the $\bB_j$ matrices associated with $\lambda_i$. 
Suppose the multiplicity of $\lambda_i$ is $m_i$, and the set of $\bB_j$ matrices associated with $\lambda_i$ can be denoted by $\{\bB_{1}^i, \bB_{2}^i, \ldots, \bB_{m_i}^i\}$. Thus, $\bA_i$ can be defined as  $\bA_i = \sum_{j=1}^{m_i} \bB_{j}^i$. Consequently, we have  $\bA = \sum_{i=1}^{k} \lambda_i \bA_i$.

\paragraph{Idempotency.} $\bA_i^2 = (\bB_1^i + \bB_2^i+\ldots \bB_{m_i}^i)(\bB_1^i + \bB_2^i+\ldots \bB_{m_i}^i)= \bB_1^i + \bB_2^i+\ldots \bB_{m_i}^i = \bA_i$  due to the idempotency and orthogonality of the $\bB_j^i$ matrices.

\paragraph{Ortogonality.} $\bA_i\bA_j = (\bB_1^i + \bB_2^i+\ldots \bB_{m_i}^i)(\bB_1^j + \bB_2^j+\ldots \bB_{m_j}^j)=\bzero$  due to the orthogonality of the $\bB_j^i$ matrices.

\paragraph{Additivity.} It is evident that  $\sum_{i=1}^{k} \bA_i = \bI_n$.

\paragraph{Rank-Additivity.} $\rank(\bA_i ) = \rank(\sum_{j=1}^{m_i} \bB_{j}^i) = m_i$ such that $\rank(\bA_1) + \rank(\bA_2) + \ldots + \rank(\bA_{k}) = m_1+m_2+\ldots+m_k=n$.
\end{proof}

The reverse implication of the above theorem also holds true.
\begin{theoremHigh}[Spectral decomposition: backward implication]\label{Corollary:spectral_theorem_4Form}
Let $\bA \in \real^{n\times n}$ be a matrix with $k$ distinct eigenvalues. If $\bA$ can be decomposed as a linear combination of a set of idempotent matrices
$$
\bA = \sum_{i=1}^{k} \lambda_i \bA_i,
$$
where each  $\lambda_i$, for  $i\in \{1,2,\ldots, k\}$, represents one of  the distinct eigenvalues of $\bA$, and 
the matrices $\bA_i$  satisfy the four conditions outlined in Theorem~\ref{Corollary:spectral_theorem_3Form}, then $\bA$ is a simple matrix.
\end{theoremHigh}
\begin{proof}[of Theorem~\ref{Corollary:spectral_theorem_4Form}]
Assume  that  $\rank(\bA_i) = r_i$ for all $i \in \{1,2,\ldots, k\}$. By the ULV decomposition given in Theorem~\ref{theorem:ulv-decomposition}, each $\bA_i$ can be decomposed as 
$
\bA_i = \bU_i 
\scriptsize
\begin{bmatrix}
\bL_i & \bzero \\
\bzero & \bzero 
\end{bmatrix}\bV_i,
$
where $\bL_i \in \real^{r_i \times r_i}$ is lower triangular, and $\bU_i \in \real^{n \times n}$  and $\bV_i\in \real^{n \times n}$ are orthogonal. Define 
$$
\bX_i = 
\bU_i \begin{bmatrix}
	\bL_i  \\
	\bzero  
\end{bmatrix}
\qquad 
\text{and}
\qquad 
\bV_i = 
\begin{bmatrix}
\bY_i \\
\bZ_i
\end{bmatrix},
$$
where $\bX_i$ is of size $\real^{n\times r_i}$, and $\bY_i \in \real^{r_i \times n}$ consists of the  first $r_i$ rows of $\bV_i$. Consequently, we have
$
\bA_i = \bX_i \bY_i.
$
This can be seen as a \textit{reduced} ULV decomposition of $\bA_i$. 
Concatenating  the $\bX_i$'s and $\bY_i$'s into matrices $\bX$ and $\bY$:
$$
\bX = [\bX_1, \bX_2, \ldots, \bX_k],
\qquad\text{and}\qquad
\bY = 
\begin{bmatrix}
\bY_1;
\bY_2;
\ldots;
\bY_k
\end{bmatrix},
$$
where $\bX\in \real^{n\times n}$ and $\bY\in \real^{n\times n}$ (by rank-additivity). Using block matrix multiplication and leveraging the additivity property of the $\bA_i$'s, we have 
$
\bX\bY = \sum_{i=1}^{k} \bX_i\bY_i = \sum_{i=1}^{k} \bA_i = \bI.
$
Therefore, $\bY$ is the inverse of $\bX$, and we also have
$$
\bY\bX = 
\begin{bmatrix}
	\bY_1\\
	\bY_2\\
	\vdots \\
	\bY_k
\end{bmatrix}
[\bX_1, \bX_2, \ldots, \bX_k]
=
\begin{bmatrix}
\bY_1\bX_1 & \bY_1\bX_2 & \ldots & \bY_1\bX_k\\
\bY_2\bX_1 & \bY_2\bX_2 & \ldots & \bY_2\bX_k\\
\vdots & \vdots & \ddots & \vdots\\
\bY_k\bX_1 & \bY_k\bX_2 & \ldots & \bY_k\bX_k\\
\end{bmatrix}
=\bI,
$$
such that 
$$ 
\bY_i\bX_j = \left\{
\begin{aligned}
	&\bI_{r_i}  ,& \mathrm{\,\,if\,\,} i = j;  \\
	& \bzero ,& \mathrm{\,\,if\,\,} i \neq j. 
\end{aligned}
\right.
$$
This implies 
$$ 
\bA_i\bX_j = \left\{
\begin{aligned}
	&\bX_i  ,& \mathrm{\,\,if\,\,} i = j;  \\
	& \bzero ,& \mathrm{\,\,if\,\,} i \neq j, 
\end{aligned}
\right.
\qquad 
\text{and}
\qquad
\bA \bX_i = \lambda_i\bX_i.
$$
Finally, we conclude that
$$
\begin{aligned}
\bA\bX &= \bA[\bX_1, \bX_2, \ldots, \bX_k] = [\lambda_1\bX_1, \lambda_2\bX_2, \ldots, \lambda_k\bX_k] = \bX\bLambda,
\end{aligned}
$$
where 
$\bLambda = \diag(\lambda_1 \bI_{r_1}, \lambda_2 \bI_{r_2}, \ldots,\lambda_k \bI_{r_k})
$
is a diagonal matrix. This implies $\bA$ can be diagonalized, and by Lemma~\ref{lemma:simple-diagonalizable}, $\bA$ is indeed a simple matrix.
\end{proof}

Combining Theorem~\ref{Corollary:spectral_theorem_3Form} and Theorem~\ref{Corollary:spectral_theorem_4Form}, we can claim that a matrix $\bA \in \real^{n\times n}$ is a simple matrix with $k$ distinct eigenvalues if and only if it can be decomposed as a sum of a set of idempotent matrices
$$
\bA = \sum_{i=1}^{k} \lambda_i \bA_i,
$$
where each $\lambda_i$, for  $i\in \{1,2,\ldots, k\}$, represents one of the distinct eigenvalues of $\bA$, and the matrices $\bA_i$  satisfy the four conditions outlined in Theorem~\ref{Corollary:spectral_theorem_3Form}.

\section{Skew-Symmetric Matrix and its Properties*}
We presented the spectral decomposition of symmetric matrices. Another significant class of matrices related to symmetry is known as \textit{skew-symmetric matrices}.
\begin{definition}[Skew-symmetric matrix\index{Skew-symmetric matrix}]
A matrix $\bA\in \real^{n\times n}$ is called a \textit{skew-symmetric matrix} if it satisfies the condition 
$\bA^\top = -\bA$.
Under this definition,  the diagonal entries $a_{ii}$ for all $i \in \{1,2,\ldots, n\}$ must satisfy the equation $a_{ii} = -a_{ii}$, which implies that all diagonal entries are  zero.
\end{definition}

Previously, in Lemma~\ref{lemma:real-eigenvalues-spectral}, we established that the eigenvalues of symmetric matrices are real. Similarly, it can be shown that all eigenvalues of skew-symmetric matrices are either purely imaginary or zero.
\begin{lemma}[Imaginary eigenvalues]\label{lemma:real-eigenvalues-spectral-skew}
The eigenvalues of any skew-symmetric matrix are either purely imaginary or zero. 
\end{lemma}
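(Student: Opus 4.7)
The plan is to mirror exactly the proof of Lemma on real eigenvalues for symmetric matrices (the one showing $\bA^\top = \bA$ forces real spectrum), but to exploit the sign flip coming from $\bA^\top = -\bA$ so that the conclusion becomes $\lambda = -\bar{\lambda}$ instead of $\lambda = \bar{\lambda}$. The whole argument is an inner-product-with-conjugate computation, and I expect no serious obstacle; the only thing to watch is keeping track of the minus sign introduced by transposing a skew-symmetric matrix.

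First I would let $\lambda \in \mathbb{C}$ be an eigenvalue with (possibly complex) eigenvector $\bx = \bc + i\bd \neq \bzero$, so $\bA\bx = \lambda \bx$. Since $\bA$ is real, taking complex conjugates gives $\bA\bar{\bx} = \bar{\lambda}\bar{\bx}$, and transposing this yields $\bar{\bx}^\top \bA^\top = \bar{\lambda}\bar{\bx}^\top$. Now I use the skew-symmetry $\bA^\top = -\bA$ to rewrite the last identity as $-\bar{\bx}^\top \bA = \bar{\lambda}\bar{\bx}^\top$, i.e., $\bar{\bx}^\top \bA = -\bar{\lambda}\bar{\bx}^\top$.

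Next I form the scalar $\bar{\bx}^\top \bA \bx$ in two ways: from $\bA\bx = \lambda\bx$ I get $\bar{\bx}^\top \bA \bx = \lambda \, \bar{\bx}^\top \bx$, and from $\bar{\bx}^\top \bA = -\bar{\lambda}\bar{\bx}^\top$ I get $\bar{\bx}^\top \bA \bx = -\bar{\lambda}\,\bar{\bx}^\top \bx$. Equating gives $(\lambda + \bar{\lambda})\,\bar{\bx}^\top \bx = 0$. As in the symmetric case, $\bar{\bx}^\top \bx = \bc^\top \bc + \bd^\top \bd > 0$ because $\bx \neq \bzero$, hence $\lambda + \bar{\lambda} = 0$.

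Finally, writing $\lambda = a + ib$ with $a, b \in \mathbb{R}$, the relation $\lambda = -\bar{\lambda}$ forces $a = 0$, so $\lambda = ib$ is purely imaginary (including the possibility $\lambda = 0$). This establishes the claim. The only genuinely new ingredient compared with the symmetric case is the extra minus sign from $\bA^\top = -\bA$, which turns the conjugate-symmetry condition $\lambda = \bar{\lambda}$ into the anti-conjugate condition $\lambda = -\bar{\lambda}$; everything else is the identical inner-product manipulation used earlier in the chapter.
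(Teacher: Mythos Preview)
Your proof is correct and follows essentially the same approach as the paper's own proof: both form the scalar $\bar{\bx}^\top \bA \bx$ in two ways using the eigenvalue equation and its conjugate-transpose, then invoke $\bA^\top = -\bA$ to obtain $-\lambda\,\bar{\bx}^\top\bx = \bar{\lambda}\,\bar{\bx}^\top\bx$ (equivalently $\lambda + \bar{\lambda} = 0$) and conclude from $\bar{\bx}^\top\bx > 0$ that the real part of $\lambda$ vanishes. The only cosmetic difference is that you substitute $\bA^\top = -\bA$ into the row equation before multiplying by $\bx$, whereas the paper multiplies first and substitutes afterward.
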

\begin{proof}[of Lemma~\ref{lemma:real-eigenvalues-spectral-skew}]
Suppose the eigenvalue $\lambda$ of the skew-symmetric matrix $\bA$ is a complex number $\lambda=a+ib$, where $a$ and $b$ are real numbers. Its complex conjugate is $\bar{\lambda}=a-ib$. 
Similarly, for the corresponding complex eigenvector $\bx = \bc+i\bd$, its complex conjugate is $\bar{\bx}=\bc-i\bd$, where $\bc$ and $\bd$ are real vectors. The eigenvalue equation and its conjugate can be written as:
$$
\bA \bx = \lambda \bx\qquad   \underrightarrow{\text{ leads to }}\qquad  \bA \bar{\bx} = \bar{\lambda} \bar{\bx}\qquad   \underrightarrow{\text{ transpose to }}\qquad  \bar{\bx}^\top \bA^\top =\bar{\lambda} \bar{\bx}^\top.
$$
Taking the dot product of the first equation with $\bar{\bx}$ and the last equation with $\bx$:
$$
\bar{\bx}^\top \bA \bx = \lambda \bar{\bx}^\top \bx \qquad \text{and } \qquad \bar{\bx}^\top \bA^\top \bx = \bar{\lambda}\bar{\bx}^\top \bx.
$$
Then we have the equality $-\lambda\bar{\bx}^\top \bx = \bar{\lambda} \bar{\bx}^\top\bx$ (since $\bA^\top=-\bA$). 
Since $\bar{\bx}^\top\bx = (\bc-i\bd)^\top(\bc+i\bd) = \bc^\top\bc+\bd^\top\bd$ is a real number, the real part of $\lambda$ must be zero, indicating $\lambda$ is either purely imaginary or zero.
\end{proof}

\begin{lemma}[Odd skew-symmetric determinant]\label{lemma:skew-symmetric-determinant}
For any skew-symmetric matrix $\bA\in \real^{n\times n}$, if $n$ is odd, then $\det(\bA)=0$.
\end{lemma}
\begin{proof}[of Lemma~\ref{lemma:skew-symmetric-determinant}]
When $n$ is odd, we have 
$$
\det(\bA) = \det(\bA^\top) = \det(-\bA) = (-1)^n \det(\bA) = -\det(\bA).
$$
This implies $\det(\bA)=0$.
\end{proof}

\index{Permutation matrix}
\begin{theoremHigh}[Block-diagonalization of skew-symmetric matrices]\label{theorem:skew-block-diagonalization_theorem}
A real skew-symmetric matrix $\bA \in \real^{n\times n}$ can be decomposed as
\begin{equation*}
\bA = \bZ \bD \bZ^\top,
\end{equation*}
where $\bZ$ is an $n\times n$ nonsingular matrix, and $\bD$ is a block-diagonal matrix of the following form
$$
\bD = 
\diag\left(\begin{bmatrix}
0 & 1 \\
-1 & 0
\end{bmatrix}, 
\ldots, 
\begin{bmatrix}
0 & 1 \\
-1 & 0
\end{bmatrix}, 
0, \ldots, 0\right).
$$
\end{theoremHigh}
\begin{proof}[of Theorem~\ref{theorem:skew-block-diagonalization_theorem}]
The proof follows from a recursive construction. 
As usual, we  denote the entry ($i,j$) of a matrix $\bA$ by $a_{ij}$. 

\paragraph{Case 1).} Suppose the first row of $\bA$ is nonzero. Note that $\bE\bA\bE^\top$ is skew-symmetric  for any matrix $\bE$ if $\bA$ is skew-symmetric. 
Therefore, both the diagonals of $\bA$ and $\bE\bA\bE^\top$ zero. The upper-left $2\times 2$ submatrix of $\bE\bA\bE^\top$ takes the following form
$$
(\bE\bA\bE^\top)_{1:2,1:2}=
\begin{bmatrix}
0 & x \\
-x & 0
\end{bmatrix}.
$$
Since we suppose the first row of $\bA$ is nonzero, there exists a permutation matrix $\bP$ (Definition~\ref{definition:permutation-matrix}), such that we will exchange the nonzero value, say $a$, in the first row to the second column of $\bP\bA\bP^\top$. The upper-left $2\times 2$ submatrix of $\bP\bA\bP^\top$ becomes
$$
(\bP\bA\bP^\top)_{1:2,1:2}=
\begin{bmatrix}
0 & a \\
-a & 0
\end{bmatrix}.
$$
Construct a nonsingular matrix $\bM = \scriptsize\begin{bmatrix}
1/a & \bzero \\
\bzero   & \bI_{n-1}
\end{bmatrix}$ such that the upper left $2\times 2$ submatrix of $\bM\bP\bA\bP^\top\bM^\top$ has the following form
$$
(\bM\bP\bA\bP^\top\bM^\top)_{1:2,1:2}=
\begin{bmatrix}
0 & 1 \\
-1 & 0
\end{bmatrix}.
$$
This completes the block-diagonalization of the upper-left $2\times 2$ block. 
Next, if there exists   a nonzero value, say $b$, in the first row of $(\bM\bP\bA\bP^\top\bM^\top)$ at position $(1,j)$ for some $j>2$, we can construct a nonsingular matrix $\bL = \bI - b\cdot\bE_{j2}$, where $\bE_{j2}$ is an all-zero matrix except that  the entry ($j,2$) is 1, such that $\bL(\bM\bP\bA\bP^\top\bM^\top)\bL^\top$  will set the entry with value $b$ to 0.

\begin{mdframed}[hidealllines=\mdframehideline,backgroundcolor=\mdframecolor,frametitle={A Trivial Example}]
For example, suppose $\bM\bP\bA\bP^\top\bM^\top$ is a $3\times 3$ matrix with the following value 
$$
\bM\bP\bA\bP^\top\bM^\top = 
\begin{bmatrix}
0 & 1 & b \\
-1 & 0 & \times \\
\times & \times & 0
\end{bmatrix}, 
\qquad \text{and}\qquad
\bL =\bI - b\cdot\bE_{j2}=
\begin{bmatrix}
1 & 0 & 0 \\
0 & 1 & 0 \\
0 &-b & 1
\end{bmatrix},
$$
where $j=3$ for this specific example. This results in 
$$
\bL\bM\bP\bA\bP^\top\bM^\top\bL^\top =
\begin{bmatrix}
1 & 0 & 0 \\
0 & 1 & 0 \\
0 &-b & 1
\end{bmatrix}
\begin{bmatrix}
0 & 1 & \textcolor{mylightbluetext}{b} \\
-1 & 0 & \times \\
\times & \times & 0
\end{bmatrix}
\begin{bmatrix}
1 & 0 & 0 \\
0 & 1 & -b \\
0 & 0 & 1
\end{bmatrix} 
= 
\begin{bmatrix}
0 & 1 & \textcolor{mylightbluetext}{0} \\
-1 & 0 & \times \\
\times & \times & 0
\end{bmatrix}.
$$
\end{mdframed}
Similarly, if the second row of $\bL\bM\bP\bA\bP^\top\bM^\top\bL^\top$ contains a nonzero value, say $c$, we can construct a nonsingular matrix $\bK = \bI+c\cdot \bE_{j1}$ such that $\bK(\bL\bM\bP\bA\bP^\top\bM^\top\bL^\top)\bK^\top$ will introduce a zero for the entry with value $c$.
\begin{mdframed}[hidealllines=\mdframehideline,backgroundcolor=\mdframecolor,frametitle={A Trivial Example}]
For example, suppose $\bL\bM\bP\bA\bP^\top\bM^\top\bL^\top$ is a $3\times 3$ matrix with the following value 
$$
\bL\bM\bP\bA\bP^\top\bM^\top\bL^\top = 
\begin{bmatrix}
	0 & 1 & 0 \\
	-1 & 0 & c \\
	\times & \times & 0
\end{bmatrix}, 
\qquad \text{and}\qquad
\bK =\bI + c\cdot\bE_{j1}=
\begin{bmatrix}
	1 & 0 & 0 \\
	0 & 1 & 0 \\
	c &0 & 1
\end{bmatrix},
$$
where $j=3$ for this specific example. This results in 
$$
\bK\bL\bM\bP\bA\bP^\top\bM^\top\bL^\top\bK^\top =
\begin{bmatrix}
	1 & 0 & 0 \\
	0 & 1 & 0 \\
	c & 0 & 1
\end{bmatrix}
\begin{bmatrix}
	0 & 1 & 0 \\
	-1 & 0 & \textcolor{mylightbluetext}{c} \\
	\times & \times & 0
\end{bmatrix}
\begin{bmatrix}
	1 & 0 & c \\
	0 & 1 & 0 \\
	0 & 0 & 1
\end{bmatrix} 
= 
\begin{bmatrix}
	0 & 1 & 0 \\
	-1 & 0 & \textcolor{mylightbluetext}{0} \\
	\times & \times & 0
\end{bmatrix}.
$$
Since we have shown that $\bK\bL\bM\bP\bA\bP^\top\bM^\top\bL^\top\bK^\top$ is also skew-symmetric, then it simplifies to 
$$
\bK\bL\bM\bP\bA\bP^\top\bM^\top\bL^\top\bK^\top= 
\begin{bmatrix}
0 & 1 & 0 \\
-1 & 0 & \textcolor{mylightbluetext}{0} \\
\textcolor{winestain}{0} & \textcolor{winestain}{0} & 0
\end{bmatrix},
$$
so we do not need to address the first two columns further.
\end{mdframed}
Apply this process iteratively to the bottom-right $(n-2)\times(n-2)$ submatrix can complete the block-diagonalization.
\paragraph{Case 2).} If the first row of $\bA$ is zero, we can use a permutation matrix to move the first row to the last row and then proceed with the process described in Case 1 to complete the proof.
\end{proof}

The block-diagonalization of skew-symmetric matrices, as discussed earlier, demonstrates that the rank of a skew-symmetric matrix is always even. Moreover, we can prove that the determinant of a skew-symmetric matrix of even order is nonnegative, as stated in the following lemma:
\begin{lemma}[Even skew-symmetric determinant]\label{lemma:skew-symmetric-determinant-even}
Let $\bA\in \real^{n\times n}$ be a skew-symmetric matrix. If $n$ is even, then $\det(\bA)\geq 0$.
\end{lemma}
\begin{proof}[of Lemma~\ref{lemma:skew-symmetric-determinant-even}]
Applying Theorem~\ref{theorem:skew-block-diagonalization_theorem}, $\bA$ can be  block-diagonalized as  $\bA = \bZ\bD\bZ^\top$, resulting in 
$
\det(\bA) = \det(\bZ\bD\bZ^\top) = \det(\bZ)^2 \det(\bD) \geq 0.
$
This completes the proof.
\end{proof}

\section{Applications in Optimization, Linear Algebra,  Machine Learning}
\index{Variable separation}
\subsection{Application: Variable Separation for Optimization}

Consider the quadratic function $ f(\bx) = \bx^\top \bA \bx + \bb^\top \bx + c $. Unless the symmetric matrix $ \bA $ is diagonal, the resulting function contains cross terms of the form $ x_i x_j $. 
These are known as \textit{interacting terms}, and they commonly appear in real-world quadratic functions.

It is worth noting that any multivariate quadratic function can be transformed into an \textit{additively separable} function (i.e., one without interacting terms) by applying a suitable linear transformation to the input variables.  
Additively separable functions are significantly easier to optimize, as the optimization problem can be decomposed into smaller, independent subproblems involving individual variables. 
For instance, a multivariate quadratic function can be rewritten as a simple sum of univariate quadratic functions, each of which is straightforward to minimize \citep{aggarwal2020linear}. We begin by formally defining the concept of separability:

\begin{definition}[Additively separable functions]
	A function $ F(x_1, x_2, \ldots, x_n) $ of $ n $ variables is said to be \textit{additively separable}  if it can be expressed in the following form for appropriately chosen univariate functions $ f_1(\cdot), f_2(\cdot), \ldots, f_n(\cdot) $:
	$$
	F(x_1, x_2, \ldots, x_n) = \sum_{i=1}^{n} f_i(x_i).
	$$
\end{definition}

Now consider the (symmetric) quadratic function defined on an  $ n $-dimensional vector $ \bx $:
$$
f(\bx) = \bx^\top \bA \bx + \bb^\top \bx + c.
$$
Since $ \bA $ is an $ n \times n $ symmetric matrix, it can be diagonalized as  $ \bA = \bQ \bLambda \bQ^\top $, and we can perform a variable transformation $ \bz = \bQ^\top \bx $. 
Substituting this transformation into the original function yields  a new function $ g(\bz) = f(\bQ\bz) $, which represents the same function expressed in a different basis. 
It can be shown that the transformed function becomes:
$$
g(\bz) = f(\bQ\bz) = \bz^\top \bLambda \bz + \bb^\top \bQ \bz + c.
$$
Because $\bLambda$ is a diagonal matrix, the function $g(\bz)$ becomes additively separable. This allows us to solve for $\bz$ using univariate optimization methods and then recover the original variable $\bx$ via $\bx=\bQ\bz$.

While this method simplifies the optimization process, a key drawback is that computing eigenvectors (as required for diagonalization) can be computationally expensive.
To mitigate this, one can generalize the approach by seeking a matrix $\bQ$ (not necessarily orthogonal) such that  $ \bA = \bQ \bLambda \bQ^\top $ for some diagonal matrix $\bLambda$. Note that this would not constitute a true diagonalization unless the columns of $\bQ$ are orthonormal and $ \bQ^\top = \bQ^{-1} $; see Definition~\ref{definition:diagonalizable} for more details.
Nevertheless, such a decomposition is sufficient for constructing a separable quadratic function.

\index{Orthogonal projection}
\index{Projection matrix (projector)}
\subsection{Application: Eigenvalue of Projection Matrices}\label{section:spec_app_eigproj}

In Section~\ref{section:application-ls-qr}, we will demonstrate how the QR, UTV, SVD decompositions can be applied to solve the least squares problem. Specifically, we consider the overdetermined system $\bA\bx = \bb$, where $\bA\in \real^{m\times n}$ is the data matrix, and $\bb\in \real^m$ is the observation vector, with $m\geq n$. Typically, $\bA$ is assumed to have full column rank, as real-world data is often sufficiently diverse to ensure linear independence, or the data can be made linearly independent after preprocessing.

Since $\bA$ has full column rank, $\bA^\top\bA$ is invertible, and $\rank(\bA^\top\bA) = \rank(\bA)$. 
Therefore, the least squares solution is given by $\bx_{LS} = (\bA^\top\bA)^{-1}\bA^\top\bb$, minimizing $\norm{\bA\bx-\bb}^2$.  The recovered observation vector is $\hat{\bb} = \bA\bx_{LS} = \bA(\bA^\top\bA)^{-1}\bA^\top\bb$. While the observed vector $\bb$ may not lie in the column space of $\bA$, the recovered vector $\hat{\bb}$ does.

We define the matrix $\bH = \bA(\bA^\top\bA)^{-1}\bA^\top$ as the (orthogonal) projection matrix, which projects $\bb$ onto the column space of $\bA$. This matrix is also known as the \textit{hat matrix} because it ``puts a hat" on $\bb$. It is straightforward to verify that $\bH$ is both symmetric ($\bH = \bH^\top$) and idempotent ($\bH^2 = \bH$).

\index{Idempotent}
\begin{remark}[Column space of projection matrices]
The hat matrix $\bH = \bA(\bA^\top\bA)^{-1}\bA^\top$ projects any vector in $\real^m$ onto the column space of $\bA$, i.e., $\bH\by \in \cspace(\bA)$. Notably, $\bH\by$ is a linear combination of the columns of $\bH$, which implies $\cspace(\bH) = \cspace(\bA)$.

More generally, for any projection matrix $\bH$ that projects vectors onto a subspace $\mathcalV$, it holds that $\cspace(\bH) = \mathcalV$. This property can be formally established using the singular value decomposition (Section~\ref{section:SVD}).
\end{remark}

We now show that any projection matrix has specific eigenvalues. 
\begin{proposition}[Eigenvalue of projection matrix]\label{proposition:eigen-of-projection-matrix}
The eigenvalues of a projection matrix are restricted to 0 and 1.
\end{proposition}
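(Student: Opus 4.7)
The plan is to exploit the defining algebraic property of a projection matrix, namely idempotency $\bH^2 = \bH$, which was noted just before the proposition. The eigenvalue equation $\bH\bx = \lambda\bx$ with $\bx \neq \bzero$ interacts very cleanly with idempotency because applying $\bH$ a second time gives back $\bH$ itself on the left, while scaling by another factor of $\lambda$ on the right.

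Concretely, I would first write $\bH\bx = \lambda \bx$ for an eigenpair $(\lambda, \bx)$, then apply $\bH$ to both sides to obtain $\bH^2 \bx = \lambda \bH \bx = \lambda^2 \bx$. Substituting $\bH^2 = \bH$ on the left gives $\bH \bx = \lambda^2 \bx$, and comparing this with the original eigenvalue equation yields $\lambda^2 \bx = \lambda \bx$, i.e., $(\lambda^2 - \lambda)\bx = \bzero$. Since $\bx$ is nonzero by the definition of an eigenvector, the scalar factor must vanish: $\lambda^2 - \lambda = \lambda(\lambda - 1) = 0$, forcing $\lambda \in \{0, 1\}$.

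There is no real obstacle here; the argument uses only the definition of an eigenvalue and the single algebraic identity $\bH^2 = \bH$, neither of which requires symmetry. The only thing worth a remark is that the proposition claims ``possible'' eigenvalues, so I do not need to show that both $0$ and $1$ actually occur for every projection matrix (though they do, generically: vectors in the subspace being projected onto give eigenvalue $1$, and vectors in its complement give eigenvalue $0$). Thus the proof is essentially a two-line computation.
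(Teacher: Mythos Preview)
Your proof is correct and takes a more elementary and more general route than the paper's. The paper invokes the spectral decomposition $\bH = \bQ\bLambda\bQ^\top$ (available because the projection matrix in its context is symmetric), squares it to obtain $\bQ\bLambda^2\bQ^\top = \bQ\bLambda\bQ^\top$, and reads off $\lambda_i^2 = \lambda_i$ for every diagonal entry at once. Your argument instead works directly with a single eigenpair and uses only the idempotency $\bH^2 = \bH$, so it applies to any idempotent matrix, symmetric or not --- which you rightly note. The paper's approach treats all eigenvalues simultaneously via diagonalization, but at the cost of invoking a substantial structure theorem; your approach is a two-line calculation that needs nothing beyond the definitions, and it is the standard textbook argument for exactly this reason.
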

\begin{proof}[of Proposition~\ref{proposition:eigen-of-projection-matrix}]
Since $\bH$ is symmetric, it has a spectral decomposition $\bH = \bQ\bLambda\bQ^\top$. Using the idempotent property of $\bH$, we have:
$$
\begin{aligned}
(\bQ\bLambda\bQ^\top)^2 &= \bQ\bLambda\bQ^\top 
\,\,\implies\,\,
\bQ\bLambda^2\bQ^\top = \bQ\bLambda\bQ^\top
\,\,\implies\,\,
\bLambda^2 =\bLambda
\,\,\implies\,\,
\lambda_i^2 =\lambda_i,
\end{aligned}
$$
Thus, each eigenvalue satisfies $\lambda_i \in \{0, 1\}$.
\end{proof}

This property is significant in the analysis of distribution theory for linear models; see, for example, \citet{lu2021rigorous}.
Building on the eigenvalues of the projection matrix, we can also define the \textit{orthogonal complement projection matrix} $\bI-\bH$.
\begin{proposition}[Project onto $\mathcalV^\perp$]\label{proposition:orthogonal-projection_tmp}
	Let $\mathcalV$ be a subspace, and $\bH$ be the projection matrix onto $\mathcalV$. Then, $\bI-\bH$ serves as the projection matrix onto $\mathcalV^\perp$.
\end{proposition}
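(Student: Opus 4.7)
The plan is to verify two things: (i) $\bI-\bH$ is itself a projection matrix in the sense already used in the text (symmetric and idempotent), and (ii) its column space is exactly $\mathcalV^\perp$. The first point is almost a one-liner. Symmetry follows immediately from $\bH^\top=\bH$, giving $(\bI-\bH)^\top = \bI - \bH^\top = \bI-\bH$. Idempotency is a short expansion using $\bH^2 = \bH$:
\begin{equation*}
(\bI-\bH)^2 = \bI - 2\bH + \bH^2 = \bI - 2\bH + \bH = \bI - \bH.
\end{equation*}
So $\bI-\bH$ is symmetric and idempotent and hence a projection matrix by the characterization used just above in the discussion of the hat matrix.

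Next, I would identify the subspace onto which $\bI-\bH$ projects. Using the preceding remark that a projection matrix projects onto its own column space, it suffices to prove $\cspace(\bI-\bH) = \mathcalV^\perp$. For the inclusion $\cspace(\bI-\bH)\subseteq\mathcalV^\perp$, take any $\bv\in \mathcalV = \cspace(\bH)$ and write $\bv=\bH\by$; then for any $\bx$,
\begin{equation*}
\bv^\top(\bI-\bH)\bx = \by^\top\bH(\bI-\bH)\bx = \by^\top(\bH-\bH^2)\bx = 0,
\end{equation*}
which shows every column of $\bI-\bH$ is perpendicular to $\mathcalV$.

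For the reverse inclusion $\mathcalV^\perp\subseteq\cspace(\bI-\bH)$, I would exploit the symmetry and idempotency of $\bH$ together. Given $\bw\in\mathcalV^\perp$, the vector $\bH\bw$ lies in $\mathcalV$, so $\bw^\top(\bH\bw)=0$; but by symmetry and idempotency this quantity equals $\bw^\top\bH^2\bw = (\bH\bw)^\top(\bH\bw) = \|\bH\bw\|^2$. Hence $\bH\bw=\bzero$ and therefore $\bw = (\bI-\bH)\bw \in \cspace(\bI-\bH)$, establishing the reverse inclusion.

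Combining the two inclusions gives $\cspace(\bI-\bH) = \mathcalV^\perp$, and with symmetry and idempotency this says $\bI-\bH$ is the projection onto $\mathcalV^\perp$. The only mildly subtle step is the reverse inclusion — the trick of expanding $\bw^\top\bH\bw$ as $\|\bH\bw\|^2$ via $\bH=\bH^\top=\bH^2$ is what makes the argument work without invoking any dimension-counting or spectral decomposition; everything else is routine manipulation with the defining properties of $\bH$.
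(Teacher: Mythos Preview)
Your proof is correct but takes a genuinely different route from the paper. Both begin identically by checking symmetry and idempotency of $\bI-\bH$. For the identification of the range, however, the paper invokes the spectral decomposition $\bH=\bQ\bLambda\bQ^\top$ together with the preceding proposition that the eigenvalues of a projection are $0$ or $1$: writing $\bI-\bH=\bQ(\bI-\bLambda)\bQ^\top$, it reads off that $\cspace(\bI-\bH)$ is spanned by the eigenvectors of $\bH$ with eigenvalue $0$, and identifies this span with $\mathcalV^\perp$. You instead argue directly from the algebraic identities $\bH=\bH^\top=\bH^2$, using the trick $\bw^\top\bH\bw=\|\bH\bw\|^2$ to force $\bH\bw=\bzero$ whenever $\bw\in\mathcalV^\perp$. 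Your approach is more elementary and self-contained, needing no spectral theorem and no appeal to the eigenvalue proposition; the paper's approach is shorter once those tools are in hand and fits the spectral theme of the surrounding section.
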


\begin{proof}[of Proposition~\ref{proposition:orthogonal-projection_tmp}]
First, $(\bI-\bH)$ is symmetric, $(\bI-\bH)^\top = \bI - \bH^\top = \bI-\bH$ since $\bH$ is symmetric. Furthermore, it follows that  
$$
(\bI-\bH)^2 = \bI^2 -\bI\bH -\bH\bI +\bH^2 = \bI-\bH,
$$
which shows that $\bI-\bH$ is idempotent.
Thus, $\bI-\bH$ qualifies as a projection matrix. Using the spectral theorem, write $\bH = \bQ\bLambda\bQ^\top$. Then, $\bI-\bH = \bQ\bQ^\top - \bQ\bLambda\bQ^\top = \bQ(\bI-\bLambda)\bQ^\top$. 
Consequently, the column space of $\bI-\bH$ is spanned by the eigenvectors of $\bH$ corresponding to the zero eigenvalues of $\bH$ (by Proposition~\ref{proposition:eigen-of-projection-matrix}), which aligns with $\mathcalV^\perp$.
\end{proof}

For a more detailed discussion of projection matrices and their applications, refer to \citet{lu2021numerical}. While these results are important, they extend beyond the primary focus of matrix decomposition techniques, and thus will not be repeated here.

\subsection{Application: An Alternative Definition of PD and PSD of Matrices}\label{section:equivalent-pd-psd}
In Definition~\ref{definition:psd-pd-defini}, positive definite (PD) and positive semidefinite (PSD) matrices are defined based on their quadratic forms. Here, we establish that a symmetric matrix is positive definite (resp., positive semidefinite) if and only if all its eigenvalues are positive (resp., nonnegative).

\begin{lemma}[Eigenvalues of PD and PSD matrices i.e., the eigenvalue characterization theorem\index{Positive definite}\index{Positive semidefinite}]\label{lemma:eigens-of-PD-psd}
A symmetric matrix $\bA\in \real^{n\times n}$ is positive definite (PD) if and only if all eigenvalues of $\bA$ are positive.
And a symmetric matrix $\bA\in \real^{n\times n}$ is positive semidefinite (PSD) if and only if all eigenvalues of $\bA$ are nonnegative.
\end{lemma}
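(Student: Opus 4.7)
The plan is to reduce the equivalence to a transparent computation in the eigenbasis by invoking the spectral theorem (Theorem~\ref{theorem:spectral_theorem}), which applies because PD/PSD are conventionally considered for symmetric matrices here and the spectral theorem gives $\bA = \bQ\bLambda\bQ^\top$ with $\bQ$ orthogonal and $\bLambda = \diag(\lambda_1,\ldots,\lambda_n)$ containing the real eigenvalues of $\bA$. The key reduction is to change variables via $\by = \bQ^\top\bx$, so that for any $\bx \in \real^n$,
$$
\bx^\top \bA \bx \;=\; \bx^\top \bQ \bLambda \bQ^\top \bx \;=\; \by^\top \bLambda \by \;=\; \sum_{i=1}^{n} \lambda_i\, y_i^2.
$$
Because $\bQ$ is orthogonal (hence nonsingular), the map $\bx \mapsto \by$ is a bijection on $\real^n$ that sends nonzero vectors to nonzero vectors and preserves length, so quantifying over nonzero $\bx$ is the same as quantifying over nonzero $\by$.

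First I would handle the PD case. For the forward direction, assume $\bA$ is PD; choosing $\bx = \bq_i$ (the $i$-th column of $\bQ$, an eigenvector with eigenvalue $\lambda_i$) gives $\by = \be_i$ and hence $\bx^\top\bA\bx = \lambda_i > 0$, so every eigenvalue is positive. For the reverse direction, assume every $\lambda_i > 0$; for any nonzero $\bx$, the vector $\by = \bQ^\top\bx$ is nonzero, so at least one $y_i \neq 0$ and
$$
\bx^\top \bA \bx \;=\; \sum_{i=1}^{n} \lambda_i\, y_i^2 \;>\; 0,
$$
which establishes PD. Then I would carry out the PSD case by the identical argument, with strict inequalities replaced by nonstrict ones: taking $\bx = \bq_i$ forces $\lambda_i \geq 0$, and conversely $\sum_i \lambda_i y_i^2 \geq 0$ for all $\by$ whenever all $\lambda_i \geq 0$.

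There is no real obstacle here; the only point that requires a sentence of care is the bijection between nonzero $\bx$ and nonzero $\by$ under $\by = \bQ^\top\bx$, which is immediate from orthogonality. Everything else is a diagonal quadratic form $\sum_i \lambda_i y_i^2$, whose sign behavior under universal/existential quantification over $\by$ is elementary. This proof also implicitly relies on the convention, consistent with the paper's earlier use of spectral decomposition on PD matrices (e.g.\ in Lemma~\ref{lemma:rank-of-symmetric}), that PD/PSD matrices in this context are symmetric; I would note this briefly at the outset so that the spectral theorem applies cleanly.
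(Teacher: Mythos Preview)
Your proposal is correct and follows essentially the same approach as the paper: the forward direction evaluates the quadratic form at an eigenvector to extract $\lambda_i > 0$, and the reverse direction uses the spectral decomposition with the change of variables $\by = \bQ^\top\bx$ to get $\bx^\top\bA\bx = \sum_i \lambda_i y_i^2 > 0$. The paper's forward step is phrased slightly more directly via $\bv^\top\bA\bv = \lambda\|\bv\|^2$ without first passing through $\by = \be_i$, but this is the same computation.
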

\begin{proof}[of Lemma~\ref{lemma:eigens-of-PD-psd}] 
Suppose $\bA$ is PD. Then, for any eigenvalue $\lambda$ and its corresponding eigenvector $\bv$ of $\bA$, we have $\bA\bv = \lambda\bv$. Thus,
$
\bv^\top \bA\bv = \lambda\norm{\bv}^2 > 0.
$
This implies $\lambda>0$.

Conversely, suppose all eigenvalues of $\bA$ are positive, and consider the spectral decomposition of $\bA =\bQ\bLambda \bQ^\top$, where $\bQ$ is orthogonal and $\bLambda$ is diagonal. Let $\bx$ be any nonzero vector, and let $\by = \bQ^\top \bx$. We have:
$$
\bx^\top \bA \bx = \bx^\top (\bQ\bLambda \bQ^\top) \bx = (\bx^\top \bQ) \bLambda (\bQ^\top\bx) = \by^\top\bLambda\by = \sum_{i=1}^{n} \lambda_i y_i^2>0.
$$
Thus, $\bA$ is PD.
The proof for the PSD case follows similarly
\end{proof}

\begin{theoremHigh}[Nonsingular factor of PSD and PD matrices]\label{lemma:nonsingular-factor-of-PD}
A real symmetric matrix $\bA\in\real^{n\times n}$ is PSD if and only if $\bA$ can be factored as $\bA=\bP^\top\bP$, where $\bP\in\real^{n\times n}$; and it is PD if and only if $\bP$ is nonsingular. 
\end{theoremHigh}
\begin{proof}[of Theorem~\ref{lemma:nonsingular-factor-of-PD}]
Suppose $\bA$ is PSD. From its spectral decomposition $\bA = \bQ\bLambda\bQ^\top$, we can decompose $\bLambda=\bLambda^{1/2}\bLambda^{1/2}$ (since the eigenvalues of any PSD matrix are nonnegative). Let $\bP = \bLambda^{1/2}\bQ^\top$. Then, $\bA$ can be decomposed as $\bA=\bP^\top\bP$.

Conversely, suppose $\bA$ can be factored as $\bA=\bP^\top\bP$. Then, all eigenvalues of $\bA$ are nonnegative since for any eigenvalues $\lambda$ and its corresponding eigenvector $\bv$ of $\bA$, we have 
$$
\lambda = \frac{\bv^\top\bA\bv}{\bv^\top\bv} = \frac{\bv^\top\bP^\top\bP\bv}{\bv^\top\bv}=\frac{\norm{\bP\bv}^2}{\norm{\bv}^2} \geq 0.
$$
Therefore, $\bA$ is PSD by Lemma~\ref{lemma:eigens-of-PD-psd}.

Similarly, we can prove the second part for PD matrices, where the positive definiteness will result in the nonsingular $\bP$; and the nonsingularity of $\bP$ implies the positivity of the eigenvalues. 
\end{proof}

\subsection{Proof for Semidefinite Rank-Revealing Decomposition}\label{section:semi-rank-reveal-proof}

In this section, we provide an alternative proof for Theorem~\ref{theorem:semidefinite-factor-rank-reveal}, which establishes the existence of a rank-revealing decomposition for positive semidefinite matrices.\index{Rank-revealing}\index{Semidefinite rank-revealing}
\begin{proof}[of Theorem~\ref{theorem:semidefinite-factor-rank-reveal}]
The proof is based on two key results: the nonsingular factorization of PSD matrices (Theorem~\ref{lemma:nonsingular-factor-of-PD}) and the column-pivoted QR decomposition (Theorem~\ref{theorem:rank-revealing-qr-general}).
	
By Theorem~\ref{lemma:nonsingular-factor-of-PD}, any PSD matrix $\bA$ can be factored as $\bA = \bZ^\top \bZ$, where $\bZ = \bLambda^{1/2} \bQ^\top$, and $\bA = \bQ \bLambda \bQ^\top$ is the spectral decomposition of $\bA$.

By Lemma~\ref{lemma:rank-of-symmetric}, the rank of $\bA$ equals the number of its nonzero eigenvalues, which  corresponds to the positive eigenvalues for a PSD matrix. Consequently, only  $r$ diagonal elements of $\bLambda^{1/2}$ are nonzero, making $\bZ = \bLambda^{1/2} \bQ^\top$ a rank-$r$ matrix with $r$ linearly independent columns.
Applying the column-pivoted QR decomposition to $\bZ$, we obtain
$
\bZ\bP = \bQ
\scriptsize
\begin{bmatrix}
	\bR_{11} & \bR_{12} \\
	\bzero   & \bzero 
\end{bmatrix},
$
where $\bP$ is a permutation matrix, $\bR_{11}\in \real^{r\times r}$ is upper triangular with positive diagonals, and $\bR_{12}\in \real^{r\times (n-r)}$. Therefore,
$$
\bP^\top\bA\bP  = 
\bP^\top\bZ^\top\bZ\bP = 
\begin{bmatrix}
\bR_{11}^\top & \bzero \\
\bR_{12}^\top & \bzero 
\end{bmatrix}
\begin{bmatrix}
	\bR_{11} & \bR_{12} \\
	\bzero   & \bzero 
\end{bmatrix}.
$$
Let 
$
\bR = \scriptsize\begin{bmatrix}
	\bR_{11} & \bR_{12} \\
	\bzero   & \bzero 
\end{bmatrix}.
$
Thus, the rank-revealing decomposition of the PSD matrix $\bA$ is: $\bP^\top\bA\bP = \bR^\top\bR$.
\end{proof}

\index{CPQR}
\index{Pivoting}
\index{Complete pivoting}
This decomposition is obtained using complete pivoting, where at each step the algorithm selects the largest diagonal element in the active submatrix as the pivot. This strategy is conceptually similar to the partial pivoting technique discussed in Section~\ref{section:partial-pivot-lu}.

\subsection{Application: Cholesky  via  QR  and  Spectral Decompositions}\label{section:cholesky-by-qr-spectral}
In this section, we present an alternative proof for the existence of the Cholesky decomposition using the nonsingular factor of PD matrices.

\begin{proof}[of Theorem~\ref{theorem:cholesky-factor-exist}]
From Theorem~\ref{lemma:nonsingular-factor-of-PD}, the PD matrix $\bA$ can be factored as $\bA=\bP^\top\bP$, where $\bP$ is a nonsingular matrix. Applying the QR decomposition to $\bP$, we write $\bP = \bQ \bR$, which implies:
$$
\bA = \bP^\top\bP = \bR^\top\bQ^\top\bQ\bR = \bR^\top\bR.
$$
This result closely resembles the Cholesky decomposition, with the exception that $\bR$ is not explicitly required to have positive diagonal entries. However, by considering the CGS algorithm for computing the QR decomposition (discussed in Section~\ref{section:qr-gram-compute}), it can be observed that the diagonal entries of $\bR$ are nonnegative. Moreover, if $\bP$ is nonsingular, these diagonal entries are strictly positive.
\end{proof}
The proof above relies on the existence of both the QR decomposition and the spectral decomposition. Thus, in this context, the existence of the Cholesky decomposition can be demonstrated using these two fundamental decomposition methods.

\subsection{Application: Unique Power Decomposition of PD Matrices}\label{section:unique-posere-pd}
In this section, we present a \textit{unique power decomposition} for positive definite  matrices using their spectral decomposition.
\begin{theoremHigh}[Unique power decomposition of PD matrices]\label{theorem:unique-factor-pd}
Any $n\times n$ positive definite matrix $\bA$ can be \textbf{uniquely} decomposed as the square of another positive definite matrix  $\bB$, that is, $\bA =\bB^2$. 
\end{theoremHigh}
\begin{proof}[of Theorem~\ref{theorem:unique-factor-pd}]
We first prove the existence of a positive definite matrix $\bB$ satisfying $\bA = \bB^2$ and then demonstrate its uniqueness.
\paragraph{Existence.} Since $\bA$ symmetric and positive definite, its spectral decomposition is given by $\bA = \bQ\bLambda\bQ^\top$. 
By Lemma~\ref{lemma:eigens-of-PD-psd}, all eigenvalues of a PD matrix are strictly positive. Hence, the square root of $\bLambda$ exists, allowing us to define $\bB = \bQ\bLambda^{1/2}\bQ^\top$. It follows that $\bA = \bB^2$. Since $\bB$ is symmetric with positive eigenvalues, it is also positive definite.

\paragraph{Uniqueness.}Suppose the factorization is not unique. Then, there exist two positive definite matrices $\bB_1$ and $\bB_2$ such that
$$
\bA = \bB_1^2 = \bB_2^2,
$$
where both $\bB_1$ and $\bB_2$ are  PD. Their spectral decompositions are given by 
$$
\bB_1 = \bQ_1 \bLambda_1\bQ_1^\top \qquad \text{and} \qquad \bB_2 = \bQ_2 \bLambda_2\bQ_2^\top.
$$
We notice that $\bLambda_1^2$ and $\bLambda_2^2$ contain the eigenvalues of $\bA$, and both eigenvalues of $\bB_1$ and $\bB_2$ contained in $\bLambda_1$ and $\bLambda_2$ are positive (since $\bB_1$ and $\bB_2$ are both PD). Without loss of generality, we suppose $\bLambda_1=\bLambda_2=\bLambda^{1/2}$, and $\bLambda=\diag(\lambda_1,\lambda_2, \ldots, \lambda_n)$ such that $\lambda_1\geq \lambda_2 \geq \ldots \geq \lambda_n$. By $\bB_1^2 = \bB_2^2$, we have
$$
\bQ_1 \bLambda \bQ_1^\top = \bQ_2 \bLambda \bQ_2^\top  \leadto \bQ_2^\top\bQ_1 \bLambda = \bLambda \bQ_2^\top\bQ_1.
$$
Let $\bZ = \bQ_2^\top\bQ_1 $.  This implies $\bLambda$ and $\bZ$ commute, and $\bZ$ must be a block diagonal matrix whose partitioning conforms to the block structure of $\bLambda$ \citep{lu2021numerical}. This results in $\bLambda^{1/2} = \bZ\bLambda^{1/2}\bZ^\top$ and 
$$
\bB_2 = \bQ_2 \bLambda^{1/2}\bQ_2^\top = \bQ_2 \bQ_2^\top\bQ_1\bLambda^{1/2} \bQ_1^\top\bQ_2 \bQ_2^\top=\bB_1.
$$
This completes the proof.
\end{proof}
Similarly, we can prove  the unique decomposition of a PSD matrix $\bA$ such that $\bA = \bB^2$, where $\bB$ is PSD  \citep{koeber2006unique}.

\paragraph{Decompositions for PD matrices.} To summarize, a PD matrix $\bA$ can be decomposed in several ways: we can factor it into $\bA=\bR^\top\bR$, where $\bR$ is an upper triangular matrix with positive diagonals as shown in Theorem~\ref{theorem:cholesky-factor-exist}  by the Cholesky decomposition; $\bA = \bP^\top\bP$, where $\bP$ is nonsingular in Theorem~\ref{lemma:nonsingular-factor-of-PD}; and $\bA = \bB^2$, where $\bB$ is PD in Theorem~\ref{theorem:unique-factor-pd}.

\index{Kernel function}
\index{Scatter matrix}
\subsection{Application: Feature Engineering for Scatter Matrices}\label{section:feat_eng_scatt}

Consider an $n\times p$ data matrix $\bX$, where each row $\bx_i$ represents a data point. In machine learning, an $n\times n$ symmetric \textit{scatter matrix} or \textit{kernel matrix} $\bS$ can be defined among the $n$ data points as follows:
$$
s_{ij}  = k(\bx_i, \bx_j) =\phi(\bx_i)^\top\phi(\bx_j),
\quad \forall\, i,j,
$$
where $k(\bx,\by)=\phi(\bx)^\top\phi(\by)$ is called a \textit{kernel function}, and $\phi(\bx)$ is the associated \textit{basis function}. 
Thus, the scatter matrix $\bS$ contains all pairwise kernel evaluations between data points.
It  can be easily shown that $\bS$ must be symmetric and positive semidefinite \citep{lu2021rigorous}.
\begin{exercise}[Properties of scatter matrices]
	Let  $\bX$ be an $n\times p$ data matrix, where each row $\bx_i$ denotes a data point. And let $\phi(\cdot):\real^p\rightarrow \real^k$ be a basis function. Show that 
	\begin{itemize}
		\item $\bS$ is a symmetric matrix, i.e., $k(\bx_i, \bx_j) = k(\bx_j, \bx_i)$.
		\item $\bS$ is positive semidefinite.
	\end{itemize}
\end{exercise}

\index{Gaussian kernel}
\index{Linear kernel}
\index{Polynomial kernel}
At first glance, it may appear that $k(\bx, \bx')$ can be any arbitrary function of $\bx$ and $\bx'$. 
However, the requirement for the scatter matrix to be positive semidefinite constrains the form of valid kernel functions. 
This constraint ensures that every valid kernel corresponds to an implicit inner product in some (possibly infinite-dimensional) feature space.
The following are examples of widely used kernel functions:
\begin{enumerate}
	\item  \textit{Linear kernel.} $k(\bx, \bx') = \bx^\top\bx'$.
	\item  \textit{Polynomial kernel.} $k(\bx, \bx') = (\eta + \gamma \bx^\top\bx')^Q$ with $\gamma>0, \eta \geq 0$.
	\item  \textit{Gaussian kernel.} $k(\bx, \bx') = \exp(-\gamma \norm{\bx-\bx'}^2)$. 
	We now show that the Gaussian kernel corresponds to an infinite-dimensional feature mapping. Without loss of generality, let $\gamma = 1$. Then,
	\begin{equation}
		\begin{aligned}
			k(\bx, \bx') &= \exp\{- \norm{\bx-\bx'}^2\}
			=\exp\{-\bx^\top\bx\}\exp\{-\bx'^\top\bx'\}\exp\{2\bx^\top\bx'\} \\
			&\underset{\mathrm{expansion}}{\overset{\mathrm{Taylor}}{=}}
			\exp\{-\bx^\top\bx\}\exp\{-\bx'^\top\bx'\}\exp\left\{\sum_{i=0}^\infty \frac{(2\bx^\top\bx')^i}{i!}\right\} \\
			&=\sum_{i=0}^\infty\left(\exp\{-\bx^\top\bx\}\exp\{-\bx'^\top\bx'\} \sqrt{\frac{2^i}{i!}}\sqrt{\frac{2^i}{i!}} (\bx)^{i}\cdot (\bx')^i  \right) \\
			&=\sum_{i=0}^\infty\left({\color{winestain}\exp\{-\bx^\top\bx\}\sqrt{\frac{2^i}{i!}} (\bx)^{i}} \cdot {\color{mylightbluetext}\exp\{-\bx'^\top\bx'\} \sqrt{\frac{2^i}{i!}}  (\bx')^i}  \right) \\
			&={\color{winestain}\bphi(\bx)^\top} {\color{mylightbluetext}\bphi(\bx')},\nonumber
		\end{aligned}
	\end{equation}
	where $\bphi(\bx) = \sum_{i=0}^\infty  \exp\{-\bx^\top\bx\}\sqrt{\frac{2^i}{i!}} (\bx)^{i} $. 
	This shows that the Gaussian kernel maps inputs from a finite-dimensional space to an infinite-dimensional space. A similar derivation holds for general $\gamma > 0$.
\end{enumerate}

Given the data matrix $\bX$ and the basis function $\phi$, it is easy to transform the data matrix $\bX$ into its corresponding scatter matrix $\bS$.
However, recovering the original data $\bX$ from the scatter matrix $\bS$ is more complex. The recovery process cannot be unique due to the invariance of dot products under rotation and reflection.
For example, consider a $p \times p$ orthogonal matrix $\bU$, which acts as a rotation/reflection matrix. Then, the rotated/reflected version of $\bX$ is
$
\widetildebX = \bX\bU .
$
Consequently, the scatter matrix $\widetildebS$ using $\widetildebX$ can be shown to be equal to $\bS$ as follows:
$$
\widetildebS = \widetildebX\widetildebX^\top = (\bX\bU)(\bX\bU)^\top = \bX {(\bU\bU^\top)} \bX^\top = \bS .
$$

A \textit{symmetric factorization} of an $n \times n$ matrix is a factorization of $\bS$ into two $n \times k$ matrices of the form $\bS = \bW\bW^\top$. For exact factorization, the value of $k$ will be equal to the rank of the scatter matrix $\bS$. The $i$-th row of $\bW$ in any symmetric factorization $\bW\bW^\top$ of $\bS$ yields a valid set of features of the $i$-th data point.
The representation of $\bW$ is important because it enables the use of many machine learning algorithms---such as \textit{support vector machines (SVMs)} or \textit{logistic regression}---that operate on multidimensional data.

There are three common methods for performing symmetric factorization:
\begin{itemize}
\item Spectral decomposition: $\bS = \bQ\bLambda\bQ^\top$. 
Since the eigenvalues of a positive semidefinite matrix are nonnegative, we can represent the diagonal matrix as $\bLambda = \bSigma^2$:
$
\bS = \bQ\bSigma^2\bQ^\top = ({\bQ\bSigma})(\bQ\bSigma)^\top = \bW\bW^\top.
$
Spectral decomposition of the scatter matrix provides one of infinitely many possible representations that can be derived from factorizing $\bS$. Among these, it is also one of the most compact in terms of the number of nonzero columns. The compactness can be further improved by discarding eigenvectors corresponding to small eigenvalues.
\item Symmetric square-root matrix, which can also be extracted from the spectral decomposition as $\bS = \bQ\bSigma^2\bQ^\top = (\bQ\bSigma\bQ^\top)(\bQ\bSigma\bQ^\top)^\top = (\sqrt{\bS})^2$. In this case, we set $\bW$ to be $\bQ\bSigma\bQ^\top$. 
\item Cholesky factorization: $\bS = \bL\bL^\top$, and we set $\bW=\bL$.
\end{itemize}
In all cases, the $i$-th row of $\bW$ contains the \textit{embedded  representation} (also referred to as the \textit{hidden or latent representation}) of the $i$-th data point. 
Choosing any of these representations will not affect the predictions made by machine learning algorithms that rely on dot products (or Euclidean distances), since these quantities remain unchanged regardless of whether we use spectral decomposition, Cholesky factorization, or the square-root matrix. For example, see its application in large language model compression \citep{lu2025large}, and in generalized least squares models \citep{lu2021rigorous}.

\index{Kernel clustering}
\paragraph{Kernel clustering.}
The kernel representation of $\bW$ is crucial because it enables the use of various machine learning algorithms. Consider a scenario where we have an $n \times n$ {scatter matrix} $\bS$ for $n$ data points, and we aim to cluster these points into similar groups. 
Using the spectral decomposition as an example, the approach of explicit feature engineering involves diagonalizing the scatter matrix as follows:
\begin{enumerate}
\item Diagonalize $\bS = \bQ \bSigma^2 \bQ^\top$.
\item Extract the $n$-dimensional embeddings from the rows of $\bQ \bSigma$.
\item Remove any zero columns from $\bQ \bSigma$ to form $\bQ_0 \bSigma_0$.
\item Apply a clustering algorithm (e.g., Bayesian GMM, K-Means \citep{lu2021bayes}) on the rows of $\bQ_0 \bSigma_0$.
\end{enumerate}
In this process, the columns of $\bQ_0$ contain the nonzero eigenvectors, and the $n$ rows of $\bQ_0 \bSigma_0$ represent the latent features of the $n$ data points. 

\index{Clustering}
\paragraph{Kernel clustering for adjacency matrices.}

In addition to its application to scatter matrices, the concept of kernel clustering can also be applied to the \textit{adjacency matrix} of an \textit{undirected graph}.

A \textit{graph}, sometimes referred to as a \textit{network}, is a mathematical structure used to represent ``relationships" (i.e., \textit{edges} in the graph) among objects (i.e., \textit{vertices or nodes} in the graph). The objects can be of any type---such as web pages, individuals in a social network, or chemical elements---while the relationships depend on the specific application; examples include hyperlinks between web pages, friendships in social networks, or chemical bonds between molecules.

A graph is considered \textit{undirected} when its edges do not have a direction. For example, a Facebook friendship link  is undirected. In contrast, a graph is \textit{directed} when its edges do have direction, as in follower-followee relationships on Twitter.

An adjacency matrix $\bA$ is a square matrix used to represent the structure of an undirected graph. Each entry $a_{ij}$ of the matrix indicates whether there is an edge between node $i$ and node $j$: it takes the value 1 or a positive weight $w_{ij}$ if such an edge exists, and 0 otherwise. This definition extends naturally to directed graphs.
Note that for a directed graph with $q$ edges, the adjacency matrix will contain exactly $q$ nonzero entries. In contrast, for an undirected graph with $q$ edges, the adjacency matrix will have $2q$ nonzero entries due to symmetry.

Since the adjacency matrix of an undirected graph is symmetric (and possibly indefinite), kernel clustering methods based on spectral decomposition can be directly applied to it. This process, commonly known as \textit{spectral clustering}, allows us to extract similarity information between nodes.
However, because the adjacency matrix of a directed graph is asymmetric, these methods cannot be directly extended. We will explore this issue further in Section~\ref{section:spec_clust}.

\index{Outlier detection}
\index{Kernel outlier detection}
\paragraph{Kernel outlier detection.}
For outlier detection tasks, the scatter matrix can be utilized in the following way:
\begin{enumerate}
\item Diagonalize $\bS = \bQ \bSigma^2 \bQ^\top$.
\item Extract the $n$-dimensional embeddings from the rows of $\bQ \bSigma$.
\item Remove any zero columns from $\bQ \bSigma$ to form $\bQ_0 \bSigma_0$.
\item Calculate the outlier score for each row of $\bQ_0$ as the $\ell_2$ distance from the mean of all rows in $\bQ_0$.
\end{enumerate}
It is important to note that we use $\bQ_0$ instead of $\bQ_0 \bSigma_0$ to compute the outlier score for each point. This distinction is critical in outlier detection since outliers often manifest in the deviations along lower-order eigenvectors.
If we were to multiply by $\bSigma_0$, it would scale down these deviations, potentially making outliers harder to detect \citep{aggarwal2020linear}.

\begin{problemset}
\item True or False?
\begin{itemize}
	\item If a (square) matrix has all zero eigenvalues, then it must be the zero matrix.
	
	\item If a \textbf{symmetric} matrix has all zero eigenvalues, then it must be the zero matrix. 
\end{itemize}

\item Show that the determinant of any diagonalizable matrix equals the product of its eigenvalues.
\item Let $\bA$ be a square and diagonalizable matrix. Consider a situation in which we add $\lambda$ to each diagonal entry of $\bA$ to create $\bB$. Show that $\bB$ has the same eigenvectors as $\bA$, and its eigenvalues are related to $\bA$ by a difference of $\lambda$.

\item Show that the eigenvalues of a matrix $\bA \in \real^{n \times n}$ are the same as those of its transpose $\bA^\top$. \textit{Hint: Use the characteristic polynomial of $\bA$ to prove this.}

\item Let $\bA$ and $\bB$ be symmetric positive definite matrices. Show that the product $\bA\bB$ may not be symmetric, but its eigenvalues remain positive. \textit{Hint: Take the product of $\bB\bx$ and $\bA\bB\bx=\lambda\bx$.}

\item Given a symmetric positive definite matrix $\bA$, show that $\bB^\top\bA\bB$ is positive definite if $\bB$ contains linearly independent columns.

\item Let $\bA \in \real^{n \times n}$ be a symmetric positive definite matrix with eigenvalues ordered as   $\lambda_1\geq \lambda_2 \geq \ldots \geq \lambda_n$.
\begin{itemize}
\item Find the eigenvalues of $\lambda\bI-\bA$.
\item Prove that $\lambda\bI - \bA$ is positive semidefinite.
\item Show that $\lambda_1\bx^\top\bx\geq \bx^\top\bA\bx$ for all $\bx\in\real^n$.
\item Determine  the maximum value of $\bx^\top\bA\bx/\bx^\top\bx$.
\end{itemize}

\item Let $\bA \in \real^{n \times n}$ satisfy $\bA^2 - \bA = 2\bI$. Show that $\bA$ is diagonalizable.

\item Suppose that  $\bA$ is a diagonalizable matrix, i.e., $\bA$ can be expressed as $\bA = \bP \bLambda \bP^{-1}$. Show that the matrix $
\lim_{k \to \infty} \left(\bI + \frac{\bA}{k}\right)^k$  exists with finite entries.
\textit{Hint: Use the fact that $\lim_{k \to \infty} \left(1 + \frac{x}{k}\right)^k = e^x$.}~\footnote{The result also holds for any square matrix.}

\item What can you claim about $\bP\in\real^{m\times n}$ with $m\geq n$ in Theorem~\ref{lemma:nonsingular-factor-of-PD}?

\item Show that two normal matrices are similar if and only if they have the same characteristic polynomial.

\item \textbf{Symmetric idempotent.} Let $\bA=\bA^\top=\bA^2\in\real^{n\times n}$ with $\rank(\bA)=r$. Show that there exists an orthogonal matrix $\bQ$ such that $\bQ^\top\bA\bQ=\diag(\bI_r, \bzero)$.

\item \textbf{Skew-symmetric.} Let $\bA\in\real^{n\times n}$ be skew-symmetric. Show that $\trace(\bA)=0$. Additionally, if $\bB\in \real^{n\times n}$ is symmetric, show that $\trace(\bA\bB)=0$.

\item Show that when $\bA$ and $\bB$ are positive semidefintie, then  the condition $\trace(\bA\bB) = 0$ is equivalent to $\bA\bB = \bzero$. \textit{Hint: The trace is invariant under cyclic permutations, and write out the trace using the elements of matrices from the spectral  decomposition.}

\index{Fan's inequality}
\index{Hardy-Littlewood-P\'olya inequality}
\item \label{prob:fans_ineq} \textbf{Fan's inequality \citep{fan1949theorem, borwein2006convex}.} 
Let $\bA,\bB$ be real symmetric, and let $\blambda^{\downarrow}(\bA)$ be the vector containing the eigenvalues of $\bA$ in nonincreasing order. Show that  $\trace(\bA\bB) \leq \blambda^{\downarrow}(\bA)^{\top}\blambda^{\downarrow}(\bB)$.
The equality holds if and only if $\bA$ and $\bB$ admit spectral decompositions $\bA=\bQ\blambda^{\downarrow}(\bA)\bQ^\top$ and $\bB=\bQ\blambda^{\downarrow}(\bB)\bQ^\top$ (called \textit{simultaneous ordered spectral decomposition}). \footnote{Fan's inequality is a refinement of the Cauchy-Schwarz inequality for symmetric matrices.}

\item \label{prob:hardy_ineq} \textbf{Hardy-Littlewood-P\'olya inequality  \citep{borwein2006convex}.} Let $[\bx]^{\downarrow}$ denote the vector with the same components of $\bx$ permuted into nonincreasing order. Show that  $\bx^\top\by \leq [\bx]^{\downarrow \top}[\by]^{\downarrow}$. \textit{Hint: Apply  Fan's inequality to diagonal matrices.}

\index{Mirsky's theorem}
\item \label{prob:pert_eig_uniinvar} \textbf{Mirsky's theorem.} Let $\bA,\bB\in\real^{n\times n}$ be symmetric. Let further $\blambda^{\downarrow}(\bA)$ and $\blambda^{\downarrow}(\bB)$ be the vectors containing the eigenvalues of $\bA$ and $\bB$, respectively, in nonincreasing order. Show that $\norm{\diag(\blambda^{\downarrow}(\bA)) - \diag(\blambda^{\downarrow}(\bB))} \leq \norm{\bA-\bB}$ if the matrix norm $\norm{\cdot}$ is \textit{unitarily/orthogonally invariant} (i.e., $\norm{\bU\bA\bV} =\norm{\bA}$ for all orthogonal $\bU\in\real^{m\times m}$ and $\bV\in\real^{n\times n}$ and for all $\bA\in\real^{m \times n}$).

\item \label{problem:eig_det_house} Let $\bH=\bI - 2  \frac{\bu\bu^\top}{\bu^\top\bu}\in\real^{n\times n}$ be  a Householder reflector. Show  that the  eigenvalue $\lambda_1=1$ has multiplicity  $n-1$; and the  eigenvalue $\lambda_2=-1$ has multiplicity  $1$.
This implies $\det(\bH) = -1$.

\item \label{problem:eig_reverse} \textbf{Eigenvalues of reverse product.}  Let $\bA\in\real^{m\times n}$ and $\bB\in\real^{n\times m}$ with $m\leq n$. Show that the $n$ eigenvalues $\bB\bA$  are the eigenvalues of $\bA\bB$ together with $n-m$ zeros. \textit{Hint: Show that
	$\scriptsize
	\begin{bmatrix}
		\bA\bB&\bzero \\
		\bB& \bzero 
	\end{bmatrix}$
	and 
	$\scriptsize
	\begin{bmatrix}
		\bzero &\bzero \\
		\bB& \bA\bB
	\end{bmatrix}$
	are similar, and use Proposition~\ref{proposition:eigenvalue-similar-matrices} to discuss the eigenvalues of the two matrices.
}

\index{Rank decomposition}
\item \textbf{Eigenvalues of rank decomposition.} Consider the rank decomposition of $\bA=\bD\bF\in\real^{n\times n}$ with rank $r$ (Theorem~\ref{theorem:rank-decomposition}). Show that the eigenvalues of $\bA$ are the same as those of $\bF\bD$ together with $n-r$ zeros. 
How does this result change if the decomposition satisfies $\bD\in\real^{n\times k}$ and $\bF\in\real^{k\times n}$ with $k>r$?
\textit{Hint: Use Problem~\ref{problem:eig_reverse}.}

\item \textbf{Subspace in symmetric.} Let $\bA\in\real^{n\times n}$ be symmetric. Show that $\cspace(\bA)=\cspace(\bA^k)$ and $\nspace(\bA)=\nspace(\bA^k)$ for all integers $k\geq 2$.

\item Show that $\bA$ is symmetric $\iff$ 
$\scriptsize\begin{bmatrix}
	\bzero & \bA\\
	\bA& \bzero
\end{bmatrix}$ is symmetric 
$\iff$
$\scriptsize\begin{bmatrix}
	\bzero & \bA\\
	-\bA& \bzero
\end{bmatrix}$ 
is skew-symmetric.

\item We introduced kernel clustering and kernel outlier detection in the main text. Discuss how this process can be applied or adapted for use in classification or regression tasks.

\item \textbf{Sigmoid kernel.} Is the ``sigmoid kernel" a valid kernel function: $k(\bx,\bx')=\text{tanh}(\kappa \cdot \bx^\top\bx'-\sigma)$, where $\text{tanh}(x) = \frac{e^x-e^{-x}}{e^x+e^{-x}}$, and $\kappa$ and $\sigma$ are scalars?
\end{problemset}

\chapter{Singular Value Decomposition (SVD)}\label{chapter:SVD_realc}

\section{Singular Value Decomposition (SVD)}\label{section:SVD}

In eigenvalue decomposition, a matrix is typically factored into a diagonal matrix. However, this is not always possible. If the underlying matrix lacks linearly independent eigenvectors, diagonalization cannot be performed. The \textit{singular value decomposition (SVD)} overcomes this limitation. Instead of decomposing a matrix into an eigenvector matrix, SVD breaks it into two orthogonal matrices.
We present the result of the SVD in the following theorem and will discuss its existence in later sections.
\index{Decomposition: SVD}
\index{Orthogonal}
\index{Rank}
\begin{theoremHigh}[Reduced SVD for rectangular matrices]\label{theorem:reduced_svd_rectangular}
Given any real $m\times n$ matrix $\bA$ of rank $r$,  the  matrix $\bA$ can be decomposed as
$$
\bA = \bU \bSigma \bV^\top,
$$ 
where $\bSigma\in \real^{r\times r}$ is a diagonal matrix, $\bSigma=\diag(\sigma_1, \sigma_2 \ldots, \sigma_r)$ with $\sigma_1 \geq \sigma_2 \geq \ldots \geq \sigma_r >0$ and 
\begin{itemize}
\item The values $\sigma_i$ are the nonzero \textit{singular values} of $\bA$; in the meantime, they are also the (positive) square roots of the nonzero \textit{eigenvalues} of both ${\bA}^\top \bA$ and $ \bA {\bA}^\top$.

\item The columns of $\bU\in \real^{m\times r}$ contain the $r$ eigenvectors of $\bA\bA^\top$ corresponding to the $r$ nonzero eigenvalues of $\bA\bA^\top$. 

\item The columns of $\bV\in \real^{n\times r}$ contain the $r$ eigenvectors of $\bA^\top\bA$ corresponding to the $r$ nonzero eigenvalues of $\bA^\top\bA$. 

\item Additionally, the columns of $\bU$ and $\bV$ are called the \textit{left and right singular vectors} of $\bA$, respectively. 

\item Moreover, the columns of both $\bU$ and $\bV$ are mutually orthonormal (by spectral theorem~\ref{theorem:spectral_theorem}). 
\end{itemize}
In particular, we can express the matrix decomposition as a sum of outer products of vectors $\bA = \bU \bSigma \bV^\top = \sum_{i=1}^r \sigma_i \bu_i \bv_i^\top$, which represents a sum of $r$ rank-one matrices.
\end{theoremHigh}

If we append $m - r$ additional orthonormal columns to $\bU$, orthogonal to the $r$ eigenvectors of $\bA \bA^\top$ (similar to the silent columns in the QR decomposition; see Section~\ref{section:silentcolu_qrdecomp}), we obtain an orthogonal matrix $\bU \in \real^{m \times m}$.
The same principle applies to the columns of $\bV$, yielding the \textit{full SVD}. 
A comparison between the reduced and full SVD is shown in Figure~\ref{fig:svd-comparison}, where white entries represent zero, and \textcolor{mylightbluetext}{blue} entries are not necessarily zero.
\begin{figure}[h]
\centering
\vspace{-0.35cm}
\subfigtopskip=2pt
\subfigbottomskip=2pt
\subfigcapskip=-5pt
\subfigure[Reduced SVD decomposition.]{\label{fig:svdhalf}
\includegraphics[width=0.47\linewidth]{./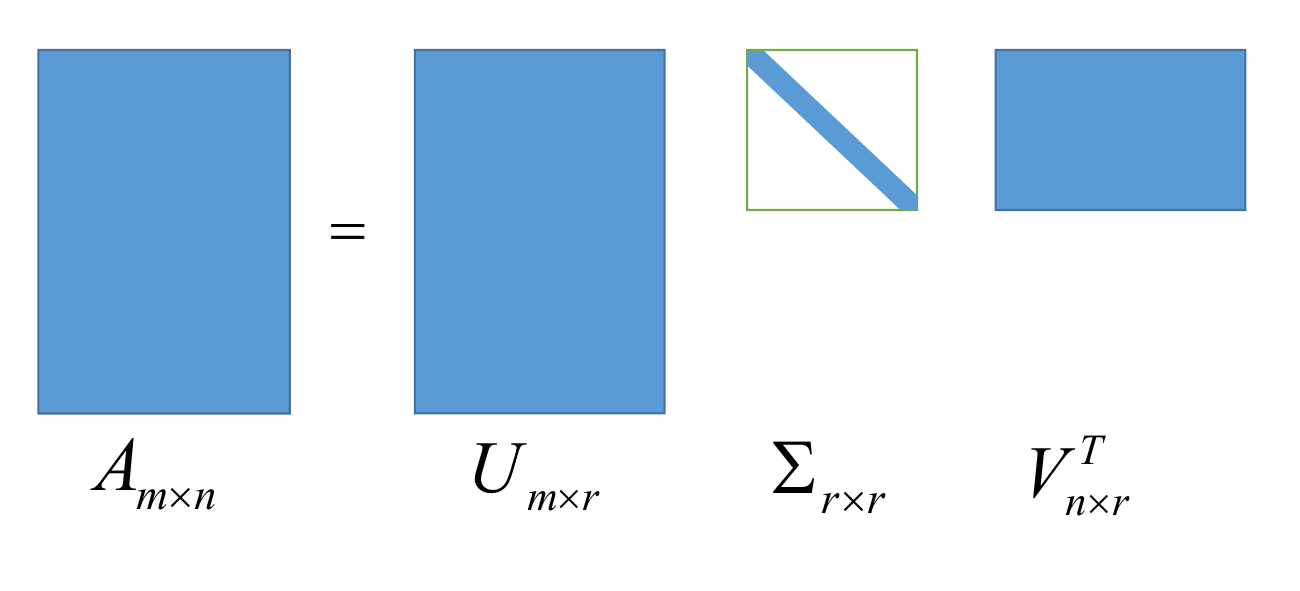}}
\quad 
\subfigure[Full SVD decomposition.]{\label{fig:svdall}
\includegraphics[width=0.47\linewidth]{./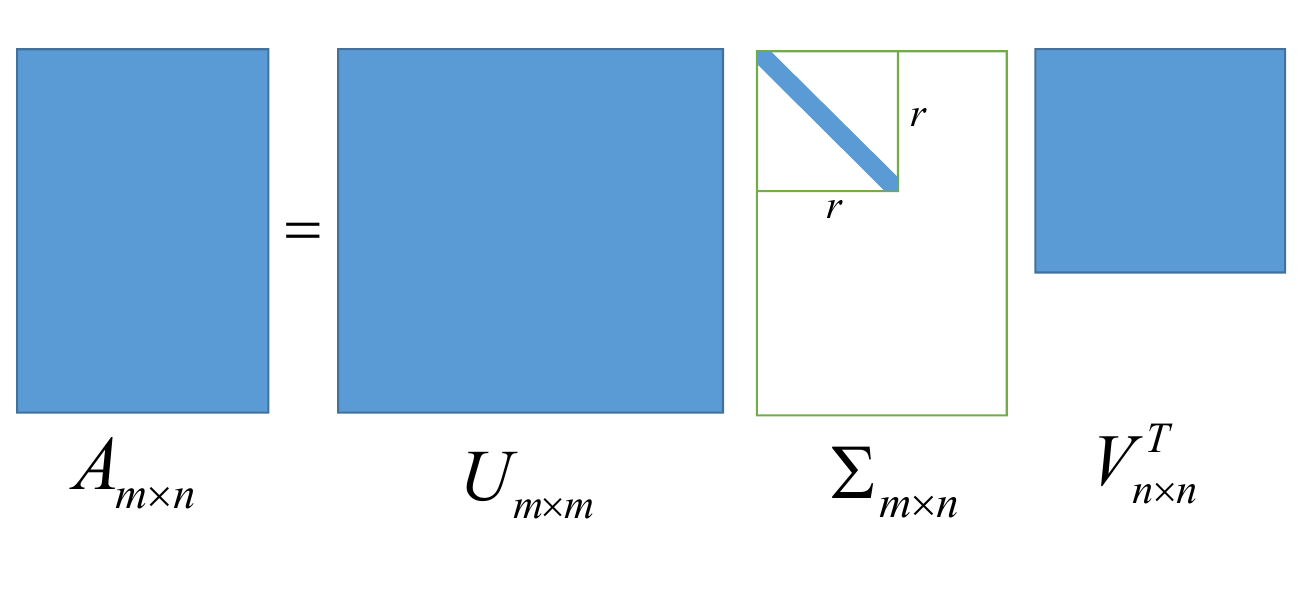}}
\caption{Comparison between the reduced and full SVD.}
\label{fig:svd-comparison}
\end{figure}

\section{Existence of the SVD}
To prove the existence of the SVD, we need to use the following lemmas. 
As previously mentioned, the singular values of a matrix $\bA$ are defined as the square roots of the eigenvalues of $\bA^\top \bA$. Since negative values do not have real square roots,  it is essential that the eigenvalues of  $\bA^\top \bA$ must be nonnegative.
\begin{lemma}[Nonnegative eigenvalues of $\bA^\top \bA$]\label{lemma:nonneg-eigen-ata}
For any matrix $\bA\in \real^{m\times n}$, the matrix  $\bA^\top \bA$ has nonnegative eigenvalues.
\end{lemma}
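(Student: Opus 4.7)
The plan is to show that every eigenvalue of $\bA^\top\bA$ arises as a Rayleigh-type quotient $||\bA\bv||^2 / ||\bv||^2$, which is manifestly nonnegative. This is essentially a one-line argument, so the main task is to present it cleanly and to note the connection with the positive semidefiniteness machinery already developed in the excerpt.

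First I would take an arbitrary eigenpair $(\lambda, \bv)$ of $\bA^\top\bA$ with $\bv \neq \bzero$, so that $\bA^\top\bA\bv = \lambda \bv$. Left-multiplying both sides by $\bv^\top$ gives $\bv^\top \bA^\top \bA \bv = \lambda\, \bv^\top \bv$. The left-hand side rewrites as $(\bA\bv)^\top(\bA\bv) = ||\bA\bv||^2 \geq 0$, while $\bv^\top\bv = ||\bv||^2 > 0$ since $\bv$ is nonzero (eigenvectors are nonzero by definition). Solving for $\lambda$ yields
$$
\lambda = \frac{||\bA\bv||^2}{||\bv||^2} \geq 0,
$$
which is the claim.

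An equivalent, more structural route would be to observe that $\bA^\top\bA$ is symmetric (since $(\bA^\top\bA)^\top = \bA^\top\bA$) and positive semidefinite, because for every $\bx \in \real^n$ we have $\bx^\top(\bA^\top\bA)\bx = ||\bA\bx||^2 \geq 0$, matching Definition~\ref{definition:psd-pd-defini}. Then Lemma~\ref{lemma:eigens-of-PD-psd} immediately gives that all eigenvalues of a PSD matrix are nonnegative. I would likely present the direct Rayleigh-quotient argument as the main proof, since it is self-contained and avoids invoking the spectral theorem indirectly, and perhaps mention the PSD viewpoint as a remark to tie the result into the broader framework of the chapter. There is no real obstacle here; the only thing to be careful about is stating the definition of eigenvector (nonzero) so that dividing by $||\bv||^2$ is justified.
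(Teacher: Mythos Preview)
Your proposal is correct and matches the paper's proof essentially line for line: the paper also takes an eigenpair $(\lambda,\bx)$, left-multiplies $\bA^\top\bA\bx=\lambda\bx$ by $\bx^\top$, and concludes from $\bx^\top\bA^\top\bA\bx=||\bA\bx||^2\geq 0$ and $\bx^\top\bx>0$ that $\lambda\geq 0$. Your instinct to present the direct argument rather than invoking Lemma~\ref{lemma:eigens-of-PD-psd} is also right, since in the paper that lemma is proved later (via the spectral theorem) and citing it here would be circular.
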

\begin{proof}[of Lemma~\ref{lemma:nonneg-eigen-ata}]
Let $\lambda$ be an eigenvalue of $\bA^\top \bA$ with the corresponding eigenvector $\bx$. We have
$
\bA^\top \bA \bx = \lambda \bx \implies \bx^\top \bA^\top \bA \bx = \lambda \bx^\top\bx. 
$
Since $\bx^\top \bA^\top \bA \bx  = \norm{\bA \bx}^2 \geq 0$ and $\bx^\top\bx > 0$, we have $\lambda \geq 0$.
\end{proof}

Since $\bA^\top\bA$ has nonnegative eigenvalues, we  can define the \textit{singular value} $\sigma\geq 0$ of $\bA$, such that $\sigma^2$ is the eigenvalue of $\bA^\top\bA$. In other words, \fbox{$\bA^\top\bA \bv = \sigma^2 \bv$}. This is a key condition for the existence of the SVD.

We also showed in Lemma~\ref{lemma:rankAB}  that $\rank$($\bA\bB$)$\leq$min$\{\rank$($\bA$), $\rank$($\bB$)\}.
However, the symmetric matrix $\bA^\top \bA$ is special in that its rank is equal to the rank of $\bA$. We now prove this result.
\begin{lemma}[Rank of $\bA^\top \bA$]\label{lemma:rank-of-ata}
The matrices $\bA^\top \bA$ and $\bA$ have same rank.
Extending this observation to $\bA^\top$, we can also prove that $\bA\bA^\top$ and $\bA$ share the same rank. 
\end{lemma}
\begin{proof}[of Lemma~\ref{lemma:rank-of-ata}]
Let $\bx\in \nspace(\bA)$. Then
$
\bA\bx  = \bzero \implies \bA^\top\bA \bx =\bzero, 
$
i.e., $\bx\in \nspace(\bA) \implies \bx \in \nspace(\bA^\top \bA)$. This shows that $\nspace(\bA) \subseteq \nspace(\bA^\top\bA)$. 

Next, let $\bx \in \nspace(\bA^\top\bA)$. We obtain
$
\bA^\top \bA\bx = \bzero\implies \bx^\top \bA^\top \bA\bx = 0\implies \norm{\bA\bx}^2 = 0 \implies \bA\bx=\bzero.
$
Thus, $\bx\in \nspace(\bA^\top \bA)$ implies $\bx\in \nspace(\bA)$, and therefore, $\nspace(\bA^\top\bA) \subseteq\nspace(\bA) $. 

By combining both inclusions, we conclude that:
$$\nspace(\bA) = \nspace(\bA^\top\bA) \qquad
\text{and} \qquad 
\dim(\nspace(\bA)) = \dim(\nspace(\bA^\top\bA)).
$$
By the fundamental theorem of linear algebra (Theorem~\ref{theorem:fundamental-linear-algebra}), it follows that $\bA^\top \bA$ and $\bA$ have the same rank.

Applying the observation to $\bA^\top$, we can also conclude that $\bA \bA^\top$ and $\bA$ have the same rank:
$
\rank(\bA) = \rank(\bA^\top \bA) = \rank(\bA\bA^\top).
$
\end{proof}

In the SVD, we claim that the matrix $\bA$ is a sum of $r$ rank-one matrices, where $r$ denotes the number of nonzero singular values. This count of nonzero singular values is, in fact, equal to the rank of the matrix.

\begin{lemma}[The number of nonzero singular values vs the rank]\label{lemma:rank-equal-singular}
The number of nonzero singular values of a matrix $\bA$ is equal to its rank.
\end{lemma}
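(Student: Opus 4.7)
The plan is to chain together three facts that have already been established in the excerpt, so the proof will be essentially a three-line composition rather than a new technical argument. The definition of singular values in this book ties them directly to the eigenvalues of $\bA^\top\bA$: each singular value $\sigma_i \geq 0$ is declared to satisfy $\sigma_i^2 = \lambda_i$ where $\lambda_i$ is an eigenvalue of $\bA^\top\bA$, with the convention that all eigenvalues of $\bA^\top\bA$ are nonnegative (Lemma~\ref{lemma:nonneg-eigen-ata}). Because $\sigma_i \geq 0$, the correspondence $\sigma_i \leftrightarrow \lambda_i$ preserves the property of being zero/nonzero: $\sigma_i \neq 0$ if and only if $\lambda_i \neq 0$. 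Hence the number of nonzero singular values of $\bA$ equals the number of nonzero eigenvalues of $\bA^\top\bA$.

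Next, I would invoke Lemma~\ref{lemma:rank-of-symmetric}, which asserts that for a real symmetric matrix the rank equals the number of its nonzero eigenvalues. Since $\bA^\top\bA$ is symmetric, this gives
\[
\#\{\text{nonzero eigenvalues of }\bA^\top\bA\} \;=\; \mathrm{rank}(\bA^\top\bA).
\]
Finally, Lemma~\ref{lemma:rank-of-ata} tells us that $\mathrm{rank}(\bA^\top\bA) = \mathrm{rank}(\bA)$. Concatenating the three equalities yields
\[
\#\{\text{nonzero singular values of }\bA\} \;=\; \#\{\text{nonzero eigenvalues of }\bA^\top\bA\} \;=\; \mathrm{rank}(\bA^\top\bA) \;=\; \mathrm{rank}(\bA),
\]
which is exactly the claim.

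There is no substantive obstacle here, since every ingredient is already in place earlier in the chapter; the only point requiring a sentence of care is the bijection between nonzero singular values and nonzero eigenvalues of $\bA^\top\bA$, which relies on the sign convention $\sigma_i \geq 0$. One could alternatively argue symmetrically via $\bA\bA^\top$ (using $\mathrm{rank}(\bA\bA^\top) = \mathrm{rank}(\bA)$), but since the ``right singular'' viewpoint via $\bA^\top\bA$ is what was used to define $\sigma_i$ at the start of the section, I would keep the $\bA^\top\bA$ route for consistency.
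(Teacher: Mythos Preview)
Your proposal is correct and matches the paper's proof essentially line for line: the paper also invokes Lemma~\ref{lemma:rank-of-symmetric} to equate the rank of the symmetric matrix $\bA^\top\bA$ with its number of nonzero eigenvalues, then Lemma~\ref{lemma:rank-of-ata} to pass from $\mathrm{rank}(\bA^\top\bA)$ to $\mathrm{rank}(\bA)$. Your extra sentence making the bijection $\sigma_i \neq 0 \Leftrightarrow \lambda_i \neq 0$ explicit is a nice clarification that the paper leaves implicit.
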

\begin{proof}[of Lemma~\ref{lemma:rank-equal-singular}]
By Lemma~\ref{lemma:rank-of-symmetric}, the rank of any symmetric matrix (such as $\bA^\top \bA$) is equal to the number of nonzero eigenvalues (counting multiplicities). Thus, the number of nonzero singular values of $\bA$ equals the rank of $\bA^\top \bA$. By Lemma~\ref{lemma:rank-of-ata}, the number of nonzero singular values is therefore also equal to the rank of $\bA$.
\end{proof}

We are now ready to prove the existence of the SVD.
\begin{proof}[{of Theorem~\ref{theorem:reduced_svd_rectangular}: Existence of the reduced SVD}]
Since $\bA^\top \bA$ is a symmetric matrix, by the spectral theorem~\ref{theorem:spectral_theorem} and Lemma~\ref{lemma:nonneg-eigen-ata}, there exists a semi-orthogonal matrix $\bV\in\real^{n\times r}$ such that
$
{\bA^\top \bA = \bV \bSigma^2 \bV^\top},
$
where $\bSigma$ is a diagonal matrix containing the $r$ nonzero singular values of $\bA$, i.e., $\bSigma^2$ contains the corresponding nonzero eigenvalues of $\bA^\top \bA$.
Specifically, $\bSigma=\diag(\sigma_1, \sigma_2, \ldots, \sigma_r)$, and the set $\{\sigma_1^2, \sigma_2^2, \ldots, \sigma_r^2\}$ represents the nonzero eigenvalues of $\bA^\top \bA$, where $r=\rank(\bA)$. 
Now we proceed with the core of the proof. Starting from the equation \fbox{$\bA^\top\bA \bv_i = \sigma_i^2 \bv_i$}, $\forall\, i \in \{1, 2, \ldots, r\}$, i.e., the eigenvector $\bv_i$ of $\bA^\top\bA$ is corresponding to the eigenvalue $\sigma_i^2$:

1. Multiplying both sides by $\bv_i^\top$:
$$
\bv_i^\top\bA^\top\bA \bv_i = \sigma_i^2 \bv_i^\top \bv_i \leadto \norm{\bA\bv_i}^2 = \sigma_i^2 \leadto \norm{\bA\bv_i}=\sigma_i.
$$

2. Multiplying both sides by $\bA$:
$$
\bA\bA^\top\bA \bv_i = \sigma_i^2 \bA \bv_i \leadto \bA\bA^\top \frac{\bA \bv_i }{\sigma_i}= \sigma_i^2 \frac{\bA \bv_i }{\sigma_i} \leadto \bA\bA^\top \bu_i = \sigma_i^2 \bu_i,
$$
where we notice that this form can find the eigenvector of $\bA\bA^\top$ corresponding to $\sigma_i^2$, which is $\bA \bv_i$. Since the length of $\bA \bv_i$ is $\sigma_i$, we then define $\bu_i = \frac{\bA \bv_i }{\sigma_i}$ with a unit norm.

These vectors $\bu_i$ are mutually orthonormal because $(\bA\bv_i)^\top(\bA\bv_j)=\bv_i^\top\bA^\top\bA\bv_j=\sigma_j^2 \bv_i^\top\bv_j=0$  if $i\neq j$. 
Thus, we conclude that
$
{\bA \bA^\top = \bU \bSigma^2 \bU^\top},
$
where $\bU=[\bu_1, \bu_2, \ldots,\bu_r]$.
Since {$\bA\bv_i = \sigma_i\bu_i$}, we have 
\begin{equation}\label{equation:svd_eqprof1}
[\bA\bv_1, \bA\bv_2, \ldots, \bA\bv_r] = [ \sigma_1\bu_1,  \sigma_2\bu_2, \ldots,  \sigma_r\bu_r]\leadto
\bA\bV = \bU\bSigma.
\end{equation}
At this point, since $\bV \bV^\top \neq \bI$, we cannot directly obtain the reduced SVD.
However, by appending $\bV$ with additional orthogonal columns, we can construct an orthogonal matrix $\widetilde{\bV} = [\bV, \bV_2]$, and similarly append $\bU$ with orthonormal columns to form $\widetilde{\bU} = [\bU, \bU_2]$. This leads to the full SVD (since $\widetilde{\bV}\widetilde{\bV}^\top=\bI$):
$$
\bA \widetilde{\bV}= \widetilde{\bU} \widetilde{\bSigma} ,\gap \text{where}\gap 
\widetilde{\bSigma} 
= \begin{bmatrix}
	\bSigma & \bzero  \\
	\bzero &\bzero
\end{bmatrix}
\leadto
\bA = \widetilde{\bU} \widetilde{\bSigma}\widetilde{\bV}^\top.
$$
Finally, simplifying the product, we have
$
\bA =\bU \bSigma \bV^\top + \bU_2 \cdot \bzero \cdot \bV_2^\top = \bU \bSigma \bV^\top,
$
which is the reduced SVD. This completes the proof.
\end{proof}

The proof also shows that if $\bA=\bU\bSigma\bV^\top$ is the reduced SVD of $\bA$, it follows from \eqref{equation:svd_eqprof1} that $\bA\bV\bV^\top=\bA$. This implies that $\bV\bV^\top$ (where $\bV\in\real^{n\times r}$ is semi-orthogonal) is an (orthogonal) projection matrix that maps each row of $\bA$ onto itself (a projection matrix onto the row space of $\bA$).

\index{Row space}
\index{Subspace}
\index{Orthogonal projection}
\paragraph{SVD-related orthogonal projections.}
In the context of the SVD, several important orthogonal projections arise from the four fundamental subspaces. In simple terms, an orthogonal projection matrix has two key properties: it is symmetric and idempotent; see Sections~\ref{section:qr-gram-compute} and \ref{section:spec_app_eigproj}. Such a projection matrix projects any vector onto its column space.
Idempotency means that applying the projection twice is the same as applying it once. Symmetry has a geometric interpretation: the projection minimizes the distance between the original vector and its projection, where the projection lies within the column space of the projection matrix.
Now suppose $\bA = \bU \bSigma \bV^\top$ is the full SVD of $\bA$ with rank $r$. Consider the following column partitions:
\[
\begin{blockarray}{ccc}
	\begin{block}{c[cc]}
		\bU=&	\bU_r & \bU_m   \\
	\end{block}
	&m\times r & m\times (m-r)   \\
\end{blockarray}
,\qquad
\begin{blockarray}{ccc}
	\begin{block}{c[cc]}
		\bV=	&	\bV_r & \bV_n   \\
	\end{block}
	&n\times r & n\times (n-r)   \\
\end{blockarray},
\]
where $\bU_r$ and $\bV_r$ consist of the first $r$ columns of $\bU$ and $\bV$, respectively.
The four orthogonal projections can then be written as:
$$
\begin{aligned}
	\bV_r\bV_r^\top &= \text{projection onto $\cspace(\bA^\top)$},
	\quad
	&\bV_n\bV_n^\top &=\text{projection onto $\nspace(\bA)$},\\
	\bU_r\bU_r^\top &= \text{projection onto $\cspace(\bA)$},
	\quad
	&\bU_m\bU_m^\top &= \text{projection onto $\nspace(\bA^\top)$}.\\
\end{aligned}
$$
These projection matrices allow us to cleanly map vectors onto the four fundamental subspaces of $\bA$: its column space, row space, null space, and left null space.

\paragraph{Spectral decomposition of $\bA \bA^\top$.}
An additional result from the above proof is that the spectral decomposition of $\bA^\top \bA = \bV \bSigma^2 \bV^\top$ naturally leads to the spectral decomposition of $\bA \bA^\top = \bU \bSigma^2 \bU^\top$, with the same eigenvalues.

\begin{corollary}[Eigenvalues of $\bA^\top\bA$ and $\bA\bA^\top$]
The nonzero eigenvalues of $\bA^\top\bA$ and $\bA\bA^\top$ are identical.
\end{corollary}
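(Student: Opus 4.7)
The plan is to observe that this corollary is essentially a restatement of what was already established inside the proof of the reduced SVD, but I will also sketch a direct argument that does not rely on SVD, since the direct argument is short and illuminating.

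First I would give the immediate SVD-based argument: in the proof of Theorem~\ref{theorem:reduced_svd_rectangular} we obtained simultaneously the spectral decompositions $\bA^\top\bA = \bV\bSigma^2\bV^\top$ and $\bA\bA^\top = \bU\bSigma^2\bU^\top$ with the \emph{same} diagonal matrix $\bSigma^2 = \diag(\sigma_1^2,\ldots,\sigma_r^2)$. Since similar matrices share the same eigenvalues (Lemma~\ref{lemma:eigenvalue-similar-matrices}), the nonzero eigenvalues of both Gram matrices are exactly $\{\sigma_1^2,\ldots,\sigma_r^2\}$, and the result follows in one line.

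Then I would give a self-contained direct proof that makes clear why the statement is true independently of SVD. Suppose $\lambda \neq 0$ is a nonzero eigenvalue of $\bA^\top\bA$ with eigenvector $\bv \neq \bzero$, so $\bA^\top\bA\bv = \lambda\bv$. Left-multiplying by $\bA$ yields $\bA\bA^\top(\bA\bv) = \lambda(\bA\bv)$, so $\bA\bv$ is a candidate eigenvector of $\bA\bA^\top$ with eigenvalue $\lambda$. The only thing to check is that $\bA\bv \neq \bzero$: if $\bA\bv = \bzero$ then $\bA^\top\bA\bv = \bzero = \lambda\bv$, which forces $\bv = \bzero$ (since $\lambda \neq 0$), contradicting the eigenvector assumption. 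Thus $\lambda$ is a nonzero eigenvalue of $\bA\bA^\top$. By symmetry (swapping the roles of $\bA$ and $\bA^\top$), every nonzero eigenvalue of $\bA\bA^\top$ is a nonzero eigenvalue of $\bA^\top\bA$.

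There is no real obstacle here; the only subtle point is the nonvanishing of $\bA\bv$, which is the only place the hypothesis $\lambda \neq 0$ is used. If one wanted to strengthen the statement to include algebraic multiplicities, one could additionally note that $\bv \mapsto \bA\bv$ furnishes an injective linear map between the $\lambda$-eigenspaces of $\bA^\top\bA$ and $\bA\bA^\top$ whenever $\lambda \neq 0$, with $\bw \mapsto \bA^\top\bw$ as its inverse (up to scaling by $\lambda$), so the geometric multiplicities agree as well; but the bare statement as written only requires the set equality.
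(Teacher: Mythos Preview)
Your proposal is correct and matches the paper's treatment: the paper states this corollary as an immediate byproduct of the SVD existence proof, where one obtains $\bA^\top\bA = \bV\bSigma^2\bV^\top$ and $\bA\bA^\top = \bU\bSigma^2\bU^\top$ with the same $\bSigma^2$. Your ``direct'' second argument is in fact the very computation that sits inside the paper's SVD proof (multiply $\bA^\top\bA\bv_i = \sigma_i^2\bv_i$ on the left by $\bA$), so the two routes are not really different here.

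One small wording fix: invoking Lemma~\ref{lemma:eigenvalue-similar-matrices} on similar matrices is not quite right, because $\bA^\top\bA\in\real^{n\times n}$ and $\bA\bA^\top\in\real^{m\times m}$ generally have different sizes and hence cannot be similar to one another (nor to the $r\times r$ matrix $\bSigma^2$). The correct statement is simply that in the spectral decompositions $\bV\bSigma^2\bV^\top$ and $\bU\bSigma^2\bU^\top$ the columns of $\bV$ (resp.\ $\bU$) are eigenvectors with eigenvalues $\sigma_i^2$, so the nonzero spectra are read off directly from the common diagonal of $\bSigma^2$. Your self-contained second argument avoids this issue entirely and is the cleaner justification.
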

We have shown in Lemma~\ref{lemma:nonneg-eigen-ata} that the eigenvalues of $\bA^\top \bA$ are nonnegative. Therefore, the eigenvalues of $\bA\bA^\top$ must also be nonnegative.
\begin{corollary}[Nonnegative eigenvalues of $\bA^\top\bA$ and $\bA\bA^\top$]
The eigenvalues of both  $\bA^\top\bA$ and $\bA\bA^\top$ are nonnegative.
\end{corollary}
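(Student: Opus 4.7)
The plan is to combine the previous two results with a direct symmetry argument, since almost all the work has already been done. The previous Lemma (Nonnegative Eigenvalues of $\bA^\top\bA$) handles one of the two matrices, and the preceding Corollary establishes that the \emph{nonzero} eigenvalues of $\bA^\top\bA$ and $\bA\bA^\top$ coincide, so the only remaining issue is to rule out negative eigenvalues of $\bA\bA^\top$ and to note that zero (if it occurs as an eigenvalue) is itself nonnegative.

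First, I would dispose of $\bA^\top\bA$ by citing Lemma~\ref{lemma:nonneg-eigen-ata} directly; no additional argument is needed there. For $\bA\bA^\top$, I would mimic the short proof of that same lemma verbatim with the roles of $\bA$ and $\bA^\top$ swapped: suppose $\bA\bA^\top\bx = \lambda\bx$ with $\bx\neq\bzero$; then dotting with $\bx^\top$ yields $\bx^\top\bA\bA^\top\bx = \lambda\bx^\top\bx$, and since $\bx^\top\bA\bA^\top\bx = \Vert\bA^\top\bx\Vert^2\ge 0$ and $\bx^\top\bx>0$, we conclude $\lambda\ge 0$. Alternatively, one could just quote the previous corollary: the nonzero eigenvalues of $\bA\bA^\top$ agree with those of $\bA^\top\bA$, hence are nonnegative, and any zero eigenvalue is trivially nonnegative.

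There is essentially no obstacle here; the statement is a one-line corollary whose content is fully contained in the preceding results. The only minor care point is that the earlier corollary is phrased about \emph{nonzero} eigenvalues, so in a purely citation-based proof one must explicitly remark that a potential zero eigenvalue causes no problem. Because of this, I would favor the direct quadratic-form argument above, which is a single line and handles the zero and nonzero cases uniformly.
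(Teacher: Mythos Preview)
Your proposal is correct and matches the paper's approach: the paper simply remarks (without a formal proof block) that Lemma~\ref{lemma:nonneg-eigen-ata} already handles $\bA^\top\bA$, and that the same conclusion then follows for $\bA\bA^\top$. Your direct quadratic-form argument for $\bA\bA^\top$ is exactly the role-swapped version of that lemma, and your observation about the zero-eigenvalue caveat in the citation-based route is a nice extra bit of care that the paper omits.
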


Extending Lemma~\ref{lemma:rank-equal-singular}, 
the existence of the SVD is also crucial for defining the effective rank of a matrix.
\begin{definition}[Effective rank vs exact rank]\label{definition:effective-rank-in-svd}
The	\textit{effective rank},  also known as the \textit{numerical rank},  is defined as follows:
From Lemma~\ref{lemma:rank-equal-singular}, we know that the number of nonzero singular values of a matrix is equal to its rank. 
Let the $i$-th largest singular value of $\bA$ be denoted as $\sigma_i(\bA)$. If $\sigma_r(\bA) \gg \sigma_{r+1}(\bA) \approx 0$, then $r$ is called the numerical rank of $\bA$. In contrast, if $\sigma_i(\bA) > \sigma_{r+1}(\bA) = 0$, we say that $\bA$ has \textit{exact rank} $r$, as discussed in most of our previous examples.
\end{definition}

\section{Properties of the SVD}\label{section:property-svd}
\subsection{Four Subspaces in SVD}\label{section:four-space-svd}
For any matrix $\bA\in  \real^{m\times n}$, the following properties hold:
\begin{itemize}
\item The null space  $\nspace(\bA)$ is the orthogonal complement of the row space $\cspace(\bA^\top)$ in $\real^n$: $\dim(\nspace(\bA))+\dim(\cspace(\bA^\top))=n$.

\item The left null space  $\nspace(\bA^\top)$ is the orthogonal complement of the column space $\cspace(\bA)$ in $\real^m$: $\dim(\nspace(\bA^\top))+\dim(\cspace(\bA))=m$.
\end{itemize}
This result is known as the fundamental theorem of linear algebra, also referred to as the rank-nullity theorem (Theorem~\ref{theorem:fundamental-linear-algebra}).
Using the SVD, we can identify an orthonormal basis for each of these subspaces.
\begin{figure}[h!]
\centering
\vspace{-0.35cm}
\subfigtopskip=2pt
\subfigbottomskip=2pt
\subfigcapskip=-5pt
\includegraphics[width=0.93\textwidth]{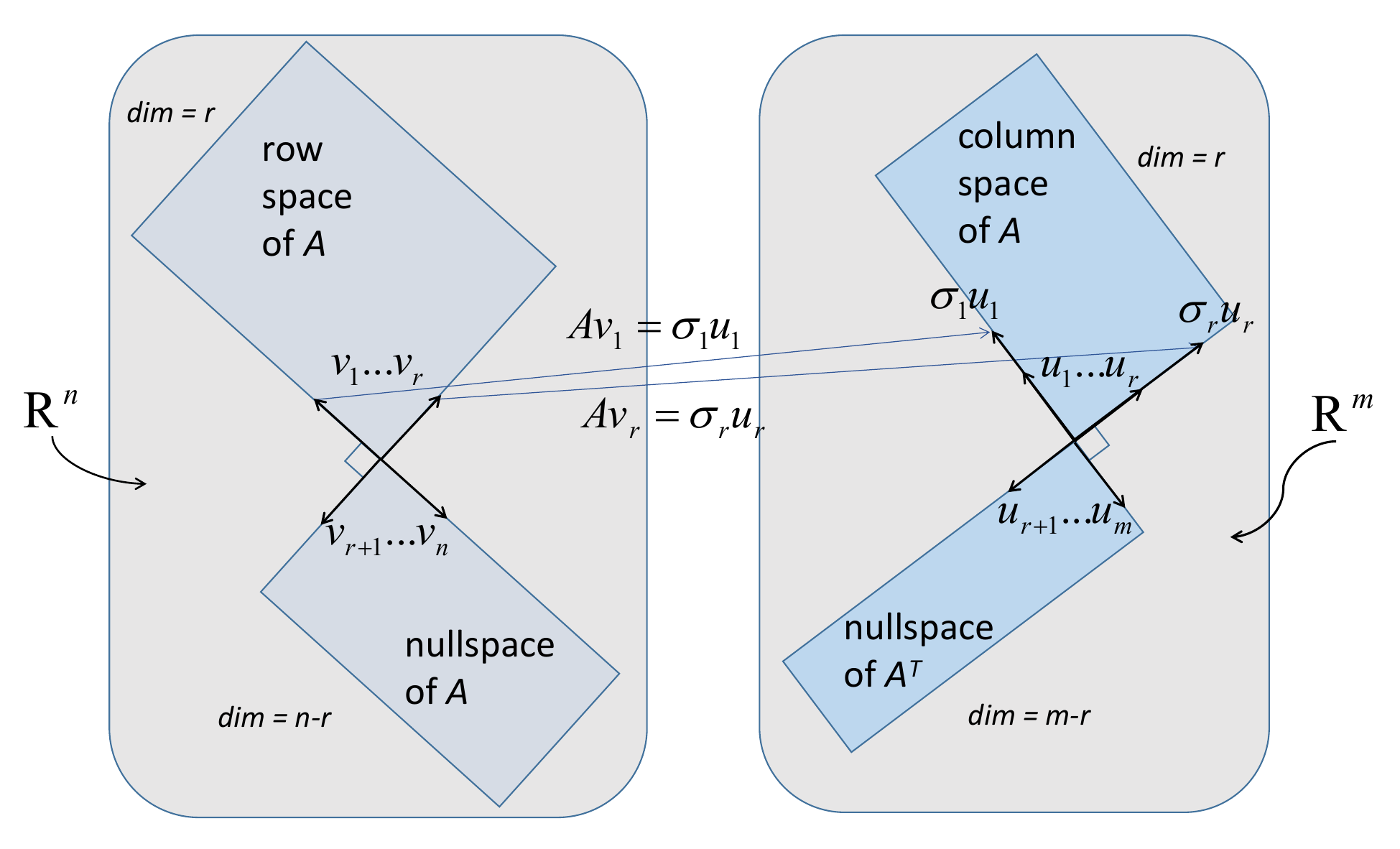}
\caption{Orthonormal bases that diagonalize $\bA$ using the SVD.}
\label{fig:lafundamental3-SVD}
\end{figure}

\index{Orthonormal basis}
\index{Basis}
\begin{proposition}[Four orthonormal bases]\label{proposition:svd-four-orthonormal-Basis}
Given the full SVD of a matrix $\bA = \bU \bSigma \bV^\top$, where $\bU=[\bu_1, \bu_2, \ldots,\bu_m]$ and $\bV=[\bv_1, \bv_2, \ldots, \bv_n]$ are the column partitions of $\bU$ and $\bV$, respectively, the following properties hold:
\begin{itemize}
\item $\{\bv_1, \bv_2, \ldots, \bv_r\} $ is an orthonormal basis of $\cspace(\bA^\top)$;

\item $\{\bv_{r+1},\bv_{r+2}, \ldots, \bv_n\}$ is an orthonormal basis of $\nspace(\bA)$;

\item $\{\bu_1,\bu_2, \ldots,\bu_r\}$ is an orthonormal basis of $\cspace(\bA)$;

\item $\{\bu_{r+1}, \bu_{r+2},\ldots,\bu_m\}$ is an orthonormal basis of $\nspace(\bA^\top)$. 
\end{itemize}
The relationship among these four subspaces is shown in Figure~\ref{fig:lafundamental3-SVD}, where $\bA$ maps the row basis $\bv_i$ to the column basis $\bu_i$  via the equation  $\sigma_i\bu_i=\bA\bv_i$ for all $i\in \{1, 2, \ldots, r\}$.
\end{proposition}
\begin{proof}[of Proposition~\ref{proposition:svd-four-orthonormal-Basis}]
From Lemma~\ref{lemma:rank-of-symmetric}, for the symmetric matrix $\bA^\top\bA$, the subspace $\cspace(\bA^\top\bA)$ is spanned by the eigenvectors, thus $\{\bv_1,\bv_2, \ldots, \bv_r\}$ forms an orthonormal basis for $\cspace(\bA^\top\bA)$.
We proceed as follows:
\begin{enumerate}
\item Since $\bA^\top\bA$ is symmetric,  the row space of $\bA^\top\bA$ is equal to its  column space. 

\item All rows of $\bA^\top\bA$ are  linear combinations of the rows of $\bA$, meaning the row space of $\bA^\top\bA$ $\subseteq$ the row space of $\bA$, i.e., $\cspace(\bA^\top\bA) \subseteq \cspace(\bA^\top)$. 

\item Since $\rank(\bA^\top\bA) = \rank(\bA)$ by Lemma~\ref{lemma:rank-of-ata}, we then have:

The row space of $\bA^\top\bA$ = the column space of $\bA^\top\bA$ =  the row space of $\bA$, i.e., $\cspace(\bA^\top\bA) = \cspace(\bA^\top)$. Thus, $\{\bv_1, \bv_2,\ldots, \bv_r\}$ is an orthonormal basis for $\cspace(\bA^\top)$. 
\end{enumerate}

Moreover, the space spanned by $\{\bv_{r+1}, \bv_{r+2},\ldots, \bv_n\}$ is the orthogonal complement to the space spanned by $\{\bv_1,\bv_2, \ldots, \bv_r\}$. 
Hence, $\{\bv_{r+1},\bv_{r+2}, \ldots, \bv_n\}$ forms an orthonormal basis for $\nspace(\bA)$. 

Applying a similar argument to $\bA \bA^\top$ proves the remaining claims. Alternatively, we can observe that $\{\bu_1, \bu_2, \ldots, \bu_r\}$ forms a basis for the column space of $\bA$ by Lemma~\ref{lemma:column-basis-from-row-basis}~\footnote{For any matrix $\bA$, if $\{\br_1, \br_2, \ldots, \br_r\}$ forms a basis for the row space, then $\{\bA \br_1, \bA \br_2, \ldots, \bA \br_r\}$ forms a basis for the column space of $\bA$.}, since $\bu_i = \frac{\bA \bv_i}{\sigma_i}$ for all $i \in \{1, 2, \ldots, r\}$. 
\end{proof}

\subsection{Relationship between Singular Values and Determinant}
Let $\bA\in \real^{n\times n}$ be a square matrix, and let its SVD be given by $\bA = \bU\bSigma\bV^\top$. It follows that 
$$
\abs{\det(\bA)} = \abs{\det(\bU\bSigma\bV^\top)} = \abs{\det(\bSigma)} = \sigma_1 \sigma_2\ldots \sigma_n.
$$
If all the singular values  are nonzero, then $\det(\bA)\neq 0$. That is, $\bA$ is \textbf{nonsingular}. 
If at least one singular value is zero, say $\sigma_i = 0$, then $\det(\bA) = 0$, implying that $\bA$ does not have full rank and is not invertible. 
In this case, $\bA$ is called \textbf{singular}. This explains why the values $\sigma_i$ are referred to as the \textit{singular values} of $\bA$.

\index{Orthogonally equivalence}
\subsection{Orthogonally Equivalence}
We have defined in Definition~\ref{definition:similar-matrices} that $\bA$ and $\bP\bA\bP^{-1}$ are similar matrices for any nonsingular matrix $\bP$. The concept of \textit{orthogonally equivalence} is defined in a similar way.

\begin{definition}[Orthogonally equivalent matrices]
Given two orthogonal matrices $\bU$ and $\bV$, the matrices $\bA$ and $\bU\bA\bV$ are called \textit{orthogonally equivalent matrices}. 
In the complex domain, when $\bU$ and $\bV$ are unitary matrices, the matrices are called  \textit{unitarily equivalent}.
\end{definition}
We now state the following property for orthogonally equivalent matrices:
\begin{lemma}[Orthogonally equivalent matrices]\label{lemma:orthogonal-equivalent-matrix}
If matrices $\bA$ and $\bB$ are orthogonally equivalent, then they have the same singular values.
\end{lemma}
\begin{proof}[of Lemma~\ref{lemma:orthogonal-equivalent-matrix}]
Since $\bA$ and $\bB$ are orthogonally equivalent, there exist orthogonal matrices $\bU$ and $\bV$ such that $\bB = \bU\bA\bV$. We then have 
$
\bB\bB^\top = (\bU\bA\bV)(\bV^\top\bA^\top\bU^\top) = \bU\bA\bA^\top\bU^\top.
$
This implies $\bB\bB^\top$ and $\bA\bA^\top$ are similar matrices. By Proposition~\ref{proposition:eigenvalue-similar-matrices}, the eigenvalues of similar matrices are the same, which implies that the singular values of $\bA$ and $\bB$ are the same.
\end{proof}

\subsection{SVD for QR}
\begin{lemma}[SVD for QR]\label{lemma:svd-for-qr}
Suppose the full QR decomposition of  a matrix $\bA\in \real^{m\times n}$, with $m\geq n$, is given by $\bA=\bQ\bR$, where $\bQ\in \real^{m\times m}$ is orthogonal and $\bR\in \real^{m\times n}$ is upper triangular. Then, $\bA$ and $\bR$ have the same singular values and right singular vectors.
\end{lemma}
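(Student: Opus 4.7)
The plan is to reduce the claim to a statement about $\bA^\top\bA$ versus $\bR^\top\bR$, since singular values and right singular vectors are encoded in these Gram matrices via the spectral decomposition used in the existence proof of the SVD (Theorem~\ref{theorem:reduced_svd_rectangular}). The central observation is that the orthogonal factor $\bQ$ cancels on the left:
$$
\bA^\top\bA = (\bQ\bR)^\top(\bQ\bR) = \bR^\top\bQ^\top\bQ\bR = \bR^\top\bR,
$$
so $\bA^\top\bA$ and $\bR^\top\bR$ are literally the same matrix, not just similar.

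From here I would proceed in two short steps. First, for the singular values: the squared singular values of $\bA$ are exactly the nonzero eigenvalues of $\bA^\top\bA$, and likewise for $\bR$; since $\bA^\top\bA=\bR^\top\bR$ these eigenvalue multisets coincide, so $\sigma_i(\bA)=\sigma_i(\bR)$ for every $i$. (This could also be viewed as a direct instance of Lemma~\ref{lemma:orthogonal-equivalent-matrix} on orthogonal equivalent matrices, taking $\bU=\bQ^\top$ and $\bV=\bI$.) Second, for the right singular vectors: by the construction in the existence proof of the SVD, $\bV$ is an orthogonal matrix whose columns are eigenvectors of $\bA^\top\bA$ arranged so that the eigenvalues appear in decreasing order; applying this same construction to $\bR^\top\bR$ and using $\bA^\top\bA=\bR^\top\bR$ gives the very same eigenvector matrix $\bV$, hence the same right singular vectors.

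To finish, I would assemble a complete SVD of $\bA$ from a given SVD $\bR=\bU_R\bSigma\bV^\top$ of $\bR$ by writing $\bA=\bQ\bR=(\bQ\bU_R)\bSigma\bV^\top$ and noting that $\bQ\bU_R$ is orthogonal as a product of orthogonal matrices; this displays $\bSigma$ and $\bV$ simultaneously as the singular value matrix and right singular vector matrix of $\bA$, making the shared $\bSigma$ and $\bV$ visible in one formula. I do not anticipate a real obstacle here — the only subtlety worth flagging is that both the reduced and full versions of the SVD must be handled consistently (so that $\bV$ has the appropriate size), and that the statement speaks only of right singular vectors because the left singular vectors of $\bA$ and $\bR$ differ by exactly the factor $\bQ$ (namely $\bU_A=\bQ\bU_R$), which the argument above makes explicit.
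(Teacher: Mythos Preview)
Your proposal is correct and follows essentially the same approach as the paper: the key identity $\bA^\top\bA=\bR^\top\bR$ via orthogonality of $\bQ$, from which equality of eigenvalues and eigenvectors (hence singular values and right singular vectors) is immediate. Your additional remark assembling $\bA=(\bQ\bU_R)\bSigma\bV^\top$ is exactly the construction the paper gives right after the lemma.
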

\begin{proof}[of Lemma~\ref{lemma:svd-for-qr}]
We observe that $\bA^\top\bA = \bR^\top\bR$ such that $\bA^\top\bA$ and $\bR^\top\bR$ have the same eigenvalues and eigenvectors. Consequently, $\bA$ and $\bR$ have the same singular values and right singular vectors (i.e., the eigenvectors of $\bA^\top\bA$ or $\bR^\top\bR$).
\end{proof}

The above lemma implies that the SVD of a matrix can be derived from its QR decomposition. Suppose the QR decomposition of $\bA$ is given by $\bA=\bQ\bR$, and the SVD of $\bR$ is given by $\bR=\bU_0 \bSigma\bV^\top$. Then, the SVD of $\bA$ can be expressed as:
$
\bA =\underbrace{ \bQ\bU_0}_{\bU} \bSigma\bV^\top.
$

\section{Polar Decomposition}
A decomposition closely related to the SVD is the \textit{polar form} or \textit{polar decomposition} of a matrix. 
In the context of continuum mechanics, it is imperative to distinguish between stretching and rotation.
The polar decomposition factors any matrix into an orthogonal matrix (which  corresponds to a rotation or reflection) and a symmetric PSD matrix (which  corresponds to stretching or compression, see Section~\ref{section:coordinate-transformation}).

\begin{theoremHigh}[Polar decomposition]\label{theorem:polar-decomposition}
Let $\bA\in\real^{m\times n}$. Then $\bA$ can be factored as 
\begin{itemize}
\item \textbf{Case $m>n$: left polar decomposition.} $\bA=\bQ_l\bS_l$, where $\bS_l^2=\bA^\top\bA$ is PSD and is \textbf{uniquely} determined. 
The factor $\bQ_l$ has orthonormal columns, and it is \textbf{uniquely} determined if $\rank(\bA)=n$.
\item \textbf{Case $m<n$: right polar decomposition.} $\bA=\bS_r\bQ_r$, where $\bS_r^2=\bA\bA^\top$ is PSD and is \textbf{uniquely} determined.
The factor $\bQ_r$ has orthonormal rows, and it is \textbf{uniquely} determined if $\rank(\bA)=m$.
\item \textbf{Case $m=n$: left/right polar decomposition.} $\bA=\bQ\bS_l=\bS_r\bQ$, where $\bS_l^2=\bA^\top\bA$ and $\bS_r^2=\bA\bA^\top$ are PSD and are \textbf{uniquely} determined. The factor $\bQ$ is orthonoal, and it is the same for both the left and right polar decompositions. $\bQ$ is \textbf{uniquely} determined if $\bA$ is nonsingular (i.e., $\rank(\bA)=n$).
\end{itemize}
\noindent
Note in all cases, the PSD factors ($\bS_l$ or $\bS_r$) are uniquely determined, and become PD if $\bA$ has full rank  (full row  or column rank). The semi-orthogonal factors $\bQ_l, \bQ_r$, and $\bQ$ are uniquely determined only when $\bA$ has full rank.~\footnote{When $\bA$ is complex, then the orthogonal (resp., semi-orthogonal) matrices become unitary (resp., semi-unitary) matrices, and the PSD matrices become  complex Hermitian and PSD matrices.}
\end{theoremHigh}
\begin{proof}[of Theorem~\ref{theorem:polar-decomposition}]
Let the SVD of $\bA$ be $\bA=\bU\bSigma\bV^\top={(\bU\bV^\top)}{(\bV\bSigma\bV^\top)}=\bQ_l\bS_l$ such that $\bS_l^2=\bV\bSigma^2\bV^\top=\bA^\top\bA$. Since $\bA^\top\bA$ is PSD, $\bS_l$ is uniquely determined (Theorem~\ref{theorem:unique-factor-pd}). 
If further $\rank(\bA)=n$, i.e., $\bA$ has full (column) rank, $\bA^\top\bA$ is PD and $\bS_l$ has full rank (Theorem~\ref{theorem:unique-factor-pd}) such that $\bQ_l=\bA\bS_l^{-1}$, implying $\bQ_l$ is uniquely determined.

The second case can be similarly proved such that $\bA=\bU\bSigma\bV^\top=(\bU\bSigma\bU^\top)(\bU\bV^\top)=\bS_r\bQ_r$.
Since $\bA\bA^\top$ is PSD, $\bS_r$ is uniquely determined. If further $\rank(\bA)=m$, $\bA\bA^\top$ is PD and $\bS_r$ has full rank such that $\bQ_r=\bS_r^{-1}\bA$ is uniquely determined.

The third case is a combination of the previous two. This completes the proof.
\end{proof}
\begin{exercise}[Trace of PSD in polar decomposition]\label{exercise:trace_polar}
Show that the trace of the PSD matrices in the polar decomposition, $\trace(\bS_r)$ or $\trace(\bS_l)$, is equal to the sum of the singular values of $\bA$.
\end{exercise}
\begin{exercise}[Normal from polar]
Let $\bA\in\real^{n\times n}$ be nonsingular, and suppose it admits the polar decomposition $\bA=\bS_r\bQ$, where $\bS_r$ is PD and $\bQ$ is orthogonal. Show that $\bA$ is normal if and only if $\bS_r\bQ=\bQ\bS_r$.
\end{exercise}
\begin{exercise}
Let $\bA,\bB\in\real^{n\times n}$ be orthogonal, and let $\bA+\bB$ be nonsingular. Show that the orthogonal factor in the polar decomposition of $\bA+\bB$ is $\bA(\bA^\top\bB)^{1/2}$. 
\end{exercise}

\section{Coordinate Transformation in Matrix Decomposition}\label{section:coordinate-transformation}

\index{Matrix multiplication}
\index{Coordinate transformation}

Consider a vector $\bv\in \real^3$ with elements $\bv = [3 ,7, 2]^\top$. 
It is essential to clarify the significance of these values: In the Cartesian coordinate system, they represent a component of 3 along the $x$-axis, a component of 7 along the $y$-axis, and a component of 2 along the $z$-axis.
These scalar values are the \textit{coordinates} of $\bv$ with respect to the basis of the Cartesian system.
Matrix multiplication, on the other hand, gains significance when applied in high-dimensional spaces.

\paragraph{Coordinate defined by a nonsingular matrix.} Suppose we have a $3\times 3$ nonsingular matrix $\bB$, which  is invertible and possesses linearly independent columns. 
Consequently, the three columns of $\bB$ collectively form a basis for the $\real^{3}$ space. 
Taking a step further, the three columns of $\bB$ can serve as the basis for a \textcolor{black}{\textbf{new coordinate system}}, referred to as the \textcolor{black}{\textbf{$B$ coordinate system}}.

Returning to the Cartesian coordinate system, we also have a set of three vectors forming a basis, denoted by $\{\be_1, \be_2, \be_3\}$. 
If we arrange the three vectors as columns in a matrix, this matrix will be the identity matrix. 
Therefore, when we multiply a vector $\bv$ by the identity matrix, denoted by $\bI\bv$, we are essentially performing a coordinate transformation that leaves $\bv$ in the same coordinate system.
In other words, $\bI\bv = \bv$ means \textcolor{black}{\textbf{transferring $\bv$ from the Cartesian coordinate system into the Cartesian coordinate system}}, the same coordinate.

Similarly, when we multiply a vector $\bv$ by the matrix $\bB$, denoted by $\bB\bv$, \textcolor{black}{\textbf{we are transforming $\bv$ from the Cartesian coordinate system into the $B$ coordinate system}}. 
To illustrate this with a specific example, consider $\bv = [3, 7, 2]^\top$ and $\bB=[\bb_1, \bb_2, \bb_3]$. In this case, we have $\bu=\bB\bv = 3\bb_1+7\bb_2+2\bb_3$, i.e., vector $\bu$ contains 3 units of the first basis $\bb_1$ of $\bB$, 7 units of the second basis $\bb_2$ of $\bB$, and 2 units of the third basis $\bb_3$ of $\bB$. 
Now, if we wish to transform the vector $\bu$ from the $B$ coordinate system back to the Cartesian coordinate system, we can achieve this by multiplying $\bu$ by the inverse of $\bB$, denoted by $\bB^{-1}$. This operation results in $\bB^{-1}\bu = \bv$.

\index{Geometric interpretation}
\paragraph{Coordinate defined by an orthogonal matrix.} A $3\times 3$ orthogonal matrix $\bQ$ defines a ``better" coordinate system since its three columns, forming the basis, are mutually orthonormal  (same as those in the Cartesian coordinate system). 
The operation $\bQ\bv$ facilitates the transition of $\bv$ from the Cartesian coordinate system to the one defined by the orthogonal matrix. 
Since the basis vectors from the orthogonal matrix exhibit orthonormality, just like the three vectors $\be_1, \be_2, \be_3$ in the Cartesian coordinate system, the transformation induced by the orthogonal matrix involves rotating or reflecting the Cartesian system.
To revert to the Cartesian coordinate system, one can utilize $\bQ^{-1}=\bQ^\top$.

\begin{figure}[h]
\centering
\includegraphics[width=0.99\textwidth]{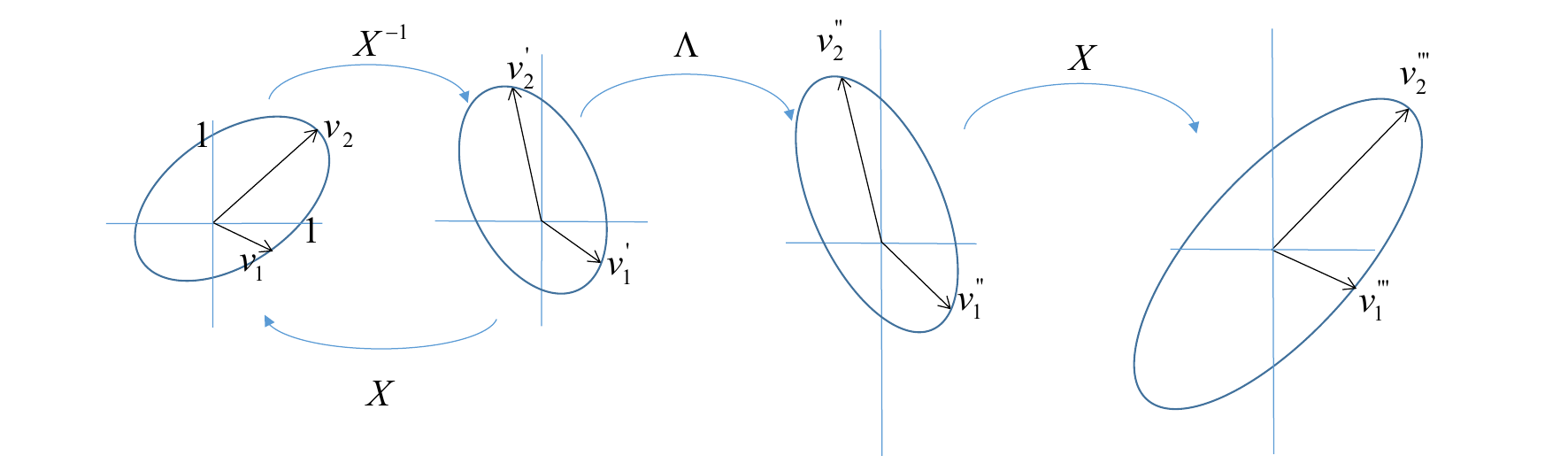}
\caption{Eigenvalue decomposition $\bA = \bX\bLambda\bX^{-1}$: $\bX^{-1}$ undergoes a transformation into a different coordinate system, followed by stretching with $\bLambda$, and then transforming back with $\bX$. 
$\bX^{-1}$ and $\bX$ are nonsingular, which will change the basis of the system, and the angle between the vectors $\bv_1$ and $\bv_2$ will \textbf{not} be preserved. In other words, the angle between $\bv_1$ and $\bv_2$ is \textbf{different} from the angle between $\bv_1^\prime$ and $\bv_2^\prime$. The lengths of $\bv_1$ and $\bv_2$ are also \textbf{not} preserved; that is, $\normtwo{\bv_1} \neq \normtwo{\bv_1^\prime}$ and $\normtwo{\bv_2} \neq \normtwo{\bv_2^\prime}$.}
\label{fig:eigen-rotate}
\end{figure}

\index{Anisotropic scaling}
\section*{Eigenvalue Decomposition}
A square matrix $\bA\in\real^{n\times n}$ with linearly independent eigenvectors can be factored as $\bA = \bX\bLambda\bX^{-1}$, where $\bX$ and $\bX^{-1}$ are nonsingular so that they define a system transformation inherently. 
The operation $\bA\bu = \bX\bLambda\bX^{-1}\bu$ firstly transfers $\bu$ into the coordinate system defined by $\bX^{-1}$, which we shall refer to as the \textit{eigen coordinate system}. 
Subsequently, the operation $\bLambda(\cdot)$  stretches each component of the vector in the eigen system by the length of the corresponding eigenvalue. 
Finally, $\bX$ facilitates the transformation of the resultant vector back to the Cartesian coordinate system. 
The overall result is an \textit{anisotropic} scaling in $n$ eigenvector directions.
A visual representation of the coordinate system transformation via eigenvalue decomposition is presented in Figure~\ref{fig:eigen-rotate}, where $\bv_1$ and $\bv_2$ are two linearly independent eigenvectors of $\bA$ such that they form a basis for $\real^2$.

\begin{figure}[h]
\centering
\includegraphics[width=0.99\textwidth]{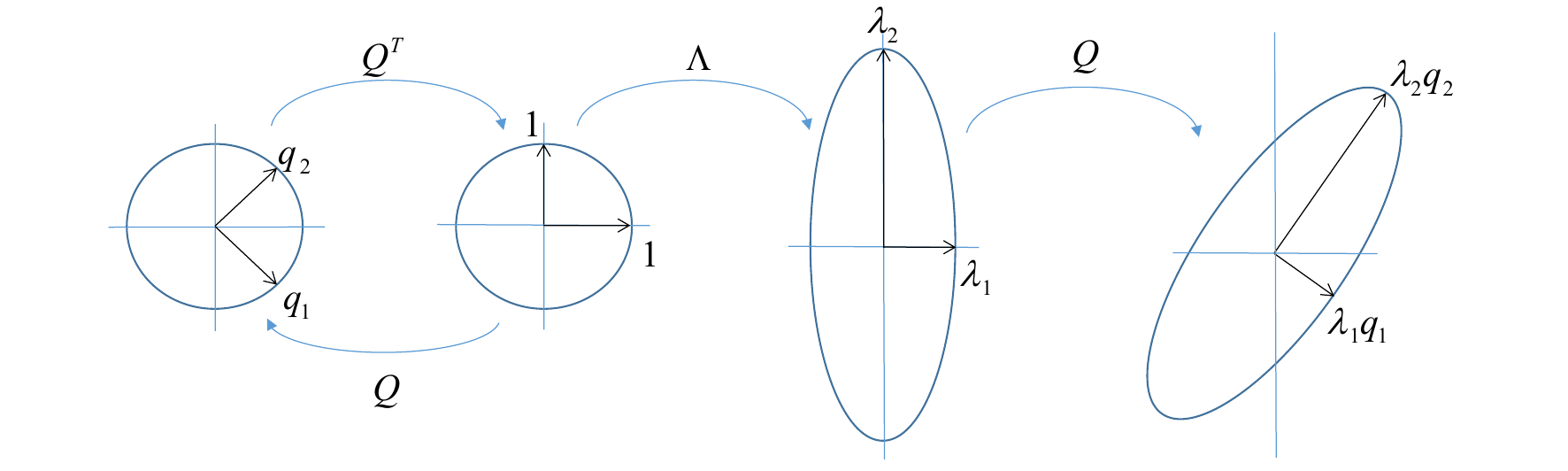}
\caption{Spectral decomposition $\bQ\bLambda \bQ^\top$: $\bQ^\top$ rotates or reflects, $\bLambda$ stretches the cycle to an ellipse, and $\bQ$ rotates or reflects back. Orthogonal matrices $\bQ^\top$ and $\bQ$ only change the basis of the system. However, they preserve both the angle between the vectors $\bq_1$ and $\bq_2$, and their lengths.}
\label{fig:spectral-rotate}
\end{figure}

\section*{Spectral Decomposition}
A symmetric matrix $\bA\in\real^{n\times n}$ can be decomposed as $\bA = \bQ\bLambda\bQ^\top$, where $\bQ$ and $\bQ^\top$ are orthogonal matrices so that they define a coordinate system transformation inherently as well. The operation $\bA\bu = \bQ\bLambda\bQ^\top\bu$ firstly rotates or reflects $\bu$ into the coordinate system defined by $\bQ^\top$, which we shall refer to as the \textit{spectral coordinate system}. 
The operation $\bLambda(\cdot)$ stretches each component of the vector in the spectral system by the length of the corresponding eigenvalue. 
Subsequently, $\bQ$ facilitates the rotation or reflection of the resultant vector back to the original coordinate system. 
Once again, the overall result is an {anisotropic} scaling in $n$ eigenvector directions.
A demonstration of how the spectral decomposition transforms between coordinate systems in the $\real^2$ space is shown in Figure~\ref{fig:spectral-rotate}, where $\bq_1$ and $\bq_2$ represent two linearly independent eigenvectors of $\bA$ such that they form a basis for $\real^2$. The coordinate transformation in the spectral decomposition is similar to that in the eigenvalue decomposition, with the distinction that in the spectral decomposition,  orthogonal vectors transformed by $\bQ^\top$ remain orthogonal. 
This is also a property of orthogonal matrices. That is, orthogonal matrices can be viewed as matrices, which change the basis of other matrices while preserving the angle (inner product) between  vectors:
$
\bu^\top \bv = (\bQ\bu)^\top(\bQ\bv).
$
The invariance of the angle between vectors also relies on the invariance of their lengths:
$
\norm{\bQ\bu}= \norm{\bu}.
$

\section*{SVD}
\begin{figure}[h]
\centering
\includegraphics[width=0.99\textwidth]{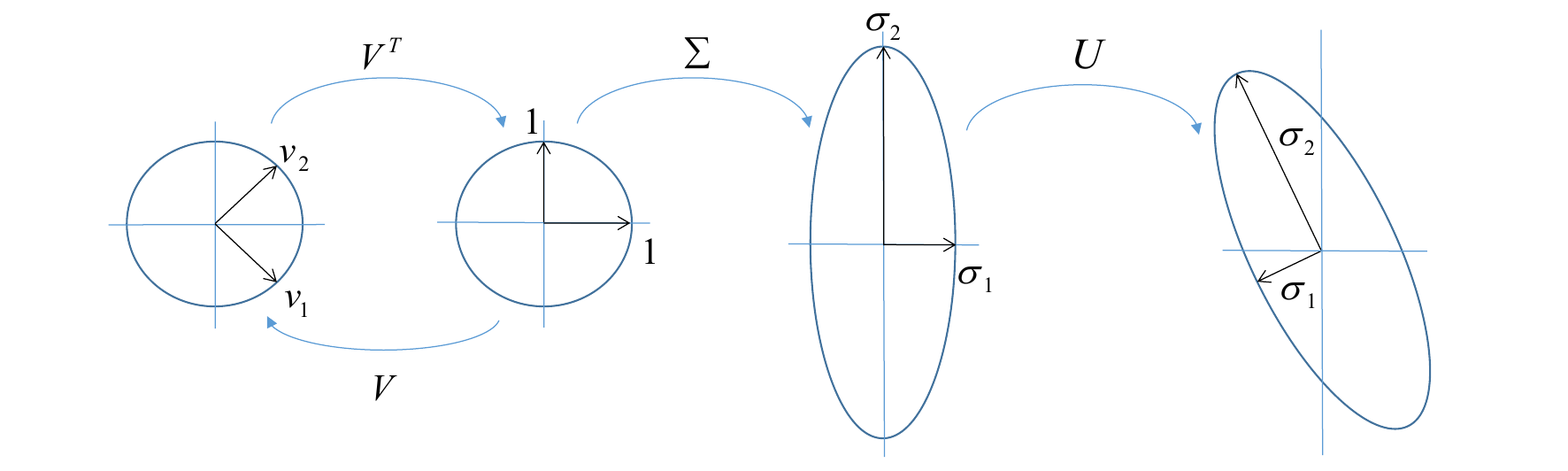}
\caption{SVD $\bA=\bU\bSigma\bV^\top$: $\bV^\top$ and $\bU$ rotate or reflect, $\bSigma$ stretches the circle to an ellipse. Orthogonal matrices $\bV^\top$ and $\bU$ only change the basis of the system. However, they preserve both the angle between the vectors $\bv_1$ and $\bv_2$, and their lengths.}
\label{fig:svd-rotate}
\end{figure}
\begin{figure}[h]
\centering
\includegraphics[width=0.99\textwidth]{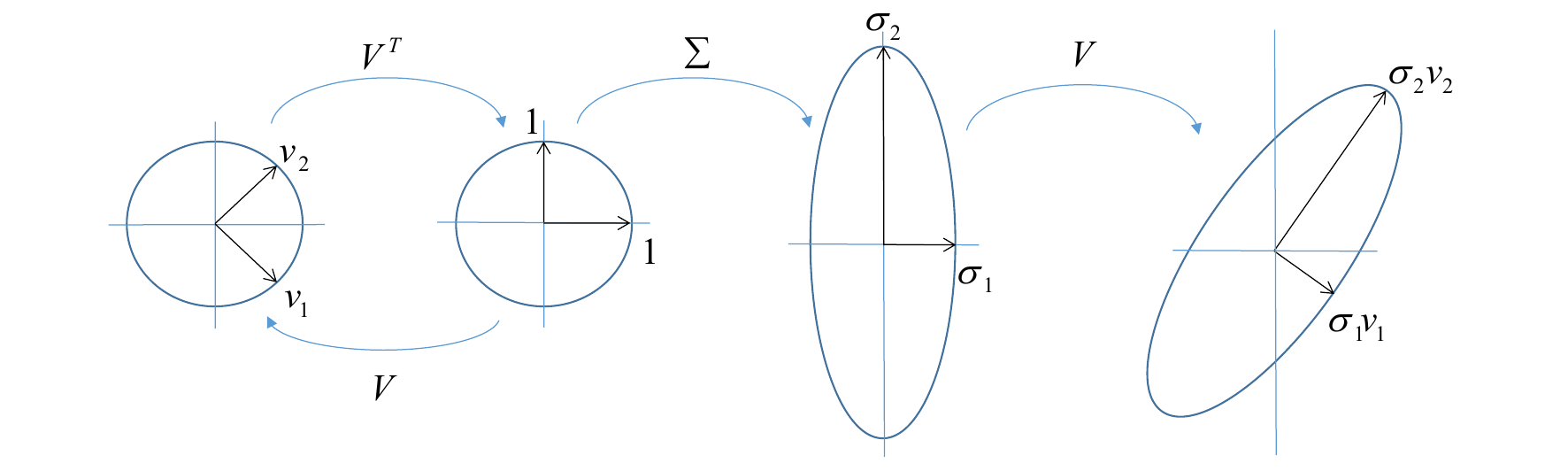}
\caption{$\bV\bSigma \bV^\top$ from SVD or polar decomposition: $\bV^\top$ rotates or reflects, $\bSigma$ stretches the cycle to an ellipse, and $\bV$ rotates or reflects back. Orthogonal matrices $\bV^\top$ and $\bV$ only change the basis of the system. However, they preserve both the angle between the vectors $\bv_1$ and $\bv_2$, and their lengths.}
\label{fig:polar-rotate}
\end{figure}
Any $m\times n$ matrix of rank $r$ can be factored as $\bA=\bU\bSigma\bV^\top$, which represents the SVD. The operation $\bA\bu=\bU\bSigma\bV^\top\bu$ then firstly rotates or reflects vector $\bu$ into the system defined by $\bV^\top$, which we refer to as the \textit{$V$ coordinate system}. $\bSigma$ stretches the first $r$ components of the resulting vector in the $V$ system by the lengths of the singular values. If $n\geq m$, then $\bSigma$ only keeps  $m-r$ additional components, which are scaled to zero, while removing the final $n-m$ components. If $m>n$, then $\bSigma$ scales $n-r$ components to zero and also adds  $m-n$  additional zero components. Finally, $\bU$ rotates or reflects the resulting vector into the \textit{$U$ coordinate system} defined by $\bU$. 
A visual demonstration of how the SVD transforms in a $2\times 2$ example is shown in Figure~\ref{fig:svd-rotate}. Further, Figure~\ref{fig:polar-rotate} demonstrates the transformation of $\bV\bSigma \bV^\top$ by a $2\times 2$ example. 
Similar to the spectral decomposition, orthogonal matrices $\bV^\top$ and $\bU$ only change the basis of the system but preserve the angle between vectors $\bv_1$ and $\bv_2$.

\begin{figure}[h]
\centering
\includegraphics[width=0.99\textwidth]{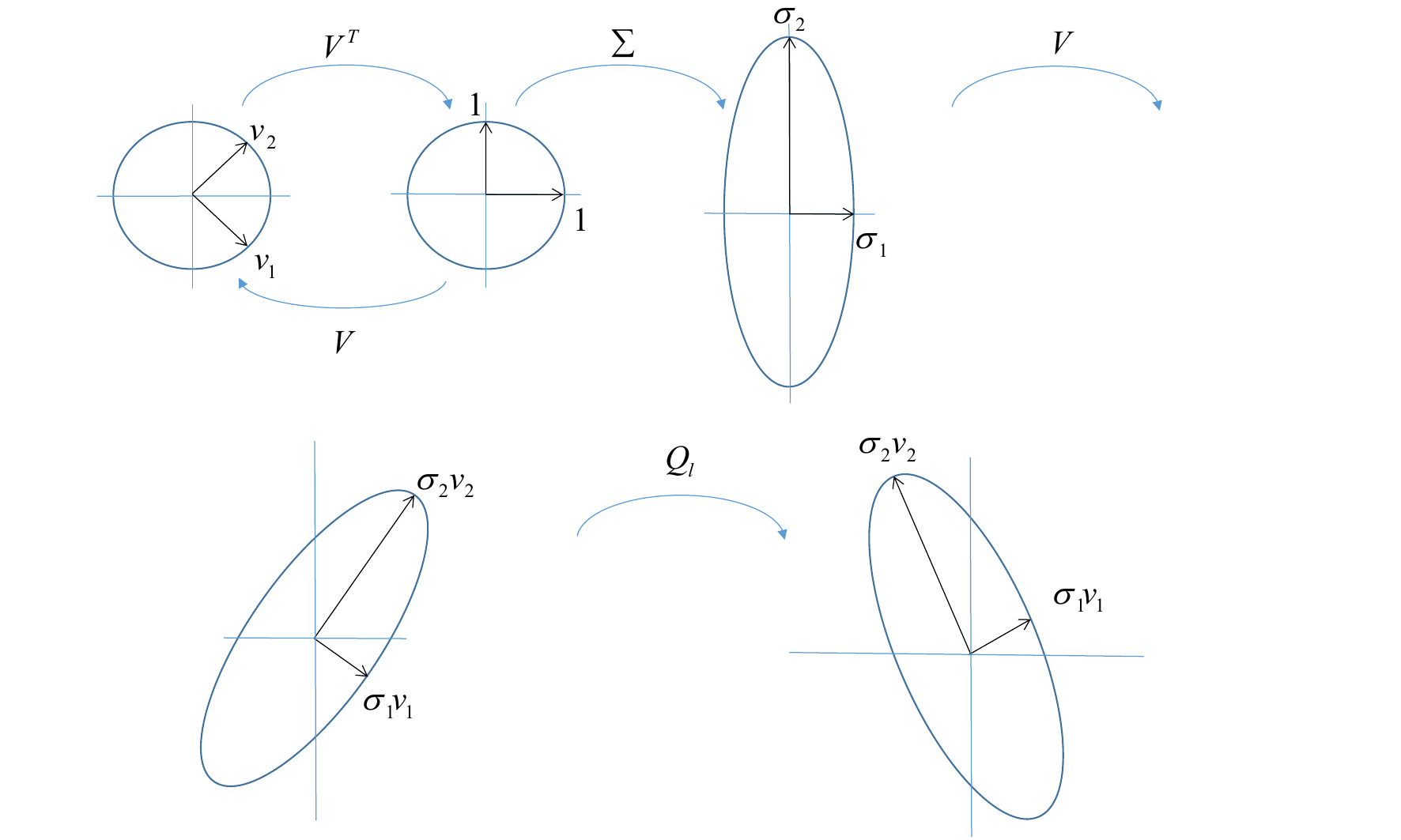}
\caption{Polar decomposition $\bA=\bQ_l\bS$: $\bV^\top$ rotates or reflects, $\bSigma$ stretches the cycle to an ellipse, and $\bV$ rotates or reflects back. Orthogonal matrices $\bV^\top$, $\bV$, and $\bQ_l$ only change the basis of the system. However, they preserve both the angle between the vectors $\bv_1$ and $\bv_2$, and their lengths.}
\label{fig:polar-rotate2}
\end{figure}

\section*{Polar Decomposition}
Any square matrix $\bA\in\real^{n\times n}$ can be factored as the left polar decomposition $\bA = (\bU\bV^\top)( \bV\bSigma \bV^\top) = \bQ_l\bS$. Similarly, the operation $\bA\bu = \bQ_l( \bV\bSigma \bV^\top)\bu$ transforms $\bu$ into the system defined by $\bV^\top$, and stretch each component by the lengths of the corresponding singular values. 
Subsequently, the resulting vector is transferred back into the Cartesian coordinate system by $\bV$. Finally, $\bQ_l$ will rotate or reflect the resulting vector from the Cartesian coordinate system into the $Q$ system defined by $\bQ_l$.
The right polar decomposition carries a similar interpretation. 
Similar to the spectral decomposition, orthogonal matrices $\bV^\top$, $\bV$, and $\bQ_l$ only change the basis of the system but preserve the angle between the vectors $\bv_1$ and $\bv_2$.

\section{Application: LS via  Cholesky, QR, UTV,  SVD, Bidiagonalization}\label{section:application-ls-qr}
In 1801, Gauss predicted the orbit of the steroid Ceres using the method of \textit{least squares (LS, or ordinary least squares)}. Since then, the principle of least squares has become  the standard procedure for the analysis of scientific data, which is also the genesis of a vast array of models in machine learning. 
We now show how to solve the least squares problems using various decompositional approaches \citep{lu2021rigorous}.

\index{Normal equation}
\subsection*{Least Squares via Choelesky Decomposition}
Let us consider the overdetermined system $\bA\bx = \bb$, where $\bA\in \real^{m\times n}$ is the data matrix, and $\bb\in \real^m$  is the observation vector, with $m\geq n$. 
In most real-world applications, $\bA$ typically has full column rank, either naturally or after preprocessing. 
The least squares  solution is given by $\bx_{LS} = (\bA^\top\bA)^{-1}\bA^\top\bb$, which minimizes $\norm{\bA\bx-\bb}^2$, where $\bA^\top\bA$ is invertible since $\bA$ has full column rank, and $\rank(\bA^\top\bA)=\rank(\bA)$.\index{Least squares}

The classical method for solving a linear least squares problem $\min_{\bx} \normtwo{\bA\bx - \bb}$ is to form and solve the symmetric \textit{normal equation} $\bA^\top \bA\bx = \bA^\top \bb$, which is derived from minimizing the objective function (the root of the objective function). If $\rank(\bA) = n$, then $\bx \neq \bzero$ implies that $\bA\bx \neq \bzero$. Hence,
\begin{equation}
	\bx^\top \bA^\top \bA\bx > 0, \quad \forall\, \bx \in \real^n, \quad \bx \neq \bzero,
\end{equation}
and $\bA^\top \bA$ is positive definite. Conversely, any symmetric positive definite matrix is nonsingular. If it were singular, there would be a vector $\bx\neq \bzero$ such that $\bA\bx = \bzero$, leading to $\bx^\top\bA^\top \bA\bx = 0$, which contradicts the positive definiteness.

\index{Normal equation}

Substituting the Cholesky factorization $ \bA^\top \bA = \bR^\top \bR $ into the normal equation yields $ \bR^\top \bR \bx = \balpha $, where $ \balpha = \bA^\top \bb $. Hence, the solution is obtained by solving two triangular systems:
\begin{equation}
	\bR^\top \bu = \balpha, \qquad \bR \bx = \bu.
\end{equation}
This method is easy to implement and often faster than other direct solution methods, e.g., using Gradient descent methods \citep{lu2021rigorous}. 

When solving a least squares problem, it is often preferable to work with the Cholesky factorization of the cross-product of the augmented matrix $[\bA ,\bb ]$:
\begin{equation}\label{equation:corss_prod_eq}
	\begin{bmatrix} 
		\bA^\top \\ 
		\bb^\top 
	\end{bmatrix} 
	\begin{bmatrix} 
		\bA & \bb 
	\end{bmatrix} 
	= 
	\begin{bmatrix} 
		\bA^\top \bA & \bA^\top \bb \\ 
		\bb^\top \bA & \bb^\top \bb
	\end{bmatrix}.
\end{equation}
If $\rank(\bA) = n$, then the Cholesky factor of the cross-product \eqref{equation:corss_prod_eq} takes the following form:
\begin{equation}
	\bS = \begin{bmatrix} 
		\bR & \bv \\ 
		\bzero & \rho 
	\end{bmatrix},
\end{equation}
which exists even when $\rho = 0$ (see Theorem~\ref{theorem:semidefinite-factor-exist}). Forming $\bS^\top \bS$ shows that
$$
\bA^\top \bA = \bR^\top \bR, \qquad \bR^\top \bv = \bA^\top \bb, \qquad \bb^\top \bb = \bv^\top \bv + \rho^2.
$$
Hence, $\bR$ is the Cholesky factor of $\bA^\top \bA$, and the least squares solution is obtained from $\bR\bx = \bv$. 
Since $\be = \bb - \bA\bx$ is orthogonal to $\bA\bx$ (by orthogonal projections; see Section~\ref{section:spec_app_eigproj}), we have
$$
\normtwo{\bA\bx}^2 = (\be + \bA\bx)^\top \bA\bx = \bb^\top \bA \bx = \bb^\top \bA \bR^{-1} \bR^{-\top} \bA^\top \bb = \bv^\top \bv,
$$
and therefore the residual term satisfies $\normtwo{\be}^2 = \rho^2 = \bb^\top \bb - \bv^\top \bv$ and $\normtwo{\bb - \bA\bx} = \rho$.

\index{Normal equation}
\paragraph{Cholesky QR factorization.}
On the other hand, suppose $\bA \in \real^{m\times n}$ have full column rank, and let $\bA^\top \bA = \bR^\top \bR$ be its Cholesky factorization. 
Define $\bQ_1 = \bA \bR^{-1} \in\real^{m\times n}$. Then, 
\begin{equation}\label{equation:qr_cho_dec}
	\bA = \bQ_1 \bR 
	\qquad\text{and}\qquad 
	\bQ_1^\top \bQ_1 = \bI_n
\end{equation}
is the \textit{Cholesky QR factorization} of $\bA$. The semi-orthogonal factor $\bQ_1$ can be obtained as the unique solution of the lower triangular matrix equation $\bR^\top \bQ_1^\top = \bA^\top$ using forward substitution. In this setting, the normal equation simplifies to $\bR^\top \bQ_1^\top \bQ_1 \bR\bx = \bR^\top \bR\bx = \bR^\top \bQ_1^\top \bb$ or
$ \bR\bx = \bQ_1^\top \bb. $

In real arithmetic, the computational cost of this Cholesky QR algorithm is $\sim 2mn^2 + n^3/3$ flops. 
More accurate methods for computing the QR factorization \eqref{equation:qr_cho_dec} directly from $\bA$ are described in Sections~\ref{section:qr-gram-compute}, \ref{section:silentcolu_qrdecomp}, and \citet{lu2021numerical}.

\subsection*{Least Squares via  Full QR Decomposition}

Since computing the inverse of a matrix can be computationally expensive, as an alternative, we can use the QR decomposition to find the least squares solution.
This approach is more efficient and numerically stable. The method is summarized in the following theorem:
\begin{theorem}[LS via QR for full column rank matrix]\label{theorem:qr-for-ls}
Let $\bA\in \real^{m\times n}$, with full rank and $m\geq n$, be the data matrix, and $\bb\in \real^m$ be the observation vector. 
And let $\bA = \bQ\bR$ be its full QR decomposition, where $\bQ\in\real^{m\times m}$ is  orthogonal, and $\bR\in \real^{m\times n}$ is  upper triangular, with  $m-n$  additional  rows of zeros appended at the bottom: $\bR = \scriptsize\begin{bmatrix}
	\bR_1 \\
	\bzero
\end{bmatrix}$ and $\bR_1 \in \real^{n\times n}$ is the square upper triangular part of $\bR$. Then, the LS solution to $\bA\bx=\bb$ is given by 
$$
\bx_{LS} = \bR_1^{-1}\bc, 
\quad \text{where} \quad 
\bQ^\top\bb = \begin{bmatrix}
\bc\\
\bd
\end{bmatrix}.
$$
\end{theorem}

\begin{proof}[of Theorem~\ref{theorem:qr-for-ls}]
Since $\bA=\bQ\bR$ is the full QR decomposition of $\bA$ and $m\geq n$, the last $m-n$ rows of $\bR$ are zero, as shown in Figure~\ref{fig:qr-comparison}. 
It follows that 
$$
\begin{aligned}
\normtwo{\bA\bx-\bb}^2
\stackrel{\dag}{=}\normtwo{\bQ^\top \bA \bx-\bQ^\top\bb}^2
&=\left\Vert\begin{bmatrix}
	\bR_1 \\
	\bzero
\end{bmatrix} \bx-\bQ^\top\bb\right\Vert^2
=\normtwo{\bR_1\bx - \bc}^2+\normtwo{\bd}^2,
\end{aligned}
$$ 
where the equality ($\dag$) follows from the invariance of norms under orthogonal transformations. Here, $\bc$ represents the first $n$ components of $\bQ^\top\bb$, and $\bd$ represents the last $m-n$ components. The least squares solution is obtained by solving the upper triangular system $\bR_1\bx = \bc$, which can be expressed as $\bx_{LS} = \bR_1^{-1}\bc$.
\end{proof}

In the least squares problem, the rows of $\bA$ represent data samples, and the number of columns, $n$, corresponds to the dimension of the variables.
In some applications, we may want to add or remove a data point (a row in $\bA$), or alternatively, add or remove a variable (a column in $\bA$); for instance, when performing variable selection using an $F$-test \citep{lu2021rigorous}.
By utilizing the update methods described in Sections~\ref{section:append-column-qr} and \ref{section:append-row-qr}, the QR decomposition of the modified matrix $\bA$ can be efficiently updated, rather than recomputed from scratch.
Therefore, QR decomposition is particular useful for \textit{online LS problems}, where the solution is updated sequentially as new data arrives, as well as for \textit{feature selection problems}, where variables are added or removed dynamically.

%
%

\subsection*{\textbf{Least Squares via ULV/URV for Rank-Deficient Matrices}}
In the previous sections, we introduced the least squares  method using the full QR decomposition and the Cholesky decomposition  for matrices with full column rank. 
However, in practice, many matrices are rank-deficient, meaning they do not have full rank. 
If $\bA$ does not have full column rank, $\bA^\top \bA$ is not invertible. To handle such cases, we can use the ULV/URV decomposition to find the least squares solution, as stated in the following theorem.
\index{Least squares}\index{Rank-deficient}
\begin{theorem}[LS via ULV/URV for rank-deficient matrix]\label{theorem:qr-for-ls-urv}
Let $\bA\in \real^{m\times n}$ be a matrix of rank $r$ and $m\geq n$. Suppose $\bA=\bU\bT\bV$ is its full ULV/URV decomposition, where $\bU\in\real^{m\times m}$ and $\bV\in \real^{n\times n}$ are orthogonal matrices, and
$$
\bT = \begin{bmatrix}
\bT_{11} & \bzero \\
\bzero & \bzero
\end{bmatrix},
$$
where $\bT_{11} \in \real^{r\times r}$ is either a lower or  upper triangular matrix.
Given $\bb\in \real^m$,  the LS solution with the minimal $\ell_2$ norm to $\bA\bx=\bb$ is given by 
$$
\bx_{LS} = \bV^\top 
\begin{bmatrix}
\bT_{11}^{-1}\bc\\
\bzero	
\end{bmatrix},
\quad \text{where}\quad 
\bU^\top\bb = \begin{bmatrix}
	\bc\\
	\bd
\end{bmatrix}.
$$
\end{theorem}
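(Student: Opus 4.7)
The plan is to mimic the QR-based least squares argument from Theorem~\ref{theorem:qr-for-ls}, exploiting the fact that both $\bU$ and $\bV$ in the ULV/URV decomposition are orthogonal, so multiplication by them preserves the $2$-norm. Because $\bA$ has rank $r < n$ here, the normal equations no longer have a unique solution, so I also need to single out the minimum-norm LS solution, and this is where $\bV$ on the right (as opposed to the QR case) plays a crucial role.

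First I would reduce the residual using the orthogonal invariance of the $2$-norm: writing $\bA = \bU\bT\bV$ and letting $\by = \bV\bx$, we have
$$
\|\bA\bx - \bb\|^2 \;=\; \|\bU\bT\bV\bx - \bb\|^2 \;=\; \|\bT\by - \bU^\top\bb\|^2,
$$
since $\bU$ is orthogonal. Next I would partition conformally with $\bT$: write $\by = [\by_1;\by_2]$ with $\by_1\in\real^r$, $\by_2\in\real^{n-r}$, and $\bU^\top\bb = [\bc;\bd]$ with $\bc\in\real^r$, $\bd\in\real^{m-r}$. Using the block form of $\bT$, the residual splits cleanly:
$$
\|\bT\by - \bU^\top\bb\|^2 \;=\; \|\bT_{11}\by_1 - \bc\|^2 + \|\bd\|^2.
$$
Since $\bT_{11}$ is a square triangular matrix with rank $r$ (hence nonsingular), this is minimized precisely when $\by_1 = \bT_{11}^{-1}\bc$, while $\by_2$ is completely free. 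This already gives the whole family of LS solutions.

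To pin down the minimum $2$-norm LS solution, I would observe that $\|\bx\|^2 = \|\bV^\top \by\|^2 = \|\by\|^2 = \|\by_1\|^2 + \|\by_2\|^2$ (again by orthogonality of $\bV$). Since $\by_1$ is forced but $\by_2$ is free, minimizing $\|\bx\|$ over all LS solutions forces $\by_2 = \bzero$. Substituting back, the unique minimum-norm LS solution is
$$
\bx_{LS} \;=\; \bV^\top\by \;=\; \bV^\top \begin{bmatrix}\bT_{11}^{-1}\bc\\ \bzero\end{bmatrix},
$$
as claimed. The only point requiring any real care is the step that asserts $\bT_{11}$ is nonsingular: this follows because $\bA$ has rank $r$, and orthogonal multiplication preserves rank, so $\bT$ also has rank $r$, which forces its only possibly nonzero block $\bT_{11}\in\real^{r\times r}$ to have full rank. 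With that in hand the proof is essentially bookkeeping; the ``hard part'' is really just recognizing that the right-hand orthogonal factor $\bV$ is what lets us replace the $\bx$-norm by the $\by$-norm and thereby identify the minimum-norm solution cleanly.
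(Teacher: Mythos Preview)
Your proposal is correct and follows essentially the same route as the paper: reduce the residual via orthogonal invariance of $\bU$, set $\by=\bV\bx$, split into $\|\bT_{11}\by_1-\bc\|^2+\|\bd\|^2$, solve $\by_1=\bT_{11}^{-1}\bc$, and then use orthogonality of $\bV$ to argue $\by_2=\bzero$ for the minimum-norm solution. If anything, you are slightly more explicit than the paper in justifying why $\bT_{11}$ is nonsingular and why $\|\bx\|=\|\by\|$ forces $\by_2=\bzero$.
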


\begin{proof}[of Theorem~\ref{theorem:qr-for-ls-urv}]
Since $\bA=\bU\bT\bV$ is the full UTV decomposition of $\bA$ and $m\geq n$, we can write:
$$
\begin{aligned}
\norm{\bA\bx-\bb}^2 
&=\norm{\bU^\top \bA \bx-\bU^\top\bb}^2 
=\norm{\bU^\top\bU\bT\bV \bx-\bU^\top\bb}^2\\
&=\norm{\bT\bV \bx-\bU^\top\bb}^2
=\norm{\bT_{11}\be - \bc}^2+\norm{\bd}^2,   
\end{aligned}
$$ 
where $\bc$ is the first $r$ components of $\bU^\top\bb$, $\bd$ is the last $m-r$ components of $\bU^\top\bb$, $\be$ is the first $r$ components of $\bV\bx$, and $\bff$ is the last $n-r$ components of $\bV\bx$:
$$
\bU^\top\bb 
= \begin{bmatrix}
\bc \\
\bd 
\end{bmatrix}, 
\qquad 
\bV\bx 
= \begin{bmatrix}
\be \\
\bff
\end{bmatrix}.
$$
The least squares solution is obtained by performing backward/forward substitution of the upper/lower triangular system $\bT_{11}\be = \bc$, i.e., $\be = \bT_{11}^{-1}\bc$. To ensure that the solution $\bx$ has the minimal $\ell_2$ norm, $\bff$ must be zero. Thus,
$
\bx_{LS} = \bV^\top 
\scriptsize
\begin{bmatrix}
\bT_{11}^{-1}\bc\\
\bzero	
\end{bmatrix}.
$
This completes the proof.
\end{proof}

\paragraph{Note on the minimal $\ell_2$ norm LS solution.} For the least squares problem, the set of all minimizers
$$
\mathcal{X} = \{\bx\in \real^n: \norm{\bA\bx-\bb} =\min \}
$$
is a convex set. 
If $\bx_1, \bx_2 \in \mathcal{X}$ and $\lambda \in [0,1]$, then 
$$
\norm{\bA(\lambda\bx_1 + (1-\lambda)\bx_2) -\bb} \leq \lambda\norm{\bA\bx_1-\bb} +(1-\lambda)\norm{\bA\bx_2-\bb} = \mathop{\min}_{\bx\in \real^n} \norm{\bA\bx-\bb}. 
$$
Thus, $\lambda\bx_1 + (1-\lambda)\bx_2 \in \mathcal{X}$. In the proof above, if we do not set $\bff=\bzero$, we can still find other least squares solutions. However, the minimal $\ell_2$ norm least squares solution is unique. For the full-rank case, as discussed in the previous sections, the least squares solution is always unique and  must have the minimal $\ell_2$ norm \citep{foster2003solving, golub2013matrix, lu2021rigorous}.

\subsection*{\textbf{Least Squares via SVD for Rank-Deficient Matrices}}
\index{Least squares}

In addition to the UTV decomposition, the singular value decomposition (SVD) can also be used to solve rank-deficient least squares problems.
\begin{theorem}[LS via SVD for rank-deficient matrix]\label{theorem:svd-deficient-rank}
Let $\bA\in \real^{m\times n}$ be a  matrix of rank $r$ and $m\geq n$. 
Suppose $\bA=\bU\bSigma\bV^\top$ is its full SVD, where $\bU=[\bu_1, \bu_2, \ldots, \bu_m]\in\real^{m\times m}$ and $\bV=[\bv_1, \bv_2, \ldots, \bv_n]\in \real^{n\times n}$  are orthogonal. 
Given $\bb\in \real^m$, the LS solution with the minimal $\ell_2$ norm to $\bA\bx=\bb$ is given by 
\begin{equation}\label{equation:svd-ls-solution}
\bx_{LS} = \sum_{i=1}^{r}\frac{\bu_i^\top \bb}{\sigma_i}\bv_i = \bV\bSigma^+\bU^\top \bb, 
\end{equation}
where the upper-left side of $\bSigma^+ \in \real^{n\times m}$ is a diagonal matrix, structured as: $\bSigma^+ =\scriptsize \begin{bmatrix}
	\bSigma_1^+ & \bzero \\
	\bzero & \bzero
\end{bmatrix}$ with $\bSigma_1^+=\diag(\frac{1}{\sigma_1}, \frac{1}{\sigma_2}, \ldots, \frac{1}{\sigma_r})$.
\end{theorem}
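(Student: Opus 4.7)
The plan is to mirror the proof of Theorem~\ref{theorem:qr-for-ls-urv} almost step for step, since the SVD $\bA=\bU\bSigma\bV^\top$ is just a two-sided orthogonal decomposition with the middle factor diagonal rather than triangular. The key tool, as in the ULV/URV case, is the invariance of the 2-norm under orthogonal transformations: since $\bU$ is orthogonal,
\begin{equation*}
||\bA\bx - \bb||^2 = ||\bU^\top(\bA\bx-\bb)||^2 = ||\bSigma\bV^\top\bx - \bU^\top\bb||^2.
\end{equation*}
I would then introduce the change of variables $\by = \bV^\top\bx$ and $\bc = \bU^\top\bb$, so that the problem reduces to minimizing $||\bSigma\by - \bc||^2$, a diagonal least squares problem.

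Next I would split the vectors and $\bSigma$ according to the rank $r$: write $\by = [\by_1;\by_2]$ with $\by_1\in\real^r$, $\bc=[\bc_1;\bc_2]$ with $\bc_1\in\real^r$, and note that only the top-left $r\times r$ block $\bSigma_1=\diag(\sigma_1,\ldots,\sigma_r)$ of $\bSigma$ is nonzero. The objective then decouples as
\begin{equation*}
||\bSigma\by-\bc||^2 = ||\bSigma_1\by_1 - \bc_1||^2 + ||\bc_2||^2,
\end{equation*}
where $||\bc_2||^2$ is a constant independent of $\bx$. Any minimizer must satisfy $\by_1 = \bSigma_1^{-1}\bc_1$, which is well-defined because every $\sigma_i$ with $i\le r$ is positive. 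The component $\by_2$ is completely free — this is exactly where rank deficiency allows an infinite family of minimizers.

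To pick out the minimal 2-norm solution, I would use the fact that $||\bx||=||\bV\by||=||\by||^2=||\by_1||^2+||\by_2||^2$, again by orthogonal invariance, so minimizing $||\bx||$ subject to $\by_1$ being fixed forces $\by_2=\bzero$. Substituting back gives
\begin{equation*}
\bx_{LS} = \bV\by = \bV\begin{bmatrix}\bSigma_1^{-1}\bc_1 \\ \bzero\end{bmatrix} = \bV\bSigma^+\bU^\top\bb,
\end{equation*}
with $\bSigma^+$ the block-diagonal matrix in the statement. Finally, expanding $\bV\bSigma^+\bU^\top\bb$ column-by-column via the column partitions $\bU=[\bu_1,\ldots,\bu_m]$ and $\bV=[\bv_1,\ldots,\bv_n]$ yields the sum form $\sum_{i=1}^r (\bu_i^\top\bb/\sigma_i)\bv_i$.

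I do not anticipate any real obstacle here — the argument is essentially identical to the ULV/URV proof already carried out, the only novelty being that $\bT_{11}$ is replaced by a diagonal $\bSigma_1$ so that its inverse is written out explicitly entry-wise. The one point that deserves a brief sentence rather than a calculation is justifying that $\by_2=\bzero$ really gives the minimal 2-norm solution among all minimizers; this follows from the strict convexity of $||\cdot||^2$ on the affine set of minimizers (as already observed in the remark after Theorem~\ref{theorem:qr-for-ls-urv}), so uniqueness of the minimal-norm LS solution comes for free.
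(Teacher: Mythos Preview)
Your proposal is correct and follows essentially the same route as the paper: both use orthogonal invariance to reduce $\|\bA\bx-\bb\|^2$ to $\|\bSigma\balpha-\bU^\top\bb\|^2$ with $\balpha=\bV^\top\bx$, split according to the rank $r$, solve the first $r$ coordinates exactly, and set the remaining ones to zero for minimal norm. The only cosmetic difference is that the paper writes the split as component sums $\sum_{i=1}^r(\sigma_i\balpha_i-\bu_i^\top\bb)^2+\sum_{i=r+1}^m(\bu_i^\top\bb)^2$ rather than in your block form, and is slightly more terse about why zeroing the free coordinates gives minimal norm.
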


\begin{proof}[of Theorem~\ref{theorem:svd-deficient-rank}]
To minimize the squared error, we begin by expanding:
$$
\begin{aligned}
\norm{\bA\bx-\bb}^2 
& \stackrel{\dag}{=}\norm{\bU^\top \bA \bx-\bU^\top\bb}^2 
=\norm{\bU^\top \bA \bV\bV^\top \bx-\bU^\top\bb}^2 \\
&\stackrel{+}{=}\norm{\bSigma\balpha - \bU^\top\bb}^2  
\stackrel{*}{=}\sum_{i=1}^{r}(\sigma_i\alpha_i - \bu_i^\top\bb)^2 +\sum_{i=r+1}^{m}(\bu_i^\top \bb)^2,
\end{aligned}
$$
where the equality ($\dag$) follows from the invariance of the norm under orthogonal transformations, the equality (+) follows by setting  $\balpha=\bV^\top \bx$, and the equality ($*$) follows because $\sigma_{r+1}=\sigma_{r+2}= \ldots= \sigma_m=0$.
Since $\bx$ only appears in $\balpha$, we minimize the expression by setting $\alpha_i = {\bu_i^\top\bb}/{\sigma_i}$ for all $i\in \{1, 2, \ldots, r\}$. Any value assigned to $\alpha_{r+1}, \alpha_{r+2}, \ldots, \alpha_{n}$ will not affect the error term. 
From the regularization point of view (or to obtain the solution with the smallest $\ell_2$ norm), we can set them to  0. This yields the LS solution via SVD:
$$
\bx_{LS} = \sum_{i=1}^{r}\frac{\bu_i^\top \bb}{\sigma_i}\bv_i=\bV\bSigma^+\bU^\top \bb = \bA^+\bb,
$$
where $\bA^+=\bV\bSigma^+\bU^\top\in \real^{n\times m}$ is known as the \textit{pseudo-inverse} of $\bA$.\index{Pseudo-inverse}
\end{proof}

\subsection*{Bidiagonal Least Squares and LGK Bidiagonalization}
We consider an  overdetermined linear system $\bC\bx=\bb$, where $\bC\in\real^{m\times (n-1)}$ and $m\geq  (n-1)$. 
\footnote{Note we set the matrix dimensions to $m\times (n-1)$ since we consider the bidiagonal decomposition of an $m\times n$ augmented matrix. Generally, we can also consider $\bC\in\real^{m\times n}$ with $m\geq n$.}
We are interested in the bidiagonalization (Theorem~\ref{theorem:Golub-Kahan-Bidiagonalization-decom}) of the augmented matrix $\bA = [\bb, \bC]\in\real^{m\times n}$, which admits the following bidiagonal decomposition:
\begin{equation}\label{equation:lgk_bidi_ls}
	\begin{aligned}
		\bA = &\bU\bB\bV^\top 
		=
		\bU\bB
		\begin{bmatrix}
			1 & \bzero \\
			\bzero & \bQ
		\end{bmatrix}^\top\\ 
		&\implies  
		\bB=
		\bU^\top [\bb, \bC] 
		\begin{bmatrix}
			1 & \bzero \\
			\bzero & \bQ
		\end{bmatrix}
		=
		[\bU^\top\bb, \bU^\top\bC\bQ]
		=
		\begin{bmatrix}
			b_{11}\be_1& \bB_2\\
			\bzero & \bzero
		\end{bmatrix},
	\end{aligned}
\end{equation}
where $b_{11}$ represents the (1,1) entry of  $\bB$, $\bB_2 = \bB[1:n,2:n]\in\real^{n\times (n-1)}$, and $\bQ=\bV[2:n,2:n]\in\real^{(n-1)\times (n-1)}$ is orthogonal (see Problem~\ref{prob:bid_vstruc}).
We then have 
$$
\begin{aligned}
	\normtwo{\bb-\bC\bx} =
	\normtwo{
		[\bb, \bC]
		\begin{bmatrix}
			1 \\
			-\bx 
		\end{bmatrix}
	}
	=
	\normtwo{
		\bU^\top
		[\bb, \bC]\bV\bV^\top
		\begin{bmatrix}
			1 \\
			-\bx 
		\end{bmatrix}
	}.
\end{aligned}
$$
Let $\bd = \bQ^\top\bx$. Then,
$$
\normtwo{\bb-\bC\bx}
=
\normtwo{\bB \bV^\top
	\begin{bmatrix}
		1 \\
		-\bx 
	\end{bmatrix}
}
=
\normtwo{b_{11} \be_1- \bB_2\bd}.
$$
Therefore, the least squares problem of $\normtwo{\bb-\bC\bx}$ then can be equivalently recovered by finding the least squares solution of $\normtwo{b_{11} \be_1- \bB_2\bd}$ in terms of the variable $\bd$.

\subsection*{LGK Bidiagonalization}
We now express $\bB_2 = \bB[1:n,2:n]$ as follows:
$$
\bB_2 = 
\footnotesize
\begin{bmatrix}
	b_{12} &  &  \ldots &   \\
	b_{22} & b_{23} & \ldots  &  \\
	&\ddots &\ddots & \vdots \\
	&&b_{n-1,n-1}& b_{n-1,n} \\
	&&& b_{nn}
\end{bmatrix}
\implies 
\normalsize
\bB_2^\top = 
\footnotesize
\begin{bmatrix}
	\footnotesize
	b_{12} & b_{22} &  \ldots &   &  \\
	0 & b_{23} & b_{33}  &   &  \\
	& &\ddots &b_{n-1,n-1} &  \\
	&& & b_{n-1,n} & b_{nn} \\
\end{bmatrix}
\in\real^{(n-1)\times n}.
$$
\paragraph{First step.}
From Equation~\eqref{equation:lgk_bidi_ls}, we find that 
${b_{11} = \normtwo{\bb} = \normtwo{\ba_1}}$ if $b_{11}$ is nonnegative, where $\bb=\ba_1$ is the first column of $\bA$. 
Additionally, we have:
$$
\bC\bQ = 
\bU
\begin{bmatrix}
	\bB_2\\
	\bzero 
\end{bmatrix}
\implies
\bC^\top 
\begin{bmatrix}
	\bu_1 & \bu_2&\ldots &\bu_{n}
\end{bmatrix}
=
\bQ\bB_2^\top, 
\quad \text{where}\quad \bU=[\bu_1,\bu_2,\ldots,\bu_m].
$$
Let $\bQ=[\bq_1,\bq_2,\ldots,\bq_{n-1}]$ be the column partition of $\bQ$, and let $\bq_0=\bzero$. We then have 
$$
\left\{
\begin{aligned}
	\bC^\top\bu_i &= b_{ii}\bq_{i-1} + b_{i,i+1}\bq_i \,\,\implies\,\, b_{i,i+1}\bq_i = \bC^\top\bu_i - b_{ii}\bq_{i-1}, \gap \forall\, i\in\{1,2,\ldots, n-1\};\\
	\bC^\top\bu_n &= b_{nn}\bq_{n-1}.
\end{aligned}
\right.
$$
If $b_{ii}$ and $\bq_{i-1}$ are known, $b_{i,i+1}$ can be determined as the norm of the right-hand side equation in the above equality: 
\begin{align}
b_{i,i+1}&=\pm\normtwo{\bC^\top\bu_i - b_{ii}\bq_{i-1}}, \gap \forall\, i\in\{1,2,\ldots, n-1\};\label{equation:lgk_bidi_ls_ste11} \\ 
\bq_i &= \frac{\bC^\top\bu_i - b_{ii}\bq_{i-1}}{b_{i,i+1}}, \gap \text{if }b_{i,i+1}\neq 0, \,\,\forall\, i\in\{1,2\ldots, n-1\}; \label{equation:lgk_bidi_ls_ste12}\\
b_{ii}&=\pm\normtwo{\bC^\top\bu_i - b_{i,i+1}\bq_i}, \gap \forall\, i\in\{2,3,\ldots, n-1\}.\label{equation:lgk_bidi_ls_ste13}
\end{align}

\paragraph{Second step.}
Similarly, from  Equation~\eqref{equation:lgk_bidi_ls}, we have ${\bu_1 = \bb/b_{11}\equiv\ba_1/b_{11}}$ and 
$
\bC\bQ =
\footnotesize 
\bU
\begin{bmatrixscript}
	\bB_2\\
	\bzero 
\end{bmatrixscript}.
$
This leads to:
\begin{equation}\label{equation:lgk_bidi_ls_eq2}
\begin{aligned}
\bC \bq_i = b_{i,i+1}\bu_i + b_{i+1,i+1}\bu_{i+1}, \gap \forall\, i \in\{1,2,\ldots,n-1\}\\
\implies 
{\bu_{i+1} = \frac{\bC \bq_i -b_{i,i+1}\bu_i}{b_{i+1,i+1}}} , \gap \text{if }b_{i+1,i+1}\neq 0,\,\, \forall\, i \in\{1,2,\ldots,n-1\}.
\end{aligned}
\end{equation}

The two steps described above form a recursive algorithm for computing the bidiagonal decomposition of the matrix  $\bA$, and is known as the \textit{LGK bidiagonalization}. 
The derivation above is valid  when $m> n$. A similar approach can be applied when $n\geq m$. 
Simple calculations can show the complexity is $\sim 4mn^2$ flops to obtain all $\bB, \bU$, and $\bV$, which is more efficient than the standard Golub--Kahan bidiagonalization; see Section~\ref{section:exist_bidia_gk}.

\begin{algorithm}[H] 
\caption{LGK Bidiagonal Decomposition} 
\label{alg:lgk_bidiagonal} 
\begin{algorithmic}[1] 
\Require Matrix $\bA$ with size $m\times n $ and $m\geq n$; 
\State Initially set $b_{11}  \leftarrow  \normtwo{\ba_1}$, $\bu_1 \leftarrow \ba_1/b_{11}$, $\bq_0\leftarrow\bzero$; 
\For{$i=1$ to $n-1$} 
\State $b_{i,i+1}\leftarrow \pm\normtwo{\bC^\top\bu_i - b_{ii}\bq_{i-1}}$ by Equation~\eqref{equation:lgk_bidi_ls_ste11}; 
\State $\bq_i \leftarrow \frac{\bC^\top\bu_i - b_{ii}\bq_{i-1}}{b_{i,i+1}}$ by 
Equation~\eqref{equation:lgk_bidi_ls_ste12}; 
\State $\bu_{i+1} \leftarrow \frac{\bC \bq_i -b_{i,i+1}\bu_i}{b_{i+1,i+1}}$ by 
Equation~\eqref{equation:lgk_bidi_ls_eq2}; 
\State $b_{jj}\leftarrow\pm\normtwo{\bC^\top\bu_j - b_{j,j+1}\bq_j}$ by 
Equation~\eqref{equation:lgk_bidi_ls_ste13}, where $j=i+1$; 
\EndFor
\State Output $\bB, \bU$, and $\bV$.
\end{algorithmic} 
\end{algorithm}

The algorithm breaks down if any $b_{i,i+1}$ or $b_{jj}$ is equal to zero. However, in the context of solving least squares problems, these cases can be handled with special treatment; see \citet{bjorck2004acta}.
Another issue arises is that, in floating-point arithmetic, the columns in $\bU$ and $\bV$ can lose orthogonality as the recursion proceeds (similar to the loss of orthogonality seen in the CGS and MGS methods for computing the QR decomposition; see Section~\ref{section:qr-gram-compute}).

\index{Approximate least squares}
\paragraph{Approximate least squares.}
We further explore the approximation of the least squares problem $\mathop{\min}_{\bx}\normtwo{\bC\bx-\bb}$. Denote $\bQ_k=[\bq_1,\bq_2,\ldots,\bq_k]$, $\bU_k=[\bu_1,\bu_2,\ldots,\bu_k]$, and $\bU=[\bU_{k+1}, \bU_\perp]$. 
Additionally, let $\bB_k$ be the upper-left $k\times (k-1)$ submatrix of $\bB_2$. 
Once again, referring to Equation~\eqref{equation:lgk_bidi_ls}, we have:
$$
\bC\bQ_k = \bU_{k+1}\bB_{k+1}.
$$
Note that the variable $\bx$ lies in $\real^{n-1}$, and the vectors $\{\bq_1,\bq_2,\ldots,\bq_k\}$ are mutually orthonormal in $\real^{n-1}$.
Approximately, we can estimate $\bx$ using a linear combination of the $k$ vectors, i.e., there exists a vector $\by$ such that $\bx\approx\bQ_k\by$.
Assume we want to find the optimal approximate solution within the subspace spanned by the $k$ vectors $\{\bq_1,\bq_2,\ldots,\bq_k\}$, i.e., solving the following problem in terms of $\by$:
\begin{equation}\label{equation:reduced_rank_bidiagonal}
	\mathop{\min}_{\by} \normtwo{\bC\bQ_k\by - \bb},
\end{equation}
where $\by\in\real^k$ (it can be shown that $\by=\bQ_k^\top\bx \in\real^k$).
Based on the preceding discussion, the optimization problem is equivalent to:
$$
\begin{aligned}
	\mathop{\min}_{\by}
	\normtwo{\bU_{k+1}\bB_{k+1}\by - \bb}
	&=
	\mathop{\min}_{\by}
	\normtwo{\bU^\top(\bU_{k+1}\bB_{k+1}\by - \bb)}\\
	&=
	\mathop{\min}_{\by}
	\normtwo{\begin{bmatrix}
			\bB_{k+1}\by \\
			\bzero 
		\end{bmatrix}
		-
		\begin{bmatrix}
			b_{11}\be_1\\
			\bzero 
		\end{bmatrix}
	}
	=
	\mathop{\min}_{\by}
	\normtwo{\bB_{k+1}\by
		-
		b_{11}\be_1
	}.
\end{aligned}
$$
Thus, the approximate least squares problem becomes 
$
\mathop{\min}_{\by}
\normtwo{
	\bB_{k+1}\by
	-
	b_{11}\be_1
},
$
where $b_{11} = \normtwo{\bb}$.
Due to the bidiagonal structure, the problem can be solved in $\sim n$ flops \citep{elden2007matrix}.

\paragraph{Reduced-rank model.}
The problem in Equation~\eqref{equation:reduced_rank_bidiagonal} is known as  the least squares problem associated with the \textit{reduced-rank model}. Instead of considering the full model 
$\mathop{\min}_{\by} \normtwo{\bC\bx - \bb}$, we introduce an approximate orthogonal basis of low dimension in $\real^{n-1}$ where the solution $\bx$ lies (i.e.,  $\{\bq_1,\bq_2,\ldots, \bq_k\}$). 
This approach helps reduce the ill-conditioning of the original problem and makes the solution less sensitive to perturbations in the data \citep{elden2007matrix}.

\index{Unbiased estimator}
\index{Consistent estimator}
\section{Application: PCA via  Spectral Decomposition and  SVD}
An important application of SVD is its use in \textit{principal component analysis (PCA)}. 
PCA is widely employed to identify patterns in data and to analyze the variance-covariance structure of the data. It serves two primary purposes:
\begin{enumerate}
	\item \textit{Data reduction}. Reducing the dimensionality of the data by selecting a smaller number of \textit{principal components}.
	\item \textit{Interpretation}. Uncovering relationships within the data that were previously unobserved.
\end{enumerate}

\index{Principal component analysis}
Given a data set of $n$ observations $\{\bx_1,\bx_2,\ldots,\bx_n\}$, where each $\bx_i\in \real^p$ for all $i\in \{1,2,\ldots,n\}$, the goal is to project the data into a lower-dimensional space of dimension $m$ ($m<p$). 
To do this, we first compute the sample mean vector and the sample covariance matrix:
$$
\overline{\bx} = \frac{1}{n}\sum_{i=1}^{n}\bx_i
\qquad 
\text{and}
\qquad 
\bS = \frac{1}{n-1}\sum_{i=1}^{n} (\bx_i - \overline{\bx})(\bx_i-\overline{\bx})^\top,
$$
where the $n-1$ term in $\bS$ ensures it is an unbiased and consistent estimator of the covariance matrix \citep{lu2021rigorous}. 
Alternatively, the covariance matrix can also be defined as $\bS = \frac{1}{\textcolor{mylightbluetext}{n}}\sum_{i=1}^{n} (\bx_i - \overline{\bx})(\bx_i-\overline{\bx})^\top$, which is still a consistent estimator of the covariance matrix \footnote{Consistency: An estimator $\theta_n $ of $\theta$ constructed on the basis of a sample of size $n$ is said to be consistent if $\theta_n\stackrel{p}{\rightarrow} \theta$   as $n \rightarrow \infty $.}.

Each data point $\bx_i$ is then projected onto a scalar value using a vector $\bu_1$ (see discussion below), such that the projection is given by $\bu_1^\top\bx_i$. The mean of the projected data is obtained by $\Exp[\bu_1^\top\bx_i] = \bu_1^\top \overline{\bx}$, and the variance of the projected data is given by 
$$
\begin{aligned}
\Cov[\bu_1^\top\bx_i] &= \frac{1}{n-1} \sum_{i=1}^{n}( \bu_1^\top \bx_i - \bu_1^\top\overline{\bx})^2=
\frac{1}{n-1} \sum_{i=1}^{n}\bu_1^\top  ( \bx_i -\overline{\bx})( \bx_i -\overline{\bx})^\top\bu_1=\bu_1^\top\bS\bu_1.
\end{aligned}
$$
To retain as much information as possible in the projection, we maximize the projected variance $\bu_1^\top\bS\bu_1$  with respect to $\bu_1$. 
To prevent $\bu_1$ from scaling indefinitely, a constraint is imposed: $\norm{\bu_1}^2=\bu_1^\top\bu_1=1$. 
Using the method of Lagrange multipliers (see, for example, \citet{bishop2006pattern, boyd2004convex}), the optimization problem becomes:
$$
\bu_1^\top\bS\bu_1 + \lambda_1 (1 - \bu_1^\top\bu_1).
$$
Solving this yields the equation:
$$
\bS\bu_1 = \lambda_1\bu_1 \leadto \bu_1^\top\bS\bu_1 = \lambda_1.
$$
This shows that $\bu_1$ is an eigenvector of $\bS$  corresponding to the eigenvalue $\lambda_1$. The direction of maximum variance, $\bu_1$, corresponds to the largest eigenvalue of $\bS$. The eigenvector $\bu_1$ is referred to as the \textit{first principal axis}.

The subsequent principal axes are defined by the remaining eigenvectors of $\bS$, arranged in descending order of their eigenvalues. By selecting the top $m$ principal components, the dimensionality of the data can be effectively reduced. This process is known as the \textit{maximum-variance formulation} of PCA \citep{hotelling1933analysis, bishop2006pattern, shlens2014tutorial}.
Alternative perspectives on the maximum-variance formulation, such as from data reconstruction, data projection, and autoencoders, are discussed in \citet{lu2021numerical}.
Another approach, known as the \textit{minimum-error formulation} of PCA, is discussed in \citet{pearson1901liii, bishop2006pattern}.

\paragraph{PCA via the spectral decomposition.}
Now, let's assume that the data are already centered, meaning the sample mean vector $\overline{\bx}$ is the zero vector. 
Alternatively, we can centralize the data by setting $\bx_i := \bx_i-\overline{\bx}$, which involves subtracting the mean from each data point. 
Let the data matrix $\bX \in \real^{n\times p}$ contain the centered data, with each row representing one observation. 
The covariance matrix is symmetric, and its spectral decomposition is given by 
\begin{equation}\label{equation:pca-equ1}
	\bS = \frac{\bX^\top\bX}{n-1} = \bU\bLambda\bU^\top,
\end{equation}
where $\bU$ is an orthogonal matrix of eigenvectors (the columns of $\bU$ are the eigenvectors of $\bS$), and $\bLambda=\diag(\lambda_1, \lambda_2,\ldots, \lambda_p)$ is a diagonal matrix containing the corresponding eigenvalues (ordered such that $\lambda_1 \geq \lambda_2 \geq \ldots \geq \lambda_p$).
As discussed above, the eigenvectors are called the \textit{principal axes} of the data, and they \textit{decorrelate} the  covariance matrix. Projections of the original data onto the principal axes are called the \textit{principal components}. 
Specifically, the $i$-th principal component is given by the $i$-th column of $\bX\bU$. 
If our objective is to reduce the dimension from $p$ to $m$, we simply select the first $m$ columns of $\bX\bU$, i.e., $\bX\bU[:,1:m]$.

\paragraph{PCA via  SVD.}
If the SVD of $\bX$ is given by $\bX = \bP\bSigma\bQ^\top$, then the covariance matrix can be expressed as 
\begin{equation}\label{equation:pca-equ2}
	\bS= \frac{\bX^\top\bX}{n-1} = \bQ \frac{\bSigma^2}{n-1}\bQ^\top,
\end{equation}
where $\bQ\in \real^{p\times p}$ is an orthogonal matrix containing the right singular vectors of $\bX$, and the upper-left part of $\bSigma$ is a diagonal matrix containing the singular values $\diag(\sigma_1,\sigma_2,\ldots)$, ordered such that $\sigma_1\geq \sigma_2\geq \ldots$. The number of singular values is equal to $\min\{n,p\}$, which will not be larger than $p$, and some of these values may be zero.

By comparing Equation~\eqref{equation:pca-equ2} with Equation~\eqref{equation:pca-equ1}, we can see that Equation~\eqref{equation:pca-equ2} also represents a spectral decomposition of $\bS$.
This is because both the eigenvalues in $\bLambda$ and the singular values in $\bSigma$ are ordered in descending order, and the spectral decomposition in terms of the eigenspaces is unique (as discussed in Section~\ref{section:uniqueness-spectral-decomposition}).

In other words, the right singular vectors $\bQ$ can also serve as the principal axes, which decorrelate the covariance matrix. 
The singular values are related to the eigenvalues of the covariance matrix through the relationship: $\lambda_i = \frac{\sigma_i^2 }{n-1}$ for each $i$. To reduce the dimensionality of the data from $p$ to $m$, we  select the largest $m$ singular values and their corresponding right singular vectors. 
This process is related to the truncated SVD (TSVD), where: $\bX_m = \sum_{i=1}^{m}\sigma_i \bp_i\bq_i^\top$, where $\bp_i$'s and $\bq_i$'s are the columns of $\bP$ and $\bQ$, respectively.

\index{Truncated SVD}

\paragraph{A byproduct of PCA via  SVD for high-dimensional data.} For a principal axis $\bu_i$ of $\bS = \frac{\bX^\top\bX}{n-1}$, we have 
$
\frac{\bX^\top\bX}{n-1} \bu_i = \lambda_i \bu_i.
$
Multiplying both sides by $\bX$ on the left, we obtain:
$$
\frac{\bX\bX^\top}{n-1} (\bX\bu_i) = \lambda_i (\bX\bu_i),
$$
which implies that $\lambda_i$ is also an eigenvalue of $\frac{\bX\bX^\top}{n-1} \in \real^{n\times n}$, and the corresponding eigenvector is $\bX\bu_i$. This relationship is also discussed in the proof of Theorem~\ref{theorem:reduced_svd_rectangular}, which establishes the existence of the SVD. 
When the number of features $p$ is much larger than the number of samples $n$ (i.e., $p \gg n$), instead of finding the eigenvectors of $\bS=\frac{\bX^\top\bX}{n-1}$, i.e., the principal axes of $\bS=\frac{\bX^\top\bX}{n-1}$, we can find the eigenvectors of $\frac{\bX\bX^\top}{n-1}$. This reduces the computational complexity from $\mathcalO(p^3)$ to $\mathcalO(n^3)$, which is more efficient when $p \gg n$. 

Now, returning to the principal axes of $\bS=\frac{\bX^\top\bX}{n-1}$, suppose  the eigenvector of $\frac{\bX\bX^\top}{n-1}$ is $\bv_i$, corresponding to a nonzero eigenvalue $\lambda_i$:
$
\frac{\bX\bX^\top}{n-1} \bv_i = \lambda_i \bv_i.
$
Multiplying both sides by $\bX^\top$, we obtain 
$$
\frac{\bX^\top\bX}{n-1} (\bX^\top\bv_i) = \bS(\bX^\top\bv_i)   = \lambda_i (\bX^\top\bv_i),
$$
which shows that the eigenvector $\bu_i$ of $\bS$ is proportional to $\bX^\top\bv_i$, where $\bv_i$ is the eigenvector of $\frac{\bX\bX^\top}{n-1}$ corresponding to the same eigenvalue $\lambda_i$. Note that a further normalization step is required to ensure that $\norm{\bu_i}=1$.
Thus, when $p\gg n$,  we can efficiently compute the principal axes using the  spectral decomposition of $\frac{\bX\bX^\top}{n-1}$, instead of directly computing the eigenvectors of $\bS$.

\index{Data whitening}
\paragraph{Data whitening.}
PCA is commonly used for feature preprocessing in machine learning. It first reduces the dimensionality of the data and then normalizes the newly transformed features so that the variance along each direction in the transformed space is equal. Let $\bU_m$ be the $p \times m$ matrix containing the top-$m$ eigenvectors obtained from PCA. The first step is to transform the mean-centered data matrix  $\bX$ into an $m$-dimensional representation using $\bU_m$, as follows:
$$
\widetildebX = \bX\bU_m.
$$
The next step involves scaling each column of $\widetildebX$ by its standard deviation. This process transforms the original data distribution into one that is approximately spherical in shape. This technique is known as \textit{whitening}.

Whitened data often leads to better performance in gradient-based optimization algorithms \citep{lu2025practical}. 
This is because large differences in variance across features can cause the loss function to have varying curvature in different directions, which slows down convergence. By normalizing the variance, whitening reduces ill-conditioning of the loss function, allowing gradient descent to converge faster. Additionally, it prevents certain features from dominating the learning process due to their scale.

Whitening is especially valuable in unsupervised learning tasks such as outlier detection, where no labels are available to guide the relative importance of different directions in the data. In such cases, ensuring that all directions are treated equally becomes even more critical.
An illustration of how PCA whitens an ellipsoidal data distribution is shown in Figure~\ref{fig:whitening}, where the resulting distribution becomes approximately spherical.

\begin{figure}[h]
	\centering
	\includegraphics[width=0.9\textwidth]{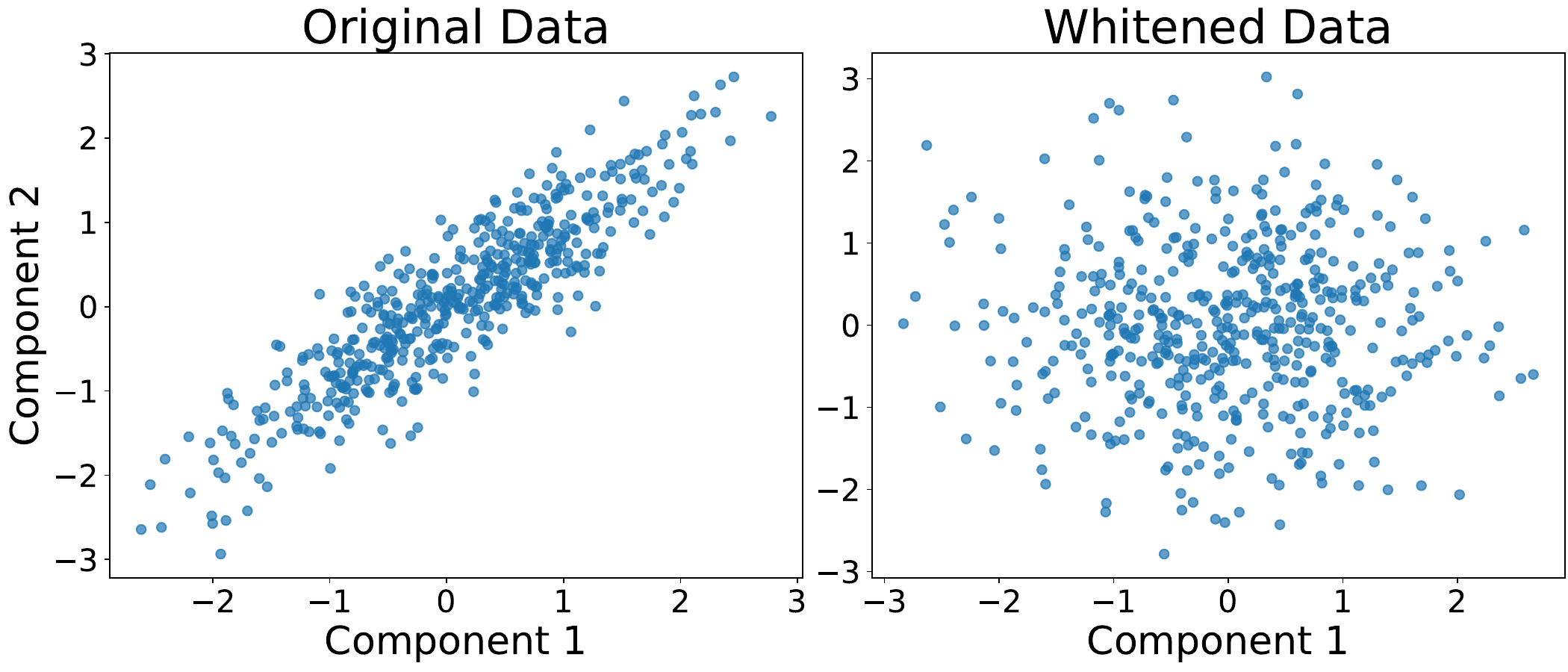} 
	\caption{An example of whitening an ellipsoidal data distribution using principal component analysis.}
	\label{fig:whitening}
\end{figure}

\section{Application: Low-Rank Approximation}\label{section:svd-low-rank-approxi}
In the context of low-rank approximation, two types of problems arise due to the interplay between rank and approximation error: \textit{fixed-precision approximation problem} and \textit{fixed-rank
approximation problem}. In the fixed-precision approximation problem, given a matrix $\bA$ and a  tolerance $\epsilon$, the goal is to find a matrix $\bB$
with rank $r = r(\epsilon)$ such that $\norm{\bA-\bB} \leq \epsilon$ in an appropriate matrix norm. On the contrary, in the fixed-rank approximation problem, one seeks  a matrix $\bB$ with a fixed rank $k$ that minimizes the error $\norm{\bA-\bB}$. This section focuses on the latter. 

To approximate a matrix $\bA\in \real^{m\times n}$ of rank $r$ with  a rank-$k$ matrix $\bB$ ($k<r$), the approximation can be evaluated using the spectral norm (Definition~\ref{definition:spectral_norm}):\index{Low-rank approximation}
\begin{equation}
\bB = \mathop{\arg\min}_{\rank(\bB)=k} \ \ \norm{\bA - \bB}_2.
\end{equation}
Then, we can recover the optimal rank-$k$ approximation by the following theorem.
\begin{theorem}[Eckart--Young--Mirsky theorem w.r.t. spectral norm\index{Eckart--Young--Mirsky theorem}]\label{theorem:young-theorem-spectral}
Given a matrix $\bA\in \real^{m\times n}$, $1\leq k\leq \rank(\bA)=r$, and let $\bA_k$ be the \textit{truncated SVD (TSVD)} of $\bA$ with the largest $k$ singular terms, i.e., $\bA_k = \sum_{i=1}^{k} \sigma_i\bu_i\bv_i^\top$ from the  SVD of $\bA=\sum_{i=1}^{r} \sigma_i\bu_i\bv_i^\top$ by zeroing out the $r-k$ trailing singular values of $\bA$. Then, $\bA_k$ is the optimal rank-$k$ approximation to $\bA$ in terms of the spectral norm.~\footnote{Note that $\bA_k$ can be stored using $(m+n)k + k$ entries, as opposed to $mn$ entries.} 
\end{theorem}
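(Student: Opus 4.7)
The plan is to prove the result in two halves: first, compute the spectral norm of the specific residual $\bA - \bA_k$ exactly; second, show that no other rank-$k$ matrix can do better. For the first half, I would observe that $\bA - \bA_k = \sum_{i=k+1}^{r} \sigma_i \bu_i \bv_i^\top$ is itself (after trivial relabelling) an SVD expansion with orthonormal left and right factors and singular values $\sigma_{k+1} \geq \sigma_{k+2} \geq \ldots \geq \sigma_r$. By Definition~\ref{definition:spectral_norm}, the spectral norm of a matrix equals its largest singular value, so $||\bA - \bA_k||_2 = \sigma_{k+1}$. Nothing deep here; it is a direct application of the orthonormality of the $\bu_i$'s and $\bv_i$'s.

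For the second half, I would fix an arbitrary matrix $\bB \in \real^{m \times n}$ with $rank(\bB) \leq k$ and aim to prove $||\bA - \bB||_2 \geq \sigma_{k+1}$. The central idea is a dimension-counting trick. By the rank-nullity theorem (Theorem~\ref{theorem:fundamental-linear-algebra}), $\dim(\nspace(\bB)) \geq n - k$. Consider the $(k+1)$-dimensional subspace $V = \mathrm{span}\{\bv_1, \bv_2, \ldots, \bv_{k+1}\} \subseteq \real^n$, where the $\bv_i$ are the right singular vectors of $\bA$. Since $\dim(\nspace(\bB)) + \dim(V) \geq (n-k) + (k+1) > n$, the two subspaces must intersect nontrivially; pick a unit vector $\bw \in \nspace(\bB) \cap V$. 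Writing $\bw = \sum_{i=1}^{k+1} c_i \bv_i$ with $\sum_{i=1}^{k+1} c_i^2 = 1$ and using $\bA \bv_i = \sigma_i \bu_i$ together with the orthonormality of $\{\bu_i\}$, a short computation gives
\begin{equation*}
||\bA - \bB||_2^2 \geq ||(\bA - \bB)\bw||_2^2 = ||\bA\bw||_2^2 = \sum_{i=1}^{k+1} c_i^2 \sigma_i^2 \geq \sigma_{k+1}^2 \sum_{i=1}^{k+1} c_i^2 = \sigma_{k+1}^2,
\end{equation*}
where the first inequality uses the variational characterization $||\bM||_2 = \max_{||\bx||=1} ||\bM\bx||$ and the last step exploits the ordering $\sigma_1 \geq \ldots \geq \sigma_{k+1}$. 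Combining this lower bound with the exact value from the first half gives $||\bA - \bB||_2 \geq \sigma_{k+1} = ||\bA - \bA_k||_2$, as required.

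The main obstacle, and the only step that needs real care, is the nontrivial intersection argument. It hinges on two facts I would want to state cleanly: that $\dim(\nspace(\bB)) \geq n - k$ whenever $rank(\bB) \leq k$, which is immediate from the rank-nullity theorem, and that two subspaces of $\real^n$ whose dimensions sum to more than $n$ must meet in a nonzero vector, which follows from the standard dimension formula $\dim(U + W) + \dim(U \cap W) = \dim(U) + \dim(W)$. Everything else is bookkeeping on top of the SVD machinery already developed in the chapter, in particular the identities $\bA\bv_i = \sigma_i \bu_i$ and the orthonormality of the singular vectors, which let me collapse $||\bA\bw||_2^2$ into a weighted sum of squared singular values.
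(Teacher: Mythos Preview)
Your proposal is correct and follows essentially the same approach as the paper's own proof: both arguments compute $||\bA-\bA_k||_2=\sigma_{k+1}$ directly, then use a dimension-counting argument to find a unit vector in $\nspace(\bB)\cap\mathrm{span}\{\bv_1,\ldots,\bv_{k+1}\}$ and bound $||\bA-\bB||_2$ from below by $\sigma_{k+1}$. Your write-up is in fact slightly cleaner in that you explicitly invoke the dimension formula $\dim(U+W)+\dim(U\cap W)=\dim(U)+\dim(W)$ and allow $rank(\bB)\leq k$ rather than equality, but these are cosmetic refinements of the same argument.
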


\begin{proof}[of Theorem~\ref{theorem:young-theorem-spectral}]
For any matrix $\bB$ with $\rank(\bB)=k$, we aim to show that $\norm{\bA-\bB}_2 \geq \norm{\bA-\bA_k}_2$.

Since $\rank(\bB)=k$,  $\dim(\nspace(\bB))=n-k$. Thus, any set of $k+1$ basis vectors in $\real^n$ intersects $\nspace(\bB)$. 
From Proposition~\ref{proposition:svd-four-orthonormal-Basis}, the singular vectors $\{\bv_1,\bv_2, \ldots, \bv_r\}$ form an orthonormal basis for $\cspace(\bA^\top)\subset \real^n$; so we can choose the first $k+1$ singular vectors $\bv_i$ as a basis for a $(k+1)$-dimensional subspace  of $\real^n$. Let $\bV_{k+1} = [\bv_1, \bv_2, \ldots, \bv_{k+1}]$. Then there exists a unit vector $\bx$ such that
$$
\bx \in \nspace(\bB) \cap \cspace(\bV_{k+1}),\qquad \text{s.t.}\,\,\,\, \norm{\bx}_2=1.
$$
That is, the vector $\bx$ can be expressed as $\bx = \sum_{i=1}^{k+1} a_i \bv_i$ with $\norm{\sum_{i=1}^{k+1} a_i \bv_i}_2 = \sum_{i=1}^{k+1}a_i^2=1$, and we also have $\bB\bx=\bzero$.
Using these properties, the following chain of inequalities holds:
$$
\begin{aligned}
\norm{\bA-\bB}_2^2 &\geq \norm{(\bA-\bB)\bx}_2^2\big/ \norm{\bx}_2^2
\stackrel{\dag}{=} \norm{\bA\bx}_2^2 
\stackrel{+}{=}\sum_{i=1}^{k+1} \sigma_i^2 (\bv_i^\top \bx)^2 \\
&\stackrel{*}{\geq} \sigma_{k+1}^2\sum_{i=1}^{k+1}  (\bv_i^\top \bx)^2 
\stackrel{\ddag}{\geq} \sigma_{k+1}^2\sum_{i=1}^{k+1} a_i^2 
= \sigma_{k+1}^2,
\end{aligned}
$$
where the first inequality follows from the defintion of the spectral norm, 
the equality ($\dag$) follows from the fact that $\bx$ lies in null space of $\bB$, 
the equality ($+$) follows from the fact that $\bx$ is orthogonal to $\bv_{k+2}, \ldots, \bv_r$, 
the inequality ($*$) follows from $\sigma_{k+1}\leq \sigma_{k}\leq\ldots\leq \sigma_{1}$, and the inequality ($\ddag$) follows from $\bv_i^\top \bx = a_i$.
On the other hand, it is evident that $\norm{\bA-\bA_k}_2^2 = \norm{\sum_{i=k+1}^{r}\sigma_i\bu_i\bv_i^\top}_2^2=\sigma_{k+1}^2$. Thus, $\norm{\bA-\bA_k}_2 \leq \norm{\bA-\bB}_2$, which completes the proof. 
\end{proof}

\index{Matrix norm}
\index{Frobenius norm}
Moreover, it can also be shown that  $\bA_k$ is  the optimal rank-$k$ approximation to $\bA$ in terms of the Frobenius norm (Definition~\ref{definition:frobernius-in-svd}). The minimal error is given by the Euclidean norm of the singular values that have been zeroed out in the process: $\norm{\bA-\bA_k}_F =\sqrt{\sigma_{k+1}^2 +\sigma_{k+2}^2+\ldots +\sigma_{r}^2}$.

\begin{exercise}
Given the definition of the Frobenius norm, show that the truncated SVD $\bA_k = \sum_{i=1}^{k} \sigma_i\bu_i\bv_i^\top$ is also the optimal rank-$k$ approximation to $\bA$ in terms of the Frobenius norm.
\end{exercise}

\index{Latent semantic analysis}
\paragraph{Application in the text domain.}
Low-rank approximation is an important technique with significant applications in  text analysis.
In this context, each document is represented as a row in a matrix, where the number of columns corresponds to the number of unique words (features). The value of each entry in the matrix represents the frequency of a specific word in the corresponding document.
It is worth noting that such matrices are typically very sparse, making them a standard use case for SVD. For example, a word-frequency matrix $\bA$ might have dimensions  $m = 10^6$ documents by  $n = 10^5$ words.
In such cases, truncated SVD often provides excellent approximations of the original matrix using a rank as small as 
$k \approx 400$ \citep{aggarwal2020linear}, which results in a substantial reduction in dimensionality. This application of SVD in text analysis is commonly referred to as \textit{latent semantic analysis}, due to its ability to uncover hidden (latent) topics represented by the rank-1 components of the SVD.

\begin{problemset}
\item Show that $(\bA\bA^\top)^{1/2}\bA = \bA(\bA^\top\bA)^{1/2}$.
	
\item Given a matrix $\bA\in\real^{m\times n}$, show that the trace of $\bA^\top\bA$ is equal to the sum of the squares of all its entries, i.e., $\trace(\bA^\top\bA) = \sum_{i,j=1}^{m,n}a_{ij}^2$.

\item  \textbf{Matrix factorization via spanning subspaces.} Let $\bA\in\real^{m\times n}$ be a matrix of rank $r$. Suppose the columns of $\bB\in\real^{m\times r}$ span the column space of  $\bA$, and the columns of $\bC\in\real^{n\times r}$ span the row space of  $\bA$. Show that the matrix $\bA$ can be factored as $\bA=\bB\bE\bC^\top$, where $\bE$ is an $r$ by $r$ nonsingular matrix.
\item Consider a square matrix $\bA\in\real^{n\times n}$ of rank $r$, and  the $(2n)\times (2n)$ symmetric matrix
$
\bB=
\begin{bmatrixfoot}
\bzero & \bA \\
\bA^\top & \bzero 
\end{bmatrixfoot}.
$
If $\bA$ admits the full SVD $\bA=\bU\bSigma\bV^\top$, where $\bSigma=\diag(\sigma_1,\sigma_2,\ldots,\sigma_n)$:
\begin{itemize}
\item Show that $\sigma_k$ is an eigenvalue of $\bB$ corresponding to the eigenvector $\begin{bmatrixfoot}
\bv_k \\
\bu_k
\end{bmatrixfoot}$ for any $k\in\{1,2,\ldots,n\}$,
and that $-\sigma_k$ is an eigenvalue of $\bB$ corresponding to the eigenvector $\begin{bmatrixfoot}
\bv_k \\
-\bu_k
\end{bmatrixfoot}$  for any $k\in\{1,2,\ldots,n\}$.
\item Show that the $2n$ eigenvectors are pairwise orthogonal.
\end{itemize}

\item Consider a rectangular matrix $\bA\in\real^{m\times n}$ of rank $r$, and  the $(m+n)\times (m+n)$ symmetric matrix
$
\bB=
\begin{bmatrixfoot}
\bzero & \bA \\
\bA^\top & \bzero 
\end{bmatrixfoot}.
$
If $\bA$ admits the full SVD $\bA=\bU\bSigma\bV^\top$, where $\bSigma=\diag(\sigma_1,\sigma_2,\ldots,\sigma_n)$:
\begin{itemize}
\item Show that $\sigma_k$ is an eigenvalue of $\bB$ corresponding to the eigenvector $\begin{bmatrixfoot}
\bv_k \\
\bu_k
\end{bmatrixfoot}$ for any $k\in\{1,2,\ldots,r\}$,
and that $-\sigma_k$ is an eigenvalue of $\bB$ corresponding to the eigenvector $\begin{bmatrixfoot}
\bv_k \\
-\bu_k
\end{bmatrixfoot}$  for any $k\in\{1,2,\ldots,r\}$.
\item Show that the remaining $m+n-2r$ eigenvectors of $\bB$ are corresponding to the eigenvalue 0.
\item Show that the $m+n$ eigenvectors  are pairwise orthogonal.
\end{itemize}

\item Given two nonzero vectors $\bu,\bv\in\real^n$, and let $\bA=\bu\bv^\top$. Show that the nonzero singular value of $\bA$ is $\norm{\bu}\cdot \norm{\bv}$.

\item For a square matrix $\bA\in\real^{n\times n}$ with singular values $\sigma_1\geq \sigma_2\geq\ldots \geq \sigma_n$, show that $\sigma_1^3, \sigma_2^3,\ldots , \sigma_n^3$ are the singular values of $\bA\bA^\top\bA$.

\item Let $\bA\in\real^{m\times n}$ be a rectangular matrix, and let $\bB\in\real^{\widehat{m}\times \widehat{n}}$ be a submatrix of $\bA$, where $\widehat{m}\leq m$ and $\widehat{n}\leq n$. Show that the largest singular value of $\bB$ is less than or equal to the largest singular value of $\bA$. 

\item For a positive definite matrix $\bA\in\real^{n\times n}$, show that the singular values and the eigenvalues of $\bA$ are the same.

\item Given a  matrix $\bA\in\real^{n\times n}$ and a positive definite matrix $\bB\in\real^{n\times n}$, prove that the singular values of $\bB\bA$ are the same as those of $\bA$. Discuss the relationship between the left and right singular vectors of $\bB\bA$ and $\bA$.

\item We have shown in Lemma~\ref{lemma:orthogonal-equivalent-matrix} that orthogonally equivalent matrices share the same singular values. 
Prove the reverse implication: if two matrices have the same singular values, then they are orthogonally equivalent.

\item In this chapter, we focus on the SVD of real matrices, expressed as $\bA=\bU\bSigma\bV^\top$. Show that if $\bA$ is real, then the matrices $\bU$ and $\bV$ are also real.

\item Given a Householder transformation matrix $\bH = \bI - 2\bu\bu^\top\in\real^{n\times n}$, where $\norm{\bu}=1$, determine the eigenvalues, determinant, and singular values of $\bH$.

\item Given the nonzero singular values $\sigma_1, \sigma_2, \ldots, \sigma_r$ of $\bA$, discuss the singular values of $\bA^\top$, $\gamma\bA$ with $\gamma>0$, and $\bA^{-1}$ (if $\bA$ is nonsingular).

\item Given a square and real matrix $\bA\in\real^{n\times n}$, show that $\bA=\bzero$ if and only if $\bA$ has only zero eigenvalues.

\item Given a square matrix $\bA\in\real^{n\times n}$, show that $\bA^\top\bA$ and $\bA\bA^\top$ are similar (Definition~\ref{definition:similar-matrices}). \textit{Hint: Proceeding with the SVD of $\bA$}.

\item Show that all eigenvalues of a square matrix are less than or equal to its largest singular value $\sigma_1$.

\item Suppose $\bx$ is an eigenvector of $\bA^\top\bA$ corresponding to a nonzero eigenvalue. Discuss the corresponding eigenvector of $\bA\bA^\top$. \textit{Hint: Premultiply by $\bA$}.

\item Given the SVD of a nonsingular square matrix $\bA=\bU\bSigma\bV^\top\in\real^{n\times n}$, determine the singular values of $\bA^\top\bA$.

\item Find the optimal rank-one approximation (in terms of the spectral norm) for the matrix:
$\bA=\begin{bmatrixfoot}
	\cos\theta & -\sin\theta \\
	\sin\theta & \cos\theta
\end{bmatrixfoot}.
$

\item \textbf{Skew-symmetric.} Given a skew-symmetric and tridiagonal matrix $\bS\in\real^{n\times n}$, show that it can be decomposed as:
$
\bP^\top\bS\bP 
=
\begin{bmatrixscript}
\bzero & \bB^\top \\
\bB & \bzero
\end{bmatrixscript},
$
where $\bB\in\real^{m\times m}$, $n=2m$, and $\bP$ is a permutation matrix.
Given further the SVD of $\bB=\bU\bSigma\bV^\top$, find the eigenvalues and eigenvectors of $\bS$.

\item Discuss the uniqueness of the polar decomposition for the  matrix:
$
\bA = \scriptsize\begin{bmatrix}
	1 & 0 \\
	0 & 0\\
\end{bmatrix}.
$

\item Let $\bA$ be a  negative semidefinite matrix. Show that the singular value decomposition of $
\bA$ is of the form $\bA = \bU \bSigma\bV^\top$, where $\bU = -\bV$.

\item \textbf{Block diagonal structure of padded SVD.} Let $\bB$ be a $p \times p$ matrix obtained by padding the $m \times n$ matrix $\bA$ with either zero rows or zero columns, where $p = \max\{m,n\}$. Depending on whether $m$ is greater than $n$ or vice versa, show that the SVD $\bB = \bU \bSigma \bV^\top$ takes one of the following forms:
$$
\begin{aligned}
	\text{When $n < m$}: \qquad &\bB = [\bA, \bzero] = \bU \begin{bmatrix} \bSigma_1 & \bzero \\ \bzero & \bzero \end{bmatrix} \begin{bmatrix} \bV_1 & \bzero \\ \bzero & \bV_2 \end{bmatrix}^\top;\\
	\text{When $m < n$}: \qquad &\bB = \begin{bmatrix} \bA \\ \bzero \end{bmatrix} = \begin{bmatrix} \bU_1 & \bzero \\ \bzero & \bU_2 \end{bmatrix} \begin{bmatrix} \bSigma_1 & \bzero \\ \bzero & \bzero \end{bmatrix} \bV^\top.
\end{aligned}
$$
Here, the matrices $\bU$, $\bV$, and $\bSigma$ are all square matrices of size $p \times p$. 
The matrix $\bV_1$ is of size $n \times n$, and $\bU_1$ is of size $m \times m$. The matrices $\bV_2$ and $\bU_2$ are of sizes $(p-n) \times (p-n)$ and $(p-m) \times (p-m)$, respectively. The matrix $\bSigma_1$ is of size $\min\{m,n\} \times \min\{m,n\}$.

\item
\textbf{Two-way to three-way SVD.} Let $\bA=\bP\bQ^\top\in\real^{m\times n}$ be a decomposition of matrix $\bA$, where the columns of $\bP\in\real^{m\times k}$ and $\bQ\in\real^{n\times k}$ are orthogonal (not necessarily orthonormal, i.e., having unit norms), and $k\leq \min\{m,n\}$. Provide a way to obtain the SVD of $\bA$.

\item \textbf{Push-through identity.} Use SVD to prove the push-through identity:
\begin{equation}
	\bC^\top(\lambda \bI_m + \bC\bC^\top)^{-1} = (\lambda \bI_n + \bC^\top \bC)^{-1}\bC^\top,
\end{equation}
where $\lambda>0$, and $\bC\in\real^{m\times n}$.

\item \textbf{Shared SVD from identical scatter matrices.} 
Consider two data matrices $\bA_1$ and $\bA_2$ that have identical scatter matrices $\bA_1^\top \bA_1 = \bA_2^\top \bA_2$, but are otherwise distinct. Show that both $\bA_1$ and $\bA_2$ can be decomposed using a partially shared singular value decomposition, such that $\bA_1 = \bU_1 \bSigma \bV^\top$ and $\bA_2 = \bU_2 \bSigma \bV^\top$. Use this fact to show that $\bA_2 = \bQ_{12} \bA_1$, where $\bQ_{12}$ is an orthogonal matrix.

\item \textbf{Frobenius norm.}
Let $\bA,\bB\in\real^{m\times n}$. Show that the squared Frobenius norm of $\bA-\bB$ is 
$$
\normf{\bA-\bB}^2 = \normf{\bA} + \normf{\bB} - 2\trace(\bA^\top\bB).
$$

\item Is there any coordinate transformation involved  in the QR or LQ decomposition?

\item Given the SVD of the matrix 
$$
\bA =
\begin{bmatrix}
	2 & 2 \\
	-1 & 1 
\end{bmatrix}
=
\begin{bmatrix}
	1 & 0 \\
	0 & 1 
\end{bmatrix}
\begin{bmatrix}
	2\sqrt{2} & 0 \\
	0 & \sqrt{2}
\end{bmatrix}
\begin{bmatrix}
	1/\sqrt{2} & 1/\sqrt{2} \\
	-1/\sqrt{2} & 1/\sqrt{2}
\end{bmatrix}
=\bU\bSigma\bV^\top,
$$
illustrate the coordinate transformation of this decomposition in a two-dimensional figure.

\item \citep{horn2012matrix} Let $\bQ\in\real^{n\times n}$ be an orthogonal matrix. Show that $\bQ$ can be decomposed as $\bQ=\bU_1\bU_2\ldots\bU_N\bD$, where $\bD=\diag(1,1,\ldots,1,\det(\bQ))$, each $\bU_i$ represents a plane rotation (Definition~\ref{definition:givens-rotation-in-qr}), and $N=n(n-1)/2$. \textit{Hint: Use the result in Problem~\ref{problem:part_ortho}.}
\end{problemset}

\newpage 
\part{Special Topics}

\newpage
\chapter{Alternating Least Squares (ALS)}\label{chapter:als}

It is evident that any given matrix can be factorized in infinitely many ways. However, certain types of factorizations are particularly valuable because of the specific properties they offer. Two main types of such properties are commonly sought in matrix decompositions:
\begin{itemize}
\item \textit{Linear algebra properties with exact decomposition.} 
In this type of decomposition, the goal is to break down a matrix into components that possess particular linear algebraic or geometric characteristics, such as orthogonality or triangular form. These properties make the decomposition useful for various tasks in linear algebra, including the construction of (orthogonal) bases, as discussed in previous chapters. So far, we have studied several decompositions that fall into this category, including LU decomposition, CR decomposition, QR decomposition, and singular value decomposition (SVD).

\item \textit{Optimization and compression properties with approximate decomposition.} 
This type focuses on approximating a large matrix by factoring it into smaller matrices. A well-known example is truncated SVD. Consider a matrix $\bA\in\real^{m\times n}$ that is approximated by a rank-$k$ matrix using the following factorization:
\begin{equation}
\bA \approx \bU_k \bSigma_k \bV_k^\top,
\end{equation}
where $\bU_k$ is an $m \times k$ semi-orthogonal matrix, $\bSigma_k$ is a $k \times k$ diagonal matrix with nonnegative entries, and $\bV_k$ is an $n \times k$ semi-orthogonal matrix. The total number of entries across all three matrices is $(m+n+k)k$ or $(m+n+1)k$ if counting only nonzero entries, which is often significantly smaller than the $mn$ entries in the original matrix when  $m$ and $n$ are both large. 
\end{itemize}

As discussed previously, singular value decomposition  is unique in that it offers advantages from both perspectives: it provides strong linear algebra properties when used exactly and useful compression properties when truncated; see Theorem~\ref{theorem:young-theorem-spectral}. The value $k$ is referred to as the {rank} of the approximation.
The optimization-based view of matrix factorization, where we approximate $\bA \approx \bW\bZ$, is especially valuable in machine learning. This approach involves defining $\bA$, $\bW$, and $\bZ$ in different ways depending on the application. 
Below are two important examples:
\begin{enumerate}
\item A \textit{rating} is a numerical score that a user assigns to an item, such as a movie~\footnote{For example, see the top 250 movies rated by different websites: \url{https://www.imdb.com/list/ls027618268/}.}. \textit{Recommender systems} collect these ratings to predict how users might rate items they haven't yet rated. When $\bA$ represents a \textit{user-item rating matrix}---where rows correspond to items, columns to users, and entries contain the observed ratings---the matrix factorization $\bA \approx \bW\bZ$ is performed using only the known ratings. In this case, the rows of $\bW$ represent latent features of items, and the columns of $\bZ$ represent latent features of users. The product $\bW\bZ$ reconstructs the full rating matrix, including predictions for missing entries.

\item When $\bA$ is a \textit{term-document matrix}, representing the frequency of words (rows of $\bA$) in documents (columns of $\bA$), the rows of $\bW$ provide latent representations of words, and the columns of $\bZ$ provide latent representations of documents (see Chapter~\ref{chapter:nmf}).
\end{enumerate}
From an optimization perspective, additional constraints can be imposed on the matrices involved in the factorization---such as requiring their entries to be nonnegative (as discussed in Chapter~\ref{chapter:nmf}). These constraints often enhance the usefulness of the decomposition in practical applications.

In this chapter, we will focus on the first application mentioned above (recommender systems). In the next chapter, we will discuss the second (topic modeling via term-document matrices using \textit{nonnegative matrix factorization (NMF)}).

\index{Least squares}
\index{Linear models}
\index{Regression analysis}
\section{Preliminary: Least Squares Approximations}\label{section:pre_ls}
The linear model is a fundamental technique in regression analysis, relying on the least squares approximation, which aims to minimize the sum of squared errors (refer to Section~\ref{section:application-ls-qr}). This method naturally emerges when trying to identify the regression function that minimizes the corresponding expected squared error.
Over the past several decades, linear models have found extensive applications across diverse domains, including decision-making \citep{dawes1974linear}, time series analysis \citep{christensen1991linear, lu2017machine}, quantitative finance \citep{menchero2011barra}, and various other fields such as production science, social science, and soil science  \citep{fox1997applied, lane2002generalized, schaeffer2004application, mrode2014linear}.

To be more concrete, consider an overdetermined system represented by $\bb = \bA\bx $, where $\bA\in \real^{m\times n}$ represents the \textit{input data matrix} (also known as the \textit{predictor variables}), $\bb\in \real^m$ is the \textit{observation vector} (or \textit{target/response vector}), and the  number of samples $m$ exceeds  the number of predictors $n$. 
The vector $\bx$ represents the \textit{weights} (or \textit{coefficients}) of the linear model.
Typically, it is assumed that $\bA$ has full column rank, as real-world data is often uncorrelated or can be preprocessed to meet this condition.
In practical scenarios, a \textit{bias term} (a.k.a., an \textit{intercept}) is added to the first column of $\bA$. This adjustment enables the least squares method to solve equations of the form:
\begin{equation}\label{equation:ls-bias}
	\widetildebA \widetildebx = 
[\bm{1} ,\bA ] 
\begin{bmatrix}
	x_0\\
	\bx
\end{bmatrix}
 = \bb .
\end{equation}

However, it is common for the equation $\bb = \bA\bx$ to have  no exact solution (the system is \textit{inconsistent}) because it is overdetermined---that is, there are more equations than unknowns.
Define the column space of $\bA$ as $\{\bA\bgamma: \,\, \forall \bgamma \in \real^n\}$, denoted by $\cspace(\bA)$.
In essence, when we say $\bb = \bA\bx$ has no solution, it implies that $\bb$ lies outside the column space of $\bA$. 
In other words, the error $\be = \bb -\bA\bx$ cannot be reduced to zero. 
The objective then becomes minimizing the error, which is typically measured using the mean squared error.
The resulting solution $\bx_{LS}$, which minimizes $\normtwo{\bb-\bA\bx_{LS}}^2$, is referred to   as the \textit{least squares solution}. The least squares method is a cornerstone of mathematical sciences, and a wealth of resources are dedicated to its study and application, including works by   \citet{trefethen1997numerical, strang2019linear, strang2021every,  lu2021rigorous}.

\paragraph{Least squares by calculus.}
When $\normtwo{\bb-\bA\bx}^2$ is differentiable and the parameter space of $\bx$ spans the entire space $\real^n$ (i.e., an unconstrained optimization problem)
\footnote{In this context, the \textit{domain} of the optimization problem $\mathop{\min}_{\bx} \normtwo{\bb-\bA\bx}^2$ is the entire space  $\real^n$.}, 
the least squares estimate corresponds to the root of the gradient of $\normtwo{\bb-\bA\bx}^2$. 
This leads us to the following lemma.~\footnote{Variants of the least squares problem are explored in Problems~\ref{problem:rls}$\sim$\ref{problem:twls2}.}

\index{Fermat's theorem}
\begin{lemma}[Least squares by calculus]\label{lemma:ols}
Let  $\bA \in \real^{m\times n}$ be a  fixed data matrix with full rank  and $m\geq n$ (i.e., its columns  are linearly independent)~\footnote{Relaxations of this condition using the pseudo-inverse are discussed in Problems~\ref{prob:als_pseudo1}$\sim$\ref{prob:als_pseudon}.}. 
For the overdetermined system $\bb = \bA\bx$, the least squares solution, obtained by   setting the partial derivatives in every direction of $\normtwo{\bb-\bA\bx}^2$ to  zero (i.e., the gradient vanishes), is given by $\bx_{LS} = (\bA^\top\bA)^{-1}\bA^\top\bb$ \footnote{This is known as the \textit{first-order optimality condition} for local optima points. Note that the proof of the first-order optimality condition for multivariate functions strongly relies on the first-order optimality conditions for univariate functions, which is also known as  \textit{Fermat's theorem}. See Problem~\ref{problem:fist_opt}.}. The value, $\bx_{LS} = (\bA^\top\bA)^{-1}\bA^\top\bb$, is commonly referred to as the \textit{ordinary least squares (OLS)} estimate or simply the \textit{least squares (LS)} estimate of $\bx$.
\end{lemma}

To prove the lemma above, we must show that $\bA^\top\bA$ is invertible. Given that $\bA$ has full rank and $m\geq n$, the matrix $\bA^\top\bA \in \real^{n\times n}$ is invertible if it has  rank  $n$, which matches the rank of $\bA$. This claim is verified in Lemma~\ref{lemma:rank-of-ata}.
\begin{proof}[of Lemma \ref{lemma:ols}]
Using calculus, a function $f(\bx)$ attains a minimum  at  $\bx_{LS}$ when its gradient $\nabla f(\bx)=\bzero$. The gradient of $\normtwo{\bb-\bA\bx}^2$ is given by $2\bA^\top\bA\bx -2\bA^\top\bb$. $\bA^\top\bA$ is invertible since we assume $\bA$ is fixed and has full rank with $m\geq n$ (Lemma~\ref{lemma:rank-of-ata}). 
Consequently, the OLS solution for $\bx$ is $\bx_{LS} = (\bA^\top\bA)^{-1}\bA^\top\bb$, which completes the proof.
\end{proof}

\index{Normal equation}
\begin{definition}[Normal equation]\label{definition:normal-equation-als}
The condition for the gradient of $\normtwo{\bb-\bA\bx}^2$ to be zero can be expressed as $\bA^\top\bA \bx = \bA^\top\bb$. This is called the \textit{normal equation}. 
Under the assumption that $\bA$ has full rank with $m\geq n$, the matrix  $\bA^\top\bA$ is invertible,  leading to the solution $\bx_{LS} = (\bA^\top\bA)^{-1}\bA^\top\bb$.
\end{definition}

\index{Convex function}
\begin{figure}[h!]
\centering  
\vspace{-0.35cm} 
\subfigtopskip=2pt 
\subfigbottomskip=2pt 
\subfigcapskip=-5pt 
\subfigure[A convex function.]{\label{fig:convex-1}
\includegraphics[width=0.26\linewidth]{./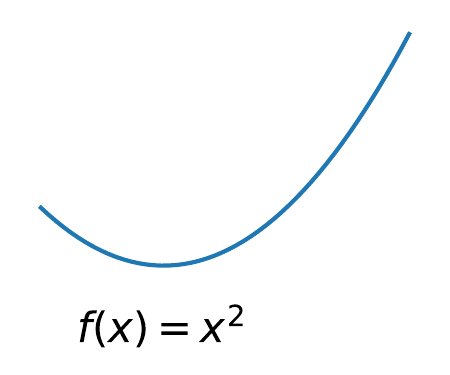}}
\subfigure[A concave function.]{\label{fig:convex-2}
\includegraphics[width=0.26\linewidth]{./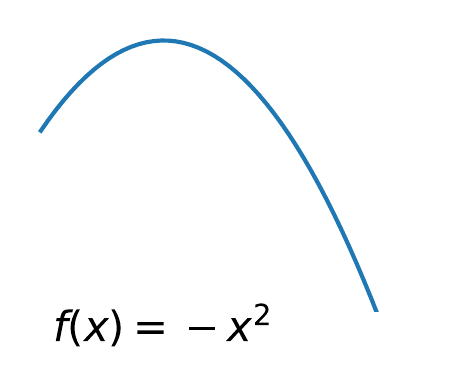}}
\subfigure[A random function.]{\label{fig:convex-3}
\includegraphics[width=0.26\linewidth]{./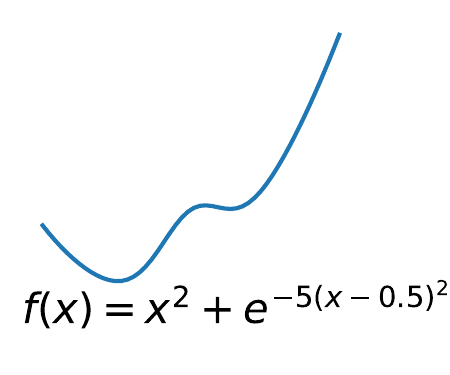}}
\caption{Three types of functions.}
\label{fig:convex-concave-none}
\end{figure}
However, it is not immediately evident whether the least squares estimate derived in Lemma~\ref{lemma:ols} corresponds to a minimum, maximum, or neither. An example illustrating this ambiguity is shown in Figure~\ref{fig:convex-concave-none}.
What we can assert with confidence is the existence of at least one root for the gradient of the function  $f(\bx)=\normtwo{\bb-\bA\bx}^2$. 
This root represents a necessary condition for a minimum point, but not necessarily a sufficient one. 
The following remark provides further clarification on this matter.
\begin{remark}[Verification of least squares solution]
Why does a zero gradient imply the least mean squared error? 
The usual explanation relies on convex analysis, which we will explore shortly. However, here we directly confirm that the OLS solution minimizes the mean squared error.
For any $\bx \neq \bx_{LS}$, we have 
\begin{equation}
\begin{aligned}
\normtwo{\bb - \bA\bx}^2 &= \normtwo{\bb - \bA\bx_{LS} + \bA\bx_{LS} - \bA\bx}^2
= \normtwo{\bb-\bA\bx_{LS} + \bA (\bx_{LS} - \bx)}^2 \\
&=\normtwo{\bb-\bA\bx_{LS}}^2 + \normtwo{\bA(\bx_{LS} - \bx)}^2 + 2\big(\bA(\bx_{LS} - \bx)\big)^\top(\bb-\bA\bx_{LS}) \\ 
&=\normtwo{\bb-\bA\bx_{LS}}^2 + \normtwo{\bA(\bx_{LS} - \bx)}^2 + 2(\bx_{LS} - \bx)^\top(\bA^\top\bb - \bA^\top\bA\bx_{LS}), \nonumber
\end{aligned} 
\end{equation}
where the third term is zero because of the normal equation, and $\normtwo{\bA(\bx_{LS} - \bx)}^2 \geq 0$. Therefore,
$
\normtwo{\bb - \bA\bx}^2 \geq \normtwo{\bb-\bA\bx_{LS}}^2. 
$
Thus, we have demonstrated that the OLS estimate corresponds to a minimum, not a maximum or a saddle point~\footnote{
A \textit{saddle point} is a point at which the gradient vanishes (a \textit{stationary point}), but the objective function increases in some directions and decreases in others.}.
As a matter of fact, this condition from the least squares estimate is also known as the \textit{sufficiency of stationarity under convexity}. When $\bx$ is defined over the entire space $\real^n$, this condition is also known as the \textit{necessity of stationarity under convexity}.
\end{remark}
\index{Saddle point}

Another natural question is: Why does the normal equation appear to ``magically" provide solutions for $\bx$?
A simple analogy can help illustrate this concept. The equation $x^2=-1$ has no real solution. 
However, $x\cdot x^2 = x\cdot (-1)$ does have a real solution $\hat{x} = 0$, in which case, $\hat{x}$ minimizes the difference between $x^2$ and $-1$, making them as close as possible.

\begin{example}[Altering the solution set by left multiplication]
Consider the  data matrix and target vector:
$\tiny
\bA=
\begin{bmatrix}
-3 & -4 \\
4 & 6  \\
1 & 1
\end{bmatrix}
$
and
$
\bb=
\tiny
\begin{bmatrix}
1  \\
-1   \\
0
\end{bmatrix}
.
$
It can be easily verified that the system $\bA\bx = \bb$ has no solution for $\bx$. 
However, if we multiply both sides on the left by
$
\bB=
\scriptsize
\begin{bmatrix}
0 & -1 & 6\\
0 & 1  & -4
\end{bmatrix},
$
then the solution $\bx_{LS} = [1/2, -1/2]^\top$ satisfies $\bB\bA\bx= \bB\bb$. 
This example illustrates why the normal equation can lead to the least squares solution. Multiplying a linear system on the left alters the solution set, effectively projecting the problem into a different subspace where a least squares solution exists.
\end{example}

\index{Rank-deficient}
\paragraph{Rank-deficiency.}
In this discussion, we assume that the matrix $\bA\in \real^{m\times n}$ has full rank with $m\geq n$, ensuring that $\bA^\top\bA$ is invertible. 
However, if two or more columns of $\bA$ are perfectly correlated, the matrix $\bA$ becomes deficient, and $\bA^\top\bA$ becomes singular. 
To address this issue, one can select the vector $\bx$ that minimizes $\normtwo{\bx_{LS}}^2$ while satisfying the normal equation. That is, we choose the least squares solution with the smallest magnitude. 
In Section~\ref{section:application-ls-qr}, we briefly discussed how to use UTV decomposition and SVD to solve such rank-deficient least squares problems.
See Problems~\ref{prob:als_pseudo1}$\sim$\ref{prob:als_pseudon} or the following paragraph for further insights.

\index{Condition number}
\index{Tikhonov regularization}
\index{$\ell_2$ regularization}
\paragraph{Regularizations and stability.}
A common  issue that arise in the ordinary least square solution is the near-singularity of the matrix $\bA$.
Let the SVD of $\bA$ be $\bA=\bU\bSigma\bV^\top\in\real^{m\times n}$, where $\bU\in\real^{m\times m}$ and $\bV\in\real^{n\times n}$ are orthogonal, and the main diagonal of $\bSigma\in\real^{m\times n}$ contains the singular values of $\bA$. 
Consequently, $\bA^\top\bA = \bV(\bSigma^\top\bSigma)\bV^\top = \bV\bS\bV^\top$, where $\bS= \bSigma^\top\bSigma  = \diag(\sigma_1^2, \sigma_2^2, \ldots,\sigma_n^2)\in\real^{n\times n}$ contains the squared singular values of $\bA$. When $\bA$ is nearly singular, $\sigma_n^2\approx 0$, making the inverse operation $(\bA^\top\bA)^{-1} = \bV\bS^{-1}\bV^\top$ numerically unstable. 
As a result, the least squares solution $\bx_{LS} =(\bA^\top\bA)^{-1}\bA^\top\bb $ may become highly sensitive or even diverge.
To address this instability, an $\ell_2$ regularization term is typically added, leading to the solution of the following optimization problem:
\begin{equation}
\bx_{Tik} = \mathop{\argmin}_{\bx} \normtwo{\bb-\bA\bx}^2 +\lambda\normtwo{\bx}^2.
\end{equation}
This approach is known as the  \textit{Tikhonov regularization method} (or simply the $\ell_2$ regularized method) \citep{tikhonov1963solution}.
The gradient of the problem is $2(\bA^\top\bA+\lambda\bI)\bx-2\bA^\top\bb$. Thus, the least squares solution is given by 
$
\bx_{Tik} = (\bA^\top\bA+\lambda\bI)^{-1}\bA^\top\bb.
$
The inverse operation becomes $(\bA^\top\bA+\lambda\bI)^{-1} = \bV(\bS+\lambda\bI)^{-1}\bV^\top$, where $\widetildebS=(\bS+\lambda\bI)=\diag(\sigma_1^2+\lambda, \sigma_2^2+\lambda, \ldots,\sigma_n^2+\lambda)$. 
The solutions for OLS and Tikhonov regularized LS are given, respectively, by 
\begin{equation}
\begin{aligned}
\bx_{LS} &= (\bA^\top\bA)^{-1}\bA^\top\bb = \bV\left(\bS^{-1}\bSigma\right)\bU^\top\bb;\\
\bx_{Tik} &= (\bA^\top\bA+\lambda\bI)^{-1}\bA^\top\bb = \bV\left((\bS+\lambda\bI)^{-1}\bSigma\right)\bU^\top\bb,\\
\end{aligned}
\end{equation}
where the main diagonals of $\left(\bS^{-1}\bSigma\right)$ are $\diag(\frac{1}{\sigma_1}, \frac{1}{\sigma_2}, \ldots, \frac{1}{\sigma_n})$; and the main diagonals of $\left((\bS+\lambda\bI)^{-1}\bSigma\right)$ are $\diag(\frac{\sigma_1}{\sigma_1^2+\lambda}, \frac{\sigma_2}{\sigma_2^2+\lambda}, \ldots, \frac{\sigma_n}{\sigma_n^2+\lambda})$. The latter solution is more stable if $\lambda$ is greater than the   smallest nonzero squared singular value.
The \textit{condition number}, which measures the sensitivity of the problem to perturbations, becomes smaller if  the smallest singular value $\sigma_n$ is close to zero:
$$
\kappa(\bA^\top\bA) = \frac{\sigma_1^2}{\sigma_n^2}
\qquad \rightarrow \qquad
\kappa(\bA^\top\bA+\lambda\bI) = \frac{\lambda+\sigma_1^2}{\lambda+\sigma_n^2}.
$$
Thus, Tikhonov regularization effectively prevents divergence in the least squares solution 
$\bx_{LS} = (\bA^\top\bA)^{-1} \bA^\top \bb$ when the matrix $\bA$ is nearly singular or even rank-deficient. This improvement enhances the convergence properties of both the LS algorithm and its variants, such as alternating least squares, while addressing identifiability issues  in various settings (see Section~\ref{section:regularization-extention-general}). As a result, Tikhonov regularization has become a widely applied technique.

\begin{exercise}
Use SVD to show that the optimum solution $\bx_{Tik} = (\bA^\top\bA+\lambda\bI)^{-1}\bA^\top\bb$
has non-increasing norm with increasing $\lambda$.
\end{exercise}

\index{Data least squares}
\index{Total least squares}
\paragraph{Data least squares.}
While the OLS method accounts for errors in the response variable $\bb$, the \textit{data least sqaures (DLS)} method considers errors in the predictor variables:
\begin{equation}
\bx_{DLS} = \mathop{\argmin}_{\bx, \widetildebA} \normf{\widetildebA}^2, \quad \text{s.t.}\quad \bb\in\cspace(\bA+\widetildebA),
\end{equation}
where $\widetildebA$ represents a perturbation in the matrix $\bA$ (i.e., a noise in the predictor variables).
That is, $(\bA+\widetildebA) \bx_{DLS} = \bb$, assuming the measured response $\bb$ is noise-free.
The Lagrangian function and its gradient w.r.t. $\bx$ are, respectively, given by 
$$
\begin{aligned}
L(\bx, \widetildebA, \blambda) &= \trace(\widetildebA\widetildebA^\top) +\blambda^\top (\bA\bx+\widetildebA\bx-\bb);\\
\nabla_{\widetildebA} L(\bx, \widetildebA,\blambda) &= \widetildebA+\blambda\bx^\top = \bzero \quad\implies\quad \widetildebA=-\blambda\bx^\top,
\end{aligned}
$$
where $\blambda\in\real^m$  is a vector of Lagrange multipliers.
Substituting the value of the vanishing gradient into $(\bA+\widetildebA) \bx = \bb$ yields $\blambda = \frac{\bA\bx-\bb}{\bx^\top\bx}$ and $\widetildebA=-\frac{(\bA\bx-\bb)\bx^\top}{\bx^\top\bx} $.
Therefore, using the invariance of the trace under cyclic permutations,  the objective function becomes 
$$
\mathop{\argmin}_{\bx}
\frac{(\bA\bx-\bb)^\top (\bA\bx-\bb)}{\bx^\top\bx} .
$$

\paragraph{Total least squares.} Similar to  data least squares, the \textit{total least squares (TLS)} method accounts for errors in both the predictor variables and the response variables. The TLS problem can be formulated as:
\begin{equation}
\bx_{TLS} = \mathop{\argmin}_{\bx, \widetildebA, \widetildebb} \normf{[\widetildebA, \widetildebb]}^2, 
\quad \text{s.t.}\quad (\bb+\widetildebb)\in\cspace(\bA+\widetildebA), 
\end{equation}
where $\widetilde{\bA}$ and $\widetilde{\bb}$ represent perturbations in the predictor variables and the response variable, respectively.
To simplify, define $\bC=[\bA,\bb]\in\real^{m\times (n+1)}$,  $\bD=[\widetildebA, \widetildebb]\in\real^{m\times (n+1)}$, and $\by\in\scriptsize\begin{bmatrix}
\bx\\
-1
\end{bmatrix}$, the problem can be equivalently stated as
\begin{equation}
\bx_{TLS} = \mathop{\argmin}_{\by, \bD} \normf{\bD}^2, 
\quad \text{s.t.}\quad \bD\by = -\bC\by, 
\end{equation}

\index{Decomposition: ALS}
\index{Netflix recommender}
\section{Netflix Recommender and Matrix Factorization}\label{section:als-netflix}
The rapid advancements in sensor technology and computer hardware have led to an explosion in the volume of data, presenting new challenges for data analysis. This data is often vast, noisy, and distorted, necessitating preprocessing to enable effective scientific inference. 
For instance, signals captured by antenna arrays are frequently contaminated by noise and other forms of degradation. To analyze such data effectively, it is essential to reconstruct or represent it in a manner that reduces inaccuracies while adhering to feasibility conditions.

In many cases, data collected from complex systems arises from multiple interrelated variables acting in unison. When these variables are not well-defined, the original data may contain overlapping or ambiguous information. 
By constructing a simplified system model, it is possible to achieve a level of accuracy comparable to that of the original system. A common approach to noise reduction, model simplification, data compression, and reconstruction is to replace the original data with a lower-dimensional representation obtained through subspace approximation.
As a result, \textit{low-rank matrix approximations (LRMA) or low-rank matrix decompositions} play a central role in many applications, such as data compression, feature selection, and noise filtering.~\footnote{Strictly speaking, the term ``approximation" usually refers to representing a matrix $\bA$ as $\bA \approx \bW\bZ$, where $\bW$ and $\bZ$ are matrices whose product approximates $\bA$. Conversely, the term ``decomposition" typically implies that $\bA$ is exactly represented as $\bA = \bW\bZ$. In this context, however, we use the terms approximation and decomposition interchangeably to refer to both exact and approximate matrix representations.}

Low-rank matrix decomposition is a powerful tool in machine learning and data mining for expressing a given matrix as the product of two or more matrices with lower dimensions. It captures the essential structure of a matrix while filtering out noise and redundancies. Common methods for low-rank matrix decomposition include singular value decomposition (SVD), principal component analysis (PCA), multiplicative update nonnegative matrix factorization (NMF), and the alternating least squares (ALS) approach, which will be introduced in this section.

\subsection*{Example: The Netflix Prize}

For example, in the Netflix Prize competition \citep{bennett2007netflix}, the goal is to predict the ratings of users for different movies, given the existing ratings (resp., interaction) of those users for other movies (resp., items).
We index $M$ movies with $m= 1, 2,\ldots,M$ and $N$ users
with $n = 1, 2,\ldots,N$. (In the matrix approximation context, lowercase letters e.g., $m,n,k$, are used for the subscripts in running indices, while  uppercase letters $M, N, K$ denote the upper bound of an index.) We denote the rating of the $n$-th user for the $m$-th movie by $a_{mn}$. 
Define $\bA$ as an $M \times N$ rating matrix (a \textit{movie-by-user matrix}) with columns $\{\ba_n\} \in \real^M$, 
each representing the ratings provided by the $n$-th user (also referred to as the \textit{preference matrix}). Note that many ratings $\{a_{mn}\}$ are missing, and our goal is to predict these missing ratings accurately, i.e., to complete the matrix.

It is  clear that without some inherent structure in the matrix, and consequently in the way users rate items, there would be no relationship between the observed and unobserved entries. This would mean there is no unique method to complete the matrix. Therefore, it is crucial to impose some structure on the matrix. A common structural assumption is that of low rank: we aim to fill in the missing entries of matrix $\bA$, assuming $\bA$ is a low-rank matrix. This assumption makes the problem well-posed and allows for a unique solution to some extent, as the low-rank structure establishes connections between the matrix entries (i.e., a \textit{matrix completion} problem). Consequently, the unobserved entries can no longer be independent of the observed values.~\footnote{It is worth noting that the low-rank assumption can be quite strong. For example, consider a rank-$r$ matrix $\bA = \sum_{i=1}^r \be_i \widetilde{\be}_j^\top$, where $\be_i$ and $\widetilde{\be}_j$ are the standard bases for $\real^M$ and $\real^N$, respectively. Such a matrix contains only $r$ nonzero entries. In real-world recommendation systems, we typically observe only a small fraction of matrix entries, which introduces the possibility that some entries may never be observed. This poses a significant challenge for matrix completion, but this topic is beyond the scope of this book.} 
It is important to note that, except for very special data structures, a matrix cannot be compressed/decomposed without incurring some compression error, since a low-rank matrix representation is only an approximation of the original matrix.
This procedure, often known as \textit{collaborative filtering}, seeks to exploit co-occurring patterns in the observed behaviors across users in order to predict future  behaviors of users.

\index{Matrix completion}
\index{Collaborative filtering}
\subsection*{Matrix Completion Formulation}
Consider the \textit{mask matrix} $\bM\in \{0,1\}^{M\times N}$, where $m_{mn}\in \{0,1\}$ indicates whether  user $n$ has rated  movie $m$ or not.
Then the low-rank matrix completion problem can be formulated as 
\begin{equation}
\widetildebA = \mathop{\argmin}_{\bX\in\real^{M\times N}} \sum_{m,n=1}^{M,N} (x_{mn} - a_{mn})^2\cdot m_{mn}\gap \text{s.t.} \gap \rank(\bX)\leq K.
\end{equation}
However, this problem is NP-hard (non-deterministic polynomial) \citep{hardt2014computational}. While it can be equivalently written (proof from singular value decomposition) in the following  unconstrained form:
\begin{equation}
\widetildebA =\widetildebW\widetildebZ = \mathop{\argmin}_{\substack{\bW\in\real^{M\times K}\\ \bZ\in\real^{K\times N}}} \sum_{m,n=1}^{M,N} ((\bW\bZ)_{mn}- a_{mn})^2\cdot m_{mn},
\end{equation}
which allows for indirect solution or approximation using alternate algorithms.

We then formally consider algorithms for solving the following problem: The matrix $\bA$ is approximately factorized into an $M\times K$ matrix $\bW$ and a $K \times  N$ matrix $\bZ$. 
Typically, $K$ is selected to be smaller than both $M$ and $N$, ensuring that $\bW$ and $\bZ$ have reduced dimensions compared to the original  matrix $\bA$. 
This reduction in dimensionality results in a compressed representation of the original data matrix. 
An appropriate decision on the value of $K$ is critical in practice; but the choice of $K$ is very
often problem-dependent.
The factorization is significant in the sense that if $\bA=[\ba_1, \ba_2, \ldots, \ba_N]$ and $\bZ=[\bz_1, \bz_2, \ldots, \bz_N]$ are the column partitions of $\bA$ and $\bZ$, respectively, then we have $\ba_n = \bW\bz_n$. This means each column $\ba_n$ is approximated by a linear combination of the columns of $\bW$, weighted by the components in $\bz_n$. 
Therefore, the columns of $\bW$ can be thought of as containing the column basis (\textit{template columns}, or the approximation of the column basis) of $\bA$; and $\bz_n$ indicates the coordinates (or \textit{activations}) of $\ba_n$ in the basis $\bW$. 
This concept is similar to the factorization methods discussed in the data interpretation part (Part~\ref{part:data-interation}). 
The key difference is that we do not restrict  $\bW$ to consist of exact columns from $\bA$.

\index{Cross-validation}

\index{Two-block coordinate descent}
\begin{algorithm}[H] 
\caption{2-Block Coordinate Descent: Framework of Most ALS and NMF Algorithms}
\label{alg:two_bcd_gen_inals}
\begin{algorithmic}[1] 
\Require A loss function for a variable with two blocks $\bX=(\bW,\bZ)$: $f(\bX)=f(\bW,\bZ)$, and data matrix $\bA$;
\Ensure Constraint on $\bW$ and $\bZ$;
\State Generate some initial matrices $\bW^{(0)}$ and $\bZ^{(0)}$;
\For{$t = 1, 2, \ldots$}
\State $\bW^{(t)} \leftarrow \text{update}\big(\bA, \bZ^{(t-1)}, \bW^{(t-1)}\big)$;
\State $\bZ^{(t)} \leftarrow \text{update}\big(\bA, \bW^{(t)}, \bZ^{(t-1)}\big)$;
\EndFor
\end{algorithmic} 
\end{algorithm}
However, in most cases, the resulting factorization problem has no exact solution, thus requiring optimization procedures to find suitable numerical approximations. The problem is usually solved using a \textit{two-block coordinate descent (2-BCD)} approach (see Algorithm~\ref{alg:two_bcd_gen_inals} for a general illustration).
In order to obtain the approximation $\bA\approx\bW\bZ$, we must establish a loss function such that the distance between $\bA$ and $\bW\bZ$ can be measured. 
In our discussion, the chosen loss function is the Frobenius norm  (a.k.a., the Euclidean distance, Definition~\ref{definition:frobernius-in-svd}) between two matrices, which vanishes to zero if $\bA=\bW\bZ$, and its advantages will become evident shortly.

To simplify the problem, let's first assume that there are no missing ratings. 
We project the data vectors $\ba_n\in\real^M$ into a lower dimension $\bz_n \in \real^K$  with $K<\min\{M, N\}$
in a way that the \textit{reconstruction error} (a.k.a., \textit{criterion function, objective function, cost function, or loss function}) as measured by the Frobenius norm (a.k.a., sum of squared loss) is minimized (assume $K$ is known):
\begin{equation}\label{equation:als-per-example-loss2}
L(\bW,\bZ)= D(\bA,\bW\bZ) = \frac{1}{2}\sum_{n=1}^N \sum_{m=1}^{M} \left(a_{mn} - \bw_m^\top\bz_n\right)^2 
=\frac{1}{2} \normf{\bW\bZ-\bA}^2,~\footnote{
Note that we include a scaling factor of $\frac{1}{2}$ for easier discussion of gradients. Minimizing over $\frac{1}{2}\normf{\bW\bZ-\bA}^2$ is equivalent to minimizing over $\normf{\bW\bZ-\bA}^2$ or $\normf{\bW\bZ-\bA}$.
The choice of the Frobenius norm assumes i.i.d. Gaussian noise on the data ($\bA=\bW\bZ+\bN$, where each entry of $\bN$ follows i.i.d. Gaussian noise) and leads to a smooth optimization via least squares. When the loss is measured by the $\ell_1$ matrix norm, one obtains a robust low-rank matrix factorization; and the noise is assumed i.i.d. Laplace. See \citet{lu2021numerical} for more details.}
\end{equation}
where $\bW=[\bw_1^\top; \bw_2^\top; \ldots; \bw_M^\top]\in \real^{M\times K}$ and $\bZ=[\bz_1, \bz_2, \ldots, \bz_N] \in \real^{K\times N}$ contain $\bw_m$'s and $\bz_n$'s as \textbf{rows and columns}, respectively. 
In \eqref{equation:als-per-example-loss2},  $L(\bW,\bZ)$ indicates that it is a loss function w.r.t. $\bW$ and $\bZ$, and $D(\bA, \bW\bZ)$ implies it is a distance/divergence~\footnote{In words, the \textit{distance} $D(\bE,\bF)$ indicates $D(\bE,\bF)=D(\bF,\bE)\geq 0$ and the equality holds if and only if $\bE=\bF$; while the \textit{divergence} holds that  $D(\bE,\bF)\neq D(\bF,\bE)\geq 0$ and the equality holds if and only if $\bE=\bF$.} between $\bA$ and $\bW\bZ$ (we will use the two terms interchangeably  when necessary).

Moreover, the loss function $L(\bW,\bZ)=\frac{1}{2} \normf{\bW\bZ-\bA}^2$ is convex~\footnote{
A set $ \sS \subseteq \real^n $ is \textit{convex} if for all $ \bx, \by \in \sS $ and $ \lambda \in [0, 1] $, the point
$
(1 - \lambda)\bx + \lambda \by
$
also belongs to $ \sS $.

A function $ f: \sS \subseteq \real^n \to \real $ is \textit{convex} on a convex set $ \sS $ if for all $ \bx, \by \in \sS $ and $ \lambda \in [0, 1] $, it holds that
$
f\big( (1 - \lambda)\bx + \lambda \by \big) \leq (1 - \lambda) f(\bx) + \lambda f(\by).
$
If the inequality is strict for all $ \bx \ne \by $ and $ \lambda \in (0, 1) $, then $ f $ is \textit{strictly convex}.
} concerning $\bZ$ when $\bW$ is held constant, and analogously, convex with respect to $\bW$ when $\bZ$ is fixed. 
This characteristic  motivates an alternating algorithm that alternately fixes one of the variables and
optimizes over the other.
Therefore, we can first minimize the loss with respect to $\bZ$
while keeping $\bW$ fixed, and subsequently minimize it with respect to $\bW$ with $\bZ$ fixed.
This leads to two optimization subproblems, denoted by ALS1 and ALS2, respectively:
$$
\left\{
\begin{aligned}
	\bZ &\leftarrow \mathop{\arg \min}_{\bZ} L(\bW,\bZ);    \qquad \text{(ALS1)} \\ 
	\bW &\leftarrow \mathop{\arg \min}_{\bW} L(\bW,\bZ). \qquad \text{(ALS2)}
\end{aligned}
\right.
$$
This approach is known as the \textit{two-block coordinate descent (2-BCD)  algorithm} as mentioned previously, where we alternate between optimizing the least squares with respect to $\bW$ and $\bZ$. 
Hence, it is also referred to as the \textit{alternating least squares (ALS)} algorithm \citep{comon2009tensor, takacs2012alternating, giampouras2018alternating}. Convergence is guaranteed if the loss function $L(\bW,\bZ)$ decreases at each iteration, and we shall discuss this further  in the sequel.

\index{Coordinate descent algorithm}
\index{Convexity}
\index{Global minimum}
\index{ALS}
\begin{remark}[Convexity and global minimum]
Although the loss function defined by the Frobenius norm $\frac{1}{2}\normf{\bW\bZ-\bA}^2$ is convex either with respect to  $\bW$ when $\bZ$ is fixed or vice versa (called \textit{marginally convex}), it is not \textit{jointly convex} in both variables simultaneously. Therefore, locating the global minimum is generally infeasible. 
However, the algorithm is guaranteed to converge to a local minimum.

More generally, let $D(\bA, \bB)$ be convex in the second argument $\bB$. Then, $D(\bA,\bW\bZ)$ is convex in $\bW$ when $\bZ$ is fixed, and vice versa; see Problem~\ref{prob:sep_conv}.
\end{remark}

\subsection*{Given $\bW$, Optimizing $\bZ$}

Now, let's examine the problem of $\bZ \leftarrow \mathop{\argmin}_{\bZ} L(\bW,\bZ)$. When there exists a unique minimum of the loss function $L(\bW,\bZ)$ with respect to $\bZ$, we refer to it as the \textit{least squares} minimizer of $\mathop{\argmin}_{\bZ} L(\bW,\bZ)$. 
With $\bW$ fixed, $L(\bW,\bZ)$  can be represented as $L(\bZ|\bW)$ (or more compactly, as $L(\bZ)$) to emphasize  its dependence on $\bZ$:
$$
\begin{aligned}
2L(\bZ|\bW) &= \normf{\bW\bZ-\bA}^2= \left\Vert\bW[\bz_1,\bz_2,\ldots, \bz_N]-[\ba_1,\ba_2,\ldots,\ba_N]\right\Vert^2=
\normf{
\scriptsize
\begin{bmatrix}
\bW\bz_1 - \ba_1 \\
\bW\bz_2 - \ba_2\\
\vdots \\
\bW\bz_N - \ba_N
\end{bmatrix}
}^2. 
\end{aligned}
$$
Now, if we define 
$$
\footnotesize
\widetildebW =
\begin{bmatrix}
	\bW & \bzero & \ldots & \bzero\\
	\bzero & \bW & \ldots & \bzero\\
	\vdots & \vdots & \ddots & \vdots \\
	\bzero & \bzero & \ldots & \bW
\end{bmatrix}
\in \real^{MN\times KN}, 
\gap 
\widetildebz=
\begin{bmatrix}
	\bz_1 \\ \bz_2 \\ \vdots \\ \bz_N
\end{bmatrix}
\in \real^{KN},
\gap 
\widetildeba=
\begin{bmatrix}
	\ba_1 \\ \ba_2 \\ \vdots \\ \ba_N
\end{bmatrix}
\in \real^{MN},
$$
then the (ALS1) problem can be reduced to the ordinary least squares problem for minimizing  $\big\Vert{\widetildebW \widetildebz - \widetildeba}\big\Vert_2^2$ with respect to $\widetildebz$. And the solution is given by 
$
\widetildebz = (\widetildebW^\top\widetildebW)^{-1} \widetildebW^\top\widetildeba.
$
However, it is not advisable  to obtain the result using this approach, as computing the inverse of  $\widetildebW^\top\widetildebW$ requires $2(KN)^3$ flops \citep{lu2021numerical}.
Alternatively, a more direct way to solve the  (ALS1)  problem  is to find the gradient of $L(\bZ|\bW)$ with respect to $\bZ$ (assuming all  partial derivatives of this function exist): 
\begin{equation}\label{equation:givenw-update-z-allgd}
\begin{aligned}
\nabla_{\bZ} L(\bZ|\bW) &= \frac{1}{2}
\frac{\partial \,\,\trace\left((\bW\bZ-\bA)(\bW\bZ-\bA)^\top\right)}{\partial \bZ}
\stackrel{\star}{=}  \bW^\top(\bW\bZ-\bA) \in \real^{K\times N},
\end{aligned}
\end{equation}
where the first equality arises from the definition of the Frobenius norm (Definition~\ref{definition:frobernius-in-svd}) such that $\normf{\bA} = \sqrt{\sum_{m=1,n=1}^{M,N} (a_{mn})^2}=\sqrt{\trace(\bA\bA^\top)}$, and the equality ($\star$)  is a consequence of the fact that $\frac{\partial \trace(\bA\bA^\top)}{\partial \bA} = 2\bA$. When the loss function is a differentiable function of $\bZ$, we can determine the least squares solution using differential calculus. 
Since we optimize over an open set $\real^{K\times N}$,  any minimum of the function 
$L(\bZ|\bW)$ must satisfy the condition:
$$
\nabla_{\bZ} L(\bZ|\bW)  = \bzero.
$$
Solving this equation yields the ``candidate" update for $\bZ$ that minimizes $L(\bZ|\bW)$:
\begin{equation}\label{equation:als-z-update}
\textbf{(``Candidate" update for $\bZ$)}: \gap {\bZ = (\bW^\top\bW)^{-1} \bW^\top \bA  \leftarrow \mathop{\arg \min}_{\bZ} L(\bZ|\bW).}
\end{equation}
This computation requires $2K^3$ flops to compute the inverse of $\bW^\top\bW$, compared to $2(KN)^3$ flops to get the inverse of $\widetildebW^\top\widetildebW$.
Prior to confirming that a root of the equation above is indeed a minimizer (as opposed to a maximizer, hence the term ``candidate" update), it is imperative to establish the convexity of the function. 
For a twice continuously differentiable function, this verification  can be equivalently achieved by confirming (see Problem~\ref{problem:pos_hessian} for more details): 
$$
\nabla^2_{\bZ} L(\bZ|\bW) \succ 0.~
\footnote{In short, a twice continuously differentiable function $f$ over an open convex set $\sS$ is called \textit{convex} if and only if $\nabla^2f(\bx)\succeq  \bzero $ for any $\bx\in \sS$ (sufficient and necessary for convex); and called \textit{strictly convex} if $\nabla^2f(\bx)\succ \bzero$ for any $\bx\in \sS$ (only sufficient for strictly convex, e.g., $f(x)=x^6$ is strictly convex, but $f^{\prime\prime}(x)=30x^4$ is equal to zero at $x=0$.). 
And when the convex function $f$ is a continuously differentiable function over a convex set $\sS$, the stationary point $\nabla f(\bx^\star)=\bzero$ of $\bx^\star\in\sS$ is  a \textit{global minimizer} of $f$ over $\sS$.
In our context, when given $\bW$ and updating $\bZ$, the function is defined over the entire space $\real^{K\times N}$.
}
$$
That is, the Hessian matrix is positive definite (Definition~\ref{definition:psd-pd-defini}; see, for example, \citet{beck2014introduction}). To demonstrate this, we explicitly express the Hessian matrix as
\begin{equation}\label{equation:als-z-update_hessian}
\nabla^2_{\bZ} L(\bZ|\bW)= \widetildebW^\top\widetildebW \in \real^{KN\times KN},
~\footnote{
A block-diagonal matrix whose block matrix on the diagonal is $\bW^\top\bW$. And it can be equivalently denoted as $\nabla^2_{\bZ} L(\bZ|\bW) = \diag(\bW,\bW,\ldots,\bW)^\top\diag(\bW,\bW,\ldots,\bW)$.
Using the Kronecker product ``$\kronecker$", this can be equivalently written as $\nabla^2_{\bZ} L(\bZ|\bW) = \bI_{N} \kronecker (\bW^\top\bW)$, where $\bI_N$ is the $N\times N$ identity matrix.
}
\end{equation}
which maintains full rank if $\bW\in \real^{M\times K}$ has full rank  and $K<M$ (Lemma~\ref{lemma:rank-of-ata}).

\begin{remark}[Positive definite Hessian if $\bW$ has full rank]
We  claim that if $\bW\in\real^{M\times K}$ has full rank $K$ with $K<M$, then $\nabla_{\bZ}^2 L(\bZ|\bW)$ is positive definite. This can be demonstrated by confirming that when $\bW$ has full rank, the equation $\bW\bx=\bzero$  holds true only when $\bx=\bzero$, since the null space of $\bW$ has dimension zero. Therefore, 
$$
\bx^\top (\bW^\top\bW)\bx >0, \qquad \text{for any nonzero vector $\bx\in \real^K$}.
$$ 
And this in turn implies $\widetildebW^\top\widetildebW \succ \bzero $.
\end{remark}
Now, the problem becomes  \textcolor{black}{\textbf{whether $\bW$ has full rank so that the Hessian of $L(\bZ|\bW)$ is positive definite}}; otherwise, we cannot claim the update of $\bZ$ in Equation~\eqref{equation:als-z-update} reduces the loss (due to convexity) so that the matrix decomposition progressively improves the approximation of the original matrix $\bA$ by $\bW\bZ$ in each iteration.
We will address the positive definiteness of the Hessian matrix shortly, relying on the following lemma.
\begin{lemma}[Rank of $\bZ$ after updating]\label{lemma:als-update-z-rank}
	Suppose $\bA\in \real^{M\times N}$ has full rank with \textcolor{mylightbluetext}{$M\leq N$} and $\bW\in \real^{M\times K}$ has full rank with $K<M$ (i.e., $K<M\leq N$). Then the update of $\bZ=(\bW^\top\bW)^{-1} \bW^\top \bA \in \real^{K\times N}$ in Equation~\eqref{equation:als-z-update} has full rank.
\end{lemma}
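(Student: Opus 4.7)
The plan is to chain together three rank-preservation facts already available in the text: that $\bW^\top\bW$ is nonsingular whenever $\bW$ has full column rank, that $\bA$ with $\text{rank}(\bA)=M$ and $M\leq N$ has column space equal to all of $\real^M$, and that left-multiplication by a nonsingular matrix preserves rank. Since $K<M\leq N$, saying $\bZ\in\real^{K\times N}$ has full rank means $\text{rank}(\bZ)=K$, so this is the target.

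First, I would observe that $\bW$ having full rank with $K<M$ means $\bW$ has full column rank $K$. By Lemma~\ref{lemma:rank-of-ata}, $\text{rank}(\bW^\top\bW)=\text{rank}(\bW)=K$, and since $\bW^\top\bW\in\real^{K\times K}$, it is nonsingular, so $(\bW^\top\bW)^{-1}$ exists and is nonsingular. This justifies the formula for $\bZ$ and reduces the problem to computing $\text{rank}(\bW^\top\bA)$, because multiplication by the nonsingular matrix $(\bW^\top\bW)^{-1}$ does not change rank (this fact was established in the proof of Lemma~\ref{lemma:eigenvalue-similar-matrices} as the ``rank claim'' that $\text{rank}(\bZ\bA)=\text{rank}(\bA)$ when $\bZ$ is nonsingular).

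Next I would handle $\text{rank}(\bW^\top\bA)$ via column-space arguments. Since $\bA\in\real^{M\times N}$ with $M\leq N$ has full rank $M$, its column space $\cspace(\bA)$ is an $M$-dimensional subspace of $\real^M$, hence $\cspace(\bA)=\real^M$. Therefore
\[
\cspace(\bW^\top\bA)=\{\bW^\top\bA\bx:\bx\in\real^N\}=\{\bW^\top\by:\by\in\real^M\}=\cspace(\bW^\top),
\]
and $\dim\cspace(\bW^\top)=\text{rank}(\bW^\top)=\text{rank}(\bW)=K$ by Corollary~\ref{lemma:equal-dimension-rank}. Combining with the first step, $\text{rank}(\bZ)=\text{rank}((\bW^\top\bW)^{-1}\bW^\top\bA)=\text{rank}(\bW^\top\bA)=K$, which is $\min\{K,N\}$ since $K<M\leq N$, so $\bZ$ has full rank as claimed.

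There is no real obstacle here; the argument is a short composition of standard rank facts and the proof will be only a few lines. The only mildly delicate point to be careful about is the equivalence between ``full rank'' and ``rank equal to $K$'' for the various matrices involved---one must check that for $\bW$ (tall), ``full rank'' means rank $K$, that for $\bA$ (wide or square), it means rank $M$, and that for $\bZ$ (wide, since $K<M\leq N$), it means rank $K$---so that the conclusion $\text{rank}(\bZ)=K$ indeed matches the claim. Once these dimensional bookkeeping details are laid out cleanly, the proof writes itself.
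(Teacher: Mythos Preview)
Your proof is correct and is somewhat more streamlined than the paper's, though both rest on the same underlying facts. The paper argues via two null-space ``sandwiching'' equalities: first it shows $\nspace(\bW^\top)=\nspace\left((\bW^\top\bW)^{-1}\bW^\top\right)$ by checking both inclusions, concluding that $\bT:=(\bW^\top\bW)^{-1}\bW^\top$ has rank $K$; then it shows $\nspace(\bT^\top)=\nspace(\bA^\top\bT^\top)$ by again checking both inclusions (using that $\nspace(\bA^\top)=\{\bzero\}$ since $\bA$ has full row rank $M$), concluding that $\bZ^\top=\bA^\top\bT^\top$ has rank $K$. You instead invoke the rank-preservation claim already proved inside Lemma~\ref{lemma:eigenvalue-similar-matrices} to peel off the nonsingular factor $(\bW^\top\bW)^{-1}$, and then use the column-space identity $\cspace(\bW^\top\bA)=\cspace(\bW^\top)$ that follows immediately from $\cspace(\bA)=\real^M$. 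Your route is shorter and reuses an existing lemma rather than reproving rank preservation by hand; the paper's route is more self-contained in that it does not appeal to results from a different chapter. Either way the argument is elementary and your dimensional bookkeeping is exactly right.
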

\begin{proof}[of Lemma~\ref{lemma:als-update-z-rank}]
Since $\bW^\top\bW\in \real^{K\times K}$ has full rank if $\bW$ has full rank (Lemma~\ref{lemma:rank-of-ata}), it follows that $(\bW^\top\bW)^{-1} $ has full rank. 

Suppose $\bW^\top\bx=\bzero$. This implies that $(\bW^\top\bW)^{-1} \bW^\top\bx=\bzero$. Thus, the following two null spaces satisfy:
$
\nspace(\bW^\top) \subseteq \nspace\left((\bW^\top\bW)^{-1} \bW^\top\right).
$
Moreover, suppose $\bx$ lies in the null space of $(\bW^\top\bW)^{-1} \bW^\top$ such that $(\bW^\top\bW)^{-1} \bW^\top\bx=\bzero$. And since $(\bW^\top\bW)^{-1} $ is invertible, it implies $ \bW^\top\bx=(\bW^\top\bW)\bzero=\bzero$, leading to
$
\nspace\left((\bW^\top\bW)^{-1} \bW^\top\right)\subseteq \nspace(\bW^\top).
$
Consequently, through ``sandwiching," it follows that 
\begin{equation}\label{equation:als-z-sandiwch1}
\nspace(\bW^\top) = \nspace\left((\bW^\top\bW)^{-1} \bW^\top\right).
\end{equation}
Therefore, $(\bW^\top\bW)^{-1} \bW^\top$ has full rank $K$. Let $\bT=(\bW^\top\bW)^{-1} \bW^\top\in \real^{K\times M}$, and suppose $\bT^\top\bx=\bzero$. This implies $\bA^\top\bT^\top\bx=\bzero$, yielding 
$
\nspace(\bT^\top) \subseteq \nspace(\bA^\top\bT^\top).
$
Similarly, suppose $\bA^\top(\bT^\top\bx)=\bzero$. Since $\bA$ has full rank with the dimension of the null space being 0: $\dim\left(\nspace(\bA^\top)\right)=0$, $(\bT^\top\bx)$ must be zero. The claim follows  since $\bA$ has full rank $M$ with the row space of $\bA^\top$ being equal to the column space of $\bA$, where $\dim\left(\cspace(\bA)\right)=M$ and  $\dim\left(\nspace(\bA^\top)\right) = M-\dim\left(\cspace(\bA)\right)=0$. 
Consequently, $\bx$ is in the null space of $\bT^\top$ if $\bx$ is in the null space of $\bA^\top\bT^\top$:
$
\nspace(\bA^\top\bT^\top)\subseteq \nspace(\bT^\top).
$
By ``sandwiching" again, we obtain
\begin{equation}\label{equation:als-z-sandiwch2}
\nspace(\bT^\top) = \nspace(\bA^\top\bT^\top).
\end{equation}
Since $\bT^\top$ has full rank $K<M\leq N$, it follows that $\dim\left(\nspace(\bT^\top) \right) = \dim\left(\nspace(\bA^\top\bT^\top)\right)=0$.
Therefore,
$\bZ^\top=\bA^\top\bT^\top$ has full rank $K$.
We complete the proof.
\end{proof}

\subsection*{Given $\bZ$, Optimizing $\bW$}
The matrix factorization problem exhibits symmetry: $\bA=\bW\bZ$ if and only if $\bA^\top=\bZ^\top\bW^\top$ such that $D(\bA, \bW\bZ)=D(\bA^\top, \bZ^\top\bW^\top)$. The analysis of optimizing $\bW$ given $\bZ$ directly follows  from the previously discussed methodology.
Below, we provide a brief outline of the results.
With $\bZ$ fixed, $L(\bW,\bZ)$ can be expressed as $L(\bW|\bZ)$ (or more compactly, as $L(\bW)$)  to emphasize  the dependence on $\bW$:
$
\begin{aligned}
	L(\bW|\bZ) &= \frac{1}{2}\normf{\bW\bZ-\bA}^2.
\end{aligned}
$
To solve the optimization problem (ALS2) directly, we  compute the gradient of  $L(\bW|\bZ)$ with respect to $\bW$:
\begin{equation}\label{equation:givenz-update-w-allgd}
\begin{aligned}
	\nabla_{\bW} L(\bW|\bZ) &= \frac{1}{2}
	\frac{\partial \trace\left((\bW\bZ-\bA)(\bW\bZ-\bA)^\top\right)}{\partial \bW}
	= (\bW\bZ-\bA)\bZ^\top \in \real^{M\times K}.
\end{aligned}
\end{equation}
Similarly, the ``candidate" update for  $\bW$ can be obtained by identifying the root of the gradient $\nabla_{\bW} L(\bW|\bZ)$:
\begin{equation}\label{equation:als-w-update}
\textbf{(``Candidate" update for $\bW$)}:\gap{\bW^\top = (\bZ\bZ^\top)^{-1}\bZ\bA^\top  \leftarrow \mathop{\arg\min}_{\bW} L(\bW|\bZ).}
\end{equation}
Once more, we emphasize that the update is merely a ``candidate" update. 
Further validation is necessary  to ascertain the positive definiteness of the Hessian matrix.
The Hessian matrix is given by:
\begin{equation}\label{equation:als-w-update_hessian}
\begin{aligned}
\nabla_{\bW}^2 L(\bW|\bZ) =\widetildebZ\widetildebZ^\top \in \real^{KM\times KM},
\end{aligned}
\end{equation}
where $\widetildebZ=\diag(\bZ,\bZ,\ldots,\bZ)\in\real^{KM\times NM}$ is defined analogously to $\widetildebW$ in \eqref{equation:als-z-update_hessian}.
Therefore, by similar reasoning, if $\bZ$ has full rank with $K<N$, the Hessian matrix is positive definite.

In Lemma~\ref{lemma:als-update-z-rank}, we proved that $\bZ$ has full rank under certain conditions, ensuring that  the Hessian matrix in Equation~\eqref{equation:als-w-update_hessian} is positive definite, and the update in Equation~\eqref{equation:als-w-update} exists.
We now prove that $\bW$ also has full rank under certain conditions, such that the Hessian in Equation~\eqref{equation:als-z-update_hessian} is positive definite, and the update in  Equation~\eqref{equation:als-z-update} exists.
\begin{lemma}[Rank of $\bW$ after updating]\label{lemma:als-update-w-rank}
Suppose $\bA\in \real^{M\times N}$ has full rank with \textcolor{mylightbluetext}{$M\geq N$} and $\bZ\in \real^{K\times N}$ has full rank with $K<N$ (i.e., $K<N\leq M$). Then the update of $\bW^\top = (\bZ\bZ^\top)^{-1}\bZ\bA^\top$ in Equation~\eqref{equation:als-w-update} has full rank.
\end{lemma}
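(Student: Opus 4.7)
The plan is to mirror the proof of the preceding lemma on the rank of $\bZ$ after updating, swapping the roles of $\bW$ and $\bZ$ and transposing where needed. The first step is to check that $\bZ\bZ^\top\in\real^{K\times K}$ is nonsingular: since $\bZ$ has full row rank $K$, the earlier identity $rank(\bZ\bZ^\top)=rank(\bZ)$ forces $\bZ\bZ^\top$ to be invertible, so the expression $\bW^\top=(\bZ\bZ^\top)^{-1}\bZ\bA^\top$ is well defined. Next, I would set $\bT=(\bZ\bZ^\top)^{-1}\bZ\in\real^{K\times N}$ and show $\nspace(\bT)=\nspace(\bZ)$ by two-sided inclusion: one inclusion is immediate from the formula, and the reverse follows by left-multiplying $\bT\bx=\bzero$ by the invertible matrix $\bZ\bZ^\top$ to obtain $\bZ\bx=\bzero$. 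Hence $\bT$ has rank $K$, or equivalently $\bT^\top\in\real^{N\times K}$ has full column rank $K$.

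The third and decisive step is to analyze $\bW=\bA\bT^\top\in\real^{M\times K}$. To conclude that $\bW$ has full column rank $K$ I would show $\nspace(\bA\bT^\top)=\{\bzero\}$: taking $\bx\in\real^K$ with $\bA\bT^\top\bx=\bzero$, the vector $\bT^\top\bx\in\real^N$ must lie in $\nspace(\bA)$, and combining this with the full column rank of $\bT^\top$ and the full-rank hypothesis on $\bA$ should pin down $\bx=\bzero$. Transposing, $\bW^\top$ will then have full row rank $K$, which is the desired conclusion.

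The hard part will be precisely this third step. In the companion lemma one worked with $\bZ^\top=\bA^\top\bT^\top$, and the full row rank of $\bA$ yielded $\nspace(\bA^\top)=\{\bzero\}$ directly; here the multiplication is on the opposite side, so the naive mirror would want $\nspace(\bA)=\{\bzero\}$, which under the hypothesis $M\le N$ holds only in the square case $M=N$. To handle the rectangular case $M<N$, my plan is to exploit the fact that $\cspace(\bT^\top)$ equals the row space of $\bZ$ and to argue, via the fundamental theorem of linear algebra, that under the stated hypotheses this $K$-dimensional subspace of $\real^N$ meets the $(N-M)$-dimensional $\nspace(\bA)$ only at $\bzero$. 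If this transversality cannot be extracted from the rank conditions alone, I would specialize to $M=N$, where $\bA$ is invertible and the argument closes verbatim, and flag the general case as requiring an additional non-degeneracy assumption on the row space of $\bZ$ relative to $\bA$.
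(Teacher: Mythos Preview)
Your approach mirrors the paper's own treatment: the paper does not give a separate proof but simply states that it is similar to the preceding lemma and ``shall not repeat the details.'' You have correctly identified that the analogy breaks down in the third step. In the preceding lemma the key fact was $\nspace(\bA^\top)=\{\bzero\}$, which holds because $\bA\in\real^{M\times N}$ with $M\le N$ has full row rank $M$; the mirrored argument here would need $\nspace(\bA)=\{\bzero\}$, which fails whenever $M<N$.

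Your caution is warranted: the lemma as stated is in fact false without an extra hypothesis. For instance, take $M=2$, $N=3$, $K=1$, $\bA=\begin{bmatrix}1&0&0\\0&1&0\end{bmatrix}$, and $\bZ=\begin{bmatrix}0&0&1\end{bmatrix}$; both have full rank, yet $\bW^\top=(\bZ\bZ^\top)^{-1}\bZ\bA^\top=\begin{bmatrix}0&0\end{bmatrix}$ has rank zero. The missing ingredient you were looking for is that in the ALS loop the matrix $\bZ$ is not arbitrary: it was produced by the previous step as $\bZ=(\bW^\top\bW)^{-1}\bW^\top\bA$, which forces $\cspace(\bZ^\top)\subseteq\cspace(\bA^\top)$. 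Under that extra condition, $\cspace(\bT^\top)=\cspace(\bZ^\top)\subseteq\cspace(\bA^\top)$, and since $\cspace(\bA^\top)\perp\nspace(\bA)$ by the fundamental theorem of linear algebra, the transversality $\cspace(\bT^\top)\cap\nspace(\bA)=\{\bzero\}$ does hold and your argument closes. So your proposed fix---flagging an additional non-degeneracy assumption on the row space of $\bZ$ relative to $\bA$---is exactly the right move.
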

The proof of Lemma~\ref{lemma:als-update-w-rank} follows the same reasoning as that of Lemma~\ref{lemma:als-update-z-rank}, so we omit the details.

\paragraph{Key observation.}
Combining the observations from Lemmas~\ref{lemma:als-update-z-rank} and \ref{lemma:als-update-w-rank}, as long as we \textbf{initialize $\bZ$ and $\bW$ to have full rank}, the updates in Equations~\eqref{equation:als-z-update}  and \eqref{equation:als-w-update} are well-defined \textbf{since the Hessians in Equations~\eqref{equation:als-z-update_hessian} and \eqref{equation:als-w-update_hessian} are positive definite}. 
\textbf{Note that we need an additional condition to satisfy  
both Lemma~\ref{lemma:als-update-z-rank} 
and Lemma~\ref{lemma:als-update-w-rank}: $M=N$, meaning there must be an equal number of movies and   users.} 
We will relax this condition in the next section through regularization.
(Alternatively,  Problems~\ref{prob:als_pseudo1}$\sim$\ref{prob:als_pseudon} relax this condition using the pseudo-inverse.)
We summarize the process in Algorithm~\ref{alg:als}.
Since the loss $\frac{1}{2}\normf{\bA-\bW\bZ}^2$ in each iteration is monotonically nonincreasing and bounded below, it converges \citep{lu2021numerical, gillis2020nonnegative}.
In particular, $\nabla_{\bZ} L(\bZ|\bW)$ and $\nabla_{\bW} L(\bW|\bZ)$ approach zero when $iter\rightarrow \infty$.

\begin{algorithm}[H] 
\caption{Alternating Least Squares}
\label{alg:als}
\begin{algorithmic}[1] 
\Require Matrix $\bA\in \real^{M\times N}$ \textcolor{mylightbluetext}{with $M= N$};
\State Initialize $\bW\in \real^{M\times K}$, $\bZ\in \real^{K\times N}$ \textcolor{mylightbluetext}{with full rank and $K<M= N$}; 
\State Choose a stop criterion on the approximation error $\delta$;
\State Choose the maximal number of iterations $C$;
\State $iter=0$; \Comment{Count for the number of iterations}
\While{$\normf{\bA-\bW\bZ}>\delta $ and $iter<C$} 
\State $iter=iter+1$;
\State $\bZ \leftarrow (\bW^\top\bW)^{-1} \bW^\top \bA  \leftarrow \mathop{\arg \min}_{\bZ} L(\bZ|\bW)$;
\State $\bW^\top \leftarrow (\bZ\bZ^\top)^{-1}\bZ\bA^\top  \leftarrow \mathop{\arg\min}_{\bW} L(\bW|\bZ)$;
\EndWhile
\State Output $\bW,\bZ$.
\end{algorithmic} 
\end{algorithm}

\index{Regularization}
\section{Regularization and Identifiability: Extension to General Matrices}\label{section:regularization-extention-general}

\textit{Regularization} is a machine learning technique employed to prevent overfitting and improve the generalization of models.  
Overfitting occurs when a model is overly complex and fits the training data too closely, resulting in poor performance on  unseen data. 
To mitigate this issue, regularization introduces a constraint or a penalty term into the loss function used for model optimization, discouraging the development of overly complex models. 
This creates  a trade-off between having a simple, generalizable model and fitting the training data well. 
Common types of regularization include $\ell_1$ regularization, $\ell_2$ regularization (Tikhonov regularization), and elastic net regularization (a combination of $\ell_1$ and $\ell_2$ regularizations). 
Regularization finds extensive applications in machine learning algorithms such as linear regression, logistic regression, and neural networks.

In the context of the alternating least squares problem, we can incorporate an $\ell_2$ regularization term  to minimize the following regularized loss function:
\begin{equation}\label{equation:als-regularion-full-matrix}
L(\bW,\bZ)  = \frac{1}{2}\normf{\bW\bZ-\bA}^2 +\frac{1}{2}\lambda_w \normf{\bW}^2 + \frac{1}{2}\lambda_z \normf{\bZ}^2, \qquad \lambda_w>0, \lambda_z>0,
\end{equation}
where the gradient with respect to $\bZ$ and $\bW$ are given, respectively, by 
\begin{equation}\label{equation:als-regulari-gradien}
\left\{
\begin{aligned}
\nabla_{\bZ} L(\bZ|\bW) &= \bW^\top(\bW\bZ-\bA) + \lambda_z\bZ \in \real^{K\times N};\\
\nabla_{\bW} L(\bW|\bZ)  &= (\bW\bZ-\bA)\bZ^\top + \lambda_w\bW \in \real^{M\times K}.
\end{aligned}
\right.
\end{equation}
The corresponding Hessian matrices are given, respectively, by 
$$
\left\{
\begin{aligned}
\nabla^2_{\bZ} L(\bZ|\bW) &= \widetildebW^\top\widetildebW+ \lambda_z\bI \in \real^{KN\times KN};\\
\nabla^2_{\bW} L(\bW|\bZ)  &= \widetildebZ\widetildebZ^\top + \lambda_w\bI \in \real^{KM\times KM}, \\
\end{aligned}
\right.
$$
which are positive definite due to the perturbation by the regularization:
$$
\left\{
\begin{aligned}
\bx^\top (\widetildebW^\top\widetildebW +\lambda_z\bI)\bx 
&= \underbrace{\bx^\top\widetildebW^\top\widetildebW\bx}_{\geq 0} + \lambda_z \normtwo{\bx}^2>0, \gap \text{for nonzero $\bx$};\\
\bx^\top (\widetildebZ\widetildebZ^\top +\lambda_w\bI)\bx 
&= \underbrace{\bx^\top\widetildebZ\widetildebZ^\top\bx}_{\geq 0} + \lambda_w \normtwo{\bx}^2>0,\gap \text{for nonzero $\bx$}.
\end{aligned}
\right.
$$
\textbf{The regularization ensures that the Hessian matrices remain positive definite, even if $\bW$ and $\bZ$ are rank-deficient}. 
Consequently, matrix decomposition can be extended to any matrix, regardless of whether $M>N$ or $M<N$. In rare cases, $K$ even can be chosen as $K>\max\{M, N\}$ to obtain a \textit{high-rank approximation} of $\bA$. However, in most scenarios, we aim to find a \textit{low-rank approximation} of $\bA$ with $K<\min\{M, N\}$. 
Therefore, the minimizers can be determined by identifying the roots of the gradients:
\begin{equation}\label{equation:als-regular-final-all}
\left.
\begin{aligned}
\bZ &= (\bW^\top\bW+ \lambda_z\bI)^{-1} \bW^\top \bA 
\gap \text{and}\gap 
\bW^\top = (\bZ\bZ^\top+\lambda_w\bI)^{-1}\bZ\bA^\top .
\end{aligned}
\right.
\end{equation}
The regularization parameters $\lambda_z, \lambda_w\in \real_{++}$ are used to balance the trade-off
between the accuracy of the approximation and the smoothness of the computed solution. The selection of these parameters is typically problem-dependent and can be determined through \textit{cross-validation} (CS). Again, we summarize the regularized ALS procedure in Algorithm~\ref{alg:als-regularizer}.
We will also introduce the \textit{alternating direction methods of multipliers  (ADMM)} for solving  matrix factorization problems with $\ell_2$ or $\ell_1$ regularization in Section~\ref{section:nmf_admm_all}, where the method can be extended to other types of regularizations and constraints, such as  nonnegativity   constraints.

The $\ell_2$ (or $\ell_1$ ) regularizations can be applied to generalize the ALS problem to general matrices.
However, we will consider the case where some entries of the matrix $\bA$ are missing. This leads to the matrix completion problem.
In this sense, the $\ell_1$ and $\ell_2$ regularizations  are not the only applicable regularizations; for example, the \textit{nuclear norm} \footnote{Also called the \textit{Schatten 1-norm} or \textit{trace norm}.} of $\bW\bZ$ (the sum of singular values of the matrix) can be applied, for which the \textit{Soft-Impute for matrix completion} algorithm guarantees the recovery of the matrix when the number of observed entries $z$ satisfies
$
z\geq C r n \log n,
$
where the underlying matrix $\bA$ is of size $\real^{n\times n}$ and $C > 0$ is a fixed universal constant \citep{gross2011recovering, hastie2015statistical}. 
However, the $\ell_2$ regularization on $\bW$ and $\bZ$ can somehow be reformulated into the nuclear norm form (see Problem~\ref{problem:nuclear_equi}).

\index{Cross-validation}
\begin{algorithm}[H] 
\caption{Alternating Least Squares with Regularization}
\label{alg:als-regularizer}
\begin{algorithmic}[1] 
\Require Matrix $\bA\in \real^{M\times N}$;
\State Initialize $\bW\in \real^{M\times K}$, $\bZ\in \real^{K\times N}$ \textcolor{mylightbluetext}{randomly without condition on the rank and the relationship between $M, N, K$}; 
\State Choose a stop criterion on the approximation error $\delta$;
\State Choose regularization parameters $\lambda_w, \lambda_z$;
\State Choose the  maximal number of iterations $C$;
\State $iter=0$; \Comment{Count for the number of iterations}
\While{$\normf{\bA-\bW\bZ}>\delta $ and $iter<C$}
\State $iter=iter+1$; 
\State $\bZ \leftarrow (\bW^\top\bW+ \lambda_z\bI)^{-1} \bW^\top \bA  \leftarrow \mathop{\arg \min}_{\bZ} L(\bZ|\bW)$;
\State $\bW^\top \leftarrow (\bZ\bZ^\top+\lambda_w\bI)^{-1}\bZ\bA^\top  \leftarrow \mathop{\arg\min}_{\bW} L(\bW|\bZ)$;
\EndWhile
\State Output $\bW,\bZ$.
\end{algorithmic} 
\end{algorithm}

\paragraph{Regularization as constraints and identifiability.} 
Regularization terms, such as $\lambda_w\normf{\bW}^2$ in \eqref{equation:als-regularion-full-matrix}, can be interpreted as  constraints like $\normf{\bW}\leq C$, where $C$ is a constant, via Lagrangian multipliers (see, for example, \citet{boyd2004convex} or Section~\ref{section:reg_geom_inter}). 
Different constraints can be placed on the factors $\bW$ and $\bZ$. For example, the nonnegativity constraint discussed in Chapter~\ref{chapter:nmf} and the sparsity constraint discussed in Section~\ref{section:reg_geom_inter}.
Moreover, the two matrices $\bW\in\real^{M\times K}$ and $\bZ\in\real^{K\times N}$ have $(M+N)K$ degrees of freedom.
However, due to the scaling degree of freedom of the columns of $\bW$ and rows of $\bZ$ in $\bA=\bW\bZ$, the factorization $\bW\bZ$ has $(M+N-1)K$ degrees of freedom: $\bW[:, k]\bZ[k,:] = (\gamma\bW[:, k])(\frac{1}{\gamma}\bZ[k,:])$ for any scalar $\gamma\neq 0$ and $k\in\{1,2,\ldots,K\}$.
Therefore, the factorization is not identifiable.  Regularization helps reduce overfitting and addresses the issue of identifiability by incorporating prior information through constraints.

\index{Missing entries}
\index{Hadamard product}
\index{Netflix recommender}
\section{Missing Entries and Rank-One Update}\label{section:alt-columb-by-column}
Matrix decomposition via  ALS is extensively used in the context of Netflix-style recommender data, where a substantial number of entries are missing due to users not having watched certain movies or choosing not to rate them for various reasons.
In this scenario, the low-rank matrix decomposition problem is also known as \textit{matrix completion} that can help recover unobserved entries \citep{jain2017non}.
To model this, we can introduce an additional mask matrix $\bM\in \{0,1\}^{M\times N}$, where  each entry $m_{mn}\in \{0,1\}$ indicates whether  user $n$ has rated  movie $m$ or not. 
Using this mask, the loss function can be defined as:
$$
L(\bW,\bZ) = \frac{1}{2}\normf{\bM\hadaprod  \bA- \bM\hadaprod (\bW\bZ)}^2,
$$
where $\hadaprod$ represents the \textit{Hadamard product} between matrices. 
The above formulation concisely expresses our goal of finding a completion of the ratings matrix that is both of low rank and consistent with observed user ratings.
To find the solution to this problem, we decompose the updates in Equation~\eqref{equation:als-regular-final-all} into:
\begin{equation}\label{equation:als-ori-all-wz}
	\left\{
	\begin{aligned}
		\bz_n &= (\bW^\top\bW+ \lambda_z\bI)^{-1} \bW^\top \ba_n, &\gap& \text{for $n\in \{1,2,\ldots, N\}$}  ;\\
		\bw_m &= (\bZ\bZ^\top+\lambda_w\bI)^{-1}\bZ\bb_m,  &\gap& \text{for $m\in \{1,2,\ldots, M\}$} ,
	\end{aligned}
	\right.
\end{equation}
where $\bZ=[\bz_1, \bz_2, \ldots, \bz_N]$ and $\bA=[\ba_1,\ba_2, \ldots, \ba_N]$ represent the column partitions of $\bZ$ and $\bA$, respectively. Similarly, $\bW^\top=[\bw_1, \bw_2, \ldots, \bw_M]$ and $\bA^\top=[\bb_1,\bb_2, \ldots, \bb_M]$ are the column partitions of $\bW^\top$ and $\bA^\top$, respectively. This decomposition of the updates indicates that the updates can be performed in a column-by-column fashion (the rank-one updates).

\paragraph{Given $\bW$.}
Let $\bo_n\in \{0,1\}^M$ represent the movies rated by user $n$, where $o_{nm}=1$ if user $n$ has rated movie $m$, and $o_{nm}=0$ otherwise. Then the $n$-th column of $\bA$ without missing entries can be denoted using the Matlab-style notation as $\ba_n[\bo_n]$. 
And we want to approximate the existing entries of the $n$-th column by $\ba_n[\bo_n] \approx \bW[\bo_n, :]\bz_n$, which is indeed a rank-one least squares problem:
\begin{equation}\label{equation:als-ori-all-wz-modif-z}
\begin{aligned}
\bz_n &= \left(\bW[\bo_n, :]^\top\bW[\bo_n, :]+ \lambda_z\bI\right)^{-1} \bW[\bo_n, :]^\top \ba_n[\bo_n], \quad \text{for $n\in \{1,2,\ldots, N\}$} .
\end{aligned}
\end{equation}
Moreover, the loss function with respect to $\bz_n$ and $\bZ$  can be described, respectively, by
$$
\begin{aligned}
L(\bz_n|\bW) &=\sum_{m\in \bo_n} \left(a_{mn} - \bw_m^\top\bz_n\right)^2
\gap \text{and}\gap
L(\bZ|\bW) =\sum_{n=1}^N\ \sum_{m\in \bo_n} \left(a_{mn} - \bw_m^\top\bz_n\right)^2.
\end{aligned}
$$

\paragraph{Given $\bZ$.}
Similarly, if $\bp_m \in\{0,1\}^{N}$ denotes the users who have rated  movie $m$, with $p_{mn}=1$ if  movie $m$ has been rated by user $n$, and $p_{mn}=0$ otherwise. Then the $m$-th row of $\bA$ without missing entries can be denoted by the Matlab-style notation as $\bb_m[\bp_m]$. We want to approximate the existing entries of the $m$-th row by $\bb_m[\bp_m] \approx \bZ[:, \bp_m]^\top\bw_m$, 
\footnote{Note that $\bZ[:, \bp_m]^\top$ is the transpose of $\bZ[:, \bp_m]$, which is equal to $\bZ^\top[\bp_m,:]$, i.e., transposing first and then selecting.}
which  is again a rank-one least squares problem:
\begin{equation}\label{equation:als-ori-all-wz-modif-w}
\begin{aligned}
\bw_m &= (\bZ[:, \bp_m]\bZ[:, \bp_m]^\top+\lambda_w\bI)^{-1}\bZ[:, \bp_m]\bb_m[\bp_m],  \quad \text{for $m\in \{1,2,\ldots, M\}$} .
\end{aligned}
\end{equation}
Similarly, the loss function with respect to $\bw_m$ and $\bW$ can be described, respectively,  by
$$
\begin{aligned}
L(\bw_m|\bZ) &=\sum_{n\in \bp_m} \left(a_{mn} - \bw_m^\top\bz_n\right)^2 
\gap \text{and}\gap
L(\bW|\bZ) =\sum_{m=1}^M  \sum_{n\in \bp_m} \left(a_{mn} - \bw_m^\top\bz_n\right)^2 .
\end{aligned}
$$
The procedure is once again presented in Algorithm~\ref{alg:als-regularizer-missing-entries}.
Other approaches, such as \textit{singular value projection (SVP)}, also exist to address the matrix completion problem. At a high level, SVP is a type of projected gradient descent (PGD) method that updates iteratively via gradient descent, projecting the updated matrix into a low-rank form through singular value decomposition at each step. However, the alternating least squares approach generally  outperforms SVP in the context of matrix completion, so we will not delve into SVP here. For more details, refer to \citet{jain2017non} and the references therein.

\begin{algorithm}[h] 
\caption{Alternating Least Squares with Missing Entries and Regularization}
\label{alg:als-regularizer-missing-entries}
\begin{algorithmic}[1] 
\Require Matrix $\bA\in \real^{M\times N}$;
\State Initialize $\bW\in \real^{M\times K}$, $\bZ\in \real^{K\times N}$ \textcolor{mylightbluetext}{randomly without condition on the rank and the relationship between $M, N, K$}; 
\State Choose a stop criterion on the approximation error $\delta$;
\State Choose regularization parameters $\lambda_w, \lambda_z$;
\State Compute the mask matrix $\bM$ from $\bA$;
\State Choose the maximal number of iterations $C$;
\State $iter=0$; \Comment{Count for the number of iterations}
\While{\textcolor{mylightbluetext}{$\normf{\bM\hadaprod  \bA- \bM\hadaprod (\bW\bZ)}^2>\delta $} and $iter<C$}
\State $iter=iter+1$; 
\For{$n=1,2,\ldots, N$}
\State $\bz_n \leftarrow \left(\bW[\bo_n, :]^\top\bW[\bo_n, :]+ \lambda_z\bI\right)^{-1} \bW[\bo_n, :]^\top \ba_n[\bo_n]$; \Comment{$n$-th column of $\bZ$}
\EndFor

\For{$m=1,2,\ldots, M$}
\State $\bw_m \leftarrow (\bZ[:, \bp_m]\bZ[:, \bp_m]^\top+\lambda_w\bI)^{-1}\bZ[:, \bp_m]\bb_m[\bp_m]$;\Comment{$m$-th column of $\bW^\top$}
\EndFor
\EndWhile
\State Output $\bW^\top=[\bw_1, \bw_2, \ldots, \bw_M],\bZ=[\bz_1, \bz_2, \ldots, \bz_N]$.
\end{algorithmic} 
\end{algorithm}

\index{Hidden features}
\index{Inner product}
\section{Vector Inner Product and Hidden Vectors}\label{section:als-vector-product}
We observe that the ALS algorithm seeks to find lower-dimensional matrices $\bW$ and $\bZ$ such that their product $\bW\bZ$ can approximate $\bA\approx \bW\bZ$ in terms of the  squared loss:
$
\mathop{\min}_{\bW,\bZ}  \sum_{n=1}^N \sum_{m=1}^{M} \left(a_{mn} - \bw_m^\top\bz_n\right)^2.
$
That is, each entry $a_{mn}$ in $\bA$ can be approximated as the inner product of  two vectors: $\bw_m^\top\bz_n$. The geometric interpretation of the vector inner product is given by 
$$
\bw_m^\top\bz_n = \normtwo{\bw_m}\cdot \normtwo{\bz_n} \cos \theta,
$$
where $\theta$ represents the angle between the vectors $\bw_m$ and $\bz_n$. Thus, if the vector norms of $\bw_m$ and $\bz_n$ are determined, a smaller angle between them results in a larger inner product.

In the context of Netflix-style recommendation systems,   movie ratings typically range from 0 to 5,
with higher ratings indicating a stronger user preference for the movie. 
If $\bw_m$ and $\bz_n$ fall sufficiently ``close" in direction, the value of $\bw_m^\top\bz_n$ becomes larger. 
This reflects a stronger match between the user's preferences and the movie's characteristics.

This concept elucidates the essence of ALS, where $\bw_m$ represents the features or attributes of movie $m$, while $\bz_n$ encapsulates  the features or preferences of user $n$. 
In other words,  ALS associates each user with a \textit{latent vector of preference} and each movie with a \textit{latent vector of attributes}.
Furthermore, each element in $\bw_m$ and $\bz_n$ signifies a specific feature. For example, it could be that the second feature $w_{m2}$ ($w_{m2}$ denotes the second element of the vector $\bw_{m}$) represents whether the movie is an action movie or not, and $z_{n2}$ might denote whether  user $n$ has a preference for action movies. When this holds true, then the inner product $\bw_m^\top\bz_n$ becomes large and provides a good approximation of the observed rating $a_{mn}$.

In the matrix decomposition $\bA\approx \bW\bZ$, it is established that the rows of $\bW$ contain the hidden features of the movies, and the columns of $\bZ$ contain the hidden features of the users. 
Nevertheless, the explicit meanings of the rows in $\bW$ or the columns in $\bZ$ remain undisclosed.
Although they might correspond to categories or genres of the movies, fostering underlying connections between users and movies, their precise nature remains uncertain.
It is precisely this ambiguity that gives rise to the terminology ``latent" or ``hidden."

\index{Stochastic gradient descent}
\index{Gradient descent}
\index{Matrix inverse}
\index{Decomposition: LU}
\section{Gradient Descent}\label{section:als-gradie-descent}
In Algorithms~\ref{alg:als}, \ref{alg:als-regularizer}, and \ref{alg:als-regularizer-missing-entries}, we minimize the loss function through the inversion of matrices (e.g., using LU decomposition). 
The reality, however, is frequently far from straightforward, particularly in the big data era of today. As data volumes explode, the size of the inversion matrix will grow at a pace proportional to the cube of the number of samples,  which poses a great challenge to the storage and computational resources.
This complexity has led to the ongoing development of gradient-based optimization techniques.
Among these, the \textit{gradient descent (GD)} method and its variant, the \textit{stochastic gradient descent (SGD)} method, are among  the simplest, fastest, and most efficient methods \citep{lu2022gradient}. These methods are particularly effective for solving convex optimization problems. We now provide a more detailed explanation of their underlying principles.

In Equation~\eqref{equation:als-ori-all-wz}, we derived the column-by-column update rules directly from the full matrix approach outlined in Equation~\eqref{equation:als-regular-final-all} (with regularization taken into account). 
To understand the underlying concept, consider the loss function with regularization, as given by Equation~\eqref{equation:als-regularion-full-matrix}.
When minimizing the  loss in \eqref{equation:als-regularion-full-matrix} with respect to $\bz_n$, we can break down the loss as follows:
\begin{equation}\label{als:gradient-regularization-zn}
\footnotesize
\begin{aligned}
L(\bz_n)  &=\frac{1}{2}\normf{\bW\bZ-\bA}^2 +\frac{1}{2}\lambda_w \normf{\bW}^2 + \frac{1}{2}\lambda_z \normf{\bZ}^2
= \frac{1}{2}\normtwo{\bW\bz_n-\ba_n}^2 + \frac{1}{2}\lambda_z \normtwo{\bz_n}^2 + C_{z_n},
\end{aligned}
\end{equation}
where $C_{z_n}$ is a constant with respect to $\bz_n$, and $\bZ=[\bz_1, \bz_2, \ldots, \bz_N]$ and $\bA=[\ba_1,\ba_2, \ldots, \ba_N]$ represent the column partitions of $\bZ$ and $\bA$, respectively. 
The gradient and the root are given, respectively, by 
$$
\begin{aligned}
\nabla_{\bz_n} L(\bz_n) = \bW^\top\bW\bz_n - \bW^\top\ba_n + \lambda_z\bz_n
\,\,\implies \,\,
\bz_n = (\bW^\top\bW+ \lambda_z\bI)^{-1} \bW^\top \ba_n, \,\,  \forall\,n.
\end{aligned}
$$
This solution corresponds to the first update rule in the column-wise updates of Equation~\eqref{equation:als-ori-all-wz}.
Similarly, when minimizing the loss with respect to $\bw_m$, we have:
\begin{equation}\label{als:gradient-regularization-wd}
\footnotesize
\begin{aligned}
L(\bw_m )  
&
=\frac{1}{2}\normf{\bZ^\top\bW-\bA^\top}^2 +\frac{1}{2}\lambda_w \normf{\bW^\top}^2 + \frac{1}{2}\lambda_z \normf{\bZ}^2
= \frac{1}{2}\normtwo{\bZ^\top\bw_m-\bb_n}^2 + \frac{1}{2}\lambda_w \normtwo{\bw_m}^2 + C_{w_m},
\end{aligned}
\end{equation}
where $C_{w_m}$ is a constant with respect to $\bw_m$, and $\bW^\top=[\bw_1, \bw_2, \ldots, \bw_M]$ and $\bA^\top=[\bb_1,\bb_2, \ldots,$ $\bb_M]$ represent the column partitions of $\bW^\top$ and $\bA^\top$, respectively. 
Analogously, taking the gradient with respect to $\bw_m$, it follows that
$$
\begin{aligned}
\nabla_{\bw_m} L(\bw_m) = \bZ\bZ^\top\bw_m - \bZ\bb_n + \lambda_w\bw_m
\,\,\implies\,\,
\bw_m = (\bZ\bZ^\top+\lambda_w\bI)^{-1}\bZ\bb_m, \,\, \forall \, m.
\end{aligned}
$$
This solution corresponds to the second update rule in the column-wise updates of Equation~\eqref{equation:als-ori-all-wz}:

Now suppose we express the iteration number ($t=1,2,\ldots$)  as the superscript, and we want to find the updates $\{\bz^{(t+1)}_n, \bw^{(t+1)}_m\}$  at the $(t+1)$-th iteration  base on $\{\bZ^{(t)}, \bW^{(t)}\}$  from the $t$-th iteration:
$$
\left.
\begin{aligned}
\bz^{(t+1)}_n    &\leftarrow \mathop{\arg \min}_{\bz_n^{(t)}} L(\bz_n^{(t)})
\qquad\text{and}\qquad
\bw_m^{(t+1)}    \leftarrow \mathop{\arg\min}_{\bw_m^{(t)}} L(\bw_m^{(t)}).
\end{aligned}
\right.
$$
For simplicity, we will only derive for $\bz^{(t+1)}_n    \leftarrow \mathop{\arg \min}_{\bz_n^{(t)}} L(\bz_n^{(t)})$, and the derivation for the update on $\bw_m^{(t+1)}$  follows a similar approach.

\index{Linear approximation}
\index{Linear update}
\index{Greedy search}
\index{Gradient descent}
\paragraph{Approximation by linear update.} 
Suppose we want to approximate $\bz^{(t+1)}_n$ using a \textit{linear update} based on $\bz^{(t)}_n$:
$$
\textbf{(Linear Update)}: \qquad {\bz^{(t+1)}_n = \bz^{(t)}_n + \eta \bv.}
$$
The problem now becomes finding the solution of $\bv$ such that
$$
\bv=\mathop{\arg \min}_{\bv} L(\bz^{(t)}_n + \eta \bv) .
$$
By Taylor's formula, $L(\bz^{(t)}_n + \eta \bv)$ can be approximated by 
$$
L(\bz^{(t)}_n + \eta \bv) \approx L(\bz^{(t)}_n ) + \eta \bv^\top \nabla  L(\bz^{(t)}_n ),
$$
where $\eta$ is a small value, and $\nabla  L(\bz^{(t)}_n )$ represents the gradient of $L(\bz)$ evaluated at $\bz^{(t)}_n$. 
To find $\bv$ under the constraint $\normtwo{\bv}=1$ for a positive $\eta$, we perform the following minimization:
$$
\bv=\mathop{\argmin}_{\normtwo{\bv}=1} L(\bz^{(t)}_n + \eta \bv) 
\approx\mathop{\argmin}_{\normtwo{\bv}=1}
 \left\{L(\bz^{(t)}_n ) + \eta \bv^\top \nabla  L(\bz^{(t)}_n )\right\}.
$$
This strategy is known as  \textit{greedy search}. The optimal $\bv$ can be obtained by 
$$
\bv = -\nabla L(\bz^{(t)}_n )\big/{\big\Vert{\nabla L(\bz^{(t)}_n )}\big\Vert_2},
$$
which means that $\bv$ points in the opposite direction to the gradient $\nabla L(\bz^{(t)}_n )$. Therefore, it is reasonable to update  $\bz_n^{(t+1)}$ as follows:
$$
\bz^{(t+1)}_n =\bz^{(t)}_n + \eta \bv = \bz^{(t)}_n - \eta {\nabla L(\bz^{(t)}_n )}\big/{\big\Vert{\nabla L(\bz^{(t)}_n )}\big\Vert_2},
$$
which is commonly referred to as    \textit{gradient descent} (GD). Similarly, the gradient descent update for $\bw_m^{(t+1)}$ is given by
$$
\bw^{(t+1)}_m =\bw^{(t)}_m + \eta \bv = \bw^{(t)}_m - \eta {\nabla L(\bw^{(t)}_m )}\big/{\big\Vert{\nabla L(\bw^{(t)}_m )}\big\Vert_2}.
$$
The revised procedure for Algorithm~\ref{alg:als-regularizer} employing a gradient descent approach is presented in Algorithm~\ref{alg:als-regularizer-missing-stochas-gradient}.

It's noteworthy that the ALS without GD (Algorithm~\ref{alg:als-regularizer}) does not involve explicit parameters such as step size  $\eta$. 
This characteristic can be both advantageous and disadvantageous. 
On one hand, it absolves the  user from the time-consuming task of fine-tuning parameters, making the method more accessible and less demanding. 
On the other hand, this absence of adjustable parameters also restricts the user's control to directly influence the progression of the algorithm, leaving the convergence of ALS entirely contingent upon the inherent structure of the optimization problem at hand.

In practical applications, it is customary to alternate between the pure ALS iterations outlined in Algorithm~\ref{alg:als-regularizer} and the modified, gradient-descent variants  discussed in this section. These descent-based adaptations offer the user a degree of control through a tunable step length parameter, allowing for a more customized approach to the optimization process.

\begin{algorithm}[h] 
\caption{Alternating Least Squares with Full Entries and Gradient Descent}
\label{alg:als-regularizer-missing-stochas-gradient}
\begin{algorithmic}[1] 
\Require Matrix $\bA\in \real^{M\times N}$;
\State Initialize $\bW\in \real^{M\times K}$, $\bZ\in \real^{K\times N}$ \textcolor{mylightbluetext}{randomly without condition on the rank and the relationship between $M, N, K$}; 
\State Choose a stop criterion on the approximation error $\delta$;
\State Choose regularization parameters $\lambda_w, \lambda_z$, and step sizes $\eta_w, \eta_z$;
\State Choose the maximal number of iterations $C$;
\State $iter=0$; \Comment{Count for the number of iterations}
\While{$\normf{\bA- (\bW\bZ)}^2>\delta $ and $iter<C$}
\State $iter=iter+1$; 
\For{$n=1,2,\ldots, N$}
\State $\bz^{(t+1)}_n \leftarrow\bz^{(t)}_n - \eta_z {\nabla L(\bz^{(t)}_n )}\big/{\big\Vert{\nabla L(\bz^{(t)}_n )}\big\Vert_2}$; \Comment{$n$-th column of $\bZ$}
\EndFor

\For{$m=1,2,\ldots, M$}
\State $\bw^{(t+1)}_m  \leftarrow \bw^{(t)}_m - \eta_w {\nabla L(\bw^{(t)}_m )}\big/{\big\Vert{\nabla L(\bw^{(t)}_m )}\big\Vert_2}$;\Comment{$m$-th column of $\bW^\top$}
\EndFor
\EndWhile
\State Output $\bW^\top=[\bw_1, \bw_2, \ldots, \bw_M],\bZ=[\bz_1, \bz_2, \ldots, \bz_N]$.
\end{algorithmic} 
\end{algorithm}

\index{Level curves}
\index{Level surfaces}
\subsection*{Geometric Interpretation of Gradient Descent}
\begin{lemma}[Direction of gradients]\label{lemm:direction-gradients}
The gradient of a function at a given point is perpendicular to the level curve (or level surface in higher dimensions) passing through that point.
\end{lemma}
\begin{proof}[of Lemma~\ref{lemm:direction-gradients}, the informal proof]
This proof involves showing that the gradient is orthogonal to the tangent vector of the level curve.  For simplicity, let's start with the two-dimensional case.
Suppose the level curve takes the form $f(x,y)=c$. 
This implicitly establishes a relationship between $x$ and $y$ such that $y=y(x)$, where $y$ can be regarded as a function of $x$~\footnote{This is known as the implicit function  theorem, provided that the partial derivative is nonzero and the function is smooth.}. Therefore, the level curve can be expressed as 
$
f(x, y(x)) = c.
$
Applying the chain rule, we get:
$$
\frac{\partial f}{\partial x} \underbrace{\frac{dx}{dx}}_{=1} + \frac{\partial f}{\partial y} \frac{dy}{dx}=0
\gap \implies \gap 
\left\langle \frac{\partial f}{\partial x}, \frac{\partial f}{\partial y}\right\rangle
\cdot 
\left\langle \frac{dx}{dx}, \frac{dy}{dx}\right\rangle=0.
$$
That is, the gradient is perpendicular to the tangent.

In full generality, consider the level curve of a vector $\bx\in \real^n$: $f(\bx) = f(x_1, x_2, \ldots, x_n)=c$. Each variable $x_i$ can be regarded as a function of a parameter $t$ on the level curve $f(\bx)=c$: $f(x_1(t), x_2(t), \ldots, x_n(t))=c$. Differentiating the equation with respect to $t$ using the chain rule:
$$
\frac{\partial f}{\partial x_1} \frac{dx_1}{dt} + \frac{\partial f}{\partial x_2} \frac{dx_2}{dt}
+\ldots + \frac{\partial f}{\partial x_n} \frac{dx_n}{dt}
=0.
$$
Thus, the gradient is perpendicular to the tangent in the $n$-dimensional case:
$$
\left\langle \frac{\partial f}{\partial x_1}, \frac{\partial f}{\partial x_2}, \ldots, \frac{\partial f}{\partial x_n}\right\rangle
\cdot 
\left\langle \frac{dx_1}{dt}, \frac{dx_2}{dt}, \ldots \frac{dx_n}{dt}\right\rangle=0.
$$
This completes the proof.
\end{proof}

This lemma provides a key geometric insight into gradient descent. When minimizing a convex function $L(\bz)$, gradient descent moves in the direction opposite to the gradient, which corresponds to the steepest descent direction. This direction ensures a decrease in the value of the loss function.
Figure~\ref{fig:alsgd-geometrical} illustrates this concept in two dimensions, where the vector $-\nabla L(\bz)$ points in the direction of maximum decrease of the convex function $L(\bz)$.

\begin{figure}[h]
\centering  
\vspace{-0.15cm}    
\subfigtopskip=2pt  
\subfigbottomskip=2pt 
\subfigcapskip=-5pt  
\subfigure[A two-dimensional convex function $L(\bz)$.]{\label{fig:alsgd1}
\includegraphics[width=0.47\linewidth]{./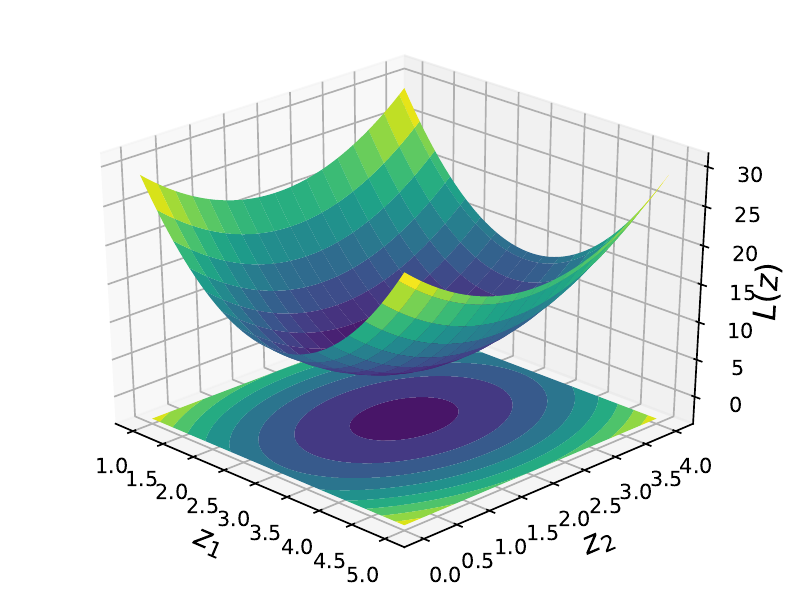}}
\subfigure[$L(\bz)=c$ is a constant.]{\label{fig:alsgd2}
\includegraphics[width=0.44\linewidth]{./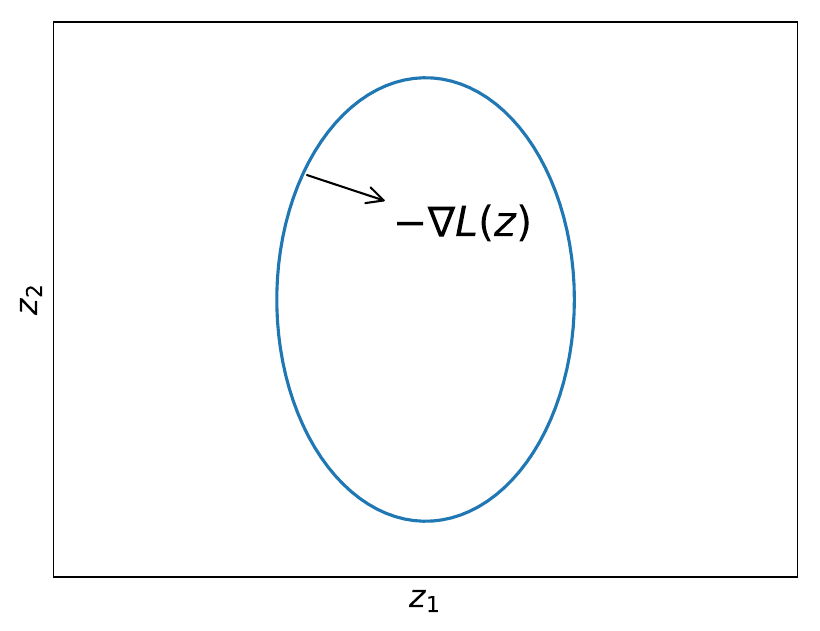}}
\caption{Figure~\ref{fig:alsgd1} shows  surface and  contour plots for a specific function (\textcolor{mydarkblue}{blue}=low, \textcolor{mydarkyellow}{yellow}=high), where the upper graph is the surface plot, and the lower one is its projection  (i.e., contour). Figure~\ref{fig:alsgd2}: $-\nabla L(\bz)$ pushes the loss to decrease for the convex function $L(\bz)$.}
\label{fig:alsgd-geometrical}
\end{figure}
\index{Convex function}
\index{Contour plot}
\index{Contour plot}
\index{Regularization}

\index{Geometric interpretation}
\index{Projection gradient descent}
\index{Overfitting}
\section{Regularization: A Geometric Interpretation}\label{section:reg_geom_inter}
\begin{figure}[h]
\centering
\includegraphics[width=0.95\textwidth]{./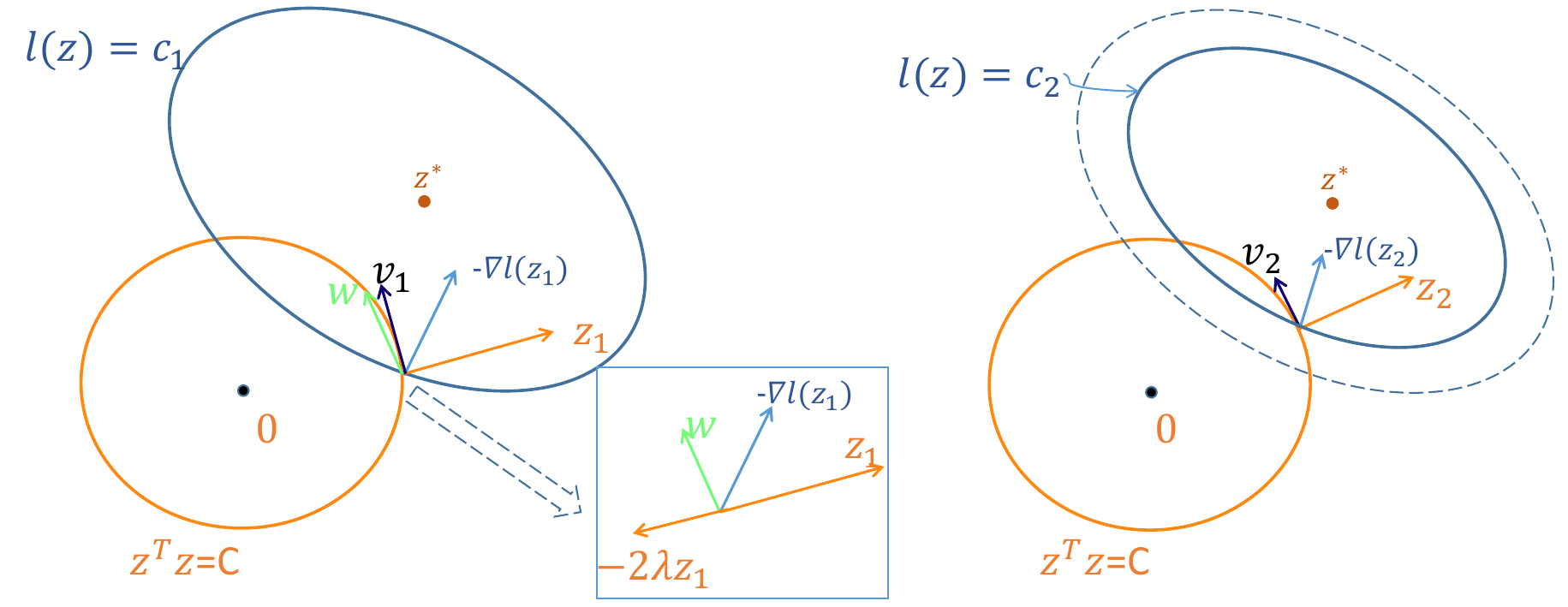}
\caption{Constrained gradient descent with $\bz^\top\bz\leq C$. The \textcolor{mydarkgreen}{green} vector $\bw$ represents the projection of $\bv_1$ onto the set $\bz^\top\bz\leq C$, where $\bv_1$ is the component of $-\nabla l(\bz)$ that is perpendicular to $\bz_1$. 
The image on the right illustrates the next step after the update in the left image. $\bz^\star$ denotes the optimal solution of \{$\min l(\bz)$\}.}
\label{fig:alsgd3}
\end{figure}
In Section~\ref{section:regularization-extention-general}, we discussed how  regularization can extend the ALS algorithm to general matrices.
Gradient descent offers a geometric interpretation of regularization.
To avoid confusion, we denote the loss function without regularization as $l(\bz)$ and the loss function with regularization as $L(\bz) = \l(\bz)+\lambda_z \normtwo{\bz}^2$, where $l(\bz): \real^n \rightarrow \real$. When minimizing $l(\bz)$, a descent method typically searches for a solution in $\real^n$. However, in machine learning, searching across the entire space $\real^n$ can lead to overfitting. 
One way to mitigate this is to restrict the search to a subset of the vector space, such as searching in $\bz^\top\bz < C$ for some constant $C$. 
This can be formulated as the constrained optimization problem:
$$
\mathop{\arg\min}_{\bz} \,\, l(\bz), \gap \text{s.t.,} \gap \bz^\top\bz\leq C.
$$
As demonstrated above, a standard gradient descent method updates $\bz$ by moving in the direction of steepest descent, i.e., update $\bz$ as $\bz\leftarrow \bz-\eta \nabla l(\bz)$ for a small step size $\eta$. When the level curve is $l(\bz)=c_1$ and the current position of parameter $\bz$ is $\bz=\bz_1$, where $\bz_1$ lies at the intersection of $\bz^\top\bz=C$ and $l(\bz)=c_1$, the descent direction $-\nabla l(\bz_1)$ will be perpendicular to the level curve of $l(\bz_1)=c_1$, as shown in the left image of Figure~\ref{fig:alsgd3} (by Lemma~\ref{lemm:direction-gradients}). However, if we further restrict that the optimal value must lie  within $\bz^\top\bz\leq C$, the standard descent direction $-\nabla l(\bz_1)$ will lead the update $\bz_2=\bz_1-\eta \nabla l(\bz_1)$ beyond the boundary of $\bz^\top\bz\leq C$. 
One solution is to decompose the step $-\nabla l(\bz_1)$ into 
$$
-\nabla l(\bz_1) = a\bz_1 + \bv_1,
$$ 
where $a\bz_1$ represents the component perpendicular to the curve of $\bz^\top\bz=C$, and $\bv_1$ is the component parallel to the curve of $\bz^\top\bz=C$. By keeping only the step $\bv_1$, the update becomes
$$
\bz_2 = \text{project}(\bz_1+\eta \bv_1) = \text{project}\bigg(\bz_1 + \eta
\underbrace{(-\nabla l(\bz_1) -a\bz_1)}_{\bv_1}\bigg),~\footnote{where the operation project($\bx$) 
will project the vector $\bx$ to the closest point inside $\bz^\top\bz\leq C$. Notice here the unprojected update $\bz_2 = \bz_1+\eta \bv_1$ can still make $\bz_2$ fall outside the curve of $\bz^\top\bz\leq C$.}
$$ 
which will lead to a smaller loss from $l(\bz_1)$ to $l(\bz_2)$ while maintaining the constraint $\bz^\top\bz\leq C$. This approach is known as  \textit{projection gradient descent (PGD)}. 
It is not hard to see that the update $\bz_2 = \text{project}(\bz_1+\eta \bv_1)$ can be understood as finding a vector $\bw$ (represented by the \textcolor{mydarkgreen}{green} vector in the left image of Figure~\ref{fig:alsgd3}) such that $\bz_2=\bz_1+\bw$ lies within the constraint set $\bz^\top\bz\leq C$. Mathematically, the vector $\bw$ can be determined as $-\nabla l(\bz_1) -2\lambda \bz_1$ for some $\lambda$, as illustrated in the middle image of Figure~\ref{fig:alsgd3}. This corresponds precisely to the negative gradient of the regularized loss function $L(\bz)=l(\bz)+\lambda\normtwo{\bz}^2$, so that 
$$
\begin{aligned}
\bw=-\nabla L(\bz) &= -\nabla l(\bz) - 2\lambda \bz 
\gap \implies \gap
\bz_2 = \bz_1+ \bw =\bz_1 -  \nabla L(\bz).
\end{aligned}
$$
And in practice, using a small step size $\eta$ prevents the trajectory from moving outside the constraint set $\bz^\top\bz\leq C$:
$$
\bz_2  =\bz_1 -  \eta\nabla L(\bz),
$$
which aligns with the regularization term discussed in Section~\ref{section:regularization-extention-general}.

\index{Sparsity}
\index{$\ell_1$ regularization}
\begin{figure}[h]
\centering
\includegraphics[width=0.95\textwidth]{./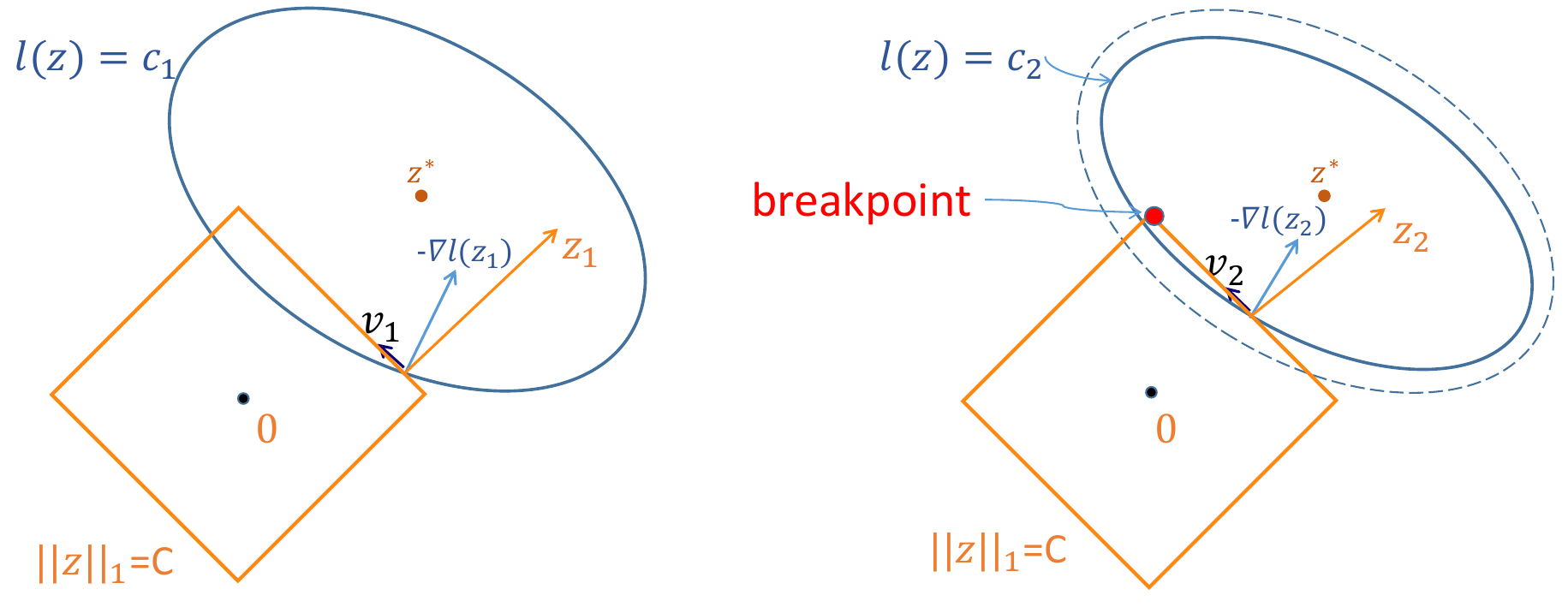}
\caption{Constrained gradient descent with $\norm{\bz}_1\leq C$, where the \textcolor{winestain}{red} dot denotes the breakpoint in the $\ell_1$ norm. The right image illustrates the next step after the update in the left image. $\bz^\star$ denotes the optimal solution of \{$\min l(\bz)$\}.}
\label{fig:alsgd4}
\end{figure}
\paragraph{Sparsity.}
In certain scenarios, we seek to identify a sparse solution $\bz$ such that $l(\bz)$ is minimized. 
For example, in facial feature extraction, sparsity leads to more localized features, meaning that fewer features are used to reconstruct each input image.
Regularization to be constrained in $\norm{\bz}_1 \leq C$ exists to this purpose, where $\norm{\cdot}_1$ denotes the $\ell_1$ norm of a vector or a matrix. 
Similar to the previous case, the $\ell_1$ constrained optimization pushes the gradient descent towards the border of the level set $\norm{\bz}_1=C$. The situation in the two-dimensional case is shown in Figure~\ref{fig:alsgd4}. 
In  high-dimensional cases, many elements in $\bz$ will be driven towards the breakpoint of $\norm{\bz}_1=C$, promoting sparsity in the solution, as shown in the right image of Figure~\ref{fig:alsgd4}.

\index{Stochastic gradient descent}
\index{Stochastic coordinate descent}
\section{Stochastic Gradient Descent}
The gradient descent method is a valuable optimization algorithm; however, it exhibits certain limitations in practical applications. 
To understand these limitations, consider the mean squared error (MSE) derived from  Equation~\eqref{equation:als-per-example-loss2}:
\begin{equation}\label{equation:als-per-example-loss_mse}
\frac{1}{MN}\mathop{\min}_{\bW,\bZ}  \sum_{n=1}^N \sum_{m=1}^{M} \left(a_{mn} - \bw_m^\top\bz_n\right)^2.
\end{equation}
The MSE requires calculating the residual $e_{mn} = (a_{mn} - \bw_m^\top\bz_n)^2$ for each observed entry $a_{mn}$, representing the squared difference between the predicted and actual values.  The total  sum of squared residuals is denoted by $e = \sum_{m,n=1}^{MN}e_{mn}$.
When the number of training entries is large (i.e., $MN$ is large), computing the full gradient over all entries becomes computationally expensive and slow. Moreover, gradients from different samples may cancel each other out, leading to small net updates and slow convergence.
To address these issues, researchers have enhanced the gradient descent method with the \textit{stochastic gradient descent (SGD)} method (see, for example, \citet{lu2022gradient}). 
In the SGD algorithm, instead of calculating the full gradient of the objective function with respect to the parameters  across all samples in the data set, which can be computationally expensive, the algorithm takes a more efficient approach. It randomly chooses one sample and calculates the gradient of the objective function with respect to the parameters using only  this single sample. 
This gradient estimate is then used to update the parameters in the direction that minimizes the objective function. 
By using a single sample at each iteration, the SGD algorithm provides a fast and often sufficient approximation of the full gradient, making it  particularly well-suited for large-scale data sets.

In particular, we consider again the per-example loss:
$$
L(\bW,\bZ)= \frac{1}{2} \sum_{n=1}^N \sum_{m=1}^{M} \left(a_{mn} - \bw_m^\top\bz_n\right)^2 +\frac{1}{2} \lambda_w\sum_{m=1}^{M}\normtwo{\bw_m}^2 +\frac{1}{2}\lambda_z\sum_{n=1}^{N}\normtwo{\bz_n}^2.
$$
As we iteratively minimize the  loss term $l(\bw_m, \bz_n)=\frac{1}{2}\left(a_{mn} - \bw_m^\top\bz_n\right)^2+\frac{1}{2} \lambda_w\normtwo{\bw_m}^2 +\frac{1}{2}\lambda_z\normtwo{\bz_n}^2$ for all $m\in \{1,2,\ldots, M\}, n\in\{1,2,\ldots,N\}$ (referred to as the per-example loss term), the overall loss $L(\bW,\bZ)$ decreases accordingly.
This approach is also known as  \textit{stochastic coordinate descent}. The gradients  with respect to $\bw_m$ and $\bz_n$, and their roots are given, respectively, by 
$$
\left\{
\begin{aligned}
\nabla_{\bz_n} l(\bz_n) &= \bw_m\bw_m^\top \bz_n +\lambda_z\bz_n  -a_{mn} \bw_m 
 &\implies&\,\, \bz_n= a_{mn}(\bw_m\bw_m^\top+\lambda_z\bI)^{-1}\bw_m;\\
\nabla_{\bw_m} l(\bw_m) &= \bz_n\bz_n^\top\bw_m +\lambda_w\bw_m - a_{mn}\bz_n &\implies&\,\, \bw_m= a_{mn}(\bz_n\bz_n^\top+\lambda_w\bI)^{-1}\bz_n.
\end{aligned}
\right.
$$
Alternatively, the update can be performed using gradient descent for the per-example loss. Since we update based on the per-example loss, this approach is thus known as the \textit{stochastic gradient descent (SGD)}:
$$
\left.
\begin{aligned}
\bz_n&\leftarrow \bz_n - \eta_z \frac{\nabla_{\bz_n} l(\bz_n)}{\normtwo{\nabla_{\bz_n} l(\bz_n)}}
\gap \text{and}\gap 
\bw_m\leftarrow \bw_m - \eta_w \frac{\nabla_{\bw_m} l(\bw_m)}{\normtwo{\nabla_{\bw_m} l(\bw_m)}}.
\end{aligned}
\right.
$$
The stochastic gradient descent update for ALS is formulated in Algorithm~\ref{alg:als-regularizer-missing-stochas-gradient-realstoch}. 
It is possible that the gradient descent or stochastic gradient descent algorithm may fail to converge. In such cases, it is advisable  to re-run the algorithm using a smaller step size.
And in practice, the indices  $m$ and $n$ in the algorithm can be randomly generated, which is why the method is termed ``stochastic." ~\footnote{When we iteratively choose the values of $m$ and $n$ from $\{1,2,\ldots, M\}$ and $\{1,2,\ldots, N\}$ in a deterministic cyclic order, respectively, the stochastic method can be referred to as  ``\textit{incremental gradient descent}."}

\begin{algorithm}[h] 
\caption{Alternating Least Squares with Full Entries and SGD}
\label{alg:als-regularizer-missing-stochas-gradient-realstoch}
\begin{algorithmic}[1] 
\Require  Matrix $\bA\in \real^{M\times N}$;
\State Initialize $\bW\in \real^{M\times K}$, $\bZ\in \real^{K\times N}$ \textcolor{mylightbluetext}{randomly without condition on the rank and the relationship between $M, N, K$}; 
\State Choose a stop criterion on the approximation error $\delta$;
\State Choose regularization parameters $\lambda_w, \lambda_z$, and step sizes $\eta_w, \eta_z$;
\State Choose the maximal number of iterations $C$;
\State $iter=0$; \Comment{Count for the number of iterations}
\While{$\normf{  \bA- (\bW\bZ)}^2>\delta $ and $iter<C$}
\State $iter=iter+1$; 
\For{$n=1,2,\ldots, N$}
\For{$m=1,2,\ldots, M$} \Comment{in practice, $m,n$ can be randomly produced}
\State $\bz_n\leftarrow \bz_n - \eta_z {\nabla l(\bz_n)}/{\normtwo{\nabla l(\bz_n)}}$;\Comment{$n$-th column of $\bZ$}
\State $\bw_m\leftarrow \bw_m - \eta_w {\nabla l(\bw_m)}/{\normtwo{\nabla l(\bw_m)}}$;\Comment{$m$-th column of $\bW^\top$}
\EndFor
\EndFor

\EndWhile
\State Output $\bW^\top=[\bw_1, \bw_2, \ldots, \bw_M],\bZ=[\bz_1, \bz_2, \ldots, \bz_N]$.
\end{algorithmic} 
\end{algorithm}

\section{Bias Term}

\begin{figure}[htp]
\centering
\includegraphics[width=0.95\textwidth]{./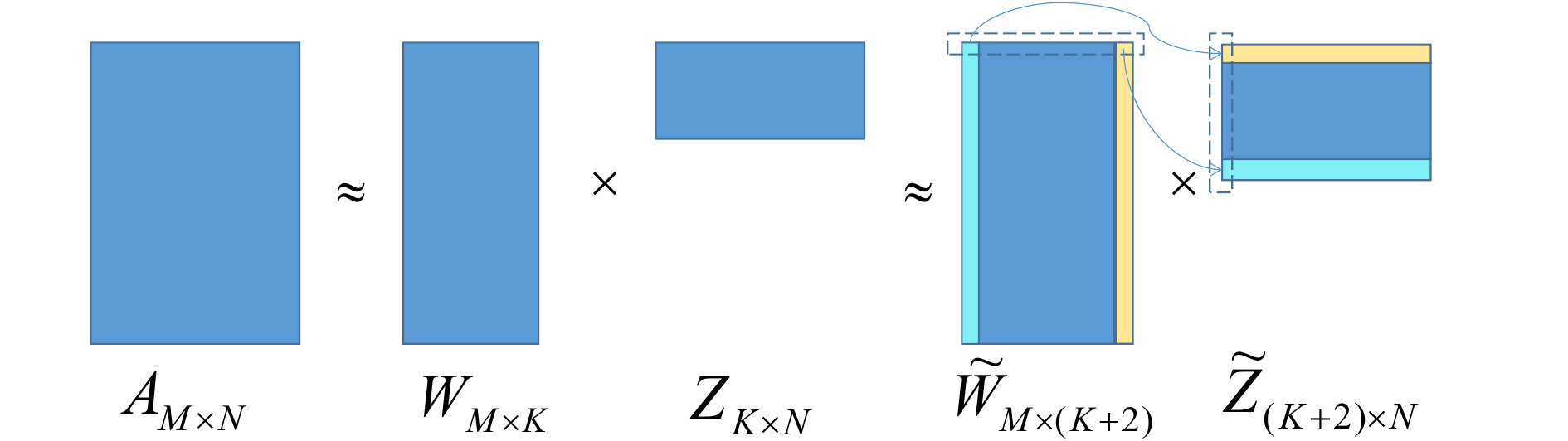}
\caption{Bias terms in alternating least squares, where the \textcolor{mydarkyellow}{yellow} entries denote ones (which are fixed), and the \textcolor{cyan}{cyan} entries denote the added features to fit the bias terms. The dotted boxes provide an example of how the bias terms work.}
\label{fig:als-bias}
\end{figure}
In ordinary least squares models, a bias term is usually incorporated into the raw matrix to improve model performance, as illustrated in Equation~\eqref{equation:ls-bias}. 
A similar approach can be applied to the ALS algorithm. 
Specifically, a fixed column filled with all ones can be appended to the \textbf{last column} of matrix $\bW$. 
To account for this, an extra row should be added to the last row of matrix $\bZ$ to fit the features introduced by the bias term in $\bW$. 
Analogously, a fixed row with all ones can be added to the \textbf{first row} of $\bZ$, and an extra column in the first column of $\bW$ can be added to fit the features. This configuration is illustrated in Figure~\ref{fig:als-bias}.

Given the loss function with respect to the columns of $\bZ$ in Equation~\eqref{als:gradient-regularization-zn}, let 
$
\widetildebz_n
=
\scriptsize
\begin{bmatrix}
	1\\
	\bz_n
\end{bmatrix}
\in\real^{K+2}
$ be the $n$-th column of $\widetildebZ$. Then we have:
\begin{equation}
\footnotesize
\begin{aligned}
2L(\bz_n) 
&=\normf{\widetildebW\widetildebZ-\bA}^2 +\lambda_w \normf{\widetildebW}^2 + \lambda_z \normf{\widetildebZ}^2
= 
\left\Vert
\widetildebW
\begin{bmatrix}
	1 \\
	\bz_n
\end{bmatrix}-\ba_n
\right\Vert_2^2 
+ 
\underbrace{\lambda_z \normtwo{\widetildebz_n}^2}_{=\lambda_z \normtwo{\bz_n}^2+\lambda_z}
+ 
C_{z_n}\\
&= 
\left\Vert
\begin{bmatrix}
	\widebarbw_0 & \widebarbW
\end{bmatrix}
\begin{bmatrix}
	1 \\
	\bz_n
\end{bmatrix}-\ba_n
\right\Vert_2^2 
+ \lambda_z \normtwo{\bz_n}^2 + C_{z_n}
= 
\bigg\Vert
\widebarbW \bz_n - 
\underbrace{(\ba_n-\widebarbw_0)}_{\widebarba_n}
\bigg\Vert_2^2 
+ \lambda_z \normtwo{\bz_n}^2 + C_{z_n},
\end{aligned}
\end{equation}
where $\widebarbw_0$ represents the first column of $\widetildebW$, $\widebarbW$ denotes the remaining $K+1$ columns of $\widetildebW$ (i.e., $\widetildebW=[\widebarbw_0, \widebarbW]$), and $C_{z_n}$ is a constant with respect to $\bz_n$. Let $\widebarba_n = \ba_n-\widebarbw_0$, the update for $\bz_n$ is just similar to the one in Equation~\eqref{als:gradient-regularization-zn}, with the gradient given by
$$
\nabla_{\bz_n} L(\bz_n) = \widebarbW^\top\widebarbW\bz_n - \widebarbW^\top\widebarba_n + \lambda_z\bz_n.
$$
Therefore, the update for $\bz_n$ is given by determining the root of the  gradient above:
$$
\textbf{(update for $\widetildebz_n$)}: \quad \bz_n = (\widebarbW^\top\widebarbW+ \lambda_z\bI)^{-1} \widebarbW^\top \widebarba_n
\gap 
\implies 
\gap 
\widetildebz_n = \begin{bmatrix}
	1\\\bz_n 
\end{bmatrix},
\,\forall n.
$$
Similarly, following the loss with respect to each row of $\bW$ in Equation~\eqref{als:gradient-regularization-wd}, let $\widetildebw_m =
\scriptsize
\begin{bmatrix}
	\bw_m \\
	1
\end{bmatrix} \in\real^{K+2}$ be the $m$-th row of $\widetildebW$ (or $m$-th column of $\widetildebW^\top$). Then we have:
\begin{equation}
\footnotesize
\begin{aligned}
&2L(\bw_m ) 
=\normf{\widetildebZ^\top\widetildebW^\top-\bA^\top}^2 +\lambda_w \normf{\widetildebW^\top}^2 + \lambda_z \normf{\widetildebZ}^2
= 
\normtwo{\widetildebZ^\top\widetildebw_m-\bb_m}^2 + 
\underbrace{\lambda_w \normtwo{\widetildebw_m}^2}_{=\lambda_w \normtwo{\bw_m}^2+\lambda_w}
+ 
C_{w_m} \\
&= 
\bigg\Vert
\begin{bmatrix}
\widebarbZ^\top&
\widebarbz_0
\end{bmatrix}
\scriptsize
\begin{bmatrix}
\bw_m \\
1
\end{bmatrix}
\footnotesize
-\bb_m
\bigg\Vert_2^2 
+ 
\lambda_w \normtwo{\bw_m}^2
+ 
C_{w_m}
= 
\left\Vert
\widebarbZ^\top\bw_m 
-(\bb_m-\widebarbz_0) 
\right\Vert_2^2+ 
\lambda_w \normtwo{\bw_m}^2
+ 
C_{w_m},
\end{aligned}
\end{equation}
where $\widebarbz_0$ represents the last column of $\widetildebZ^\top$, $\widebarbZ^\top$ contains the remaining $K+1$ columns of $\widetildebZ^\top$ (i.e., $\widetildebZ^\top=[\widebarbZ^\top, \widebarbz_0]$),
and $C_{w_m}$ is a constant with respect to $\bw_m$.  
$\bW^\top=[\bw_1, \bw_2, \ldots, \bw_M]$ and $\bA^\top=[\bb_1,\bb_2, \ldots, \bb_M]$ are the column partitions of $\bW^\top$ and $\bA^\top$, respectively. Let $\widebarbb_m = \bb_m-\widebarbz_0$. The update for $\bw_m$ is again just similar to  the one in Equation~\eqref{als:gradient-regularization-wd}, with the gradient given by
$$
\nabla_{\bw_m} L(\bw_m) = \widebarbZ\cdot \widebarbZ^\top\bw_m - \widebarbZ\cdot \widebarbb_m + \lambda_w\bw_m.
$$
Therefore, the update for $\bw_m$ is given by the root of the  gradient above:
$$
\textbf{(update for $\widetildebw_m$)}:\quad 
\bw_m=(\widebarbZ\cdot \widebarbZ^\top+\lambda_w\bI)^{-1}\widebarbZ\cdot \widebarbb_m
\gap \implies \gap 
\widetildebw_m = \begin{bmatrix}
	\bw_m \\ 1
\end{bmatrix}, 
\forall m.
$$
Similar updates can be derived using gradient descent, taking into account the bias terms and handling missing entries (see Section~\ref{section:als-gradie-descent} and \ref{section:alt-columb-by-column} for a reference).

\section{Low-Rank Hadamard Decomposition}\label{section:low_rank_hadamard}
In the fields of linear algebra and data analysis, matrix decomposition techniques are essential for extracting meaningful information from complex datasets. 
As discussed above, one common objective is to approximate a given matrix using a lower-rank representation, which simplifies the data while preserving its key characteristics. The Hadamard product, also known as the element-wise product, provides an alternative to traditional matrix multiplication in matrix decomposition.

As discussed  previously, the alternating least squares (ALS) algorithm is an iterative method used to find a suboptimal low-rank approximation of a matrix by decomposing it into two or more matrices. ALS is particularly advantageous for large-scale problems, such as those found in recommender systems, where the goal is to predict missing entries in a user-item interaction matrix. During each iteration, the ALS algorithm alternates between updating one matrix while keeping the other fixed, thereby minimizing the reconstruction error at every step.
Nonnegative matrix factorization (NMF), introduced in Chapter~\ref{chapter:nmf}, is a variant of matrix factorization where both the original matrix and the resulting factorized matrices have nonnegative entries. This constraint makes NMF especially suitable for applications where the data represents quantities that cannot be negative, such as images, audio signals, or document-term matrices in text mining.

Ws further explore  the  Hadamard decomposition of a matrix $\bA$, where $\bA$ can be expressed as the Hadamard product of two low-rank matrices: $\bA=\bA_1\hadaprod \bA_2$. This type of decomposition is advantageous when the data exhibits multiplicative relationships, and a low-rank approximation is desired to reduce complexity or enhance interpretability.
\paragraph{Non-Factorizability  Issue.}
When $\bA_1\in\real^{n^2\times n^2}$ and $\bA_2\in\real^{n^2\times n^2}$ share the same rank $n$, the Hadamard product $\bA_1\hadaprod \bA_2$ can achieve a maximum rank of $n^2$ (Problem~\ref{problem:rank_hada_prod}).
However, not all matrices $\bA\in\real^{n^2\times n^2}$ of rank $n^2$  can be  represented as the Hadamard product of two lower-rank matrices:
\begin{itemize}
	\item The Hadamard decomposition $\bA = \bA_1 \hadaprod  \bA_2$, where $\bA_1$ and $\bA_2$ are rank-$n$ factors, encodes a system of nonlinear equations.
	\item This system comprises $n^2 \times n^2 = n^4$ equations (one per entry of $ \bA$) and, due to the low-rank constraint on the two Hadamard factors $\bA_1$ and $ \bA_2$, only $(n^2 n + n n^2) = 2n^3$ variables exist.
	\item For $n > 2$, there are more equations than variables, suggesting that all the equations will be simultaneously satisfied only in special cases.
	For example, if the matrix $\bA$ includes a row or a column with all but a single entry being zero, then not all the equations in the system can be satisfied \citep{ciaperoni2024hadamard}.
	
\end{itemize}

Therefore, we focus on solving the low-rank reconstruction problem for the Hadamard decomposition.
Assuming that  $\bA_1$ and $\bA_2$ share the same rank $K$, our aim is to reconstruct the design matrix $\bA$ through the Hadamard product $\bA_1\hadaprod\bA_2$. 
Building upon the matrix factorization method used in   alternating least squares (Section~\ref{section:als-netflix}),
we now concentrate on algorithms for solving the \textit{low-rank Hadamard decomposition} problem:
\begin{itemize}
	\item Given a real matrix $\bA\in \real^{M\times N}$, find  matrix factors $\bA_1\in \real^{M\times N}$ and $\bA_2\in \real^{M\times N}$ such that: 
	\begin{equation}
		\min\,\,L(\bC_1, \bD_1, \bC_2,\bD_2) = \normf{\bA_1\hadaprod \bA_2-\bA}^2= \normf{(\bC_1\bD_1)\hadaprod (\bC_2\bD_2) -\bA}^2,
	\end{equation}
	where $\bC_1, \bC_2\in\real^{M\times K}$, and $\bD_1, \bD_2\in\real^{K\times N}$: $\bA_1=\bC_1\bD_1$ and $\bA_2=\bC_2\bD_2$ such that $\bA_1$ and $\bA_2$ are rank-$K$ matrices.
\end{itemize}
Low-rank (Hadamard) decomposition is often  necessary because many natural phenomena exhibit multiplicative or conjunctive relationships \citep{ciaperoni2024hadamard}.
For instance, consider a study on risk factors for a disease with two predictors: smoking status (yes/no) and alcohol consumption (yes/no). The multiplicative model would account  not only for the individual effects of smoking and alcohol consumption but also for their interaction.
The (low-rank) Hadamard decomposition offers an alternative approach to modeling such relationships.

Following the alternating descent framework using gradient descent, at each iteration, the matrices $\bC_1, \bD_1, \bC_2$, and $\bD_2$ are updated sequentially by taking a step in the direction opposite to the gradient of the objective function.
It then can be shown that 
$$
\nabla L(\bC_1)=\nabla L(\bC_1|\bD_1, \bC_2,\bD_2)=2\big(\left((\bC_1\bD_1)\hadaprod (\bC_2\bD_2) -\bA\right)\hadaprod (\bC_2\bD_2)\big)\bD_1^\top.
$$
\begin{proof}
	For simplicity, we derive the gradient of $\bE$ for $f(\bE) = \normf{\bE\bF\hadaprod \bC - \bD}^2$.
	We have 
	$$
	\begin{aligned}
		f(\bE)&=\normf{\bE\bF\hadaprod \bC - \bD}^2 
		= \trace\left((\bE\bF\hadaprod \bC - \bD)^\top(\bE\bF\hadaprod \bC - \bD) \right)\\
		&= \trace\left((\bE\bF\hadaprod \bC)^\top (\bE\bF\hadaprod \bC)\right)-2\trace\left((\bE\bF\hadaprod \bC)^\top\bD\right) + \trace(\bD^\top\bD).
	\end{aligned}
	$$
	Considering the first term, we get 
	$$
	\frac{\partial \trace\left((\bE\bF\hadaprod \bC)^\top (\bE\bF\hadaprod \bC)\right)}{\partial \bE}
	=2(\bE\bF)\hadaprod \bC\hadaprod \bC \cdot \bF^\top.
	~\footnote{Use the fact that $\frac{\partial \trace\left((\bE\hadaprod \bC)^\top(\bE\hadaprod \bC) \right)}{\partial \bE}=2\bE\hadaprod \bC\hadaprod \bC$, which can be derived element-wise.}
	$$
	For the second term, it follows that 
	$$
	-2\frac{\partial \trace\left((\bE\bF\hadaprod \bC)^\top\bD\right)}{\partial \bE}
	=-2\bD\hadaprod \bC \cdot \frac{\partial \bE\bF}{\partial \bE}
	=-2\bD\hadaprod \bC \cdot\bF^\top.
	~\footnote{Use the fact that $\frac{\partial\trace( (\bE\hadaprod \bC)^\top\bD )}{\partial \bE} = \bD\hadaprod \bC$, which can be derived element-wise. Since $\trace( (\bE\hadaprod \bC)^\top\bD)=\sum_{i,j} d_{ij}a_{ij}c_{ij}$ and thus $\frac{\partial \trace( (\bE\hadaprod \bC)^\top\bD)}{\partial a_{ij}}=d_{ij}c_{ij}$.}
	$$
	The third term is a constant w.r.t. to $\bE$. 
	Therefore, $\frac{\partial f(\bE)}{\partial \bE} = 2(\bE\bF)\hadaprod \bC\hadaprod \bC \cdot \bF^\top-2\bD\hadaprod \bC \cdot\bF^\top
	=2\big((\bE\bF)\hadaprod \bC -\bD \big)\hadaprod \bC\cdot\bF^\top
	.$
	Substituting  $\bE=\bC_1$, $\bF=\bD_1$, $\bC=\bC_2\bD_2$, and $\bD=\bA$ completes the proof.
\end{proof}
The gradients with respect to $\bD_1, \bC_2$, and $\bD_2$ can be derived analogously.
Thus, the alternating descent method for obtaining the low-rank approximation of Hadamard decomposition can be described by  Algorithm~\ref{alg:ad_hadamad_svd}.

\begin{algorithm}[h] 
	\caption{Alternating Descent with Gradient Descent for Low-Rank Hadamard Decomposition: A regularization can also be added into the gradient descent update (see Section~\ref{section:regularization-extention-general}).}
	\label{alg:ad_hadamad_svd}
	\begin{algorithmic}[1] 
		\Require Matrix $\bA\in \real^{M\times N}$;
		\State Initialize $\bC_1,\bC_2\in \real^{M\times K}$, and $\bD_1,\bD_2\in \real^{K\times N}$; 
		\State Choose a stoping criterion on the approximation error $\delta$;
		\State Choose  step size $\eta$;
		\State Choose the maximum number of iterations $C$;
		\State $iter=0$; \Comment{Count for the number of iterations}
		\While{$\normf{(\bC_1\bD_1)\hadaprod (\bC_2\bD_2) -\bA}^2>\delta $ and $iter<C$}
		\State $iter=iter+1$; 
		\State $\Delta \leftarrow \left((\bC_1\bD_1)\hadaprod (\bC_2\bD_2) -\bA\right)$;
		\State $\bC_1 \leftarrow \bC_1-\eta \nabla L(\bC_1)=\bC_1-\eta\cdot 2\left(\Delta \hadaprod (\bC_2\bD_2)\right)\bD_1^\top$;
		\State $\bD_1 \leftarrow \bD_1-\eta \nabla L(\bD_1)=\bD_1-\eta\cdot 2 \left\{\left(\Delta^\top \hadaprod (\bC_2\bD_2)^\top\right)\bC_1\right\}^\top$;
		\State $\bC_2 \leftarrow \bC_2-\eta \nabla L(\bC_2)=\bC_2 - \eta \cdot 2 \left(\Delta \hadaprod (\bC_1\bD_1)\right)\bD_2^\top$;
		\State $\bD_2 \leftarrow \bD_2-\eta \nabla L(\bD_2)=\bD_2-\eta \cdot 2 \left\{\left( \Delta^\top \hadaprod (\bC_1\bD_1)^\top \right)\bC_2\right\}^\top$;

		\EndWhile
		\State Output $\bC_1,\bD_1, \bC_2,\bD_2$.
	\end{algorithmic} 
\end{algorithm}

\subsection{Rank-One Update }
Following the rank-one update approach used in  ALS (Section~\ref{section:alt-columb-by-column}), we consider updating the $n$-th column $\bd_{1,n}$ of $\bD_1$, $n\in\{1,2,\ldots, N\}$.
Analogously, the gradient with respect to $\bd_{1,n}$ can be derived as:
\begin{equation}\label{equation:hada_rkone1}
	\begin{aligned}
		\nabla L(\bd_{1,n}) 
		=\frac{\partial L(\bd_{1,n})}{\partial \bd_{1,n}}
		&= 2\bC_1^\top \left((\bC_1\bd_{1,n}) \hadaprod \ba_{2,n} \hadaprod \ba_{2,n}\right) - 2\bC_1^\top (\ba_n \hadaprod \ba_{2,n})\\
		&=2\bC_1^\top \left(\left[(\bC_1\bd_{1,n}) \hadaprod \ba_{2,n}-\ba_n \right] \hadaprod\ba_{2,n}\right), \gap n\in\{1,2,\ldots, N\}, 
	\end{aligned}
\end{equation}
where $\ba_{2,n}$ denotes the $n$-th column of $\bA_2=\bC_2\bD_2$. The gradients for the columns of $\bD_2$ can be computed in a similar manner.

Suppose further that $\bC_1^\top=[\bc_{1,1}, \bc_{1,2}, \ldots, \bc_{1,M}]\in\real^{K\times M}$,  $\bB=\bA^\top=[\bb_1, \bb_2, \ldots, \bb_M]\in\real^{N\times M}$, and $\bB_2=\bA_2^\top=(\bC_2\bD_2)^\top=[\bb_{2,1}, \bb_{2,2}, \ldots, \bb_{2,M}]\in\real^{N\times M}$, i.e., the row partitions of $\bC_1$, $\bA$, and $\bA_2=(\bC_2\bD_2)$, respectively.
Then, the gradient with respect to  $\bc_{1,m}$ is given by:
\begin{equation}\label{equation:hada_rkone2}
	\nabla L(\bc_{1,m}) 
	=\frac{\partial L(\bc_{1,m})}{\partial \bc_{1,m}}
	= 2\bD_1 \left([(\bD_1^\top\bc_{1,m}) \hadaprod \bb_{2,m}-\bb_m ] \hadaprod\bb_{2,m}\right), \, m\in\{1,2,\ldots, M\}.
\end{equation}
The gradient for the rows of $\bC_2$ can be obtained analogously.
Therefore,  Algorithm~\ref{alg:ad_hadamad_svd} can be adapted to update the columns of $\bD_1, \bD_2$ and the rows of $\bC_1, \bC_2$ iteratively (referred to as  rank-one updates).

\subsection{Missing Entries}
The rank-one update framework can be extended to settings like the Netflix problem, in which case many entries of $\bA\in\real^{M\times N}$ are missing. 
Assuming $\bA$ is a low-rank matrix, we aim to fill in the missing entries of matrix $\bA$ (where $M$ represents the number of movies, and $N$ represents the number of users).

Let $\bo_n\in \{0,1\}^M, n\in\{1,2,\ldots, N\}$, represent the movies rated by user $n$, where $o_{nm}=1$ if user $n$ has rated movie $m$, and $o_{nm}=0$ otherwise.
Similarly, let $\bp_m \in\{0,1\}^{N}, m\in\{1,2,\ldots,M\}$ denote the users who have rated  movie $m$, with $p_{mn}=1$ if the movie $m$ has been rated by user $n$, and $p_{mn}=0$ otherwise.
Then, Equations~\eqref{equation:hada_rkone1} and~\eqref{equation:hada_rkone2} become
\begin{align}
	\nabla L(\bd_{1,n}) 
	&=2\bC_1[\bo_n,:]^\top \left(\left[(\bC_1[\bo_n,:]\bd_{1,n}) \hadaprod \ba_{2,n}[\bo_n]-\ba_n[\bo_n] \right] \hadaprod\ba_{2,n}[\bo_n]\right),\nonumber \\
	&\gap\gap\gap\gap\gap\gap\gap\gap\gap\gap\gap\gap\gap\gap\gap\gap n\in\{1,2,\ldots, N\};\label{equation:hada_rkone3} \\
	\nabla L(\bc_{1,m}) 
	&= 2\bD_1[:,\bp_m] \left(\left[(\bD_1[:,\bp_m]^\top\bc_{1,m}) \hadaprod \bb_{2,m}[\bp_m]-\bb_m[\bp_m] \right] \hadaprod\bb_{2,m}[\bp_m]\right),\nonumber\\
	&\gap\gap\gap\gap\gap\gap\gap\gap\gap\gap\gap\gap\gap\gap\gap\gap  m\in\{1,2,\ldots, M\} \label{equation:hada_rkone4}.
\end{align}
Since the Hadamard product commutes,  the gradients for $L(\bd_{2,n})$, $n\{1,2,\ldots, N\}$ and $L(\bc_{2,m})$,  $m\in\{1,2,\ldots, M\}$ can be obtained similarly due to symmetry. 
The complete procedure for predicting missing entries in $\bA$ using low-rank Hadamard decomposition is summarized in Algorithm~\ref{alg:ad_hadamad_missen}.

\begin{algorithm}[h] 
	\caption{Alternating Descent with Gradient Descent for Hadamard Decomposition with Missing Entries: A regularization can also be added into the gradient descent update (see Section~\ref{section:regularization-extention-general}).}
	\label{alg:ad_hadamad_missen}
	\begin{algorithmic}[1] 
		\Require Matrix $\bA\in \real^{M\times N}$;
		\State Initialize $\bC_1,\bC_2\in \real^{M\times K}$, and $\bD_1,\bD_2\in \real^{K\times N}$; 
		\State Choose a stoping criterion on the approximation error $\delta$;
		\State Choose  step size $\eta$;
		\State Choose the maximum number of iterations $C$;
		\State $iter=0$; \Comment{Count for the number of iterations}
		\While{$\normf{(\bC_1\bD_1)\hadaprod (\bC_2\bD_2) -\bA}^2>\delta $ and $iter<C$}
		\State $iter\leftarrow iter+1$; 
		\For{$n=1,2,\ldots, N$}
		\State $\bd_{1,n}\leftarrow\bd_{1,n}-\eta \nabla L(\bd_{1,n}) $;  \Comment{Equation~\eqref{equation:hada_rkone3}}
		\State $\bd_{2,n}\leftarrow\bd_{2,n}-\eta \nabla L(\bd_{2,n}) $;
		\EndFor
		\For{$m=1,2,\ldots, M$}
		\State $\bc_{1,m}\leftarrow\bc_{1,m}-\eta \nabla L(\bc_{1,m})  $; \Comment{Equation~\eqref{equation:hada_rkone4}}
		\State $\bc_{2,m}\leftarrow\bc_{2,m}-\eta \nabla L(\bc_{2,m})  $;
		\EndFor
		\EndWhile
		\State Output $\bC_1,\bD_1, \bC_2,\bD_2$.
	\end{algorithmic} 
\end{algorithm}

\section{Application: Movie Recommender}\label{section:movie_rec_als}
The ALS algorithm has been extensively developed for movie recommendation systems. 
To illustrate its application, we use  the ``MovieLens 100K" data set from MovieLens \citep{harper2015movielens}~\footnote{http://grouplens.org}. 
This data set is widely recognized and used in the field of recommender systems research due to its comprehensive set of user ratings for movies. 
It consists of 100,000 ratings from 943 users for 1,682 movies, with rating values ranging from 0 to 5. The data was collected through the MovieLens website
over a seven-month period from September 19th, 
1997 to April 22nd, 1998. This data has been cleaned up---users
who had less than 20 ratings or did not have complete demographic
information were removed from this data set such that simple demographic info for the users (age, gender, occupation, zip) can be obtained. However, our focus will solely be on the raw rating matrix  to evaluate how well the low-rank ALS approach can capture the underlying structure of the data, leading to accurate and meaningful recommendations.

The data set is split into training and validation set, comprising approximately 95,015 and 4,985 ratings, respectively, for fitting the ALS algorithm.
The error is quantified using the \textit{root mean squared error (RMSE)}. The RMSE is a common measure of the difference between actual and predicted values. For a set of values $\{x_1, x_2, \ldots, x_n\}$ and their predictions $\{\hat{x}_1, \hat{x}_2, \ldots, \hat{x}_n\}$, the RMSE can be described as 
$
\text{RMSE}(\bx, \hat{\bx}) = \sqrt{\frac{1}{n} \sum_{i=1}^{n}(x_i-\hat{x}_i)^2}.
$
For evaluating the ALS algorithm, the minimum RMSE for the validation set is achieved with $K=62$ and $\lambda_w=\lambda_z=0.15$, resulting in an RMSE of $0.806$ (less than 1), as shown in Figure~\ref{fig:movie100k}. 
Given that ratings range from 0 to 5, the ALS algorithm can predict whether a user is likely to enjoy a movie (e.g., ratings of 4 to 5) or not  (e.g., ratings of 0 to 2) on average due to the  RMSE score. 
\begin{figure}[h]
\centering  
\vspace{-0.35cm} 
\subfigtopskip=2pt 
\subfigbottomskip=2pt
\subfigcapskip=-5pt
\subfigure[Training set.]{\label{fig:movie100k1}
\includegraphics[width=0.47\linewidth]{./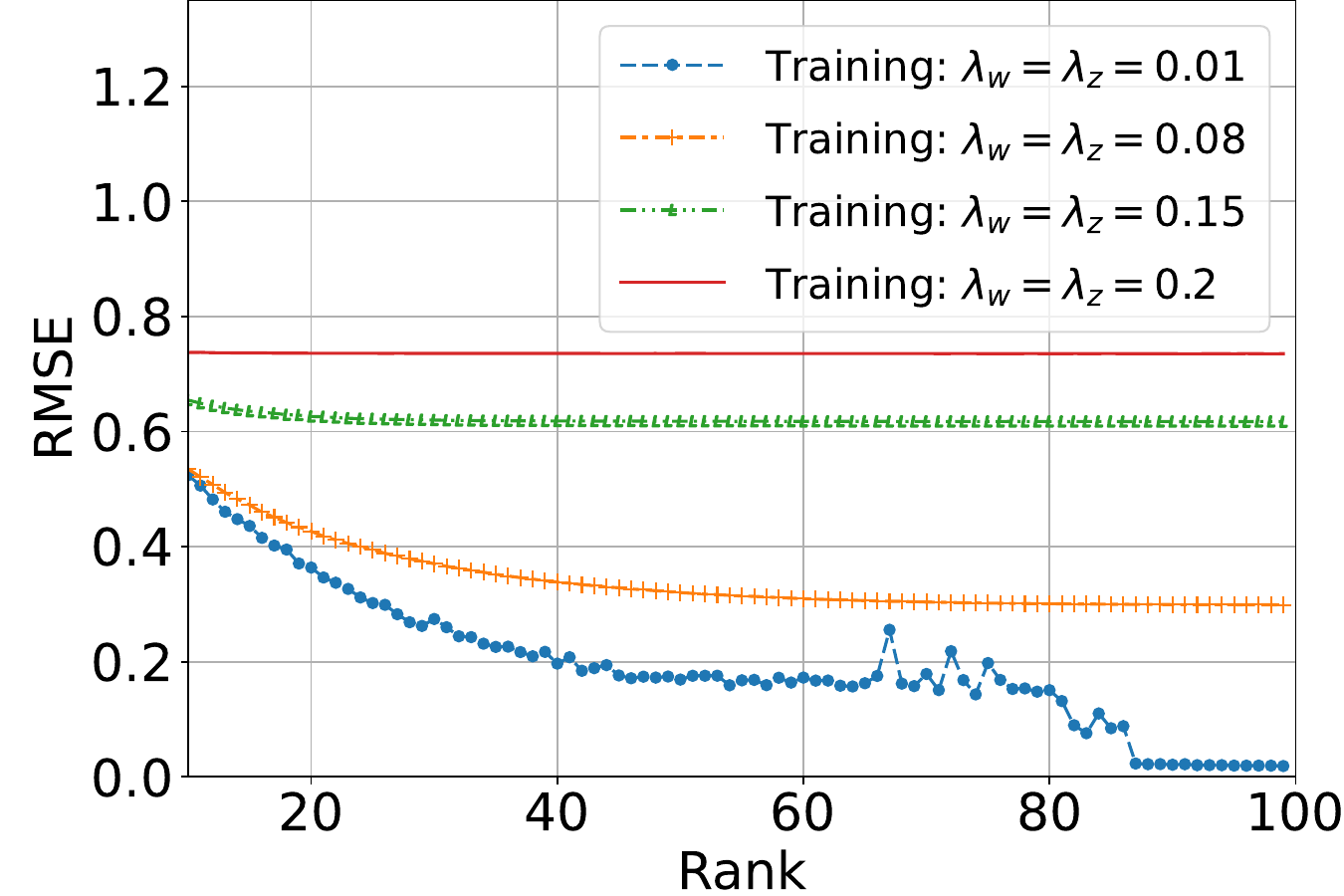}}
\quad 
\subfigure[Validation set.]{\label{fig:movie100k2}
\includegraphics[width=0.47\linewidth]{./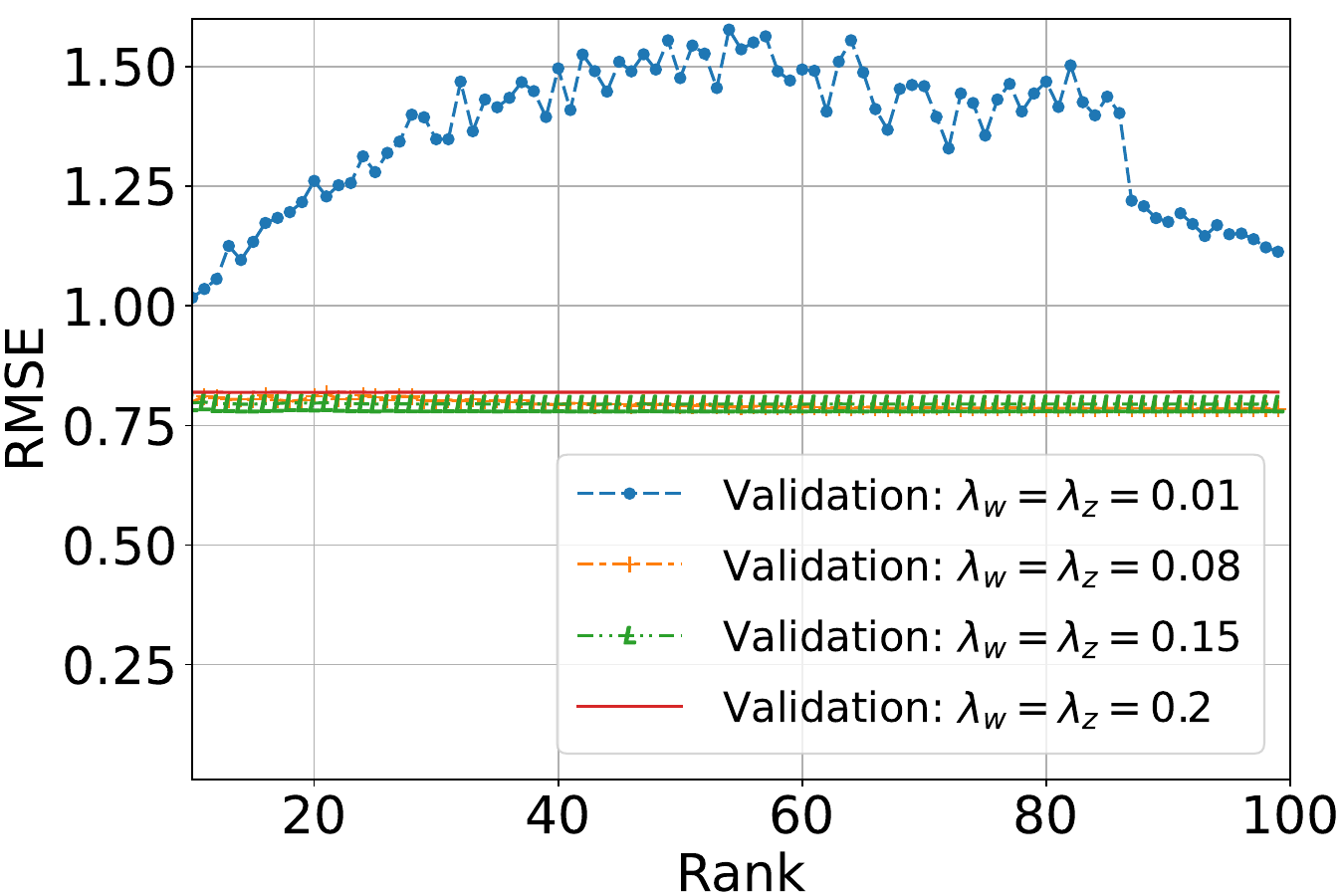}}
\caption{Comparison of training and validation error for the ``MovieLens 100K" data set with different reduction dimensions and regularization parameters.}
\label{fig:movie100k}
\end{figure}

\paragraph{Recommender 1.}
A simple recommender system suggests movie $m$ to user $n$ if $a_{mn}\geq4$ and user $n$ has not yet rated movie $m$. 
\paragraph{Recommender 2.}
Alternatively, we can recommend similar movies to those highly rated by the user. 
Suppose user $n$ has rated movie $m$ with a 5 ($a_{mn}=5$). Under the ALS approximation $\bA=\bW\bZ$, where each row of $\bW$ represents the hidden features of each movie (see Section~\ref{section:als-vector-product} on  vector inner products), the solution involves identifying the most similar movies that user  $n$ has not rated (or watched), to movie $m$. Mathematically, this is expressed as:
$$
\mathop{\argmax}_{\bw_i} \gap \text{similarity}(\bw_i, \bw_m), \qquad \text{for all} \gap i \notin \bo_n,
$$
where $\bw_i$'s are the rows of $\bW$, each representing the hidden features of movie $i$, and $\bo_n$ represents a mask vector, indicating the movies that user $n$ has already rated.

The method described above relies on a similarity function applied to two vectors. The \textit{cosine similarity} is the most commonly used measure. It is defined as the cosine of the angle between the two vectors:
$$
\cos(\bx, \by) = \frac{\bx^\top\by}{\normtwo{\bx}\cdot \normtwo{\by}},
$$
where the value ranges from $-1$ to 1, with $-1$ representing perfectly dissimilar and 1 being perfectly similar. Based on this definition, it follows that the cosine similarity depends only on the angle between the two nonzero vectors, but not on their magnitudes since it can be regarded as the inner product between the normalized  versions of these vectors. 
Another measure for calculating similarity is the \textit{Pearson similarity}:
$$
\text{Pearson}(\bx,\by) =\frac{\Cov(\bx,\by)}{\sigma_x \cdot \sigma_y}
= \frac{\sum_{i=1}^{n} (x_i - \bar{x} ) (y_i -\bar{y})}{ \sqrt{\sum_{i=1}^{n} (x_i-\bar{x})^2}\sqrt{ \sum_{i=1}^{n} (y_i-\bar{y})^2 }}.
$$
It is calculated as the ratio between the covariance of two variables and the product of their standard deviations, whose range varies between $-1$ and 1, where $-1$ is perfectly dissimilar, 1 is perfectly similar, and 0 indicates no linear relationship.  Pearson similarity is commonly used to measure the linear correlation between two sets of data. 

Both Pearson correlation and cosine similarity are widely used in  machine learning and data analysis. Pearson correlation is often used in regression analysis, while cosine similarity is commonly used in recommendation systems and information retrieval tasks. 
In our context, cosine similarity performs better in \textit{precision-recall (PR) curve} analysis. 
\begin{figure}[h]
\centering
\vspace{-0.35cm}
\subfigtopskip=2pt
\subfigbottomskip=2pt
\subfigcapskip=-5pt
\subfigure[Cosine Bin Plot.]{\label{fig:als-cosine}%
\includegraphics[width=0.32\linewidth]{./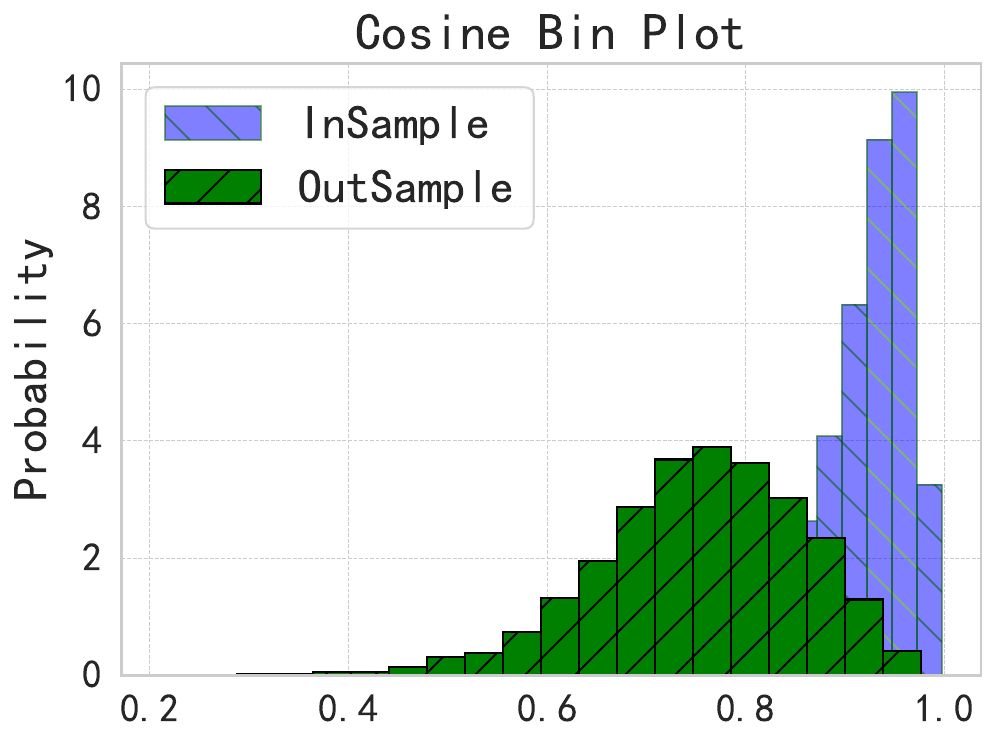}}%
\subfigure[Pearson Bin Plot.]{\label{fig:als-pearson}%
\includegraphics[width=0.32\linewidth]{./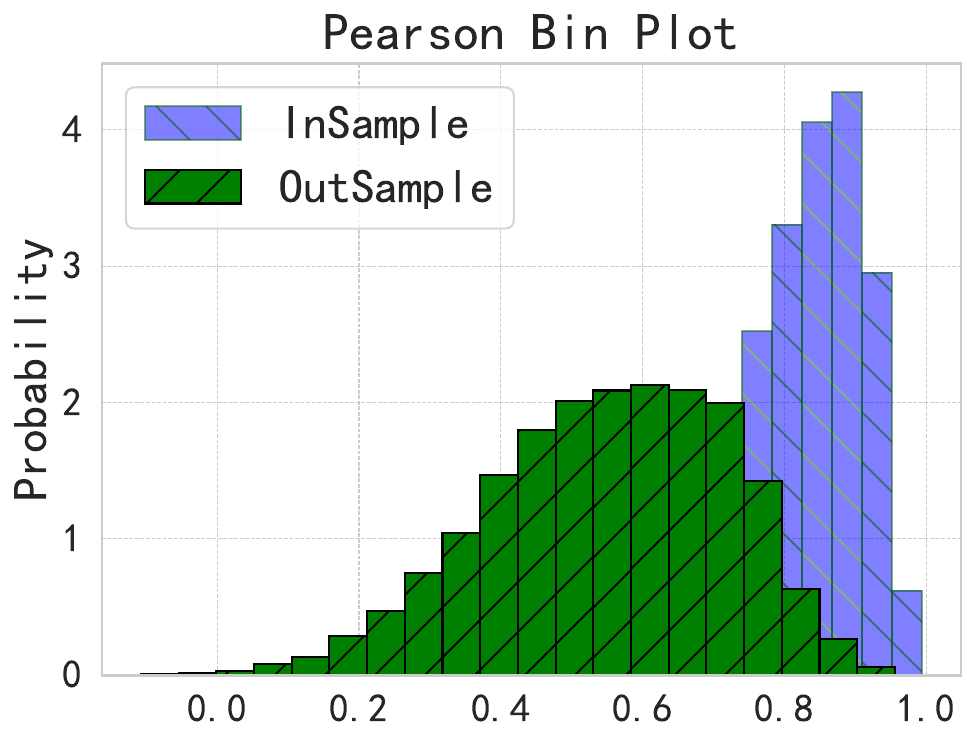}}%
\subfigure[PR Curve.]{\label{fig:als-prcurve}%
\includegraphics[width=0.35\linewidth]{./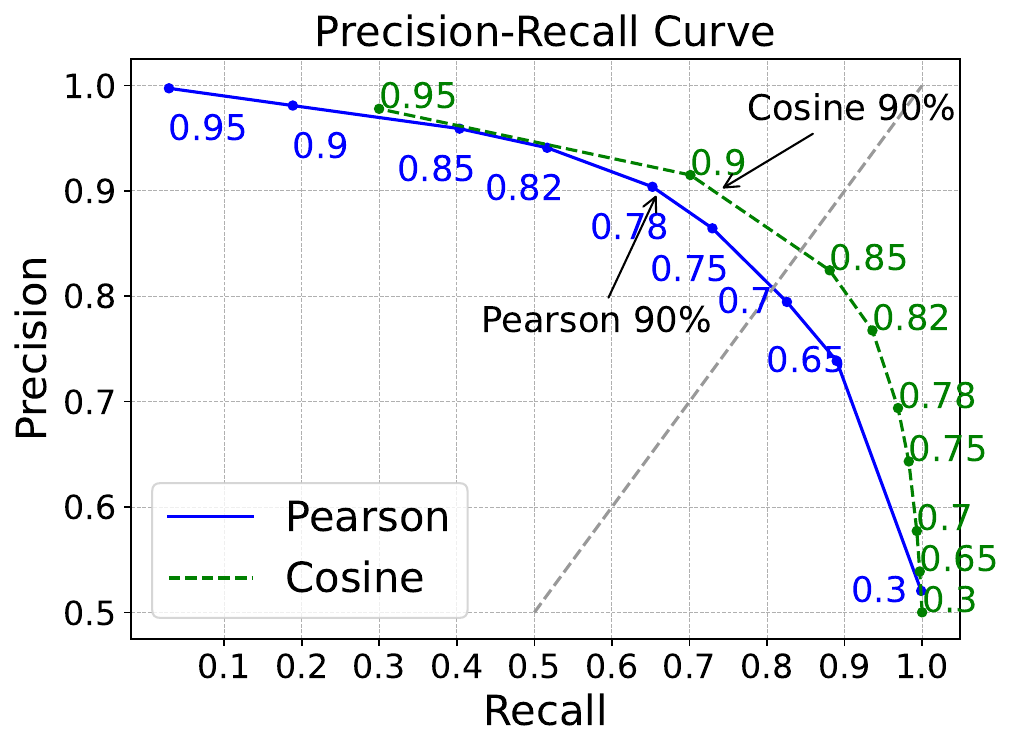}}%
\caption{Distribution of the insample and outsample using cosine and Pearson similarities, and the Precision-Recall curves for both.}
\label{fig:als-prcurive-bin}
\end{figure}

Building upon the previous example  using the MovieLens 100K data set, we set $\lambda_w=\lambda_z=0.15$ for  regularization and a rank of $62$ to minimize  RMSE. 
We aim to analyze the similarity between different movie hidden vectors, and the goal of Recommender 2 is to see whether the matrix factorization can help differentiate high-rated from low-rated movies, thereby recommending movies correlated with the user's high-rated ones.
Define further the term ``insample" as the similarity between the movies having rates $5$ for each user, and ``outsample" as the similarity between the movies having rates $5$ and $1$ for each user. Figure~\ref{fig:als-cosine} and \ref{fig:als-pearson} depict the bin plots of the distributions of insample and outsample under cosine and Pearson similarities, respectively.
In both scenarios, a clear distinction is observed between the distributions of the ``insample" and ``outsample" data, indicating that  ALS decomposition can actually find the hidden features of different movies for each user.
Figure~\ref{fig:als-prcurve} displays the \textit{precision-recall (PR) curve} for these scenarios, where we find  cosine similarity outperforms Pearson similarity, achieving over $73\%$ recall with  $90\%$ precision. However, Pearson similarity can  identify only about $64\%$ of the high-rated movies with the same precision. 
In practice, other measures, such as  \textit{negative Euclidean distance}, can also be explored. The Euclidean distance  measures the ``dissimilarity" between two vectors; and a negative value thus represents their similarity.

The ALS method for recommendation discussed here is designed for \textit{explicit data}, where the ratings provided by each user have a clear hierarchical meaning. In contrast, there are also recommendation systems for \textit{implicit data}, where the system automatically infers users' preferences by tracking their actions, such as which items they viewed, where they clicked, which products they purchased, or how long they spent on a web page.
In such cases,  ALS can be extended to more complex models, such as using a dictionary matrix to transform  the explicit data into user and item latent vectors \citep{he2017neural}, incorporating multinomial prior into a variational auto-encoder,  and enhancing the model's ability to handle implicit feedback by leveraging probabilistic modeling techniques \citep{liang2018variational}.

\index{Outlier detection}
\section{Application: Outlier Detection}
We can also use matrix decomposition algorithms to detect outlier entries in a matrix. 
Given an $ M \times N $ input matrix $ \bA $, the objective is to identify entries that significantly deviate from their reconstructed counterparts based on a low-rank approximation. Begin by selecting a rank $ K $ such that $ K < \min(M,N) $, which captures the dominant structure of the data while filtering out noise or anomalies. 
Then, we employ a matrix factorization method such as ALS, SVD, nonnegative matrix factorization (NMF; see Chapter~\ref{chapter:nmf}), or another suitable decomposition technique to approximate $ \bA $ as $ \bW\bZ $, where $ \bW \in \real^{M \times K} $ and $ \bZ \in \real^{K \times N} $. Subsequently, reconstruct the approximation $ \widetildebA = \bW\bZ $ and compute the element-wise reconstruction error matrix $ \bE \in \real^{M\times N} $ defined by $ e_{ij} = \abs{a_{ij} - \widetildea_{ij}} $ for all $ i,j $. Establish a threshold $ \tau $ either through statistical means---such as mean absolute deviation or quantile-based methods---or via domain-specific criteria. 
Finally, we can declare an entry $ a_{ij} $ as an outlier if $ e_{ij} > \tau $. The identified outliers correspond to those indices where the deviation between the original and reconstructed values exceeds the expected variability captured by the low-rank model. 
This approach exploits the ability of matrix factorization to capture the main patterns in the data. As a result, large reconstruction errors often indicate potential anomalies or unusual behavior.

\section{Application: Spectral Clustering and Link Prediction}\label{section:spec_clust}

In Section~\ref{section:feat_eng_scatt}, we discussed how spectral decomposition can be applied to perform spectral clustering on adjacency matrices of undirected graphs.
However, adjacency matrices of directed graphs are asymmetric.
For example, in a social network like Twitter, a teenager might follow a famous rock star, but the rock star may not follow the teenager in return.

Given an adjacency matrix $\bA\in\real^{n\times n}$ of a directed graph, ALS can be used to find the low-dimensional representation for each node such that $\bA=\bU\bV^\top$, where $\bU, \bV\in\real^{n\times k}$.
Unlike the symmetric case, the $k$-dimensional row vectors of $\bU$ and $\bV$ can be concatenated to form $2k$-dimensional embedded representations for each node.

These $2k$-dimensional embeddings can then be used for spectral clustering of the nodes, similar to the approach outlined in Section~\ref{section:feat_eng_scatt}.
Alternatively, once the factorization $\bA=\bU\bV^\top$ is obtained, the reconstructed matrix $\bU\bV^\top$ can be used to predict links---just as in the Netflix recommendation problem.
In such cases, when the connections between nodes are binary (i.e., an implicit data set), \textit{logistic matrix decomposition} using ALS can also be applied to model the probability of a link; see Problem~\ref{prob:logis_mf}.

\begin{problemset}

\item \label{prob:als_pseudo1} \textbf{Least squares for rank-deficiency \citep{lu2021rigorous}.} Let $\bA\in\real^{m\times n}$ and $\bb\in\real^m$. Show that the least squares problem $L(\bx)=\normtwo{\bA\bx-\bb}^2$ has a minimizer $\bx^*\in\real^n$ if and only if there exists a vector $\by\in\real^n$ such that $\bx^*=\bA^+\bb+(\bI-\bA^+\bA)\by$, where $\bA^+$ denotes the pseudo-inverse of $\bA$. 
Show that:
\begin{itemize}
\item The least squares has a \textbf{unique} minimizer of $\bx^*=\bA^+\bb$ only when $\bA^+$ is a left inverse of $\bA$ (i.e., $\bA^+\bA=\bI$). The solution in Lemma~\ref{lemma:ols} is a special case.
\item The optimal value is $L(\bx^*)=\bb^\top(\bI-\bA\bA^+)\bb$.
\item If $\by\neq \bzero$, then $\normtwo{\bA^+\bb}\leq \normtwo{\bA^+\bb+(\bI-\bA^+\bA)\by}$.
\end{itemize}
\textit{Hint: See LS via SVD in Section~\ref{section:application-ls-qr}.}

\item \label{prob:als_pseudo2} \textbf{Least squares for rank-deficiency.} Let  $\bA\in\real^{m\times n}$ and $\bB\in\real^{m\times p}$. Show that the least squares problem $L(\bX) = \normf{\bA\bX-\bB}^2$ has a minimizer $\bX^*=\bA^+\bB\in\real^{n\times p}$. Determine all the minimizers using Problem~\ref{prob:als_pseudo1}.

\item \label{prob:als_pseudon} \textbf{Least squares for rank-deficiency.}   Let $\bA\in\real^{m\times n}$ and $\bB\in\real^{p\times n}$. Show that the least squares problem $L(\bX) = \normf{\bX\bA-\bB}^2$ has a minimizer $\bX^*=\bB\bA^+\in\real^{p\times m}$.
	
\item Prove Lemma~\ref{lemma:als-update-w-rank}.

\item \label{prob:sep_conv} \textbf{Marginally convex.} Let $D(\bA, \bB)$ be convex in the second argument $\bB$. Show that $D(\bA,\bW\bZ)$ is convex in $\bW$ for a  fixed $\bZ$, and vice versa.

\item Derive the column-by-column update rules for Algorithm~\ref{alg:als-regularizer}.

\item \label{prob:ortho_mf} \textbf{Orthogonal and projective  matrix factorization.} Consider the optimization $\mathopmin{\bW}\normf{\bA-\bW\bZ}^2$ such that $\bZ\bZ^\top=\bI_K$, where $\bA\in\real^{M\times N}, \bW\in\real^{M\times K}, \bZ\in\real^{K\times N}$, and $K\leq \min\{M,N\}$. Show that the optimal value $\bW^*$ given $\bZ$ is $\bA\bZ^\top$.
This indicates that the matrix factorization optimization can be equivalently stated as $\mathopmin{\bZ\bZ^\top=\bI_K}\normf{\bA-\bA\bZ^\top\bZ}^2$. And the relaxed problem is called the \textit{projective matrix factorization} \citep{yuan2005projective, yang2010linear}:
$$
\mathopmin{\bZ}\normf{\bA-\bA\bZ^\top\bZ}^2,
$$
where each row of $\bA$ is projected onto a $K$-dimensional subspace, hence the name.
The interpretations of orthogonal and projective matrix factorizations are further discussed in Problem~\ref{prob:ortho_nmf}.

\item \label{problem:rls} \textbf{Regularized least squares (RLS).} 
Given $\bA\in\real^{m\times n}, \bb\in\real^{m}, \bB\in\real^{p\times n}$, and $\lambda\in\real_{++}$, we consider the regularized least squares  (RLS) problem:
$$
\mathop{\min}_{\bx\in\real^n} \normtwo{\bA\bx-\bb}^2 + \lambda\normtwo{\bB\bx}^2.
$$
Show that this regularized least squares problem has a unique solution if and only if $\nspace(\bA)\cap \nspace(\bB) = \{\bzero\}$.

\item \label{prob:denoise_rls} \textbf{Denoising via RLS.} Consider a noisy measurement of a signal $\bx\in\real^n$:
$
\by = \bx+\be,
$
where $\by$ is the observed measurement, and $\be$ is the noise vector. We want to find an estimate $\bx$ of the observed measurement $\by$ such that $\bx \approx \by$:
$
\min \normtwo{\bx-\by}^2.
$
Apparently, the optimal solution of this optimization is given by $\bx=\by$; however, it is meaningless.
To improve the estimate,  we can add a penalty term for the differences between  consecutive observations:
$
R(\bx) = \sum_{i=1}^{n-1} (x_i - x_{i+1})^2.
$
Then, 
\begin{itemize}
\item Find the regularized least squares representation for this problem and derive the regularized least squares solution.
\item Find some applications of this denoising problem. For example, when we model the profit and loss signal of a financial asset, the two observations over consecutive days of the underlying asset should exhibit smooth transitions rather than abrupt changes.
\end{itemize}


\item \textbf{Weighted least squares (WLS).}
Building upon the assumptions in Lemma~\ref{lemma:ols}, we consider further that each data point $i\in\{1,2,\ldots, m\}$ (i.e., each row of $\bA$) has a weight $w_i$. 
This means some  data points may carry greater significance than others, and we can produce approximate minimizers that reflect this.
Show that the value $\bx_{WLS} = (\bA^\top\bW^2\bA)^{-1}\bA^\top\bW^2\bb$ serves as the \textit{weighted least squares (WLS)}  estimate of $\bx$, where $\bW=\diag(w_1, w_2, \ldots, w_m)\in\real^{m\times m}$. \textit{Hint: Find the normal equation for this problem.}

\item \textbf{Positive definite  weighted least squares (PDWLS).}
Building upon the assumptions in Lemma~\ref{lemma:ols}, we consider further  the matrix equation $\bA\bx + \be =\bb$, where $\be$ is an error vector. Define the weighted error squared sum $E_w = \be^\top \bW \be$, where the weighting matrix $\bW$ is  positive definite. 
Show that the positive definite weighted least squares solution is $\bx^* = (\bA^\top\bW\bA)^{-1}\bA^\top\bW\bb$. \textit{Hint: Compute the gradient of $E_w = (\bb-\bA\bx)^\top\bW(\bb-\bA\bx)$.}

\item \textbf{Weighted color noise least squares.}
Building upon the assumptions in Lemma~\ref{lemma:ols}, we consider  the matrix equation $ \bA\bx + \be = \bb $, where $\be$ is an additive color noise vector satisfying the conditions $\Exp[\be] = \bzero$ and $\Exp[\be\be^\top] = \bSigma$, where $\bSigma$ is known. Use the weighting error function $E_w = \be^\top \bW \be$ as the loss function for finding the optimal estimate $\bx^*$. Show that
$\bx^* = (\bA^\top \bW \bA)^{-1} \bA^\top \bW \bb$,
where the optimal choice of the weighting matrix $\bW$ is $\bW^* = \bSigma^{-1}$.
\textit{Hint: Compute the gradient of $E_w = (\bb-\bA\bx)^\top\bW(\bb-\bA\bx)$.}

\item \label{problem:tls} \textbf{Transformed least squares (TLS).}
Building upon the assumptions in Lemma~\ref{lemma:ols}, we consider further the restriction $\bx=\bC\bgamma+\bc$, where $\bC\in\real^{n\times k}$ is a known matrix such that $\bA\bC$ has full rank, $\bc$ is a known vector, and $\bgamma$ is an unknown vector.
Show that the value $\bx_{TLS}=\bC(\bC^\top\bA^\top\bA\bC)^{-1}(\bC^\top\bA^\top)(\bb-\bA\bc) +\bc$ serves as the \textit{transformed least squares (TLS)} estimate of $\bx$.


\item \label{problem:twls2} Find the transformed weighted least squares estimate.

\index{Weighted matrix decomposition}
\item \label{prob:weight_mf} \textbf{ALS for weighted matrix decomposition.} Let $\bA\in\real^{m\times n}$ be the data matrix and $\bM\in\real^{m\times n}$ be the corresponding weight matrix, where each entry of $\bM$ represents the weight associated with the corresponding entry in $\bA$. Derive the ALS algorithm of the \textit{weighted matrix decomposition} problem:
\begin{equation}
	L(\bW, \bZ) = \normf{\bM\hadaprod(\bA-\bW\bZ)}^2.
\end{equation}
In the context of  \textit{implicit feedback data}, each entry of $\bA$ takes on a binary value; for example,  1 if the user rates a movie with 3, 4, or 5 stars, and 0 if the rating is 0, 1, or 2 stars.
Suppose that matrix $\bB\in\real^{m\times n}$ contains the original raw ratings of the matrix (ranging from 0 to 5). A relaxation of this implicit feedback representation can be achieved using a weight matrix  $\bM$, defined as:
$$
m_{ij} = 1+ \gamma \cdot b_{ij}, \quad \forall\,i,j,
$$
where the parameter $\gamma$ is typically set to a large value, such as $\gamma=40$. 
Discuss the relationship between this formulation of weighted matrix decomposition and the standard matrix decomposition used for implicit feedback data.

\index{Logistic matrix decomposition}
\item  \label{prob:logis_mf} \textbf{ALS for logistic matrix decomposition.}  Let $\bA\in\{0,1\}^{m\times n}$ be the observed binary  data matrix. The logistic matrix decomposition aims to factor $\bA$ into $\bW\in\real^{m\times k}$ and $\bZ\in\real^{k\times n}$ such that 
$$
L(\bW,\bZ) = \normf{\sigma(\bW\bZ)-\bA}^2 
$$
is minimized, where $\sigma(x) = 1/(1+\exp\{-x\})$ denotes the logistic sigmoid function, applied element-wise to the matrix 
$\bW\bZ$.
Alternatively, since we want to learn $\bW$ and $\bZ$ such that $\sigma(\bW\bZ)_{ij}$  has a large value when $a_{ij}$ is 1, and a small value when $a_{ij}$ is 0, we can use the \text{log-likelihood loss function}:
\begin{equation}
	L(\bW, \bZ) = -\sum_{i,j=1}^{m,n}\big[a_{ij} \ln\big(\sigma(\bw_i, \bz_j)\big) + (1-a_{ij})\ln\big(1-\sigma(\bw_i, \bz_j)\big)\big], 
\end{equation}
where $\bw_i$ and $\bz_j$ denote the $i$-th row of $\bW$ and the $j$-th column of $\bZ$, respectively.
Derive the ALS steps for this loss function, and discuss how it can be applied to the implicit feedback data introduced in Problem~\ref{prob:weight_mf}.

\index{Shared matrix decomposition}
\item \label{prob:shared_mf} \textbf{ALS for shared matrix decomposition.} In the main text, we consider a matrix, $\bA\in\real^{m\times n}$, representing the use-movie matrix, where   rows correspond to movies, columns to users, and entries contain the observed ratings. The product $\bW\bZ$ reconstructs the full rating matrix, including predictions for missing entries.
Suppose we are further given a user-book matrix $\bB\in\real^{p\times n}$, where the rows correspond to book items, and the columns contain the same set of users. To leverage both types of data (movies and books), we consider the \textit{shared matrix decomposition} problem:
\begin{equation}
	\min_{\bW,\bY,\bZ} L(\bW, \bY, \bZ) = \normf{\bA-\bW\bZ}^2 + \normf{\bB-\bY\bZ}^2 + \lambda(\normf{\bW}^2+\normf{\bY}^2+\normf{\bZ}^2).
\end{equation}
Since the columns of $\bZ$ represent latent features of users, these representations gain information from both the movie and book data.
Derive the ALS steps for this problem.

\index{First-order optimality condition}
\index{Fermat's theorem}
\item \label{problem:fist_opt} \textbf{First-order optimality condition for local optima points.} 
Consider  \textit{Fermat's theorem}: for a one-dimensional function $g(\cdot)$ defined and differentiable over an interval ($a, b$), if a point $x^*\in(a,b)$ is a local maximum or minimum, then $g^\prime(x^*)=0$. 
Prove the first-order optimality conditions for multivariate functions based on  Fermat's theorem for one-dimensional functions.
That is, let  $f: \sS\rightarrow \real$ be a function defined on a set $\sS\subseteq \real^n$. Suppose that $\bx^*\in\text{int}(\sS)$, i.e., in the interior point of the set, is a local optimum point and that all the partial derivatives of $f$ exist at $\bx^*$. Then $\nabla f(\bx^*)=\bzero$, i.e., the gradient vanishes at all local optimum points. (Note that, this optimality condition is a necessary condition but not sufficient; however, there could be vanished points which are not local maximum or minimum points.)
\textit{Hint: Consider the one-dimensional function $g(t)=f(\bx^* + t\be_i)$ for $i\in\{1,2,\ldots,n\}$.}

\item \label{problem:pos_hessian} \textbf{Global minimum point of convex functions.} Let the function $f$ be a twice continuously differentiable function defined over $\real^n$. Suppose that the Hessian $\nabla^2f(\bx) \succeq 0$ for any $\bx\in\real^n$ (i.e., the Hessian is always positive semidefinite~\footnote{Instead, if we assume the Hessian is positive semidefinite at a given point, then the point is a local minimum point.}).
This property is also referred to as the \textit{convexity}.
Show that $\bx^*$ is a global minimum point of $f$ if $\nabla f(\bx^*)=\bzero$. \textit{Hint: Use the linear approximation theorem from Taylor's expansion.}

\item \textbf{Two-sided matrix least squares \citep{friedland2007generalized, aggarwal2020linear}.} Let $\bB$ be an $M\times K$ matrix and $\bC$ be a $P\times N$ matrix. Find the $K\times P$ matrix $\bX$ such that $L(\bX)=\norm{\bA - \bB\bX\bC}_F^2$ is minimized, where $\bA\in\real^{M\times N}$ is known. 
\begin{itemize}
\item Derive the derivative of $L$ with respect to $\bX$ and the optimality conditions. 
\item Show that one possible solution to the optimality conditions is $\bX^*=\bB^+\bA\bC^+$, where $\bB^+$ and $\bC^+$ are the pseudo-inverses of $\bB$ and $\bC$, respectively.
\end{itemize}
Similarly, consider the optimization with $\rank(\bX)\leq p$:
$
L(\bX)=\norm{\bA - \bB\bX\bC}_F^2$, s.t. 
$\rank(\bX)\leq p$.
Show that 
\begin{itemize}
\item One possible solution to this is $\bX^*=\bB^+\bA_p\bC^+$, where $\bA_p$ a truncated SVD of $\bB\bB^+\bA\bC^+\bC$ by replacing all but the $p$ largest singular values by zero.
\item  $\bX^*$ also minimizes $\normf{\bX}$, i.e., has the smallest magnitude among all  solutions.
\item $\bX^*$ is the \textbf{unique} solution if and only if either $\rank(\bB\bB^+\bA\bC^+\bC)\leq p$ or both $\rank(\bB\bB^+\bA\bC^+\bC)\geq p$ and $\sigma_{p+1}(\bB\bB^+\bA\bC^+\bC) < \sigma_{p}(\bB\bB^+\bA\bC^+\bC)$.
\end{itemize}

\item \label{problem:mono_gd} \textbf{Monotonic progress of gradient descent.} Consider the gradient descent for a differentiable function $f(\bx):\real^n\rightarrow \real$ that is $L$-strongly smooth~\footnote{A continuously differentiable function $f:\real^n\rightarrow \real$ is called  \textit{$L$-strongly smooth (SS)} if, for every $\bx,\by\in\real^n$, it follows that 
	$
	f(\by)-f(\bx)-f(\bx)^\top (\by-\bx)
	\leq \frac{L}{2} \norm{\bx-\by}^2$.}. Suppose the iterate $\bx^{(t+1)}$ is obtained from iterate $\bx^{(t)}$ by 
$
\bx^{(t+1)} = \bx^{(t)} - \eta\nabla f(\bx^{(t)}).
$
Show that 
\begin{itemize}
\item If the step size $\eta \leq \frac{2}{L}$,  the function value $f$ is nonincreasing: $f(\bx^{(t+1)})\leq f(\bx^{(t)})$.
\item If the step size $\eta \in [\frac{1}{2L}, \frac{1}{L}]$,  the gradient satisfies $\normtwo{\nabla f(\bx^{(t)})}\leq \epsilon$ after $T=\mathcalO(\frac{1}{\epsilon^2})$ steps.
\end{itemize}

\item \label{problem:nuclear_equi} \citep{rennie2005fast, mazumder2010spectral} Consider the nuclear norm~\footnote{The nuclear norm is defined as the sum of singular values of a matrix and provides the tightest convex envelope of the rank function of a matrix.} $\norm{\bA}_n$ of any matrix $\bA\in\real^{m\times n}$ of rank $r$. Show that 
$$
\norm{\bA}_n = \mathop{\min}_{\substack{\bW\in\real^{m\times r} \\ 
\bZ\in\real^{r\times n} \\
}}
\frac{1}{2} (\normf{\bW}^2 + \normf{\bZ}^2)
\gap \text{s.t.} \gap 
\bA=\bW\bZ.
$$

\item \label{problem:rank_hada_prod} Let $\bA_1,\bA_2\in\real^{m\times n}$ be any $m\times n$ matrices of rank $r_1$ and rank $r_2$, respectively.
Show that their Hadamard product $\bA_1\hadaprod\bA_2$ has rank at most $r_1\cdot r_2$: $\rank(\bA_1\hadaprod\bA_2)\leq \rank(\bA_1)\rank(\bA_2)$.

\item \textbf{Modified LS.} Consider a modified least squares problem of minimizing $\normtwo{\bA \bx - \bb}^2 + \bc^\top \bx$, where $\bA\in\real^{m\times n}$, $\bx, \bc\in\real^n$, and $\bb\in\real^m$. 
Show that the problem can be reduced to the standard least squares problem as long as $\bc$ lies in the row space of $\bA$. What happens when $\bc$ does not lie in the row space of $\bA$? \textit{Hint: First examine the univariate version of this problem.}

\end{problemset}

\newpage
\chapter{Nonnegative Matrix Factorization (NMF)}\index{NMF}\label{chapter:nmf}

\index{Decomposition: NMF}
\index{Sparsity}
\index{Nonnegativity constraint}
\section{Nonnegative Matrix Factorization}
In the era of big data, extracting meaningful patterns and latent structures from high-dimensional data sets has become a central challenge in various scientific and technological fields. 
Singular value decomposition (SVD) is supported by strong theoretical foundations and is applicable in a wide range of contexts. However, it  has certain limitations; for example, when applied to a nonnegative matrix~\footnote{Nonnegative matrices possess unique properties in linear algebra and are crucial for theoretical analysis; see Problems~\ref{prob:nonn_lin_1}$\sim$\ref{prob:nonn_lin_12}.},  SVD may produce negative values in the resulting factors, which can be difficult to interpret meaningfully.
To overcome this limitation, \textit{nonnegative matrix factorization (NMF)} has emerged as a powerful and interpretable tool for dimensionality reduction, feature extraction, and discovering latent structures within complex data.
Early consideration of the NMF problem was due to \citet{paatero1994positive, cohen1993nonnegative}, who referred to it as \textit{positive matrix factorization}. 
Later, \citet{lee2001algorithms} popularized the problem with the introduction of the \textit{multiplicative update} rule.

Following the discussion of matrix factorization using the alternating least squares (ALS) method, we now turn to algorithms for solving the NMF problem: 
\begin{itemize}
\item Given a nonnegative matrix $\bA\in \real_+^{M\times N}$ of rank $r$, find nonnegative matrix factors $\bW\in \real_+^{M\times K}$ and $\bZ\in \real_+^{K\times N}$ such that: 
$
\bA\approx\bW\bZ.
$
\end{itemize}

As discussed in the ALS section, a fundamental challenge in linear data analysis involves transforming or decomposing a high-dimensional data vector into a linear combination of lower-dimensional vectors. This transformation captures the essential characteristics of the original data, making it suitable for tasks such as pattern recognition. Consequently, these lower-dimensional vectors are often referred to as \textit{``hidden vectors," ``pattern vectors," or ``feature vectors."}
When conducting data analysis, building models, and processing information, two primary requirements for a pattern vector are essential:
\begin{itemize}
\item \textit{Interpretability}. Each component of a pattern vector should possess clear physical or physiological significance, allowing for a meaningful interpretation  of the underlying data.
\item \textit{Statistical fidelity}. In cases where the data are reliable and contain minimal error or noise, the components of a pattern vector should effectively capture the variability within the data, reflecting its primary structure and distribution of information.
\end{itemize}
The NMF approach addresses these issues in various applications. For example:
\begin{itemize}
\item In document collections, documents are represented as vectors, with each vector element indicating the frequency (often weighted) of a specific term within the document. Arranging these document vectors sequentially forms a nonnegative term-by-document matrix, which provides a numerical representation of the entire document collection.
\item In image collections, each image is depicted by a vector, where each vector element represents a pixel. The value of each element, a nonnegative number, reflects the intensity and color of the corresponding pixel, leading to a nonnegative pixel-by-image matrix.

\item In gene expression analysis, observations from gene sequences under different experimental conditions are compiled into gene-by-experiment matrices. These matrices encapsulate the variations in gene expression across experiments.

\item For item sets or recommendation systems, customer purchase histories or ratings for a selection of items are recorded in a nonnegative sparse matrix. This matrix efficiently captures the sparse nature of user interactions with a large number of potential items.
\end{itemize}

Unlike arbitrary linear combinations, the linear combinations in the NMF context involve only nonnegative weights of nonnegative \textit{template vectors} (or \textit{basis vectors}, i.e., the columns of $\bW$). 
This prevents phenomena such as \textit{destructive interference}, where a positive component could be canceled out by adding a negative component.  Instead,  data vectors must be explained using purely constructive methods, involving only positive components.
The nonnegativity constraint  inherently imposes   \textbf{sparsity}, enabling the factorization to capture additive features, which is especially advantageous in applications where parts-based representations are meaningful. 
This property has led to its widespread use in fields such as text mining, image processing, document analysis, and bioinformatics, where the identified components often correspond to distinct parts or features.
For example, in image processing, NMF has proven valuable for tasks such as  object detection,  image segmentation, and facial recognition \citep{lee2001algorithms, gillis2014and, gillis2020nonnegative}. The decomposition into nonnegative components aligns with the intuitive notion that images are composed of identifiable parts.
In the topic recovery problem, each column of $\bA$ denotes a document;  NMF aligns with a soft clustering approach, where each column of $\bW$ represents a topic, and the positive entries of each column of $\bZ$ represent the  positive weights of each document for those topics \citep{shahnaz2006document}.
On the other hand, a nonnegative matrix factorization $\bA\approx \bW\bZ$ can be applied directly for clustering algorithms. 
Specifically, the data vector $\ba_j$ is assigned to cluster $i$ if $z_{ij}$ is the largest element in column $j$ of $\bZ$ \citep{brunet2004metagenes, gao2005improving}.
For further applications, see the survey by \citet{berry2007algorithms}.
In conclusion, the popularity of NMF stems from its ability to automatically extract sparse and easily interpretable factors.

To measure the quality of the  approximation, we evaluate the loss by computing  the Frobenius norm of the difference between the original matrix and its reconstruction:
\begin{equation}\label{equation:frob_nmf}
L(\bW,\bZ) = D(\bA, \bW\bZ) = \frac{1}{2}\normf{\bW\bZ-\bA}^2,~\footnote{
Note that the factor $\frac{1}{2}$ is included for analytical convenience in derivative calculations.}
\end{equation}
where $L(\bW,\bZ)$ indicates it is a loss function w.r.t. $\bW$ and $\bZ$, and $D(\bA, \bW\bZ)$ implies it is a distance/divergence between $\bA$ and $\bW\bZ$ (we will use the two notations interchangeably  as needed).
The Frobenius norm is arguably the most widely used norm for NMF because it corresponds to Gaussian additive noise, which is reasonable in many situations and allows for the design of particularly efficient algorithms.
For nonnegative data, Gaussian noise can be interpreted as a truncated version of standard Gaussian noise \citep{lu2023bayesian}.
In later sections, we will extend this approach to include more general  $\beta$-divergences (Section~\ref{section:beta_div_altmu}).

 When we want to find two nonnegative matrices $\bW\in\real^{M\times r}_+$ and $\bZ\in\real_+^{r\times N}$ such that $\bA=\bW\bZ$, the problem is known as the \textit{Exact NMF} of $\bA$ of size $r$. However, exact NMF is NP-hard \citep{vavasis2010complexity, gillis2020nonnegative}.
Therefore, we focus on the approximate NMF formulation in this discussion.
\index{Overfitting}
In the context of collaborative filtering, it is recognized  that  NMF via multiplicative updates can result in overfitting  despite favorable convergence properties.
The overfitting issue can be partially mitigated through regularization, but its out-of-sample performance may still be limited. 
Bayesian optimization through the use of generative models, on the other hand, can effectively prevent overfitting in nonnegative matrix factorization \citep{brouwer2017comparative, lu2022flexible, lu2023bayesian}.

In the following sections, we introduce several methods for solving NMF problems and provide a brief overview  of their applications.

\index{Bayesian inference}
\index{Bayesian optimization}
\index{Bayesian matrix decomposition}

\begin{algorithm}[h] 
\caption{Projected Gradient Descent Method}
\label{alg:pgd_gen}
\begin{algorithmic}[1] 
\Require A function $f(\bx)$ and a set $\sS$; 
\For{$t=1,2,\ldots$}
\State Pick a step size $\eta_t$;
\State Set $\bx^{(t+1)} \leftarrow \mathcalP_{\sS}(\bx^{(t)} - \eta_t \nabla f(\bx^{(t)}))$;
\EndFor
\State Output final  $\bx$;
\end{algorithmic} 
\end{algorithm}
\section{NMF via Alternating Projected Gradient Descent (APGD)}\label{section:nmf_apgd}
The projected gradient descent (PGD, Algorithm~\ref{alg:pgd_gen}) is designed to minimize a function over a constraint set $\sS$:
$$
\mathopmin{\bx\in\sS} f(\bx).
$$
The \textit{orthogonal projection} onto $\sS$ is defined as
$
\mathcalP_{\sS} (\bx) = \mathop{\argmin}_{\by\in\sS} \normtwo{\by-\bx}.
$
When $\sS$ is the nonnegative orthant, the projection $\mathcalP_{\sS} (\bx)$ simplifies to $\mathcalP_{\sS} (\bx) = \max\{\bzero, \bx\}$, where the max operator is applied componentwise.

Therefore, the \textit{alternating PGD (APGD)} approach for NMF updates the factored components iteratively by 
$$
\bZ\leftarrow \max\bigg\{\bzero, \mathop{\argmin}_{\bZ\in\real^{K\times N}}\normf{\bW\bZ-\bA} \bigg\}
\gap\text{and}\gap
\bW\leftarrow \max\bigg\{\bzero, \mathop{\argmin}_{\bW\in\real^{M\times K}}\normf{\bW\bZ-\bA} \bigg\},
$$
where each update can be solved using a least squares method followed by projection onto the nonnegative orthant.
However, due to the projection, the solution may not be properly scaled. A closed-form scaling factor $\gamma$ can be applied at each iteration to improve the approximation:
$$
\gamma^* = \mathop{\argmin}_{\gamma\geq 0} \normf{\gamma\bW\bZ-\bA} 
=
\frac{\langle \bA, \bW\bZ\rangle}{\langle \bW\bZ, \bW\bZ\rangle}
=
\frac{\langle \bA\bZ^\top, \bW\rangle}{\langle \bW^\top\bW, \bZ\bZ^\top\rangle}.
$$
While it is generally not advised to use APGD due to its convergence challenges, APGD can be quite effective as an initialization method. This approach involves running a few iterations of APGD before switching to a different NMF algorithm, which is particularly beneficial for sparse matrices \citep{gillis2014and}.

\index{KKT condition}
\index{Nonnegative least squares}
\index{NNLS|see {Nonnegative least squares}}
\index{ANLS|see {Nonnegative least squares}}
\section{NMF via Alternating Nonnegative Least Squares (ANLS)}\label{section:nmf_anls}
A fundamental component of  the ALS approach is the least squares problem (Lemma~\ref{lemma:ols}). 
For NMF, we focus on the \textit{nonnegative least squares (NNLS)} problem:
\begin{equation}
\mathopmin{\bx\geq \bzero } f(\bx) = \mathopmin{\bx\geq \bzero } \frac{1}{2}\normtwo{\bb-\bM\bx}^2
\gap
\text{with }\bM\in\real^{m\times n}, \bb\in\real^m, \bx\in\real_+^n. 
\end{equation}
The KKT conditions  imply the complementary slackness condition $\lambda_ix_i^*=0, \forall i$, where $\lambda_i$ is the Lagrangian multiplier; and the optimal condition $\nabla f(\bx^*) -\sum_{i}\lambda_i \be_i=\bzero$, where $\bx^*$ denotes the optimal solution of the NNLS problem.
Together, the complementary slackness and the optimal condition indicate that:
$$
\nabla f(\bx^*)
=
\sum_{i:x_i^*=0} \lambda_i \be_i.
$$
From this, we derive the following equivalent KKT conditions for NNLS:
\begin{equation}\label{equation:kkv_nnn_raw}
(\textbf{KKT of NNLS})\gap 
	\bx^*\geq \bzero, 
	\gap
	\nabla f(\bx^*)\geq 0, 
	\gap
	\text{and}
	\gap
	x_i^* (\nabla f(\bx^*))_i=0,\, \forall i.
\end{equation}
These conditions imply sparsity when the nonnegative constraint     is applied, meaning the NNLS or NMF problem inherently  imposes a \textbf{sparsity constraint}.

Assume we are given the inactive set $\sI\subseteq \{1,2,\ldots,n\}$:
$$
\sI = \left\{ i \mid x_{i}^{*} > 0, \,\forall i \in\{1,2,\ldots,n\} \right\}.
$$
The complement of $\sI$ is the so-called \textit{active set}, where the corresponding constraints are active. That is, the active set contains  indices $i$ such that $x_{i}^{*} = 0$. The nonzero entries of $\bx^{*}$ can be determined by solving the following reduced linear system:
$$
\begin{aligned}
[\nabla_{\bx} f(\bx)]_{\sI} = \bzero 
\gapthree\Longleftrightarrow\gapthree [\bM^\top(\bM \bx - \bb)]_{\sI} = \bzero 
\gapthree\Longleftrightarrow\gapthree \bM[:, \sI]^\top \bM[:, \sI] \bx[\sI] = \bM[:, \sI]^\top \bb.
\end{aligned}
$$
This is precisely the normal equation for the unconstrained least squares problem w.r.t. $\bx[\sI]$, that is,
$$
\min_{\bx[\sI]} \frac{1}{2}\normtwo{\bb -\bM[:, \sI] \bx[\sI]}^2.
$$
This observation forms the basis of the \textit{active-set method}, which iteratively updates the active set through pivoting (that is, entering and removing variables from the active set) to ensure the objective function  decreases \citep{lawson1995solving}; see Algorithm~\ref{alg:nmf_anls}. 

\index{Normal equation}
\paragraph{Alternating nonnegative least squares (ANLS).} 
Once we have the active-set method for NNLS problems,  NMF can be achieved by replacing OLS in ALS algorithms with NNLS, known as \textit{alternating nonnegative least squares (ANLS)} \citep{kim2011fast}.
Given a fixed $\bW$, the NMF objective can be solved for each column of $\bZ$ separately:
$$
\frac{1}{2}\normf{\bA-\bW\bZ}^2 =
\frac{1}{2}\sum_{n=1}^{N}\normtwo{\ba_n - \bW\bz_n}^2,
$$
where each subproblem $\mathopmin{\bz_n\geq \bzero}\normtwo{\ba_n - \bW\bz_n}^2$ can be solved using NNLS.
Since the NMF problem is symmetric: $\bA=\bW\bZ$ if and only if $\bA^\top=\bZ^\top\bW^\top$ such that $D(\bA, \bW\bZ)=D(\bA^\top, \bZ^\top\bW^\top)$. The analysis of optimizing $\bW$ given $\bZ$  follows directly from the previous methodology.
We should also note that  since the initial guess of $\bW$ and $\bZ$ typically offers a poor approximation of  $\bA$, solving the NNLS subproblems exactly in the early stages of the alternating algorithms is often unnecessary. 
Instead, it can be more efficient to use ANLS as a refinement step within a less computationally expensive NMF algorithm, such as  APGD or MU (discussed in later sections)

\begin{algorithm}[h] 
\caption{Nonnegative Least Squares (NNLS) via Active-Set Method}
\label{alg:nmf_anls}
\begin{algorithmic}[1] 
\Require A real-valued matrix $\bM\in\real^{m \times n}$, a real-valued vector $\bb\in\real^m$;
\State Initialize index sets $\sI = \emptyset$ and $\sJ = \{1, \ldots, n\}$;
\State Initialize unknown $\bx\in\real^n$ to an all-zero vector and let $\bw \leftarrow \bM^\top(\bb - \bM\bx)$;
\State Let $\bw[\sJ]$ denote the sub-vector with indices from $\sJ$;
\State Choose a stopping criterion on the approximation error $\delta$;
\State Choose the maximal number of iterations $C$;
\State $iter=0$; \Comment{Count for the number of iterations}
\While{$\sJ \neq \emptyset$ and $\max(\bw[\sJ]) > \delta$ and $iter<C$}
\State $iter=iter+1$; 
\State Let $j$ in $\sJ$ be the index of $\max(\bw[\sJ])$ in $\bw$: $j=\mathop{\argmax}_{j\in \sJ} w_j$;
\State Add $j$ to $\sI$  and remove $j$ from $\sJ$ such that $\sI\cup \sJ = \{1,2,\ldots,n\}$;
\State Let $\bM[:,\sI]$ be $\bM$ restricted to the variables/columns included in $\sI$;
\State \parbox[t]{\dimexpr\linewidth-\algorithmicindent}{Let $\bs$ be vector of same length as $\bx$;
Let $\bs[\sI]$ denote the sub-vector with indices from $\sI$, and let $\bs[\sJ]$ denote the sub-vector with indices from $\sJ$;}
\State Set $\bs[\sI] \leftarrow ((\bM[:,\sI])^\top \bM[:,\sI])^{-1} (\bM[:,\sI])^\top \bb$ and $\bs[\sJ]$ to zero;
\While{$\min(\bs[\sI]) \leq 0$}
\State Let $\alpha \leftarrow \min \frac{x_i}{x_i - s_i}$ for $i$ in $\sI$ where $s_i \leq 0$;
\State Set $\bx\leftarrow \bx + \alpha(\bs - \bx)$;
\State Move to $\sJ$ all indices $j$ in $\sI$ such that $x_j \leq 0$;
\State Set $\bs[\sI] \leftarrow ((\bM[:,\sI])^\top \bM[:,\sI])^{-1} (\bM[:,\sI])^\top \bb$;
\EndWhile
\State Set $\bs[\sJ]$ to zero;
\State Set $\bx\leftarrow \bs$;
\State Set $\bw\leftarrow \bM^\top(\bb - \bM\bx)$;
\EndWhile
\State Output $\bx$.
\end{algorithmic} 
\end{algorithm}

\index{Hierarchical ANLS}
\section{NMF via Hierarchical Alternating Nonnegative Least Squares}
Let $\ba, \bb\in\real_+^n$ be two nonnegative vectors. The \textit{univariate NNLS} problem can be formulated as 
$$
\mathopmin{x\geq 0} \normtwo{\ba-x\bb}^2.
$$
This problem admits a closed-form solution: $x=\max\big\{0, \frac{\bb^\top\ba}{\normtwo{\bb}^2}\big\}$ if $\normtwo{\bb}\neq 0$.
With this univariate NNLS solution in mind, considering the $k$-th row of $\bZ$ for $k\in\{1,2,\ldots,K\}$, the subproblem in NMF is
\begin{equation}\label{equation:llipschi_hianls}
\mathopmin{\bZ[k,:]\geq \bzero} \bigg\Vert\underbrace{\big(\bA-\sum_{p\neq k}^{K} \bW[:,p]\bZ[p,:]\big)}_{=\bA_k} - \bW[:,k]\bZ[k,:]\bigg\Vert_F^2, 
\gap \forall k,~\footnote{This subproblem is convex and is $L$-Lipschitz gradient continuous/$L$-strongly smooth (definition in Problem~\ref{problem:mono_gd}); see Problem~\ref{prob:llipschi_hianls}.}
\end{equation}
which indicates the entries in a row of $\bZ$ do not interact (similarly, entries in a column of $\bW$ do not interact). Therefore, the optimization of each entry in a row of $\bZ$ can be decoupled. 
Let $\bA_k=\big(\bA-\sum_{p\neq k}^{K} \bW[:,p]\bZ[p,:]\big)$. Then, the NMF problem becomes a set of rank-one updates on $\bA_k$, for $k\in\{1,2,\ldots,K\}$.
The solution is 
$$
\bZ^*[k,:]=\mathop{\argmin}_{\bZ[k,:]\geq \bzero} \normf{\bA_k -\bW[:,k]\bZ[k,:]}^2
=
\max\left(
\bzero, 
\frac{\bW[:,k]^\top\bA_k}{\normtwo{\bW[:,k]}^2}
\right),
\gap \forall k,
$$
where the max operator is applied componentwise.
This derivation leads to the \textit{hierarchical ANLS (Hi-ANLS)} solution for NMF problems, which iteratively solves a univariate NNLS problem.
The procedure is described in Algorithm~\ref{alg:hie_anls}, where we note that $\bZ[k,:]^\top = \bZ^\top[:,k]$.
In the algorithm, we update the $k$-th row of $\bZ$ and $k$-th column of $\bW$ in an interleaved manner. \citet{gillis2012accelerated} show that updating $\bZ$ several times before updating $\bW$  can significantly improve the performance  since this reuses the results of $\bW^\top\bA$ and $\bW^\top\bW$.

\begin{algorithm}[h] 
\caption{NMF via Hierarchical Alternating Nonnegative Least Squares (Hi-ANLS)}
\label{alg:hie_anls}
\begin{algorithmic}[1] 
\Require Matrix $\bA\in \real_+^{M\times N}$;
\State Initialize $\bW\in \real_{++}^{M\times K}$, $\bZ\in \real_{++}^{K\times N}$ randomly with positive entries;
\State Choose a stop criterion on the approximation error $\delta$;
\State Choose maximal number of iterations $C$;
\State $iter=0$; \Comment{Count for the number of iterations}
\While{$\normf{\bA- (\bW\bZ)}^2>\delta $ and $iter<C$}
\State $iter=iter+1$;  
\For{$k=1$ to $K$}
\State $
\bZ[k,:]\leftarrow 
\max\left(
\bzero, 
\frac{\bW[:,k]^\top\bA_k}{\normtwo{\bW[:,k]}^2}
\right)$; \Comment{$\bA_k=\big(\bA-\sum_{p\neq k}^{K} \bW[:,p]\bZ[p,:]\big)$}

\State $
\bW[:, k]\leftarrow 
\max\left(
\bzero, 
\frac{\bA_k\bZ[k,:]^\top}{\normtwo{\bZ[k,:]}^2}
\right)$;
\EndFor
\EndWhile
\State Output $\bW,\bZ$.
\end{algorithmic} 
\end{algorithm}

\section{NMF via Alternating Direction Methods of Multipliers (ADMM)}\label{section:nmf_admm_all}
We briefly introduce the \textit{alternating direction methods of multipliers (ADMM)} method and then  discuss its applications in matrix factorization and NMF.
\paragraph{ADMM.}
ADMM is designed to solve convex optimization problems of the form:
\begin{equation}\label{equation:admm_prob}
\mathopmin{\bx, \bz} f(\bx)+g(\bz), \gap \text{s.t.}\gap \bD\bx+\bE\bz=\bff.
\end{equation}
Given a penalty parameter $\rho>0$, the \textit{augmented Lagrangian} of \eqref{equation:admm_prob} is 
\begin{equation}
L_\rho (\bx, \bz, \bl) = f(\bx)+g(\bz) +\langle \bl, \bD\bx+\bE\bz-\bff\rangle + \frac{\rho}{2}\normtwo{\bD\bx+\bE\bz-\bff}^2.
\end{equation}
When $\rho=0$, the augmented Lagrangian function reduces to the Lagrangian function; when $\rho>0$, the augmented Lagrangian function  acts as  a penalized version of the Lagrangian function.
The \textit{augmented Lagrangian method} solves the problem iteratively.  At the $(t+1)$-th iteration,  it performs the following updates:
$$
\text{augmented Lagrangian:}
\gap 
\left\{
\begin{aligned}
(\bx^{(t+1)}, \bz^{(t+1)}) &\in \mathop{\argmin}_{\bx, \bz} L_\rho (\bx, \bz, \bl);\\
\bl^{(t+1)}&=\bl^{(t)} + \rho(\bD\bx^{(t+1)}+\bE\bz^{(t+1)} -\bff ),
\end{aligned}
\right.
$$
where the update on $\bl^{(t+1)}$ is derived from the \textit{conjugate subgradient theorem} (see, for example, \citet{bach2011convex}), and 
the symbol `$\in$' indicates that the minimum points may not be uniquely determined.
One source of difficulty is the coupling term between the $\bx$ and  $\bz$ variables, which is of the form $\rho(\bx^\top\bD^\top\bE\bz)$.
ADMM tackles this difficulty by replacing the exact minimization of $(\bx,\bz)$ with one iteration of the alternating minimization method.
To be more specific, at the  $(t+1)$-iteration,  ADMM performs the following updates:
\begin{equation}
\text{ADMM:}\gap
\left\{
\begin{aligned}
	\bx^{(t+1)}&\in\mathop{\argmin}_{\bx} \left\{ f(\bx)+ \frac{\rho}{2}\normtwo{\bD\bx+\bE\bz^{(t)} -\bff +\frac{1}{\rho}\bl^{(t)}}^2 \right\};\\
	\bz^{(t+1)}&\in\mathop{\argmin}_{\bz} \left\{ g(\bz)+ \frac{\rho}{2}\normtwo{\bD\bx^{(t+1)}+\bE\bz -\bff +\frac{1}{\rho}\bl^{(t)}}^2 \right\};\\
	\bl^{(t+1)}&=\bl^{(t)} + \rho(\bD\bx^{(t+1)}+\bE\bz^{(t+1)} -\bff ).
\end{aligned}
\right.
\end{equation}
By defining $\widetildebl=\frac{1}{\rho}\bl$, this  can be equivalently stated as (this form will be used in the sequel):
\begin{equation}\label{equation:admm_gen_up}
\text{ADMM:}\gap
\left\{
\begin{aligned}
\bx^{(t+1)}&\in\mathop{\argmin}_{\bx} \left\{ f(\bx)+ \frac{\rho}{2}\normtwo{\bD\bx+\bE\bz^{(t)} -\bff +\widetildebl^{(t)}}^2 \right\};\\
\bz^{(t+1)}&\in\mathop{\argmin}_{\bz} \left\{ g(\bz)+ \frac{\rho}{2}\normtwo{\bD\bx^{(t+1)}+\bE\bz -\bff +\widetildebl^{(t)}}^2 \right\};\\
\widetildebl^{(t+1)}&=\widetildebl^{(t)} + (\bD\bx^{(t+1)}+\bE\bz^{(t+1)} -\bff ).
\end{aligned}
\right.
\end{equation}
That is, ADMM alternately updates $\bx, \bz$, and $\bl$ (or the scaled dual variable $\widetildebl$).

\paragraph{ADMM applied to matrix factorization.}
We return to the problem discussed in ALS (Equation~\eqref{equation:als-per-example-loss2}, i.e., matrix factorization with Frobenius norm; not necessarily a NMF problem), along with a regularization function $r(\bZ)$:
$$
\mathopmin{\bZ} \frac{1}{2}\normf{\bA-\bW\bZ}^2+r(\bZ).
$$
The problem can be equivalently stated with an auxiliary variable $\widetildebZ\in\real^{K\times N}$:
\begin{equation}\label{equation:mf_admm_prob1}
\mathopmin{\bZ} \frac{1}{2}\normf{\bA-\bW\bZ}^2+r(\widetildebZ), 
\gap 
\text{s.t.}
\gap 
\bZ=\widetildebZ.
\end{equation}
Following \eqref{equation:admm_gen_up}, let a. \{$\bx\leftarrow \bZ$, $\bz\leftarrow \widetildebZ$, $\widetildebl\leftarrow \bL$, $\bD=-\bI$,  $\bE=\bI$\} or b. \{$\bx\leftarrow \bZ$, $\bz\leftarrow \widetildebZ$, $\widetildebl\leftarrow \bL$, $\bD=\bI$,  $\bE=-\bI$\}, 
the resulting ADMM updates  for \eqref{equation:mf_admm_prob1} are: 
\begin{equation}\label{equation:admm_gen_als}
\left\{
\begin{aligned}
\bZ 
&\stackrel{(a)}{\leftarrow} (\bW^\top\bW+\rho \bI)^{-1} \left[ \bW^\top\bA +\rho(\widetildebZ+\bL) \right]
&\stackrel{(b)}{\leftarrow}& (\bW^\top\bW+\rho \bI)^{-1} \left[ \bW^\top\bA +\rho(\widetildebZ-\bL) \right];\\
\widetildebZ
&\stackrel{(a)}{\leftarrow}\mathop{\argmin}_{\widetildebZ} r(\widetildebZ) + \frac{\rho}{2}\normf{-\bZ+\widetildebZ + \bL}^2
&\stackrel{(b)}{\leftarrow}&\mathop{\argmin}_{\widetildebZ} r(\widetildebZ) + \frac{\rho}{2}\normf{\bZ-\widetildebZ + \bL}^2\\
\bL&\stackrel{(a)}{\leftarrow}\bL -\bZ+\widetildebZ &\stackrel{(b)}{\leftarrow}& \bL +\bZ-\widetildebZ.
\end{aligned}
\right.
\end{equation}
In practice, the Cholesky decomposition of $(\bW^\top\bW+\rho \bI)$ can be calculated such that the update can be obtained by forward and backward substitutions.
The update for $\bW$ can be obtained similarly due to symmetry.
In the following discussion, we adopt setting (a) from \eqref{equation:admm_gen_als}.

%

\paragraph{ADMM applied to $\ell_1$ regularization.}
We may also consider the $\ell_1$ regularization (see Section~\ref{section:regularization-extention-general}): $r(\widetildebZ)=\lambda \Vert\widetildebZ\Vert_1$. The update for each element $(k,n)$ of $\widetildebZ$ is $\widetildez_{kn}\leftarrow \max(0, 1-\frac{\lambda}{\rho} \abs{h_{kn}}^{-1}) h_{kn}$ for all $k\in\{1,2,\ldots,K\}$ and $n\in\{1,2,\ldots,N\}$, where $h_{kn} = z_{kn}-l_{kn}$ (i.e., the elements of $\bH=\bZ-\bL$). 

\paragraph{ADMM applied to smoothness/denoising regularization.}
A smoothness regularization on $\bZ$ can be defined as $r(\widetildebZ)=\frac{\lambda}{2} \Vert\bT\widetildebZ^\top\Vert_F^2$, where $\bT$ is an $N\times N $ tridiagonal matrix with 2 on the main diagonal and $-1$ on the superdiagonal and subdiagonal. This regularization ensures the proximal components in each row of $\widetildebZ$ is smooth (see Problem~\ref{prob:denoise_rls}). The update  for $\widetildebZ$ becomes $\widetildebZ\leftarrow \rho\bZ(\lambda \bT^\top\bT +\rho\bI)^{-1}$ \citep{huang2016flexible}.

\paragraph{ADMM applied to NMF.}
The NMF with ADMM is achieved simply by replacing $r(\bZ)$ with an indicator function.
The update for $\widetildebZ$ becomes $\max\left(\bzero, \bZ-\bL\right)$, where the max operator is applied  componentwise.
However, unlike the methods discussed earlier (such as NNLS) or the MU approach introduced in the next section, ADMM updates are generally not monotonically nonincreasing in terms of the objective function. This is an important consideration when monitoring convergence.


\index{Alternating update}
\index{Kullback-Leibler divergence}
\index{Multiplicative update}
\section{NMF via Multiplicative Update (MU)}\label{section:nmf_frob_mu}
We consider an alternative alternating update approach for NMF. 
The hidden features in $\bW$ and $\bZ$ are modeled as nonnegative vectors in a low-dimensional space. These latent vectors are randomly initialized and iteratively updated via an alternating \textit{multiplicative update} rule to minimize the Frobenius norm distance between the observed and modeled matrices. 
Following Section~\ref{section:als-netflix}, we consider the low-rank with $K$ components; given $\bW\in \real_+^{M\times K}$, we aim to update $\bZ\in \real_+^{K\times N}$. The gradient of the loss function $L(\bW, \bZ)=\frac{1}{2}\normf{\bA-\bW\bZ}^2$ with respect to $\bZ$ is given by Equation~\eqref{equation:givenw-update-z-allgd}:
$
\begin{aligned}
\nabla_{\bZ} L(\bW, \bZ) =\bW^\top(\bW\bZ-\bA) \in \real^{K\times N}.
\end{aligned}
$
Applying the gradient descent idea discussed in Section~\ref{section:als-gradie-descent}, a straightforward update for $\bZ$ is:
$$
(\text{GD on $\bZ$})\gap \bZ \leftarrow \bZ - \eta \big(\nabla_{\bZ} L(\bW, \bZ)\big)=\bZ - \eta \nabla_{\bZ} L(\bW, \bZ),
$$
where $\eta$ represents a small positive step size (learning rate). 
\paragraph{Multiplicative update (MU).}
If we allow a different step size for each entry of $\bZ$, the update can be written as:
$$
(\text{GD$^\prime$ on $\bZ$})\gap 
\begin{aligned}
	z_{kn} &\leftarrow z_{kn} - {\eta_{kn}} \big(\nabla_{\bZ} L(\bW, \bZ)\big)_{kn}
	=z_{kn} - \eta_{kn}(\bW^\top\bW\bZ-\bW^\top\bA)_{kn}, \,\, \forall k,n,
\end{aligned}
$$
where $z_{kn}$ denotes the $(k,n)$-th entry of $\bZ$. 
To proceed, we further rescale the step size:
$$
\eta_{kn} = \frac{z_{kn}}{(\bW^\top\bW\bZ)_{kn}}.
$$
Then we obtain the update rule:
\begin{equation}\label{equation:multi-update-z}
(\text{MU on $\bZ$})\gap 
\bZ \leftarrow \bZ\hadaprod \frac{[\bW^\top\bA]}{[\bW^\top\bW\bZ]}
\stackrel{*}{=}
\bZ - \frac{[\bZ]}{[\bW^\top\bW\bZ]}\hadaprod \nabla_{\bZ} L(\bW, \bZ) 
,
\end{equation}
where $\frac{[\cdot]}{[\cdot]}$ represents the componentwise division between two matrices. This is known as the \textit{multiplicative update (MU)}, and is first developed in \citet{lee2001algorithms} for NMF problems. 
Analogously, the multiplicative update for $\bW$ can be obtained by 
\begin{equation}\label{equation:multi-update-w}
(\text{MU on $\bW$})\gap
\bW \leftarrow \bW \hadaprod \frac{[\bA\bZ^\top]}{[\bW\bZ\bZ^\top]} 
\stackrel{*}{=} 
\bW - \frac{[\bW]}{[\bW\bZ\bZ^\top]}\hadaprod \nabla_{\bW} L(\bW, \bZ) .
\end{equation}
The factors $\frac{(\bW^\top\bA)_{kn}}{(\bW^\top\bW\bZ)_{kn}}$ and $\frac{(\bA\bZ^\top)_{mk}}{(\bW\bZ\bZ^\top)_{mk}}$ for all $m,k,n$ in \eqref{equation:multi-update-z} and \eqref{equation:multi-update-w} are called \textit{multiplicative factors}.
When $\bA=\bW\bZ$, these multiplicative factors reduce to one, indicating that the corresponding gradients vanish.

\paragraph{MU vs gradient descent.}
The above derivation shows that multiplicative update algorithms are fundamentally similar to gradient descent algorithms, differing primarily in step size selection. With an appropriate choice of step size, the multiplicative algorithm can transform the subtraction update rule of the standard gradient descent method into a multiplicative update rule.

In the gradient descent algorithm, a fixed or adaptive step length is typically used, and this step length is independent of the specific variable being updated. In other words, the step size may vary over time, but at any given update step, all entries of the  matrix variable are updated using the same step size.
In contrast, the multiplicative algorithm uses different step sizes ($\eta_{kn}$ above) for different entries of the factor matrix. This means that the step length is adaptive to each matrix entry. This adaptability is a key reason why the multiplicative algorithm can outperform the gradient descent algorithm in NMF algorithms.

\paragraph{KKT conditions for NMF with Frobenius norm.}
The KKT conditions  indicate that (see derivation in \eqref{equation:kkv_nnn_raw}):
\begin{equation}\label{equation:nmf_fro_kkt1}
\begin{aligned}
\bZ\geq \bzero,\gap&\nabla_{\bZ} L(\bW,\bZ)&\geq& \bzero, \gap \langle \bZ, \nabla_{\bZ} L(\bW,\bZ)\rangle &=&\bzero_{K\times N}; \\
\bW\geq \bzero,\gap&\nabla_{\bW} L(\bW,\bZ)&\geq& \bzero, \gap \langle \bW, \nabla_{\bW} L(\bW,\bZ)\rangle &=&\bzero_{M\times K}.
\end{aligned}
\end{equation}
This also implies
\begin{equation}
\begin{aligned}
\min\{\bZ, \nabla_{\bZ} L(\bW,\bZ)\} = \bzero_{K\times N}
\gap \text{and}\gap 
\min\{\bW, \nabla_{\bW} L(\bW,\bZ)\} = \bzero_{M\times K},
\end{aligned}
\end{equation}
where the min operator $\min\{\cdot, \cdot \}$ is applied componentwise. Any pair $(\bW,\bZ)$ satisfying the KKT conditions is a stationary point of the NMF problem in \eqref{equation:frob_nmf}.

\paragraph{Problems in MU.}
The equality ($*$) in \eqref{equation:multi-update-z} indicates a rescaled gradient descent update in the MU rules, which also implies 
$$
\frac{[\bW^\top\bA]_{kn}}{[\bW^\top\bW\bZ]_{kn}}\geq 1 \gap\Longleftrightarrow\gap (\nabla_{\bZ} L(\bW, \bZ))_{kn}\leq 0, \gap  \forall k, n.
$$
Therefore, the MU algorithm induces three-fold rules: (i) Increase if its partial derivative is negative; (ii) Decrease it if its partial derivative is positive; (iii) Keep it unchanged if its partial derivative is zero.
However, if an element of $\bZ$ is equal to zero, the MU rule cannot modify it. Therefore, it is possible for an entry of $\bZ$  to be zero while its partial derivative is negative, which would violate the KKT conditions in \eqref{equation:nmf_fro_kkt1}.
As a result, the iterates from the MU rule are not guaranteed to converge to a stationary point. There are several ways to address this issue, such as: (i) Using a small positive lower bound for the entries of $\bZ$ and  $\bW$\citep{gillis2012accelerated}.
(ii) Using the MU rule while reinitializing zero entries of $\bZ$ and  $\bW$ to a small positive constant when their partial derivatives become negative \citep{chi2012tensors}.

We now prove that the MU rule monotonically decreases the loss function.
\begin{theorem}[Monotonically nonincreasing of multiplicative update]\label{theorem:conv_mu_fro}
The loss $L(\bW,\bZ)=\frac{1}{2}\normf{\bW\bZ-\bA}^2$ remains nonincreasing under the following multiplicative update rules:~\footnote{More general results for $\beta$-divergences are discussed in Theorem~\ref{theorem:conv_mu_beta}.}
$$
\begin{aligned}
\bZ &\leftarrow \bZ\hadaprod \frac{[\bW^\top\bA]}{[\bW^\top\bW\bZ]}
\qquad\text{and}\qquad 
\bW &\leftarrow \bW \hadaprod \frac{[\bA\bZ^\top]}{[\bW\bZ\bZ^\top]},
\end{aligned}
$$
where $\bA\in\real_+^{M\times N}, \bW\in\real_+^{M\times K}$, and $\bZ\in\real_+^{K\times N}$. 
The operator $\frac{[\cdot]}{[\cdot]}$ represents the componentwise division between two matrices, and $\hadaprod$ denotes the Hadamard product between two matrices.

The MU update requires that $\bZ$ and $\bW$ should be initialized with positive (nonzero) entries; otherwise, zeros will persist due to the multiplicative nature of the update.
\end{theorem}

The MU method sparked significant interest in NMF and has since become a cornerstone in the field, due to several advantages: (i) The update rules are extremely easy to implement; (ii) In practice, the convergence is relatively faster compared to many other methods; (iii) Nonnegativity is automatically preserved during updates.
To prove the monotonicity of the MU rules, we use the auxiliary function framework.
\begin{definition}[Auxiliary function (majorizer)]\label{definition:aux_func}
$G(\bx, \widetildebx)$ is called an \textit{auxiliary function} for $F(\bx)$ (or a majorizer of $F$ at $\widetildebx$) if the conditions~\footnote{$\bx$ can be scalars, vectors, or matrices.}
$$
G(\bx, \widetildebx) \geq F(\bx)
\qquad 
\text{and}
\qquad 
G(\bx, \bx) = F(\bx)
$$
are satisfied.
In other words, the auxiliary function $G(\bx, \widetildebx)$ is an upper bound of $F(\bx)$, and the bound is tight when $\widetildebx=\bx$.
\end{definition}

\begin{lemma}[Nonincreasing in auxiliary functions]\label{lemma:noninmuaux}
If $ G $ is an auxiliary function for $F$, then $F$ is nonincreasing under the update
\begin{equation}\label{equation:aux_update}
\bx^{(t+1)} = \mathop{\argmin}_{\bx} \, G(\bx, \bx^{(t)}).
\end{equation}
\end{lemma}
\begin{proof}[of Lemma~\ref{lemma:noninmuaux}]
The definition of the auxiliary function indicates that $ F(\bx^{(t+1)}) \leq G(\bx^{(t+1)}, \bx^{(t)}) \leq G(\bx^{(t)}, \bx^{(t)}) = F(\bx^{(t)})$.
\end{proof}

Note that $ F(\bx^{(t+1)}) = F(\bx^{(t)})$ only if $ \bx^{(t)}$ is a local minimum of $ G(\bx, \bx^{(t)})$ w.r.t. $\bx$. If the partial derivatives of $ F$ exist and are continuous in a small neighborhood of $ \bx^{(t)}$, this also implies that the gradient $ \nabla F(\bx^{(t)}) = \bzero$. Thus, by iterating the update in \eqref{equation:aux_update}, we obtain a sequence of estimates that converge to a local minimum $ \bx_{\min} = \argmin_{\bx} F(\bx)$ of the objective function:
\begin{equation}
 F(\bx^{(0)}) \geq 	 F(\bx^{(1)}) \geq  F(\bx^{(2)})\geq \ldots  \geq  F(\bx^{(t)}) \geq  F(\bx^{(t+1)})\geq \ldots \geq F(\bx_{\min}).
\end{equation}
Definition~\ref{definition:aux_func} finds a majorizer $G$ of $F$, and Lemma~\ref{lemma:noninmuaux} shows the minimization property in $G$, hence the algorithm is often referred to as the \textit{majorization-minimization (MM) framework}.
The update benefits when the global minimizer of $G$ has a closed-form solution or can be computed efficiently.

Therefore, if we can construct an appropriate auxiliary function $ G(\bx, \bx^{(t)})$ for both variables in $\normf{\bA-\bW\bZ}$, the update rules in  Theorem~\ref{theorem:conv_mu_fro} follow from  \eqref{equation:aux_update}.
To apply the auxiliary function method to the NMF problem, we focus on a single column of $\bA$ or $\bZ$: $\ba=\ba_n$ and $\bz=\bz_n$  in the following lemma, where $n\in\{1,2,\ldots,N\}$.

\begin{lemma}[Auxiliary function for NMF]\label{lemma:aux_nmf}
Let $\bW\in\real^{K\times N}, \ba\in\real^M$, and $\bz\in\real^{K}$.
Let further $\bD\in\real^{K\times K}$ be a diagonal matrix with the $(k,k)$-th entry being $d_{kk}=\frac{(\bW^\top\bW \bz)_k}{z_k}=\frac{\bw_k^\top\bW\bz}{z_k} = \frac{\sum_{j=1}^{K} (\bW^\top\bW)_{kj}z_j}{z_k}, \, \,\forall k\in\{1,2,\ldots,K\}$, where $\bw_k$ is the $k$-th column of $\bW$ and $z_k$ is the $k$-th component of $\bz$.
Then, the following function is an auxiliary function for  $F(\bz)=\frac{1}{2}\normtwo{\ba-\bW\bz}^2$:
\[ 
G(\bz, \bz^{(t)}) = F(\bz^{(t)}) + (\bz-\bz^{(t)})^\top \nabla F(\bz^{(t)}) +\frac{1}{2}(\bz-\bz^{(t)})^\top \bD (\bz-\bz^{(t)}).
\]
\end{lemma}
\begin{proof}[of Lemma~\ref{lemma:aux_nmf}]
Since the third-order partial derivatives of $F(\bz)$ vanish (see Problem~\ref{prob:third_order_nmf}), $F(\bz)$ can be factored as 
$$
F(\bz) = F(\bz^{(t)})+(\bz-\bz^{(t)})^\top \nabla F(\bz^{(t)}) +\frac{1}{2}(\bz-\bz^{(t)})^\top \bW^\top\bW (\bz-\bz^{(t)}).
$$
Apparently, $G(\bz, \bz)=F(\bz) $. To complete the proof, we need to show that $G(\bz, \bz^{(t)}) \geq F(\bz)$; that is, $\bD-\bW^\top\bW$ is positive semidefinite.
To prove this, consider the matrix $\bM\in\real^{K\times K}$ whose entries are $m_{ij}=z_i (\bD-\bW^\top\bW)_{ij}z_j$ for all $i,j\in\{1,2,\ldots,K\}$, which is a rescaling of the components of $\bD-\bW^\top\bW$. Then $\bD-\bW^\top\bW$ is positive semidefinite if and only if $\bM$ is: 
$$
\begin{aligned}
\bx^\top& \bM\bx
= \sum_{i,j=1}^{K,K} x_i m_{ij}x_j
\stackrel{*}{=}\sum_{i,j=1}^{K,K} \left\{(\bW^\top\bW)_{ij} z_i z_j  x_i^2 - (\bW^\top\bW)_{ij}z_i z_j  x_i x_j\right\}\\
&\stackrel{\dag}{=}\sum_{i,j=1}^{K,K} (\bW^\top\bW)_{ij}z_i z_j \left( \frac{1}{2}x_i^2 + \frac{1}{2}x_j^2 - x_ix_j\right)
=\sum_{i,j=1}^{K,K} (\bW^\top\bW)_{ij}z_i z_j\frac{1}{2} \left( x_i-x_j\right)^2 \geq 0,
\end{aligned}
$$
where the equality $(\dag)$ follows from the symmetry of $\bM$, and the equality ($*$) follows from the diagonality of $\bD$:
$$
\sum_{i,j=1}^{K,K} x_iz_i d_{ij}z_jx_j
=
\sum_{i=1}^{K} x_iz_i d_{ii}z_ix_i
=
\sum_{i=1}^{K} x_i^2 z_i^2 \frac{\sum_{j=1}^{K}(\bW^\top\bW)_{ij}z_j}{z_i}
=
\sum_{i,j=1}^{K,K} (\bW^\top\bW)_{ij} z_i z_j  x_i^2.
$$
This completes the proof.
\end{proof}

The proof of the monotonicity of MU updates in  Theorem~\ref{theorem:conv_mu_fro} follows directly from the above lemmas.
Clearly, the approximations $\bW$ and $\bZ$ remain nonnegative during the updates.
It is generally better to update $\bW$ and $\bZ$ ``simultaneously” rather than ``sequentially," i.e., updating each matrix completely before the other. In this case, after updating a row of $\bZ$, we update the corresponding column of $\bW$.  
In the implementation, it is advisable to introduce a small positive quantity, say the square root of the machine precision, to the denominators in the approximations of $\bW$ and $\bZ$
at each iteration. 
And a trivial value like $\epsilon=10^{-9}$  suffices. The full procedure is shown in Algorithm~\ref{alg:nmf-multiplicative}.
In practice, the algorithm can also be accelerated by updating $\bW$ several times before updating $\bZ$, during which process we can reuse the result of $\bA\bZ^\top$ and $\bZ\bZ^\top$, and vice versa.

\index{Machine precision}
\begin{algorithm}[h] 
\caption{NMF via Multiplicative Updates}
\label{alg:nmf-multiplicative}
\begin{algorithmic}[1] 
\Require Matrix $\bA\in \real_+^{M\times N}$;
\State Initialize $\bW\in \real_{++}^{M\times K}$, $\bZ\in \real_{++}^{K\times N}$ randomly with positive entries;
\State Choose a stop criterion on the approximation error $\delta$;
\State Choose maximal number of iterations $C$;
\State $iter=0$; \Comment{Count for the number of iterations}
\While{$\normf{\bA- (\bW\bZ)}^2>\delta $ and $iter<C$}
\State $iter=iter+1$; 
\State $\bZ \leftarrow \bZ\hadaprod \frac{[\bW^\top\bA]}{[\bW^\top\bW\bZ]+\epsilon}$;
\State $\bW \leftarrow \bW \hadaprod \frac{[\bA\bZ^\top]}{[\bW\bZ\bZ^\top]+\epsilon}$;
\EndWhile
\State Output $\bW,\bZ$.
\end{algorithmic} 
\end{algorithm}


\index{Regularization}
\subsection{Regularization}
As mentioned in \eqref{equation:kkv_nnn_raw},  the NNLS or NMF problem implicitly imposes a \textbf{sparsity constraint}. 
On the other hand, similar to the ALS method with regularization discussed in Section~\ref{section:regularization-extention-general} (recall that the regularization can help extend the applicability of ALS to general matrices),
a regularization term can be incorporated into the NMF framework to enhance its performance:
$$
L(\bW,\bZ)  =\frac{1}{2}\normf{\bW\bZ-\bA}^2 +\frac{1}{2}\lambda_w \normf{\bW}^2 + \frac{1}{2}\lambda_z \normf{\bZ}^2, \qquad \lambda_w>0, \lambda_z>0,
$$
where the employed matrix norm is still the Frobenius norm. The gradient with respect to $\bZ$ given $\bW$ is the same as that in Equation~\eqref{equation:als-regulari-gradien}:
$$
\begin{aligned}
	\frac{\partial L(\bZ|\bW)}{\partial \bZ} =\bW^\top(\bW\bZ-\bA) + \textcolor{mylightbluetext}{\lambda_z\bZ}  \in \real^{K\times N}.
\end{aligned}
$$
The gradient descent update can be obtained by 
$$
(\text{GD on }\bZ) \gap \bZ \leftarrow \bZ - \eta \left(\frac{\partial L(\bZ|\bW)}{\partial \bZ}\right)=\bZ - \eta \left(  \bW^\top\bW\bZ-\bW^\top\bA+\textcolor{mylightbluetext}{\lambda_z\bZ}\right),
$$
Analogously, if we assume a different step size for each entry of $\bZ$, the update can be obtained by
$$
(\text{GD$^\prime$ on $\bZ$})\gap 
\begin{aligned}
z_{kn} 
&=z_{kn} - \eta_{kn}(\bW^\top\bW\bZ-\bW^\top\bA+\textcolor{mylightbluetext}{\lambda_z\bZ})_{kn}, 
 \,\, \forall k,n.
\end{aligned}
$$
We again rescale the step size:
$
\eta_{kn} = \frac{z_{kn}}{(\bW^\top\bW\bZ)_{kn}}.
$
Then we obtain the MU rules for $\bZ$ and $\bW$ (due to symmetry):
$$
\begin{aligned}
\bZ \leftarrow  \bZ\hadaprod \frac{[\bW^\top\bA-\lambda_z\bZ]}{[\bW^\top\bW\bZ]}
\gap\text{and}\gap 
\bW \leftarrow \bW \hadaprod \frac{[\bA\bZ^\top-\lambda_w\bW]}{[\bW\bZ\bZ^\top]}.
\end{aligned}
$$

\paragraph{Modified MU.}
Since the update for the above regularized NMF can result in negative values, a modified MU can be applied such that
$$
\textbf{(MMU1):}\quad  \bZ \leftarrow \left[\bZ\hadaprod \frac{[\bW^\top\bA-\lambda_z\bZ]}{[\bW^\top\bW\bZ]}\right]_+
\gap\text{and}\gap 
\bW \leftarrow \left[\bW \hadaprod \frac{[\bA\bZ^\top-\lambda_w\bW]}{[\bW\bZ\bZ^\top]}\right]_+,
$$
where $[x]_+ = \max\{x, \epsilon\}$. The parameter $\epsilon$ is usually a very small positive number that prevents the emergence of negative update.
That is, we add a small lower bound for entries of $\bW$ and $\bZ$. An alternative  applies the nonnegativity constraint only to the numerator:
$$
\begin{aligned}
\textbf{(MMU2):}\quad 
\bZ \leftarrow \bZ\hadaprod \frac{[\bW^\top\bA-\lambda_z\bZ]_+}{[\bW^\top\bW\bZ]}
\gap\text{and}\gap 
\bW \leftarrow\bW \hadaprod \frac{[\bA\bZ^\top-\lambda_w\bW]_+}{[\bW\bZ\bZ^\top]}.
\end{aligned}
$$

\section{NMF with Three Factors}
The NMF method, when extended to incorporate three factor matrices, is referred to as  \textit{nonnegative matrix trifactorization (tri-NMF)}. 
This approach introduces an additional factor:
\begin{equation}\label{equation:tri_nmv}
\bA \approx \bW\bU\bZ,
\end{equation}
where $\bW\in\real_+^{M\times K}$, $\bU\in\real_+^{K\times J}$, and $\bZ\in\real_+^{J\times N}$.
Consider  the item-by-user matrix $\bA\in\real^{M\times N}$, where each element is a binary number $\{0,1\}$. 
This type of data is referred to as \textit{implicit feedback}, in contrast to \textit{explicit feedback} (such as numerical ratings) used in other contexts.~\footnote{For example, in datasets like Netflix or MovieLens, ratings above 4 can be mapped to 1, while ratings below 1 can be mapped to 0 to obtain an implicit data set.}
Standard NMF on this matrix provides a sum of  $K$ rank-one matrices $\bA\approx\sum_{k=1}^{K} \bW[:,k]\bZ[k,:]$.
In the context of implicit data, each rank-one matrix can be interpreted as finding a subset of users and a subset of items (e.g., movies)  that interact strongly with each other.
In contrast, tri-NMF yields the following approximation:
$$
\bA\approx\sum_{k=1}^{K}\sum_{j=1}^{J} \bW[:,k]\bU[k,j]\bZ[j,:].
$$
This formulation can be interpreted as identifying  separately $J$ subsets of movies that are watched together (the rows of $\bZ$) and $K$ subset of users that behave similarly (the columns of $\bW$); while the matrix $\bU$ tells us how these subsets interact together.
If $u_{kj}>0$, then the $k$-th subset of users (corresponding to the positive entries of $\bW[:,k]$) watches the movies from the $j$-th subset of movies (corresponding to the positive entries of $\bZ[j,:]$).

In other words, tri-NMF identifies groups of users who exhibit similar behavior (by watching the same movies) and groups of movies that are similar (because they are watched by the same users), while connecting these groups through the nonnegative interaction matrix $\bU$. 
This model is also applicable in text mining, where it can identify groups of documents that contain similar sets of words (columns of $\bW$) and groups of words that commonly appear together in the same documents (rows of $\bZ$), with 
$\bU$ encoding the relationships between these groups \citep{brouwer2017comparative, gillis2020nonnegative, lu2023bayesian}.

\section{$\beta$-Divergence, Alternative Perspectives of MU}\label{section:beta_div_altmu}
 
As mentioned previously, the sum of squared loss, as given in  \eqref{equation:als-per-example-loss2} or \eqref{equation:frob_nmf}, is convex when one of the factors is held constant, leading to a smooth optimization process.
This type of loss function falls under a broader class of distance/divergence estimators known as \textit{$\beta$-divergence} in the context of NMF.
Given two nonnegative scalars $x$ and $y$, the $\beta$-divergence between $x$ and $y$ is defined as follows:
\begin{equation}\label{equation:scalar_beta_div}
d_{\beta}(x, y)
=
\left\{
\begin{aligned}
&\frac{x}{y}-\ln \frac{x}{y}-1, &\text{if } \beta=0;\\
&x\ln\frac{x}{y} -x+y , &\text{if } \beta=1;\\
&\frac{1}{\beta^2-\beta}(x^\beta+(\beta-1)y^\beta - \beta xy^{\beta-1} ) , &\text{otherwise}.\\
\end{aligned}
\right.
\end{equation}
The $\beta$-divergence is continuous in $\beta$ since $\mathoplim{\beta\rightarrow 0}(x^\beta -y^\beta)/\beta=\ln(x/y)$. 
When $\beta=0, 1,$ and $2$, the $\beta$-divergences are also known as the \textit{Itakura-Saito (IS), KL, and Frobenius/Euclidean distances/divergences}, respectively.
The $\beta$-divergence between two matrices $\bB$ and $\bC$ is 
\begin{equation}\label{equation:beta_div_mat_def}
D_{\beta}(\bB,\bC)=
\sum_{j}d_{\beta}(\bb_j, \bc_j)
=
\sum_{i,j} d_{\beta}({b_{ij}, c_{ij}}).
\end{equation}

The analysis of $\beta$-divergence is complex. When the first argument is fixed at 1,  smaller values are less penalized as  the $\beta$ value increases; however, when the first argument is $2$,  smaller values  are more penalized as  the $\beta$ value increases.
In both cases, larger values are more heavily penalized as the $\beta$ value increases. See Figure~\ref{fig:beta_divergence_all}.
\begin{figure}[h]
\centering  
\vspace{-0.15cm}    
\subfigtopskip=2pt  
\subfigbottomskip=2pt 
\subfigcapskip=-5pt  
\subfigure[$\beta$-divergence for $d_{\beta}(1,y)$.]{\label{fig:beta_divergence_1}
\includegraphics[width=0.47\linewidth]{./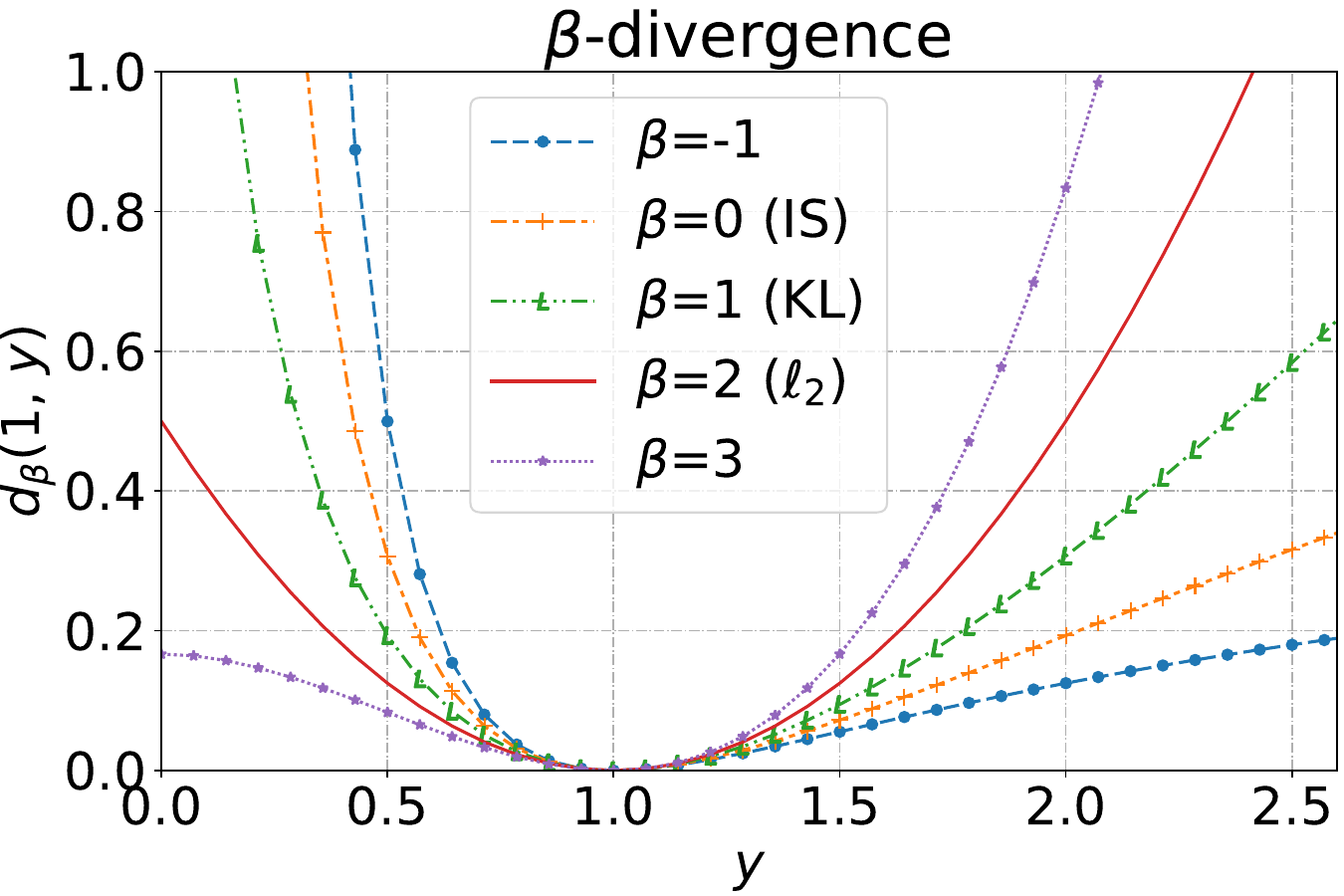}}
\subfigure[$\beta$-divergence for $d_{\beta}(2,y)$.]{\label{fig:beta_divergence_2}
\includegraphics[width=0.47\linewidth]{./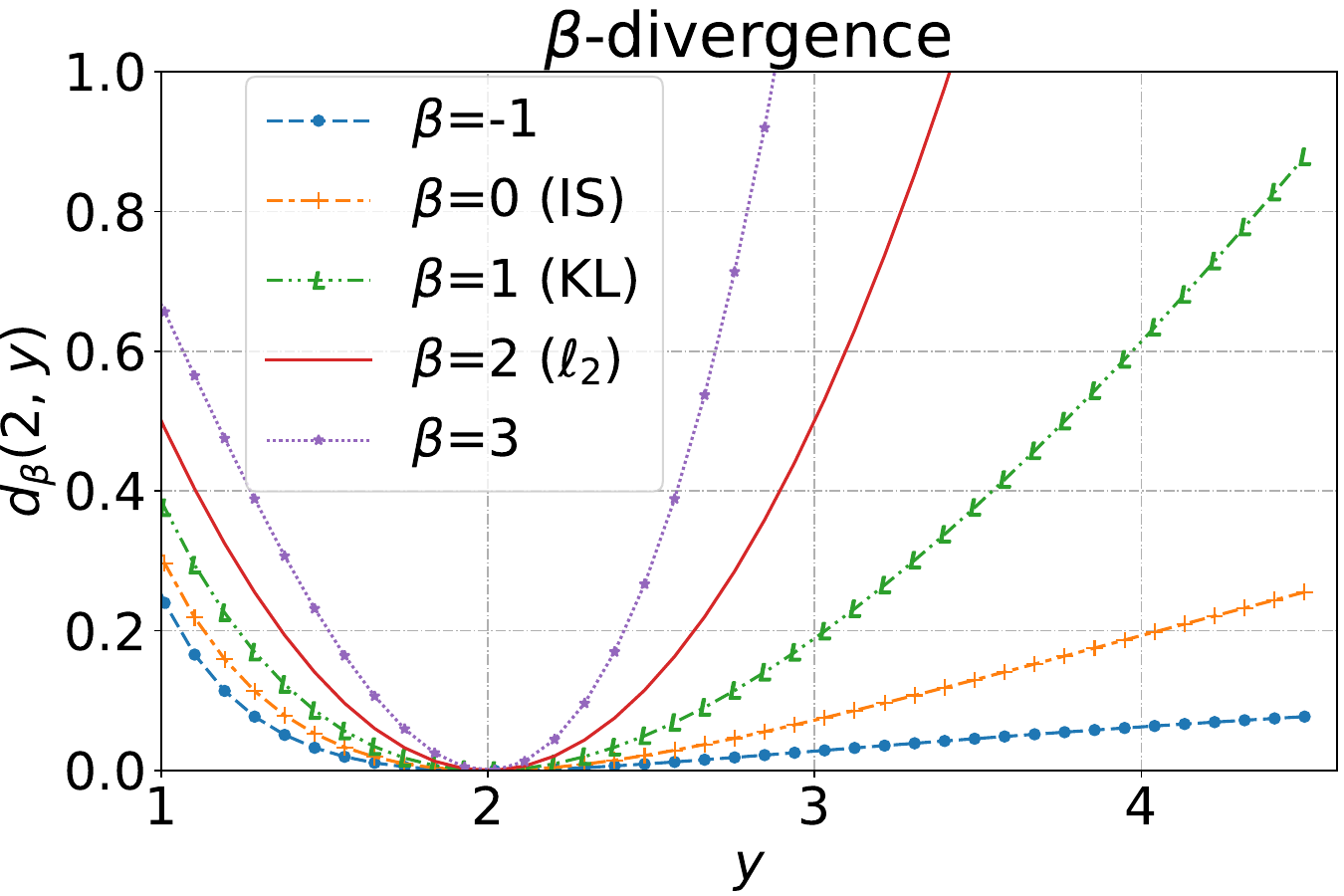}}
\caption{
The analysis of $\beta$-divergence is complex. When the first argument is fixed at 1,  smaller values are less penalized as  the $\beta$ value increases; however, when the first argument is $2$,  smaller values  are more penalized as  the $\beta$ value increases.
In both cases, larger values are more heavily penalized as the $\beta$ value increases.
}
\label{fig:beta_divergence_all}
\end{figure}

\paragraph{Convexity of $\beta$-divergence.}
When $\beta\in[1,2]$, the function $d_{\beta}(x,y)$ is convex in the second argument $y$. This implies $D_{\beta}(\bA,\bW\bZ)$ is convex in $\bW$ when fixing $\bZ$, and vice versa (Problem~\ref{prob:sep_conv}).  
Therefore, coordinate descent algorithms can be effectively applied for NMF using the $\beta$-divergence.

\paragraph{Scaling in arguments.} Let $\gamma>0$ be a scale factor, then 
\begin{equation}
d_{\beta}(\gamma x, \gamma y) = \gamma^\beta d_{\beta}(x,y).
\end{equation}
This indicates that the larger the $\beta$, the more sensitive the $\beta$-divergence is to large values of $x$ or $y$; on the contrary, $\beta$-divergence with small $\beta<0$ values relies more heavily on the smallest data values.
However, when $\beta=0$ (called \textit{Itakura-Saito divergence, IS divergence}), the $\beta$-divergence is not related to the $\beta$ value, and thus it is invariant to scaling. 
What matters is the ratio between $x$ and $y$; see \eqref{equation:scalar_beta_div}.

\paragraph{Gradient.} 
Since we consider a nonnegative matrix $\bA$ for NMF problems, we should note that $d_{\beta}(x, \cdot)$ for $x=0$ is not defined for all values of $\beta$:
$$
d_{\beta}(0, y)=
\left\{
\begin{aligned}
\text{not defined}, \gap &\text{if $\beta\leq 0$};\\
\frac{1}{\beta} y^\beta, \gap \gap &\text{if $\beta> 0$},
\end{aligned}
\right.
\gapthree \implies \gapthree
d'_{\beta}(0, y)=
\left\{
\begin{aligned}
\text{not defined}, \gap &\text{if $\beta\leq 0$};\\
 y^{\beta-1}, \gap \gap &\text{if $\beta> 0$},
\end{aligned}
\right.
$$
where the derivative $d'_{\beta}(0, y)$ corresponds to the second variable $y$.
Therefore, the algorithm developed in the following sections can only be applied to positive matrices when $\beta\leq 0$.
Table~\ref{table:dom_beta_1} and Table~\ref{table:dom_beta_2} present the domains of $d_\beta(x, \cdot)$ and $d'_{\beta}(x, \cdot)$, respectively, for different values of $\beta$ and $x$.
\noindent
\begin{table}[h]
\centering
\begin{minipage}{0.37\textwidth}
\centering
\caption{Domain of $d_\beta(x, \cdot)$.}
\label{table:dom_beta_1}
\begin{tabular}{c|c|c|c}
\hline
& $\beta \leq 0$ & $\beta \in (0,1]$ & $\beta > 1$ \\
\hline
$x = 0$ & $\varnothing$ & $\real_+$ & $\real_+$ \\
$x > 0$ & $\real_{++}$ & $\real_{++}$ & $\real_+$ \\
\hline
\end{tabular}
\end{minipage}
\quad
\begin{minipage}{0.54\textwidth}
\centering
\caption{Domain of $d'_{\beta}(x, \cdot)$.}
\label{table:dom_beta_2}
\begin{tabular}{c|c|c|c|c}
\hline
& $\beta \leq 0$ & $\beta \in (0,1)$ & $\beta \in [1,2)$ & $\beta \geq 2$ \\
\hline
$x = 0$ & $\varnothing$ & $\real_{++}$ & $\real_+$ & $\real_+$ \\
$x > 0$ & $\real_{++}$ & $\real_{++}$ & $\real_{++}$ & $\real_+$ \\
\hline
\end{tabular}
\end{minipage}
\end{table}

On the other hand, the gradients of  $D_{\beta}(\bA,\bW\bZ)$ w.r.t. $\bZ$ and $\bW$ are given by (if exist) 
\begin{equation}
\begin{aligned}
\nabla_{\bZ} D_{\beta}(\bA,\bW\bZ) &= \bW^\top \big( (\bW\bZ)^{\beta-2} \hadaprod (\bW\bZ-\bA) \big);\\
\nabla_{\bW} D_{\beta}(\bA,\bW\bZ) &=  \big( (\bW\bZ)^{\beta-2} \hadaprod (\bW\bZ-\bA) \big)\bZ^\top, 
\end{aligned}
\end{equation}
where $(\bW\bZ)^{\beta-2}$ denotes the componentwise exponent.
When $\beta=2$, the gradient reduces to the one in \eqref{equation:givenw-update-z-allgd} and \eqref{equation:givenz-update-w-allgd}.

\paragraph{Decomposition of $\beta$-divergence.}
The $\beta$-divergence can be divided into three parts: convex, concave, and constant terms. 
We should note that this decomposition is not unique since any affine term is both convex and concave. 
We follow the convention in  \citet{fevotte2011algorithms}:
\begin{equation}\label{equation:beta_decom_defi}
d_{\beta}(x,y) = \convd_{\beta}(x,y)+\concd_{\beta}(x,y)+\cnstd_{\beta}(x,y),
\end{equation} 
where $\convd_{\beta}(x,y)$ is convex in $y$, $\concd_{\beta}(x,y)$ is concave in $y$, and $\cnstd_{\beta}(x,y)$ is constant in $y$; see Table~\ref{table:beta_decom} for different $\beta$ values.

\begin{table}[h]
\setlength{\tabcolsep}{6.pt}    
\centering
\begin{tabular}{c|c|c|c}
\hline
& $\convd_{\beta}(x,y)/\convd'_{\beta}(x,y)$, convex & $\concd_{\beta}(x,y)/\concd'_{\beta}(x,y)$, concave & $\cnstd_{\beta}(x,y)$, constant \\ \hline\hline
$\beta < 1, \beta \neq 0$ & $-\frac{1}{\beta-1}xy^{\beta-1}/ -xy^{\beta-2}$ & $\frac{1}{\beta}y^{\beta}/y^{\beta-1}$ & $\frac{1}{\beta(\beta-1)}x^{\beta}$ \\ \hline
$\beta = 0$ & $xy^{-1}/-xy^{-2}$ & $\ln y/y^{-1}$ & $x(\ln x - 1)$ \\ \hline
$1 \leq \beta \leq 2$ & $d_{\beta}(x,y)/d'_{\beta}(x,y)$ & 0/0 & 0 \\ \hline
$\beta > 2$ & $\frac{1}{\beta}y^{\beta}/y^{\beta-1}$ & $-\frac{1}{\beta-1}xy^{\beta-1}$ & $\frac{1}{\beta(\beta-1)}x^{\beta}$ \\ \hline
\end{tabular}
\caption{Scalar convex-concave-constant decomposition  of $d_{\beta}(x,y)$ with respect to the second variable $y$, and the corresponding derivatives with respect to the second variable $y$.}
\label{table:beta_decom}
\end{table}

\paragraph{KKT conditions for NMF with $\beta$-divergence.}

The KKT conditions  indicate that (see derivation in \eqref{equation:kkv_nnn_raw}):
\begin{equation}\label{equation:nmf_beta_kkt1}
\begin{aligned}
\bZ\geq \bzero,\gap&\nabla_{\bZ} D_{\beta}(\bA, \bW\bZ) &\geq& \bzero, \gap \langle \bZ, \nabla_{\bZ} D_{\beta}(\bA, \bW\bZ)\rangle &=&\bzero_{K\times N}; \\
\bW\geq \bzero,\gap&\nabla_{\bW} D_{\beta}(\bA, \bW\bZ) &\geq& \bzero, \gap \langle \bW, \nabla_{\bW} D_{\beta}(\bA, \bW\bZ)\rangle &=&\bzero_{M\times K}.
\end{aligned}
\end{equation}
This also implies
\begin{equation}
\begin{aligned}
\min\{\bZ, \nabla_{\bZ} D_{\beta}(\bA, \bW\bZ)\} = \bzero_{K\times N}
\gap \text{and}\gap 
\min\{\bW, \nabla_{\bW} D_{\beta}(\bA, \bW\bZ)\} = \bzero_{M\times K},
\end{aligned}
\end{equation}
where the min operator $\min\{\cdot, \cdot \}$ is applied componentwise.

\subsection{MU for $\beta$-Divergence Obtained by Gradient Ratio Heuristic}\label{section:mu_gd_ratio}
We have shown that the MU update for the Frobenius norm can be derived from rescaled gradient descent. 
For brevity, let $\nabla_{\bZ}=\nabla_{\bZ} D_{\beta}(\bA,\bW\bZ) = \nabla_{\bZ}^+ - \nabla_{\bZ}^-$, where 
\begin{equation}\label{equation:mu_grati_decom}
\nabla_{\bZ}^+ = \bW^\top \big( (\bW\bZ)^{\beta-1} \big)
\gap \text{and}\gap 
\nabla_{\bZ}^- = \bW^\top \big( (\bW\bZ)^{\beta-2} \hadaprod \bA \big).
\end{equation}
When $z_{kn}>0, \forall k,n$, the KKT conditions show that $(\nabla_{\bZ}^+)_{kn} = (\nabla_{\bZ}^-)_{kn}$. The rule from gradient descent (i.e., $\bZ^{(t+1)}=\bZ^{(t)} - \eta\nabla_{\bZ}$) indicates a small decrease (resp., increase) of $z_{kn}$ will lead to a  decrease of the loss function if $(\nabla_{\bZ})_{kn}>0$ (resp., $<0$).
Therefore, it is reasonable to update $z_{kn}$ using the componentwise ratio between $\nabla_{\bZ}^-$ and $\nabla_{\bZ}^+$:
\begin{equation}\label{equation:mu_grati_z}
\bZ\leftarrow  \bZ \hadaprod \frac{[\nabla_{\bZ}^-]}{[\nabla_{\bZ}^+]},
\end{equation}
where $\frac{[\cdot]}{[\cdot]}$ represents the componentwise ratio of two matrices.
This update rule also corresponds to a multiplicative update (MU).
When $\beta=2$, the MU algorithm reduces to the case in Theorem~\ref{theorem:conv_mu_fro}.
When $\beta=1$, the loss function becomes the KL divergence, and the update for $\bZ$ is 
$$
\textbf{($\beta=1$)}:\gap \bZ\leftarrow \bZ \hadaprod \frac{[\bW^\top \frac{[\bA]}{[\bW\bZ]}]}{[\bW^\top \bone_{M\times N}]}.
$$
It can be shown that when $\beta\in[1,2]$, the MU algorithms derived for $\beta$-divergence will monotonically decrease $D_{\beta}(\bA, \bW\bZ)$.

\subsection{MU for $\beta$-Divergence  Obtained by Rescaled PGD}
As discussed in Section~\ref{section:nmf_apgd},
the PGD approach involves projecting the gradient descent update onto the feasible set \citep{lu2025practical}. Consider a standard GD update on $f(\bx)$: $\bx^{(t+1)}=\bx^{(t)}-\eta\nabla f(\bx^{(t)})$, where $\eta$ is a step size and  $-\nabla f(\bx^{(t)})$ is a \textit{descent direction} ($\bg$ is a descent direction if $\bg^\top\nabla f(\bx^{(t)})<0$).
Consider further  a diagonal $\bD$ such that $-\eta \nabla f(\bx^{(t)}) \rightarrow -\bD \nabla f(\bx^{(t)})$ is also a descent direction (replacing the step size by a diagonal matrix)~\footnote{$\bD$ can be relaxed to any positive definite matrices.}.
In this case, if the feasible set of $\bx$ is nonnegative, then the PGD is useful: $\bx^{(t+1)}=\mathcalP(\bx^{(t)}-\bD\nabla f(\bx^{(t)}))$, where $\mathcalP(x)=\max\{x, 0\}$~\footnote{see, for example, \citet{beck2017first}.}.
If we further decompose the gradient into positive and negative parts: $\nabla f(\bx^{(t)}) = \nabla^+f(\bx^{(t)}) - \nabla^-f(\bx^{(t)})$ with $\nabla^+f(\bx^{(t)})>0 $ and $\nabla^-f(\bx^{(t)})>0$, taking $\bD=\diag\big( \frac{[\bx^{(t)}]}{[\nabla^+f(\bx^{(t)}))]}  \big)$, the rescaled PGD update becomes a MU rule:
\begin{equation}
\bx^{(t+1)}=\mathcalP\bigg(\bx^{(t)}-\diag\big( \frac{[\bx^{(t)}]}{[\nabla^+f(\bx^{(t)})]}  \big)\nabla f(\bx^{(t)})\bigg)
=
\mathcalP\bigg(\bx^{(t)} \hadaprod \frac{[\nabla^-f(\bx^{(t)})]}{[\nabla^+f(\bx^{(t)})]}\bigg).
\end{equation}
If we use the decomposition of gradient in \eqref{equation:mu_grati_decom}, the rescaled PGD becomes the MU update for NMF in \eqref{equation:mu_grati_z}.
If we further incorporate a step size $\eta$ in the rescaled PGD update, it becomes 
\begin{equation}
\bx^{(t+1)}
=
\mathcalP\bigg((1-\eta)\bx^{(t)} + \eta\bx^{(t)} \hadaprod \frac{[\nabla^-f(\bx^{(t)})]}{[\nabla^+f(\bx^{(t)})]}\bigg).
\end{equation}
Since $-\bD \nabla f(\bx^{(t)})$ is a descent direction, the step size $\eta \in(0,1)$ can ensure that the update is monotonically nonincreasing.
Note that the projection operator can be omitted since all  updates are nonnegative.

\subsection{MU for $\beta$-Divergence Obtained by MM Framework}
The $\beta$-divergence between two matrices can be defined columnwise (Equation~\eqref{equation:beta_div_mat_def}), and the $\beta$-divergence can be divided into three parts (convex, concave, and constant, Equation~\eqref{equation:beta_decom_defi}). Thus, the loss function in NMF can be decomposed into (note the loss function can be further divided componentwise):
$$
D_{\beta}(\bA, \bW\bZ)
=
\sum_{n=1}^{N}d_{\beta}(\ba_n, \bW\bz_n)
=
\sum_{n=1}^{N}\left(\convd_{\beta}(\ba_n, \bW\bz_n) + \concd_{\beta}(\ba_n, \bW\bz_n) + \cnstd_{\beta}(\ba_n, \bW\bz_n)\right).
$$
For each column $n$, the MM framework involves finding auxiliary functions for the three components separately.
To see this, we need the following lemma:
\begin{lemma}[Auxiliary function by parts]
Let $F(\bx)=\sum_{i=1}^{n} F_i(\bx)$, and  let $G_i(\bx, \widetildebx)$ be an auxiliary function for $F_i(\bx)$ at $\widetildebx$ for all $i$. 
Then, $G(\bx, \widetildebx)=\sum_{i=1}^{n}G_i(\bx, \widetildebx)$ is an auxiliary function for $F(\bx)$ at $\widetildebx$.
\end{lemma}
This lemma indicates that if the auxiliary function is constructed separately for each component, it allows us to decouple the optimization.

\paragraph{Constant part.}
There is no need to find an auxiliary function for the constant term  $\cnstd_{\beta}(\ba_n, \bW\bz_n)$, since it does not influence the minimization of $d_{\beta}(\ba_n, \bW\bz_n)$ with respect to $\bz_n$.
\paragraph{Concave part.}
Any concave function can be upper-bounded using  linearization (the tangent plane):
$$
\concd_{\beta}(x, y) 
\leq 
\concd_{\beta}(x, \widetildey)
+
(y-\widetildey) \concd_{\beta}'(x, \widetildey), 
$$
where $\concd_{\beta}'(x, \widetildey)$ denotes the gradient of $\concd(x, \widetildey)$ with respect to the second component $\widetildey$.
Therefore, for any $\widetildebz_n\in\real^{K}$, the auxiliary function for the concave component $\concd_{\beta}(\ba_n, \bW\bz_n)$ can be constructed by  
$$
\concG(\bz_n, \widetildebz_n) = \concd_{\beta}(\ba_n, \bW\widetildebz_n) + (\bW\bz_n-\bW\widetildebz_n) \hadaprod \concd_{\beta}' (\ba_n, \bW\widetildebz_n).
$$

\paragraph{Convex part.}
The auxiliary function for the convex part follows from the convexity inequality~\footnote{Let $f:\sS\rightarrow \real$ be a convex function, and let $p\geq 2$ be any integer. Then, 
$
f\big(\sum_{i=1}^{p} \lambda_i\bx_i\big) \leq \sum_{i=1}^{p}\lambda_i f(\bx_i),
$
if $\lambda_i\geq 0$ and $\sum_{i=1}^{p}\lambda_i=1$.}.
Construct a matrix $\bP\in\real^{M\times K}$ as follows:
\begin{equation}
p_{mk} = \frac{w_{mk}{\widetildez}_{kn}}{\sum_{j}w_{mj}{\widetildez}_{jn}}
=
\frac{w_{mk}{\widetildez}_{kn}}{\bW[m,:]\widetildebz_n}
\gap \implies\gap
\bP\geq\bzero \text{ and } \bP\bone = \bone.
\end{equation}
That is, each row of $\bP$ belongs to the unit simplex in $\real^K$.
Therefore, we have
$$
\begin{aligned}
\convd_{\beta}(a_{mn}, \bW[m,:]\bz_n ) 
&=
\convd_{\beta}\big(a_{mn}, \sum_{k=1}^{K}w_{mk}z_{kn} \big) 
=
\convd_{\beta}\big(a_{mn}, \sum_{k=1}^{K} p_{mk}\frac{w_{mk}z_{kn}}{p_{mk}} \big) \\
&\leq 
\sum_{k=1}^{K} p_{mk}\convd_{\beta}\big(a_{mn}, \frac{w_{mk}z_{kn}}{p_{mk}} \big). 
\end{aligned}
$$

This decomposition finds an auxiliary function for $D_{\beta}(\bA,\bW\bZ)$ w.r.t. $\bZ$.
\begin{theorem}[Auxiliary function for $D_{\beta}(\bA,\bW\bZ)$ w.r.t. $\bZ$]\label{theorem:aux_db_z}
Let $\widetildeba_n=\bW\widetildebz_n$ with $\widetildea_{mn}=\bW[m,:]\widetildebz_n$ for all $m, n$, where $\widetildebz_n$ is any vector in $\real^K$. Then, $G(\bZ,\widetildebZ)=\sum_{n=1}^{N} G_{n}(\bz_n, \widetildebz_n) = \sum_{n=1}^{N} \sum_{m=1}^{M} G_{mn}$ is an auxiliary function for $D_{\beta}(\bA,\bW\bZ)$ w.r.t. $\bZ$, where 
$$
\footnotesize
\begin{aligned}
G_{mn}
&=
\cnstd_{\beta}(a_{mn}, \widetildea_{mn})
+
\concd_{\beta}(a_{mn}, \widetildea_{mn}) + \sum_{k=1}^{K}w_{mk}(z_{kn}-\widetildez_{kn}) \concd_{\beta}'(a_{mn}, \widetildea_{mn})
+ \sum_{k=1}^{K} \frac{w_{mk}\widetildez_{kn}}{\widetildea_{mn}}\convd_{\beta}\big(a_{mn},  \frac{\widetildea_{mn}z_{kn}}{\widetildez_{kn}} \big).
\end{aligned}
$$
\end{theorem}

\begin{exercise}[Gradient and Hessian of auxiliary functions]\label{exercise:gra_hes_aux}
Consider the setting and notations in Theorem~\ref{theorem:aux_db_z}. 
Let $G_n(\bz_n, \widetildebz_n)= \sum_{k=1}^{K} G_k (\bz_{kn}, \widetildebz_n) + C(\bz_n)$ where $C(\bz_n)$ is a constant w.r.t. $\bz_n$. 
That is, 
$$
G_k (\bz_{kn}, \widetildebz_n) 
= 
\sum_{m=1}^{M}w_{mk} z_{kn}\concd_{\beta}'(a_{mn}, \widetildea_{mn})
+ 
\sum_{m=1}^{M} \frac{w_{mk}\widetildez_{kn}}{\widetildea_{mn}}\convd_{\beta}\big(a_{mn},  \frac{\widetildea_{mn}z_{kn}}{\widetildez_{kn}} \big).
$$
Show that the gradient of the auxiliary function is 
$$
\nabla_{z_{kn}}G_n(\bz_n, \widetildebz_n) 
=
\sum_{m=1}^{M} 
w_{mk}
\bigg(
\concd_{\beta}'(a_{mn}, \widetildea_{mn})
+
\convd_{\beta}'\big(a_{mn},  \frac{\widetildea_{mn}z_{kn}}{\widetildez_{kn}} \big)
\bigg), 
$$
and the Hessian matrix is diagonal with entries
$$
\nabla^2_{z_{kn}}G_n(\bz_n, \widetildebz_n) 
=
\sum_{m=1}^{M} 
w_{mk}
\frac{\widetildea_{mn}}{\widetildez_{kn}}
\bigg(
\convd_{\beta}{''}\big(a_{mn},  \frac{\widetildea_{mn}z_{kn}}{\widetildez_{kn}} \big)
\bigg).
$$
Note in all cases, the first-order derivative or the second-order derivative corresponds to the second argument of $d_{\beta}(\cdot, \cdot)$.
\end{exercise}

Since $\convd_{\beta}(\cdot, \cdot)$ is convex in the second argument, the Hessian is positive definite. Thus, the auxiliary function is convex.
These constructions result in the following theorem by minimizing the  auxiliary function obtained in Theorem~\ref{theorem:aux_db_z}.
\begin{theorem}[Nonincreasing of MU for $\beta$-divergence \citep{fevotte2011algorithms, gillis2020nonnegative}]\label{theorem:conv_mu_beta}
Let $\bA\in\real_+^{M\times N}$, $\bW\in\real_{++}^{M\times K}$, and $\bZ\in\real_{++}^{K\times N}$. 
The  loss $D_{\beta}(\bA,\bW\bZ)$ remains nonincreasing under the following multiplicative update rules:
$$
\footnotesize
\begin{aligned}
\bZ\leftarrow \bZ\hadaprod \left(\frac{\left[\bW^\top \left\{ (\bW\bZ)^{(\beta-2)} \hadaprod \bA \right\}\right]}{[\bW^\top (\bW\bZ)^{(\beta-1)}]}\right)^{m(\beta)},
\gapthree \text{and}\gapthree
\bW\leftarrow \bW\hadaprod \left(\frac{\left[\left\{ (\bW\bZ)^{(\beta-2)} \hadaprod \bA \right\} \bZ^\top \right]}{[ (\bW\bZ)^{(\beta-1)}\bZ^\top]}\right)^{m(\beta)},
\end{aligned}
$$
where 
$$
m(\beta)=\left\{
\begin{aligned}
&\frac{1}{2-\beta}, \gap &\textit{if }&\beta<1; \\
&1,                  &\textit{if }& 1\leq\beta\leq 2; \\
&\frac{1}{\beta-1}, &\textit{if }& \beta>1.
\end{aligned}
\right.
$$
When $\beta=2$, the result reduces to Theorem~\ref{theorem:conv_mu_fro}.
When $1\leq \beta\leq 2$, the MU obtained via the MM framework coincides with heuristic described in Section~\ref{section:mu_gd_ratio}.
\end{theorem}
The update in Theorem~\ref{theorem:conv_mu_beta} ensures nonnegativity of the parameter updates, provided they are initialized with positive values.

\paragraph{Choice of $\beta$ for NMF.}
The choice of $\beta$-divergence for NMF is problem-dependent.
\citet{fevotte2009nonnegative} present  results of decomposing a piano power spectrogram using $\beta=0$ and demonstrate that components corresponding to very low residual noise and hammer strikes on the strings are extracted with great accuracy; these components are either ignored or severely degraded when using Euclidean or KL distances/divergences. 
\citet{fitzgerald2009use} show that  $\beta=0.5$ is optimal for  music source separation problems.

\paragraph{Convergence.}
An algorithm is said to be  \textit{convergent} if it produces a sequence of iterates $\{\bZ^{(t)}\}_{t\geq 1}$ or $\{\bW^{(t)}\}_{t\geq 1}$ that converges to a limit point $\bW^*$ or $\bZ^*$ satisfying the KKT conditions in \eqref{equation:nmf_beta_kkt1}. Monotonic nonincreasingness does not imply convergence in general, and neither is monotonicity necessary for convergence. Proving convergence of the MU methods is beyond the scope of this book; we refer the readers to \citet{gillis2020nonnegative, fevotte2011algorithms} and references therein for more details.

\subsection{Initialization of NMF}
A significant challenge in NMF is the lack of guaranteed convergence to a global minimum.  
Often, the convergence process is slow, and the algorithm may reach a suboptimal approximation.
In the preceding discussion, we initialized $\bW$ and $\bZ$ randomly. 
To mitigate this issue, there are also alternative strategies designed to obtain better initial estimates in the hope of converging more rapidly to a good solution \citep{boutsidis2008svd, gillis2014and}. We sketch the methods as follows for reference:
\begin{itemize}
\item \textit{Clustering techniques.} Apply some clustering methods to the columns of $\bA$, set the cluster means of the top $K$ clusters as the columns of $\bW$, and initialize $\bZ$ as a proper scaling of
the cluster indicator matrix (that is, $z_{kn}\neq 0$ indicates  that $\ba_n$ belongs to the $k$-th cluster);
\item \textit{Subset selection.} Pick $K$ columns of $\bA$, and set those as the initial columns for $\bW$. And analogously, $K$ rows of $\bA$ are selected to form the rows of $\bZ$;
\item \textit{SVD-based approach.} Suppose the optimal rank-$K$ approximation of $\bA$ is $\bA=\sum_{i=1}^{K}\sigma_i\bu_i\bv_i^\top$, where each factor $\sigma_i\bu_i\bv_i^\top$ is a rank-one matrix with possible negative values in $\bu_i$ and $\bv_i$, and nonnegative $\sigma_i$. Denote $[x]_+=\max(x, 0)$, we notice 
$$
\bu_i\bv_i^\top = [\bu_i]_+[\bv_i]_+^\top+[-\bu_i]_+[-\bv_i]_+^\top-[-\bu_i]_+[\bv_i]_+^\top-[\bu_i]_+[-\bv_i]_+^\top,
$$
where the first two rank-one factors in this decomposition are nonnegative.
Then, either $[\bu_i]_+[\bv_i]_+^\top$ or $[-\bu_i]_+[-\bv_i]_+^\top$ can be selected to replace the factor $\bu_i\bv_i^\top$. \citet{boutsidis2008svd} suggest to replace each rank-one factor in $\sum_{i=1}^{K}\sigma_i\bu_i\bv_i^\top$ with  either $[\bu_i]_+[\bv_i]_+^\top$ or $[-\bu_i]_+[-\bv_i]_+^\top$, selecting the one with the larger norm and scaling it properly.
In other words, if we select $[\bu_i]_+[\bv_i]_+^\top$, then $\sigma_i\cdot [\bu_i]_+  $ can be initialized as the $i$-th column of $\bW$, and $[\bv_i]_+^\top$ can be chosen as the $i$-th row of $\bZ$.
\end{itemize}
However, these techniques are not guaranteed to yield better performance theoretically. 
We recommend referring to the aforementioned papers for more detailed information.

\index{Implicit hierarchy}
\section{Movie Recommender Context}
Both  NMF and  ALS methods approximate a matrix and reconstruct its entries using a set of basis/template vectors. 
The key difference lies in the nature of these basis vectors and how the approximation is carried out.
The basis  in  NMF is composed of vectors with nonnegative elements while the basis vectors in  ALS can have positive or negative values.
In  NMF, each vector is reconstructed  as a nonnegative summation of the basis vectors with  ``relatively" small components in the direction of each basis vector.
In contrast, in the ALS approximation, the data is modeled as a linear combination of the basis vector such that we can add or subtract vectors as needed; and the components in the direction of each basis vector can be large positive values or negative values. Therefore, depending on the application, one or the other factorization can be utilized to describe the data with different meanings.

\paragraph{Movie recommender context.}
In the context of a movie recommender system,  the rows of $\bW$ represent the hidden features of  movies, while the columns of $\bZ$ represent the hidden features of users. 
For example, in NMF, a movie might be described as 0.5 comedy, 0.002 action, and 0.09 romantic. However, in the ALS approach, we can get combinations such as 4 comedy, $-0.05$ action, and $-3$ drama, indicating positive or negative contributions to each feature.

\paragraph{Implicit hierarchy.}
Both ALS and NMF do not rank the importance of each basis vector hierarchically. In contrast, singular value decomposition (SVD) ranks the importance of each basis vector based on the corresponding singular value.
In the SVD representation of $\bA=\sum_{i=1}^{r}\sigma_i\bu_i\bv_i^\top$, 
this usually means that the reconstruction $\sigma_1\bu_1\bv_1^\top$ via the first set of basis vectors dominates and is the most used set to reconstruct data, followed by the second set, and so on. 
This creates an implicit hierarchy in the SVD basis that doesn't happen in the ALS or the NMF approach.

\paragraph{Interpretability of basis vectors.}
In  SVD, the basis vectors can be statistically interpreted as the directions of maximum variance, but many of these directions lack a clear visual or intuitive interpretation due to the presence of zero, positive, and negative entries. 
When these basis vectors are used in a linear combination, the combination involves complex cancellations of positive and negative values, which can obscure the intuitive physical meaning of individual basis vectors. As a result, many basis vectors do not provide a meaningful explanation for nonnegative data, such as  pixel values in a color image.
On one hand, the entries of a nonnegative pattern vector should all be nonnegative values. On the other hand, mutually orthogonal singular vectors must contain negative entries. For example, if all  entries of the singular vector  $\bu_1$ corresponding to the maximum singular value are nonnegative, then any other singular vector orthogonal to $\bu_1$ must contain at least one negative entry; otherwise, the orthogonality condition $\bu_1^\top\bu_j=0$ for $j\neq 1$ cannot be satisfied. This indicates that mutually orthogonal singular vectors are not suitable as pattern vectors or basis vectors in nonnegative data analysis.

\section{Other Applications}

\paragraph{Music spectral reconstruction.}
To illustrate the application of NMF, we demonstrate how this technique can decompose a spectrogram of a music recording into components that carry musical significance \citep{muller2015fundamentals}. As an example, let's examine the opening measures of \textit{Frédéric Chopin's Prélude Op. 28, No. 4}. Figure~\ref{fig:nmf_music_note} presents the musical notation alongside a piano-roll visualization that is synchronized with an audio recording of the piece. For clarity, all information pertaining to the note numbered $p=71$ are emphasized with red rectangular frames.
\begin{figure}[h]
\centering       
\subfigtopskip=2pt               
\subfigbottomskip=-2pt         
\subfigcapskip=-30pt      
\includegraphics[width=0.98\textwidth]{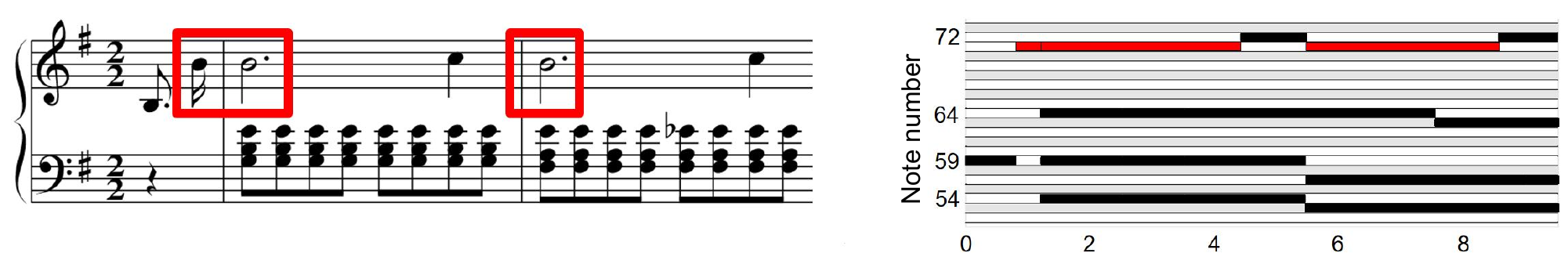}
\caption{Musical score and piano-roll representation. Figure is adapted from \citet{muller2015fundamentals}.}
\label{fig:nmf_music_note}
\end{figure}

Regarding the original data matrix $\bA$, we utilize the magnitude STFT (see, for example, \citet{lopez2019nmf}), which consists of a series of spectral vectors. By applying NMF, this matrix can be decomposed into two nonnegative matrices, $\bW$ and $\bZ$. Ideally,  $\bW$ encapsulates the spectral patterns corresponding to the pitches of the notes present in the musical piece, whereas $\bZ$ indicates the temporal points at which these patterns appear in the audio recording. Figure~\ref{fig:nmf_music_decom} illustrates such a decomposition applied to the Chopin prelude.
\begin{figure}[h]
	\centering       
	\vspace{-0.35cm}                 
	\subfigtopskip=2pt               
	\subfigbottomskip=-2pt         
	\subfigcapskip=-10pt      
	\includegraphics[width=0.98\textwidth]{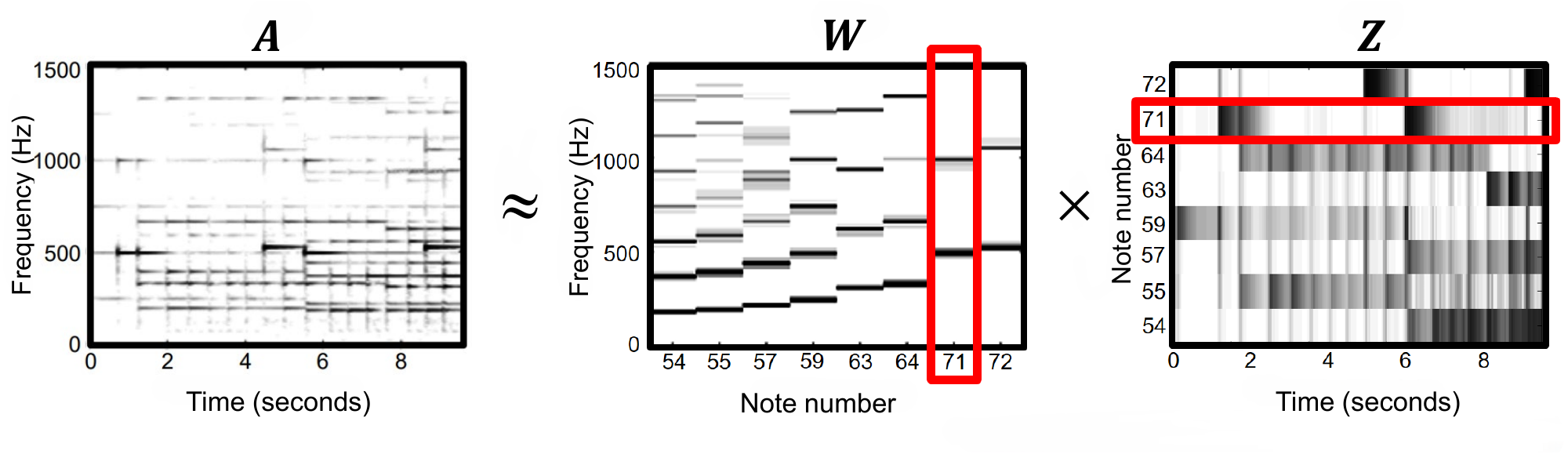}
	\caption{Ideal NMF of the spectrogram using NMF. Figure is adapted from \citet{muller2015fundamentals}.}
	\label{fig:nmf_music_decom}
\end{figure}

In this scenario, each template represented by the matrix  $\bW$ corresponds to the spectral manifestation of a specific pitch within  $\bA$, and the activation matrix $\bZ$ resembles the piano-roll representation of the musical score. 
Therefore, the advantages of NMF over general matrix factorization are evident:
\begin{itemize}
\item \textbf{Nonnegativity constraint.} NMF enforces nonnegativity on both the factorization matrices $\bW$ and $\bZ$. This constraint makes the resulting matrices more interpretable because they can be directly related to physical or perceptual quantities in the domain of interest. In the case of music, the nonnegative factors correspond to meaningful musical elements like notes or chords.
\item \textbf{Interpretability.} In NMF, the matrix $\bW$ represents the spectral patterns (timbres) of the notes present in the music piece, and $\bZ$ indicates the temporal activations of these patterns. This leads to a more interpretable decomposition compared to unconstrained matrix factorization methods, where the factors might not have a clear physical or musical interpretation.
\end{itemize}

\begin{figure}[h]
\centering       
\vspace{-0.25cm}                 
\subfigtopskip=2pt               
\subfigbottomskip=-2pt         
\subfigcapskip=-10pt      
\includegraphics[width=0.98\textwidth]{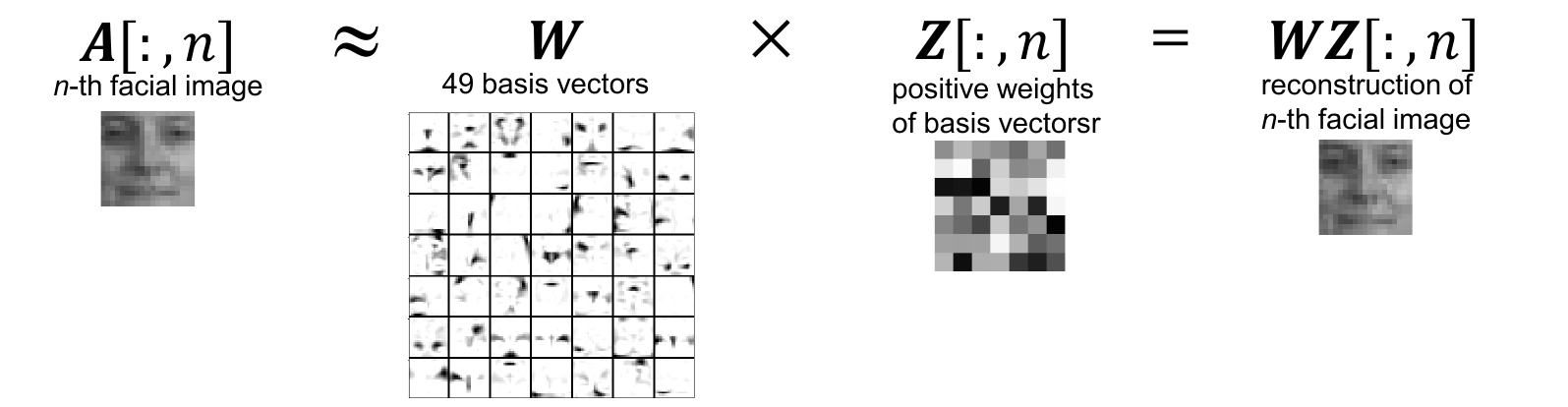}
\caption{NMF of the CBCL face database with $K=49$. The basis vectors in $\bW$ are reshaped into $19\times 19$ images. Facial features can be observed from these reshaped basis vectors, e.g., eyes, noses, nasolabial folds, and lips. Figure is adapted from \citet{lee1999learning, gillis2014and}.}
\label{fig:nmf_face_decom}
\end{figure}

\paragraph{Facial feature extraction and reconstruction.}
Suppose each column of the data matrix $\bA \in \real_+^{M\times N}$ represents a vectorized grayscale image of a face, where the $(m,n)$-th entry of $\bA$ corresponds to the intensity of the $m$-th pixel in the $n$-th face. NMF decomposes $\bA$ into two nonnegative matrices $\bW$ and $\bZ$ such that each image $\ba_n$ can be approximated by a linear combination of the columns of $\bW$. Since $\bW$ is nonnegative, its columns can be interpreted as images, referred to as \textit{template images} or \textit{basis images}, which are vectors of pixel intensities. The nonnegative weights in $\bZ$ ensure that these template images are combined additively to reconstruct each original face image. Given that the number of basis images $K$ is typically much smaller than the number of images $N$, the basis images should capture localized and sparse features that are common across multiple images. For facial images, these basis images often represent features such as eyes, noses, nasolabial folds, and lips (see Figure~\ref{fig:nmf_face_decom}, NMF for  the CBCL face data~\footnote{http://cbcl.mit.edu/software-datasets/FaceData2.html}), while the columns of $\bZ$ indicate the presence of these features in each image \citep{lee1999learning, gillis2014and}.

On the other hand, if each column of $\bA$ indicates a facial image of a single person, the  NMF approach can be utilized for \textit{face recognition}. NMF has been shown to be more robust to \textit{occlusions} compared to PCA or ALS, which generates dense factors. For instance, if a new face with occlusions (e.g., wearing sunglasses or distortions) needs to be mapped into the NMF basis, the non-occluded parts (e.g., the lips or the noise) can still be accurately approximated \citep{jain2017non}.

\paragraph{Topic recovery.} 
As mentioned at the very beginning of this chapter, NMF can be effectively applied to topic recovery problems. 
Typically, this application involves constructing a term-document matrix  $\bA$, where the rows correspond to terms (words or phrases) and the columns correspond to documents. Each entry  $a_{mn}$ in the matrix represents the weight or importance of term  $m$ in document $n$. 
This weight could be binary (presence/absence), \textit{term frequency (TF)}, or \textit{term frequency-inverse document frequency (TF-IDF)} \citep{shahnaz2006document}.
In this framework, each column of $\bW$ can be interpreted as a topic, while each column of $\bZ$ indicates the positive weight of each topic for the given document;  NMF in this context aligns with a \textit{soft clustering} approach where each data point can belong to multiple clusters. 
NMF is particularly well-suited for topic recovery because it captures the additive nature of topics in text data and produces interpretable results. However, the choice of the number of topics $K$  and the initialization of $\bW$  and  $\bZ$ can significantly affect the quality of the results. Additionally, NMF is sensitive to the scaling of the input data, so it's important to preprocess the term-document matrix appropriately.

\index{$L$-strongly smoothness}
\begin{problemset}

\item \label{prob:llipschi_hianls} \textbf{$L$-strongly smooth and PGD in Hi-ANLS problems.} A function $f(\bx): \real^n\rightarrow \real$ is called an $L$-Lipschitz gradient continuous function (a.k.a., a $L$-strongly smooth function) if it satisfies that $\normtwo{\nabla f(\bx) - \nabla f(\by)} \leq L\normtwo{\bx-\by}$ for all $\bx,\by$. Show that the subproblem~\eqref{equation:llipschi_hianls} in Hi-ANLS is $L$-strongly smooth with constant $L=\normtwo{\bW[:,k]}^2$. Therefore, the subproblem can be treated as a \textit{projected gradient descent (PGD)} update with a step size $\eta=\frac{1}{L}$, i.e., using gradient descent update with a step size $\eta=\frac{1}{L}$ first and projecting the update onto the nonnegative orthant afterwards \citep{lu2025practical}.

\item \label{prob:lsmooth_dslemma} \textbf{Descent lemma for $L$-strongly smooth functions.} Let $f:\sS\rightarrow (-\infty, \infty]$ be a function defined over a convex set $\sS$ such that $\normtwo{\nabla f(\bx) - \nabla f(\by)} \leq L\normtwo{\bx-\by}$ for all $\bx$ and $\by$.
Show that 
$
f(\by)\leq f(\bx)+ \nabla f(\bx)^\top (\by-\bx) + \frac{L}{2}\normtwo{\bx-\by}^2.
$
\textit{Hint: Use the fundamental theorem of calculus: $\nabla f(\bx+\alpha\bd) - \nabla f(\bx) = \int_{0}^{\alpha} \nabla^2 f(\bx+t\bd)\bd dt$.}

\item \label{prob:third_order_nmf} Let $\ba\in\real^M$, $\bz\in\real^K$, and $\bW\in\real^{K\times L}$. Show that the third-order partial derivatives of  $F(\bz)=\frac{1}{2}\normtwo{\ba-\bW\bz}^2$ vanish.

\item \textbf{MM applied to $L$-strongly smooth functions.} Let $f(\bx): \real^n\rightarrow \real$ be a $L$-strongly smooth function such that $\normtwo{\nabla f(\bx) - \nabla f(\by)} \leq L\normtwo{\bx-\by}$ for all $\bx,\by$. Show that $g(\bx, \widetildebx) = f(\widetildebx)+\nabla f(\widetildebx)^\top (\bx-\widetildebx) +\frac{L}{2} \normtwo{\bx-\widetildebx}^2$ is an auxiliary function for $f(\bx)$.
Find the update rule for this problem.

\item Derive the gradients and gradient descent updates for the tri-NMF problem in \eqref{equation:tri_nmv}.

\item \label{prob:projpro0} \textbf{Projection property-O.} Let $\sS\subset \real^n$ be \textbf{any set} and $\by\in\real^n$ such that $\widetildeby=\mathcalP_{\sS}(\by)$ is the projection of $\by$ onto set $\sS$. Show that for all $\bx\in\sS$, we have $\normtwo{\widetildeby - \by}\leq \normtwo{\bx-\by}$.

\item \label{prob:projpro1} \textbf{Projection property-I.} Let $\sS\subset \real^n$ be a \textbf{convex set} and $\by\in\real^n$ such that $\widetildeby=\mathcalP_{\sS}(\by)$. Show that for all $\bx\in\sS$, we have $\langle \bx-\widetildeby, \by-\widetildeby \rangle\leq 0$, i.e., the angle between the two vectors is greater than 90\textdegree.

\item \label{prob:projpro2} \textbf{Projection property-II.} Let $\sS\subset \real^n$ be a \textbf{convex set} and $\by\in\real^n$ such that $\widetildeby=\mathcalP_{\sS}(\by)$. Show that for all $\bx\in\sS$, we have $\normtwo{\widetildeby - \bx} \leq \normtwo{\by-\bx}$ and $\normtwo{\widetildeby - \bx}^2\leq \normtwo{\by-\bx}^2 - \normtwo{\by-\widetildeby}^2$ (the latter is related to the Pythagorean theorem). \textit{Hint: Examine $\normtwo{\by-\bx}^2=\normtwo{(\widetildeby-\bx)-(\widetildeby-\by)}^2$ and Problem~\ref{prob:projpro1}.}

\item \textbf{Linear feasibility.} Let $\sS=\{\bx\in\real^n: \bA\bx=\bb\}$ with full row rank $\bA$. Show that $\mathcalP_{\sS}(\bx) = \bx-\bA^\top(\bA\bA^\top)^{-1}(\bA\bx-\bb)$.

\item \label{prob:ab_diverg} \textbf{AB divergence \citep{amari2000methods}.} Let the \textit{$\alpha$-$\beta$ (AB) divergence} be given as follows:
\begin{align*}
d_{\alpha,\beta}(x,y)&=\begin{cases}
	-\frac{1}{\alpha\beta}(x^\alpha y^\beta-\frac{\alpha}{\alpha+\beta}x^{\alpha+\beta}-\frac{\beta}{\alpha+\beta}y^{\alpha+\beta}),&\alpha,\beta,\alpha+\beta\neq 0;\\
	\frac{1}{\alpha^2}(x^\alpha\ln(\frac{x^\alpha}{y^\alpha})-x^\alpha+y^\alpha),&\alpha\neq 0,\beta=0;\\
	\frac{1}{\alpha^2}(\ln(\frac{y^\alpha}{x^\alpha})+(\frac{y^\alpha}{x^\alpha})^{-1}-1),&\alpha=-\beta\neq 0;\\
	\frac{1}{\beta^2}(y^\beta\ln(\frac{y^\beta}{x^\beta})-y^\beta+x^\beta),&\alpha=0,\beta\neq 0;\\
	\frac{1}{2}(\ln (x)-\ln (y))^2,&\alpha=0,\beta=0.
\end{cases}
\end{align*}
When $\alpha+\beta=1$, it is called the \textit{$\alpha$-divergence}. Discuss under what conditions it reduces to the $\beta$-divergence.
Show that $d_{\alpha,\beta}(x,y)\geq 0$ and the equality holds if and only if $x=y$.

\item \label{prob:ortho_nmf} \textbf{Orthogonal and projective NMF, and clustering.} Consider the same setting as the  orthogonal or projective matrix factorization in Problem~\ref{prob:ortho_mf},  and suppose further that $\bA,\bW$, and $\bZ$ are nonnegative. 
Show that there is only one positive entry in each column of $\bZ$ in this case. 
How  is this related to the K-means problem?
When each column of $\bA$ represents a data point, discuss the interpretation of $z_{kn}$ (the $(k,n)$-th entry of $\bZ$) as the importance of the $k$-th cluster to the $n$-th data point in the projective NMF case; that is, each data point can belong to several clusters.


\item \label{prob:nonn_lin_1}  Suppose $\bA \geq \bzero_{n}$ is nonnegative~\footnote{$\bA \geq \bzero_{n}$ indicates that  $\bA$ is an $n\times n$ nonnegative matrix, and ${\bA}\geq \bzero_{m,n}$ indicates  that ${\bA}$ is an $m\times n$ nonnegative matrix.
Note that $\bC=\abs{\bA}$ is defined as the matrix obtained by setting each entry of $\bC$ as the absolute value of $\bA\in\real^{n\times n}$.
} and has a positive row. If $\abs{\bA\bx}=\bA\abs{\bx}$, where $\bx\in\complex^n$, then there exists a real $\theta\in[0, 2\pi)$ such that $e^{-i\theta}\bx = \abs{\bx}$, where $e^{-i\theta}\bx$ indicates a complex vector with $j$-th element being $e^{-i\theta} x_j$. \textit{Hint: Use triangle inequality $\abs{\bA\bx}\leq \abs{\bA}\abs{\bx}$, and examine the positive row. In the polar coordinate notation, $e^{i\theta}=\cos\theta+i\sin\theta$ and $\abs{e^{i\theta}x}=\abs{x} \implies \normtwo{e^{i\theta}\bx}=\normtwo{\bx}$.}

\item \label{prob:nnga_algebra} \textbf{Nonnegative algebra.} A bounty of results can be harvested from nonnegative conditions. We investigate several of them in this problem. Given square matrices $\bA,\bB,\bC,\bD\in\real^{n\times n}$, show that 
\begin{itemize}
\item \textbf{Triangle inequality.} $\abs{\bA\bB}\leq \abs{\bA}\abs{\bB}$.
\item \textbf{Nonexpansiveness.} $\abs{\bA^k}\leq \abs{\bA}^k$, for all $k=\{1,2,\ldots\}$.
\item \textbf{Equal norm.} $\normf{\bA}=\normf{\abs{\bA}}$.
\item If $\abs{\bB}\geq \abs{\bA}$, then $\normf{\bB}\geq \normf{\bA}$.
\item If $ \bB \geq \bA\geq \bzero$ and $ \bD \geq \bC\geq \bzero$, then $ \bB\bD \geq \bA\bC\geq \bzero$.
\item If $ \bB \geq \bA\geq \bzero$, then $\bB^k\geq \bA^k\geq \bzero$, for all $k=\{1,2,\ldots\}$,
\end{itemize}
where $\bB \geq \bA$ indicates that $\bB-\bA$ is a nonnegative matrix.
Given rectangular matrices $\bA,\bB\in\real^{m\times n}$, show that 
\begin{itemize}
\item $\abs{\bA+\bB}\leq \abs{\bA}+\abs{\bB}$.
\end{itemize}

\item$^\ast$ \label{prob:nnga_algebra2} \textbf{Eigenvalue interlacing in nonnegative matrices.}  Let   $\bB - \abs{\bA}\in\real_+^{n\times n}$ be nonnegative. Show  that 
$$
\rho(\bA) \leq \rho(\abs{\bA}) \leq \rho(\bB),
$$
where $\rho(\bX)$ represents the spectral radius of  $\bX$ (Definition~\ref{definition:spectrum}).
\textit{Hint: Use Problem~\ref{prob:nnga_algebra} and Gelfand formula; show that $\normf{\bA^k}\leq \normf{\abs{\bA}^k} \leq \normf{\bB^k}$.}

\item \label{prob:nnga_algebra3} Use Problem~\ref{prob:nnga_algebra2} to show that $\rho(\bB)\geq \rho(\bA)$ if $\bB\geq \bA\geq \bzero$.

\item Let $\bA\in\real_+^{n\times n}$ be nonnegative,  let $\bB=\bA[1:k,1:k], \,\forall k\in\{1,2,\ldots,n\}$  (i.e., any leading principal submatrix of $\bA$, Definition~\ref{definition:leading-principle-minors}), and let $\bC\in\real^{k\times k}, \,\forall k\in\{1,2,\ldots,n\}$ be any principal submatrix of $\bA$ (Definition~\ref{definition:principle-minors}). Show that 
\begin{itemize}
\item $\rho(\scriptsize\begin{bmatrix}
\bB & \bzero \\
\bzero & \bzero 
\end{bmatrix}
\normalsize
) 
\leq 
\rho (\bA)
$
$\implies\rho(\bB)\leq \rho(\bA)$.
\item Use the first result to prove $\rho(\bC)\leq \rho(\bA)$. \textit{Hint: Use permutation transformations.}
\item $\mathopmax{i=1,2,\ldots,n}a_{ii} \leq \rho(\bA)$.
\end{itemize}

\item$^\ast$  \label{prob:nonn_lin_12} Let $\bA\in\real_+^{n\times n}$ be nonnegative. Show that 
$$
\begin{aligned}
\text{Row sum: }\gap  \mathop{\min}_{1\leq i \leq n} \sum_{j=1}^{n} a_{ij} 
&\leq \rho(\bA)
\leq 
\mathop{\max}_{1\leq i \leq n} \sum_{j=1}^{n} a_{ij}; \\
\text{Column sum: }\gap \mathop{\min}_{1\leq j \leq n} \sum_{i=1}^{n} a_{ij} 
&\leq \rho(\bA)
\leq 
\mathop{\max}_{1\leq j \leq n} \sum_{i=1}^{n} a_{ij}. \\
\end{aligned}
$$

\end{problemset}

\newpage
\chapter{Biconjugate Decomposition}
\section{Existence of the Biconjugate Decomposition}
The concept of \textit{biconjugate decomposition} was introduced by \citet{chu1995rank}.
However, its underlying principle---the rank-diminishing operator---on the other hand, has roots in the work of  \citet{egervary1960rank, householder1964theory, stewart1973conjugate}.
A variety of matrix decomposition methods can be unified through  this biconjugate decomposition. 
In Section~\ref{section:conn_bi}, biconjugate decomposition is put into perspective by providing connections with standard decompositional methods, namely LDU, Cholesky, QR, and SVD decompositions.
The existence of the biconjugate decomposition is supported by the rank-one reduction theorem, as presented below.

\begin{theorem}[(Wedderburn's) rank-one reduction\index{Rank-one reduction}]\label{theorem:rank-1-reduction}
Let $\bA\in \real^{m\times n}$  be an $m\times n$ matrix of rank $r$, and let  $\bx\in \real^n$ and $\by\in \real^m$ be a pair of vectors such that $w=\by^\top\bA\bx \neq 0$. Then the matrix
\begin{equation}\label{equation:rank_reduc_operat}
\bB=\bA-w^{-1}\bA\bx\by^\top\bA
\end{equation}
has  rank  $r-1$, which is exactly one less than the rank of $\bA$, i.e., $\rank(\bB)=\rank(\bA)-1$.
\end{theorem}
A generalization of the rank-one reduction is discussed in Problem~\ref{problem:rk_reduc}.
\begin{proof}[of Theorem~\ref{theorem:rank-1-reduction}]
To prove the theorem, it suffices to show that the dimension of the null space of $\bB$ is one greater than that of $\bA$, indicating that $\bB$ has a rank exactly one less than the rank of $\bA$.

For any vector $\bn \in \nspace(\bA)$, i.e., $\bA\bn=\bzero$, we  have $\bB\bn =\bA\bn-w^{-1}\bA\bx\by^\top\bA\bn=\bzero $, implying that $\nspace(\bA)\subseteq \nspace(\bB)$.

Now, consider any vector $\bmm \in \nspace(\bB)$, i.e., $\bB\bmm=\bzero$. We have $\bB\bmm = \bA\bmm-w^{-1}\bA\bx\by^\top\bA\bmm =\bzero$.

Let $k= w^{-1}\by^\top\bA\bmm$, which is a scalar. 
Therefore, $\bB\bmm=\bA(\bmm - k\bx)=\bzero$, i.e., for any vector $\bn\in \nspace(\bA)$, we could find a vector $\bmm\in \nspace(\bB)$ such that $\bn=(\bmm - k\bx)\in \nspace(\bA)$. 
Note that $\bA\bx\neq \bzero$ based on the definition of $w$. Thus, the null space of $\bB$ is therefore obtained from the null space of $\bA$ by adding $\bx$ to its basis, which will increase the order of the space by one. 
Consequently, the dimension of $\nspace(\bA)$ is smaller than the dimension of $\nspace(\bB)$ by one, which completes the proof.
\end{proof}

The converse of the above theorem is also true, as stated  in the following corollary.
\begin{corollary}[Rank-one reduction, \citep{egervary1960rank}]\label{corollary:rk_redu_one}
Let  $\bA\in \real^{m\times n}$ be any $m\times n$ matrix, and let $\bu\in\real^m$ and $\bv\in\real^n$ be two vectors.
Then, the rank of the matrix $\bB = \bA-\sigma^{-1}\bu\bv^\top$ is less than that of $\bA$ if and only if there exist vectors $\bx\in\real^n$ and $\by\in\real^m$ such that $\bu=\bA\bx, \bv=\bA^\top\by$, and $\sigma=\by^\top\bA\bx\neq 0$. In this case, it holds that $\rank(\bB)=\rank(\bA)-1$.
\end{corollary}

More generally, the rank-one reduction can be extended to reductions involving matrices of higher rank.
\begin{corollary}[Rank-$k$ reduction, \citep{cline1979rank}]\label{corollary:rk_K_redu_one}
Let $\bA\in \real^{m\times n}$ be any $m\times n$ matrix. Let further $\bP\in\real^{m\times k}$,  $\bU\in\real^{k\times k}$ be nonsingular, and $\bQ\in\real^{n\times k}$. Then,
$$
\rank(\bA-\bP\bU^{-1}\bQ^\top) = \rank(\bA)-\rank(\bP\bU^{-1}\bQ^\top)
$$
if and  only if there exist $\bX\in\real^{n\times k}$ and $\bY\in\real^{m\times k}$ such that 
$$
\bP=\bA\bX, \gap \bQ = \bA^\top\bY, \gap \text{and} \gap\bU=\bY^\top\bA\bX.
$$
\end{corollary}

Suppose a matrix $\bA\in \real^{m\times n}$ has rank $r$. We can define a rank-reducing process to generate a sequence of matrices $\{\bA_k\}$, known as \textit{Wedderburn matrices} or \textit{Wedderburn sequence}:
\begin{equation}\label{equation:rank_reduc_proces}
\bA_1 = \bA \gap \text{and}\gap \bA_{k+1} = \bA_k-w_k^{-1}\bA_k\bx_k\by_k^\top\bA_k, \quad \forall k\in\{1,2,\ldots, r\},
\end{equation}
where $\bx_k \in \real^n$ and $\by_k\in \real^m$ are any vectors satisfying $w_k = \by_k^\top\bA_k\bx_k \neq 0$.
The operator in Equation~\eqref{equation:rank_reduc_operat} is known as a \textit{rank-diminishing operator}, and the process described by Equation~\eqref{equation:rank_reduc_proces} is referred to the \textit{rank-reducing process}.
And the sets $\{\bx_1,\bx_2,\ldots, \bx_r\}$ and $\{\by_1,\by_2,\ldots, \by_r\}$ are called the \textit{vectors associated with the rank-reducing process}. Alternatively, if we let $\bX=[\bx_1,\bx_2,\ldots, \bx_r]$ and $\bY=[\by_1,\by_2,\ldots, \by_r]$, then the pair ($\bX,\bY$) is said to  \textit{effect a rank-reducing process} for $\bA$. 

The sequence will terminate after $r$ steps since the rank of $\bA_k$ decreases by exactly one at each step. 
The sequence can be written out as follows:
$$
\begin{aligned}
	\bA_1 &= \bA, \\
\bA_1-\bA_{2} &= w_1^{-1}\bA_1\bx_1\by_1^\top\bA_1,\\
\bA_2-\bA_3 &=w_2^{-1}\bA_2\bx_2\by_2^\top\bA_2, \\
\bA_3-\bA_4 &=w_3^{-1}\bA_3\bx_3\by_3^\top\bA_3, \\
\vdots &=\vdots\\
\bA_{r-1}-\bA_{r} &=w_{r-1}^{-1}\bA_{r-1}\bx_{r-1}\by_{r-1}^\top\bA_{r-1}, \\
\bA_r-\bzero &=w_{r}^{-1}\bA_{r}\bx_{r}\by_{r}^\top\bA_{r}.
\end{aligned}
$$
By summing up the sequence, we obtain
\begin{equation}\label{equation:bi_rreduc_equa}
\begin{aligned}
\textbf{(Rank-reducing)}: &\quad (\bA_1-\bA_2)+(\bA_2-\bA_3)+\ldots+(\bA_{r-1}-\bA_{r})+(\bA_r-\bzero ) \\
&=\bA_1=\bA= \sum_{i=1}^{r}w_i^{-1}\bA_i\bx_i\by_i^\top\bA_i.
\end{aligned}
\end{equation}
Therefore, we can derive the following decomposition directly from this rank-reducing process.
\begin{theoremHigh}[Biconjugate decomposition: form 1\index{Decomposition: Biconjugate}]\label{theorem:biconjugate-form1}
Let $\bA\in\real^{m\times n}$ any matrix of rank $r$.
This equality~\eqref{equation:bi_rreduc_equa},  derived from the rank-reducing process, implies the following matrix decomposition
$$
\bA = \bPhi \bOmega^{-1} \bPsi^\top,
$$
where $\bOmega=\diag(w_1, w_2, \ldots, w_r)$, $\bPhi=[\bphi_1,\bphi_2, \ldots, \bphi_r]\in \real^{m\times r}$, and $\bPsi=[\bpsi_1, \bpsi_2, \ldots, \bpsi_r]$ with 
$$
\bphi_k = \bA_k\bx_k 
\qquad \text{and}\qquad 
\bpsi_k=\bA_k^\top \by_k, 
\qquad \forall k\in\{1,2,\ldots,r\}.
$$
\end{theoremHigh}
Thus, different choices of the vectors $\bx_k$ and $\by_k$ will result in different biconjugate factorizations, making this factorization  quite general and  versatile. In the following sections, we will explore its connections to several well-known matrix factorizations.

\index{Wedderburn sequence}

\begin{remark}\label{remark:weded_space}
Regarding the vectors $\bx_k$ and $\by_k$ in the Wedderburn sequence, the following orthogonality properties hold:
$$
\begin{aligned}
\bx_k &\in \nspace(\bA_{k+1})      \quad \implies \quad   \bx_k\bot \cspace(\bA_{k+1}^\top), \\
\by_k &\in \nspace(\bA_{k+1}^\top) \quad \implies \quad   \by_k\bot \cspace(\bA_{k+1}).
\end{aligned}
$$
To verify this, observe that:
$$
\begin{aligned}
\bA_{k+1} \bx_k 
&=  (\bA_k-w_k^{-1}\bA_k\bx_k\by_k^\top\bA_k) \bx_k\\
&=  \bA_k(\bx_k - \frac{\by_k^\top\bA_k\bx_k}{\by_k^\top\bA_k\bx_k}\bx_k)=\bzero,
\end{aligned}
$$
since $w_k=\by_k^\top\bA_k\bx_k \neq 0$. Hence, $\bx_k \in \nspace(\bA_{k+1}) $.
Similarly, it can be shown that $ \bA_{k+1}^\top\by_k=\bzero$.
\end{remark}

\begin{lemma}[General term formula of Wedderburn sequence: V1]\label{lemma:wedderburn-sequence-general}
Let $\bA\in\real^{m\times n}$ be any matrix of rank $r$, and let $\bA_1=\bA$.
For each matrix in the sequence defined by $\bA_{k+1} = \bA_k -w_k^{-1} \bA_k\bx_k\by_k^\top \bA_k$ ($k\in\{1,2,\ldots,r-1\}$), the matrix $\bA_{k+1} $ can be expressed as 
$$
\bA_{k+1} = \bA - \sum_{i=1}^{k}w_{i}^{-1} \bA\bu_i \bv_i^\top \bA,
\qquad \forall k\in\{1,2,\ldots,r-1\},
$$
where 
$$
\bu_k=\bx_k -\sum_{i=1}^{k-1}\frac{\bv_i^\top \bA\bx_k}{w_i}\bu_i
\qquad \text{and}\qquad 
\bv_k=\by_k -\sum_{i=1}^{k-1}\frac{\by_k^\top \bA\bu_i}{w_i}\bv_i.
$$
Let $\bX=[\bx_1,\bx_2,\ldots,\bx_r]$, $\bY=[\by_1,\by_2,\ldots,\by_r]$, $\bU=[\bu_1,\bu_2,\ldots,\bu_r]$, and $\bV=[\bv_1,\bv_2,\ldots,\bv_r]$ be the column partitions for each set of vectors.
Then, the rank-reducing process can be viewed as transforming the  matrix pair $(\bX,\bY)$ into the pair $(\bU,\bV)$.~\footnote{It can be shown that if $\bA$ is symmetric and $\bX=\bY$, then $\bU=\bV$.}
\end{lemma}

The proof of this lemma is deferred to Section~\ref{section:wedderburn-general-term}. We notice that $w_i =\by_i^\top\bA_i\bx_i$ in the general term formula is related to $\bA_i$, which means the expression is not the true general term formula. 
We will later reformulate $w_i$ in terms of the original matrix $\bA$ rather than $\bA_i$.
From the general term formula of the  Wedderburn sequence, we have: 
$$
\begin{aligned}
\bA_{k+1} &= \bA - \sum_{i=1}^{k}w_{i}^{-1} \bA\bu_i \bv_i^\top \bA, \\
\bA_{k} &= \bA - \sum_{i=1}^{k-1}w_{i}^{-1} \bA\bu_i \bv_i^\top \bA.
\end{aligned}
$$
Subtracting these two equations yields: $\bA_{k+1} - \bA_{k} = -w_{k}^{-1} \bA\bu_k \bv_k^\top \bA$. 
Since  the sequence is defined as $\bA_{k+1} = \bA_k -w_k^{-1} \bA_k\bx_k\by_k^\top \bA_k$, we can deduce that $w_{k}^{-1} \bA\bu_k \bv_k^\top \bA = w_k^{-1} \bA_k\bx_k\by_k^\top \bA_k$. 
Consequently, it follows that
\begin{equation}\label{equation:wedderburn-au-akxk}
\begin{aligned}
\bA\bu_k &=\bA_k\bx_k,\\
\bv_k^\top \bA&=\by_k^\top \bA_k.
\end{aligned}
\end{equation}
Let $z_{k,i} = \frac{\bv_i^\top \bA\bx_k}{w_i}$, which is a scalar. Referring to the definitions of $\bu_k$ and $\bv_k$ in the  lemma above, we can express them explicitly as follows:
\begin{itemize}
\item $\bu_1=\bx_1$; 

\item $\bu_2 = \bx_2 - z_{2,1}\bu_1 \implies $ $\bx_2$ is a linear combination of $\bu_1$ and $\bu_2$;

\item $\bu_3 = \bx_3 - z_{3,1}\bu_1-z_{3,2}\bu_2 \implies$ $\bx_3$ is a linear combination of $\bu_1,\bu_2$, and $\bu_3$;

\item $\ldots$.
\end{itemize}
Each coefficient $z_{k,i}$ ($i< k$) encodes the component of $\bx_k$ in that of $\bu_i$.
This process bears resemblance to the Gram--Schmidt process (Section~\ref{section:gram-schmidt-process}).
However, in this process,  we do not perform an \textbf{orthogonal projection} of $\bx_2$ onto $\bx_1$ to find the vector component of $\bx_2$ along $\bx_1$, as we would do in an orthogonal projection (Section~\ref{section:qr-gram-compute}). 
Instead, the vector of $\bx_2$ along $\bx_1$ is now defined by $z_{2,1}$ (i.e., an \textbf{oblique projection}; see Section~\ref{section:qr-gram-compute}). 
This process is illustrated in Figure~\ref{fig:projection-wedd}. 

In Figure~\ref{fig:project-line-wedd}, $\bu_2$ is \textbf{not} perpendicular to $\bu_1$ (in the Gram--Schmidt process, $\bu_2$ would be perpendicular to $\bu_1$ via orthogonal projections).
Nevertheless, $\bu_2$ does not lie on the same line as $\bu_1$, so $\{\bu_1, \bu_2\}$  can still  span a $\real^2$ subspace. Similarly, in Figure~\ref{fig:project-space-wedd}, $\bu_3= \bx_3 - z_{3,1}\bu_1-z_{3,2}\bu_2$ does not lie in the subspace spanned by $\{\bu_1, \bu_2\}$, allowing $\{\bu_1, \bu_2, \bu_3\}$  to span a $\real^3$ subspace.
\index{Gram--Schmidt}

A moment of reflexion would reveal that the span of $\{\bx_2, \bx_1\}$ is the same as the span of $\{\bu_2, \bu_1\}$. This equivalence extends to the $\bv_i$ vectors and $\by_i$ vectors as well.  
We can express this property as follows:
\begin{equation}\label{equation:wedderburn-span-same}
\left\{
\begin{aligned}
\spn\{\bx_1, \bx_2, \ldots, \bx_j\} &= \spn\{\bu_1, \bu_2, \ldots, \bu_j\},\gap \forall j\in\{1,2,\ldots, r\};\\
\spn\{\by_1, \by_2, \ldots, \by_j\} &= \spn\{\bv_1, \bv_2, \ldots, \bv_j\},\gap \forall j\in\{1,2,\ldots, r\}.\\
\end{aligned}
\right.
\end{equation}

\begin{figure}[h]
\centering  
\vspace{-0.35cm} 
\subfigtopskip=2pt 
\subfigbottomskip=2pt 
\subfigcapskip=-5pt 
\subfigure[``Project" onto a line.]{\label{fig:project-line-wedd}
\includegraphics[width=0.47\linewidth]{./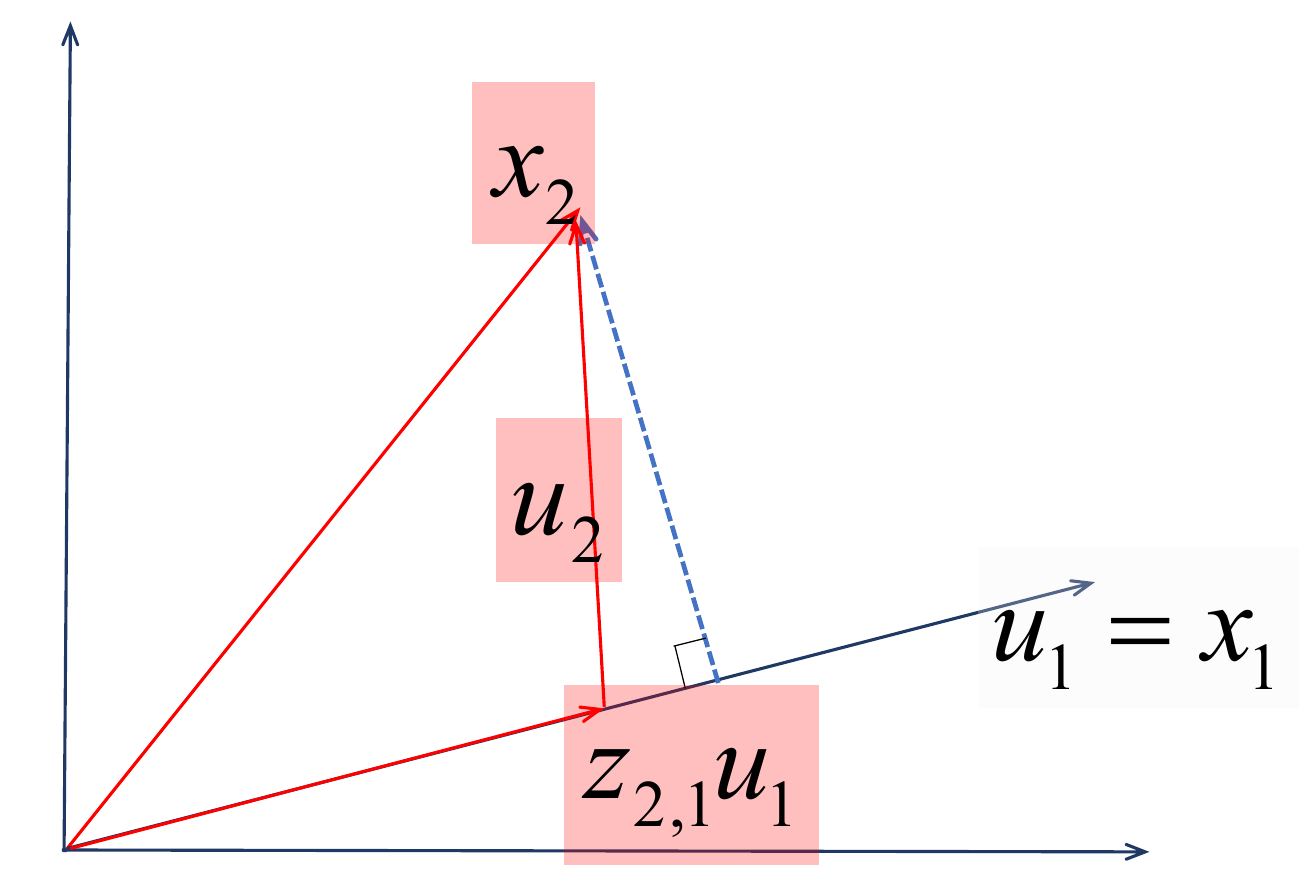}}
\quad 
\subfigure[``Project" onto a space.]{\label{fig:project-space-wedd}
\includegraphics[width=0.47\linewidth]{./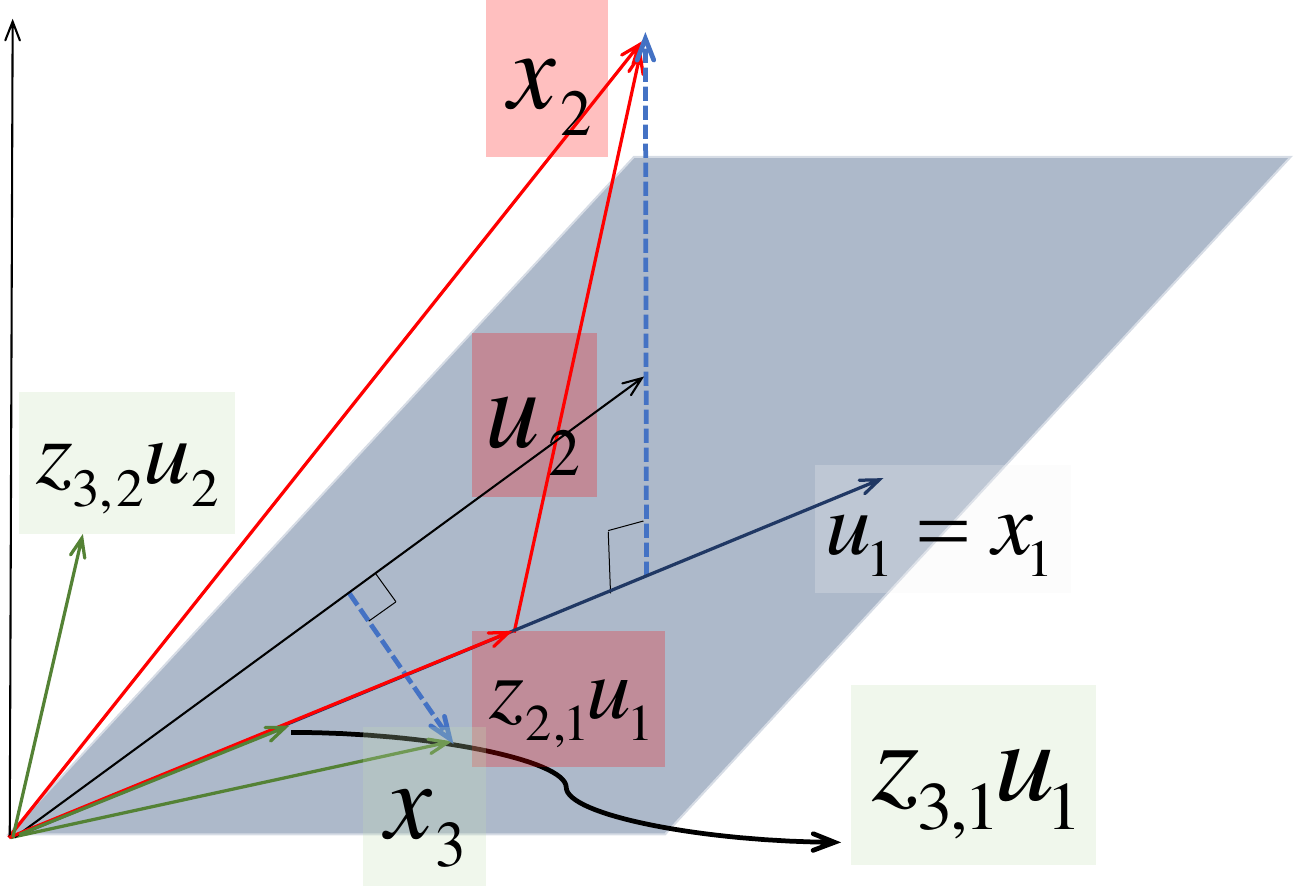}}
\caption{``Project" a vector onto a line and onto a space. Compare with the Gram--Schmidt process in Figure~\ref{fig:gram-schmidt-12}.}
\label{fig:projection-wedd}
\end{figure}

Furthermore, from the rank-reducing property of the Wedderburn sequence, we have the following nested inclusions:
$$
\left\{
\begin{aligned}
	\cspace(\bA_1) &\supset \cspace(\bA_2) \supset \cspace(\bA_3) \supset \ldots;\\
	\nspace(\bA_1^\top) &\subset \nspace(\bA_2^\top) \subset \nspace(\bA_3^\top) \subset \ldots.
\end{aligned}
\right.
$$
Since $\by_k \in \nspace(\bA_{k+1}^\top)$, it then follows that $\by_j \in \nspace(\bA_{k+1}^\top)$ for all $j<k+1$, i.e., $\bA_{k+1}^\top \by_j=\bzero$ for all $j<k+1$. 
This also holds true for $\bx_{k+1}^\top \bA_{k+1}^\top \by_j=0$ for all $j<k+1$. From Equation~\eqref{equation:wedderburn-au-akxk}, we also have $\bu_{k+1}^\top \bA^\top \by_j=0$ for all $j<k+1$. Following Equation~\eqref{equation:wedderburn-span-same}, we obtain 
\begin{equation}
\bx_{k+1}^\top \bA_{k+1}^\top \by_j=0
\,\,\stackrel{\eqref{equation:wedderburn-au-akxk}}{\implies}\,\,
\bu_{k+1}^\top \bA^\top \by_j=0
\,\,\stackrel{\eqref{equation:wedderburn-span-same}}{\implies}\,\,
\bv_j^\top\bA\bu_{k+1}=0\quad  \text{ for all } j<k+1.
\end{equation}
Similarly, we can prove 
\begin{equation}
\bv_{k+1}^\top\bA\bu_{j}=0\quad \text{ for all } j<k+1.
\end{equation}
Moreover, since $w_k = \by_k^\top \bA_k\bx_k$, according to Equation~\eqref{equation:wedderburn-au-akxk}, we can express  $w_k$ as:
$$
\begin{aligned}
w_k &= \by_k^\top \bA_k\bx_k
=\bv_k^\top \bA\bx_k \\
&=\bv_k^\top\bA (\bu_k +\sum_{i=1}^{k-1}\frac{\bv_i^\top \bA\bx_k}{w_i}\bu_i) \qquad &(\text{by the definition of }\bu_k \text{ in Lemma~\ref{lemma:wedderburn-sequence-general}})\\
&=\bv_k^\top\bA \bu_k,\qquad &(\text{by } \bv_{k}^\top\bA\bu_{j}=0 \text{ for all } j<k)
\end{aligned}
$$
which can be utilized to substitute the value of $w_k$ in Lemma~\ref{lemma:wedderburn-sequence-general}. We then have the full version of the general term formula of the Wedderburn sequence. 
In this form, the formula no longer depends on matrices $\bA_k$ (in the form of $w_k$'s): 
\begin{equation}\label{equation:uk-vk-to-mimic-gram-process}
\bu_k=\bx_k -\sum_{i=1}^{k-1}\frac{\bv_i^\top \bA\bx_k}{\textcolor{mylightbluetext}{\bv_i^\top\bA\bu_i}}\bu_i
\qquad \text{and}\qquad 
\bv_k=\by_k -\sum_{i=1}^{k-1}\frac{\by_k^\top \bA\bu_i}{\textcolor{mylightbluetext}{\bv_i^\top\bA\bu_i}}\bv_i.
\end{equation}

\index{Gram--Schmidt}
\paragraph{Gram--Schmidt process from Wedderburn sequence.}
Suppose the matrices $\bX=[\bx_1, 
\bx_2, \ldots, \bx_r]\in \real^{n\times r}$ and $\bY=[\by_1, \by_2, \ldots, \by_r]\in \real^{n\times r}$ effect a rank-reducing process for $\bA\in\real^{n\times n}$. 
If $\bA=\bI\in\real^{n\times n}$ is the identity matrix and $(\bX=\bY)$ are identical, containing the vectors for which an orthogonal basis is desired (i.e., we aim to obtain an orthogonal basis spanning the same column space of $\bX=\bY$), then  the result of the rank-reducing process, $(\bU = \bV)$, gives the resultant orthogonal basis (but not an orthonormal basis as that in the Gram--Schmidt process):
\begin{itemize}
\item To see this, we follow the computation of the Gram--Schmidt process as given in Equation~\eqref{equation:qr-gsp-equation}, where we replace the data matrix with $\bX=\bY\in\real^{m\times n}$ (assuming $\bX$ has full rank for simplicity), and we use the matrix $\bQ=[\bq_1,\bq_2, \ldots, \bq_n]$ to denote the orthonormal basis.
Then, for any $k\in\{1,2,\ldots,n\}$, the Gram--Schmidt process admits
\begin{equation}\label{equation:bicon_cgs}
	\text{Gram--Schmidt process}=\left\{
	\begin{aligned}
		\bx_k^\perp&= \left(\bI-\sum_{i=1}^{k-1}\bq_i\bq_i^\top\right)\bx_k;\\
		\bq_k &= \frac{\bx_k^\perp}{\norm{\bx_k^\perp}}.
	\end{aligned}
	\right.
\end{equation}
\item Considering the rank-reducing process with $\bA=\bI$, $\bX=\bY$. Since $\bA$ is symmetric and $\bX=\bY$, we obtain $\bU=\bV$. We can therefore focus on the analysis of $\bU$. For any $k\in\{1,2,\ldots,n\}$, the rank-reducing process yields 
\begin{equation}\label{equation:bicon_wedd}
	\text{Rank-reducing process}=\left\{
	\begin{aligned}
		\bu_k&=\bx_k -\sum_{i=1}^{k-1}\frac{\bv_i^\top \bA\bx_k}{\bv_i^\top\bA\bu_i}\bu_i
		=\left(\bI -\sum_{i=1}^{k-1}\frac{\bu_i\bu_i^\top }{\bu_i^\top\bu_i}\bu_i\right)\bx_k ;\\
		\widetilde{\bu}_k &= \frac{\bu_k}{\norm{\bu_k}}.
	\end{aligned}
	\right.
\end{equation}
\end{itemize}
Therefore, by comparing Equations~\eqref{equation:bicon_cgs} and \eqref{equation:bicon_wedd}, we can find the equivalence between $\bq_k$ and $\widetilde{\bu}_k$ for $k\in\{1,2,\ldots,n\}$. Thus, the Wedderburn sequence effects a Gram--Schmidt process when $\bX=\bY$ is the data matrix and $\bA=\bI$.


\paragraph{``Projection" notations.}
The expressions for $\bu_k$ and $\bv_k$ in Equation~\eqref{equation:uk-vk-to-mimic-gram-process} closely resemble the projection onto the perpendicular space in the Gram--Schmidt process, as shown in Equation~\eqref{equation:gram-schdt-eq2}. We then define the bilinear form \fbox{$<\bx, \by>=\by^\top\bA\bx$} to explicitly mimic the form of projection in Equation~\eqref{equation:gram-schdt-eq2}.

We consolidate the results established so far into the following lemma, which provides a concise overview of what we have been working on. These results will be extensively utilized  in the sequel.
\begin{lemma}[Properties of Wedderburn sequence]\label{lemma:wedderburn-sequence-general-v2}
Let $\bA\in\real^{m\times n}$ be any matrix of rank $r$, and let $\bA_1=\bA$.
For each matrix in the sequence defined by $\bA_{k+1} = \bA_k -w_k^{-1} \bA_k\bx_k\by_k^\top \bA_k$ ($k\in\{1,2,\ldots,r-1\}$), the matrix $\bA_{k+1} $ can be expressed as 
$$
\bA_{k+1} = \bA - \sum_{i=1}^{k}w_{i}^{-1} \bA\bu_i \bv_i^\top \bA,
$$
where 
\begin{equation}\label{equation:properties-of-wedderburn-ukvk}
\bu_k=\bx_k -\sum_{i=1}^{k-1}\frac{\textcolor{mylightbluetext}{<\bx_k, \bv_i>}}{\textcolor{mylightbluetext}{<\bu_i,\bv_i>}}\bu_i
\qquad \text{and}\qquad 
\bv_k=\by_k -\sum_{i=1}^{k-1}\frac{\textcolor{mylightbluetext}{<\bu_i,\by_k>}}{\textcolor{mylightbluetext}{<\bu_i,\bv_i>}}\bv_i.
\end{equation}
Furthermore, we can observe the following properties:
\begin{equation}\label{equation:wedderburn-au-akxk-2}
\begin{aligned}
	\bA\bu_k &=\bA_k\bx_k;\\
	\bv_k^\top \bA&=\by_k^\top \bA_k;
\end{aligned}
\end{equation}
\begin{equation}\label{equation:wedderburn-au-akxk-33}
<\bu_k, \bv_j>=<\bu_j, \bv_k>=0 \text{ for all } j<k;
\end{equation}
\begin{equation}\label{equation:wk-by-ukvk}
w_k = \by_k^\top\bA_k\bx_k = <\bu_k, \bv_k>.
\end{equation}

\end{lemma}

By substituting Equation~\eqref{equation:wedderburn-au-akxk-2} into Form 1 of the biconjugate decomposition and using Equation~\eqref{equation:wk-by-ukvk}, which implies $w_k = \bv_k^\top\bA\bu_k$, we obtain the Form 2 and Form 3 of this decomposition:
\begin{theoremHigh}[Biconjugate decomposition: Form 2 and Form 3]\label{theorem:biconjugate-form2}
Let $\bA\in\real^{m\times n}$ be any matrix of rank $r$.
This equality~\eqref{equation:bi_rreduc_equa}, which results from the rank-reducing process, implies the following matrix decomposition:
\begin{equation}\label{equation:bicon_form2_1}
\bA = \bA\bU_r \bOmega_r^{-1} \bV_r^\top\bA,
\end{equation}
where $\bOmega_r=\diag(w_1, w_2, \ldots, w_r)$, $\bU_r=[\bu_1,\bu_2, \ldots, \bu_r]\in \real^{n\times r}$, and  $\bV_r=[\bv_1, \bv_2,$ $\ldots,$ $\bv_r] \in \real^{m\times r}$ with  
\begin{equation}\label{equation:properties-of-wedderburn-ukvk2-inform2}
\bu_k=\bx_k -\sum_{i=1}^{k-1}\frac{<\bx_k, \bv_i>}{<\bu_i,\bv_i>}\bu_i
\qquad \text{and}\qquad 
\bv_k=\by_k -\sum_{i=1}^{k-1}\frac{<\bu_i,\by_k>}{<\bu_i,\bv_i>}\bv_i.
\end{equation}
Additionally, for any $\gamma \leq r$, the following decomposition holds:
\begin{equation}\label{equation:wedderburn-vgamma-ugamma}
\bV_\gamma^\top \bA \bU_\gamma = \bOmega_\gamma,
\end{equation}
where $\bOmega_\gamma=\diag(w_1, w_2, \ldots, w_\gamma)$, $\bU_\gamma=[\bu_1,\bu_2, \ldots, \bu_\gamma]\in \real^{n\times \gamma}$, and $\bV_\gamma=[\bv_1, \bv_2,$ $\ldots,$ $\bv_\gamma]\in \real^{m\times \gamma}$. Note the difference between the subscripts $r$ and $\gamma$  employed here, where $\gamma \leq r$.
\end{theoremHigh}
Note that Equation~\eqref{equation:bicon_form2_1} is derived from  \eqref{equation:wedderburn-au-akxk-2}, 
and Equation~\eqref{equation:wedderburn-vgamma-ugamma}  is a consequence of \eqref{equation:wedderburn-au-akxk-33}. 
Importantly, these two forms of the biconjugate decomposition no longer depend on the intermediate Wedderburn matrices  $\{\bA_k\}$. 

\paragraph{Notation.} 
In the following discussion, we will use subscripts to indicate the dimensions of matrices to avoid ambiguity. For example, the use of $r$ and $\gamma$ in the above theorem highlights the size of the constructed matrices.

\section{Properties of the Biconjugate Decomposition}
The following corollary  establishes a connection between matrices $\bU_\gamma$ and $\bX_\gamma$ through unique unit upper triangular matrices derived from the Wedderburn sequence.

\begin{corollary}[Connection of $\bU_\gamma$ and $\bX_\gamma$]\label{corollary:biconjugate-connection-u-x}
Let $\bA\in\real^{m\times n}$ be any matrix of rank $r\geq \gamma$.
If $(\bX_\gamma, \bY_\gamma) \in \real^{n\times \gamma}\times \real^{m\times \gamma}$ effects a rank-reducing process for $\bA$, then there exist unique unit upper triangular matrices $\bR_\gamma^{(x)}\in\real^{\gamma\times \gamma}$ and $\bR_\gamma^{(y)}\in\real^{\gamma\times \gamma}$ such that 
$$
\bX_\gamma = \bU_\gamma \bR_\gamma^{(x)}
\qquad \text{and} \qquad 
\bY_\gamma=\bV_\gamma\bR_\gamma^{(y)},
$$
where $\bU_\gamma$ and $\bV_\gamma$ are matrices whose columns are derived from the Wedderburn sequence, as described in Equation~\eqref{equation:wedderburn-vgamma-ugamma}.
\end{corollary}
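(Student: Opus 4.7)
The plan is to read off the matrices $\bR_\gamma^{(x)}$ and $\bR_\gamma^{(y)}$ directly from the recursive definitions of $\bu_k$ and $\bv_k$ given in Equation~\eqref{equation:properties-of-wedderburn-ukvk2-inform2}, and then argue uniqueness from the full column rank of $\bU_\gamma$ and $\bV_\gamma$.

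First, I would simply rearrange the recursion
\[
\bu_k=\bx_k -\sum_{i=1}^{k-1}\frac{<\bx_k, \bv_i>}{<\bu_i,\bv_i>}\bu_i
\]
to express each $\bx_k$ as a linear combination of $\bu_1,\ldots,\bu_k$ of the form
\[
\bx_k = \bu_k + \sum_{i=1}^{k-1}\frac{<\bx_k, \bv_i>}{<\bu_i,\bv_i>}\bu_i,
\]
which has coefficient $1$ on $\bu_k$ and no contribution from $\bu_{k+1},\ldots,\bu_\gamma$. Collecting these $\gamma$ relations into matrix form yields precisely $\bX_\gamma = \bU_\gamma \bR_\gamma^{(x)}$, where the entries of $\bR_\gamma^{(x)}$ are defined by $(\bR_\gamma^{(x)})_{ik} = <\bx_k,\bv_i>/<\bu_i,\bv_i>$ for $i<k$, $(\bR_\gamma^{(x)})_{kk}=1$, and $(\bR_\gamma^{(x)})_{ik}=0$ for $i>k$. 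By construction this matrix is unit upper triangular. The identical argument applied to the recursion for $\bv_k$ produces $\bR_\gamma^{(y)}$ with $\bY_\gamma = \bV_\gamma \bR_\gamma^{(y)}$.

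For uniqueness, I would appeal to Equation~\eqref{equation:wedderburn-span-same}, which guarantees that $\mathrm{span}\{\bu_1,\ldots,\bu_j\} = \mathrm{span}\{\bx_1,\ldots,\bx_j\}$ for every $j \le \gamma$. Because the rank-reducing process produces $\gamma$ nonzero pivots $w_k$, the vectors $\bu_1,\ldots,\bu_\gamma$ must be linearly independent (otherwise some partial span would have dimension less than the corresponding $\bx$-span), so $\bU_\gamma$ has full column rank. Consequently, if $\bX_\gamma = \bU_\gamma \bR_\gamma^{(x)} = \bU_\gamma \widetilde{\bR}$ for some other unit upper triangular $\widetilde{\bR}$, then $\bU_\gamma(\bR_\gamma^{(x)}-\widetilde{\bR}) = \bzero$ forces $\bR_\gamma^{(x)} = \widetilde{\bR}$. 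The same reasoning gives uniqueness of $\bR_\gamma^{(y)}$.

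I do not expect a significant obstacle here: the whole result is essentially a bookkeeping identity, and the only non-cosmetic ingredient is the full column rank of $\bU_\gamma$ and $\bV_\gamma$, which is already implicit in the statement that the sequence reduces rank by exactly one at each step. The mild subtlety is to be explicit that the recursion defines $\bu_k$ with coefficient $1$ (not some rescaled value), which is what forces $\bR_\gamma^{(x)}$ and $\bR_\gamma^{(y)}$ to be \emph{unit} upper triangular rather than merely upper triangular.
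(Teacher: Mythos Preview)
Your approach is essentially identical to the paper's: both simply read off the columns of $\bR_\gamma^{(x)}$ and $\bR_\gamma^{(y)}$ from the recursion in Equation~\eqref{equation:properties-of-wedderburn-ukvk2-inform2}, obtaining exactly the same explicit entries you wrote down. Your proof is actually more complete, since the paper's proof does not address the uniqueness claim at all, whereas you correctly derive it from the full column rank of $\bU_\gamma$ and $\bV_\gamma$.
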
 
\begin{proof}[of Corollary~\ref{corollary:biconjugate-connection-u-x}]
The proof follows directly from the definitions of $\bu_k$ and $\bv_k$ in Equations~\eqref{equation:properties-of-wedderburn-ukvk} or \eqref{equation:properties-of-wedderburn-ukvk2-inform2}. We construct   the $j$-th columns of $\bR_\gamma^{(x)}$ and $\bR_\gamma^{(y)}$ as follows:
$$
\left[\frac{<\bx_j,\bv_1>}{<\bu_1,\bv_1>}, \frac{<\bx_j,\bv_2>}{<\bu_2,\bv_2>}, \ldots, \frac{<\bx_j,\bv_{j-1}>}{<\bu_{j-1},\bv_{j-1}>}, 1, 0, 0, \ldots, 0  \right]^\top,
$$
and 
$$
\left[\frac{<\bu_1, \by_j>}{<\bu_1,\bv_1>},\frac{<\bu_2, \by_j>}{<\bu_2,\bv_2>}, \ldots, \frac{<\bu_{j-1}, \by_j>}{<\bu_{j-1},\bv_{j-1}>}, 1, 0, 0, \ldots, 0  \right]^\top.
$$
And the uniqueness stems  from the fact that the matrices $\bU_\gamma$ and $\bV_\gamma$  have independent columns from the rank-reducing process.
This completes the proof.
\end{proof}

The pair $(\bU_\gamma, \bV_\gamma) \in \real^{m\times \gamma}\times \real^{n\times \gamma}$ in Theorem~\ref{theorem:biconjugate-form2} is called a \textbf{biconjugate pair} with respect to $\bA$ if $\bOmega_\gamma$ is nonsingular and diagonal. 
Furthermore, suppose the pair $(\bX_\gamma, \bY_\gamma) \in \real^{n\times \gamma}\times \real^{m\times \gamma}$ effects a rank-reducing process for $\bA$. 
Then, the pair $(\bX_\gamma, \bY_\gamma)$ is said to be \textbf{biconjugatable} and can be \textbf{biconjugated into a biconjugate pair} of matrices $(\bU_\gamma, \bV_\gamma)$, if there exist unit upper triangular matrices $\bR_\gamma^{(x)}$ and $\bR_\gamma^{(y)}$ such that $\bX_\gamma = \bU_\gamma \bR_\gamma^{(x)}$ and $\bY_\gamma=\bV_\gamma\bR_\gamma^{(y)}$.

\section{Connection to Well-Known Decomposition Methods}\label{section:conn_bi}
In this section, we demonstrate how biconjugate decomposition relates to well-known matrix factorization methods.

\subsection{LDU Decomposition}
\index{Decomposition: LDU}
\begin{theorem}[LDU, \cite{chu1995rank} Theorem 2.4]\label{theorem:biconjugate-ldu}
Let $\bA\in\real^{m\times n}$ be any matrix of rank $r\geq \gamma$.
Let further $(\bX_\gamma, \bY_\gamma) \in \real^{n\times \gamma}\times \real^{m\times \gamma}$ 
with $\gamma\in\{1, 2, \ldots, r\}$. Then, the pair  $(\bX_\gamma, \bY_\gamma)$ is biconjugatable if and only if $\bY_\gamma^\top\bA\bX_\gamma$ admits an LDU decomposition. 
\end{theorem}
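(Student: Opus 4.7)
The plan is to exploit the definitions directly: biconjugation asserts the existence of unit upper triangular change-of-basis matrices on the right of $\bX_\gamma$ and $\bY_\gamma$, while LDU asserts a factorization of $\bY_\gamma^\top \bA \bX_\gamma$ into unit lower triangular, diagonal, and unit upper triangular factors. These two structures are essentially dual to each other, and the proof should reduce to reading off one factorization from the other.

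For the forward direction, I would assume $(\bX_\gamma, \bY_\gamma)$ is biconjugated into a biconjugate pair $(\bU_\gamma, \bV_\gamma)$, so that $\bX_\gamma = \bU_\gamma \bR_\gamma^{(x)}$ and $\bY_\gamma = \bV_\gamma \bR_\gamma^{(y)}$ with $\bR_\gamma^{(x)}, \bR_\gamma^{(y)}$ unit upper triangular, and $\bV_\gamma^\top \bA \bU_\gamma = \bOmega_\gamma$ nonsingular and diagonal by the definition of biconjugate pair. Then I substitute:
\begin{equation*}
\bY_\gamma^\top \bA \bX_\gamma = (\bR_\gamma^{(y)})^\top \bV_\gamma^\top \bA \bU_\gamma \bR_\gamma^{(x)} = (\bR_\gamma^{(y)})^\top \bOmega_\gamma \bR_\gamma^{(x)}.
\end{equation*}
Since $(\bR_\gamma^{(y)})^\top$ is unit lower triangular, $\bOmega_\gamma$ is nonsingular diagonal, and $\bR_\gamma^{(x)}$ is unit upper triangular, this is precisely an LDU decomposition of $\bY_\gamma^\top \bA \bX_\gamma$ in the sense of Corollary~\ref{corollary:ldu-decom}.

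For the converse, I would assume $\bY_\gamma^\top \bA \bX_\gamma = \bL \bD \bU$ with $\bL$ unit lower triangular, $\bD$ diagonal (nonsingular), and $\bU$ unit upper triangular. I then define $\bU_\gamma := \bX_\gamma \bU^{-1}$ and $\bV_\gamma := \bY_\gamma \bL^{-\top}$; since $\bL$ and $\bU$ are unit triangular they are invertible with unit triangular inverses. A direct computation gives
\begin{equation*}
\bV_\gamma^\top \bA \bU_\gamma = \bL^{-1} (\bY_\gamma^\top \bA \bX_\gamma) \bU^{-1} = \bL^{-1}(\bL \bD \bU)\bU^{-1} = \bD,
\end{equation*}
so $(\bU_\gamma, \bV_\gamma)$ is a biconjugate pair. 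The identities $\bX_\gamma = \bU_\gamma \bU$ and $\bY_\gamma = \bV_\gamma \bL^\top$ show that $(\bX_\gamma, \bY_\gamma)$ is biconjugated into $(\bU_\gamma, \bV_\gamma)$ via the unit upper triangular matrices $\bR_\gamma^{(x)} = \bU$ and $\bR_\gamma^{(y)} = \bL^\top$.

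The main obstacle is not algebraic but definitional: I have to be careful that the LDU decomposition referenced here carries the same conventions used elsewhere in the text (unit triangular factors and nonsingular diagonal middle factor), so that both implications are true equivalences rather than one-sided containments. In particular, the nonsingularity of $\bOmega_\gamma$ in the definition of a biconjugate pair must match the nonsingularity of $\bD$ in the LDU decomposition, which in turn is guaranteed by Corollary~\ref{corollary:ldu-decom} under the nonzero-leading-principal-minors hypothesis; once the conventions are aligned the rest is just bookkeeping of triangular factors.
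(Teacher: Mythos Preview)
Your proof is correct and matches the paper's approach essentially line for line: both directions proceed by the same direct substitution, with the forward implication reading off the LDU factors as $(\bR_\gamma^{(y)})^\top \bOmega_\gamma \bR_\gamma^{(x)}$ and the converse defining the biconjugate pair as $(\bX_\gamma \bU^{-1}, \bY_\gamma \bL^{-\top})$. Your version is slightly more explicit in verifying $\bV_\gamma^\top \bA \bU_\gamma = \bD$, which the paper leaves to the reader.
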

\begin{proof}[of Theorem~\ref{theorem:biconjugate-ldu}]
Suppose $\bX_\gamma$ and $\bY_\gamma$ are biconjugatable. Then there exist unit upper triangular matrices $\brx$ and $\bry$ such that $\bxgamma = \bugamma\brx$, $\bygamma = \bvgamma\bry$, and $\bvgamma^\top\bA\bugamma = \bomegagamma$ is a nonsingular diagonal matrix. It follows that 
$$
\bygamma^\top\bA\bxgamma = \bryt \bvgamma^\top \bA \bugamma\brx = \bryt \bomegagamma \brx
$$
is the unique  LDU decomposition of $\bygamma^\top\bA\bxgamma$. This expression can be regarded as the \textbf{fourth form of biconjugate decomposition}.

Conversely, suppose $\bygamma^\top\bA\bxgamma = \bR_2^\top \bD\bR_1$ is an LDU decomposition, with both $\bR_1$ and $\bR_2$ being unit upper triangular matrices. Since the inverses $\bR_1^{-1}$ and $\bR_2^{-1}$ are also unit upper triangular matrices, the pair $(\bX_\gamma, \bY_\gamma)$ can be biconjugated into $(\bX_\gamma\bR_1^{-1}, \bY_\gamma\bR_2^{-1})$.
This completes the proof.
\end{proof}

\index{Determinant}
\index{Principal minor}
\begin{corollary}[Determinant]\label{corollary:lu-determinant}
Let $\bA\in\real^{m\times n}$ be any matrix of rank $r\geq \gamma$.
Suppose the pair $(\bX_\gamma, \bY_\gamma) \in \real^{n\times \gamma}\times \real^{m\times \gamma}$ can be biconjugated into $(\bU_\gamma, \bV_\gamma)$ such that $\bvgamma^\top\bA\bugamma = \bomegagamma=\diag(w_1,w_2,\ldots,w_\gamma)$ is a nonsingular diagonal matrix. 
Then it follows that
$$
\det(\bygamma^\top \bA\bxgamma) = \prod_{i=1}^{\gamma} w_i.
$$
\end{corollary}
\begin{proof}[of Corollary~\ref{corollary:lu-determinant}]
By Theorem~\ref{theorem:biconjugate-ldu}, since $(\bX_\gamma, \bY_\gamma)$ are biconjugatable, then there exist unit upper triangular matrices $\brx$ and $\bry$ such that $\bygamma^\top\bA\bxgamma = \bryt \bomegagamma \brx$. The determinant is simply the product of the diagonal elements.
\end{proof}

\begin{lemma}[Biconjugatable in principal minors]\label{lemma:Biconjugatable-in-Principal-Minors}
Let $\bA\in\real^{m\times n}$ be any matrix of rank $r\geq \gamma$.
In the Wedderburn sequence, we choose $\bx_i$ as the $i$-th standard basis in $\real^n$ for $i \in \{1, 2, \ldots, \gamma\}$ (i.e., $\bx_i = \be_i \in \real^n$), and $\by_i$ as the $i$-th standard basis in $\real^m$ for $i \in \{1, 2, \ldots, \gamma\}$ (i.e., $\by_i=\be_i \in 
\real^m$). 
That is, $\bygamma^\top \bA\bxgamma$ corresponds to the leading principal submatrix of $\bA$, i.e., $\bygamma^\top \bA\bxgamma = \bA[1:\gamma, 1:\gamma]$. 
Then, $(\bX_\gamma, \bY_\gamma)$ is biconjugatable  into $(\bU_\gamma, \bV_\gamma)$ such that $\bvgamma^\top\bA\bugamma = \bomegagamma=\diag(w_1,w_2,\ldots,w_\gamma)$ is a nonsingular diagonal matrix
if and only if the $\gamma$-th leading principal minor of $\bA$ is nonzero, i.e.,  $\det(\bA[1:\gamma, 1:\gamma])\neq 0$. In this case, the $\gamma$-th leading principal minor of $\bA$ is given by $\prod_{i=1}^{\gamma} w_i$.\index{Leading principal minor}
\end{lemma}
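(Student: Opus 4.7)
My plan is to reduce the claim to already-established machinery: the biconjugate/LDU correspondence in Theorem~\ref{theorem:biconjugate-ldu}, the determinant formula in Corollary~\ref{corollary:lu-determinant}, and the uniqueness of the LDU decomposition in Corollary~\ref{corollary:ldu-decom}. The specific choice $\bx_i=\by_i=\be_i$ is what makes the connection concrete. With this choice, $\bY_\gamma^\top\bA\bX_\gamma = \bA[1{:}\gamma,1{:}\gamma]$, i.e., the $\gamma$-th leading principal submatrix of $\bA$. So the whole question becomes a question about that submatrix.

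First, I would invoke Theorem~\ref{theorem:biconjugate-ldu}: $(\bX_\gamma,\bY_\gamma)$ is biconjugatable if and only if $\bA[1{:}\gamma,1{:}\gamma]$ admits a (unit-triangular) LDU decomposition. Next, I would argue that this LDU existence is equivalent to all leading principal minors $\Delta_1,\Delta_2,\ldots,\Delta_\gamma$ of $\bA$ being nonzero. The forward implication here is really Theorem~\ref{theorem:lu-factorization-without-permutation} applied to $\bA[1{:}\gamma,1{:}\gamma]$: nonvanishing leading principal minors yield a unique LU, and positivity-free normalization gives LDU via Corollary~\ref{corollary:ldu-decom}. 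For the converse, if $\bA[1{:}\gamma,1{:}\gamma]=\bL\bD\bU$ with $\bL,\bU$ unit triangular, then each $k$-th leading principal minor equals $\det(\bD[1{:}k,1{:}k])$ which is the product of the first $k$ diagonal entries of $\bD$, so the hypothesis that such a factorization exists forces $\Delta_k\neq 0$ for every $k\le\gamma$.

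For the quantitative statement, I would use Corollary~\ref{corollary:lu-determinant}: once $(\bX_\gamma,\bY_\gamma)$ is biconjugatable, $\det(\bY_\gamma^\top\bA\bX_\gamma)=\prod_{i=1}^{\gamma}w_i$. Combined with $\bY_\gamma^\top\bA\bX_\gamma=\bA[1{:}\gamma,1{:}\gamma]$, this immediately gives $\Delta_\gamma=\prod_{i=1}^{\gamma}w_i$, which proves the ``In this case'' assertion. As a sanity check, I would briefly verify the base cases inductively using Equation~\eqref{equation:properties-of-wedderburn-ukvk}: with $\bu_1=\be_1$ and $\bv_1=\be_1$, $w_1=a_{11}=\Delta_1$, and the second-step Schur-complement-like computation gives $w_2=a_{22}-a_{21}a_{12}/a_{11}=\Delta_2/\Delta_1$, which suggests the telescoping identity $w_k=\Delta_k/\Delta_{k-1}$ that makes $\prod_{i=1}^\gamma w_i=\Delta_\gamma$ transparent.

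The main obstacle I anticipate is a definitional subtlety: biconjugatability of $(\bX_\gamma,\bY_\gamma)$ really requires every partial product $\prod_{i=1}^{k}w_i$ to be nonzero for $k\le\gamma$ (since the Wedderburn rank-reducing process needs $w_k=\by_k^\top\bA_k\bx_k\ne 0$ at every step for $\bomegagamma$ to be nonsingular diagonal). So the ``iff the $\gamma$-th leading principal minor of $\bA$ is nonzero'' phrasing is really shorthand for ``iff $\Delta_1,\ldots,\Delta_\gamma$ are all nonzero,'' since any intermediate $\Delta_k=0$ would break the biconjugation at step $k$. I would make this nested equivalence explicit, and use the telescoping relation $w_k=\Delta_k/\Delta_{k-1}$ (with $\Delta_0=1$) to package the whole chain of nonvanishing conditions into the single product formula $\Delta_\gamma=\prod_{i=1}^{\gamma}w_i$, closing the proof cleanly.
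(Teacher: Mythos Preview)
Your proposal is correct and follows essentially the same route as the paper: the paper's proof invokes Corollary~\ref{corollary:lu-determinant} for the converse and asserts (tersely) that $\Delta_\gamma\neq 0$ forces each $w_i\neq 0$ for the forward direction, while you make the LDU bridge of Theorem~\ref{theorem:biconjugate-ldu} explicit. Your observation that the statement really requires all of $\Delta_1,\ldots,\Delta_\gamma$ to be nonzero---not just $\Delta_\gamma$---is a genuine refinement over the paper's brief argument, and your telescoping check $w_k=\Delta_k/\Delta_{k-1}$ is exactly the mechanism the paper leaves implicit.
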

\begin{proof}[of Lemma~\ref{lemma:Biconjugatable-in-Principal-Minors}]
The proof is straightforward that the $\gamma$-th leading principal minor of $\bA$ being nonzero will imply that $w_i \neq 0$ for all $i\leq \gamma$. Thus, the Wedderburn sequence can be successfully obtained. The converse holds because Corollary~\ref{corollary:lu-determinant} implies that $\det(\bygamma^\top \bA\bxgamma)$ is nonzero.
\end{proof}

We have now arrived at the LDU decomposition for square matrices. 
\begin{theorem}[LDU: Biconjugate decomposition for square matrices]\label{theorem:biconjugate-square-ldu}
For any matrix $\bA\in \real^{n\times n}$, the pair $(\bI_n, \bI_n)$ is biconjugatable if and only if all the leading principal minors of $\bA$ are nonzero. In this case, $\bA$ can be factored as 
$$
\bA = \bV_n^{-\top} \bOmega_n \bU_n^{-1} = \bL\bD\bU,
$$
where $\bOmega_n = \bD$ is a diagonal matrix with nonzero values along its diagonal, $\bV_n^{-\top} = \bL$ is a unit lower triangular matrix, and $\bU_n^{-1} = \bU$ is a unit upper triangular matrix.
\end{theorem}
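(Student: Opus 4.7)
The plan is to read off this theorem as the specialization $\gamma=n$, $\bX_n=\bY_n=\bI_n$ of the machinery already built up in Theorem~\ref{theorem:biconjugate-form2}, Corollary~\ref{corollary:biconjugate-connection-u-x}, and Lemma~\ref{lemma:Biconjugatable-in-Principal-Minors}, rather than running any new Wedderburn-style computation from scratch.

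First I would dispatch the ``if and only if'' half directly from Lemma~\ref{lemma:Biconjugatable-in-Principal-Minors}: with $\bx_i=\be_i\in\real^n$ and $\by_i=\be_i\in\real^n$ for $i=1,\dots,n$, we have $\bY_n^\top\bA\bX_n=\bA$, so $(\bI_n,\bI_n)$ is biconjugatable iff every leading principal minor $\det(\bA_{1:k,1:k})$ is nonzero for $k=1,\dots,n$, and in that case $\det(\bA)=\prod_{i=1}^n w_i$, which in particular forces each $w_i\neq 0$ (so $\bOmega_n$ has nonzero diagonal).

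Next I would build the factorization. Assuming the principal-minor condition, Corollary~\ref{corollary:biconjugate-connection-u-x} produces unique unit upper triangular matrices $\bR_n^{(x)},\bR_n^{(y)}$ with $\bI_n=\bU_n\bR_n^{(x)}$ and $\bI_n=\bV_n\bR_n^{(y)}$; hence $\bU_n^{-1}=\bR_n^{(x)}$ and $\bV_n^{-1}=\bR_n^{(y)}$ are unit upper triangular, and therefore $\bV_n^{-\top}$ is unit lower triangular. Taking Equation~\eqref{equation:wedderburn-vgamma-ugamma} at $\gamma=n$ gives $\bV_n^\top\bA\bU_n=\bOmega_n$, and rearranging yields
\[
\bA=\bV_n^{-\top}\,\bOmega_n\,\bU_n^{-1}.
\]
Setting $\bL:=\bV_n^{-\top}$, $\bD:=\bOmega_n$, $\bU:=\bU_n^{-1}$ delivers the claimed $\bA=\bL\bD\bU$ with the required triangular/diagonal structure, and $\bD$ is nonsingular by the first paragraph.

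There is essentially no hard step: the one subtlety to be careful about is checking that inverses and transposes of unit upper triangular matrices behave correctly (inverse of unit upper triangular is unit upper triangular, transpose of unit upper triangular is unit lower triangular), so that $\bL=\bV_n^{-\top}$ genuinely comes out unit lower triangular, and that the $w_i$ produced in the Wedderburn sequence are exactly the pivots appearing on $\bD$. Since the uniqueness of LDU for matrices with nonzero leading principal minors has already been established in Corollary~\ref{corollary:ldu-decom}, I would close by remarking that the $\bL,\bD,\bU$ produced here must coincide with the classical LDU factors, which reinforces that this biconjugate construction is simply an intrinsic reinterpretation of Gaussian elimination.
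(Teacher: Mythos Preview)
Your proposal is correct and follows essentially the same approach as the paper: invoke Lemma~\ref{lemma:Biconjugatable-in-Principal-Minors} for the equivalence, then Corollary~\ref{corollary:biconjugate-connection-u-x} to identify $\bU_n^{-1}=\bR_n^{(x)}$ and $\bV_n^{-1}=\bR_n^{(y)}$ as unit upper triangular, and read off the factorization. If anything, your write-up is more explicit than the paper's (which omits the rearrangement step from $\bV_n^\top\bA\bU_n=\bOmega_n$ to $\bA=\bV_n^{-\top}\bOmega_n\bU_n^{-1}$ and the closing remark on uniqueness via Corollary~\ref{corollary:ldu-decom}).
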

\begin{proof}[of Theorem~\ref{theorem:biconjugate-square-ldu}]
As per Lemma~\ref{lemma:Biconjugatable-in-Principal-Minors}, it is evident that the pair $(\bI_n, \bI_n)$ is biconjugatable. 
Based on Corollary~\ref{corollary:biconjugate-connection-u-x}, we have $\bU_n \bR_n^{(x)} = \bI_n$ and $\bI_n=\bV_n\bR_n^{(y)}$. Thus, $\bR_n^{(x)} = \bU_n^{-1}$ and $\bR_n^{(y)} = \bV_n^{-1}$ are well defined. This completes the proof.
\end{proof}

\subsection{Cholesky Decomposition}
For symmetric and positive definite matrices, all leading principal minors are always positive. 
The proof for this statement can be found in Section~\ref{appendix:leading-minors-pd}.
The following theorem shows how the Cholesky decomposition arises naturally from biconjugate decomposition in the case of positive definite matrices.
\begin{theorem}[Cholesky: Biconjugate decomposition for PD matrices]\label{theorem:biconjugate-square-cholesky}
For any symmetric and positive definite matrix $\bA\in \real^{n\times n}$, the Cholesky decomposition of $\bA$ can be derived from the Wedderburn sequence by setting $(\bX_n, \bY_n)$ as the pair $(\bI_n, \bI_n)$.
In this case, $\bA$ can be factored as 
$$
\bA = \bU_n^{-\top} \bOmega_n \bU_n^{-1} = (\bU_n^{-\top} \bOmega_n^{1/2})( \bOmega_n^{1/2} \bU_n^{-1}) =\bR^\top\bR,
$$
where $\bOmega_n$ is a diagonal matrix with positive values along the diagonal, and $\bU_n^{-1}$ is a unit upper triangular matrix.
\end{theorem}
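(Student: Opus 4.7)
The plan is to derive the Cholesky factorization as a corollary of the LDU form already established in Theorem~\ref{theorem:biconjugate-square-ldu}, by exploiting symmetry of $\bA$ and positive definiteness in turn. First I would invoke Sylvester's criterion (Theorem~\ref{lemma:sylvester-criterion}) to conclude that every leading principal minor of the PD matrix $\bA$ is positive, so that by Lemma~\ref{lemma:Biconjugatable-in-Principal-Minors} the pair $(\bI_n,\bI_n)$ is biconjugatable with $\bx_i=\by_i=\be_i$. Theorem~\ref{theorem:biconjugate-square-ldu} then immediately gives a decomposition $\bA=\bV_n^{-\top}\bOmega_n\bU_n^{-1}$ with $\bV_n^{-\top}$ unit lower triangular and $\bU_n^{-1}$ unit upper triangular.

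The core step is to show that under the symmetry $\bA=\bA^\top$ together with the choice $\bx_i=\by_i=\be_i$, the Wedderburn recursions in Equation~\eqref{equation:properties-of-wedderburn-ukvk2-inform2} collapse to $\bu_k=\bv_k$ for every $k$. I would prove this by induction: the base case $\bu_1=\be_1=\bv_1$ is immediate, and the inductive step compares
$$
\bu_k=\be_k-\sum_{i<k}\frac{\bv_i^\top\bA\be_k}{\bv_i^\top\bA\bu_i}\bu_i,\qquad \bv_k=\be_k-\sum_{i<k}\frac{\be_k^\top\bA\bu_i}{\bv_i^\top\bA\bu_i}\bv_i,
$$
and observes that symmetry of $\bA$ plus the inductive hypothesis $\bu_i=\bv_i$ forces the two coefficient sequences to coincide, hence $\bu_k=\bv_k$. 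Consequently $\bU_n=\bV_n$, and the LDU form reduces to $\bA=\bU_n^{-\top}\bOmega_n\bU_n^{-1}$.

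Next I would argue that the diagonal entries $w_i$ of $\bOmega_n$ are strictly positive. Using Equation~\eqref{equation:wk-by-ukvk}, $w_i=\langle \bu_i,\bv_i\rangle=\bu_i^\top\bA\bu_i$. Because each $\bu_i$ equals $\be_i$ plus a combination of $\bu_1,\ldots,\bu_{i-1}$, its $i$-th coordinate is $1$, so $\bu_i\neq\bzero$; positive definiteness then yields $w_i>0$. (Alternatively, Corollary~\ref{corollary:lu-determinant} identifies $w_1\cdots w_i$ with the $i$-th leading principal minor, which is positive by Sylvester's criterion, and induction gives each $w_i>0$.)

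With positivity of $\bOmega_n$ in hand, $\bOmega_n^{1/2}=\diag(\sqrt{w_1},\ldots,\sqrt{w_n})$ is well defined, and setting $\bR:=\bOmega_n^{1/2}\bU_n^{-1}$ yields an upper triangular matrix with strictly positive diagonal (since $\bU_n^{-1}$ is unit upper triangular) such that $\bA=\bR^\top\bR$. The main obstacle is the inductive identification $\bU_n=\bV_n$, because the two recursions look formally different and one has to track carefully how $\bA=\bA^\top$ permits the symmetrization $\bv_i^\top\bA\be_k=\be_k^\top\bA\bu_i$; everything else is bookkeeping that reuses the LDU specialization and Sylvester's criterion already proved.
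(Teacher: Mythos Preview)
Your proposal is correct and follows the same route as the paper: invoke Sylvester's criterion to get positivity of the leading principal minors (hence biconjugatability of $(\bI_n,\bI_n)$ and $w_i>0$), specialize the biconjugate LDU of Theorem~\ref{theorem:biconjugate-square-ldu}, use symmetry of $\bA$ to identify $\bU_n=\bV_n$, and then split $\bOmega_n=\bOmega_n^{1/2}\bOmega_n^{1/2}$. The only difference is one of detail: the paper simply asserts that the identification $\bA=\bU_n^{-\top}\bOmega_n\bU_n^{-1}$ ``can be easily verified via the LDU from biconjugation decomposition and the symmetric property of $\bA$,'' whereas you supply an explicit inductive proof that $\bu_k=\bv_k$ from the recursions~\eqref{equation:properties-of-wedderburn-ukvk2-inform2}; an equally short alternative would be to transpose the LDU and invoke uniqueness (Corollary~\ref{corollary:ldu-decom}).
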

\begin{proof}[of Theorem~\ref{theorem:biconjugate-square-cholesky}]
Given that the leading principal minors of positive definite matrices are positive, $w_i>0$ for all $i\in \{1, 2,\ldots, n\}$. 
It follows from the LDU factorization via biconjugation and the symmetry of $\bA$ that $\bA = \bU_n^{-\top} \bOmega_n \bU_n^{-1}$.  
Since $w_i$'s are positive,  $\bOmega_n$ is positive definite and  can be factored as $\bOmega_n = \bOmega_n^{1/2}\bOmega_n^{1/2}$. This implies that $\bOmega_n^{1/2} \bU_n^{-1}$ is the Cholesky factor. 
\end{proof}

\subsection{QR Decomposition}

Without loss of generality, we  assume that $\bA\in \real^{n\times n}$ has full rank, which allows for the QR decomposition: $\bA=\bQ\bR$, where $\bQ\in\real^{n\times n}$ is orthogonal, and $\bR \in \real^{n\times n}$ is upper triangular with  full rank and positive diagonal values. 
We now show how this decomposition can be obtained through biconjugate decomposition.
\begin{theorem}[QR: Biconjugate decomposition for nonsingular matrices]\label{theorem:biconjugate-square-qr}
For any nonsingular matrix $\bA\in \real^{n\times n}$, the QR decomposition of $\bA$ can be obtained from the Wedderburn sequence by setting $(\bX_n, \bY_n)$ as $(\bI_n, \bA)$.
Thus, $\bA$ can be factored as 
$$
\bA = \bQ\bR,
$$
where $\bQ=\bV_n \bOmega_n^{-1/2}$ is an orthogonal matrix, and $\bR = \bOmega_n^{1/2}\bR_n^{(x)}$ is an upper triangular matrix, according to the \textcolor{black}{\textbf{Form 4}} in Theorem~\ref{theorem:biconjugate-ldu}, with $\gamma=n$:
$$
\bY_n^\top\bA\bX_n = \bR_n^{(y)\top} \bV_n^\top \bA \bU_n\bR_n^{(x)}=\bR_n^{(y)\top} \bOmega_n \bR_n^{(x)},
$$
where we set $\gamma=n$ because $\gamma$ can be any value such that $\gamma\leq r$, and the rank $r=n$.
\end{theorem}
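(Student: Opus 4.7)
The plan is to leverage the biconjugate framework together with uniqueness of the LDU decomposition to identify the QR factors of $\bA$. The starting observation is that, with the choice $\bX_n = \bI_n$ and $\bY_n = \bA$, Form 4 of the biconjugate decomposition reads $\bA^\top \bA = \bR_n^{(y)\top}\,\bOmega_n\, \bR_n^{(x)}$. Because $\bA$ is nonsingular, $\bA^\top \bA$ is symmetric positive definite, so by Sylvester's criterion (Theorem~\ref{lemma:sylvester-criterion}) its leading principal minors are all positive. Hence the LDU decomposition of $\bA^\top \bA$ exists without permutation and is unique by Corollary~\ref{corollary:ldu-decom}. Applying Theorem~\ref{theorem:biconjugate-ldu} then guarantees that $(\bI_n, \bA)$ is biconjugatable, so the Wedderburn sequence applied to this pair runs to completion and produces matrices $\bU_n$, $\bV_n$, $\bOmega_n$ together with unit upper triangular matrices $\bR_n^{(x)}$ and $\bR_n^{(y)}$.

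The pivotal step will be to argue that $\bR_n^{(x)} = \bR_n^{(y)}$. Since $\bA^\top \bA$ is symmetric, the uniqueness of the unit-lower/diagonal/unit-upper LDU decomposition forces $\bR_n^{(x)} = \bR_n^{(y)}$; call this common matrix $\bR_0$. From $\bY_n = \bV_n \bR_n^{(y)}$ I can then read off $\bA = \bV_n \bR_0$, and from $\bX_n = \bU_n \bR_n^{(x)}$ that $\bU_n = \bR_0^{-1}$. These two identities combine to yield $\bA \bU_n = \bV_n \bR_0 \bR_0^{-1} = \bV_n$. Substituting this into the Wedderburn identity $\bV_n^\top \bA \bU_n = \bOmega_n$ from Equation~\eqref{equation:wedderburn-vgamma-ugamma}, I would conclude that $\bV_n^\top \bV_n = \bOmega_n$. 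This is the linchpin of the proof: the biconjugate pair turns out to be orthogonal up to the diagonal rescaling by $\bOmega_n$.

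Finally, I would verify the QR properties. The diagonal entries of $\bOmega_n$ are positive since they are the pivots of an LDU of a positive definite matrix (by Section~\ref{section:cholesky-diagonals} they equal ratios of consecutive leading principal minors, all of which are positive), so $\bOmega_n^{1/2}$ is well defined. Setting $\bQ = \bV_n \bOmega_n^{-1/2}$, the identity $\bV_n^\top \bV_n = \bOmega_n$ gives $\bQ^\top \bQ = \bOmega_n^{-1/2}\bOmega_n \bOmega_n^{-1/2} = \bI_n$, so $\bQ$ is orthogonal. Setting $\bR = \bOmega_n^{1/2} \bR_n^{(x)}$, it is upper triangular as a product of a diagonal matrix and a unit upper triangular matrix, and $\bQ\bR = \bV_n \bOmega_n^{-1/2}\bOmega_n^{1/2}\bR_n^{(x)} = \bV_n \bR_0 = \bA$ completes the identification.

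The main obstacle I anticipate is the justification that $\bR_n^{(x)} = \bR_n^{(y)}$: one must carefully invoke uniqueness of LDU on a symmetric PD matrix, and ensure that the triangular factors extracted by the biconjugate machinery actually coincide with those of the standard LDU of $\bA^\top\bA$. Everything else is a clean rearrangement once that symmetry collapse is in hand.
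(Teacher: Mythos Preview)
Your proposal is correct and follows essentially the same route as the paper: both start from Form~4 with $(\bX_n,\bY_n)=(\bI_n,\bA)$ to get $\bA^\top\bA=\bR_n^{(y)\top}\bOmega_n\bR_n^{(x)}$, use symmetry of $\bA^\top\bA$ and uniqueness of LDU to force $\bR_n^{(x)}=\bR_n^{(y)}$, then read off $\bA=\bV_n\bR_0$ and $\bU_n=\bR_0^{-1}$ from Corollary~\ref{corollary:biconjugate-connection-u-x}, and finally establish $\bV_n^\top\bV_n=\bOmega_n$ to conclude orthogonality of $\bQ=\bV_n\bOmega_n^{-1/2}$. The only cosmetic difference is that the paper reaches $\bV_n^\top\bV_n=\bOmega_n$ by equating two expressions for $\bA^\top\bA$, whereas you obtain it more directly by substituting $\bA\bU_n=\bV_n$ into the Wedderburn identity $\bV_n^\top\bA\bU_n=\bOmega_n$; your path is slightly cleaner but the content is the same.
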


\begin{proof}[of Theorem~\ref{theorem:biconjugate-square-qr}]
Since $(\bX_n, \bY_n) = (\bI_n, \bA)$, applying Theorem~\ref{theorem:biconjugate-ldu}, we have the decomposition
$
\bY_n^\top\bA\bX_n = \bR_n^{(y)\top} \bV_n^\top \bA \bU_n\bR_n^{(x)}=\bR_n^{(y)\top} \bOmega_n \bR_n^{(x)}.
$
Substituting $(\bX_n,\bY_n)=(\bI_n, \bA)$ into the  decomposition above, we obtain: 
\begin{equation}\label{equation:biconjugate-qr-ata1}
\begin{aligned}
	\bY_n^\top\bA\bX_n &= \bR_n^{(y)\top} \bV_n^\top \bA \bU_n\bR_n^{(x)} = \bR_n^{(y)\top} \bOmega_n \bR_n^{(x)};\\
	\bA^\top \bA &= \bR_n^{(y)\top} \bOmega_n \bR_n^{(x)}; \\
	\bA^\top \bA &= \bR_1^\top \bOmega_n \bR_1; \qquad (\text{$\bA^\top\bA$ is symmetric and let $\bR_1=\bR_n^{(x)}=\bR_n^{(y)}$})\\
	\bA^\top \bA &= (\bR_1^\top \bOmega_n^{1/2\top}) (\bOmega_n^{1/2}\bR_1);\\
	\bA^\top \bA &= \bR^\top\bR. \qquad \qquad\qquad\qquad(\text{let $\bR = \bOmega_n^{1/2}\bR_1$})
\end{aligned}
\end{equation}
To see why $\bOmega_n$ can be factored as $\bOmega_n = \bOmega_n^{1/2\top}\bOmega_n^{1/2}$, we consider the following steps. 
Suppose $\bA=[\ba_1, \ba_2, \ldots, \ba_n]$ is the column partition of $\bA$. We obtain $w_i = \by_i^\top\bA\bx_i=\ba_i^\top \ba_i>0$, since $\bA$ is nonsingular. Therefore, $\bOmega_n=\diag(w_1, w_2, \ldots, w_n)$ is positive definite and it can be factored as 
\begin{equation}\label{equation:omega-half-qr}
\bOmega_n = \bOmega_n^{1/2}\bOmega_n^{1/2}= \bOmega_n^{1/2\top}\bOmega_n^{1/2}.
\end{equation}
By $\bxgamma = \bugamma\brx$ in Theorem~\ref{theorem:biconjugate-ldu} for all $\gamma\in \{1, 2, \ldots, n\}$, we have 
$$
\begin{aligned}
\bX_n &= \bU_n\bR_1; \\
\bI_n &= \bU_n\bR_1; \qquad (\text{Since $\bX_n = \bI_n$}) \\
\bU_n &= \bR_1^{-1}.
\end{aligned}
$$
By $\bygamma = \bvgamma\bry$ in Theorem~\ref{theorem:biconjugate-ldu}  for all $\gamma\in \{1, 2, \ldots, n\}$, we have
$$
\begin{aligned}
\bY_n &= \bV_n\bR_1;\\
\bA &= \bV_n\bR_1; \qquad &(\text{$\bA=\bY_n$}) \\
\bA^\top \bA &= \bR_1^\top\bV_n^\top \bV_n\bR_1; \\
\bR_1^\top \bOmega_n \bR_1&=\bR_1^\top\bV_n^\top \bV_n\bR_1; \qquad &(\text{Equation~\eqref{equation:biconjugate-qr-ata1}})\\
(\bR_1^\top \bOmega_n^{1/2\top}) (\bOmega_n^{1/2}\bR_1) &= (\bR_1^\top \bOmega_n^{1/2\top} \bOmega_n^{-1/2\top})\bV_n^\top \bV_n (\bOmega_n^{-1/2}\bOmega_n^{1/2}\bR_1); \qquad &\text{(Equation~\eqref{equation:omega-half-qr})} \\
\bR^\top\bR &= \bR^\top (\bOmega_n^{-1/2\top} \bV_n^\top) (\bV_n \bOmega_n^{-1/2})  \bR.
\end{aligned}
$$
Thus, $\bQ=\bV_n \bOmega_n^{-1/2}$ is an orthogonal matrix.
\end{proof}

\subsection{SVD}\label{section:bicon_SVD}
To explore  the SVD of a square matrix $\bA\in\real^{n\times n}$ within the biconjugation decomposition, we introduce the following notation: let $\bA=\bU^\svd \bSigma^\svd \bV^{\svd\top}$ be the SVD of $\bA$, where $\bU^\svd = [\bu_1^\svd, \bu_2^\svd, \ldots, \bu_n^\svd]$ is orthogonal, $\bV^\svd = [\bv_1^\svd, \bv_2^\svd, \ldots, \bv_n^\svd]$ is orthogonal, and $\bSigma^\svd = \diag(\sigma_1, \sigma_2, \ldots, \sigma_n)$ is diagonal. Without loss of generality, we assume $\bA\in \real^{n\times n}$ and $\rank(\bA)=n$. Readers can verify the equivalence for a general matrix $\bA\in \real^{m\times n}$.

If the pair ($\bX_n=\bV^\svd$, $\bY_n=\bU^\svd$) effects a rank-reducing process for $\bA$.
From the definitions of $\bu_k$ and $\bv_k$ in Equation~\eqref{equation:properties-of-wedderburn-ukvk} or Equation~\eqref{equation:properties-of-wedderburn-ukvk2-inform2}, we have
$$
\bu_k = \bv_k^\svd \qquad \text{and} \qquad \bv_k = \bu_k^\svd \qquad \text{and} \qquad w_k = \by_k^\top \bA \bx_k=\sigma_k.
$$
This implies $\bV_n = \bU^\svd$, $\bU_n = \bV^\svd$, and $\bOmega_n = \bSigma^\svd$, where we set $\gamma=n$ because $\gamma$ can be any value such that $\gamma\leq r$, and the rank $r=n$.

By $\bX_n= \bU_n\bR_n^{(x)}$ in Theorem~\ref{theorem:biconjugate-ldu}, we have 
$$
\bX_n = \bU_n\bR_n^{(x)}   \quad\implies\quad
\bV^\svd = \bV^\svd\bR_n^{(x)}	 \quad\implies\quad
\bI_n = \bR_n^{(x)}.
$$
By $\bY_n = \bV_n\bR_n^{(y)}$ in Theorem~\ref{theorem:biconjugate-ldu}, we have
$$
\bY_n = \bV_n\bR_n^{(y)}    \quad\implies\quad
\bU^\svd = \bU^\svd\bR_n^{(y)} \quad\implies\quad
\bI_n=\bR_n^{(y)}.
$$
Applying Theorem~\ref{theorem:biconjugate-ldu} again and setting $\gamma=n$, we have
$$
\bY_n^\top\bA\bX_n = \bR_n^{(y)\top} \bV_n^\top \bA \bU_n\bR_n^{(x)}=\bR_n^{(y)\top} \bOmega_n \bR_n^{(x)}.
$$
This simplifies to
$
\bU^{\svd\top}\bA \bV^\svd = \bSigma^\svd,
$
which corresponds precisely to the form of a SVD. This demonstrates the equivalence between the SVD and the biconjugate decomposition when the Wedderburn sequence is applied with $(\bV^\svd, \bU^\svd)$ as $(\bX_n, \bY_n)$.

\section{Proof: General Term Formula of Wedderburn Sequence}\label{section:wedderburn-general-term}
In Lemma~\ref{lemma:wedderburn-sequence-general}, we present the general term formula for the Wedderburn sequence.
Given any matrix $\bA\in\real^{m\times n}$, the Wedderburn sequence of $\bA$ is defined recursively  by $\bA_{k+1} = \bA_k -w_k^{-1} \bA_k\bx_k\by_k^\top \bA_k$ with $\bA_1 = \bA$. The proof of the general term formula for this sequence is as follows:
\begin{proof}[of Lemma~\ref{lemma:wedderburn-sequence-general}]
For $\bA_2$, let $\bu_1=\bx_1$ and $\bv_1 =\by_1$. We have:
$$
\begin{aligned}
\bA_2 &=\bA_1 -w_1^{-1} \bA_1\bx_1\by_1^\top \bA_1=\bA -w_1^{-1} \bA\bu_1\bv_1^\top \bA.
\end{aligned}
$$
For $\bA_3$, we can write out the equation as:
$$
\footnotesize
\begin{aligned}
&\bA_3 = \bA_2 -w_2^{-1} \bA_2\bx_2\by_2^\top \bA_2 \\
&=(\bA -w_1^{-1} \bA\bu_1\bv_1^\top \bA) - w_2^{-1}(\bA -w_1^{-1} \bA\bu_1\bv_1^\top \bA)\bx_2\by_2^\top(\bA -w_1^{-1} \bA\bu_1\bv_1^\top \bA) \qquad &\text{(substitute $\bA_2$)}\\
&=(\bA -w_1^{-1} \bA\bu_1\bv_1^\top \bA) 
- w_2^{-1}\textcolor{mylightbluetext}{\bA}(\textcolor{mylightbluetext}{\bx_2 }-w_1^{-1} \bu_1\bv_1^\top \bA\textcolor{mylightbluetext}{\bx_2 })(\textcolor{mylightbluetext}{\by_2^\top} -w_1^{-1}\textcolor{mylightbluetext}{\by_2^\top} \bA\bu_1\bv_1^\top )\textcolor{mylightbluetext}{\bA} \qquad &\text{(factor out $\bA$)}\\
&=\bA -w_1^{-1} \bA\bu_1\bv_1^\top \bA - w_2^{-1}\bA\bu_2\bv_2^\top\bA
=\bA -\sum_{i=1}^{2}w_i^{-1} \bA\bu_i\bv_i^\top \bA,
\end{aligned}
$$
where $\bu_2=\bx_2 -w_1^{-1} \bu_1\bv_1^\top \bA\bx_2=\bx_2 -\frac{\bv_1^\top \bA\bx_2}{w_1}\bu_1$, and $\bv_2=\by_2 -w_1^{-1}\by_2^\top \bA\bu_1\bv_1=\by_2 -\frac{\by_2^\top \bA\bu_1}{w_1}\bv_1$.
Similarly, we can find the expression of $\bA_4$ by $\bA$:
$$
\footnotesize
\begin{aligned}
&\bA_4  = \bA_3 -w_3^{-1} \bA_3\bx_3\by_3^\top \bA_3 \\
&=\bA -\sum_{i=1}^{2}w_i^{-1} \bA\bu_i\bv_i^\top \bA 
 - w_3^{-1} \big(\bA -\sum_{i=1}^{2}w_i^{-1} \bA\bu_i\bv_i^\top \bA\big)\bx_3\by_3^\top \big(\bA -\sum_{i=1}^{2}w_i^{-1} \bA\bu_i\bv_i^\top \bA\big) \gap &\text{{(substitute $\bA_3$)}}\\
&=\bA -\sum_{i=1}^{2}w_i^{-1} \bA\bu_i\bv_i^\top \bA 
- w_3^{-1}\textcolor{mylightbluetext}{\bA} \big(\textcolor{mylightbluetext}{\bx_3} -\sum_{i=1}^{2}w_i^{-1} \bu_i\bv_i^\top \bA\textcolor{mylightbluetext}{\bx_3}\big) \big(\textcolor{mylightbluetext}{\by_3^\top} -\sum_{i=1}^{2}w_i^{-1}\textcolor{mylightbluetext}{\by_3^\top} \bA\bu_i\bv_i^\top \big)\textcolor{mylightbluetext}{\bA} \gap &\text{(factor out $\bA$)} \\
&=\bA -\sum_{i=1}^{2}w_i^{-1} \bA\bu_i\bv_i^\top \bA - w_3^{-1}\bA\bu_3\bv_3^\top\bA
=\bA -\sum_{i=1}^{3}w_i^{-1} \bA\bu_i\bv_i^\top \bA,
\end{aligned}
$$	
where $\bu_3=\bx_3 -\sum_{i=1}^{2}\frac{\bv_i^\top \bA\bx_3}{w_i}\bu_i$, and $\bv_3=\by_3 -\sum_{i=1}^{2}\frac{\by_3^\top \bA\bu_i}{w_i}\bv_i$.
Continuing this process, we can define 
$$
\bu_k=\bx_k -\sum_{i=1}^{k-1}\frac{\bv_i^\top \bA\bx_k}{w_i}\bu_i
\qquad \text{and} \qquad
\bv_k=\by_k -\sum_{i=1}^{k-1}\frac{\by_k^\top \bA\bu_i}{w_i}\bv_i,
$$ 
and  the general term of the Wedderburn sequence can be proved by induction.
\end{proof}

\begin{problemset}
\item Following the proof of Theorem~\ref{theorem:rank-1-reduction}, prove Corollarys~\ref{corollary:rk_redu_one} and \ref{corollary:rk_K_redu_one}.

\item Discuss the Wedderburn sequence of $(\bX_n,\bY_n)=(\bI_n, \bA)$ in Theorem~\ref{theorem:biconjugate-square-qr} if $\bA$ is singular.

\item Following Section~\ref{section:bicon_SVD}, verify the equivalence between the SVD and the biconjugate decomposition for a general matrix $\bA$ of size $m\times n$.

\item \label{problem:rk_reduc} \textbf{Rank reduction theorem.} Let $\bA\in\real^{m\times n}$, $\bX\in\real^{n\times k}$, and $\bY\in\real^{m\times k}$ such that $\bW=\bY^\top\bA\bX$ is nonsingular. Show that
$$
\rank(\bA-\bA\bX\bW^{-1}\bY^\top\bA) = \rank(\bA)-\rank(\bA\bX\bW^{-1}\bY^\top\bA).
$$
When $k=1$, this is the rank-one reduction (Theorem~\ref{theorem:rank-1-reduction}).
Discuss how this general result relates to  Corollary~\ref{corollary:rk_K_redu_one}.

\item Show that if $\bA$ is symmetric and $\bX=\bY$, then $\bU=\bV$ in Lemma~\ref{lemma:wedderburn-sequence-general}.
\end{problemset}

\newpage
\bibliography{bib}

\begin{thebibliography}{158}
\providecommand{\natexlab}[1]{#1}
\providecommand{\url}[1]{\texttt{#1}}
\expandafter\ifx\csname urlstyle\endcsname\relax
  \providecommand{\doi}[1]{doi: #1}\else
  \providecommand{\doi}{doi: \begingroup \urlstyle{rm}\Url}\fi

\bibitem[Aggarwal(2020)]{aggarwal2020linear}
Charu~C Aggarwal.
\newblock \emph{Linear algebra and optimization for machine learning}, volume
  156.
\newblock Springer, 2020.

\bibitem[Amari and Nagaoka(2000)]{amari2000methods}
Shun-ichi Amari and Hiroshi Nagaoka.
\newblock \emph{Methods of information geometry}, volume 191.
\newblock American Mathematical Soc., 2000.

\bibitem[An et~al.(2012)An, {\c{S}}im{\c{s}}ekli, Cemgil, and
  Akarun]{an2012large}
{\.I}smail An, Umut {\c{S}}im{\c{s}}ekli, Ali~Taylan Cemgil, and Laie Akarun.
\newblock Large scale polyphonic music transcription using randomized matrix
  decompositions.
\newblock In \emph{2012 Proceedings of the 20th European Signal Processing
  Conference (EUSIPCO)}, pages 2020--2024. IEEE, 2012.

\bibitem[Bach et~al.(2011)Bach, Jenatton, Mairal, and
  Obozinski]{bach2011convex}
Francis Bach, Rodolphe Jenatton, Julien Mairal, and Guillaume Obozinski.
\newblock Convex optimization with sparsity-inducing norms.
\newblock 2011.

\bibitem[Banerjee and Roy(2014)]{banerjee2014linear}
Sudipto Banerjee and Anindya Roy.
\newblock \emph{Linear algebra and matrix analysis for statistics}, volume 181.
\newblock CRC Press Boca Raton, FL, USA, 2014.

\bibitem[Beck(2014)]{beck2014introduction}
Amir Beck.
\newblock \emph{Introduction to nonlinear optimization: Theory, algorithms, and
  applications with MATLAB}.
\newblock SIAM, 2014.

\bibitem[Beck(2017)]{beck2017first}
Amir Beck.
\newblock \emph{First-Order Methods in Optimization}, volume~25.
\newblock SIAM, 2017.

\bibitem[Bennett et~al.(2007)Bennett, Lanning, et~al.]{bennett2007netflix}
James Bennett, Stan Lanning, et~al.
\newblock The {N}etflix prize.
\newblock In \emph{Proceedings of KDD cup and workshop}, volume 2007, page~35.
  New York, NY, USA., 2007.

\bibitem[Bernstein(2008)]{bernstein2008matrix}
Dennis~S. Bernstein.
\newblock Matrix mathematics: Theory, facts, and formulas.
\newblock 2008.

\bibitem[Bernstein(2009)]{bernstein2009matrix}
Dennis~S Bernstein.
\newblock \emph{Matrix mathematics: theory, facts, and formulas}.
\newblock Princeton university press, 2009.

\bibitem[Berry et~al.(2007)Berry, Browne, Langville, Pauca, and
  Plemmons]{berry2007algorithms}
Michael~W Berry, Murray Browne, Amy~N Langville, V~Paul Pauca, and Robert~J
  Plemmons.
\newblock Algorithms and applications for approximate nonnegative matrix
  factorization.
\newblock \emph{Computational statistics \& data analysis}, 52\penalty0
  (1):\penalty0 155--173, 2007.

\bibitem[Bishop(2006)]{bishop2006pattern}
Christopher~M Bishop.
\newblock Pattern recognition.
\newblock \emph{Machine learning}, 128\penalty0 (9), 2006.

\bibitem[Bj{\"o}rck(2004)]{bjorck2004acta}
{\AA}ke Bj{\"o}rck.
\newblock The calculation of linear least squares problems.
\newblock \emph{Acta Numer.}, 13:\penalty0 1--51, 2004.

\bibitem[Bj{\"o}rck(2024)]{bjorck2024numerical}
{\AA}ke Bj{\"o}rck.
\newblock \emph{Numerical methods for least squares problems}.
\newblock SIAM, 2024.

\bibitem[Borwein and Lewis(2006)]{borwein2006convex}
Jonathan Borwein and Adrian Lewis.
\newblock \emph{Convex Analysis}.
\newblock Springer, 2006.

\bibitem[Boutsidis and Gallopoulos(2008)]{boutsidis2008svd}
Christos Boutsidis and Efstratios Gallopoulos.
\newblock {SVD} based initialization: {A} head start for nonnegative matrix
  factorization.
\newblock \emph{Pattern recognition}, 41\penalty0 (4):\penalty0 1350--1362,
  2008.

\bibitem[Boyd and Vandenberghe(2018)]{boyd2018introduction}
Stephen Boyd and Lieven Vandenberghe.
\newblock \emph{Introduction to applied linear algebra: vectors, matrices, and
  least squares}.
\newblock Cambridge University Press, 2018.

\bibitem[Boyd et~al.(2004)Boyd, Boyd, and Vandenberghe]{boyd2004convex}
Stephen Boyd, Stephen~P Boyd, and Lieven Vandenberghe.
\newblock \emph{Convex optimization}.
\newblock Cambridge University Press, 2004.

\bibitem[Brouwer et~al.(2017)Brouwer, Frellsen, and
  Li{\'o}]{brouwer2017comparative}
Thomas Brouwer, Jes Frellsen, and Pietro Li{\'o}.
\newblock Comparative study of inference methods for {B}ayesian nonnegative
  matrix factorisation.
\newblock In \emph{Joint European Conference on Machine Learning and Knowledge
  Discovery in Databases}, pages 513--529. Springer, 2017.

\bibitem[Brunet et~al.(2004)Brunet, Tamayo, Golub, and
  Mesirov]{brunet2004metagenes}
Jean-Philippe Brunet, Pablo Tamayo, Todd~R Golub, and Jill~P Mesirov.
\newblock Metagenes and molecular pattern discovery using matrix factorization.
\newblock \emph{Proceedings of the national academy of sciences}, 101\penalty0
  (12):\penalty0 4164--4169, 2004.

\bibitem[Bujanovic et~al.(2018)Bujanovic, Karlsson, and
  Kressner]{bujanovic2018householder}
Zvonimir Bujanovic, Lars Karlsson, and Daniel Kressner.
\newblock A householder-based algorithm for hessenberg-triangular reduction.
\newblock \emph{SIAM Journal on Matrix Analysis and Applications}, 39\penalty0
  (3):\penalty0 1270--1294, 2018.

\bibitem[Bunch and Kaufman(1977)]{bunch1977some}
James~R Bunch and Linda Kaufman.
\newblock Some stable methods for calculating inertia and solving symmetric
  linear systems.
\newblock \emph{Mathematics of computation}, pages 163--179, 1977.

\bibitem[Chan(1982)]{chan1982improved}
Tony~F Chan.
\newblock An improved algorithm for computing the singular value decomposition.
\newblock \emph{ACM Transactions on Mathematical Software (TOMS)}, 8\penalty0
  (1):\penalty0 72--83, 1982.

\bibitem[Chan(1987)]{chan1987rank}
Tony~F Chan.
\newblock Rank revealing {QR} factorizations.
\newblock \emph{Linear algebra and its applications}, 88:\penalty0 67--82,
  1987.

\bibitem[Chi and Kolda(2012)]{chi2012tensors}
Eric~C Chi and Tamara~G Kolda.
\newblock On tensors, sparsity, and nonnegative factorizations.
\newblock \emph{SIAM Journal on Matrix Analysis and Applications}, 33\penalty0
  (4):\penalty0 1272--1299, 2012.

\bibitem[Christensen(1991)]{christensen1991linear}
Ronald Christensen.
\newblock \emph{Linear models for multivariate, time series, and spatial data},
  volume~1.
\newblock Springer, 1991.

\bibitem[Chu et~al.(1995)Chu, Funderlic, and Golub]{chu1995rank}
Moody~T Chu, Robert~E Funderlic, and Gene~H Golub.
\newblock A rank--one reduction formula and its applications to matrix
  factorizations.
\newblock \emph{SIAM review}, 37\penalty0 (4):\penalty0 512--530, 1995.

\bibitem[Ciaperoni et~al.(2024)Ciaperoni, Gionis, and
  Mannila]{ciaperoni2024hadamard}
Martino Ciaperoni, Aristides Gionis, and Heikki Mannila.
\newblock The {H}adamard decomposition problem.
\newblock \emph{Data Mining and Knowledge Discovery}, pages 1--42, 2024.

\bibitem[Cline and Funderlic(1979)]{cline1979rank}
Randall~E Cline and Robert~E Funderlic.
\newblock The rank of a difference of matrices and associated generalized
  inverses.
\newblock \emph{Linear Algebra and its Applications}, 24:\penalty0 185--215,
  1979.

\bibitem[Cohen and Rothblum(1993)]{cohen1993nonnegative}
Joel~E Cohen and Uriel~G Rothblum.
\newblock Nonnegative ranks, decompositions, and factorizations of nonnegative
  matrices.
\newblock \emph{Linear Algebra and its Applications}, 190:\penalty0 149--168,
  1993.

\bibitem[Comon et~al.(2009)Comon, Luciani, and De~Almeida]{comon2009tensor}
Pierre Comon, Xavier Luciani, and Andr{\'e}~LF De~Almeida.
\newblock Tensor decompositions, alternating least squares and other tales.
\newblock \emph{Journal of Chemometrics: A Journal of the Chemometrics
  Society}, 23\penalty0 (7-8):\penalty0 393--405, 2009.

\bibitem[Dawes and Corrigan(1974)]{dawes1974linear}
Robyn~M Dawes and Bernard Corrigan.
\newblock Linear models in decision making.
\newblock \emph{Psychological bulletin}, 81\penalty0 (2):\penalty0 95, 1974.

\bibitem[Dopico et~al.(2006)Dopico, Johnson, and Molera]{dopico2006multiple}
Froil{\'a}n~M Dopico, Charles~R Johnson, and Juan~M Molera.
\newblock Multiple {LU} factorizations of a singular matrix.
\newblock \emph{Linear algebra and its applications}, 419\penalty0
  (1):\penalty0 24--36, 2006.

\bibitem[Drineas et~al.(2006)Drineas, Kannan, and Mahoney]{drineas2006fast}
Petros Drineas, Ravi Kannan, and Michael~W Mahoney.
\newblock Fast {M}onte {C}arlo algorithms for matrices {II}: Computing a
  low-rank approximation to a matrix.
\newblock \emph{SIAM Journal on computing}, 36\penalty0 (1):\penalty0 158--183,
  2006.

\bibitem[Dumas and Pernet(2018)]{dumas2018symmetric}
Jean-Guillaume Dumas and Cl{\'e}ment Pernet.
\newblock Symmetric indefinite triangular factorization revealing the rank
  profile matrix.
\newblock In \emph{Proceedings of the 2018 ACM International Symposium on
  Symbolic and Algebraic Computation}, pages 151--158, 2018.

\bibitem[Egerv{\'a}ry(1960)]{egervary1960rank}
Eugen Egerv{\'a}ry.
\newblock On rank-diminishing operations and their applications to the solution
  of linear equations.
\newblock \emph{Zeitschrift f{\"u}r angewandte Mathematik und Physik ZAMP},
  11:\penalty0 376--386, 1960.

\bibitem[Eld{\'e}n(2007)]{elden2007matrix}
Lars Eld{\'e}n.
\newblock \emph{Matrix methods in data mining and pattern recognition}.
\newblock SIAM, 2007.

\bibitem[Fan(1949)]{fan1949theorem}
Ky~Fan.
\newblock On a theorem of weyl concerning eigenvalues of linear transformations
  i.
\newblock \emph{Proceedings of the National Academy of Sciences}, 35\penalty0
  (11):\penalty0 652--655, 1949.

\bibitem[F{\'e}votte and Idier(2011)]{fevotte2011algorithms}
C{\'e}dric F{\'e}votte and J{\'e}r{\^o}me Idier.
\newblock Algorithms for nonnegative matrix factorization with the
  $\beta$-divergence.
\newblock \emph{Neural computation}, 23\penalty0 (9):\penalty0 2421--2456,
  2011.

\bibitem[F{\'e}votte et~al.(2009)F{\'e}votte, Bertin, and
  Durrieu]{fevotte2009nonnegative}
C{\'e}dric F{\'e}votte, Nancy Bertin, and Jean-Louis Durrieu.
\newblock Nonnegative matrix factorization with the {I}takura-{S}aito
  divergence: With application to music analysis.
\newblock \emph{Neural computation}, 21\penalty0 (3):\penalty0 793--830, 2009.

\bibitem[Fierro and Hansen(1997)]{fierro1997low}
Ricardo~D Fierro and Per~Christian Hansen.
\newblock Low-rank revealing {UTV} decompositions.
\newblock \emph{Numerical Algorithms}, 15\penalty0 (1):\penalty0 37--55, 1997.

\bibitem[FitzGerald et~al.(2009)FitzGerald, Cranitch, and
  Coyle]{fitzgerald2009use}
Derry FitzGerald, Matt Cranitch, and Eugene Coyle.
\newblock On the use of the beta divergence for musical source separation.
\newblock 2009.

\bibitem[Foster(2003)]{foster2003solving}
Leslie~V Foster.
\newblock Solving rank-deficient and ill-posed problems using {UTV} and {QR}
  factorizations.
\newblock \emph{SIAM journal on matrix analysis and applications}, 25\penalty0
  (2):\penalty0 582--600, 2003.

\bibitem[Fox(1997)]{fox1997applied}
John Fox.
\newblock \emph{Applied regression analysis, linear models, and related
  methods.}
\newblock Sage Publications, Inc, 1997.

\bibitem[Frantar et~al.(2022)Frantar, Ashkboos, Hoefler, and
  Alistarh]{frantar2022gptq}
Elias Frantar, Saleh Ashkboos, Torsten Hoefler, and Dan Alistarh.
\newblock {GPTQ}: Accurate post-training quantization for generative
  pre-trained transformers.
\newblock \emph{arXiv preprint arXiv:2210.17323}, 2022.

\bibitem[Friedland and Torokhti(2007)]{friedland2007generalized}
Shmuel Friedland and Anatoli Torokhti.
\newblock Generalized rank-constrained matrix approximations.
\newblock \emph{SIAM Journal on Matrix Analysis and Applications}, 29\penalty0
  (2):\penalty0 656--659, 2007.

\bibitem[Gallier and Quaintance(2019)]{gallier2017fundamentals}
Jean Gallier and Jocelyn Quaintance.
\newblock Linear algebra for computer vision, robotics, and machine learning,
  2019.

\bibitem[Gao and Church(2005)]{gao2005improving}
Yuan Gao and George Church.
\newblock Improving molecular cancer class discovery through sparse
  non-negative matrix factorization.
\newblock \emph{Bioinformatics}, 21\penalty0 (21):\penalty0 3970--3975, 2005.

\bibitem[Gentle(1998)]{gentle1998numerical}
James~E Gentle.
\newblock \emph{Numerical linear algebra for applications in statistics}.
\newblock Springer Science \& Business Media, 1998.

\bibitem[Gentle(2007)]{gentle2007matrix}
James~E Gentle.
\newblock Matrix algebra.
\newblock \emph{Springer texts in statistics, Springer, New York, NY, doi},
  10:\penalty0 978--0, 2007.

\bibitem[Giampouras et~al.(2018)Giampouras, Rontogiannis, and
  Koutroumbas]{giampouras2018alternating}
Paris~V Giampouras, Athanasios~A Rontogiannis, and Konstantinos~D Koutroumbas.
\newblock Alternating iteratively reweighted least squares minimization for
  low-rank matrix factorization.
\newblock \emph{IEEE Transactions on Signal Processing}, 67\penalty0
  (2):\penalty0 490--503, 2018.

\bibitem[Gilbert(1991)]{gilbert1991positive}
George~T Gilbert.
\newblock Positive definite matrices and {S}ylvester's criterion.
\newblock \emph{The American Mathematical Monthly}, 98\penalty0 (1):\penalty0
  44--46, 1991.

\bibitem[Gill et~al.(2019)Gill, Murray, and Wright]{gill2019practical}
Philip~E Gill, Walter Murray, and Margaret~H Wright.
\newblock \emph{Practical optimization}.
\newblock SIAM, 2019.

\bibitem[Gill et~al.(2021)Gill, Murray, and Wright]{gill2021numerical}
Philip~E Gill, Walter Murray, and Margaret~H Wright.
\newblock \emph{Numerical linear algebra and optimization}.
\newblock SIAM, 2021.

\bibitem[Gillis(2014)]{gillis2014and}
Nicolas Gillis.
\newblock The why and how of nonnegative matrix factorization.
\newblock \emph{Connections}, 12:\penalty0 2--2, 2014.

\bibitem[Gillis(2020)]{gillis2020nonnegative}
Nicolas Gillis.
\newblock \emph{Nonnegative matrix factorization}.
\newblock SIAM, 2020.

\bibitem[Gillis and Glineur(2012)]{gillis2012accelerated}
Nicolas Gillis and Fran{\c{c}}ois Glineur.
\newblock Accelerated multiplicative updates and hierarchical als algorithms
  for nonnegative matrix factorization.
\newblock \emph{Neural computation}, 24\penalty0 (4):\penalty0 1085--1105,
  2012.

\bibitem[Gohberg and Goldberg(1996)]{gohberg1996simple}
Israel Gohberg and Seymour Goldberg.
\newblock A simple proof of the jordan decomposition theorem for matrices.
\newblock \emph{The American Mathematical Monthly}, 103\penalty0 (2):\penalty0
  157--159, 1996.

\bibitem[Goldfarb(1976)]{goldfarb1976factorized}
Donald Goldfarb.
\newblock Factorized variable metric methods for unconstrained optimization.
\newblock \emph{Mathematics of Computation}, 30\penalty0 (136):\penalty0
  796--811, 1976.

\bibitem[Golub and Kahan(1965)]{golub1965calculating}
Gene Golub and William Kahan.
\newblock Calculating the singular values and pseudo-inverse of a matrix.
\newblock \emph{Journal of the Society for Industrial and Applied Mathematics,
  Series B: Numerical Analysis}, 2\penalty0 (2):\penalty0 205--224, 1965.

\bibitem[Golub and Van~Loan(2013)]{golub2013matrix}
Gene~H Golub and Charles~F Van~Loan.
\newblock \emph{Matrix computations}, volume~3.
\newblock JHU Press, 2013.

\bibitem[Goodfellow et~al.(2016)Goodfellow, Bengio, and
  Courville]{goodfellow2016deep}
Ian Goodfellow, Yoshua Bengio, and Aaron Courville.
\newblock \emph{Deep learning}.
\newblock MIT press, 2016.

\bibitem[Goreinov et~al.(1997)Goreinov, Zamarashkin, and
  Tyrtyshnikov]{goreinov1997pseudo}
Sergei~A Goreinov, Nikolai~Leonidovich Zamarashkin, and Evgenii~Evgen’evich
  Tyrtyshnikov.
\newblock Pseudo-skeleton approximations by matrices of maximal volume.
\newblock \emph{Mathematical Notes}, 62\penalty0 (4):\penalty0 515--519, 1997.

\bibitem[Gram(1883)]{gram1883ueber}
J{\o}rgen~Pedersen Gram.
\newblock Ueber die entwickelung reeller functionen in reihen mittelst der
  methode der kleinsten quadrate.
\newblock 1883.

\bibitem[Gross(2011)]{gross2011recovering}
David Gross.
\newblock Recovering low-rank matrices from few coefficients in any basis.
\newblock \emph{IEEE Transactions on Information Theory}, 57\penalty0
  (3):\penalty0 1548--1566, 2011.

\bibitem[Hales and Passi(1999)]{hales1999jordan}
AW~Hales and IBS Passi.
\newblock Jordan decomposition.
\newblock In \emph{Algebra}, pages 75--87. Springer, 1999.

\bibitem[Hanson and Lawson(1969)]{hanson1969extensions}
Richard~J Hanson and Charles~L Lawson.
\newblock Extensions and applications of the householder algorithm for solving
  linear least squares problems.
\newblock \emph{Mathematics of Computation}, 23\penalty0 (108):\penalty0
  787--812, 1969.

\bibitem[Hardt et~al.(2014)Hardt, Meka, Raghavendra, and
  Weitz]{hardt2014computational}
Moritz Hardt, Raghu Meka, Prasad Raghavendra, and Benjamin Weitz.
\newblock Computational limits for matrix completion.
\newblock In \emph{Conference on Learning Theory}, pages 703--725. PMLR, 2014.

\bibitem[Harper and Konstan(2015)]{harper2015movielens}
F~Maxwell Harper and Joseph~A Konstan.
\newblock The movielens datasets: {H}istory and context.
\newblock \emph{Acm transactions on interactive intelligent systems (tiis)},
  5\penalty0 (4):\penalty0 1--19, 2015.

\bibitem[Hastie et~al.(2015)Hastie, Tibshirani, and
  Wainwright]{hastie2015statistical}
Trevor Hastie, Robert Tibshirani, and Martin Wainwright.
\newblock Statistical learning with sparsity.
\newblock \emph{Monographs on statistics and applied probability}, 143\penalty0
  (143):\penalty0 8, 2015.

\bibitem[He et~al.(2017)He, Liao, Zhang, Nie, Hu, and Chua]{he2017neural}
Xiangnan He, Lizi Liao, Hanwang Zhang, Liqiang Nie, Xia Hu, and Tat-Seng Chua.
\newblock Neural collaborative filtering.
\newblock In \emph{Proceedings of the 26th international conference on world
  wide web}, pages 173--182, 2017.

\bibitem[Higham(2002{\natexlab{a}})]{higham2002accuracy}
Nicholas~J Higham.
\newblock \emph{Accuracy and stability of numerical algorithms}.
\newblock SIAM, 2002{\natexlab{a}}.

\bibitem[Higham(2002{\natexlab{b}})]{higham2002computing}
Nicholas~J Higham.
\newblock Computing the nearest correlation matrix—a problem from finance.
\newblock \emph{IMA journal of Numerical Analysis}, 22\penalty0 (3):\penalty0
  329--343, 2002{\natexlab{b}}.

\bibitem[Higham(2009)]{higham2009cholesky}
Nicholas~J Higham.
\newblock Cholesky factorization.
\newblock \emph{Wiley Interdisciplinary Reviews: Computational Statistics},
  1\penalty0 (2):\penalty0 251--254, 2009.

\bibitem[Higham and Strabic(2016)]{higham2016bounds}
Nicholas~J Higham and Natasa Strabic.
\newblock Bounds for the distance to the nearest correlation matrix.
\newblock \emph{SIAM Journal on Matrix Analysis and Applications}, 37\penalty0
  (3):\penalty0 1088--1102, 2016.

\bibitem[Horn and Johnson(2012)]{horn2012matrix}
Roger~A Horn and Charles~R Johnson.
\newblock \emph{Matrix analysis}.
\newblock Cambridge University Press, 2012.

\bibitem[Hotelling(1933)]{hotelling1933analysis}
Harold Hotelling.
\newblock Analysis of a complex of statistical variables into principal
  components.
\newblock \emph{Journal of educational psychology}, 24\penalty0 (6):\penalty0
  417, 1933.

\bibitem[Householder(1964)]{householder1964theory}
Alston~S Householder.
\newblock \emph{The theory of matrices in numerical analysis}.
\newblock Blaisdell, 1964.

\bibitem[Huang et~al.(2016)Huang, Sidiropoulos, and Liavas]{huang2016flexible}
Kejun Huang, Nicholas~D Sidiropoulos, and Athanasios~P Liavas.
\newblock A flexible and efficient algorithmic framework for constrained matrix
  and tensor factorization.
\newblock \emph{IEEE Transactions on Signal Processing}, 64\penalty0
  (19):\penalty0 5052--5065, 2016.

\bibitem[Hwang et~al.(1992)Hwang, Lin, and Yang]{hwang1992rank}
Tsung-Min Hwang, Wen-Wei Lin, and Eugene~K Yang.
\newblock Rank revealing {LU} factorizations.
\newblock \emph{Linear algebra and its applications}, 175:\penalty0 115--141,
  1992.

\bibitem[Jain et~al.(2017)Jain, Kar, et~al.]{jain2017non}
Prateek Jain, Purushottam Kar, et~al.
\newblock Non-convex optimization for machine learning.
\newblock \emph{Foundations and Trends{\textregistered} in Machine Learning},
  10\penalty0 (3-4):\penalty0 142--336, 2017.

\bibitem[Jordan(1870)]{jordan1870traite}
Camille Jordan.
\newblock \emph{Trait{\'e} des substitutions et des {\'e}quations
  alg{\'e}briques}.
\newblock Gauthier-Villars, 1870.

\bibitem[Kagstr{\"o}m et~al.(2008)Kagstr{\"o}m, Kressner, Quintana-Ort{\i}, and
  Quintana-Ort{\i}]{kagstrom2008blocked}
Bo~Kagstr{\"o}m, Daniel Kressner, Enrique~S Quintana-Ort{\i}, and Gregorio
  Quintana-Ort{\i}.
\newblock Blocked algorithms for the reduction to hessenberg-triangular form
  revisited lapack working note 198.
\newblock 2008.

\bibitem[Kim and Park(2011)]{kim2011fast}
Jingu Kim and Haesun Park.
\newblock Fast nonnegative matrix factorization: An active-set-like method and
  comparisons.
\newblock \emph{SIAM Journal on Scientific Computing}, 33\penalty0
  (6):\penalty0 3261--3281, 2011.

\bibitem[Kingma et~al.(2019)Kingma, Welling, et~al.]{kingma2019introduction}
Diederik~P Kingma, Max Welling, et~al.
\newblock An introduction to variational autoencoders.
\newblock \emph{Foundations and Trends{\textregistered} in Machine Learning},
  12\penalty0 (4):\penalty0 307--392, 2019.

\bibitem[Kishore~Kumar and Schneider(2017)]{kishore2017literature}
N~Kishore~Kumar and Jan Schneider.
\newblock Literature survey on low rank approximation of matrices.
\newblock \emph{Linear and Multilinear Algebra}, 65\penalty0 (11):\penalty0
  2212--2244, 2017.

\bibitem[Koeber and Sch{\"a}fer(2006)]{koeber2006unique}
Martin Koeber and Uwe Sch{\"a}fer.
\newblock The unique square root of a positive semidefinite matrix.
\newblock \emph{International Journal of Mathematical Education in Science and
  Technology}, 37\penalty0 (8):\penalty0 990--992, 2006.

\bibitem[Lane(2002)]{lane2002generalized}
PW~Lane.
\newblock Generalized linear models in soil science.
\newblock \emph{European Journal of Soil Science}, 53\penalty0 (2):\penalty0
  241--251, 2002.

\bibitem[Lawson and Hanson(1995)]{lawson1995solving}
Charles~L Lawson and Richard~J Hanson.
\newblock \emph{Solving least squares problems}.
\newblock SIAM, 1995.

\bibitem[Lee and Seung(1999)]{lee1999learning}
Daniel~D Lee and H~Sebastian Seung.
\newblock Learning the parts of objects by non-negative matrix factorization.
\newblock \emph{nature}, 401\penalty0 (6755):\penalty0 788--791, 1999.

\bibitem[Lee and Seung(2001)]{lee2001algorithms}
Daniel~D Lee and Hyunjune~Sebastian Seung.
\newblock Algorithms for non-negative matrix factorization.
\newblock In \emph{14th Annual Neural Information Processing Systems
  Conference, NIPS 2000}. Neural information processing systems foundation,
  2001.

\bibitem[Lee and Choi(2008)]{lee2008cur+}
Hyekyoung Lee and Seungjin Choi.
\newblock {CUR}+{NMF} for learning spectral features from large data matrix.
\newblock In \emph{2008 IEEE International Joint Conference on Neural Networks
  (IEEE World Congress on Computational Intelligence)}, pages 1592--1597. IEEE,
  2008.

\bibitem[Levenberg(1944)]{levenberg1944method}
Kenneth Levenberg.
\newblock A method for the solution of certain non-linear problems in least
  squares.
\newblock \emph{Quarterly of applied mathematics}, 2\penalty0 (2):\penalty0
  164--168, 1944.

\bibitem[Liang et~al.(2018)Liang, Krishnan, Hoffman, and
  Jebara]{liang2018variational}
Dawen Liang, Rahul~G Krishnan, Matthew~D Hoffman, and Tony Jebara.
\newblock Variational autoencoders for collaborative filtering.
\newblock In \emph{Proceedings of the 2018 world wide web conference}, pages
  689--698, 2018.

\bibitem[L{\'o}pez-Serrano et~al.(2019)L{\'o}pez-Serrano, Dittmar, {\"O}zer,
  and M{\"u}ller]{lopez2019nmf}
Patricio L{\'o}pez-Serrano, Christian Dittmar, Yigitcan {\"O}zer, and Meinard
  M{\"u}ller.
\newblock Nmf toolbox: Music processing applications of nonnegative matrix
  factorization.
\newblock In \emph{Proceedings of the International Conference on Digital Audio
  Effects DAFx}, volume~19, pages 2--6, 2019.

\bibitem[Lu(2017)]{lu2017machine}
Jun Lu.
\newblock Machine learning modeling for time series problem: {P}redicting
  flight ticket prices.
\newblock \emph{arXiv preprint arXiv:1705.07205}, 2017.

\bibitem[Lu(2021{\natexlab{a}})]{lu2021bayes}
Jun Lu.
\newblock A survey on {B}ayesian inference for {G}aussian mixture model.
\newblock \emph{arXiv preprint arXiv:2108.11753}, 2021{\natexlab{a}}.

\bibitem[Lu(2021{\natexlab{b}})]{lu2021column}
Jun Lu.
\newblock On the column and row ranks of a matrix.
\newblock \emph{arXiv preprint arXiv:2112.06638}, 2021{\natexlab{b}}.

\bibitem[Lu(2021{\natexlab{c}})]{lu2021numerical}
Jun Lu.
\newblock Numerical matrix decomposition.
\newblock \emph{arXiv preprint arXiv:2107.02579}, 2021{\natexlab{c}}.

\bibitem[Lu(2021{\natexlab{d}})]{lu2021rigorous}
Jun Lu.
\newblock A rigorous introduction to linear models.
\newblock \emph{arXiv preprint arXiv:2105.04240, Eliva Press},
  2021{\natexlab{d}}.

\bibitem[Lu(2022{\natexlab{a}})]{lu2022bayesian}
Jun Lu.
\newblock Bayesian low-rank interpolative decomposition for complex datasets.
\newblock \emph{arXiv preprint arXiv:2205.14825, Studies in Engineering and
  Technology}, 9\penalty0 (1):\penalty0 1--12, 2022{\natexlab{a}}.

\bibitem[Lu(2022{\natexlab{b}})]{lu2022comparative}
Jun Lu.
\newblock Comparative study of inference methods for interpolative
  decomposition.
\newblock \emph{arXiv preprint arXiv:2206.14542}, 2022{\natexlab{b}}.

\bibitem[Lu(2022{\natexlab{c}})]{lu2022gradient}
Jun Lu.
\newblock Gradient descent, stochastic optimization, and other tales.
\newblock \emph{arXiv preprint arXiv:2205.00832, Eliva Press},
  2022{\natexlab{c}}.

\bibitem[Lu(2023)]{lu2023bayesian}
Jun Lu.
\newblock Bayesian matrix decomposition and applications.
\newblock \emph{arXiv preprint arXiv:2302.11337}, 2023.

\bibitem[Lu(2025)]{lu2025practical}
Jun Lu.
\newblock Practical topics in optimization.
\newblock \emph{arXiv preprint arXiv:2503.05882}, 2025.

\bibitem[Lu and Osterrieder(2022)]{lu2022feature}
Jun Lu and Joerg Osterrieder.
\newblock Feature selection via the intervened interpolative decomposition and
  its application in diversifying quantitative strategies.
\newblock 2022.

\bibitem[Lu and Ye(2022)]{lu2022flexible}
Jun Lu and Xuanyu Ye.
\newblock Flexible and hierarchical prior for {B}ayesian nonnegative matrix
  factorization.
\newblock \emph{arXiv preprint arXiv:2205.11025}, 2022.

\bibitem[Lu and Yi(2022)]{lu2022autoencoding}
Jun Lu and Shao Yi.
\newblock Autoencoding conditional {GAN} for portfolio allocation
  diversification.
\newblock \emph{arXiv preprint arXiv:2207.05701}, 2022.

\bibitem[Lu et~al.(2025)Lu, Xu, Ding, Li, and Kang]{lu2025large}
Jun Lu, Tianyi Xu, Bill Ding, David Li, and Yu~Kang.
\newblock Large language model compression via the nested activation-aware
  decomposition.
\newblock \emph{arXiv preprint arXiv:2503.17101}, 2025.

\bibitem[Madsen et~al.(2004)Madsen, Nielsen, and Tingleff]{madsen2004methods}
Kaj Madsen, Hans~Bruun Nielsen, and Ole Tingleff.
\newblock Methods for non-linear least squares problems.
\newblock 2004.

\bibitem[Mahoney(2016)]{mahoney2016lecture}
Michael~W Mahoney.
\newblock Lecture notes on randomized linear algebra.
\newblock \emph{arXiv preprint arXiv:1608.04481}, 2016.

\bibitem[Mahoney and Drineas(2009)]{mahoney2009cur}
Michael~W Mahoney and Petros Drineas.
\newblock {CUR} matrix decompositions for improved data analysis.
\newblock \emph{Proceedings of the National Academy of Sciences}, 106\penalty0
  (3):\penalty0 697--702, 2009.

\bibitem[Marquardt(1963)]{marquardt1963algorithm}
Donald~W Marquardt.
\newblock An algorithm for least-squares estimation of nonlinear parameters.
\newblock \emph{Journal of the society for Industrial and Applied Mathematics},
  11\penalty0 (2):\penalty0 431--441, 1963.

\bibitem[Martinsson(2019)]{martinsson2019randomized}
Per-Gunnar Martinsson.
\newblock Randomized methods for matrix computations.
\newblock \emph{The Mathematics of Data}, 25\penalty0 (4):\penalty0 187--231,
  2019.

\bibitem[Mathias and Hogben(2014)]{mathias2014singular}
Roy Mathias and L~Hogben.
\newblock Singular values and singular value inequalities, 2014.

\bibitem[Mazumder et~al.(2010)Mazumder, Hastie, and
  Tibshirani]{mazumder2010spectral}
Rahul Mazumder, Trevor Hastie, and Robert Tibshirani.
\newblock Spectral regularization algorithms for learning large incomplete
  matrices.
\newblock \emph{The Journal of Machine Learning Research}, 11:\penalty0
  2287--2322, 2010.

\bibitem[McSweeney(2017)]{mcsweeney2017modified}
Thomas McSweeney.
\newblock \emph{Modified Cholesky decomposition and applications}.
\newblock PhD thesis, University of Manchester, 2017.

\bibitem[Menchero et~al.(2011)Menchero, Orr, and Wang]{menchero2011barra}
Jose Menchero, D~Orr, and Jun Wang.
\newblock The {B}arra {US} equity model ({USE4}), methodology notes.
\newblock \emph{English, MSCI (May}, 2011.

\bibitem[Miranian and Gu(2003)]{miranian2003strong}
L~Miranian and Ming Gu.
\newblock Strong rank revealing {LU} factorizations.
\newblock \emph{Linear algebra and its applications}, 367:\penalty0 1--16,
  2003.

\bibitem[Moler and Stewart(1973)]{moler1973algorithm}
Cleve~B Moler and Gilbert~W Stewart.
\newblock An algorithm for generalized matrix eigenvalue problems.
\newblock \emph{SIAM Journal on Numerical Analysis}, 10\penalty0 (2):\penalty0
  241--256, 1973.

\bibitem[Mrode(2014)]{mrode2014linear}
Raphael~A Mrode.
\newblock \emph{Linear models for the prediction of animal breeding values}.
\newblock Cabi, 2014.

\bibitem[M{\"u}ller(2015)]{muller2015fundamentals}
Meinard M{\"u}ller.
\newblock \emph{Fundamentals of music processing: Audio, analysis, algorithms,
  applications}, volume~5.
\newblock Springer, 2015.

\bibitem[Noschese et~al.(2013)Noschese, Pasquini, and
  Reichel]{noschese2013tridiagonal}
Silvia Noschese, Lionello Pasquini, and Lothar Reichel.
\newblock Tridiagonal toeplitz matrices: properties and novel applications.
\newblock \emph{Numerical linear algebra with applications}, 20\penalty0
  (2):\penalty0 302--326, 2013.

\bibitem[Paatero and Tapper(1994)]{paatero1994positive}
Pentti Paatero and Unto Tapper.
\newblock Positive matrix factorization: A non-negative factor model with
  optimal utilization of error estimates of data values.
\newblock \emph{Environmetrics}, 5\penalty0 (2):\penalty0 111--126, 1994.

\bibitem[Pan(2000)]{pan2000existence}
C-T Pan.
\newblock On the existence and computation of rank-revealing {LU}
  factorizations.
\newblock \emph{Linear Algebra and its Applications}, 316\penalty0
  (1-3):\penalty0 199--222, 2000.

\bibitem[Pearson(1901)]{pearson1901liii}
Karl Pearson.
\newblock Liii. on lines and planes of closest fit to systems of points in
  space.
\newblock \emph{The London, Edinburgh, and Dublin philosophical magazine and
  journal of science}, 2\penalty0 (11):\penalty0 559--572, 1901.

\bibitem[Quarteroni et~al.(2010)Quarteroni, Sacco, and
  Saleri]{quarteroni2010numerical}
Alfio Quarteroni, Riccardo Sacco, and Fausto Saleri.
\newblock \emph{Numerical mathematics}, volume~37.
\newblock Springer Science \& Business Media, 2010.

\bibitem[Rennie and Srebro(2005)]{rennie2005fast}
Jasson~DM Rennie and Nathan Srebro.
\newblock Fast maximum margin matrix factorization for collaborative
  prediction.
\newblock In \emph{Proceedings of the 22nd international conference on Machine
  learning}, pages 713--719, 2005.

\bibitem[Schaeffer(2004)]{schaeffer2004application}
Lawrence~R Schaeffer.
\newblock Application of random regression models in animal breeding.
\newblock \emph{Livestock Production Science}, 86\penalty0 (1-3):\penalty0
  35--45, 2004.

\bibitem[Schilders(2009)]{schilders2009solution}
Wil~HA Schilders.
\newblock Solution of indefinite linear systems using an {LQ} decomposition for
  the linear constraints.
\newblock \emph{Linear algebra and its applications}, 431\penalty0
  (3-4):\penalty0 381--395, 2009.

\bibitem[Schmidt(1907)]{schmidt1907theorie}
Erhard Schmidt.
\newblock Zur theorie der linearen und nichtlinearen integralgleichungen.
\newblock \emph{Mathematische Annalen}, 63\penalty0 (4):\penalty0 433--476,
  1907.

\bibitem[Seeger(2004)]{seeger2004low}
Matthias Seeger.
\newblock Low rank updates for the {C}holesky decomposition.
\newblock Technical report, 2004.

\bibitem[Shahnaz et~al.(2006)Shahnaz, Berry, Pauca, and
  Plemmons]{shahnaz2006document}
Farial Shahnaz, Michael~W Berry, V~Paul Pauca, and Robert~J Plemmons.
\newblock Document clustering using nonnegative matrix factorization.
\newblock \emph{Information Processing \& Management}, 42\penalty0
  (2):\penalty0 373--386, 2006.

\bibitem[Shlens(2014)]{shlens2014tutorial}
Jonathon Shlens.
\newblock A tutorial on principal component analysis.
\newblock \emph{arXiv preprint arXiv:1404.1100}, 2014.

\bibitem[Stewart(1973)]{stewart1973conjugate}
Gilbert~W Stewart.
\newblock Conjugate direction methods for solving systems of linear equations.
\newblock \emph{Numerische Mathematik}, 21\penalty0 (4):\penalty0 285--297,
  1973.

\bibitem[Stewart(2002)]{stewart2002updating}
Gilbert~W Stewart.
\newblock An updating algorithm for subspace tracking.
\newblock \emph{IEEE Transactions on Signal Processing}, 40\penalty0
  (6):\penalty0 1535--1541, 2002.

\bibitem[Stewart(2000)]{stewart2000decompositional}
GW~Stewart.
\newblock The decompositional approach to matrix computation.
\newblock \emph{Computing in Science \& Engineering}, 2\penalty0 (1):\penalty0
  50--59, 2000.

\bibitem[Strang(2009)]{strang1993introduction}
Gilbert Strang.
\newblock \emph{Introduction to linear algebra}.
\newblock Wellesley-Cambridge Press Wellesley, 4th edition, 2009.

\bibitem[Strang(2019)]{strang2019linear}
Gilbert Strang.
\newblock \emph{Linear algebra and learning from data}.
\newblock Wellesley-Cambridge Press Cambridge, 2019.

\bibitem[Strang(2021)]{strang2021every}
Gilbert Strang.
\newblock \emph{Linear algebra for everyone}.
\newblock Wellesley-Cambridge Press Wellesley, 2021.

\bibitem[Strang and Drucker(2021)]{strang2021three}
Gilbert Strang and Daniel Drucker.
\newblock Three matrix factorizations from the steps of elimination.
\newblock 2021.

\bibitem[Strang and Moler(2022)]{stranglu}
Gilbert Strang and Cleve Moler.
\newblock {LU} and {CR} elimination.
\newblock \emph{SIAM Review}, 64\penalty0 (1):\penalty0 181--190, 2022.

\bibitem[Swamy(1973)]{swamy1973sylvester}
Kuduvally Swamy.
\newblock On {S}ylvester's criterion for positive-semidefinite matrices.
\newblock \emph{IEEE Transactions on Automatic Control}, 18\penalty0
  (3):\penalty0 306--306, 1973.

\bibitem[Tak{\'a}cs and Tikk(2012)]{takacs2012alternating}
G{\'a}bor Tak{\'a}cs and Domonkos Tikk.
\newblock Alternating least squares for personalized ranking.
\newblock In \emph{Proceedings of the sixth ACM conference on Recommender
  systems}, pages 83--90, 2012.

\bibitem[Teukolsky et~al.(1992)Teukolsky, Flannery, Press, and
  Vetterling]{teukolsky1992numerical}
Saul~A Teukolsky, Brian~P Flannery, WH~Press, and W~Vetterling.
\newblock Numerical recipes in c.
\newblock \emph{SMR}, 693\penalty0 (1):\penalty0 59--70, 1992.

\bibitem[Tikhonov(1963)]{tikhonov1963solution}
Andrei~N Tikhonov.
\newblock Solution of incorrectly formulated problems and the regularization
  method.
\newblock \emph{Sov Dok}, 4:\penalty0 1035--1038, 1963.

\bibitem[Trefethen and Bau~III(1997)]{trefethen1997numerical}
Lloyd~N Trefethen and David Bau~III.
\newblock \emph{Numerical linear algebra}, volume~50.
\newblock SIAM, 1997.

\bibitem[Turkay et~al.(2003)Turkay, Epperlein, and
  Christofides]{turkay2003correlation}
Saygun Turkay, Eduardo Epperlein, and Nicos Christofides.
\newblock Correlation stress testing for value-at-risk.
\newblock \emph{Journal of Risk}, 5:\penalty0 75--90, 2003.

\bibitem[van~de Geijn and Myers(2020)]{van2020advanced}
Robert van~de Geijn and Margaret Myers.
\newblock Advanced linear algebra: Foundations to frontiers.
\newblock \emph{Creative Commons NonCommercial (CC BY-NC)}, 2020.

\bibitem[Van~Zee et~al.(2012)Van~Zee, Van De~Geijn, Quintana-Ort{\'\i}, and
  Elizondo]{van2012families}
Field~G Van~Zee, Robert~A Van De~Geijn, Gregorio Quintana-Ort{\'\i}, and
  G~Joseph Elizondo.
\newblock Families of algorithms for reducing a matrix to condensed form.
\newblock \emph{ACM Transactions on Mathematical Software (TOMS)}, 39\penalty0
  (1):\penalty0 1--32, 2012.

\bibitem[Van~Zee et~al.(2014)Van~Zee, Van~de Geijn, and
  Quintana-Ort{\'\i}]{van2014restructuring}
Field~G Van~Zee, Robert~A Van~de Geijn, and Gregorio Quintana-Ort{\'\i}.
\newblock Restructuring the tridiagonal and bidiagonal {QR} algorithms for
  performance.
\newblock \emph{ACM Transactions on Mathematical Software (TOMS)}, 40\penalty0
  (3):\penalty0 1--34, 2014.

\bibitem[Vavasis(2010)]{vavasis2010complexity}
Stephen~A Vavasis.
\newblock On the complexity of nonnegative matrix factorization.
\newblock \emph{SIAM journal on optimization}, 20\penalty0 (3):\penalty0
  1364--1377, 2010.

\bibitem[Williams and Rasmussen(2006)]{williams2006gaussian}
Christopher~KI Williams and Carl~Edward Rasmussen.
\newblock \emph{Gaussian processes for machine learning}, volume~2.
\newblock MIT press Cambridge, MA, 2006.

\bibitem[Wright and Holt(1985)]{wright1985inexact}
SJ~Wright and John~Norman Holt.
\newblock An inexact {L}evenberg-{M}arquardt method for large sparse nonlinear
  least squres.
\newblock \emph{The ANZIAM Journal}, 26\penalty0 (4):\penalty0 387--403, 1985.

\bibitem[Yang and Oja(2010)]{yang2010linear}
Zhirong Yang and Erkki Oja.
\newblock Linear and nonlinear projective nonnegative matrix factorization.
\newblock \emph{IEEE Transactions on Neural Networks}, 21\penalty0
  (5):\penalty0 734--749, 2010.

\bibitem[Yuan and Oja(2005)]{yuan2005projective}
Zhijian Yuan and Erkki Oja.
\newblock Projective nonnegative matrix factorization for image compression and
  feature extraction.
\newblock In \emph{Image Analysis: 14th Scandinavian Conference, SCIA 2005,
  Joensuu, Finland, June 19-22, 2005. Proceedings 14}, pages 333--342.
  Springer, 2005.

\bibitem[Zhang(2017)]{zhang2017matrix}
Xian-Da Zhang.
\newblock \emph{Matrix analysis and applications}.
\newblock Cambridge University Press, 2017.

\bibitem[Zhu and Lin(2011)]{zhu2011randomised}
X~Zhu and W~Lin.
\newblock Randomised pseudo-skeleton approximation and its application in
  electromagnetics.
\newblock \emph{Electronics letters}, 47\penalty0 (10):\penalty0 590--592,
  2011.

\end{thebibliography}

\clearpage
\printindex


\end{document}